\documentclass[preprint,a4paper,11pt]{elsarticle}

%Remove the Elsevier footer, see tex.stackexchange.com/questions/35712
\makeatletter
\def\ps@pprintTitle{%
 \let\@oddhead\@empty
 \let\@evenhead\@empty
 \def\@oddfoot{}%
 \let\@evenfoot\@oddfoot}
\makeatother

\usepackage[utf8]{inputenc}
\usepackage{amsfonts}
\usepackage{amssymb}
\usepackage{amsmath}
\usepackage{amsthm}
\usepackage{mathtools}
\usepackage{todonotes}
\usepackage{enumitem}
\usepackage{esint}
\usepackage{dsfont}

\setenumerate[0]{label=(\arabic*)}

\numberwithin{equation}{section}

\usepackage{geometry}
\geometry{left=20mm, right=20mm, top=15mm, bottom=15mm}

\def \R {\mathbb{R}}
\def \CC {\mathbb{C}}
\def \N {\mathbb{N}}
\def \Z {\mathbb{Z}}
\def \eps {\varepsilon}
\def \with {\, : \,}
\def \CalQ {\mathcal{Q}}
\def \CalB {\mathcal{B}}
\def \CalP {\mathcal{P}}
\def \CalD {\mathcal{D}}
\def \CalH {\mathcal{H}}

\def \BoldW {\mathbb{W}}
\def \emptyset {\varnothing}
\def \identity {{\mathrm{id}}}
\def \DecompSp {\mathcal{D}}
\def \Schwartz {\mathcal{S}}
\def \Fourier {\mathcal{F}}
\def \GL {{\mathrm{GL}}}
\def \supp {\operatorname{supp}}
\newcommand{\mybullet}{\bullet}

\newcommand{\PacketSpace}{\mathcal{W}}
\newcommand{\Indicator}{{\mathds{1}}}

 %useful for getting ``clickable'' links in the PDF
\usepackage[pdfencoding=auto]{hyperref}

\theoremstyle{plain}
\newtheorem{thm}{Theorem}[section]
\theoremstyle{definition}
\newtheorem{defn}[thm]{Definition}
\theoremstyle{plain}
\newtheorem{lem}[thm]{Lemma}
\theoremstyle{plain}
\newtheorem{prop}[thm]{Proposition}
\theoremstyle{plain}
\newtheorem{cor}[thm]{Corollary}
\theoremstyle{remark}
\newtheorem*{rem*}{Remark}

% Useful for adding the date of compilation to the bottom of each page
%\usepackage[ddmmyyyy,hhmmss]{datetime}
%\newdateformat{daymonthyeardate}{%
%  \THEDAY.\THEMONTH.\THEYEAR}
%\usepackage{fancyhdr}
%\pagestyle{fancy}
%\makeatletter
%\fancyhead{}
%\fancyfoot{}
%%\fancyhead[RO,LE]{\footnotesize\thepage}
%\fancyfoot[C]{\scriptsize Draft copy --- Compiled on \daymonthyeardate\today\ at \currenttime}
%\fancyfoot[RO]{\footnotesize\thepage}
%\makeatother

\usepackage{etoolbox}
% patch `\maketitle` to accommodate the new information, printed after the keywords
\patchcmd{\pprintMaketitle}
  {\fi\hrule}% the second rule
  {\fi\ifvoid\extrainfobox\else\unvbox\extrainfobox\par\vskip10pt\fi\hrule}
  {}{}

% an environment for the new information

\newsavebox\extrainfobox

\newcommand\nnfootnote[1]{%
  \begin{NoHyper}
  \renewcommand\thefootnote{}\footnote{#1}%
  \addtocounter{footnote}{-1}%
  \end{NoHyper}
}

%The following code is based on tex.stackexchange.com/questions/129425
\makeatletter
\newcommand*{\itemequation}[3][]{%
  \item
  \begingroup
    \refstepcounter{equation}%
    \ifx\\#1\\%
    \else
      \label{#1}%
    \fi
    \sbox0{#2}%
    \sbox2{$\displaystyle#3\m@th$}%
    \sbox4{\@eqnnum}%
    \dimen@=.5\dimexpr\linewidth-\wd2\relax
    % Warning for overlapping
    \ifcase
        \ifdim\wd0>\dimen@
          \z@
        \else
          \ifdim\wd4>\dimen@
            \z@
          \else
            \@ne
          \fi
        \fi
      \@latex@warning{Equation is too large}%
    \fi
    \noindent
    \rlap{\copy0}%
    \rlap{\hbox to \linewidth{\hfill\copy2\hfill}}%
    \hbox to \linewidth{\hfill\copy4}%
    \hspace{0pt}% allow linebreak
  \endgroup
  \ignorespaces
}
\makeatother

\begin{document}

\begin{frontmatter}

  %\title{Wave packet smoothness spaces}
  \title{Design and properties of wave packet smoothness spaces}

  \author{Dimitri Bytchenkoff $^{1, 2, *}$}
  \author{Felix Voigtlaender $^{1,3, *}$}

  \address{$^{1}$Technische Universit\"at Berlin, Institut f\"ur Mathematik, Stra{\ss}e des 17. Juni 136, 10623 Berlin, Germany}
  \address{$^{2}$Universit{\'e} de Lorraine, Laboratoire d'Energ{\'e}tique et de M{\'e}canique Th{\'e}orique et Appliqu{\'e}e, 2 avenue de la For{\^e}t de Haye, 54505 Vandoeuvre-l{\`e}s-Nancy, France}
  \address{$^{3}$Katholische Universität Eichstätt--Ingolstadt, Lehrstuhl für wissenschaftliches Rechnen, Ostenstraße 26, 85072 Eichstätt, Germany}

   \begin{abstract}
     We introduce a family of quasi-Banach spaces --- which we call
     wave packet smoothness spaces --- that includes those function spaces which
     can be characterised by the sparsity of their expansions in Gabor frames, wave atoms, and many other frame constructions.
     We construct Banach frames for and atomic decompositions of the
     wave packet smoothness spaces and study their embeddings in each other
     and in a few more classical function spaces such as Besov and Sobolev spaces.

\section*{Résumé}

\noindent Nous introduisons une famille d’espaces affines quasi-normés complets – que nous appellerons espaces de paquets d’ondelettes réguliers – qui incluent de nombreux espaces de fonctions caractérisés par leurs transformées, comme celle de Gabor ou en ondelettes, clairsemées. Nous construisons des cadres de Banach et des décompositions atomiques pour ces espaces et étudions leurs inclusions l’un dans l’autre ainsi que dans quelques espaces de fonctions devenus classiques tels que ceux de Sobolev ou de Besov.

   \end{abstract}

   \begin{keyword}
     Wave packets \sep
     Banach frames \sep
     atomic decompositions \sep
     embeddings \sep
     analysis and synthesis sparsity \sep
     decomposition spaces \sep
     $\alpha$-modulation spaces \sep
     Sobolev and Besov spaces
     \MSC[2010]{42B35, 42C15, 46E15, 46E35, 42C40}
   \end{keyword}

\end{frontmatter}

\nnfootnote{* Corresponding authors.
E-mail addresses: \url{dimitri.bytchenkoff@univ-lorraine.fr} (D.\@ Bytchenkoff),
\url{felix@voigtlaender.xyz} (F.\@ Voigtlaender).}

\section{Introduction}
\label{sec:Introduction}

\noindent A large number of different frame constructions are used in harmonic analysis
both in practical applications such as image denoising
\cite{NonlinearWaveletImageProcessing,ImageDenoisingOrthonormalWavelet,BytchenkoffCardinalSeriesOne,BytchenkoffCardinalSeriesTwo},
restoring truncated signals \cite{BytchenkoffCardinalSeries,BytchenkoffExtrapolationPhaseCorrection},
edge detection \cite{ReisenhoferShearletFlame,ShearletClassificationOfEdges}
and compressed sensing \cite{HansenQuestForOptimalSampling}
and in pure mathematics for characterising
function spaces in terms of the frame coefficients
\cite{UllrichNewCharacterizations,AlphaShearletSparsity,
GroechenigTimeFrequencyAnalysis,FrazierJawerthDiscreteTransform,
TriebelTheoryOfFunctionSpaces3},
studying the boundedness of operators
\cite{GroechenigPsiDOOverview,MeyerWaveletsVolume1,MeyerWaveletsVolume2,KatoPseudoDifferentialOperatorsOnAlphaModulation}
or for characterising the wave front set of distributions
\cite{FellFuehrWaveFront,ShearletWavefront,CurveletWavefront}.
The most important of these frame constructions are wavelets \cite{DaubechiesTenLectures},
Gabor frames \cite{GroechenigTimeFrequencyAnalysis,GroechenigGaborFramesReview},
shearlets \cite{Kittipoom,GittaOptimallySparse},
curvelets \cite{CandesCurveletSecondGeneration},
ridgelets \cite{DonohoRidgelets,GrohsRidgelet}
and wave atoms \cite{DemanetWaveAtoms}.

All of these frames are constructed by applying dilations, modulations and translations
to a finite set of prototype functions.
Since the seminal work of Cordoba and Fefferman \cite{FeffermanWavePackets} --- who
used such systems to study the mapping properties of pseudo-differential operators --- it has become
customary to refer to such systems as \emph{wave packet systems}.
While the first papers mainly considered wave packet systems with continuous index sets
\cite{FeffermanWavePackets,NazaretInterpolationFamily}, nowadays the focus lies on
\emph{discrete} wave packet systems, which are special generalised shift invariant systems
\cite{HernandezLabateWeiss,LemvigVelthovenCriteriaForGSISystems,FuehrSystemBandwidth,
VelthovenLICForGSISystems}.
As particular highlights of the theory of wave packets, we mention the characterisation
of the Parseval property \cite{OversamplingQuasiAffineFramesWavePackets,ApproachToWavePacketSystems}
for such systems and the use of Gaussian wave packets for approximating solutions
of the homogeneous wave equation \cite{RomeroMultiscaleGaussianBeam}.

In the present paper, we will concentrate on the case of functions on $\R^2$
and consider the class of \emph{$(\alpha,\beta)$ wave packet systems} as introduced in
\cite{DemanetWaveAtoms}.
%Such a wave packet system is characterised by parameters
%$\alpha, \beta \in [0,1]$, where $\alpha$ describes the
Here, the parameter $\alpha \in [0,1]$ describes the \emph{frequency bandwidth relationship}
of the system, while $\beta \in [0,1]$ describes its \emph{directional selectivity}.

More precisely, if $(\psi_i)_{i \in I}$ is a system of $(\alpha,\beta)$ wave packets
and if $\psi_i$ is concentrated at frequency $\xi \in \R^2$,
then the bandwidth of $\psi_i$ is approximately $(1 + |\xi|)^{\alpha}$.
The parameter $\alpha$ describes how multi-scale the system is.
For instance, for Gabor systems, the bandwidth of the
frame elements is independent of the frequency ($\alpha = 0$), while for
wavelets, the bandwidth is proportional to the frequency ($\alpha=1$).

The parameter $\beta$ determines how many different directions the
wave packet system can distinguish at each frequency scale; that is,
on the dyadic frequency ring $\{\xi \in \R^2 \,:\, |\xi| \asymp 2^j \}$,
an $(\alpha,\beta)$-wave packet system distinguishes approximately
$2^{(1-\beta) j}$ different directions.
For instance, wavelets are directionally insensitive
($\beta = 1$), while Gabor frames have high directional sensitivity
($\beta = 0$).
Figure~\ref{fig:WavePacketParametrization} shows how wave packet systems,
including their most important examples, relate to $\alpha$ and $\beta$.

\begin{figure}[h]
  \vspace{0.4cm}
  \begin{center}
    \includegraphics[width=0.7\textwidth]{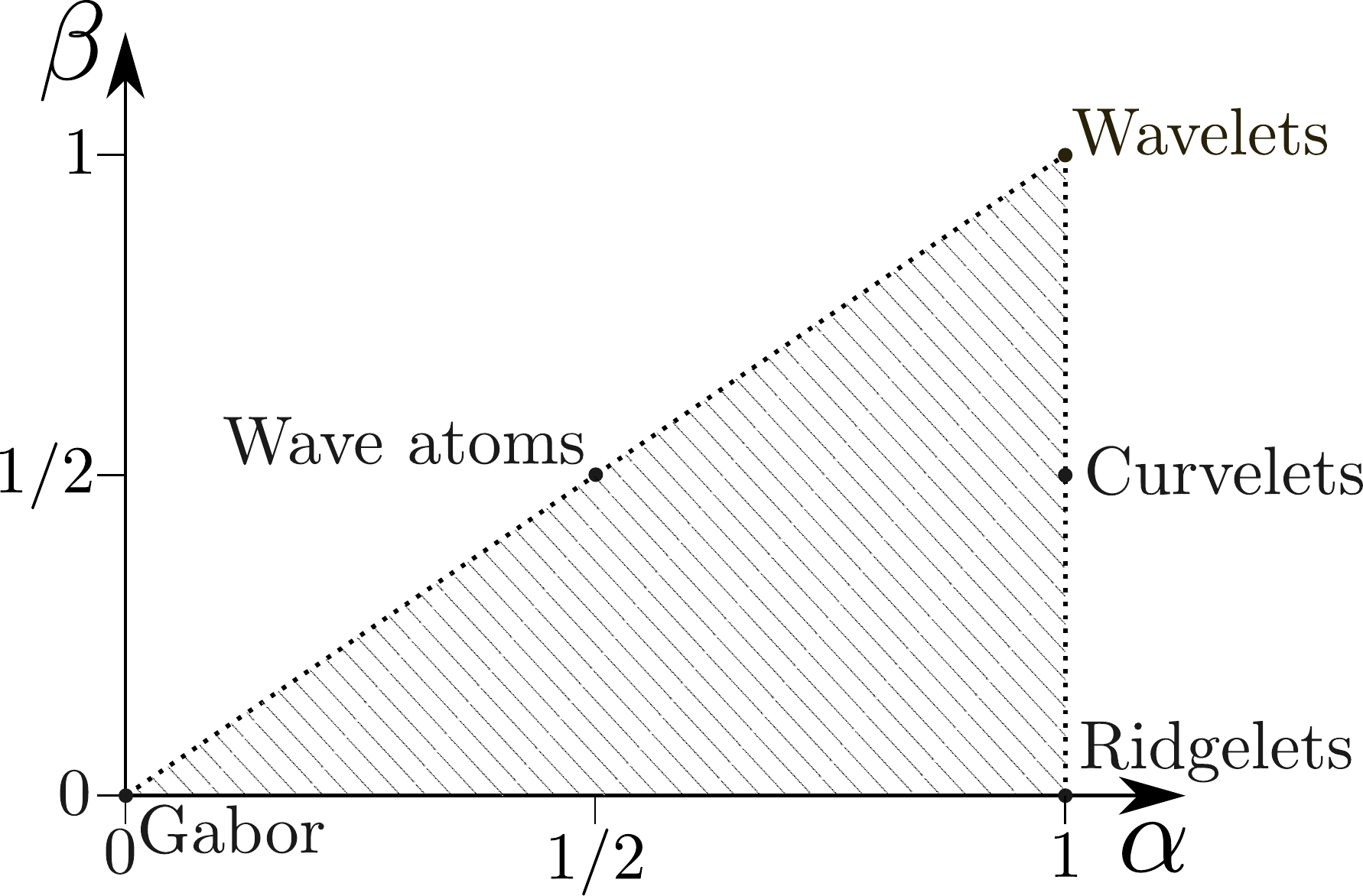}
  \end{center}
  \caption{\label{fig:WavePacketParametrization}
           Parametrisation of $(\alpha,\beta)$ wave packet systems
           including their most important special cases.
           In this work we focus on the regime where $0  \leq \beta \leq \alpha \leq 1$
           (hatched in the figure).}
\end{figure}

\paragraph{Our contribution}
In this work, we provide a rigorous mathematical framework for studying the
properties of $(\alpha,\beta)$ wave packet systems.
Specifically, for given $0 \leq \beta \leq \alpha \leq 1$, we define
a family of \emph{wave packet smoothness spaces}
$\PacketSpace_s^{p,q}(\alpha,\beta)$, parametrised by the integrability
exponents $p,q \in (0,\infty]$ and the smoothness parameter $s \in \R$,
and investigate properties of these spaces.

One of our main results is that if $\mathbb{W}(\alpha,\beta) = (\psi_i)_{i \in I}$
%Our main result states that if $\mathbb{W}(\alpha,\beta) = (\psi_i)_{i \in I}$
is a sufficiently regular frame of $(\alpha,\beta)$-wave packets, then
$\mathbb{W}(\alpha,\beta)$ constitutes a \emph{Banach frame} and an
\emph{atomic decomposition} for a whole family of wave packet smoothness spaces.
We would like to emphasise that the wave packet system is \emph{not} required
to be band-limited; on the contrary, we show that if the generators of the wave
packet system are \emph{compactly supported} and smooth enough, then the
resulting wave packet system will constitute a Banach frame and an atomic
decomposition for a family of wave packet smoothness spaces provided that the
sampling density of the wave packet system is fine enough.

In a nutshell, this means that \emph{the wave packet smoothness space is
characterised by the decay of the frame coefficients} with respect to the wave packet system.
More precisely, there is an explicitly given \emph{coefficient space}
$\mathcal{C}_s^{p,q}$ such that
\begin{equation}
  \PacketSpace_s^{p,q} (\alpha,\beta)
  = \big\{
      f
      \,:\,
      (\langle f \mid \psi_i \rangle_{L^2})_{i \in I} \in \mathcal{C}_s^{p,q}
    \big\}
  = \Big\{
      f = \sum_{i \in I} c_i \, \psi_i
      \,:\,
      c = (c_i)_{i \in I} \in \mathcal{C}_s^{p,q}
    \Big\} \, .
  \label{eq:IntroductionWavePacketSpaceFrameCharacterization}
\end{equation}
Moreover, a function $f \in \PacketSpace_s^{p,q}(\alpha,\beta)$ can be
continuously reconstructed from its \emph{analysis coefficients}
$(\langle f \mid \psi_i \rangle_{L^2})_{i \in I}$, and the
\emph{synthesis coefficients} $c(f) = (c_i)_{i \in I} \in \mathcal{C}_s^{p,q}$
satisfying $f = \sum_{i \in I} c_i \, \psi_i$ can be chosen to depend
linearly and continuously on $f$.

In a less technical terminology, the
identity~\eqref{eq:IntroductionWavePacketSpaceFrameCharacterization} means that
\emph{analysis} and \emph{synthesis sparsity} are equivalent for sufficiently
regular wave packet systems, where sparsity is quantified by the
coefficient space $\mathcal{C}_s^{p,q}$.
We note that $\mathcal{C}_{s}^{p,p} = \ell^p$ for a suitable choice of
$s = s(p,\alpha,\beta)$.
For non-tight frames, this equivalence between analysis- and synthesis sparsity
is nontrivial, but often useful.
For instance, it is usually relatively simple to verify that a certain class of
functions has sparse --- or quickly decaying --- analysis coefficients, which
amounts to estimating the inner products $\langle f \mid \psi_i \rangle_{L^2}$.
In contrast, it can be quite difficult to construct coefficients
$c = (c_i)_{i \in I}$ such that $f = \sum_{i \in I} c_i \, \psi_i$, even without
requiring that the sequence $c$ has good decay properties.
For applications in approximation theory or for studying the boundedness
properties of operators, however, it is usually much more useful to know that
$f = \sum_{i \in I} c_i \, \psi_i$ with sparse coefficients $c$,
rather than that the analysis coefficients of $f$ are sparse.

\smallskip{}

The second of our main findings are several useful results
concerning \emph{embeddings} of the wave packet smoothness spaces.
First, we study the existence of embeddings
\begin{equation}
  \PacketSpace_{s_1}^{p_1, q_1} (\alpha, \beta) \hookrightarrow
  \PacketSpace_{s_2}^{p_2, q_2}(\alpha', \beta')
  \label{eq:IntroductionWavePacketEmbedding}
\end{equation}
between wave packet spaces with different parameters.
Given~\eqref{eq:IntroductionWavePacketSpaceFrameCharacterization},
this amounts to asking whether sparsity of a function $f$ with
respect to an $(\alpha,\beta)$ wave packet system implies some, possibly worse,
sparsity with respect to an $(\alpha',\beta')$ wave packet system.
If $\beta \leq \beta'$ and $\alpha \leq \alpha'$ or if $\beta' \leq \beta$
and $\alpha' \leq \alpha$, we can \emph{completely characterise} the
existence of the embedding~\eqref{eq:IntroductionWavePacketEmbedding}.
Furthermore, we show that distinct parameter choices yield distinct wave packet smoothness spaces;
that is,
${\PacketSpace_{s_1}^{p_1, q_1}(\alpha, \beta) \neq \PacketSpace_{s_2}^{p_2, q_2}(\alpha', \beta')}$
unless $(p_1, q_1, s_1, \alpha, \beta) = (p_2, q_2, s_2, \alpha', \beta')$ or
$(p_1, q_1) = (2, 2) = (p_2, q_2)$ and $s_1 = s_2$.

Finally, we also consider embeddings between wave packet smoothness spaces
on the one hand and Besov- or Sobolev spaces on the other hand.
For the case of Besov spaces, we again obtain a complete characterisation
of the existence of the embeddings
$\PacketSpace_{s_1}^{p_1, q_1}(\alpha,\beta) \hookrightarrow B^{s_2}_{p_2,q_2}(\R^2)$
and of the reverse embedding; as a corollary, we show that
$B^s_{p,q}(\R^2) = \PacketSpace_s^{p,q}(1,1)$.
For the case of Sobolev spaces, we can completely characterise
the existence of the embedding
$\PacketSpace_s^{p,q} (\alpha,\beta) \hookrightarrow W^{k,r}(\R^2)$ for
$r \in [1,2] \cup \{\infty\}$.
For $r \in (2,\infty)$ we establish certain necessary and certain sufficient
conditions for the existence of the embedding, which are not equivalent.
%there is a gap between the necessary and the sufficient conditions.

In particular, we show that if $s \geq k + c(p)$, then
$\PacketSpace_s^{p,q}(\alpha,\beta) \hookrightarrow C_b^k (\R^2)$, so that
all functions in the wave packet smoothness space are $k$-times continuously differentiable.
This is one of the reasons for calling the spaces $\PacketSpace_s^{p,q}$
\emph{smoothness spaces}.

%\paragraph{The restriction to parameters $0 \leq \beta \leq \alpha < 1$}

\medskip{}

One can in principle define
wave packet systems for arbitrary $\alpha,\beta \in [0,1]$.
In this work, however, we restrict ourselves to the case where
$0 \leq \beta \leq \alpha \leq 1$ for defining the wave packet smoothness spaces
and to $0 \leq \beta \leq \alpha < 1$ for constructing Banach frames and atomic decompositions
for these spaces.
This restriction $\alpha < 1$ is mainly done for convenience,
since the case $\alpha = 1$ was already explored in \cite{AlphaShearletSparsity},
which studies $\alpha$-shearlet systems
and the associated smoothness spaces for $\alpha \in [0,1]$.
In our terminology, $\alpha$-shearlets are $(1,\alpha)$ wave packets.

In contrast --- at least with our construction of the wave packet smoothness spaces --- the restriction
$\beta \leq \alpha$ seems to be unavoidable.
Precisely, we define the wave packet spaces as \emph{decomposition spaces}
\cite{DecompositionSpaces1} with respect to a certain covering of the frequency space,
which we call the $(\alpha,\beta)$ wave packet covering.
To show that this construction yields well-defined spaces, the wave packet covering
needs to satisfy a bounded overlap property; for this, the assumption $\beta \leq \alpha$
seems to be essential.
Finally, we are unaware of any frame construction that results in
$(\alpha,\beta)$ wave packets with $\beta > \alpha$; as seen in
Figure~\ref{fig:WavePacketParametrization}, all commonly used
frame constructions fall into the regime $\beta \leq \alpha$.

%\todo[inline,caption={}]{Briefly discuss related work here, e.g.~stuff on
%\begin{itemize}
%  \item Wave packets (the main reference I know is~\cite{DemanetWaveAtoms}),
%
%  \item Special cases of wave packets, e.g.~ the Banach frames associated to
%        ($\alpha$)-modulation spaces
%        \cite{BorupNielsenAlphaModulationSpaces,
%        FornasierFramesForAlphaModulation,SpeckbacherAlphaModulationTransform}
%        (which are called ``$\alpha$-Gabor-wavelet frames'' by Fornasier),
%        wavelets \cite{DaubechiesTenLectures},
%        curvelets \cite{CandesCurveletSecondGeneration},
%        shearlets \cite{Labate_et_al_Shearlet,shearlet_book},
%        wave atoms \cite{DemanetWaveAtoms}.
%
%  \item Decomposition spaces and associated Banach frames/atomic decompositions,
%        e.g.~\cite{DecompositionSpaces1,BorupNielsenDecomposition,
%        StructuredBanachFrames1}.
%\end{itemize}}

\paragraph{Structure of the paper}

To define the wave packet smoothness spaces
$\PacketSpace_s^{p,q}(\alpha,\beta)$, we shall use the formalism of
\emph{decomposition spaces},
originally introduced in \cite{DecompositionSpaces1}.
%The main ingredient for defining such a decomposition space
In order to define such a decomposition space
$\DecompSp(\CalQ, L^p, \ell_w^q)$, one needs a \emph{covering}
$\CalQ = (Q_i)_{i \in I}$ of the frequency domain
which has to satisfy certain regularity criteria;
namely, it has to be \emph{admissible} and, preferably,
\emph{almost-structured}.

In Section~\ref{sec:DecompositionSpaces}, we recall those parts of the existing
theory of decomposition spaces that are essential for our work.
In particular, we recapitulate the notions of admissible and almost-structured
coverings, the existing theory concerning the existence of embeddings between different
decomposition spaces, and the recent theory of structured Banach frame
decompositions of decomposition spaces.

In Section~\ref{sec:CoveringDefinition}, we introduce the
\emph{wave packet coverings} $\CalQ^{(\alpha,\beta)}$ that we shall use
to define the wave packet smoothness spaces and verify that
$\CalQ^{(\alpha,\beta)}$ indeed covers the whole frequency plane.
That the covering $\CalQ^{(\alpha,\beta)}$ is admissible and almost-structured
is shown in Sections~\ref{sec:Admissibility} and \ref{sec:Structuredness},
respectively.

The wave packet smoothness spaces will be defined in
Section~\ref{sec:WavePacketSpaces}, where we also study many of their
properties.
First, we show that they are indeed well-defined quasi-Banach spaces.
Second, we study the existence of embeddings
\(
  \PacketSpace_{s_1}^{p_1,q_1}(\alpha,\beta)
  \hookrightarrow \PacketSpace_{s_2}^{p_2,q_2}(\alpha',\beta')
\)
between
wave packet smoothness spaces with different parameters
and show that distinct parameters yield distinct spaces.
Third, we will completely characterise the existence of embeddings
between inhomogeneous Besov spaces and wave packet smoothness spaces.
Fourth, we study the conditions under which the wave packet smoothness spaces
embed into the classical Sobolev spaces $W^{k,p}(\R^2)$.
For the range $p \in [1,2] \cup \{\infty\}$ we characterise these conditions
completely.
Finally, we show that the $(\alpha,\alpha)$ wave packet smoothness spaces
are identical to $\alpha$-modulation spaces.

Since our construction of the covering $\CalQ^{(\alpha,\beta)}$
involves some non-canonical choice of parameters,
the spaces $\PacketSpace_s^{p,q}(\alpha,\beta)$ might appear rather esoteric.
In Section~\ref{sec:Universality}, we show that this is not the case.
Precisely, we introduce the \emph{natural} class of $(\alpha,\beta)$ coverings,
and show that any two $(\alpha,\beta)$ coverings give rise to the same family
of decomposition spaces. We also verify that $\CalQ^{(\alpha,\beta)}$
is indeed an $(\alpha,\beta)$ covering.
This shows that the wave packet smoothness spaces are natural objects,
and it allows us to show that the wave packet smoothness spaces are invariant
under dilation with respect to arbitrary invertible matrices
(see Section~\ref{sec:DilationInvariance}).

Finally, in Section~\ref{sec:SeriesConvergence}, we formally define the
notion of \emph{$(\alpha,\beta)$ wave packet systems}.
We then show that the wave packet smoothness spaces can be described using
the decay of the analysis or synthesis coefficients with respect to
such a system. More formally, we show that if the generators of the
wave packet system are sufficiently smooth and decay fast enough,
then the associated wave packet system constitutes a Banach frame and an atomic
decomposition for a whole range of wave packet smoothness spaces provided
that the sampling density is fine enough.

The proofs of some particularly lengthy auxiliary statements were transferred
to Appendices~\ref{sec:MainLemmaProof} -- \ref{sec:IntervalInclusionProofs}.
All general mathematical notions used in this manuscript
are summarised in Appendix~\ref{sec:Notation}.

\section{Decomposition spaces and their relation to frames and sparsity}
\label{sec:DecompositionSpaces}

\noindent Decomposition spaces, originally introduced by Feichtinger and Gröbner
\cite{DecompositionSpaces1}, provide a unified generalisation of modulation
and Besov spaces and were used to introduce the $\alpha$-modulation
spaces \cite{GroebnerAlphaModulationSpaces}, which have recently received
great attention \cite{KatoAlphaModulationSobolev,
SpeckbacherAlphaModulationTransform,GuoAlphaModulationEmbeddingCharacterization,
BorupNielsenAlphaModulationSpaces,FornasierFramesForAlphaModulation,
AlphaModulationNotInterpolation,HanWangAlphaModulationEmbeddings,
QuotientCoorbitTheoryAndAlphaModulationSpaces}.

The essential element of the decomposition space
$\DecompSp(\CalQ,L^p,\ell_w^q)$ is the covering $\CalQ = (Q_i)_{i \in I}$
of the frequency domain $\R^d$. Given this covering, the Fourier transform
$\widehat{g}$ of a given function $g$ can be decomposed into the components
$\varphi_i \cdot \widehat{g}$, where $(\varphi_i)_{i \in I}$ is a suitable
partition of unity subordinate to $\CalQ$. The Fourier inverse
$g_i := \Fourier^{-1} (\varphi_i \cdot \widehat{g})$ of the components
$\varphi_i \cdot \widehat{g}$ are frequency-localised components
of the function $g$. The contribution of each of these components $g_i$ to the
decomposition space quasi-norm of $g$ is measured by the $L^p$-norm, in the
time domain, and the total quasi-norm of $g$ is obtained by using the weighted
$\ell^q$ space $\ell_w^q$ as follows:
\begin{equation}
   \| g \|_{\DecompSp (\CalQ,L^p,\ell_w^q)}
  = \left\| \left( \| g_i \|_{L^p} \right)_{i \in I} \right\|_{\ell_w^q}
  = \left\|
      \left(
        w_i \cdot
        \| \Fourier^{-1}(\varphi_i \cdot \widehat{g}) \|_{L^p}
      \right)_{i \in I}
    \right\|_{\ell^q} \, .
  \label{eq:DecompositionSpaceIntroductionNormDefinition}
\end{equation}
To ensure that the decomposition space $\DecompSp(\CalQ,L^p,\ell_w^q)$
is indeed a well-defined quasi-Banach space, certain conditions must be imposed
on the covering $\CalQ$, the partition of unity $(\varphi_i)_{i \in I}$
and the weight $w = (w_i)_{i \in I}$.
These conditions and elementary properties of the decomposition spaces
$\DecompSp(\CalQ,L^p,\ell_w^q)$ will be reminded in
Section~\ref{sub:DecompositionDefinition}.

\medskip{}

An attractive feature of decomposition spaces is the recently developed theory
of \emph{structured Banach frame decompositions of decomposition spaces}
\cite{StructuredBanachFrames1}, which shows that there is a
close connexion between the existence of a sparse expansion of a given function
$f$ in terms of a given frame, and the membership of $f$ in a certain
decomposition space, which depends on the frame under consideration.

This theory will be formally introduced in Section~\ref{sub:BFD}; here, we
outline the underlying intuition.
%decomposition spaces and sparsity properties of frames:
We first note that most frame constructions used in harmonic analysis
have two crucial properties:
\begin{itemize}
  \item The frame is a \emph{generalised shift-invariant system}
        (see \cite{HernandezLabateWeiss,VelthovenLICForGSISystems,ChristensenConstructionAndPropertiesOfGSISystems,
        FuehrSystemBandwidth,LemvigVelthovenCriteriaForGSISystems} for more about these systems),
        i.e., it is of the form $\Psi = (L_x \, \psi_j)_{j\in I, x\in\Gamma_j}$ for
        suitable generators $(\psi_j)_{j \in I}$ and certain lattices
        $\Gamma_j = \delta B_j \Z^d$ where the matrices $B_j \in \GL (\R^d)$
        are determined by the frame construction
        and $\delta > 0$ globally stands for the sampling density.
        %and where $1/\delta$ determines the sampling rate.

        The matrices $B_j$ determine the \emph{relative step size} of the
        translations that are applied to each of the generators $\psi_j$.
        For example, in a Gabor system, $B_j = \identity$
        for all $j \in I$, while in a wavelet system,
        $B_j = 2^{-j} \cdot \identity$, so that the wavelets on higher scales
        have a much smaller step size than the wavelets on lower scales.

  \item The generators $\psi_j$ of the GSI system $\Psi$ have a characteristic
        \emph{frequency concentration}.
        For instance, in a Gabor frame,
        $I = \Z^d$ and $\psi_j (x) = e^{2\pi i \langle j, x \rangle} \cdot\psi(x)$ where $\psi$ is a given window function .
        Hence, if $\widehat{\psi}$ is concentrated in a subset $Q$ of the
        frequency domain $\R^d$, then $\widehat{\psi}_j$ is concentrated in
        $Q_j = Q + j$; the frequency tiling associated with a Gabor frame is thus
        uniform.
        Similarly, the frequency tiling associated with a wavelet system is dyadic.

        For most frame constructions, the Fourier transforms $\widehat{\psi}_j$
        of the generators $\psi_j$ are concentrated in subsets of the
        frequency domain of the form $Q_j = T_j Q + b_j$.
        Here, $Q \subset \R^d$ is a fixed bounded set, $b_j \in \R^d$ and $T_j = B_j^{-t}$
        where $\psi_j$ is, as mentioned above, translated along the lattice
        $\Gamma_j = \delta B_j \Z^d$.
\end{itemize}

Given such a system $\Psi = (L_x \, \psi_j)_{j \in I, x \in \Gamma_j}$,
the theory of structured Banach frame decompositions \cite{StructuredBanachFrames1} provides
verifiable conditions on the generators $\psi_j$ so that if these
conditions are satisfied and if the sampling density $\delta > 0$ is fine
%conditions are satisfied and if the sampling rate $1/\delta$ is large
enough, then $\Psi$ forms a \emph{Banach frame} and an \emph{atomic decomposition} for
the decomposition spaces $\DecompSp (\CalQ,L^p,\ell_w^q)$, where $\CalQ = (Q_j)_{j \in I}$.

In Subsection~\ref{sub:BFD}, we shall discuss in detail what these two notions
mean and what kind of conditions the generators have to satisfy.
Here, we merely note that for $p=q \in (0,2]$ and a suitable choice of the
weight $w = w(p)$, there is the following equivalence for any
$f \in L^2 (\R^d)$:
\begin{equation}
  \begin{split}
  f \in \DecompSp(\CalQ,L^p,\ell_w^p)
  & \quad \Longleftrightarrow \quad
    \big(
      \langle f \,\mid\, L_x \, \psi_j \rangle_{L^2}
    \big)_{j \in I, x \in \Gamma_j} \in \ell^p \\
  & \quad \Longleftrightarrow \quad
    \exists \, (c_{j,x})_{j \in I, x \in \Gamma_j} \in \ell^p :
      \,\, f = \smash{\sum_{j \in I, x \in \Gamma_j}}\vphantom{\sum_i}
                  \big( c_{j,x} \cdot L_x \, \psi_j \big) \, .
  \end{split}
  \label{eq:DecompositionSparsityCharacterizationGeneral}
\end{equation}
In other words, a function $f$ is sparse with respect to the frame $\Psi$ if and
only if $f$ belongs to the decomposition space $\DecompSp(\CalQ,L^p,\ell_w^p)$.

We mention that the theory of structured Banach frames is related to the findings
in \cite{NielsenRasmussenCompaclySupportedFrames}.
In that paper, Nielsen and Rasmussen establish the existence of compactly supported
Banach frames for certain decomposition spaces.
The main difference between these results and those in \cite{StructuredBanachFrames1}
is that the theory of structured Banach frame decompositions does not just establish the
\emph{existence} of Banach frames; rather, it allows to verify whether a \emph{given}
set of prototype functions generates a Banach frame or an atomic decomposition.
Moreover, the theory of structured Banach frames applies to more general coverings $\CalQ$
than those considered in \cite{NielsenRasmussenCompaclySupportedFrames}.
%%FIXME: Add a description containing the words ``analysis sparsity'' and
%%``synthesis sparsity'' here!

%In the last subsection, we then discuss the theory of embeddings
%for decomposition spaces. In view of the preceding considerations,
%these embeddings will allow us to determine in which cases sparsity
%of a function $f$ with respect to one frame construction entails some,
%albeit worse, sparsity of $f$ with respect to another frame construction.

\medskip{}

Finally, as we shall be interested in a whole family of wave packet systems,
parametrised by $0 \leq \beta \leq \alpha < 1$, it is sensible to ask oneself whether
sparsity of a function $f$ in a given wave packet system
$\BoldW_{\alpha_1,\beta_1}(\psi_1;\delta_1)$ entails
some, albeit worse, sparsity in another wave packet system
$\BoldW_{\alpha_2,\beta_2}(\psi_2;\delta_2)\vphantom{\sum_j}$.
Given \eqref{eq:DecompositionSparsityCharacterizationGeneral},
this is equivalent to the question of whether
$\DecompSp\big(\CalQ^{(\alpha_1,\beta_1)},L^{p_1},\ell_{w_1}^{p_1}\big)$ is a subset
of $\DecompSp\big(\CalQ^{(\alpha_2,\beta_2)}, L^{p_2}, \ell_{w_2}^{p_2}\big)$
where the frequency covering $\CalQ^{(\alpha,\beta)}$ is the one
associated with $(\alpha,\beta)$-wave packet systems.
In many cases this question can be answered using the recently developed theory
of embeddings for decomposition spaces
\cite{DecompositionEmbeddings,VoigtlaenderPhDThesis},
which we shall briefly present in Subsection \ref{sub:DecompositionEmbedding}.

\subsection{Definition of decomposition spaces}
\label{sub:DecompositionDefinition}

\noindent As explained after \eqref{eq:DecompositionSpaceIntroductionNormDefinition},
one needs to impose certain conditions on the covering $\CalQ$, the weight $w$,
and the partition of unity $(\varphi_i)_{i \in I}$ in order to obtain
well-defined decomposition spaces.
Precisely, the covering $\CalQ$ should be \textbf{almost structured},
the partition of unity $(\varphi_i)_{i \in I}$ should be \textbf{regular},
and the weight $(w_i)_{i \in I}$ should be \textbf{$\CalQ$-moderate}.
%\begin{enumerate}[label={(\arabic*)}]
% \item the covering $\CalQ$ should be \textbf{almost structured};
%
% \item the partition of unity $(\varphi_i)_{i\in I}$ should be \textbf{regular};
%
% \item the weight $(w_i)_{i \in I}$ should be \textbf{$\CalQ$-moderate}.
%\end{enumerate}
Let us now give the definitions of these notions:
%These conditions will be discussed now.
%Of course, one has to impose certain restrictions on the covering
%$\CalQ = (Q_i)_{i \in I}$, associated partition of unity
%$(\varphi_i)_{i \in I}$, and the weight $w = (w_i)_{i \in I}$ in order to
%obtain well-defined (quasi)-Banach spaces.
%These conditions will be discussed now.
%These assumptions will be discussed
%in Subsection \ref{sub:DecompositionDefinition}.
%
%As we saw above,
%The most important ingredient for defining a decomposition
%space $\DecompSp(\CalQ,L^p,\ell_w^q)$ is the frequency covering
%$\CalQ = (Q_i)_{i \in I}$. In order to obtain well-defined decomposition spaces,
%we will always make sure that $\CalQ$ is an almost structured covering of
%$\R^d$, a notion that we now define:
%
%We begin with the conditions on the covering $\CalQ$.

\begin{defn}\label{def:AlmostStructuredCovering}%
  (Definition~2.5 in \cite{DecompositionEmbeddings};
  inspired by \cite{BorupNielsenDecomposition})

\noindent A family $\CalQ = (Q_i)_{i \in I}$ is called an \textbf{almost structured
  covering} of $\R^d$, if there is an \textbf{associated family}
  $(T_i \bullet + b_i)_{i \in I}$ of invertible affine-linear maps
  %a family $(T_i,b_i)_{i \in I}$ (which is then
  %called a \textbf{parametrization} of $\CalQ$) of invertible matrices
  %$T_i \in \GL(\R^d)$ and of shifts $b_i \in \R^d$
  such that the following properties hold:
  \begin{enumerate}
    \item $\CalQ$ is \textbf{admissible}; that is, the sets
          %This means if we define the index
          %cluster $i^\ast$ for $i \in I$ as
          \begin{equation}
            i^\ast := \left\{
                        \ell \in I
                        \with
                        Q_\ell \cap Q_i \neq \emptyset
                      \right\}
            \quad \text{for} \quad i \in I
            \label{eq:IndexClusterDefinition}
          \end{equation}
          have uniformly bounded cardinality.
          %then there is some $N \in \N$ with $|i^\ast| \leq N$ for all
          %$i \in I$.

    \item There is $C > 0$ such that $\|T_i^{-1}T_\ell\| \leq C$ for all
          $i, \ell \in I$ for which $Q_i \cap Q_\ell \neq \emptyset$.

    \item There are $n \in \N$ and open, non-empty, bounded sets
          $Q_1^{(0)}, \dots, Q_n^{(0)}, P_1, \dots, P_n \subset \R^d$ such that
          %There are families $(Q_i^{\natural})_{i \in I}$ and $(P_i)_{i \in I}$
          %of non-empty, open, bounded sets $Q_i^{\natural}, P_i \subset \R^d$
          %such that
          \begin{itemize}
            \item for each $i \in I$ there is some $k_i \in \{1,\dots,n\}$ such that
                  $Q_i = T_i Q_{k_i}^{(0)} + b_i$;
                  %$Q_i = T_i \, Q_i^{\natural} + b_i$ for all $i \in I$;

            \item $P_k$ is compactly contained in $Q_k^{(0)}$; that is,
                  %$P_i$ is compactly contained in $Q_i^{\natural}$, that is,
                  %$\overline{P_i} \subset Q_i^{\natural}$ for all $i \in I$;
                  $\overline{P_k} \subset Q_k^{(0)}$ for all $k \in \{ 1,\dots,n \}$;

            %\item the sets $\big\{ Q_i^{\natural} \with i \in I \big\}$ and
            %      $\left\{ P_i \with i \in I \right\}$ are finite; and

            \item $\R^d = \bigcup_{i \in I} (T_i P_{k_i} + b_i)$.
          \end{itemize}
  \end{enumerate}
  %If it is possible to choose $Q_i^{\natural} = Q$ for all $i \in I$, then
  If it is possible to choose $n = 1$, then the covering becomes \textbf{structured},
  as it was defined in \cite{BorupNielsenDecomposition}.
\end{defn}

%\begin{rem*}
  %For a \textbf{structured covering} it is additionally assumed that
  %$Q_i' = Q$ for all $i \in I$. This notion was originally introduced
  %in \cite{BorupNielsenDecomposition}, based on which the notion of
  %almost structured coverings was developed in
  %\cite{DecompositionIntoSobolev,DecompositionEmbeddings}.
%\end{rem*}

%Having clarified the types of coverings that we consider, we now
%define the partitions of unity $(\varphi_i)_{i \in I}$ that we will use.

\begin{defn}\label{def:RegularPartitionOfUnity}%
  (Definition~2.4 in \cite{DecompositionIntoSobolev};
  inspired by \cite{BorupNielsenDecomposition})

\noindent Let $\CalQ = (Q_i)_{i \in I}$ be an almost structured covering of $\R^d$
  with associated family $(T_i \bullet + b_i)_{i \in I}$.
  A family of functions $\Phi = (\varphi_i)_{i \in I}$ is called a
  \textbf{regular partition of unity subordinate to $\CalQ$},
  if
  %it satisfies the following:
  \begin{enumerate}
    \item $\varphi_i \in C_c^\infty (\R^d)$ with
          $\supp \varphi_i \subset Q_i$ for all $i \in I$;

    \item $\sum_{i \in I} \varphi_i \equiv 1$ on $\R^d$; and

    \item $\sup_{i \in I}
               \| \, \partial^\alpha \varphi_i^\natural \, \|_{L^\infty}
            < \infty$ for all $\alpha \in \N_0^d$, where
            $\varphi_i^\natural :\R^d\to\CC,\xi\mapsto\varphi_i(T_i \,\xi+b_i)$.
  \end{enumerate}
\end{defn}

\begin{rem*}
  The classical definition of decomposition spaces in \cite{DecompositionSpaces1}
  uses so-called BAPUs (bounded admissible partitions of unity) to define the decomposition spaces.
  The notion of regular partitions of unity is a modification of the concept of a BAPU and
  is necessary for handling the spaces $L^p$ for $p \in (0,1)$, which are not considered
  in \cite{DecompositionSpaces1}.
\end{rem*}

\begin{defn}\label{def:ModerateWeight}
  (Definition~3.1 in \cite{DecompositionSpaces1})

\noindent Let $\CalQ = (Q_i)_{i \in I}$ be an almost structured covering of $\R^d$.
  A \textbf{weight} on $I$ is a sequence $w = (w_i)_{i \in I}$
  where $w_i \in (0,\infty)$ for all $i \in I$. Such a weight is called
  %A weight $w = (w_i)_{i \in I}$ (i.e., $w_i \in (0,\infty)$ for $i \in I$)
  \textbf{$\CalQ$-moderate}, if there is $C > 0$
  such that $w_i \leq C \cdot w_\ell$ for all $i,\ell \in I$ for which
  $Q_i \cap Q_\ell \neq \emptyset$.
  We write $C_{\CalQ,w}$ for the smallest constant $C$ for which this holds;
  that is, $C_{\CalQ,w} = \sup_{i \in I} \sup_{\ell \in i^\ast} w_i / w_\ell$.
\end{defn}

The following theorem ensures that, given an almost structured covering,
%One of the main reasons for introducing the notion of almost structured
%coverings is that for these,
one can always find an associated regular partition of unity:
\begin{thm}\label{thm:RegularPartitionOfUnityExistence}%
  (Theorem~2.8 in \cite{DecompositionIntoSobolev};
  inspired by Proposition~1 in \cite{BorupNielsenDecomposition})

\noindent Let $\CalQ = (Q_i)_{i \in I}$ be an almost structured covering of $\R^d$.
  Then the index set $I$ is countably infinite and there exists a regular
  partition of unity subordinate to $\CalQ$.
\end{thm}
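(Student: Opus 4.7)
The plan is to handle the three claims (countability, infinity, existence of the partition of unity) separately, with the bulk of the work going into verifying condition~(3) of Definition~\ref{def:RegularPartitionOfUnity}.

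\textbf{Countability and infinity of $I$.}  For infinity, I would argue by contradiction: if $I$ were finite, then $\R^d = \bigcup_{i \in I} (T_i P_{k_i} + b_i)$ would be a finite union of bounded sets, which is absurd. For countability, I would exploit admissibility: by definition, each point $x \in \R^d$ lies in at most $N := \sup_{i \in I} |i^{\ast}|$ of the sets $Q_i$, since any $Q_\ell \ni x$ must intersect any fixed $Q_i \ni x$ and hence belong to $i^{\ast}$. Since each open set $T_i P_{k_i} + b_i \subset Q_i$ is nonempty, it contains a point with rational coordinates; if $I$ were uncountable, some rational point would lie in infinitely many of the $Q_i$, contradicting the multiplicity bound.

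\textbf{Construction of $(\varphi_i)_{i \in I}$.}  For each $k \in \{1,\dots,n\}$, fix once and for all a function $\gamma_k \in C_c^\infty(\R^d)$ with $\supp \gamma_k \subset Q_k^{(0)}$, $0 \leq \gamma_k \leq 1$, and $\gamma_k \equiv 1$ on the compact set $\overline{P_k}$. Define
\begin{equation*}
  \gamma_i(x) := \gamma_{k_i}\bigl(T_i^{-1}(x - b_i)\bigr),
  \qquad
  \varphi_i(x) := \gamma_i(x) \Big/ \sum_{\ell \in I} \gamma_\ell(x).
\end{equation*}
Then $\supp \gamma_i \subset Q_i$, and $\gamma_i \equiv 1$ on $T_i P_{k_i} + b_i$; since the latter sets cover $\R^d$, the denominator $\Sigma(x) := \sum_\ell \gamma_\ell(x)$ satisfies $\Sigma \geq 1$ everywhere. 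By admissibility, at each $x$ the sum involves at most $N$ nonzero terms, so the sum is locally finite and $\Sigma \in C^\infty(\R^d)$. Properties~(1) and~(2) of Definition~\ref{def:RegularPartitionOfUnity} are immediate.

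\textbf{Verifying the uniform derivative bounds (the main obstacle).}  This is the technical step. Pulling back via $\xi \mapsto T_i \xi + b_i$, write
\begin{equation*}
  \varphi_i^\natural(\xi)
  = \frac{\gamma_{k_i}(\xi)}{\Sigma(T_i \xi + b_i)},
  \qquad
  \Sigma(T_i \xi + b_i)
  = \sum_{\ell \in i^{\ast}}
      \gamma_{k_\ell}\bigl(T_\ell^{-1} T_i \xi + T_\ell^{-1}(b_i - b_\ell)\bigr),
\end{equation*}
where I restrict the sum to $\ell \in i^{\ast}$ because a term can be nonzero only when $Q_\ell \cap Q_i \neq \emptyset$. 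By property~(2) of Definition~\ref{def:AlmostStructuredCovering} applied symmetrically, $\|T_\ell^{-1} T_i\| \leq C$ for every $\ell \in i^\ast$; combined with uniform bounds on the derivatives of the finitely many prototypes $\gamma_1,\dots,\gamma_n$, the chain rule yields $\|\partial^\alpha [\gamma_{k_\ell}(T_\ell^{-1} T_i \,\mybullet + \cdots)]\|_{L^\infty} \leq C_\alpha$ uniformly in $i$ and in $\ell \in i^\ast$. Since $|i^\ast| \leq N$, we obtain $\|\partial^\alpha (\Sigma \circ (T_i \mybullet + b_i))\|_{L^\infty} \leq C_\alpha'$ uniformly in $i$. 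Finally, the lower bound $\Sigma \geq 1$ allows the quotient rule (or Faà di Bruno applied to $1/\Sigma$) to transfer these uniform bounds to $\varphi_i^\natural$, giving property~(3).

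The main obstacle is precisely the last step: managing the denominator in the normalisation without losing uniformity in $i$. Everything rests on the interplay between admissibility (which bounds the number of neighbours $\ell$) and the compatibility condition $\|T_i^{-1} T_\ell\| \leq C$ (which controls how differently each neighbour is pulled back under the normalising map $T_i \mybullet + b_i$). Without either property, derivatives of $\Sigma \circ (T_i \mybullet + b_i)$ could blow up as $i$ varies.
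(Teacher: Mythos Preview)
The paper does not contain a proof of this theorem; it is quoted verbatim as Theorem~2.8 from \cite{DecompositionIntoSobolev} (itself a refinement of Proposition~1 in \cite{BorupNielsenDecomposition}) and used as a black box. There is therefore nothing in the present paper to compare your argument against.

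That said, your proposal is the standard proof and is correct. The construction via normalised bump functions $\gamma_i = \gamma_{k_i}\bigl(T_i^{-1}(\mybullet - b_i)\bigr)$ is exactly what is done in the cited references, and your identification of the main obstacle --- uniform control of $\partial^\alpha\bigl(\Sigma\circ(T_i\,\mybullet + b_i)\bigr)$ via the bound $\|T_\ell^{-1}T_i\|\leq C$ for $\ell\in i^\ast$ together with $|i^\ast|\leq N$ --- is precisely the point. One minor remark: your use of the bound $\|T_\ell^{-1}T_i\|\leq C$ (rather than the stated $\|T_i^{-1}T_\ell\|\leq C$) is justified by the symmetry $\ell\in i^\ast \Leftrightarrow i\in\ell^\ast$, which you note; this is worth making explicit in a written-out proof.
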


%Before we can properly define the decomposition space
%$\DecompSp (\CalQ,L^p,\ell_w^q)$, we still need to clarify our assumptions
%concerning the weight $w = (w_i)_{i \in I}$.

In principle, the decomposition space $\DecompSp (\CalQ, L^p, \ell_w^q)$
could be defined as the set of all tempered distributions
$g \in \Schwartz'(\R^d)$ for which
$\|g\|_{\DecompSp (\CalQ, L^p, \ell_w^q)}<\infty$ with the quasi-norm
$\|\bullet\|_{\DecompSp (\CalQ, L^p, \ell_w^q)}$ as defined in
\eqref{eq:DecompositionSpaceIntroductionNormDefinition}.
However, the decomposition space defined in this way
%that is thus defined
would not necessary be complete (see the example in Section~5 in
\cite{FuehrVoigtlaenderCoorbitSpacesAsDecompositionSpaces}).
To avoid this possible incompleteness, we shall use a slightly different
set than the space of tempered distributions for defining the decomposition
spaces:

%Recall from equation \eqref{eq:DecompositionSpaceIntroductionNormDefinition}
%the form of the (quasi)-norm $\|\bullet\|_{\DecompSp (\CalQ, L^p, \ell_w^q)}$.
%One possibly way of defining the decomposition space
%$\DecompSp (\CalQ, L^p, \ell_w^q)$ (see e.g.~\cite{BorupNielsenDecomposition})
%is to define it as the set of all \emph{tempered distributions}
%$g \in \Schwartz'(\R^d)$ for which
%$\|g\|_{\DecompSp(\CalQ,L^p,\ell_w^q)} < \infty$.
%However, as noted in
%\cite[Example in Sect.~5]{FuehrVoigtlaenderCoorbitSpacesAsDecompositionSpaces},
%the resulting spaces may not be complete. To avoid this possible
%incompleteness, we use a slightly different \emph{reservoir} than the tempered
%distributions to define the decomposition spaces:

\begin{defn}\label{def:ReservoirDefinition}(inspired by
  \cite{TriebelFourierAnalysisAndFunctionSpaces})

\noindent Let us define the set
  $Z := \Fourier (C_c^\infty (\R^d)) \subset \Schwartz (\R^d)$ and equip
  it with the unique topology that makes the Fourier transform
  $\Fourier : C_c^\infty (\R^d) \to Z$ into a homeomorphism.
  The topological dual space $Z'$ of $Z$ will be called the \textbf{reservoir}.
  %We denote by $Z'$ the topological dual space of $Z$, and we call $Z'$ the
  %\textbf{reservoir}.
  We write $\langle \phi ,g \rangle_{Z'} := \langle \phi , g \rangle := \phi(g)$
  for the bilinear dual pairing between $Z'$ and $Z$.

  As in the space of tempered distributions, the
  \textbf{Fourier transform in the reservoir $Z'$}
  can be defined by using its duality with the space $Z$, i.e.
  %Similarly to the setting of tempered distributions, one can define the
  %\textbf{Fourier transform on the reservoir} $Z'$ by duality, i.e.,
  \[
    \Fourier : Z'   \to    \CalD' (\R^d),
               \phi \mapsto \Fourier \phi
                            := \widehat{\phi}
                            := \phi \circ \Fourier \,
    \quad \text{and therefore} \quad
    \langle \Fourier \phi, g \rangle_{\CalD'}
    = \langle \phi, \widehat{g} \rangle_{Z'} \text{ for } g \in C_c^\infty (\R^d) \, .
  \]
  When $Z'$ and $\CalD'(\R^d)$ are both equipped with their
  respective weak-$\ast$-topologies, this Fourier transform
  %thus defined
  is a homeomorphism with its inverse being given by
  %It is not hard to see that this is a bijection and, indeed, a homeomorphism
  %if $Z'$ and $\CalD'(\R^d)$ are both equipped with the topologies of pointwise
  %convergence. The inverse of $\Fourier : Z' \to \CalD'(\R^d)$ is given by
  $\Fourier^{-1} : \CalD'(\R^d) \to Z', \phi \mapsto \phi \circ \Fourier^{-1}$.
\end{defn}

We can now define our decomposition spaces.

\begin{defn}\label{def:DecompositionSpace}

\noindent Let $\CalQ = (Q_i)_{i \in I}$, $\Phi = (\varphi_i)_{i \in I}$,
  and $w = (w_i)_{i \in I}$ be an almost structured covering of $\R^d$,
  a regular partition of unity subordinate to $\CalQ$ and a $\CalQ$-moderate
  weight, respectively, and let $p,q \in (0,\infty]$.

  The \textbf{decomposition space} with the covering $\CalQ$, the weight $w$,
  and the integrability exponents $p$ and $q$ is defined as
  \[
    \DecompSp (\CalQ, L^p, \ell_w^q)
    := \left\{
         g \in Z' \with \| g \|_{\DecompSp(\CalQ,L^p,\ell_w^q)} < \infty
       \right\} \, ,
  \]
  where the decomposition space quasi-norm
  $\| g \|_{\DecompSp (\CalQ, L^p, \ell_w^q)}$ of any $g \in Z'$
  is defined as
  \begin{equation}
    \| g \|_{\DecompSp (\CalQ, L^p, \ell_w^q)}
    := \left\|
          \left(
            w_i \cdot \| \Fourier^{-1} (\varphi_i \cdot \widehat{g}) \|_{L^p}
          \right)_{i \in I}
       \right\|_{\ell^q} \in [0,\infty] \, .
    \label{eq:DecompositionSpaceNorm}
  \end{equation}
\end{defn}

\begin{rem*}
  At this point a few comments regarding the the notions
  introduced in Definition \ref{def:DecompositionSpace} are appropriate.
  %seem us to be appropriate.

  First, we see from Definition~\ref{def:ReservoirDefinition} of the Fourier
  transform $\Fourier : Z' \to \CalD'(\R^d)$ that
  $\widehat{g} \in \CalD'(\R^d)$ for $g \in Z'$, whence $\varphi_i \cdot \widehat{g}$
  is a \emph{tempered} distribution with compact support.
  Furthermore the Paley-Wiener theorem (Theorem~7.23 in \cite{RudinFA})
  allows us to infer that $\Fourier^{-1}(\varphi_i \cdot \widehat{g})$ is
  (given by integration against) a smooth function of moderate growth so that
  the expression
  $\| \Fourier^{-1} (\varphi_i \cdot \widehat{g}) \|_{L^p} \in [0,\infty]$
  makes sense.
  Given the convention that
  $\| (c_i)_{i \in I} \|_{\ell^q} = \infty$ if $c_i = \infty$ for some
  $i \in I$, we can now conclude that
  ${\| g \|_{\DecompSp (\CalQ,L^p,\ell_w^q)} \in [0,\infty]}$ is indeed
  well-defined.

  Second, the combination of Corollary~2.7 in \cite{DecompositionIntoSobolev}
  and Corollary~3.18 in \cite{DecompositionEmbeddings} allows us to conclude
  that any two regular partitions of unity will yield equivalent quasi-norms
  as in Equation~\eqref{eq:DecompositionSpaceNorm}.
  Therefore, the space $\DecompSp(\CalQ,L^p,\ell_w^q)$ is independent
  of the choice of the regular partition of unity.

  Third, we chose to use the somewhat unusual reservoir $Z'$ to make sure
  that our decomposition space is complete.
  Indeed, in Theorem~3.12 in \cite{DecompositionEmbeddings},
  which uses the same definition of decomposition spaces as we do here,
  it is shown that
  $\big(\DecompSp(\CalQ,L^p,\ell_w^q), \|\bullet\|_{\DecompSp(\CalQ,L^p,\ell_w^q)}\big)$
  is a quasi-Banach space; that is, a complete quasi-normed vector space.

\end{rem*}

\subsection{Structured Banach frame decompositions for decomposition spaces}
\label{sub:BFD}

\noindent Let us select a particular almost structured covering
$\CalQ = (Q_i)_{i \in I}$ of $\R^d$ with associated family
${(T_i \bullet + b_i)_{i \in I}}$.
By definition, there are non-empty, open, bounded sets
$Q_1^{(0)}, \dots, Q_n^{(0)} \subset \R^d$ and for each $i \in I$
some $k_i \in \{1,\dots,n\}$ such that $Q_i = T_i Q_{k_i}^{(0)} + b_i$.
%This implies, among other things, that $Q_i = T_i Q_i ' + b_i$
%where the $Q_i'$ are part of a finite family
%$\{ Q_1^{(0)}, \dots, Q_n^{(0)} \}$ of open bounded sets
%that are not necessarily pairwise distinct.
%For each $i \in I$, one can thus find some
%$k_i \in \{1,\dots,n\}$ with $Q_i ' = Q_{k_i}^{(0)}$.
Let us select one particular such family $(k_i)_{i \in I}$,
which we shall use in the rest of this subsection.
Furthermore, let us define $Q_i ' := Q_{k_i}^{(0)}$ for $i \in I$.

\medskip{}

In this section, we consider generalised shift-invariant systems of the form
\begin{equation}
  \Gamma^{(\delta)}
  := \big( \, \gamma^{[i,k;\delta]} \, \big)_{i \in I, k\in \Z^d}
  := \big( \,
        L_{\delta \cdot T_i^{-t} \cdot k} \, \gamma^{[i]}
     \, \big)_{i \in I, k \in \Z^d}
  \label{eq:GSISystemDefinition}
\end{equation}
where the \emph{generators} $\gamma^{[i]}$ are given by
\begin{equation}
  \gamma^{[i]}
  := | \det T_i \, |^{1/2}\cdot M_{b_i} [\gamma_i (T_i^{t} \bullet)]
  \label{eq:GeneratorDefinition}
\end{equation}
with a suitable \emph{prototype functions} $\gamma_i \in L^2(\R^d)$.
The suitable choice of these prototype functions $\gamma_i$ will ensure that the system
$\Gamma^{(\delta)}$ is compatible with the frequency covering $\CalQ$.
Indeed, if the Fourier transform $\widehat{\gamma_i}$ of $\gamma_i$
decays rapidly outside the set $Q_i'$, then
%\begin{equation}
%  \gamma^{[i]}
%  := | \det T_i \, |^{1/2}\cdot M_{b_i} [\gamma_i (T_i^{t} \bullet)]
%  \label{eq:GeneratorDefinition}
%\end{equation}
\(
  \vphantom{\sum_j}
  \widehat{\gamma^{[i]}}
  = |\det T_i|^{-1/2} \cdot \widehat{\gamma_i} \big( T_i^{-1} ( \bullet - b_i) \big)
\),
so that the Fourier transform of $\gamma^{[i]}$ decays rapidly
outside the set $Q_i = T_i Q_i ' + b_i$.
Therefore, except for their normalisation,
the $\widehat{\gamma^{[i]}}$ are similar to a regular partition of unity
$\Phi = (\varphi_i)_{i \in I}$ subordinate to $\CalQ$.
Thus, with the generalised shift invariant system $\Gamma^{(\delta)}$ defined in
Equation~\eqref{eq:GSISystemDefinition},
%Thus, if a generalised shift-invariant system
%Thus, if we form the \textbf{generalized shift-invariant system}
%\begin{equation}
%  \Gamma^{(\delta)}
%  := \big( \, \gamma^{[i,k;\delta]} \, \big)_{i \in I, k\in \Z^d}
%  := \big( \,
%        L_{\delta \cdot T_i^{-t} \cdot k} \, \gamma^{[i]}
%     \, \big)_{i \in I, k \in \Z^d}
%  \label{eq:GSISystemDefinition}
%\end{equation}
%is generated from $(\gamma^{[i]})_{i \in I}$, then one would intuitively
one would intuitively expect that the membership of a function $g$ in the decomposition space
$\DecompSp(\CalQ,L^p,\ell_w^q)$ could be characterised in terms of the decay
of its coefficients
$\big( \, \langle g, \gamma^{[i,k;\delta]} \rangle \, \big)_{i \in I,k\in\Z^d}$.

The theory of structured Banach frames \cite{StructuredBanachFrames1},
whose elements essential to this work we shall remind here, makes this intuition
precise and provides criteria on the prototype functions $\gamma_i$ that,
if satisfied, will guarantee that the system $\Gamma^{(\delta)}$ constitutes
a \textbf{Banach frame} or an \textbf{atomic decomposition} for the
decomposition space $\DecompSp(\CalQ,L^p,\ell_w^q)$
provided the sampling density $\delta > 0$ is sufficiently fine.
%provided the sampling rate $\delta^{-1}$ is sufficiently high.

%The theory of structured Banach frame decompositions for decomposition spaces
%\cite{StructuredBanachFrames1} makes this rough intuition precise,
%and provides criteria concerning the prototype functions $\gamma_i$ which---if
%satisfied---ensure that the system $\Gamma^{(\delta)}$ forms a
%\textbf{Banach frame}, or an \textbf{atomic decomposition} for the
%decomposition space $\DecompSp(\CalQ,L^p,\ell_w^q)$, at least for sufficiently
%fine sampling density $\delta > 0$.

%\todo[inline]{FV: We should maybe move (parts of) the next few paragraphs
%to the introduction.}

The concept of Banach frames and atomic decompositions
\cite{GroechenigDescribingFunctions} are generalisations of
the notion of \textbf{frames} in Hilbert spaces.
By definition, a frame $(\psi_j)_{j \in J}$ in a Hilbert space $\CalH$ satisfies
${\|x\|_{\CalH}^2 \asymp \sum_{j \in J} |\langle x \,\mid\, \psi_j \rangle_{\CalH}|^2}$
for all $x \in \CalH$.
In other words, the norm of an element $x$ of the Hilbert space
can be characterised in terms of its coefficients
$(\langle x \,\mid\, \psi_j \rangle_{\CalH})_{j \in J}$.
This, given the rich structure of Hilbert spaces, has far-reaching consequences.
In particular $(\psi_j)_{j \in J}$ has a \textbf{dual frame}
$(\widetilde{\psi_j})_{j \in J}$ (see Theorem~5.1.6 in \cite{ChristensenFramesRieszBases})
which satisfies
\[
  x
  = \sum_{j \in J} \langle x \,\mid\, \widetilde{\psi_j} \rangle_{\CalH} \, \psi_j
  = \sum_{j \in J} \langle x \,\mid\, \psi_j \rangle_{\CalH} \, \widetilde{\psi_j}
  \qquad \forall \, x \in \CalH \, .
\]
Thus, any $x \in \CalH$ can be, on the one hand,
stably recovered from its coefficients $(\langle x \,\mid\, \psi_j \rangle)_{j \in J} \in \ell^2(J)$
and, on the other hand, represented as a series
$x = \sum_{j \in J} c_j \psi_j$, where the coefficients
$(c_j)_{j \in J} \in \ell^2(J)$ depend linearly and continuously on $x$.
Each of these properties can be shown to be equivalent to $(\psi_j)_{j \in J}$
being a frame for $\CalH$ and thus are equivalent to each other.
In Banach spaces, however, these properties are no longer equivalent,
thus leading to the introduction of the concepts of Banach frames and
atomic decompositions.

In the space $\ell^2 (J)$, if $c = (c_j)_{j \in J} \in \ell^2(J)$ and
$e = (e_j)_{j \in J}$ are such that $| e_j | \leq | c_j |$ for all $j \in J$,
then $e \in \ell^2 (J)$ and $\|e\|_{\ell^2} \leq \|c\|_{\ell^2}$.
More generally, a quasi-Banach space $X \subset \CC^J$ --- which consists of
sequences with index set $J$ --- with the analogous property is called \textbf{solid}.

%Note that the space $\ell^2(I)$ has a special
%property: If $c = (c_i)_{i \in I} \in \ell^2(I)$, and if $e = (e_i)_{i \in I}$
%is any sequence with $| e_i | \leq | c_i |$ for all $i \in I$, then
%$e \in \ell^2(I)$ with $\| e \|_{\ell^2} \leq \| c \|_{\ell^2}$.
%Any (quasi)-Banach space $X \subset \CC^I$ (which thus consists of sequences
%over the index set $I$) with the same property is called \textbf{solid}.

\begin{defn}\label{def:AtomicDecomposition}

\noindent A family $\Psi = (\psi_i)_{i \in I}$ in a quasi-Banach space $Y$ is
called an \textbf{atomic decomposition} of $Y$ with coefficient space $X$, if
\begin{enumerate}[label=(\arabic*)]
  \item $X \subset \CC^J$ is a solid quasi-Banach space;

  \item the \textbf{synthesis map}
        $S_\Psi : X \to Y, (c_j)_{j \in J} \mapsto \sum_{j \in J} c_j \, \psi_j$
        is well-defined and bounded, with convergence of the series in a
        suitable topology; and

  \item there is such a bounded linear \textbf{coefficient map} $C_\Psi : Y \to X$
        that $S_\Psi \circ C_\Psi = \identity_Y$.
\end{enumerate}
\end{defn}

\begin{defn}\label{def:BanachFrame}

\noindent A family $\Theta = (\theta_j)_{j \in J}$ in the dual space $Y'$
of a quasi-Banach space $Y$ is called a \textbf{Banach frame} for $Y$ with
coefficient space $X$, if
\begin{itemize}
  \item $X \subset \CC^J$ is a solid quasi-Banach space;

  \item the \textbf{analysis map}
        $A_\Psi : Y \to X, f \mapsto (\langle f, \theta_j \rangle_{Y,Y'})_{j \in J}$
        is well-defined and bounded; and

  \item there is such a bounded linear \textbf{reconstruction map}
        $R_\Psi : X \to Y$ that $R_\Psi \circ A_\Psi = \identity_Y$.
\end{itemize}
\end{defn}

%Having recalled the notions of Banach frames and atomic decompositions,
%we now present the theory of structured Banach frame decompositions of
%decomposition spaces.
%First, we introduce the sequence space that we will use.
We now introduce the associated sequence spaces which we shall use
in the theory of structured Banach frames for decomposition spaces.
\begin{defn}\label{def:StructuredBFDSequenceSpace}
  (Definition~2.8 in \cite{AlphaShearletSparsity})

\noindent For $p,q \in (0,\infty]$ and $w = (w_i)_{i \in I}$,
  the \textbf{associated coefficient space}
  $C_w^{p,q} \subset \CC^{I \times \Z^d}$ is defined as
  \[
    C_w^{p,q}
    := \left\{
         c = (c_k^{(i)})_{i \in I, k \in \Z^d}
         \with
         \| c \|_{C_w^{p,q}}
         := \left\|
              \left(
                | \det T_i \, |^{\frac{1}{2} - \frac{1}{p}}
                \cdot w_i
                \cdot \| (c_k^{(i)})_{k \in \Z^d} \|_{\ell^p}
              \right)_{i \in I}
            \right\|_{\ell^q}
         < \infty
       \right\} \, .
  \]
\end{defn}

The following theorem on structured atomic decompositions for decomposition
spaces is a combination of Theorem 2.10 and Proposition 2.11 in \cite{AlphaShearletSparsity},
which provide simplified versions of the results obtained in \cite{StructuredBanachFrames1}.
%Let us begin with the statement regarding atomic decompositions, which is a
%combination of \cite[Theorem 2.10 and Proposition 2.11]{AlphaShearletSparsity},
%which are in turn simplified versions
%of~\cite[Theorem 5.6, Corollary 6.7, and Lemma 6.9]{StructuredBanachFrames1}.
%Recall from the beginning of the subsection that we chose
%open bounded sets $Q_1^{(0)},\dots,Q_n^{(0)}$ and indices
%$k_i \in \underline{n}$ for $i \in I$ such that $Q_i' = Q_{k_i}^{(0)}$ for
%each $i \in I$. These quantities play an important role in the following
%theorem.

\begin{thm}\label{thm:StructuredBFDAtomicDecomposition}

\noindent Let $\eps,p_0,q_0 \in (0,1]$, $p,q \in (0,\infty]$ such that $p \geq p_0$
  and $q \geq q_0$ and $w = (w_i)_{i \in I}$ be $\CalQ$-moderate.
  %Let us define
  %\[
  %  \tau := \min \{1,p,q\}                                     \,\, , \qquad
  %  \vartheta := (p^{-1} - 1)_+                                \,\, , \qquad
  %  %\qquad \text{and} \qquad
  %  \Lambda := 1 + d / \min \{1,p\} \, ,
  %\]
  %and finally
  %\[
  %  N    := \left\lceil \frac{d+\eps}{\min\{1,p\}} \right\rceil,
  %  \qquad \text{and} \qquad
  %  \sigma
  %  := \begin{cases}
  %       \tau \cdot (d+1),
  %       & \text{if } p \in [1,\infty] \, , \\
  %       \tau \cdot \big(p^{-1}\cdot d + \lceil p^{-1}\cdot (d+\eps)\rceil\big),
  %       & \text{if } p \in (0,1) \, .
  %     \end{cases}
  %\]
  Let $\gamma_1^{(0)}, \dots, \gamma_n^{(0)} \in L^1(\R^d)$ and
  $\gamma_i := \gamma_{k_i}^{(0)}$ for $i \in I$.
  Let us define
  \[
    \Lambda := 1 + \frac{d}{\min \{1, p\}}
    \qquad \text{and} \qquad
    N := \left\lfloor \frac{d+\eps}{\min \{1,p\}} \right\rfloor
  \]
  and assume that, for each $k \in \{1,\dots,n\}$, there is a non-negative function
  $\varrho_k \in L^1(\R^d)$ such that the following hold:
  %For given $\gamma_1^{(0)}, \dots, \gamma_k^{(0)} \in L^1(\R^d)$, let us define
  %$\gamma_i := \gamma_{k_i}^{(0)}$ for all $i \in I$.
  %Furthermore, let us assume that for each $k \in \underline{n}$ there is a
  %nonnegative function $\varrho_k \in L^1(\R^d)$ such that:
  \begin{enumerate}
    \item \label{enu:AtomicDecompositionFourierTransformDecent}
          %for all $k \in \underline{n}$,
          $\Fourier \gamma_k^{(0)} \in C^\infty (\R^d)$ and all partial derivatives
          of $\Fourier \gamma_k^{(0)}$ are of polynomial growth at most;

       %This is always satisfied if each $\gamma_k^{(0)}$ has compact support.

    \item \label{enu:AtomicDecompositionNonVanishing}
          $\Fourier \gamma_k^{(0)} (\xi) \neq 0$ for all
          $\xi \in \overline{Q_k^{(0)}}$;% and $k \in \underline{n}$;

    \item \label{enu:AtomicDecompositionSpaceSideDecay}
          %for all $k \in \underline{n}$,
          \(
            \displaystyle
            \sup_{x \in \R^d}
              \left[
                 (1+|x|)^{\Lambda} \cdot | \gamma_k^{(0)}(x) |
              \right]
            < \infty \, ;
          \)

    \item \label{enu:AtomicDecompositionRhoDefinition}
          %for all $k \in \underline{n}$ and all $\alpha \in \N_0^d$ with $|\alpha|\leq N$,
          \(
            \Big|
              \partial^\alpha \big[\Fourier \gamma_k^{(0)}\big] (\xi)
            \Big|
            \leq \varrho_k (\xi) \cdot (1+|\xi|)^{-(d+1+\eps)}
          \)
          for all $\xi \in \R^d$ and $\alpha \in \N_0^d$ with $|\alpha| \leq N$.

    %\item \label{enu:AtomicDecompositionHorribleCondition}%We have
    %  $K_1 := \sup_{i \in I} \sum_{j \in I} N_{i,j} < \infty$ and
    %  $K_2 := \sup_{j \in I} \sum_{i \in I} N_{i,j} < \infty$,
    %  where $\rho_j := \varrho_{k_j}$ for $j \in I$ and
    %  \[
    %    N_{i,j}
    %    := \left(
    %         \frac{w_i}{w_j} \cdot
    %         \left(
    %           |\det T_j| \,\big/\, |\det T_i|
    %         \right)^{\vartheta}
    %       \right)^{\!\tau}
    %       \!\! \cdot (1 + \| T_j^{-1} T_i \|)^{\sigma}
    %       \cdot \left(
    %               |\det T_i|^{-1}
    %               \,\, \int_{Q_i}
    %                        \rho_j \big(T_j^{-1}(\xi - b_j)\big)
    %                    \, d \xi
    %             \right)^{\! \tau} \, .
    %  \]
  \end{enumerate}
  Finally, let us define
  \[
    \tau := \min \{1,p,q\},
    \qquad
    \vartheta := (p^{-1} - 1)_+
    \qquad \text{and} \qquad
    \sigma
    := \begin{cases}
         \tau \cdot (d+1),
         & \text{if } p \in [1,\infty] \, , \\
         \tau \cdot \big(p^{-1}\cdot d + \lceil p^{-1}\cdot (d+\eps)\rceil\big),
         & \text{if } p \in (0,1)
       \end{cases}
  \]
  and
  \[
    N_{i,j}
    := \left(
         \frac{w_i}{w_j} \cdot
         \left(
           |\det T_j| \,\big/\, |\det T_i|
         \right)^{\vartheta}
       \right)^{\!\tau}
       \!\! \cdot (1 + \| T_j^{-1} T_i \|)^{\sigma}
       \cdot \left(
               |\det T_i|^{-1}
               \,\, \int_{Q_i}
                        \varrho_{k_j} \big(T_j^{-1}(\xi - b_j)\big)
                    \, d \xi
             \right)^{\! \tau}
  \]
  for $i,j \in I$ and assume that
  \begin{equation}
    K_1 := \sup_{i \in I} \sum_{j \in I} N_{i,j} < \infty
    \qquad \text{and} \qquad
    K_2 := \sup_{j \in I} \sum_{i \in I} N_{i,j} < \infty.
    \label{eq:AtomicDecompositionConstantDefinition}
  \end{equation}
  Then there is a $\delta_0 > 0$ such that for any $\delta \in (0,\delta_0]$,
  the family $\Gamma^{(\delta)}$ as defined by \eqref{eq:GeneratorDefinition}
  and \eqref{eq:GSISystemDefinition} constitutes an atomic decomposition for the
  decomposition space $\DecompSp(\CalQ,L^p,\ell_w^q)$ with associated
  coefficient space $C_w^{p,q}$ as introduced in
  Definition \ref{def:StructuredBFDSequenceSpace}.

  More specifically,
  \begin{enumerate}[label=(\arabic*)]
    \item there is a constant
          $C = C(p_0,q_0,\eps,d,\CalQ,\gamma_1^{(0)},\dots,\gamma_n^{(0)}) > 0$
          that allows us to choose
          \[
            \delta_0
            = \min \left\{
                      1,
                      \Big[
                        C \cdot \big(K_1^{1/\tau} + K_2^{1/\tau} \, \big)
                      \Big]^{-1}
                   \right\} \, ;
          \]

    \item the synthesis map
          \begin{equation}
            S_{\Gamma^{(\delta)}} :
            C_w^{p,q} \to \DecompSp(\CalQ,L^p,\ell_w^q),
            (c_k^{(i)})_{i \in I, k \in \Z^d}
            \mapsto \sum_{i \in I}
                      \sum_{k \in \Z^d}
                        \left[
                          c_k^{(i)}
                          \cdot L_{\delta \cdot T_i^{-t}k} \gamma^{[i]}
                        \right]
            \label{eq:SynthesisMapDefinition}
          \end{equation}
          is well-defined and bounded for all $\delta \in (0,1]$.
          Moreover, for each $i \in I$ the inner series
          $\sum_{k \in \Z^d} [c_k^{(i)}\cdot L_{\delta\cdot T_i^{-t}k}\gamma^{[i]}]$
          converges absolutely to a function
          $g_i \in L_{\mathrm{loc}}^1 (\R^d) \cap \Schwartz'(\R^d)$
          and the series
          ${S_{\Gamma^{(\delta)}} \, (c_k^{(i)})_{i\in I,k\in \Z^d} = \sum_{i\in I} g_i}$
          converges unconditionally in the weak-$\ast$-sense in $Z'$; and

   \item for $0 < \delta \leq \delta_0$, there is a coefficient operator
         $C^{(\delta)} = C^{(\delta)}_{p,q,w}:
         \DecompSp(\CalQ,L^p,\ell_w^q) \to C_w^{p,q}$
         such that $S_{\Gamma^{(\delta)}} \circ C^{(\delta)}
         = \identity_{\DecompSp(\CalQ,L^p,\ell_w^q)}$.
         Furthermore, the action of $C^{(\delta)}_{p,q,w}$ on a given
         $f \in \DecompSp(\CalQ,L^p,\ell_w^q)$ is \emph{independent} of $p,q$
         and $w$, thus justifying the notation $C^{(\delta)}$.
  \end{enumerate}
\end{thm}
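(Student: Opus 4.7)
The overall plan is to reduce the theorem to a \emph{continuous-to-discrete} comparison argument combined with a Neumann series inversion. The core idea is that, because of hypotheses \ref{enu:AtomicDecompositionNonVanishing} and \ref{enu:AtomicDecompositionRhoDefinition}, the Fourier transforms $\widehat{\gamma^{[i]}}$ behave essentially like a smooth partition of unity subordinate to $\CalQ$ (up to the non-vanishing correction factor $\Fourier\gamma_k^{(0)}$ on $\overline{Q_k^{(0)}}$). Hence if one could work with a \emph{continuous} family of translates $L_x\gamma^{[i]}$ and integrated against $x\in\R^d$, one would obtain a genuine reproducing formula by Fourier inversion. The proof consists in showing that, for sufficiently fine sampling $\delta$, the lattice sums $\sum_{k\in\Z^d}\langle g,L_{\delta T_i^{-t}k}\gamma^{[i]}\rangle\cdot L_{\delta T_i^{-t}k}\gamma^{[i]}$ differ from this continuous reconstruction only by an operator whose norm on $\DecompSp(\CalQ,L^p,\ell_w^q)$ is less than $1$.

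First, I would establish the boundedness of the synthesis map $S_{\Gamma^{(\delta)}}:C_w^{p,q}\to\DecompSp(\CalQ,L^p,\ell_w^q)$ for every $\delta\in(0,1]$. The crucial estimate is that after applying the decomposition-space localisation $\Fourier^{-1}(\varphi_i\cdot\widehat{\bullet})$ to a single summand $c_k^{(j)}\cdot L_{\delta T_j^{-t}k}\gamma^{[j]}$, the resulting $L^p$-norm is controlled by $|\det T_j|^{\frac12-\frac1p}\cdot\|(c_k^{(j)})_k\|_{\ell^p}$, up to a cross-term factor of the shape appearing in the definition of $N_{i,j}$. This is where hypothesis~\ref{enu:AtomicDecompositionSpaceSideDecay} enters: the space-side decay exponent $\Lambda$ guarantees that translated copies of $\gamma^{[i]}$ satisfy the correct $L^p$-convolution estimates for $p<1$, via a standard maximal-function argument. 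Summing over $j\in I$ and invoking the weighted Schur test with kernel $N_{i,j}$ then yields $\|S_{\Gamma^{(\delta)}}\|\lesssim K_1^{1/\tau}+K_2^{1/\tau}$.

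Next, I would construct the coefficient map $C^{(\delta)}$. Exploiting the non-vanishing condition~\ref{enu:AtomicDecompositionNonVanishing}, define a dual prototype $\widetilde\gamma_k^{(0)}$ by $\Fourier\widetilde\gamma_k^{(0)}:=\varphi_k^{(0)}/\Fourier\gamma_k^{(0)}$ on a neighbourhood of $\overline{Q_k^{(0)}}$ (extended smoothly), so that the continuous operator $T_{\mathrm{cont}} g:=\sum_i \widetilde\gamma^{[i]} * (\gamma^{[i]}*g)$ --- appropriately defined --- is the identity on $\DecompSp(\CalQ,L^p,\ell_w^q)$. Discretising by sampling the inner convolution at the lattice $\delta T_i^{-t}\Z^d$ produces an operator $T_\delta=S_{\Gamma^{(\delta)}}\circ\widetilde A_\delta$, where $\widetilde A_\delta$ is the analysis map relative to the (rescaled) dual system. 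The assertion then reduces to showing $\|\identity - T_\delta\|_{\DecompSp\to\DecompSp}<1$ for small $\delta$; setting $C^{(\delta)}:=\widetilde A_\delta\circ(T_\delta)^{-1}$ with the Neumann series gives the desired left-inverse relation. Independence of $C^{(\delta)}$ from $p,q,w$ follows because both $\widetilde A_\delta$ and $T_\delta$ are defined purely through pointwise/distributional operations that do not see these parameters.

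The main obstacle, and the heart of the proof, is quantifying the difference between the continuous reproducing operator and its discretisation $T_\delta$ uniformly in all decomposition-space quasi-norms. This requires a Poisson-summation-type estimate controlling $|\widehat g(\xi)-\sum_{k}\widehat g(\xi+\delta^{-1}T_j^{t}k)|$ on $\supp\varphi_i$, together with the weighted Schur bound $K_1,K_2<\infty$; the threshold $\delta_0$ arises as the value of $\delta$ that makes the resulting operator-norm estimate strictly below $1$, which is precisely $\min\{1,[C(K_1^{1/\tau}+K_2^{1/\tau})]^{-1}\}$. The delicate point is keeping track of the exponent $\tau=\min\{1,p,q\}$ when assembling the quasi-norm estimates for $p,q<1$; this is exactly where hypotheses~\ref{enu:AtomicDecompositionSpaceSideDecay} and~\ref{enu:AtomicDecompositionRhoDefinition}, with their sharp decay rates $\Lambda$ and $N$, are needed. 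Since these same estimates are carried out in detail in \cite{StructuredBanachFrames1} and in the simplified form of \cite{AlphaShearletSparsity}, the argument can be concluded by invoking Theorem~2.10 and Proposition~2.11 of the latter.
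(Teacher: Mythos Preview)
The paper does not actually prove this theorem: it is stated as a quotation of Theorem~2.10 and Proposition~2.11 in \cite{AlphaShearletSparsity} (which in turn simplify results from \cite{StructuredBanachFrames1}), with no argument given beyond the citation. Your proposal ends by invoking exactly the same references, so it is consistent with the paper's treatment; the sketch you give of the underlying mechanism (continuous reproducing formula, discretisation error controlled via a Schur-type bound with kernel $N_{i,j}$, Neumann series inversion for small $\delta$) is indeed the strategy carried out in \cite{StructuredBanachFrames1}.
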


\begin{rem*}
  1) The description of the convergence of the series in \eqref{eq:SynthesisMapDefinition}
     might appear quite technical.
     Luckily, for $p,q < \infty$, this description can be simplified.
     Indeed, the finitely supported sequences are dense in $C_w^{p,q}$ if $p,q < \infty$.
     Combined with the boundedness of the synthesis map $S_{\Gamma^{(\delta)}}$,
     this implies that the series
     $\sum_{(i,k) \in I \times \Z^d} c_k^{(i)} L_{\delta T_i^{-t} k} \, \gamma^{[i]}$
     converges unconditionally in $\DecompSp(\CalQ,L^p,\ell_w^q)$.

  2) We note that the conditions
     \ref{enu:AtomicDecompositionFourierTransformDecent}
     and \ref{enu:AtomicDecompositionSpaceSideDecay} are satisfied as long as
     all $\gamma_k^{(0)}$ are bounded and have compact supports.
     In the case of the condition \ref{enu:AtomicDecompositionFourierTransformDecent},
     this is a consequence of the Paley-Wiener theorem.
\end{rem*}

%%\begin{rem*}
%Mention should be made that the conditions
%\ref{enu:AtomicDecompositionFourierTransformDecent}
%and \ref{enu:AtomicDecompositionSpaceSideDecay} are satisfied as long as
%all $\gamma_k^{(0)}$ have compact supports.
%%If all $\gamma_k^{(0)}$ have compact support, then conditions
%%\ref{enu:AtomicDecompositionFourierTransformDecent} and
%%\ref{enu:AtomicDecompositionSpaceSideDecay} are always satisfied.
%In case of condition \ref{enu:AtomicDecompositionFourierTransformDecent},
%this is a consequence of the Paley-Wiener theorem.
%%\end{rem*}

%\todo[inline]{Slightly reformulate the following theorem, so that we do
%\emph{not} need this modified GSI system $\widetilde{\Gamma}^{(\delta)}$
%any more! Once this is done, also reformulate
%Theorem~\ref{thm:WavePacketBanachFrames}.}

For the next theorem --- which is a combination of Theorem~2.9
and Lemma~5.12 in \cite{AlphaShearletSparsity}),
we shall need a GSI system $\widetilde{\Gamma}^{(\delta)}$ that differs slightly from
the system $\Gamma^{(\delta)}$ given by \eqref{eq:GeneratorDefinition} and
\eqref{eq:GSISystemDefinition}.
Precisely, let us define
\begin{equation}
   \widetilde{\Gamma}^{(\delta)}
  := \left(
       L_{\delta \cdot T_i^{-t}k} \, \widetilde{\gamma^{[i]}}
     \right)_{i \in I, k \in \Z^d} \, ,
  \quad \text{where} \quad \widetilde{g}(x) = g(-x).
  \label{eq:BanachFrameFlippedGSISystem}
\end{equation}

\begin{thm}\label{thm:StructuredBFDBanachFrame}
  Let $\eps,p_0,q_0 \in (0,1]$ and $p,q \in (0,\infty]$ such that $p \geq p_0$ and $q \geq q_0$. Let $\Phi = (\varphi_i)_{i \in I}$ and $w = (w_i)_{i \in I}$ be a
  regular partition of unity for $\CalQ$ and $\CalQ$-moderate weight, respectively. Let $\gamma_1^{(0)}, \dots, \gamma_n^{(0)} \in L^1(\R^d)$ and let us define
  $\gamma_i := \gamma_{k_i}^{(0)}$ for $i \in I$.
  Let us assume that ,for all $k \in \{ 1,\dots,n \}$,
  %Let $N$ and $\tau$ be defined as in the previous theorem and
  %\[
  %  %N    := \left\lceil \frac{d+\eps}{\min\{1,p\}} \right\rceil, \quad
  %  %\tau := \min \{1,p,q\}                                     ,
  %  %\quad \text{and} \quad
  %  \sigma := \tau \cdot \left( N + \frac{d}{\min \{1,p\}} \right) \, .
  %\]
  %%as well as
  %%\[
  %%  \sigma
  %%  :=\begin{cases}
  %%      \tau \cdot (d+1),
  %%      & \text{if } p \in [1,\infty] \, , \\
  %%      \tau \cdot \big(p^{-1}\cdot d + \lceil p^{-1}\cdot (d+\eps)\rceil\big),
  %%      & \text{if } p \in (0,1) \, .
  %%    \end{cases}
  %%\]
  %For given $\gamma_1^{(0)}, \dots, \gamma_k^{(0)} \in L^1(\R^d)$, let us define
  %$\gamma_i := \gamma_{k_i}^{(0)}$ for all $i \in I$.
  %Moreover, we assume that:
  \begin{enumerate}
    \item \label{enu:BanachFrameFourierTransformDecent}
          %For all $k \in \underline{n}$,
          $\Fourier\gamma_k^{(0)}\in C^\infty(\R^d)$
          and all partial derivatives of $\Fourier \gamma_k^{(0)}$ are of polynomial growth at most;

    \item \label{enu:BanachFrameNonVanishing}
          $\Fourier \gamma_k^{(0)}(\xi) \neq 0$ for all $\xi \in \overline{Q_k^{(0)}}$;
          %and all $k \in \underline{n}$;

    \item \label{enu:BanachFrameSpaceSideDecay}
          $\gamma_k^{(0)} \in C^1(\R^d)$ and $\nabla \gamma_k^{(0)} \in L^1(\R^d) \cap L^\infty(\R^d)$.
          %for all $k \in \underline{n}$; and

    %\item \label{enu:BanachFrameHorribleCondition}
    %      $C_1 := \sup_{i \in I} \sum_{j \in I} M_{j,i} < \infty$
    %      and $C_2 := \sup_{j \in I} \sum_{i \in I} M_{j,i} < \infty$ where
    %      \[
    %        M_{j,i}
    %        := \left( \frac{w_j}{w_i} \right)^\tau
    %           \cdot (1 + \| T_j^{-1} T_i \|)^\sigma
    %           \cdot \max_{|\beta| \leq 1}
    %                   \left(
    %                     |\det T_i|^{-1} \,
    %                     \int_{Q_i}
    %                       \max_{|\alpha| \leq N}
    %                       \left|
    %                         \left[
    %                           \partial^\alpha
    %                             \widehat{\partial^\beta \gamma_j}
    %                         \right] \big( T_j^{-1} (\xi - b_j) \big)
    %                       \right|
    %                     \, d \xi
    %                   \right)^\tau \, .
    %      \]
  \end{enumerate}
  Finally, let us define
  \[
    N := \left\lfloor \frac{d+\eps}{\min\{ 1,p \}} \right\rfloor,
    \qquad
    \tau := \min \{ 1,p,q \},
    \qquad
    \theta := \tau \cdot \Big( N + \frac{d}{\min \{ 1,p \}} \Big)
  \]
  and
  \[
    M_{j,i}
    := \left( \frac{w_j}{w_i} \right)^\tau
       \cdot (1 + \| T_j^{-1} T_i \|)^\theta
       \cdot \max_{|\beta| \leq 1}
               \left(
                 |\det T_i|^{-1} \,
                 \int_{Q_i}
                   \max_{|\alpha| \leq N}
                   \left|
                     \left[
                       \partial^\alpha
                         \widehat{\partial^\beta \gamma_j}
                     \right] \big( T_j^{-1} (\xi - b_j) \big)
                   \right|
                 \, d \xi
               \right)^\tau
  \]
  for $i,j \in I$ and assume that
  \[
    C_1 := \sup_{i \in I} \sum_{j \in I} M_{j,i} < \infty
    \qquad \text{and} \qquad
    C_2 := \sup_{j \in I} \sum_{i \in I} M_{j,i} < \infty.
  \]
  Then there is a $\delta_0 \in (0,1]$ such that for any
  $\delta \in (0,\delta_0]$, the family $\widetilde{\Gamma}^{(\delta)}$
  defined in Equations~\eqref{eq:GeneratorDefinition} and
  \eqref{eq:BanachFrameFlippedGSISystem} constitutes a Banach frame for the
  decomposition space $\DecompSp(\CalQ,L^p,\ell_w^q)$ with associated
  coefficient space $C_w^{p,q}$ as introduced in
  Definition~\ref{def:StructuredBFDSequenceSpace}.

  More specifically,
  \begin{enumerate}[label=(\arabic*)]
    \item There is a constant
          $C = C(p_0,q_0,\eps,d,\CalQ,\gamma_1^{(0)},\dots,\gamma_n^{(0)}) > 0$
          that allows us to choose
          \[
            \delta_0
            = \delta_0(p_0,q_0,w)
            = 1 \Big/ \Big[
                        1
                        + C
                          \cdot C_{\CalQ,w}^4
                          \cdot \left( C_1^{1/\tau} + C_2^{1/\tau} \right)^2
                      \Big] \, .
          \]

    \item The \textbf{analysis map}
          \[
            A_{\widetilde{\Gamma}^{(\delta)}} :
            \DecompSp(\CalQ,L^p,\ell_w^q) \to C_w^{p,q},
            f \mapsto \Big(
                        [\gamma^{[i]} \ast f] (\delta \cdot T_i^{-t}k)
                      \Big)_{i \in I, k \in \Z^d} \,\, ,
          \]
          where the convolution $\gamma^{[i]} \ast f$, i.e.
          %is well-defined and bounded for each $\delta \in (0,1]$.
          %Here, the convolution $\gamma^{[i]} \ast f$ is defined as
          \begin{equation}
            \big( \gamma^{[i]} \ast f \big)(x)
            = \sum_{\ell \in I}
                \Fourier^{-1}
                \Big(
                  \,\widehat{\gamma^{[i]}} \cdot \varphi_\ell \cdot \widehat{f} \,
                \Big)(x) \, ,
            \label{eq:BanachFrameConvolutionDefinition}
          \end{equation}
          is well-defined and bounded for all $\delta \in (0,1]$
          and the series in \eqref{eq:BanachFrameConvolutionDefinition} converges
          normally in $L^\infty (\R^d)$.

          Moreover, if
          $f \in L^2(\R^d) \hookrightarrow \Schwartz'(\R^d) \hookrightarrow Z'$,
          then the convolution defined by
          \eqref{eq:BanachFrameConvolutionDefinition} agrees with its usual
          definition and
          %coincides with the
          %usual definition of the convolution, so that indeed
          \begin{equation}
            A_{\widetilde{\Gamma}^{(\delta)}} f
            = \Big(
                \langle
                   f, L_{\delta \cdot T_i^{-t} \cdot k} \widetilde{\gamma^{[i]}}
                \rangle
              \Big)_{i \in I, k\in \Z^d}
            \quad
            \forall \, f \in L^2 (\R^d) \cap \DecompSp(\CalQ,L^p,\ell_w^q) \, .
            \label{eq:AnalysisOperatorConsistency}
          \end{equation}

    \item For $0 < \delta \leq \delta_0$, there is such a bounded linear
          reconstruction map
          $R^{(\delta)}_{p,q,w} : C_w^{p,q} \to \DecompSp(\CalQ,L^p,\ell_w^q)$ that
          \(
            R^{(\delta)}_{p,q,w} \circ A_{\widetilde{\Gamma}^{(\delta)}}
            = \identity_{\DecompSp(\CalQ,L^p,\ell_w^q)}
          \).

    \item If the assumptions of the current theorem are valid for
          $(p,q,w) = (p_\ell,q_\ell,w^{(\ell)})$ for $\ell \in \{1,2\}$ and
          $0 < \delta \leq \min \{\delta_0 (p_0,q_0,w^{(1)}),
          \delta_0(p_0,q_0,w^{(2)}) \}$, then
          \[
            \forall \, f \in \DecompSp(\CalQ,L^{p_2},\ell_{w^{(2)}}^{q_2})
            \quad : \quad
            f \in \DecompSp(\CalQ,L^{p_1},\ell_{w^{(1)}}^{q_1})
            \Longleftrightarrow
            A_{\widetilde{\Gamma}^{(\delta)}} f \in C_{w^{(1)}}^{p_1,q_1} \, .
          \]
  \end{enumerate}
\end{thm}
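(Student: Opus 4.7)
My plan is to deduce Theorem~\ref{thm:StructuredBFDBanachFrame} by carefully matching its hypotheses with those of Theorem~2.9 and Lemma~5.12 in \cite{AlphaShearletSparsity}, since the statement is explicitly advertised as a combination of those two results. Concretely, Theorem~2.9 in \cite{AlphaShearletSparsity} produces the Banach frame property together with the quantitative choice of $\delta_0$ and the bounded reconstruction map, while Lemma~5.12 is what makes the analysis map concrete in terms of the convolutions $\gamma^{[i]} \ast f$ and links these convolutions to the inner products $\langle f, L_{\delta T_i^{-t} k} \widetilde{\gamma^{[i]}} \rangle$ when $f \in L^2$.

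First I would unpack the assumptions \ref{enu:BanachFrameFourierTransformDecent}--\ref{enu:BanachFrameSpaceSideDecay} and show that they imply the (more cumbersome) set of hypotheses assumed in Theorem~2.9 of \cite{AlphaShearletSparsity}; in particular, assumption~\ref{enu:BanachFrameSpaceSideDecay}, requiring $\gamma_k^{(0)} \in C^1$ with $\nabla \gamma_k^{(0)} \in L^1 \cap L^\infty$, is exactly what is needed so that $\widehat{\partial^\beta \gamma_j}$ is a bounded continuous function for $|\beta|\le 1$, which makes the matrix entries $M_{j,i}$ well-defined and identifies them with the quantities appearing in the cited theorem. I would then verify that the summability hypotheses $C_1, C_2 < \infty$ translate directly into the input used in the cited theorem, and that the $\CalQ$-moderateness of $w$ gives the constant $C_{\CalQ,w}$ appearing in the formula for $\delta_0$. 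At this point the cited result yields items (1) and (3) of the conclusion, and also the existence of the analysis map as a bounded operator $\DecompSp(\CalQ,L^p,\ell_w^q) \to C_w^{p,q}$.

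The second --- and I expect more delicate --- step is to verify the convolution description~\eqref{eq:BanachFrameConvolutionDefinition} of the analysis map. The subtle point is that $\gamma^{[i]} \ast f$ is a priori ill-defined when $f$ is only a reservoir distribution, so it must be given meaning through the partition of unity expansion $\sum_\ell \Fourier^{-1}(\widehat{\gamma^{[i]}} \cdot \varphi_\ell \cdot \widehat{f})$. To show that this series converges normally in $L^\infty$, I would estimate each summand by Hausdorff--Young or the simpler bound $\|\Fourier^{-1}(\widehat{\gamma^{[i]}} \varphi_\ell \widehat{f})\|_{L^\infty}$ via $\|\widehat{\gamma^{[i]}}\|_{L^\infty(Q_\ell)}$ together with $\|\Fourier^{-1}(\varphi_\ell \widehat{f})\|_{L^\infty}$, using the rapid decay of $\widehat{\gamma_j}$ outside $Q_j'$ (which is a consequence of assumption~\ref{enu:BanachFrameSpaceSideDecay} together with the non-vanishing condition) to sum over $\ell \in I$; the geometric admissibility of the covering then keeps the overlaps under control. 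Once the series is shown to converge normally, Lemma~5.12 of \cite{AlphaShearletSparsity} applies directly and identifies the analysis map with the prescribed convolution evaluations. The identity~\eqref{eq:AnalysisOperatorConsistency} on $L^2 \cap \DecompSp$ then follows because, on such $f$, the truncated partition-of-unity sum converges to $f$ in $L^2$ and Plancherel allows us to shift $\gamma^{[i]}$ to the other side of the inner product with a change of variable that introduces the reflection $\widetilde{\gamma^{[i]}}$.

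Finally, the consistency statement~(4) is a formal consequence of item~(3) of Theorem~\ref{thm:StructuredBFDAtomicDecomposition} applied to both parameter triples: since the coefficient operator $C^{(\delta)}$ of the atomic decomposition is independent of $(p,q,w)$, and since the analysis map is its adjoint-like dual construction, the image of $A_{\widetilde{\Gamma}^{(\delta)}} f$ determines membership in each individual space $\DecompSp(\CalQ,L^{p_\ell},\ell_{w^{(\ell)}}^{q_\ell})$ by applying $R^{(\delta)}_{p_\ell,q_\ell,w^{(\ell)}}$. The main obstacle, as indicated, will be the careful $L^\infty$-convergence argument for~\eqref{eq:BanachFrameConvolutionDefinition} and the proof that this extended convolution agrees with the classical one on $L^2$; the remainder is bookkeeping to align notation with the cited references.
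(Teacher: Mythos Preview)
Your overall plan is aligned with the paper's treatment: the paper does not give a proof of this theorem at all, but simply records it as ``a combination of Theorem~2.9 and Lemma~5.12 in \cite{AlphaShearletSparsity}'', so verifying that the stated hypotheses feed directly into those cited results is exactly what is required, and your sketch for items (1)--(3) and the convolution identity is on the right track.

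There is, however, a genuine slip in your argument for item~(4). You claim it is a ``formal consequence of item~(3) of Theorem~\ref{thm:StructuredBFDAtomicDecomposition}'', invoking the independence of the coefficient operator $C^{(\delta)}$ from $(p,q,w)$. But $C^{(\delta)}$ belongs to the \emph{atomic decomposition} side (it is a right inverse of the synthesis map $S_{\Gamma^{(\delta)}}$), whereas item~(4) concerns the \emph{analysis} map $A_{\widetilde{\Gamma}^{(\delta)}}$ and its left inverse $R^{(\delta)}$; these are not the same operators, and calling one the ``adjoint-like dual'' of the other does not make the implication go through. The actual mechanism for (4) is internal to the Banach-frame construction: the analysis map $A_{\widetilde{\Gamma}^{(\delta)}}$ is defined by the same formula regardless of $(p,q,w)$, and the reconstruction maps $R^{(\delta)}_{p,q,w}$ are built (in \cite{AlphaShearletSparsity}) via a Neumann series that acts consistently across the admissible parameter range; this is precisely the content carried by the combination of Theorem~2.9 and Lemma~5.12 there, not something you can extract from the atomic decomposition theorem. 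So for (4) you should cite the same two results rather than route through Theorem~\ref{thm:StructuredBFDAtomicDecomposition}.
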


%\todo[inline]{The ``structured Banach frame decomposition'' subsection is now
%more or less complete. The only thing that I would like to add at some point
%is the statement that ``analysis sparsity and synthesis sparsity
%are equivalent'' if Theorems \ref{thm:StructuredBFDBanachFrame} and
%\ref{thm:StructuredBFDAtomicDecomposition} are both applicable.}

\subsection{Embeddings of decomposition spaces}
\label{sub:DecompositionEmbedding}

\noindent In this subsection, we recall from \cite{DecompositionEmbeddings} the
results concerning the existence of embeddings between two
decomposition spaces $\DecompSp(\CalQ, L^{p_1}, \ell^{q_1}_w)$ and
$\DecompSp(\CalP, L^{p_2}, \ell^{q_2}_u)$ which we shall need in the following.
Furthermore, we recall a few notions and results established by Feichtinger and
Gröbner \cite{DecompositionSpaces1} on which we shall rely in this work.

\begin{defn}\label{def:Embeddings}

\noindent Let $\CalQ$ and $\CalP$ be two almost-structured coverings of $\R^d$ and $w,u$ be a $\CalQ$-moderate weight and a $\CalP$-moderate weight, respectively
  and let $p_1, p_2, q_1, q_2 \in (0,\infty]$.

  We shall write $\DecompSp(\CalQ, L^{p_1}, \ell^{q_1}_w)
  \hookrightarrow \DecompSp(\CalP, L^{p_2}, \ell^{q_2}_u)$ and say that
  $\DecompSp(\CalQ, L^{p_1}, \ell^{q_1}_w)$ \emph{embeds} in
  $\DecompSp(\CalP, L^{p_2}, \ell^{q_2}_u)$, if
  $\DecompSp(\CalQ, L^{p_1}, \ell^{q_1}_w) \subset \DecompSp(\CalP, L^{p_2}, \ell^{q_2}_u)$
  and if the identity map
  \(
    \DecompSp(\CalQ, L^{p_1}, \ell^{q_1}_w)
    \to \DecompSp(\CalP, L^{p_2}, \ell^{q_2}_u),
    f \mapsto f
  \)
  is bounded.
\end{defn}

\begin{rem*}
  From the closed graph theorem (see Theorem~2.15 in \cite{RudinFA}),
  in combination with the embeddings
  ${\CalD(\CalQ,L^{p_1}, \ell_w^{q_1}) \hookrightarrow Z'}$
  and $\CalD(\CalP,L^{p_2}, \ell_u^{q_2}) \hookrightarrow Z'$
  (see Theorem~3.21 in \cite{DecompositionEmbeddings}), we infer that, if
  $\DecompSp(\CalQ,L^{p_1}, \ell_w^{q_1}) \subset \DecompSp(\CalP,L^{p_2}, \ell_u^{q_2})$, then
  $\DecompSp(\CalQ,L^{p_1}, \ell_w^{q_1}) \hookrightarrow \DecompSp(\CalP,L^{p_2}, \ell_u^{q_2})$;
  that is, the identity map is always bounded if the decomposition spaces are included in each other.
\end{rem*}

To be able to provide meaningful criteria allowing to decide whether such an embedding holds,
%For such an embedding result to hold,
one needs a certain compatibility between the coverings $\CalQ$ and $\CalP$.
%that are used to define the decomposition spaces in question.
The required type of compatibility is discussed in the following definition.
%Most of the required notions were already introduced by Feichtinger and Gröbner
%in their seminal paper \cite{DecompositionSpaces1} on decomposition spaces.

\begin{defn}\label{def:Subordinateness}
  (Definition~3.3 in \cite{DecompositionSpaces1})

\noindent Let $\CalQ = (Q_i)_{i \in I}$ be an admissible covering of $\R^d$.
  Using the notation $i^\ast$ as introduced in \eqref{eq:IndexClusterDefinition},
  let us define $L^\ast := \bigcup_{\ell \in L} \ell^\ast \subset I$ for
  any $L \subset I$.
  Moreover, let us inductively define $L^{0\ast} := L$, and
  $L^{(n+1)\ast} := (L^{n\ast})^\ast$ for $n \in \N_0$.
  Finally, let us write $i^{n \ast} := \{i\}^{n \ast}$ and
  $Q_i^{n \ast} := \bigcup_{\ell \in i^{n \ast}} Q_\ell$ for $i \in I$ and
  $n \in \N$.

  \medskip{}

  Now, let $\CalQ = (Q_i)_{i \in I}$ and $\CalP = (P_j)_{j \in J}$ be two
  admissible coverings of $\R^d$. Let us define
  \begin{equation}
    I_j := \{ i \in I \,:\, Q_i \cap P_j \neq \emptyset \} \text{ for } j \in J
    \qquad \text{and} \qquad
    J_i := \{ j \in J \,:\, P_j \cap Q_i \neq \emptyset \} \text{ for } i \in I
    \, .
    \label{eq:IntersectionSets}
  \end{equation}

  \noindent
  We shall say that
  \begin{enumerate}[label=(\arabic*)]
    \item $\CalQ$ is \textbf{weakly subordinate} to $\CalP$ if
          $\sup_{i \in I} |J_i|$ is finite, that is, if the number of elements
          of the sets $J_i$ is uniformly bounded;

    \item $\CalQ$ is \textbf{almost subordinate} to $\CalP$ if
          \[
            \exists \, N \in \N_0 \quad
              \forall \, i \in I \quad
                \exists \, j_i \in J : \quad
                  Q_i \subset P_{j_i}^{N\ast} \, ;
          \]
          %there is some $N \in \N_0$ such that for each $i \in I$,
          %there is some $j_i \in J$ such that $Q_i \subset P_{j_i}^{N\ast}$;

    \item $\CalQ$ and $\CalP$ are \textbf{weakly equivalent} if
          $\CalQ$ is weakly subordinate to $\CalP$ and if also $\CalP$ is weakly
          subordinate to $\CalQ$; and

    \item $\CalQ$ and $\CalP$ are \textbf{equivalent}, if
          $\CalQ$ is almost subordinate to $\CalP$ and if also $\CalP$ is almost
          subordinate to $\CalQ$.
  \end{enumerate}
\end{defn}

Most of the results in \cite{DecompositionEmbeddings} concerning
embeddings of decomposition spaces will require $\CalQ$ to be almost
%For most of the embedding results, we will assume $\CalQ$ to be almost
subordinate to $\CalP$, or vice versa.
However, this almost subordinateness is often quite difficult to verify.
Since it is often easier to verify that one covering is \emph{weakly}
subordinate to another, the following lemma will be useful.
%whereas it is usually easier to verify that one
%covering is weakly subordinate to another covering.
%Therefore, the following result is convenient.

\begin{lem}\label{lem:WeakAndAlmostSubordinateness}
  (slightly corrected version of Proposition~3.6 in \cite{DecompositionSpaces1};
   see also Lemma~2.12 in \cite{DecompositionEmbeddings})

\noindent Let $\CalQ = (Q_i)_{i \in I}$ and $\CalP = (P_j)_{j \in J}$ be two admissible
  coverings of $\R^d$ such that each $Q_i$ is path-connected and each
  $P_j$ is open.

  Then $\CalQ$ is weakly subordinate to $\CalP$ if and only if
  $\CalQ$ is almost subordinate to $\CalP$.
\end{lem}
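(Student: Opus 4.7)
The proof naturally splits into the two directions of the equivalence.

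\textbf{Easy direction (almost subordinate implies weakly subordinate).} Suppose $Q_i \subset P_{j_i}^{N\ast}$ for some $N \in \N_0$ independent of $i$. If $j \in J_i$, then $P_j \cap Q_i \neq \emptyset$, so $P_j$ intersects $P_{j_i}^{N\ast} = \bigcup_{\ell \in j_i^{N\ast}} P_\ell$. Hence there exists $\ell \in j_i^{N\ast}$ with $P_j \cap P_\ell \neq \emptyset$, which by definition of the $\ast$-operation places $j \in j_i^{(N+1)\ast}$. Therefore $|J_i| \leq |j_i^{(N+1)\ast}|$, and admissibility of $\CalP$ (iterated $N+1$ times) bounds $|j_i^{(N+1)\ast}|$ uniformly in $i$. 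Hence $\sup_{i\in I} |J_i| < \infty$.

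\textbf{Hard direction (weakly subordinate implies almost subordinate).} Let $M := \sup_{i \in I} |J_i| \in \N$. The plan is to show that $N := M - 1$ works uniformly: for every $i \in I$ and \emph{any} choice of $j_i \in J_i$, we have $Q_i \subset P_{j_i}^{N\ast}$.

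Fix $i \in I$ and consider the graph $G_i$ on vertex set $J_i$ with edge relation $j \sim j'$ iff $(P_j \cap Q_i) \cap (P_{j'} \cap Q_i) \neq \emptyset$. Because each $P_j$ is open in $\R^d$, each $P_j \cap Q_i$ is relatively open in $Q_i$; because $\CalP$ covers $\R^d$, these sets cover $Q_i$. If $G_i$ were disconnected, we could partition $J_i = A \sqcup B$ with no edge between $A$ and $B$, and then $U := \bigcup_{j \in A}(P_j \cap Q_i)$ and $V := \bigcup_{j \in B}(P_j \cap Q_i)$ would be disjoint nonempty relatively open sets covering the path-connected (hence connected) set $Q_i$, a contradiction. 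Thus $G_i$ is connected.

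Now a connected graph on at most $M$ vertices has diameter at most $M-1$. Moreover, an edge $j \sim j'$ in $G_i$ certainly implies $P_j \cap P_{j'} \neq \emptyset$, i.e.\ $j' \in j^\ast$ in $\CalP$. By iteration, any two vertices within distance $k$ in $G_i$ satisfy $j' \in j^{k\ast}$. Consequently, for \emph{any} fixed $j_i \in J_i$, every $j \in J_i$ lies in $j_i^{(M-1)\ast}$. Therefore
\[
  Q_i \subset \bigcup_{j \in J_i} P_j
         \subset \bigcup_{\ell \in j_i^{(M-1)\ast}} P_\ell
         = P_{j_i}^{(M-1)\ast},
\]
which is exactly the almost subordinateness of $\CalQ$ to $\CalP$ with $N = M - 1$.

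\textbf{Main obstacle.} The only subtle step is the connectedness argument in the hard direction: one must correctly identify the graph $G_i$ whose edges are tested \emph{inside} $Q_i$ (so that openness of the $P_j$'s yields a relatively open cover of $Q_i$), yet transfer those edges into the cluster relation on $\CalP$, which is defined by intersections in all of $\R^d$. This monotonicity (an intersection inside $Q_i$ is a fortiori a nonempty intersection in $\R^d$) is what couples path-connectedness of $Q_i$ to the combinatorial cluster structure of $\CalP$, and it is what allows a single constant $N = M - 1$ to work uniformly in $i$.
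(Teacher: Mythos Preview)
The paper does not prove this lemma; it is quoted from the literature (Proposition~3.6 in \cite{DecompositionSpaces1} and Lemma~2.12 in \cite{DecompositionEmbeddings}) without proof. Your argument is correct and is essentially the standard proof found in those references: the easy direction uses that $J_i \subset j_i^{(N+1)\ast}$ together with iterated admissibility of $\CalP$, and the hard direction exploits connectedness of $Q_i$ to show that the intersection graph $G_i$ on $J_i$ is connected, hence has diameter at most $|J_i|-1 \leq M-1$, which yields $Q_i \subset P_{j_i}^{(M-1)\ast}$.
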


In addition to the different concepts of subordinateness,
%introduced in Definition~\ref{def:Subordinateness},
we shall also need the following two notions of
relative moderateness.
%which was introduced in \cite[Definition 2.10]{DecompositionEmbeddings}.

\begin{defn}\label{def:RelativeModerateness}

\noindent Let $\CalQ = (Q_i)_{i \in I}$ and $\CalP = (P_j)_{j \in J}$ be two almost
  structured coverings of $\R^d$ with associated families
  $(T_i \mybullet + b_i)_{i \in I}$ and $(S_j \mybullet + c_j)_{j \in J}$
  and let $w = (w_i)_{i \in I}$ be a weight.
  We shall say that
  \begin{enumerate}[label=(\arabic*)]
    \item $w$ is \textbf{relatively $\CalP$-moderate} if there is a constant
          $C > 0$ such that
          \[
            w_i \leq C \cdot w_\ell
            \qquad \text{for all } \, i,\ell \in I
                   \text{ and all } j \in J
                   \text{ for which } Q_i \cap P_j
                                      \neq \emptyset
                                      \neq Q_\ell \cap P_j \, ;
          \]

    \item $\CalQ$ is relatively $\CalP$-moderate if the weight
          $\big(|\det T_i|\big)_{i \in I}$ is relatively $\CalP$-moderate.
  \end{enumerate}
\end{defn}

We now state the two embedding results on which we shall rely.
In the first, we assume $\CalP$ to be almost subordinate to $\CalQ$,
while in the second we will assume $\CalQ$ to be almost subordinate to $\CalP$.

\begin{thm}\label{thm:EmbeddingPFinerThanQ}
  (special case of Theorem~7.2 in \cite{DecompositionEmbeddings})

\noindent Let $p_1,p_2, q_1, q_2 \in (0,\infty]$.
  Let $\CalQ = (Q_i)_{i \in I}$ and $\CalP = (P_j)_{j \in J}$ be two almost
  structured coverings of $\R^d$ with associated families
  $(T_i \mybullet + b_i)_{i \in I}$ and $(S_j \mybullet + c_j)_{j \in J}$.
  Let $w = (w_i)_{i \in I}$ and $v = (v_j)_{j \in J}$ be $\CalQ$-moderate and
  $\CalP$-moderate, respectively.

  Assume that $\CalP$ is almost subordinate to $\CalQ$ and that $\CalP$ and $v$
  are relatively $\CalQ$-moderate.
  Finally, for each $i \in I$, let us choose an index $j_i \in J$ that
  $Q_i \cap P_{j_i} \neq \emptyset$.
  Then $\DecompSp(\CalQ, L^{p_1}, \ell_w^{q_1}) \hookrightarrow
  \DecompSp(\CalP, L^{p_2}, \ell_v^{q_2})$ if and only if
  \[
    p_1 \leq p_2
    \quad \text{and} \quad
    \left\|
      \left(
        \frac{v_{j_i}}{w_i}
        \cdot |\det T_i|^{\nu}
        \cdot |\det S_{j_i}|^{p_1^{-1} - p_2^{-1} - \nu}
      \right)_{i \in I}
    \right\|_{\ell^{q_2 \cdot (q_1 / q_2)'}} < \infty
  \]
  %Here, we use the following notation:
  where
  \[
    %\nu := \left( \frac{1}{q_2} - \frac{1}{p_1^{\pm \triangle}} \right)_+
    \nu := \left( q_2^{-1} - p_1^\ast \right)_+
    \qquad \text{with} \qquad
    x_+ := \max \{0, x\}
    \qquad \text{and} \qquad
    %\frac{1}{p_1^{\pm \triangle}} = \min \big\{ p_1^{-1}, 1 - p_1^{-1} \big\}
    p_1^\ast := \min \big\{ p_1^{-1}, 1 - p_1^{-1} \big\}
    \, ,
  \]
  and where the exponent $q_2 \cdot (q_1 / q_2)' \in (0,\infty]$ is
  defined by
  \begin{equation}
    \frac{1}{q_2 \cdot (q_1 / q_2)'}
    = \left(q_2^{-1} - q_1^{-1}\right)_+ \,\, .
    \label{eq:EmbeddingSequenceSpaceExponent}
  \end{equation}
  In particular, $q_2 \cdot (q_1 / q_2)' = \infty$ if and only if $q_1 \leq q_2$.
\end{thm}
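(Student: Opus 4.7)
The plan is to exploit the almost subordinateness of $\CalP$ to $\CalQ$ to transfer the control of $\CalP$-localized pieces to $\CalQ$-localized pieces, and then carry out two layers of Hölder/sequence-space arguments.

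For the \emph{sufficiency} direction, I would start with $f \in \DecompSp(\CalQ,L^{p_1},\ell_w^{q_1})$, fix regular partitions of unity $(\varphi_i)_{i\in I}$ and $(\psi_j)_{j\in J}$ subordinate to $\CalQ$ and $\CalP$, and bound $\|\Fourier^{-1}(\psi_j \widehat{f}\,)\|_{L^{p_2}}$ for each $j$. Writing $\psi_j \widehat{f} = \sum_{i \in I_j^{N\ast}} \psi_j\,\varphi_i\,\widehat{f}$ where the sum is finite and uniformly bounded in cardinality (this is where almost subordinateness of $\CalP$ to $\CalQ$ and admissibility enter, via Lemma~\ref{lem:WeakAndAlmostSubordinateness}), one applies a Bernstein-type embedding for band-limited functions on $Q_i$, namely
\[
  \|\Fourier^{-1}(\psi_j \varphi_i \widehat{f}\,)\|_{L^{p_2}}
  \lesssim |\det T_i|^{1/p_1 - 1/p_2}\,\|\Fourier^{-1}(\varphi_i \widehat{f}\,)\|_{L^{p_1}}
\]
for $p_1 \leq p_2$; the Fourier multiplier $\psi_j$ is absorbed by convolution with a Schwartz function whose $L^1$-norm is controlled using $\CalP$-moderateness of $|\det S_j|$. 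The case $p_1 > p_2$ already fails even for a single $Q_i$, which yields the necessity of $p_1 \leq p_2$. Combining with $\CalQ$-moderateness of $w$ and relative $\CalQ$-moderateness of $v$, both weights are essentially constant over $I_j^{N\ast}$, so that $w_i \asymp w_{j_i}$ and $v_{j} \asymp v_{j_i}$ within each cluster.

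The two-layer sequence estimate then works by first applying Hölder's inequality in $i$ over the finite cluster $I_j^{N\ast}$ (trivial when $p_1 \leq p_2$ and one is already in an $\ell^{p_1}$-into-$\ell^{p_2}$ type situation), and then Hölder's inequality in $j$ with conjugate exponents $q_1/q_2$ and $(q_1/q_2)'$. The latter produces exactly the exponent $q_2(q_1/q_2)'$ prescribed by \eqref{eq:EmbeddingSequenceSpaceExponent}, and the weight that appears in the $\ell^{q_2(q_1/q_2)'}$-norm is precisely
\[
  \frac{v_{j_i}}{w_i}\,|\det T_i|^{\nu}\,|\det S_{j_i}|^{p_1^{-1}-p_2^{-1}-\nu},
\]
where the exponent $\nu = (q_2^{-1}-p_1^\ast)_+$ arises from carefully choosing whether to absorb the Bernstein factor $|\det T_i|^{1/p_1-1/p_2}$ or the convolution norm $|\det S_{j_i}|^{1/p_1-1/p_2}$; the split is dictated by which of $p_1^{-1}$ or $1-p_1^{-1}$ is smaller, explaining the formula for $p_1^\ast$. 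Summability of this weight in $\ell^{q_2(q_1/q_2)'}$ then yields the embedding.

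For the \emph{necessity} direction, the strategy is to construct concrete test families. Pick for each $i \in I$ a function $f_i \in \DecompSp(\CalQ,L^{p_1},\ell_w^{q_1})$ whose Fourier transform is a bump concentrated in $Q_i$ (obtained by transplanting a fixed prototype from $Q_{k_i}^{(0)}$ via $T_i\mybullet + b_i$). The $\CalQ$-norm of $f_i$ is essentially $w_i\,|\det T_i|^{1/2 - 1/p_1}$, while the $\CalP$-norm can be computed from the localization against $\psi_{j_i}$, producing the factor $v_{j_i}\,|\det S_{j_i}|^{1/2-1/p_2}$ up to a Bernstein correction. Testing the embedding on finite linear combinations of the $f_i$ with scalars in a sequence space, and appealing to the converse of Hölder's inequality, forces exactly the stated $\ell^{q_2(q_1/q_2)'}$ summability condition. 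The main obstacle is the bookkeeping of the exponents $\nu$ and $p_1^\ast$ and verifying that the test functions saturate the Bernstein inequality up to constants independent of $i$; this is where one must use structuredness of $\CalQ$ (so that bumps are all affine images of finitely many prototypes) rather than only admissibility. Since the statement is announced as a special case of Theorem~7.2 in \cite{DecompositionEmbeddings}, the rigorous execution of all these steps is already carried out there.
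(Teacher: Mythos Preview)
The paper does not provide its own proof of this theorem: it is stated as a special case of Theorem~7.2 in \cite{DecompositionEmbeddings} and is simply quoted without argument, so there is no in-paper proof to compare your proposal against. Your sketch is broadly consistent with the standard strategy one expects for such embedding results (Bernstein-type inequalities plus a two-layer H\"older argument for sufficiency, and bump-function test families for necessity), and you yourself defer the rigorous execution to the cited reference. Since the paper treats this as a black-box import, your proposal is adequate as an orientation sketch; just be aware that the precise bookkeeping producing the exponent $\nu = (q_2^{-1} - p_1^{\ast})_+$ and the split between $|\det T_i|$ and $|\det S_{j_i}|$ is considerably more delicate than your one-sentence explanation suggests, and a full proof would need to treat the cases $p_1 \in (0,1)$, $p_1 \in [1,2]$, and $p_1 \in [2,\infty]$ separately.
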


\begin{rem*}
  The definition~\eqref{eq:EmbeddingSequenceSpaceExponent} results in
  the same value as when computing $q_2 \cdot (q_1 / q_2)'$ as usual
  (with the conjugate exponent as defined in Appendix~\ref{sec:Notation})
  \emph{if the latter expression is defined};
  the advantage of \eqref{eq:EmbeddingSequenceSpaceExponent} is that it is defined
  in some cases where $q_2 \cdot (q_1 / q_2)'$ is not --- for instance if $q_2 = \infty$.
\end{rem*}

%For the next theorem, we assume $\CalQ$ to be almost subordinate to $\CalP$.

\begin{thm}\label{thm:EmbeddingQFinerThanP}
  (special case of Theorem~7.4 in \cite{DecompositionEmbeddings})

\noindent Let $p_1,p_2, q_1, q_2 \in (0,\infty]$, let $\CalQ = (Q_i)_{i \in I}$
  and $\CalP = (P_j)_{j \in J}$ be two almost
  structured coverings of $\R^d$ with associated families
  $(T_i \mybullet + b_i)_{i \in I}$ and $(S_j \mybullet + c_j)_{j \in J}$,
  and let $w = (w_i)_{i \in I}$ and $v = (v_j)_{j \in J}$ be $\CalQ$-moderate and
  $\CalP$-moderate, respectively.

  Let us assume that $\CalQ$ is almost subordinate to $\CalP$ and that $\CalQ$ and $w$
  are relatively $\CalP$-moderate.
  Finally, for each $j \in J$, let us choose $i_j \in I$ such that
  $Q_{i_j} \cap P_j \neq \emptyset$.
  Then ${\DecompSp(\CalQ, L^{p_1}, \ell_w^{q_1}) \hookrightarrow \DecompSp(\CalP, L^{p_2}, \ell_v^{q_2})}$
  if and only if
  \[
    p_1 \leq p_2
    \quad \text{and} \quad
    \left\|
      \left(
        \frac{v_{j}}{w_{i_j}}
        \cdot |\det T_{i_j}|^{p_1^{-1} - p_2^{-1} - \mu}
        \cdot |\det S_{j}|^{\mu}
      \right)_{j \in J}
    \right\|_{\ell^{q_2 \cdot (q_1 / q_2)'}} < \infty
  \]
  where the exponent $q_2 \cdot (q_1 / q_2)' \in (0,\infty]$ is as defined in
  \eqref{eq:EmbeddingSequenceSpaceExponent} and where
  \[
    %\mu := \left(\frac{1}{p_2^{\triangledown}} - \frac{1}{q_1}\right)_+
    \mu := \left( p_2^{\ast \ast} - q_1^{-1} \right)_+
    \qquad \text{with} \qquad
    p_2^{\ast \ast} := \big( \min \big\{p_2, p_2 ' \big\} \big)^{-1} \, .
    %p_2^{\triangledown} := \min \big\{p_2, p_2 ' \big\} \, .
  \]
  Here, $p_2 '$ is the conjugate exponent of $p_2 \in (0,\infty]$,
  as defined in Appendix~\ref{sec:Notation}.
\end{thm}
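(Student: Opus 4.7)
The plan is to derive the claim as a direct specialisation of Theorem~7.4 of \cite{DecompositionEmbeddings}, which characterises embeddings of the form $\DecompSp(\CalQ, L^{p_1}, Y) \hookrightarrow \DecompSp(\CalP, L^{p_2}, Z)$ for general solid sequence spaces $Y$ and $Z$ under exactly the hypotheses we are assuming here (almost-subordinateness of $\CalQ$ to $\CalP$ and relative $\CalP$-moderateness of $\CalQ$ and $w$). The current statement corresponds to the case $Y = \ell_w^{q_1}$, $Z = \ell_v^{q_2}$, so the proof reduces to matching notation and performing the sequence-space computation.

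Concretely, Theorem~7.4 of \cite{DecompositionEmbeddings} produces the embedding criterion as the boundedness of an inclusion between two weighted $\ell^{q_1}$- and $\ell^{q_2}$-type spaces indexed by $J$, where the weights on each $j \in J$ involve the Jacobian factor $|\det T_{i_j}|^{p_1^{-1}-p_2^{-1}}$ coming from a Bernstein-type estimate that converts $L^{p_1}$-control on $\CalQ$-tiles into $L^{p_2}$-control on $\CalP$-tiles, the weight ratio $v_j / w_{i_j}$, and an additional pair of factors $|\det S_j|^{\mu}$ and $|\det T_{i_j}|^{-\mu}$ produced by an optimisation step. The standard H\"older reduction then collapses this inclusion to an $\ell^{q_2 \cdot (q_1/q_2)'}$ condition on the stated sequence, with the exponent defined via \eqref{eq:EmbeddingSequenceSpaceExponent} so as to cover also the degenerate case $q_2 = \infty$. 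Relative $\CalP$-moderateness of both $\CalQ$ and $w$ is precisely what guarantees that replacing any other $i \in I$ with $Q_i \cap P_j \neq \emptyset$ by the chosen $i_j$ changes the relevant quantities only by a factor uniformly bounded in $j$, so that the criterion is independent of the particular selection $(i_j)_{j \in J}$.

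The analytic heart of Theorem~7.4 itself is its sufficiency direction: one fixes a regular partition of unity $(\psi_j)_{j \in J}$ subordinate to $\CalP$, writes $\psi_j \widehat{g} = \psi_j \sum_{i \in I} \varphi_i \widehat{g}$ using a regular partition $(\varphi_i)_{i \in I}$ for $\CalQ$, and estimates $\|\Fourier^{-1}(\psi_j \widehat{g})\|_{L^{p_2}}$ via a Bernstein-type inequality on the finite cluster $\{ i \in I : Q_i \cap P_j \neq \emptyset \}$. The exponent $\mu = (p_2^{\ast\ast} - q_1^{-1})_+$ arises from selecting, for each given $p_2$ and $q_1$, whichever of a direct $L^{p_2}$ estimate or a dual $L^{p_2'}$ estimate produces the sharper control when combined with the $\ell^{q_1}$-sum over the cluster; this is exactly why $p_2^{\ast\ast}$ involves $\min\{p_2, p_2'\}$. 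The necessity direction is then the classical one: plug in test functions whose Fourier transforms are concentrated on individual tiles $P_j$ and read off the sequence-space condition from the assumed embedding. Since all of this has already been carried out in \cite{DecompositionEmbeddings}, the main obstacle in the present proof is the careful bookkeeping required to identify the stated weighted $\ell^{q_2 \cdot (q_1/q_2)'}$ condition on $(v_j/w_{i_j}) \cdot |\det T_{i_j}|^{p_1^{-1}-p_2^{-1}-\mu} \cdot |\det S_j|^{\mu}$ with the general criterion of Theorem~7.4, with special attention to the placement of the exponent $\mu$ on the two Jacobians $|\det T_{i_j}|$ and $|\det S_j|$.
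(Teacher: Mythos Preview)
Your proposal is correct and matches the paper's approach: the paper does not prove this theorem at all but simply states it as a special case of Theorem~7.4 in \cite{DecompositionEmbeddings}, which is precisely the reduction you carry out. Your additional discussion of how the exponent $\mu$ and the Jacobian factors arise in the proof of the cited theorem is accurate commentary, but goes beyond what the paper itself provides.
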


Finally, we shall also need the following rigidity result, which shows that if
two decomposition spaces are identical, then the ``ingredients'' used to define the
decomposition spaces are closely related.

\begin{thm}\label{thm:RigidityTheorem}(Theorem~6.9 in \cite{DecompositionEmbeddings})

\noindent Let $p_1, p_2, q_1, q_2 \in (0,\infty]$, $\CalQ = (Q_i)_{i \in I}$ and $\CalP := (P_j)_{j \in J}$ be two almost structured coverings
  of $\R^d$ and $w = (w_i)_{i \in I}$ and $v = (v_j)_{j \in J}$ be $\CalQ$-moderate
  and $\CalP$-moderate, respectively.

  \smallskip{}

  If $\DecompSp(\CalQ,L^{p_1},\ell_w^{q_1}) = \DecompSp(\CalP,L^{p_2},\ell_v^{q_2})$, then
  $(p_1, q_1) = (p_2, q_2)$ and there is a constant $C > 0$ such that
  \[
    C^{-1} \cdot w_i \leq v_j \leq C \cdot w_i
    \qquad \forall \, i \in I \text{ and } j \in J \text{ for which } Q_i \cap P_j \neq \emptyset.
  \]
  If furthermore $(p_1, q_1) \neq (2, 2)$ then $\CalQ$ and $\CalP$ are weakly equivalent.
\end{thm}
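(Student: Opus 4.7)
The plan is to combine the closed graph theorem with test functions whose Fourier support probes a single cell of each covering, and then to exploit an extremiser argument in the non-Hilbertian case $(p_1,q_1)\neq(2,2)$.

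Since both $\DecompSp(\CalQ,L^{p_1},\ell_w^{q_1})$ and $\DecompSp(\CalP,L^{p_2},\ell_v^{q_2})$ embed continuously into the reservoir $Z'$, the closed graph theorem upgrades the hypothesised set-equality into a two-sided quasi-norm equivalence. To pin down $p_1=p_2$ and the weight comparability, I would fix $i\in I$ and $j\in J$ with $Q_i\cap P_j\neq\emptyset$; the almost-structured property guarantees that this intersection contains a non-empty open set around some interior point $\xi_0$. I then probe with dilated bumps: pick $\psi\in\Schwartz(\R^d)$ with $\widehat\psi$ supported in a small ball around $0$, and for $\lambda\ge 1$ set $\widehat{f_\lambda}(\xi):=\widehat\psi\bigl(\lambda(\xi-\xi_0)\bigr)$, so that $f_\lambda(x)=\lambda^{-d}e^{2\pi i\langle\xi_0,x\rangle}\psi(x/\lambda)$ and hence $\|f_\lambda\|_{L^p}=\lambda^{d/p-d}\|\psi\|_{L^p}$. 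For $\lambda$ large enough, $\supp\widehat{f_\lambda}\subset Q_i\cap P_j$; by admissibility only finitely many pieces of the partition of unity contribute, and smoothness of the partition functions at $\xi_0$ combined with $\CalQ$- and $\CalP$-moderateness yields
\[
\|f_\lambda\|_{\DecompSp(\CalQ,L^{p_1},\ell_w^{q_1})}\asymp w_i\,\|f_\lambda\|_{L^{p_1}},\qquad \|f_\lambda\|_{\DecompSp(\CalP,L^{p_2},\ell_v^{q_2})}\asymp v_j\,\|f_\lambda\|_{L^{p_2}},
\]
with constants independent of $\lambda$. Letting $\lambda\to\infty$ in the norm equivalence forces the $\lambda$-exponents to match, i.e.\ $p_1=p_2$, and then $w_i\asymp v_j$ uniformly in $(i,j)$ with $Q_i\cap P_j\neq\emptyset$.

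To obtain $q_1=q_2$ I would pass to lacunary superpositions. Using the countable infinity of $I$ (Theorem~\ref{thm:RegularPartitionOfUnityExistence}) and admissibility, select a sequence $(i_n)\subset I$ with pairwise disjoint clusters $i_n^\ast$, together with $j_n\in J$ satisfying $Q_{i_n}\cap P_{j_n}\neq\emptyset$ and, after passing to a subsequence, pairwise disjoint $j_n^\ast$. Applying the probe construction at each $(i_n,j_n)$ with some fixed large $\lambda$, I obtain functions $f_n$, and for any finitely supported $a=(a_n)\in\CC^\N$ the disjointness of supports on both sides decouples the norms as
\[
\Big\|\sum_n a_n f_n\Big\|_{\DecompSp(\CalQ,L^{p_1},\ell_w^{q_1})}\asymp\bigl\|(a_n w_{i_n})_n\bigr\|_{\ell^{q_1}},\qquad \Big\|\sum_n a_n f_n\Big\|_{\DecompSp(\CalP,L^{p_2},\ell_v^{q_2})}\asymp\bigl\|(a_n v_{j_n})_n\bigr\|_{\ell^{q_2}}.
\]
Since $w_{i_n}\asymp v_{j_n}$, equivalence of these two weighted $\ell^{q_k}$-norms on all finitely supported sequences forces $q_1=q_2$.

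The main obstacle is the weak-equivalence statement under $(p_1,q_1)\neq(2,2)$, which I would prove by contradiction. Suppose $\sup_i|J_i|=\infty$; then for each $N$ there exist $i\in I$ and $N$ distinct indices $j_1,\dots,j_N\in J_i$. Using almost-structuredness, choose bumps $g_\ell$ with $\widehat{g_\ell}$ supported in pairwise disjoint open subsets of $Q_i\cap P_{j_\ell}$, and test on $f:=\sum_{\ell=1}^N c_\ell g_\ell$. On the $\CalP$-side the supports remain decoupled, giving $\|f\|_{\DecompSp(\CalP,L^{p_2},\ell_v^{q_2})}\asymp\|(c_\ell v_{j_\ell})_\ell\|_{\ell^{q_1}}$, whereas on the $\CalQ$-side the Fourier support sits in the fixed bounded cluster $i^\ast$, giving $\|f\|_{\DecompSp(\CalQ,L^{p_1},\ell_w^{q_1})}\asymp w_i\|f\|_{L^{p_1}}$. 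For $(p_1,q_1)\neq(2,2)$ one can then pick the coefficients $(c_\ell)$ --- random Khintchine-type phases when $p_1\neq q_1$, and coefficients that saturate Hausdorff--Young for $p_1=q_1\neq 2$ --- so that the ratio of the two norms grows like $N^{\gamma(p_1,q_1)}$ with $\gamma(p_1,q_1)>0$, contradicting Step~1. A symmetric argument with $\CalQ$ and $\CalP$ swapped handles $\sup_j|I_j|=\infty$. The technical heart, and the hardest part, is constructing these extremisers explicitly so as to expose the Hilbertian gap, rather than merely invoking Hausdorff--Young inequalities abstractly.
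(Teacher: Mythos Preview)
The paper does not prove this theorem; it is quoted without proof as Theorem~6.9 of \cite{DecompositionEmbeddings}, so there is no in-paper argument to compare your proposal against.

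That said, your sketch follows the standard rigidity strategy and is broadly on the right track for Steps~1--3, though two points deserve more care than you give them: the uniform multiplier bound $\|\Fourier^{-1}(\varphi_k\widehat{f_\lambda})\|_{L^{p_1}}\lesssim\|f_\lambda\|_{L^{p_1}}$ needed in Step~2 is not free when $p_1<1$ (you must invoke the regular-partition/$L^p$-BAPU machinery and the support constraint, not just smoothness at $\xi_0$), and in Step~3 the simultaneous disjointness of the $i_n^\ast$ and the $j_n^\ast$ needs an argument---choosing the probe points $\xi_n\to\infty$ and then thinning twice does work, but you should say so.

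Step~4 is, as you acknowledge, the real crux, and your outline is not yet a proof. Two gaps stand out. First, the $P_{j_\ell}$ intersecting a fixed $Q_i$ need not have disjoint clusters $j_\ell^\ast$, so your claimed decoupling on the $\CalP$-side requires first passing to a subcollection of size $\gtrsim N$ with that property. Second, and more seriously, the extremiser construction is only gestured at: for the Khintchine route you need uniform control of $\|(\sum_\ell|g_\ell|^2)^{1/2}\|_{L^{p_1}}$ relative to $N$, and for the Hausdorff--Young route you need to actually build $g_\ell$'s (using the almost-structured parametrisation $P_j=S_jP'+c_j$) whose $L^{p_1}$-interaction saturates the relevant inequality in the right direction. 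The full proof in \cite{DecompositionEmbeddings} is lengthy precisely because making these extremisers quantitative, uniformly in $i$ and $N$, is delicate.
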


%\todo[inline]{Maybe(?) add \cite[Lemma 6.11]{DecompositionEmbeddings}, which
%can be used (at least for $p \geq 1$) to show that the decomposition spaces
%defined by two different $(\alpha,\beta)$-coverings coincide.}

%\section{Design of a covering of \texorpdfstring{$ \R^2 $}{ℝ²}}
\section{Defining the wave packet covering \texorpdfstring{$\CalQ^{(\alpha,\beta)}$}{}}
\label{sec:CoveringDefinition}

\noindent In order to define the wave packet smoothness spaces,
we shall need suitable coverings of the frequency plane $\R^2$, which we now introduce.
We recall that $\N = \{ 1,2,3,\dots \}$, $\N_0 = \{0\} \cup \N$
and $B_r (x)$ is the Euclidean ball of radius $r$ with its centre in $x \in \R^d$.

%This covering is compatible with the
%\emph{frequency concentration} of the elements of the wave packet system.

\begin{defn}\label{defn:CoveringSets}

\noindent Let $0 \leq \beta \leq \alpha \leq 1$.
  First, let
  \begin{equation}
      N := 10,
      \qquad
      m_j^{\max}
      := m_j^{\max,\alpha}
      := \left \lceil 2^{\left(1-\alpha\right)j-1} \right \rceil,
      \qquad \text{and} \qquad
      \ell_j^{\max}
      := \ell_j^{\max,\beta}
      := \left\lceil
           N \cdot 2^{\left(1-\beta\right)j}
         \right\rceil
      \label{eq:MaxValues}
  \end{equation}
  and furthermore $I := I^{(\alpha,\beta)} := \{ 0 \} \cup I_0^{(\alpha,\beta)}$, where
  \begin{equation}
      I_0
      := I_0^{(\alpha,\beta)}
      := \left\{
            (j,m,\ell) \in \N \times \N_0 \times \N_0
            \with
            m \leq m_{j}^{\max} \text{ and } \ell \leq \ell_j^{\max}
         \right\} .
      %\quad \text{and} \quad
      %I := I^{(\alpha,\beta)} := \{0\} \cup I_0^{(\alpha,\beta)}.
      \label{eq:IndexSet}
  \end{equation}
  Second, let us choose $\eps \in (0, 1/32)$ and define
  %$\alpha, \beta \in [0,1]$ so that $\beta \leq \alpha$ and define
  \begin{equation}
    Q := (-\eps, 1 + \eps) \times (-1-\eps, 1+\eps)
    \qquad \text{and} \qquad
    P := [0,1] \times [-1,1] \, .
    \label{eq:BaseSets}
  \end{equation}
  Third, for all $j \in \N$ and all $m \in \N_0$ such that $m \leq m_j^{\max}$, let us define
  \begin{equation}
      A_{j} := \left(
                 \begin{matrix}
                    2^{\alpha j} & 0 \\
                    0            & 2^{\beta j}
                 \end{matrix}
               \right)
      %\quad \! \text{where} \! \quad
      %\gamma_{j,m} := 2^{\beta j - 1}
      %                \cdot \frac{2^{j-1} + (m \!+\! 1) 2^{\alpha j}}{2^j}
      \qquad \text{and} \qquad
      c_{j,m} := \left(
                   \begin{matrix}
                     2^{j-1} \!+\! m \, 2^{ \alpha j} \\ 0
                   \end{matrix}
                 \right).
      %\quad \text{and} \quad
      %\quad \text{for} \quad m \in \N_0 \text{ with } m \leq m_{j}^{\max},
      \label{eq:DilationMatrixAndTranslation}
  \end{equation}
  Fourth, for all $\ell \in \N_0$ such that $\ell \leq \ell_j^{\max}$, let us define
  \begin{equation}
      R_{j,\ell} := \left(
                      \begin{matrix}
                         \cos \Theta_{j,\ell} & -\sin \Theta_{j,\ell} \\
                         \sin \Theta_{j,\ell} & \cos \Theta_{j,\ell}
                      \end{matrix}
                    \right)
      \quad \text{where} \quad
      \Theta_{j,\ell} := \Theta_{j,\ell}^{(\beta)} := 2 \ell \cdot \phi_j
      \quad \text{and} \quad
      \phi_j := \phi_j^{(\beta)} := \frac{\pi}{N} \cdot 2^{(\beta - 1) j} .
      %\text{ for } \ell \in \N_0 \text{ with } \ell \leq \ell_j^{\max} .
      \label{eq:RotationMatrix}
  \end{equation}
  Finally, for all $(j,m,\ell) \in I_0^{(\alpha,\beta)}$, let us define
  %Finally, for all $j \in \N$ and $m,\ell \in \N_0$ such that
  %$m \leq m_j^{\max}$ and $\ell \leq \ell_j^{\max}$, let us define
  \begin{equation}
      Q_{j,m,\ell} := R_{j,\ell} \, (A_{j} \, Q + c_{j,m})
      \quad \text{and} \quad
      P_{j,m,\ell} := R_{j,\ell} \, (A_{j} \, P + c_{j,m}) ,
      \label{eq:CoveringSets}
  \end{equation}
  and set $Q_0 := B_4 (0)$ and $P_0 := B_3 (0)$.
  The family $\CalQ^{(\alpha,\beta)} := (Q_i)_{i \in I}$
  will be called the \textbf{$(\alpha,\beta)$ wave packet covering} of $\R^2$.
\end{defn}

\begin{figure}[h]
  \begin{center}
    \vspace{-0.1cm}
    \includegraphics[width=0.45\textwidth]{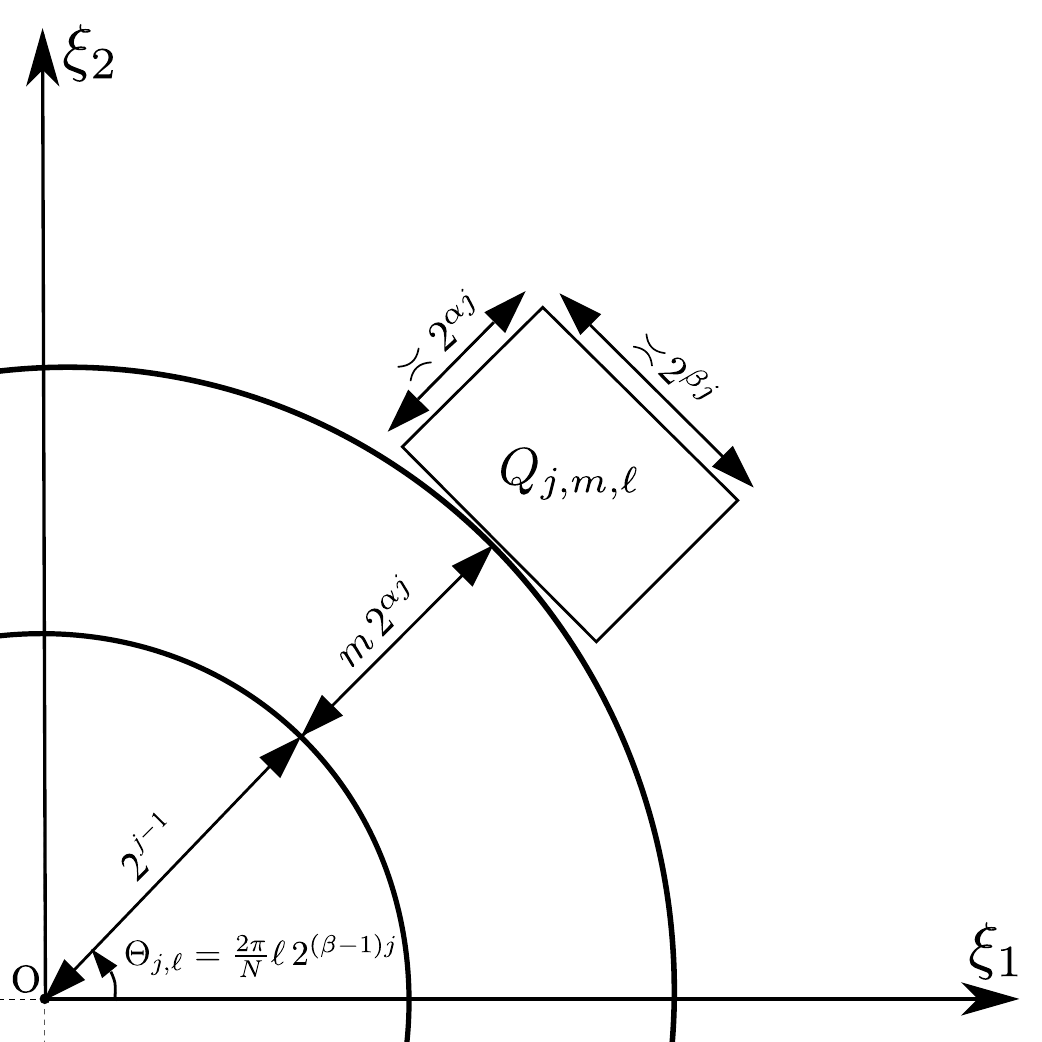}
    \vspace{-0.1cm}
  \end{center}
  \caption{\label{fig:RectangleSketch}
  An element $Q_{j,m,\ell}$ of the wave packet covering introduced
  in Definition~\ref{defn:CoveringSets}.
  %Sketch of a generic rectangle $Q_{j,m,\ell}$ of
  %the $(\alpha,\beta)$-wave-packet smoothness covering.
  The distance between the rectangle $Q_{j,m,\ell}$ and the origin $O$
  of the frequency plane is approximately $2^{j-1} + m 2^{\alpha j}$.
  The length of $Q_{j,m,\ell}$, in the radial direction, is approximately
  $2^{\alpha j}$ while its width, in the angular direction, is approximately $2^{\beta j}$.
  %$2^{(\beta-1) j} \big(2^{j-1} + (m+1) 2^{\alpha j} \big) \approx 2^{\beta j}$.
  %The height of the rectangle $Q_{j,m,\ell}$ is approximately $2^{\alpha j}$;
  %its width is about $2^{(\beta-1) j} \big(2^{j-1} + (m+1) 2^{\alpha j} \big)$,
  %and its distance from the origin is approximately $2^{j-1} + m 2^{\alpha j}$.
  Its axis of symmetry intersecting the origin $O$ deviates from the
  $\xi_1$-axis by the angle
  $\Theta_{j,\ell} = \frac{2 \pi}{N} \ell \cdot 2^{(\beta-1)j}$.
  For a given $j$, all rectangles $Q_{j,m,\ell}$ are contained in the dyadic
  ring $\{ \xi \in \R^2 : |\xi| \asymp 2^j \}$.
  We also note that $m_j^{\max} \asymp 2^{(1-\alpha)j}$ and $\ell_j^{\max} \asymp 2^{(1-\beta)j}$.}
  %The bold continuous circular lines indicate boundaries between adjacent dyadic rings.}
\end{figure}

\noindent In other words, the elements $Q_{j,m,\ell}$ of the covering
are generated from the rectangle $Q$ by scaling, shifting and
rotating --- the corresponding operators being represented by
$A_{j}$, $c_{j,m}$ and $R_{j,\ell}$ --- as schematically shown in
Figure~\ref{fig:RectangleSketch}.
For a given $j$, all rectangles $Q_{j,m,\ell}$ are contained in the dyadic
ring $\{\xi \in \R^2 \,:\, |\xi| \asymp 2^j\}$. Moreover, the length
of the rectangle $Q_{j,m,\ell}$, in the radial direction, is approximately
$2^{\alpha j}$ while its width, in the angular direction, is approximately
$2^{\beta j}$.

%More precisely, the rectangles  of the covering are chosen such
%that all sets $Q_{j,m,\ell}$ of the ``$j$-th scale'' are contained in the
%dyadic annulus $\{\xi \in \R^2 \,:\, |\xi| \asymp 2^j\}$.
%Moreover, each rectangle $Q_{j,m,\ell}$ has ``length''
%(in the radial direction) $\approx 2^{\alpha j}$, and ``width''
%(in the angular direction) $\approx 2^{\beta j}$, where the parameters
%$0 \leq \beta \leq \alpha \leq 1$ are fixed.
%The following definition makes this rough description more precise.

%In what follows, we shall often use the notation
%\begin{equation}
%    I_0^{(\alpha,\beta)} := I_0
%    := \left\{
%          (j,m,\ell) \in \N \times \N_0 \times \N_0
%          \with
%          m \leq m_{j}^{\max} \text{ and } \ell \leq \ell_j^{\max}
%       \right\}
%    \label{eq:IndexSet}
%\end{equation}
%for the set containing all index triplets $(j,m,\ell)$ of the family
%$(Q_{j,m,\ell})_{j,m,\ell}$.
%for the index set of the family $(Q_{j,m,\ell})_{j,m,\ell}$ will turn out
%to be very practical.

%We shall now prove that the family $(Q_{j,m,\ell})_{(j,m,\ell) \in I_0}$ we
We shall now prove that the family $\CalQ^{(\alpha,\beta)}$ introduced
in Definition~\ref{defn:CoveringSets} is indeed a covering of $\R^2$.
%designed and an open disc $B_{4} \left( 0 \right)$ of radius four centered at
%the origin, together, constitute a covering of $ {\mathbb{R}}^2 $.
Indeed, we shall prove the following stronger statement.

\begin{lem}\label{lem:CoveringCovers}
    %Let us define $P_0 := B_3 (0)$ and $Q_0 := B_4 (0)$. Then
    Let $0 \leq \beta \leq \alpha \leq 1$.
    The sets $(P_i)_{i \in I^{(\alpha,\beta)}}$ and $(Q_i)_{i \in I^{(\alpha,\beta)}}$ introduced
    in Definition~\ref{defn:CoveringSets} satisfy
    \begin{equation}
        \R^2
        = P_0 \,\,
          \cup \,\, \bigcup_{j=1}^\infty \,\,
                      \bigcup_{m=0}^{m_j^{\max}} \,\,
                        \bigcup_{\ell = 0}^{\ell_j^{\max}} \,\,
                            P_{j,m,\ell}
        = Q_0 \,\,
          \cup \,\, \bigcup_{j=1}^\infty \,\,
                      \bigcup_{m=0}^{m_j^{\max}} \,\,
                        \bigcup_{\ell = 0}^{\ell_j^{\max}} \,\,
                            Q_{j,m,\ell} \, .
        \label{eq:Covering}
    \end{equation}
\end{lem}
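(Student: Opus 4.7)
The plan is to reduce to the $P$-covering and then work in polar coordinates. Since $[0,1]\times[-1,1] \subset (-\eps, 1+\eps) \times (-1-\eps, 1+\eps)$ and $B_3(0) \subset B_4(0)$, one has $P_i \subset Q_i$ for every $i \in I^{(\alpha,\beta)}$, so the second equality in \eqref{eq:Covering} follows from the first, and I may focus on the $P$-family. Points with $|\xi| < 3$ lie in $P_0$, so I fix $\xi \in \R^2$ with $|\xi| \geq 3$ and write $\xi = (r\cos\theta, r\sin\theta)$. Using that $R_{j,\ell}$ is orthogonal, $A_{j}$ is diagonal, and $c_{j,m}$ is horizontal, membership $\xi \in P_{j,m,\ell}$ is equivalent to the pair of scalar conditions
\[
    r\cos\phi \in \bigl[2^{j-1} + m \cdot 2^{\alpha j},\, 2^{j-1} + (m+1) \cdot 2^{\alpha j}\bigr]
    \qquad \text{and} \qquad
    r|\sin\phi| \leq 2^{\beta j},
\]
where $\phi := \theta - \Theta_{j,\ell}$. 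I will choose $j$, then $\ell$, then $m$.

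Set $I_j := [\,2^{j-1}/\cos\phi_j,\, 2^j + 2^{\alpha j}\,]$ and select $j \geq 1$ such that $r \in I_j$. With $j$ fixed, the bound $\ell_j^{\max} \geq N \cdot 2^{(1-\beta)j}$ ensures that the angles $\Theta_{j,\ell}$ for $0 \leq \ell \leq \ell_j^{\max}$ form a grid on $\R/2\pi\Z$ of step $2\phi_j$ which spans at least one full turn, so I can pick $\ell$ with $|\phi| \leq \phi_j = (\pi/N) \cdot 2^{(\beta-1)j}$. Using the crude bound $r \leq 2^j + 2^{\alpha j}$ together with $\alpha \leq 1$, the angular condition follows from
\[
    r|\sin\phi| \leq r|\phi| \leq (\pi/N) \cdot \bigl(2^{\beta j} + 2^{(\alpha+\beta-1)j}\bigr) \leq (2\pi/N) \cdot 2^{\beta j} < 2^{\beta j},
\]
since $N = 10$. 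For $m$, the lower endpoint of $I_j$ gives $r\cos\phi \geq r\cos\phi_j \geq 2^{j-1}$, while $r\cos\phi \leq r \leq 2^j + 2^{\alpha j} \leq 2^{j-1} + (m_j^{\max}+1) \cdot 2^{\alpha j}$; the choice $m := \min\bigl\{\lfloor (r\cos\phi - 2^{j-1})/2^{\alpha j}\rfloor,\, m_j^{\max}\bigr\}$ then lies in $\{0,\ldots,m_j^{\max}\}$ and realises the radial condition.

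What I expect to be the main technical obstacle is the claim $[3, \infty) \subset \bigcup_{j \geq 1} I_j$. The left endpoint of $I_1$ equals $1/\cos\phi_1 \leq 1/\cos(\pi/10) < 3$, so $I_1$ already contains a right-neighbourhood of $3$; it therefore suffices to show that $I_j$ and $I_{j+1}$ overlap for every $j \geq 1$, i.e.\ $(2^j + 2^{\alpha j})\cos\phi_{j+1} \geq 2^j$, equivalently $\cos\phi_{j+1} \geq 1/(1 + 2^{(\alpha-1)j})$. Combining $\cos\phi_{j+1} \geq 1 - \tfrac{1}{2}\phi_{j+1}^2$ with the explicit formula $\phi_{j+1}^2 = (\pi/N)^2 \cdot 2^{2(\beta-1)(j+1)}$ reduces this to an inequality of the form
\[
    2^{(\alpha - 2\beta + 1)j + 2(1-\beta)} \gtrsim \pi^2/N^2 ,
\]
whose exponent is non-negative under the standing hypothesis $0 \leq \beta \leq \alpha \leq 1$ (giving $\alpha + 1 \geq 2\beta$) and whose right-hand side $\pi^2/100$ is comfortably less than $1$. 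This is the one place where the choice $N = 10$ and the assumption $\beta \leq \alpha$ genuinely enter; once it is settled, the argument concludes.
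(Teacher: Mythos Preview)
Your argument is correct. Both proofs work in polar coordinates, but they are organised differently. The paper introduces auxiliary polar sectors
\[
  S_{j,m,\ell}
  = R_{j,\ell}\Bigl\{ r(\cos\phi,\sin\phi) : 2^{j-1}+m\,2^{\alpha j}\le r\le 2^{j-1}+(m+1)\,2^{\alpha j},\ |\phi|\le\phi_j \Bigr\}
\]
(together with a slightly modified sector $S_{j,0,\ell}$ at the inner boundary), shows via the linear bound $\cos\phi\ge 1-\tfrac{2}{\pi}\phi$ that each such sector is contained in $P_{j,m-1,\ell}\cup P_{j,m,\ell}$ (this is where the conditions $N\ge 4$ and $N\ge 8$ appear), and then verifies that the sectors tile the plane outside $B_3(0)$.

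You instead fix the point first and manufacture the triple $(j,\ell,m)$ directly: the radial interval $I_j=[\,2^{j-1}/\cos\phi_j,\,2^j+2^{\alpha j}\,]$ determines $j$, the angular grid determines $\ell$, and a floor determines $m$. Your only nontrivial step is the overlap $I_j\cap I_{j+1}\neq\emptyset$, which you handle with the quadratic bound $\cos\phi\ge 1-\tfrac12\phi^2$; this reduces to $2^{(\alpha-2\beta+1)j+2(1-\beta)}\ge \pi^2/N^2$, where the exponent is non-negative because $\beta\le\alpha\le 1$ forces $\alpha+1\ge 2\beta$, and the right-hand side is below $1$ since $N=10$. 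This is a genuinely different (and somewhat shorter) verification than the paper's sector-inclusion estimates: you avoid the auxiliary sets and the two separate case analyses for $m\ge1$ and $m=0$, at the cost of a slightly less geometric picture. The paper's approach has the mild advantage that the sector inclusions make the constants $N\ge 4$ and $N\ge 8$ visibly sharp at each step, whereas your quadratic bound gives a single, looser constraint on $N$.
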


\begin{proof}
First of all we note that $P_0 \subset Q_0$ and $P \subset Q$, whence
$P_{j,m,\ell} \subset Q_{j,m,\ell} \subset \R^2$ for all ${(j,m,\ell) \in I_0^{(\alpha,\beta)}}$.
Therefore, the second equality in \eqref{eq:Covering} indeed holds, provided
that the first holds.
%\begin{equation}
%P_{jm\ell} := R_{jl} (A_{jm} P + B_{jm})
%\label{eq:Pjm\ell}
%\end{equation} and
%\begin{equation}
%P := [0 , 1 ] \times [-1 , 1 ] \hspace{0.25cm}.
%\label{eq:P}
%\end{equation}

\smallskip{}

Second we note that
%The first step of the proof will be to show that
\begin{equation}
    S_{j,m,\ell}
    := R_{j,\ell} \, S_{j,m,0}
    \subset P_{j, m-1, \ell} \cup P_{j,m,\ell}
    %S_{j,m,\ell} & := R_{j,\ell} \, S_{j,m,0}\hspace{0.25cm},
    \qquad \text{for all} \quad (j,m,\ell) \in I_0^{(\alpha,\beta)}
           \quad \text{for which} \quad m \geq 1 \, ,
    %\quad \forall \, j \in \N
    %\quad \forall \, m \in \N \text{ with } m \leq m_j^{\max}
    %\quad \forall \, \ell \in \N_0 \text{ with } \ell \leq \ell_j^{\max} \, ,
    \label{eq:SectorsIncludedInPSets}
\end{equation}
%for all $m \in \N$ with $m \leq m_j^{\max}$
%and all $\ell \in \N_0$ with $\ell \leq \ell_j^{\max}$,
where\vspace{-0.3cm} %$m \neq 0$ where
\begin{align}
    %\label{eq:RotatedSector} \\
    %\shortintertext{with}
    S_{j,m,0}
    & := \left\lbrace
            \xi = r \cdot \left(
                            \begin{matrix}
                              \cos \phi \\ \sin \phi
                            \end{matrix}
                          \right)
            \, : \,
            2^{j-1} + m \, 2^{\alpha j}
            \leq r
            \leq 2^{j-1} + \left( m+1 \right) 2^{\alpha j}
            \hspace{0.25cm} \text{and} \hspace{0.25cm}
            \vert \phi \vert \leq \phi_j
       \right\rbrace
    \label{eq:NonRotatedSector}
\end{align}
%\begin{equation}
%S_{jm\ell} := R_{j\ell} \, S_{jm0}\hspace{0.25cm},
%\label{eq:RotatedSector}
%\end{equation}
%\begin{equation}
%S_{jm0} := \left\lbrace \xi = r \cdot  \left( \begin{array}{ c }
%\cos \phi \\
%\sin \phi \\
%\end{array} \right) : 2^{j-1} + m 2^{\alpha j}
%\leq r \leq 2^{j-1} + \left( m+1 \right) 2^{\alpha j}
%\hspace{0.25cm} \text{and} \hspace{0.25cm} \vert \phi \vert \leq \phi_j \right\rbrace
%\label{eq:NonRotatedSector}
%\end{equation}
and $\phi_j = \frac{\pi}{N} \cdot 2^{(\beta-1)j}$ as defined in
\eqref{eq:RotationMatrix}.

%To see this, note from equations \eqref{eq:CoveringSets}
Indeed, from the definitions in \eqref{eq:CoveringSets} and
\eqref{eq:SectorsIncludedInPSets} %we see that
%$P_{j,m,\ell}$ and the definition  of $S_{j,m,\ell}$,
we see that $P_{j,m,\ell}$ and $S_{j,m,\ell}$ can be obtained
by rotating $P_{j,m,0}$ and $S_{j,m,0}$ through the angle
$\Theta_{j,\ell} = 2\ell \cdot \phi_j$, respectively.
Therefore, we would prove~\eqref{eq:SectorsIncludedInPSets} in general,
should we prove it for $\ell = 0$.
%Therefore to prove~\eqref{eq:SectorsIncludedInPSets} for any $\ell$ in general,
%it suffices to consider the case $\ell = 0$.
%it suffices to show that
%\begin{equation}
%S_{jm0} \subset P_{j, m-1, 0} \cup P_{jm0}
%\label{eq:I}
%\end{equation} for all $m \neq 0$.
To do so, we first note from~\eqref{eq:CoveringSets} and
\eqref{eq:DilationMatrixAndTranslation} that
\begin{equation}
       P_{j,m,0}
    =  [2^{j-1} +m 2^{\alpha j} , 2^{j-1} + (m+1) 2^{\alpha j}]
       \times [-2^{\beta j}, 2^{\beta j} \, ]
       %\times [-\gamma_{jm} , \gamma_{jm}]
    =: P_{j,m,0}^{(1)} \times P_{j,m,0}^{(2)} \, ,
\label{eq:Pjm0}
\end{equation}
%and therefore $\xi_1 \in P_{j, m-1, 0}^{(1)} \cup P_{j,m,0}^{(1)}$ if and only if
and therefore $\xi \in P_{j, m-1, 0} \cup P_{j,m,0}$ if and only if
$2^{j-1} + (m-1) 2^{\alpha j} \leq \xi_1 \leq 2^{j-1} + (m+1) 2^{\alpha j}$
and $|\xi_2| \leq 2^{\beta j}$.
We now verify that these conditions hold for $\xi \in S_{j,m,0}$.

Indeed, from~\eqref{eq:NonRotatedSector} we see that
if $\xi = r \cdot (\cos \phi, \sin \phi) \in S_{j,m,0}$, then
%on the one hand
$\xi_1 \leq \vert \xi_1 \vert \leq r \leq 2^{j-1} + \left( m+1 \right) 2^{\alpha j}$
%but on the other hand
and
\[
  \xi_1
  = r \cos \phi
  = r \cos \vert \phi \vert
  \geq r \cos \phi_j
  \geq (2^{j-1} + m 2^{\alpha j})
       \cdot \left(1-\frac{2}{N} 2^{(\beta-1)j}\right) \, ,
\]
where we noticed that $\vert \phi \vert \leq \phi_j \leq \pi / 2$
as $N = 10$ and $0 \leq \beta \leq \alpha \leq 1$
and that the cosine is a decreasing function on
$[0, \frac{\pi}{2}]$ that satisfies
\begin{equation}
    1- \frac{2}{\pi} \phi
    \leq \cos \phi
    \leq \frac{\pi}{2} \left( 1 - \frac{2}{\pi} \phi \right)
    =         \frac{\pi}{2} - \phi
    \qquad \text{for} \qquad 0 \leq \phi \leq \frac{\pi}{2} \, ;
    \label{eq:COS}
\end{equation}
see Appendix~\ref{sec:TrigonometricLinearBounds} for a proof.
%since $N \geq 2$ and $\beta \leq \alpha \leq 1$,
%and we used the well-known fact that the

Furthermore, since $m \leq m_j^{\max} \leq 1+2^{(1-\alpha )j-1}$,
and noting that $\beta \leq \alpha \leq 1$ and hence
$2^{(\beta-\alpha)j} \leq 1$ and $2^{(\beta - 1)j + 1} \leq 2$,
we establish the following chain of implications:
\begin{align*}
    & \phantom{\Longleftrightarrow}
        (2^{j-1} + m \, 2^{\alpha j})
        \left( 1-\frac{2}{N} \cdot 2^{(\beta-1)j} \right)
        \overset{!}{\geq} 2^{j-1} + (m-1)2^{\alpha j} \\
    & \Longleftrightarrow
        1 - \frac{2}{N} \cdot 2^{(\beta-1)j}
        \overset{!}{\geq}
        \frac{2^{j-1} + (m-1)2^{\alpha j} }{2^{j-1} + m \, 2^{\alpha j}}
        = 1 - \left(2^{(1-\alpha) j -1} + m \right)^{-1} \\
    & \Longleftrightarrow
        N
        \overset{!}{\geq} 2 \cdot 2^{(\beta -1) j}
                          \cdot \left( 2^{(1-\alpha) j -1} + m \right) \\
    & \Longleftarrow
        N
        \overset{!}{\geq} 2 \cdot \left(
                                     2^{(\beta-\alpha) j -1}
                                     + \left(
                                         1+2^{(1-\alpha) j-1}
                                       \right) \cdot 2^{(\beta -1) j}
                                  \right)
          =         2^{(\beta-\alpha) j}
                    + 2^{(\beta -1) j+1}
                    + 2^{(\beta - \alpha ) j} \\
    & \Longleftarrow
        N \overset{!}{\geq} 4.
\end{align*}
The last inequality does indeed hold, since $N = 10$ by Definition~\ref{defn:CoveringSets}.
%Note that the last condition from above ($N \geq 4$) is indeed satisfied,
%cf.\@ Definition \ref{defn:CoveringSets}.
%This is indeed the case, given~(\ref{eq:6}).
Thus we have demonstrated that
%$\xi_1 \in P_{j, m-1, 0}^{(1)} \cup P_{j,m,0}^{(1)}$
$2^{j-1} + (m-1) \, 2^{\alpha j} \leq \xi_1 \leq 2^{j-1} + (m+1) \, 2^{\alpha j}$
if $\xi \in S_{j,m,0}$.
%Overall, we have shown that
%$0 < 2^{j-1} + (m-1) \cdot 2^{\alpha j} \leq \xi_1
%\leq 2^{j-1} + (m+1) \cdot 2^{\alpha j}$ if $\xi \in S_{j,m,0}$.
%Put differently, $\xi_1 \in P_{j, m-1, 0}^{(1)} \cup P_{j,m,0}^{(1)}$ if
%$\xi \in S_{j,m,0}$.

Now we estimate $\xi_2$ for $\xi \in S_{j,m,0}$.
Write $\xi = r \cdot (\cos \phi, \sin \phi)^t$ with $r,\phi$ as in
Equation~\eqref{eq:NonRotatedSector}.
Next, note as a consequence of the definition of $m_j^{\max}$ in Equation~\eqref{eq:MaxValues}
that $m+1 \leq 2 + 2^{(1 - \alpha) j - 1}$, and recall that $\alpha - 1 \leq 0$ and $N = 10$.
In combination with the estimate $|\sin \phi| \leq |\phi|$, this implies
%Equation~\eqref{eq:COS}, combined with the estimates $|\sin \phi| \leq |\phi|$
%(see the definition of $m_j^{\max}$ in Equation~\eqref{eq:MaxValues})
%and with our choice of $N = 10$ entails for $\xi = r \cdot (\cos \phi, \sin \phi)^t \in S_{j,m,0}$
%because of $|\phi| \leq \phi_j = \frac{\pi}{N} \cdot 2^{(\beta - 1) j}$ that
\begin{align*}
  |\xi_2|
  & = r \cdot |\sin \phi|
    \leq r \cdot |\phi|
    \leq (2^{j-1} + (m+1) \, 2^{\alpha j}) \cdot \frac{\pi}{N} \cdot 2^{(\beta - 1) j} \\
  & \leq \frac{2^{\beta j} \pi}{N} \cdot \Big( \frac{1}{2} + (m+1) \cdot 2^{(\alpha - 1) j} \Big)
    \leq \frac{2^{\beta j} \pi}{N} \cdot \Big( 1 + 2^{1 + (\alpha - 1) j} \Big)
    \leq \frac{3 \pi}{N} \cdot 2^{\beta j}
    \leq 2^{\beta j} .
\end{align*}
Overall, we have thus shown $\xi \in P_{j,m-1,0} \cup P_{j,m,0}$ for all $\xi \in S_{j,m,0}$.
As discussed above, we have thus proven Equation~\eqref{eq:SectorsIncludedInPSets}.

\medskip{}

Third, we note that
\begin{equation}
  S_{j,0,\ell}
  := R_{j,\ell} \, S_{j,0}
  \subset P_{j,0,\ell}
  %\Lambda_{j,0,\ell} \subset P_{j,0,\ell}
  \qquad \text{for} \quad
  j \in \N \quad \text{and} \quad
  \ell \in \N_0
  \,\, \text{ with } \,\, \ell \leq \ell_j^{\max},
  \label{eq:SectorInclusionNoShift}
\end{equation}
where
%\begin{align}
\begin{equation}
    % \label{eq:Sj0l} \\
    % \shortintertext{and}
    %\quad \! \text{with} \! \quad
    S_{j,0}
    := \! \left\lbrace
          r \cdot  \left(
                     \begin{matrix}
                        \cos \phi \\ \sin \phi
                     \end{matrix}
                   \right)
          \, : \,
          2^{j-1}+ 2^{\alpha (j-1)} \leq r \leq 2^{j-1} + 2^{\alpha j}
          \hspace{0.25cm} \text{and} \hspace{0.25cm}
          \vert \phi \vert \leq \phi_j \!
       \right\rbrace  .
\label{eq:Sj00}
\end{equation}
%\end{align}
%\begin{equation}
%S_{j0l} := R_{jl} S_{j00}
%\label{eq:Sj0l}
%\end{equation}
%and
%\begin{equation}
%  S_{j0}
%  := \left\lbrace
%        \xi = r \cdot \left(
%                        \begin{array}{c} \cos \phi \\ \sin \phi \\ \end{array}
%                      \right)
%        \, : \,
%        2^{j-1} + 2^{\alpha (j-1)} \leq r \leq 2^{j-1} + 2^{\alpha j}
%        \hspace{0.25cm} \text{and} \hspace{0.25cm}
%        \vert \phi \vert \leq \phi_j
%     \right\rbrace
%\label{eq:Sj00} \, .
%\end{equation}
Indeed, since $S_{j,0,\ell}$ and $P_{j,0,\ell}$ can be obtained by rotating
%are obtained from
$S_{j,0} = S_{j,0,0}$ and $P_{j,0,0}$ % by applying the
using the matrix $R_{j,\ell}$, we would prove \eqref{eq:SectorInclusionNoShift}
in general, for any $\ell$, if we prove it for $\ell = 0$.
%it suffices to consider the case $\ell = 0$ for
%proving \eqref{eq:SectorInclusionNoShift}.
%Indeed, we would prove this, should we demonstrate that
%\begin{equation}
%S_{j00} \subset P_{j 0 0}\hspace{0.25cm}.
%\label{eq:II}
%\end{equation}

To do so, we infer from \eqref{eq:DilationMatrixAndTranslation}
and \eqref{eq:CoveringSets} that
\[
  P_{j,0,0}
  = [2^{j-1} , 2^{j-1} + 2^{\alpha j}]
    \times [-2^{\beta j} , 2^{\beta j}]
    %\times [-\gamma_{j,0} , \gamma_{j,0}]
  =: P_{j,0,0}^{(1)} \times P_{j,0,0}^{(2)}.
\]
Furthermore, from~\eqref{eq:Sj00} we deduce that,
if $\xi = r \cdot (\cos \phi, \sin \phi)^t \in S_{j,0,0}$,
then on the one hand $\xi_1 \leq \vert \xi_1 \vert \leq r \leq 2^{j-1} + 2^{\alpha j}$,
but on the other hand, thanks to \eqref{eq:COS},
\[
  \xi_1
  = r \cos \phi
  = r \cos \vert \phi \vert \geq r \cos \phi_j
  \geq (2^{j-1} + 2^{\alpha (j-1)}) \left( 1-\frac{2}{N} 2^{(\beta-1)j} \right)
  \geq 2^{j-1} \, ,
\]
where the last inequality is justified by the following chain of implications:
\[
    \frac{2}{N} \cdot 2^{(\beta-1)j} \cdot (2^{j-1} + 2^{\alpha (j-1)})
    \overset{!}{\leq} 2^{\alpha (j-1)}
    \quad \! \Longleftrightarrow \quad \!
    N \overset{!}{\geq} 2^{1+\alpha}
                             \cdot \left(
                                      2^{(\beta-\alpha)j-1}
                                      +2^{-\alpha} 2^{(\beta-1)j}
                                   \right)
    \quad \! \overset{0 \leq \beta \leq \alpha \leq 1}{\Longleftarrow} \quad \!
    N \overset{!}{\geq} 8.
\]
%Here, we used again that fact that $0 \leq \beta \leq \alpha \leq 1$.
The last inequality does indeed hold, since $N = 10$.
Thus we have shown that $\xi_1 \in P_{j,0,0}^{(1)}$ if $\xi \in S_{j,0,0}$.

Furthermore, if $\xi = r \cdot (\cos \phi, \sin \phi)^t \in S_{j,0,0}$, then
\[
  |\xi_2|
  = r \, |\sin \phi|
  \leq r \cdot |\phi|
  \leq (2^{j-1} + 2^{\alpha j}) \cdot \frac{\pi}{N} \, 2^{(\beta - 1) j}
  = \frac{\pi}{N} \, 2^{\beta j} \cdot (2^{-1} + 2^{(\alpha - 1) j})
  \leq \frac{3 \pi}{2 N} \cdot 2^{\beta j}
  \leq 2^{\beta j}
\]
%$\vert \xi_2 \vert = r \vert \sin \phi \vert \leq r \vert \phi \vert
%\leq (2^{j-1} + 2^{\alpha j}) \frac{\pi}{N} 2^{(\beta-1)j} \leq \gamma_{j,0}$
and hence $\xi_2 \in P_{j,0,0}^{(2)}$ and $\xi \in P_{j,0,0}$.
This completes the proof of \eqref{eq:SectorInclusionNoShift} for $\ell = 0$
and hence in general, for any $\ell$.

\medskip{}

%Using equations \eqref{eq:SectorsIncludedInPSets}
%and \eqref{eq:SectorInclusionNoShift},
%we can now complete the proof.
Finally, from~\eqref{eq:SectorsIncludedInPSets} we deduce that
\begin{equation}
  \bigcup_{j=1}^{\infty} \,\,
      \bigcup_{m=0}^{m_j^{\max}} \,\,
          \bigcup_{\ell=0}^{\ell_j^{\max}} P_{j,m,\ell}
  \supset \bigcup_{j=1}^{\infty} \,\,
              \bigcup_{m=1}^{m_j^{\max}} \,\,
                  \bigcup_{\ell=0}^{\ell_j^{\max}} S_{j,m,\ell}
  \supset \bigcup_{j=1}^{\infty}
              \left\lbrace
                  \xi \in {\mathbb{R}}^2
                  \, : \,
                  2^{j-1} + 2^{\alpha j}
                  \leq \vert \xi \vert
                  \leq 2^j + 2^{\alpha j}
              \right\rbrace .
\label{eq:U1}
\end{equation}
Here, we noted that $\ell_j^{\max} \geq \pi / \phi_j$ and
$m_j^{\max} = \lceil 2^{(1-\alpha) j - 1} \rceil$
thanks to \eqref{eq:MaxValues} and \eqref{eq:RotationMatrix} and therefore
%To justify the last inclusion in~\eqref{eq:U1},
%note that $\ell_j^{\max}\geq \pi / \phi_j$ by our choice of
%$\ell_j^{\max}$ and $\phi_j$ , so that
%noted that, given~(\ref{eq:l}),  and therefore
%\[
$%\[
  \bigcup_{\ell=0}^{\ell_j^{\max}} [\phi_j (2 \ell - 1), \phi_j (2 \ell + 1)]
  = [- \phi_j , \phi_j (2 \cdot \ell_j^{\max}+1)] \supset [0, 2 \pi]
$ %\]
and
%Furthermore, note that our choice of $m_j^{\max}\vphantom{\sum^{\ell_j^{\max}}}$
%(see again equation \eqref{eq:MaxValues}) implies
\begin{align*}
    \bigcup_{m=1}^{m_j^{\max}}
       [2^{j-1} \! + m 2^{\alpha j} , 2^{j-1} \! + (m+1) 2^{\alpha j}]
    & =       [2^{j-1} \! +2^{\alpha j},
               2^{j-1} \! + (m_j^{\max} +1) 2^{\alpha j}] \\
    & \supset [2^{j-1} \! +2^{\alpha j},
               2^{j-1} \! + (2^{(1-\alpha) j -1} \! + 1) 2^{\alpha j}]
      =       [2^{j-1} \! +2^{\alpha j} , 2^{j} \! + 2^{\alpha j}].
\end{align*}

Similarly, from \eqref{eq:SectorInclusionNoShift}, we infer that
\begin{equation}
    \begin{split}
        \bigcup_{j=2}^{\infty} \,\,
            \bigcup_{\ell=0}^{\ell_j^{\max}} \,\,
                P_{j,0,\ell}
    &\supset \bigcup_{j=2}^{\infty} \,\,
                \bigcup_{\ell=0}^{\ell_j^{\max}}
                    S_{j,0,\ell}
    \supset \bigcup_{j=2}^{\infty}
                \left\lbrace
                    \xi \in {\mathbb{R}}^2
                    \, : \,
                    2^{j-1} + 2^{\alpha (j-1)}
                    \leq \vert \xi \vert
                    \leq 2^{j-1} + 2^{\alpha j}
                \right\rbrace \\
    &= \bigcup_{j=1}^{\infty}
         \left\lbrace
            \xi \in {\mathbb{R}}^2
            \,:\,
            2^{j} + 2^{\alpha j}
            \leq \vert \xi \vert
            \leq 2^{j} + 2^{\alpha (j+1)}
         \right\rbrace \, .
    \end{split}
    \label{eq:U2}
\end{equation}
%\begin{equation}
%\begin{array}{c}
%\bigcup_{j=2}^{\infty} \bigcup_{l=0}^{l_j^{\max}} P_{j0l}
%\supset \bigcup_{j=2}^{\infty} \bigcup_{l=0}^{l_j^{\max}} S_{j0l}
%\supset \bigcup_{j=2}^{\infty} \lbrace \xi \in {\mathbb{R}}^2 : 2^{j-1} + 2^{\alpha (j-1)} \leqslant \vert \xi \vert \leqslant 2^{j-1} + 2^{\alpha j} \rbrace = \\
%\bigcup_{j=1}^{\infty} \lbrace \xi \in {\mathbb{R}}^2 : 2^{j} + 2^{\alpha j} \leqslant \vert \xi \vert \leqslant 2^{j} + 2^{\alpha (j+1)} \rbrace \hspace{0.25cm}.
%\end{array}
%\end{equation}
Combining~\eqref{eq:U1} and~\eqref{eq:U2} results in
\begin{align*}
    \bigcup_{j=1}^{\infty} \,
        \bigcup_{m=0}^{m_j^{\max}} \,\,
            \bigcup_{\ell=0}^{\ell_j^{\max}} \,\,
                P_{j,m,\ell}
    &\supset \bigcup_{j=1}^{\infty}
                \left\lbrace
                    \xi \in {\mathbb{R}}^2
                    \,:\,
                    2^{j-1} \! + 2^{\alpha ((j-1)+1)}
                    \! \leq \! \vert \xi \vert
                    \! \leq \! 2^j \! + 2^{\alpha (j+1)}
                \right\rbrace % \\
    = \left\lbrace
        \xi \in {\mathbb{R}}^2
        :
        \vert \xi \vert \geq 1 \! + \! 2^{\alpha}
      \right\rbrace .
\end{align*}
Since $1 + 2^\alpha \leq 3$, this implies
%and, given $1+2^{\alpha} \leq 3$, we finally see\vspace{-0.3cm}
\(
    %B_3 (0) \cup \bigcup_{j=1}^{\infty} \bigcup_{m=0}^{m_j^{\max}}
    %\bigcup_{l=0}^{l_j^{\max}} Q_{jml}
    B_{3} (0)
    \,\, \cup \,\, \bigcup_{j=1}^{\infty} \,\,
                       \bigcup_{m=0}^{m_j^{\max}} \,\,
                         \bigcup_{\ell=0}^{\ell_j^{\max}} \,\,
                            P_{j,m,\ell}
    = {\mathbb{R}}^2
    %\label{eq:Uq}
\).
\end{proof}

\section{Proving admissibility of the wave packet covering}
\label{sec:Admissibility}

%In order to prove admissibility and (almost) structuredness of the covering,
%it will turn out to be helpful to first study the geometric properties of the
%covering in slightly greater detail.
%This is done in the following lemma.
%To prove the admissibility of the covering, we will need a lemma concerning
%its geometric structure.

Our next lemma will clarify in more detail the geometric structure of the
wave packet covering and will be useful in proving its admissibility.
%The lemma shows that the scaling matrix
%$A_{j,m} = \mathrm{diag}(2^{\alpha j}, \gamma_{j,m})$---and hence the length
%and width of the elements $Q_{j,m,\ell} = R_{j,\ell}(A_{j,m}Q + B_{j,m})$ of
%the covering---hardly depend on the index $m$ and are almost entirely
%determined by the index $j$.
The lemma makes clear how the Euclidean length $|\xi|$ and the angle
$\angle(\xi)$ of the vectors $\xi \in Q_{j,m,\ell}$ are influenced by the
indices $j,m$ and $\ell$, respectively.

%But first, we record an elementary
%estimate concerning the quantity $\gamma_{j,m}$ that appears in the scaling
%matrices $A_{j,m}$ from equation \eqref{eq:DilationMatrixAndTranslation}.
%, so that
%This estimate---which we shall use time and again---shows that the matrix
%$A_{j,m}$ depends only very weakly on $m$;
%in fact, by definition of $m_j^{\max}$,
%This shows that the matrix $A_{j,m} = \mathrm{diag}(2^{\alpha j}, \gamma_{j,m})$
%hardly depends on the index $m$ and is almost entirely determined by the index
%$j$.
%
%Now we can analyze the geometric structure of the covering.
%The following lemma shows that the indices $j,m$ determine the
%(euclidean) length of the elements $\xi \in Q_{j,m,\ell}$, while the parameter
%$\ell$ determines their angle.

\begin{lem}\label{lem:CoveringGeometry}
  Let $0 \leq \beta \leq \alpha \leq 1$.
  With notation as in Definition~\ref{defn:CoveringSets}, let
  $(j,m,\ell) \in I_0^{(\alpha,\beta)}$ and $\xi \in Q_{j,m,\ell}$.
  Then
  \begin{equation}
    2^{j-2}
    < 2^{j-1} + 2^{\alpha j} \, (m - \eps)
    \leq |\xi|
    \leq 2^{j-1} + 2^{\alpha j} (m + 2 + 2 \eps)
    \leq 2^j + 2^{\alpha j} (3 + 2 \eps)
    < 2^{j + 3}
    \label{eq:AbsoluteValueEstimate}
  \end{equation}
  and
  \begin{equation}
    \exists \, \varphi \in \R : \qquad
      \xi = |\xi| \cdot e^{i \varphi}
      \quad \text{and} \quad
      \left| \varphi - \Theta_{j,\ell}\right|
      \leq 4 \, (1 + \eps) \cdot 2^{(\beta-1) j},
    \label{eq:AngleControl}
  \end{equation}
  where the vector $(\cos \varphi, \sin \varphi)^t \in \R^2$ is identified
  with the complex number $e^{i \varphi}$.
\end{lem}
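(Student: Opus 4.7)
The plan is to unwind the definition $Q_{j,m,\ell} = R_{j,\ell}(A_j Q + c_{j,m})$ and reduce the problem to elementary estimates on the axis-aligned rectangle $A_j Q + c_{j,m}$, exploiting that $R_{j,\ell}$ is an isometric rotation by the angle $\Theta_{j,\ell}$. For $\xi \in Q_{j,m,\ell}$, I would write $\xi = R_{j,\ell}(A_j \eta + c_{j,m})$ with $\eta = (x,y)^t \in Q$; then $|\xi| = |A_j\eta+c_{j,m}|$, and — provided $A_j\eta + c_{j,m}$ lies in the open right half-plane — the argument of $\xi$ equals $\Theta_{j,\ell}$ plus the argument of $A_j\eta + c_{j,m}$. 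Setting $(a,b)^t := A_j\eta + c_{j,m}$, the definitions in \eqref{eq:BaseSets} and \eqref{eq:DilationMatrixAndTranslation} give $a = 2^{j-1} + 2^{\alpha j}(m+x)$ and $b = 2^{\beta j} y$ with $x \in (-\eps, 1+\eps)$ and $|y| < 1+\eps$.

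For the length estimate \eqref{eq:AbsoluteValueEstimate}, the second inequality follows from $|\xi| \geq a \geq 2^{j-1} + 2^{\alpha j}(m-\eps)$, while the leftmost inequality $2^{j-2} < 2^{j-1} + 2^{\alpha j}(m-\eps)$ is a short numerical check based on $\eps < 1/32$ and $j \geq 1$. For the third inequality I would use the crude bound $\sqrt{a^2+b^2} \leq a + |b|$; since $\beta \leq \alpha$, the term $|b| \leq 2^{\beta j}(1+\eps)$ is absorbed into $2^{\alpha j}(1+\eps)$, yielding $|\xi| \leq 2^{j-1} + 2^{\alpha j}(m+2+2\eps)$. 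The penultimate inequality then follows from $m \leq m_j^{\max} \leq 1 + 2^{(1-\alpha)j-1}$, which gives $2^{\alpha j} m \leq 2^{\alpha j} + 2^{j-1}$, and the final bound $2^j + 2^{\alpha j}(3+2\eps) < 2^{j+3}$ is a direct numerical check.

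For the angular bound \eqref{eq:AngleControl}, with $a > 2^{j-2} > 0$ in hand, I would define $\varphi := \Theta_{j,\ell} + \arctan(b/a)$, so that $\xi = |\xi| \, e^{i\varphi}$ holds automatically. The elementary inequality $|\arctan t| \leq |t|$, combined with the estimate $|b|/a \leq 2^{\beta j}(1+\eps)/2^{j-2} = 4(1+\eps) \cdot 2^{(\beta-1)j}$, then delivers the claim. The only real subtlety in an otherwise mechanical argument is verifying the uniform lower bound $a > 2^{j-2}$ in the borderline case $m = 0$, where it reduces to $\eps \cdot 2^{\alpha j} < 2^{j-2}$; this holds because $\alpha \leq 1$ and $\eps < 1/32 \leq 1/4$.
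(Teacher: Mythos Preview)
Your proof is correct and follows essentially the same route as the paper: reduce via the rotation $R_{j,\ell}$ to the axis-aligned rectangle $A_j Q + c_{j,m}$, bound $|\xi|$ below by the first coordinate and above by $a+|b|$ using $\beta\le\alpha$ and $m\le m_j^{\max}$, and control the angle via $|b|/a$ together with the lower bound $a>2^{j-2}$. The only cosmetic difference is that you phrase the angle step through $\arctan(b/a)$ and $|\arctan t|\le|t|$, whereas the paper chooses $\varphi\in(-\pi/2,\pi/2)$ and invokes $|\varphi|\le|\tan\varphi|$; these are the same inequality viewed from opposite sides.
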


\begin{proof}
  %From~\eqref{eq:DilationMatrixAndTranslation} and \eqref{eq:MaxValues},
  %we infer that
  %\[
  %  \frac{1}{4}
  %  \leq 2^{\beta j - 2}
  %  \leq \gamma_{j,m}
  %  \leq 2^{\beta j} \cdot
  %       \left(
  %         \frac{1}{4} + \frac{1 + m_j^{\max}}{2} \cdot 2^{(\alpha - 1)j}
  %       \right)
  %  \leq \frac{3}{2} \cdot 2^{\beta j}
  %  \leq 2^{\beta j + 1}
  %\]
  %and thereby prove~\eqref{eq:GammaEstimate}.
  %\medskip{}
  Since $Q_{j,m,\ell} = R_{j,\ell} \, Q_{j,m,0}$ can be obtained
  from $Q_{j,m,0}$ by rotation through the angle $\Theta_{j,\ell}$
  and since rotations preserve the Euclidean norm, we would prove
  \eqref{eq:AbsoluteValueEstimate}, in general, for $\xi \in Q_{j,m,\ell}$,
  should we prove it for $\xi \in Q_{j,m,0}$.
  To do so, directly from Definition~\ref{defn:CoveringSets} we infer that
  \begin{equation}
    Q_{j,m,0}
    = \left(
        2^{j-1} + 2^{\alpha j} (m - \eps) ,
        \, 2^{j-1} + 2^{\alpha j}(m+1+\eps)
      \right)
      \times
      \left(
        \vphantom{2^{\alpha j}}
        - (1+\eps) 2^{\beta j}, \,
        (1+\eps) 2^{\beta j}
      \right) .
    \label{eq:UnrotatedQExplicit}
  \end{equation}
  As $\varepsilon \in (0, 1/32)$ and $\alpha \leq 1$, we conclude that
  \begin{equation}
    |\xi|
    \geq \xi_1
    \geq 2^{j-1} + 2^{\alpha j} (m-\eps)
    \geq 2^{j-1} - \eps 2^{\alpha j}
    >    2^{j-2} > 0
    \qquad \forall \, \xi \in Q_{j,m,0} .
    \label{eq:UnrotatedSetXi1LowerBound}
  \end{equation}
  This completes the proof of the lower bound in \eqref{eq:AbsoluteValueEstimate}.
  %since $\eps 2^{\alpha j} \leq \eps 2^{j} \leq 2^{j-2}$.
  %This proves the lower estimate in equation \eqref{eq:AbsoluteValueEstimate}
  %for the case $\ell = 0$. For $\ell \neq 0$ it remains true since
  %rotations preserve the euclidean distance.

  Similarly, since $\xi_1 \geq 0$ for $\xi \in Q_{j,m,0}$ and since $\beta \leq \alpha$,
  we infer from \eqref{eq:UnrotatedQExplicit} that, for any $\xi \in Q_{j,m,0}$,
  %and noting that $\xi_1 \geq 0$ as $\xi \in Q_{j,m,0}$, we conclude that
  %Likewise, we need only prove the upper estimate for $\ell = 0$.
  %But for $\xi \in Q_{j,m,0}$, we have because of
  %equation \eqref{eq:GammaEstimate}, and because of $\xi_1 \geq 0$ that
  \begin{align*}
    |\xi|
    \leq \xi_1 + |\xi_2|
    & \leq 2^{j-1} + 2^{\alpha j} (m + 1 + \eps) + (1 + \eps) 2^{\beta j}
    %\leq 2^{j-1} + 2^{\alpha j} (m + 1 + \eps) + (1 + \eps) 2^{\beta j + 1} \\
    %({\scriptstyle{\text{since } \beta \leq \alpha}})
    %\leq 2^{j-1} + 2^{\alpha j} (m + 2 + 2\eps)
    \leq 2^{j-1} + 2^{\alpha j} (m_j^{\max} + 2 + 2\eps) \\
    ({\scriptstyle{\text{Definition of } m_j^{\max}, \text{ see Eq.~}\eqref{eq:MaxValues}}})
    & \leq 2^{j-1} + 2^{\alpha j} \cdot \big( 2^{(1-\alpha)j - 1} + 3 + 2\eps \big)
      =    2^{j} + 2^{\alpha j} (3 + 2\eps) < 2^{j+3} \, .
  \end{align*}
  This completes the proof of the upper bound in
  \eqref{eq:AbsoluteValueEstimate}.
  %i.e., the upper estimate in equation \eqref{eq:AbsoluteValueEstimate}
  %also holds.

  \medskip{}

  %For proving equation \eqref{eq:AngleControl}, it again suffices to
  %consider the case $\ell = 0$: Any $\xi \in Q_{j,m,\ell}$ can be written
  %as $\xi = R_{j,\ell} \xi_0$ with $\xi_0 \in Q_{j,m,0}$. Therefore, since
  %$\Theta_{j,0} = 0$, if equation \eqref{eq:AngleControl} holds for
  %$\ell = 0$, then $\xi_0 = |\xi_0| \cdot e^{i \varphi_0}$ with
  %$|\varphi_0| \leq 6(1+\eps) 2^{(\beta - 1)j}$.
  %Since the matrix $R_{j,\ell}$ simply is a rotation with angle
  %$\Theta_{j,\ell}$, this means
  %$\xi = |\xi| \cdot e^{i (\varphi_0 + \Theta_{j,\ell})}$, whence we can simply
  %choose $\varphi = \varphi_0 + \Theta_{j,\ell}$.

  To prove \eqref{eq:AngleControl}, let us first consider the case where
  $\xi \in Q_{j,m,0}$ and choose $\varphi \in [-\pi, \pi)$ such that
  $\xi = |\xi| \cdot e^{i \varphi}$.
  Since $\xi_1 = |\xi| \cdot \cos(\varphi)$ and $\xi_1 > 0$ for
  $\xi \in Q_{j,m,0}$ (see Equation~\eqref{eq:UnrotatedSetXi1LowerBound}),
  we conclude that $\varphi \in (-\pi/2, \pi/2)$.
  %Now, for $\xi \in Q_{j,m,0}$, there certainly is some
  %$\varphi \in [-\pi, \pi)$ satisfying $\xi = |\xi| \cdot e^{i \varphi}$.
  %But above we saw that $\xi_1 > 0$ for $\xi \in Q_{j,m,0}$; because of
  %$\xi_1 = |\xi| \cdot \cos \varphi$, this implies
  %$\varphi \in (- \pi / 2, \pi / 2)$.
  Since the derivative $\tan ' (\varphi) = 1 + \tan^2 (\varphi)$ of
  $\tan \varphi$ is not less than one for $\varphi \in (-\pi/2, \pi/2)$
  and since $\tan (0) = 0$, we conclude that $\tan (\varphi) \geq \varphi \geq 0$ for
  $\varphi \in [0,\pi/2)$ and
  $|\tan(\varphi)| = \tan(|\varphi|) \geq |\varphi| \geq 0$ for
  $\varphi \in (-\pi/2, \pi/2)$.
  %the tangens is smooth on $(-\pi/2, \pi/2)$
  %with derivative , and
  %with $\tan (0)=0$, we get $\tan(\varphi) \geq \varphi \geq 0$ for all
  %$\varphi \in [0,\pi/2)$. By symmetry of the tangens, we thus see
  %$|\tan(\varphi)| \geq |\varphi|$ for all $\varphi \in (-\pi/2, \pi/2)$.
  Therefore,
  \begin{align*}
    |\varphi|
    \leq |\tan (\varphi)|
    & = \frac{|\xi_2|}{|\xi_1|} \\
    ({\scriptstyle{
                  \xi_1 \geq 2^{j-2} \text{ and }
                  |\xi_2|\leq (1+\eps) 2^{\beta j}
                  \text{ for } \xi \in Q_{j,m,0}}
                  %\text{ and eq.\@ } \eqref{eq:GammaEstimate}
                })
    & \leq \frac{(1+\eps) 2^{\beta j}}{2^{j-2}}
      \leq 4 \cdot (1+\eps) \cdot 2^{(\beta - 1) j} \, .
  \end{align*}
  This completes the proof of \eqref{eq:AngleControl} for $\xi \in Q_{j,m,0}$.

  In general, if $\xi \in Q_{j,m,\ell} = R_{j,\ell} \, Q_{j,m,0}$,
  there is $\xi' = |\xi'| \cdot e^{i \varphi_0} \in Q_{j,m,0}$
  such that $|\varphi_0| \leq 4 (1+\varepsilon) \cdot 2^{(\beta-1)j}$
  and $\xi = R_{j,\ell} \, \xi'\vphantom{\sum_j}$.
  Therefore, $\varphi := \varphi_0 + \Theta_{j,\ell}$ satisfies
  $\xi = |\xi| \cdot e^{i \varphi}$ and
  $|\varphi - \Theta_{j,\ell}| = |\varphi_0| \leq 4(1+\varepsilon) \cdot 2^{(\beta-1)j}$.
  %Since $\Theta_{j,0} = 0$, this proves the claim for $\ell = 0$,
  %and thus also in the general case.
\end{proof}

%With this preparation, we can now show that the covering $\CalQ$
%consisting of the sets $Q_{j,m,\ell}$,
%together with a ball around the origin,
%is admissible.
We now turn to the proof of the admissibility of the covering from Lemma~\ref{lem:CoveringCovers}.

\begin{lem}\label{lem:Admissibility}
  %Let $0 \leq \beta \leq \alpha < 1$ be fixed.
  %Let $Q_0 := B_4 (0)$ and $I := I^{(\alpha,\beta)} := \{0\} \cup I_0^{(\alpha,\beta)}$
  %where $I_0^{(\alpha,\beta)}$ is as defined in \eqref{eq:IndexSet}.
  Let $0 \leq \beta \leq \alpha \leq 1$.
  Then the covering $\CalQ := \CalQ^{(\alpha,\beta)} := (Q_i)_{i \in I}$
  from Definition~\ref{defn:CoveringSets} is admissible.

  \medskip{}

  \noindent More specifically,
  %$Q_{j,m,\ell} \cap Q_{j',m',\ell'} \neq \emptyset$,
  %for $(j,m,\ell), (j',m',\ell') \in I_0 = I_0^{(\alpha,\beta)}$, then:
  \begin{enumerate}[label=\alph*)]
    \item  \label{enu:IntersectionScaleRelation}
               for any given
               $(j,m,\ell), (j',m',\ell') \in I_0^{(\alpha,\beta)}$,
               \begin{equation}
                 Q_{j,m,\ell} \cap Q_{j',m',\ell'} = \emptyset
                 \quad \text{unless} \quad
                 |j - j'| \leq 3 \, ;
                 \label{eq:IntersectionScaleRelation}
               \end{equation}
               %$|j - j'| \leq 5$.  \vspace{-0.2cm}

    \item \label{enu:IntersectionShiftRelation}
              for any given $(j,m,\ell) \in I_0^{(\alpha,\beta)}$
              and $j' \in \N$, there are at most five different values of
              $m' \in \N_0$ such that there is $\ell' \in \N_0$ with
              $(j',m',\ell') \in I_0^{(\alpha,\beta)}$ and
              $Q_{j,m,\ell} \cap Q_{j',m',\ell'} \neq \emptyset$;

    \item \label{enu:IntersectionAngleControl}
              for any given
              $(j,m,\ell), (j',m',\ell') \in I_0^{(\alpha,\beta)}$,
              \begin{equation}
                Q_{j,m,\ell} \, \cap \, Q_{j',m',\ell'} = \emptyset
                \quad \! \text{unless} \quad \!
                %\Longrightarrow
                %\exists \, k \in \{-4,\dots,4\} \,:\,
                \min_{k \in \{-2,\dots,2\}}
                |\Theta_{j,\ell} - \Theta_{j',\ell'} - 2\pi k|
                \leq 4 (1+\eps) \cdot (2^{(\beta-1)j} + 2^{(\beta - 1) j'})
                ;
                \label{eq:IntersectionAngleControl}
              \end{equation}

    \item \label{enu:IntersectionAngleCount}
          for any given $(j,m,\ell) \in I_0^{(\alpha,\beta)}$
          and $j' \in \N$, there are at most
          $65 N$ different values of
          $\ell' \in \N_0$ such that there is
          $m' \in \N_0$ with $(j',m',\ell') \in I_0^{(\alpha,\beta)}$ and
          $Q_{j,m,\ell} \cap Q_{j',m',\ell'} \neq \emptyset$; and

    \item \label{enu:IntersectionLowFrequencyCount}
          there are at most $135N$ different values of
          $(j',m',\ell') \in I_0^{(\alpha,\beta)}$ such that
          $Q_0 \cap Q_{j',m',\ell'} \neq \emptyset$.
  \end{enumerate}
\end{lem}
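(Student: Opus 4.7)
The overall plan is to extract every item from the explicit control on $|\xi|$ and $\arg(\xi)$ that Lemma~\ref{lem:CoveringGeometry} already provides on the rectangles $Q_{j,m,\ell}$: the radial estimate~\eqref{eq:AbsoluteValueEstimate} handles parts (a), (b) and (e), the angular estimate~\eqref{eq:AngleControl} handles (c) and (d), and admissibility itself then follows by multiplying the counts.

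For part (a) I would take $\xi \in Q_{j,m,\ell} \cap Q_{j',m',\ell'}$ and apply~\eqref{eq:AbsoluteValueEstimate} on both sides. With $\alpha \leq 1$ and $\eps < 1/32$, the lower bound refines (worst case $m=0$) to $|\xi| \geq 2^{j-1}(1 - 2\eps) \geq (15/16) \cdot 2^{j-1}$, while the upper bound combined with $m \leq m_j^{\max} \leq 1 + 2^{(1-\alpha)j-1}$ gives $|\xi| \leq 2^{j} + (3+2\eps) \cdot 2^{\alpha j} < 5 \cdot 2^{j'}$. Comparing the two yields $2^{j-j'-1} < 16/3 < 2^3$, so $j - j' \leq 3$, and symmetry finishes the argument. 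For (b) the same estimate forces the radial intervals $I_{j,m} := [\,2^{j-1} + (m - \eps) 2^{\alpha j},\, 2^{j-1} + (m+2+2\eps) 2^{\alpha j}\,]$ and $I_{j',m'}$ to overlap, which confines $m'$ to an integer window of length roughly $(2+3\eps)(2^{\alpha(j-j')} + 1)$ measured in units of the spacing $2^{\alpha j'}$. The clean bound $5$ then falls out of combining this window count with the cap $m' \leq m_{j'}^{\max}$: the regimes in which the window is large (namely $j - j' > 0$ with $\alpha$ close to $1$) are precisely those in which $m_{j'}^{\max}$ is small, and a short case analysis on $\alpha$ and $j - j'$ produces the constant.

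Parts (c) and (d) are the angular counterparts. Writing the common point as $\xi = |\xi|\, e^{i\varphi} = |\xi|\, e^{i\varphi'}$ one obtains $|\varphi - \Theta_{j,\ell}| \leq 4(1+\eps) \cdot 2^{(\beta-1)j}$ and $|\varphi' - \Theta_{j',\ell'}| \leq 4(1+\eps) \cdot 2^{(\beta-1)j'}$ from~\eqref{eq:AngleControl}, while $\varphi - \varphi' \in 2\pi\Z$ since both represent $\arg(\xi/|\xi|)$; the triangle inequality then delivers~\eqref{eq:IntersectionAngleControl}. That only $k \in \{-2,\dots,2\}$ matters follows from the crude bound $\Theta_{j,\ell}, \Theta_{j',\ell'} \in [0,\,2\pi + 2\pi/N]$ that comes straight from the definitions of $\phi_j$ and $\ell_j^{\max}$. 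For (d) I would fix $k$ and count: the admissible $\Theta_{j',\ell'}$ lie in an interval of length $8(1+\eps)(2^{(\beta-1)j} + 2^{(\beta-1)j'})$ and consecutive $\Theta_{j',\ell'}$ are spaced $2\phi_{j'} = (2\pi/N)\cdot 2^{(\beta-1)j'}$ apart; using $|j-j'| \leq 3$ from (a) and $\beta \leq 1$ (so $2^{(\beta-1)(j-j')} \leq 8$) this produces at most about $4(1+\eps) N/\pi \cdot 9 + 1 \leq 13 N$ values per $k$, and multiplying by the five admissible values of $k$ gives $\leq 65 N$.

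For (e) note that $Q_0 = B_4(0)$ and that $|\xi| > 2^{j-2}$ on $Q_{j,m,\ell}$ by~\eqref{eq:AbsoluteValueEstimate}, so $Q_{j,m,\ell} \cap Q_0 \neq \emptyset$ forces $j \in \{1,2,3\}$; summing $(m_j^{\max}+1)(\ell_j^{\max}+1) \leq (2 + 2^{j-1})(2 + N \cdot 2^j)$ over these three values gives a bound of the form $c_1 + c_2 \cdot N \leq 135 N$. Admissibility itself then follows automatically: for any $i = (j,m,\ell) \in I_0^{(\alpha,\beta)}$, part (a) leaves at most seven choices of $j'$, part (b) at most five of $m'$ per $j'$, and part (d) at most $65 N$ of $\ell'$ per $j'$, while part (e) handles the index $0$. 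The hardest step I foresee is the clean constant in part (b): the window count from~\eqref{eq:AbsoluteValueEstimate} alone does not give exactly $5$, and one must bring in the cap $m_{j'}^{\max}$ in a regime-dependent way; the bookkeeping in (d) is of the same flavour but less delicate, and (a), (c), (e) are routine once the explicit geometric estimates of Lemma~\ref{lem:CoveringGeometry} are in hand.
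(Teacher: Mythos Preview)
Your plan is correct and matches the paper's proof essentially step for step: radial bounds from Lemma~\ref{lem:CoveringGeometry} for (a), (b), (e), angular bounds for (c), (d), and then multiplying the counts for admissibility. One refinement on the hardest step you flagged: in part (b), when $j' < j$ the paper does not rely on $m_{j'}^{\max}$ being small (it need not be for intermediate $\alpha$ and large $j'$); instead, the lower bound $|\xi| \geq 2^{j-1} - \eps\,2^{\alpha j}$ from the $j$-side combined with the upper bound on the $j'$-side forces $m' > 2^{(1-\alpha)j'-1} - 4$, so $m'$ sits within five integers of $m_{j'}^{\max}$ regardless of how large the latter is. The case $j' > j$ is handled symmetrically by forcing $m' < 4$.
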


\begin{rem*}
  The derived bounds concerning the number of intersections are quite pessimistic,
  but sufficient for our purposes.
  The reason for the unappealing bounds is that we provide uniform bounds that apply simultaneously
  for all values of $0 \leq \beta \leq \alpha \leq 1$.
\end{rem*}

\begin{proof}
  %We already saw in Lemma \ref{lem:CoveringCovers} that $\CalQ$ is a covering
  %of $\R^2$; thus, we only need to show admissibility. For this, we first prove
  %the two additional claims of the lemma.
  \emph{Proof of \ref{enu:IntersectionScaleRelation}}
  Assume there is some $\xi \in Q_{j,m,\ell} \cap Q_{j',m',\ell'}$.
  We claim that $|j - j'| \leq 3$.
  To show this, let us assume the contrary, i.e. $|j-j'| \geq 4$.
  By symmetry, we can assume that $j \geq j'$, whence $0 \leq j' \leq j - 4$
  and $2^{\alpha j'} \leq 2^{j'} \leq 2^{j - 4}$.
  Thus, we infer from~\eqref{eq:AbsoluteValueEstimate} that
  \[
    2^{j - 1} - \eps \, 2^{j}
    \leq 2^{j - 1} - \eps 2^{\alpha j}
    \leq |\xi|
    \leq 2^{j'} + 2^{\alpha j'} (3 + 2 \eps)
    \leq 2^{j - 4} + 2^{j - 4} (3 + 2 \eps)
    =    2^{j-4} (4 + 2 \eps).
  \]
  Multiplying this estimate by $2^{4 - j}$, we obtain
  $2^3 - 2^4 \eps \leq 4 + 2 \eps$ and hence $\eps \geq \tfrac{2}{9}$,
  which contradicts our choice of $\eps \in (0, \tfrac{1}{32})$.
  %From Lemma \ref{lem:CoveringGeometry} we infer that,
  %if $\xi \in Q_{j,m,\ell} \cap Q_{j',m',\ell'}$, then
  %$2^{j-2} \leq |\xi| \leq 2^{j' + 3}$ and hence $j - j' \leq 5$
  %and, similarly, $2^{j' - 2} \leq |\xi| \leq 2^{j + 3}$ and hence
  %$j' - j \leq 5$.
  %Combining these two estimates results in $|j-j'| \leq 5$, as desired.

  \medskip{}

  \emph{Proof of \ref{enu:IntersectionShiftRelation}}
  We assume that $\xi \in Q_{j,m,\ell} \cap Q_{j',m',\ell'}$ and derive
  restrictions for the possible values of $m'$.
  To do this, we distinguish three possible cases:

  \emph{Case 1:} $j=j'$. From Lemma \ref{lem:CoveringGeometry} we infer that
  \[
     2^{j-1} + 2^{\alpha j} (m-\eps)
     \leq |\xi|
     \leq 2^{j'-1} + 2^{\alpha j'} (m' + 2 + 2\eps)
     =    2^{j-1} + 2^{\alpha j} (m' + 2 + 2\eps)
  \]
  and hence $m - m' \leq 2 + 3\eps$.
  By symmetry (interchanging the indices $(j,m,\ell)$ and $(j',m',\ell')$),
  this yields $|m - m'| \leq 2 + 3 \eps < 4$, that is, $|m - m'| \leq 3$.
  %Similarly,
  %\[
  %  %\shortintertext{and likewise}
  %  2^{j-1} + 2^{\alpha j} (m'-\eps)
  %  = 2^{j'-1} + 2^{\alpha j'} (m'-\eps)
  %  \leq |\xi|
  %  \leq 2^{j-1} + 2^{\alpha j} (m+3+3\eps)
  %\]
  %and hence $m' - m \leq 3+4\eps$.
  %Combining these two estimates results in $|m - m'| \leq 3+4\eps < 4$.
  %By rearranging, we see $|m - m'| \leq 3 + 4\eps < 4$.
  Thus, in case $j = j'$, the index $m'$ can take five different values at most.
  %Note that this leaves at most $7$ possibilities for $m'$, which depend
  %only on $m$.

  \smallskip{}

  \emph{Case 2}: $j' < j$.
  Thanks to \eqref{eq:IntersectionScaleRelation},
  we can write $j = j' + \kappa$ where $\kappa \in \{1,\dots,3\}$.
  From Lemma~\ref{lem:CoveringGeometry} we infer that
  %Because of $|j-j'| \leq 5$, we see that
  %there is some $\kappa \in \{1,\dots,5\}$ with $j = j' + \kappa$.
  %Again, Lemma \ref{lem:CoveringGeometry} shows
  $2^{j-1} - \eps 2^{\alpha j} \leq |\xi| \leq 2^{j' - 1} + 2^{\alpha j'} (m' + 2 + 2\eps)$
  and hence
  $2^{j-1} - 2^{j' - 1} \leq \eps 2^{\alpha j} + 2^{\alpha j'} (m' + 2 + 2\eps)$.
  Taking into account the possible values of $\kappa$, we conclude that
  %But by choice of $\kappa$,
  $2^{j'-1} \leq 2^{j'-1} \cdot (2^{\kappa} - 1) = 2^{j-1} - 2^{j'-1}$.
  %Combining all these observations with the estimate
  Combining the last two estimates with
  $\eps 2^{\alpha j} = \eps 2^{\alpha (j'+\kappa)} \leq 2^{\alpha j'}$ results in
  \[
    2^{j' - 1}
    \leq \eps 2^{\alpha j} + 2^{\alpha j'} (m' + 2 + 2\eps)
    \leq 2^{\alpha j'} (m' + 3 + 2\eps)
    <    2^{\alpha j'} (m' + 4) \, ,
  \]
  whence $2^{(1-\alpha)j' - 1} - 4 < m' \leq m_{j'}^{\max} \leq 2^{(1-\alpha)j' - 1} + 1$.
  Thus, in case $j' < j$, the index $m'$ can take five different values at most.
  %Note that this leaves at most $7$ possibilities
  %for $m'$, which depend only on $j'$.

  \smallskip{}

  %FIXME: Continue here!
  \emph{Case 3}: $j' > j$ and thus $j' \geq j+1$.
  From Lemma \ref{lem:CoveringGeometry} we infer that
  \[
    2^{j' - 1} + 2^{\alpha j'} (m' - \eps)
    \leq |\xi|
    \leq 2^{j} + 2^{\alpha j} (3 + 2\eps)
    \leq 2^{j'-1} + 2^{\alpha j'} (3 + 2\eps)
  \]
  and hence $0 \leq m' \leq 3 (1+\eps) < 4$.
  Thus, in case $j' > j$, the index $m'$ can take four different values at most.
  %and thus $0 \leq m' \leq 4(1+\eps) \leq 5$.
  %Thus, in this case there are at most $6$ possibilities for $m'$.

  Combining our conclusions of the three cases completes the proof of b).

  %Overall, this case distinction shows that for any $j' \in \N$ there is a set
  %$M_{j,m,j'} \subset \N_0$ with at most $7$ elements, and such that
  %$Q_{j,m,\ell} \cap Q_{j',m',\ell'} = \emptyset$, unless possibly if
  %$m' \in M_{j,m,j'}$.

  %First, if there is some $\xi \in Q_{j,m,\ell} \cap Q_{j',m',\ell'}$, then
  %the first part of Lemma \ref{lem:CoveringGeometry} shows
  %$2^{j-2} \leq |\xi| \leq 2^{j' + 3}$, whence $j - j' \leq 5$.
  %By symmetry, this shows $|j-j'| \leq 5$.

  \medskip{}

  \emph{Proof of \ref{enu:IntersectionAngleControl}}
  If $\xi \in Q_{j,m,\ell} \cap Q_{j',m',\ell'}$, then
  \eqref{eq:AngleControl} implies that there are $\varphi, \varphi' \in \R$
  such that $\xi = |\xi| \cdot e^{i\varphi}$ where
  $|\varphi - \Theta_{j,\ell}| \leq 4(1+\eps) \cdot 2^{(\beta-1)j}$ and
  such that $\xi = |\xi| \cdot e^{i \varphi'}$
  where $|\varphi' - \Theta_{j',\ell'}| \leq 4(1+\eps) \cdot 2^{(\beta-1)j'}$.
  Moreover, Equation~\eqref{eq:AbsoluteValueEstimate} shows that $|\xi| > 0$.
  Therefore, $e^{i\varphi} = e^{i\varphi'}$ so that there is
  $k \in \Z$ such that $\varphi - \varphi' = 2\pi k$.

  Taking into account that
  $\ell \leq \ell_j^{\max} \leq 1 + N \cdot 2^{(1-\beta)j}$, $N = 10$, that $\beta \leq 1$ and $\eps \leq \frac{1}{32}$, we conclude that
  \begin{equation}
    0 \leq \Theta_{j,\ell}
    = \frac{2\pi}{N} \cdot 2^{(\beta-1)j} \cdot \ell
    \leq \frac{2\pi}{N} \cdot 2^{(\beta-1)j} + 2\pi \leq \frac{22}{10} \pi
    \label{eq:ThetaJEstimate}
  \end{equation}
  and hence
  \[
    -\frac{14}{10} \pi
    <    - 4 (1+\eps)
    \leq - |\varphi - \Theta_{j,\ell}|
    \leq \varphi - \Theta_{j,\ell} + \Theta_{j,\ell}
    =    \varphi
    \leq |\varphi - \Theta_{j,\ell}| + \Theta_{j,\ell}
    \leq 4 (1 + \eps) + \frac{22}{10} \pi
    <    \frac{36}{10} \pi .
  \]
  In the same way, we also see that $-\tfrac{14}{10} \pi < \varphi' < \tfrac{36}{10} \pi$
  and hence $- 5 \pi < \varphi - \varphi' < 5 \pi$,
  so that $|k| = \frac{|\varphi - \varphi'|}{2\pi} < \frac{5}{2} < 3$,
  or, in other words, $k \in \{-2,-1,0,1,2\}$.
  Finally, we conclude, as claimed, that
  %and hence
  %%Recalling $N=9$ and $\beta \leq 1$ as well as $\eps \leq \frac{1}{32}$,
  %%this implies
  %\(
  %  |\varphi|
  %  \leq |\varphi - \Theta_{j,\ell}| + |\Theta_{j,\ell}|
  %  \leq 4(1+\eps) + \frac{22}{10} \pi
  %  \leq \frac{35}{10} \pi
  %\).
  %%Here, we used that $6(1+\eps) \leq 2\pi$ since $\eps \leq 1/32$.
  %Similar arguments prove that $|\varphi'| \leq \frac{35}{10} \pi$ as well.
  %%With the same arguments we see $|\psi| \leq \frac{38}{9} \pi$ as well.
  %%All in all, this shows
  %Thus
  %\[
  %  |k|
  %  = \frac{|\varphi - \varphi'|}{2\pi}
  %  \leq \frac{|\varphi| + |\varphi'|}{2\pi}
  %  \leq \frac{38}{9} < 5
  %\]
  %or, in other words $k \in \{-4,\dots,4\}$ and
  \begin{equation}
    |\Theta_{j,\ell} - \Theta_{j',\ell'} - 2\pi k|
    \leq |\Theta_{j,\ell} - \varphi|
         + |\varphi - \varphi' - 2\pi k|
         + |\varphi' - \Theta_{j',\ell'}|
    \leq 4(1+\eps) \cdot \big(2^{(\beta-1)j} + 2^{(\beta-1)j'}\big) \, .
    \label{eq:AngleControlInProof}
  \end{equation}

  \medskip{}

  \emph{Proof of \ref{enu:IntersectionAngleCount}}
  %Having proved the two additional claims, we now prove the admissibility
  %of the covering $\CalQ$. To this end, fix $(j,m,\ell) \in I_0$.
  %We want to show that there are at most $46431 \cdot N$ many tuples
  %$(j',m',\ell') \in I_0$ for which
  %$Q_{j,m,\ell} \cap Q_{j',m',\ell'} \neq \emptyset$.
  %
  %As seen above, we have $|j-j'| \leq 5$, and thus
  %$j' \in J_{j} := \{ j - 5, \dots, j + 5\}$, with $|J_{j}| \leq 11$.
  Given \eqref{eq:AngleControlInProof} and the definition of $\Theta_{j',\ell'}$, we see that
  \[
    \left|
      \frac{2\pi}{N} \cdot 2^{(\beta-1)j'} \cdot (\ell' - \lambda_{j,\ell,k,j'})
    \right|
    \leq 4 (1+\eps) \cdot (2^{(\beta-1)j} + 2^{(\beta-1)j'})
  \]
  where
  \[
    \lambda_{j,\ell,k,j'}
    := \frac{N}{2\pi} \cdot 2^{(1-\beta)j'} \cdot (\Theta_{j,\ell} - 2\pi k)
    \in \R \, .
  \]
  Multiplying this estimate by $\frac{N}{2\pi} \cdot 2^{(1-\beta)j'}$
  and noting that $2^{(1-\beta)(j'-j)} \leq 2^{3(1-\beta)} \leq 8$,
  we conclude that $ |\ell' - \lambda_{j,\ell,k,j'}| \leq 6 N$.
  Since, for given $(j,m,\ell)$ and $j'$, the parameter $k \in \{-2,\dots,2\}$
  can only take up to five different values,
  %Since there are $9$ different choices for $k \in \{-4,\dots,4\}$,
  %and since $(j,m,\ell)$ and $j'$ are fixed,
  %Since there are $7$ different choices for $k$
  %Next, for $k \in \Z$, define
  the index $\ell'$ can take at most $5 \cdot 13 N = 65 N$ different values,
  %there are at most
  as claimed.
  %Therefore, for any given $(j,m,\ell) \in I_0^{(\alpha,\beta)}$ and
  %$j' \in \N$, there are at most $67N$ different values of $\ell' \in \N_0$
  %The purpose of this number will become clear in a moment.
  %Indeed, we saw above that there is some $k \in \{-4,\dots,4\}$ such that
  %\begin{align*}
  %   \left|
  %    \frac{2\pi}{N} \cdot 2^{(\beta-1)j'} \cdot (\ell' - \lambda_{j,\ell,k,j'})
  %  \right|
  %  & = |\Theta_{j',\ell'} - \Theta_{j,\ell} + 2\pi k| \\
  %  & =    |\Theta_{j,\ell} - \Theta_{j',\ell'} - 2\pi k|
  %    \leq 6(1+\eps) \cdot (2^{(\beta-1)j} + 2^{(\beta-1)j'}) \, .
  %\end{align*}
  %Multiplying this estimate with $\frac{N}{2\pi} \cdot 2^{(1-\beta)j'}$, and
  %noting $2^{(1-\beta)(j'-j)} \leq 2^{5(1-\beta)} \leq 32$, we thus see
  %$|\ell' - \lambda_{j,\ell,k,j'}| \leq 33N$.
  %In other words, $\ell' \in L_{j,\ell,k,j'}$, where
  %$L_{j,\ell,k,j'}
  %:= \{ \lambda \in \Z \with |\lambda - \lambda_{j,\ell,k,j'}| \leq 33N \}$
  %satisfies $|L_{j,\ell,k,j'}| \leq 67N$, so that
  %$L_{j,\ell,j'} := \bigcup_{k=-4}^{4} L_{j,\ell,k,j'}$ satisfies
  %$|L_{j,\ell,j'}| \leq 603N$.

  \medskip{}

  \emph{Proof of \ref{enu:IntersectionLowFrequencyCount}}
  %It remains to estimate $0^\ast$. But this is easy:
  For $\xi \in Q_0 \cap Q_{j',m',\ell'}$,
  the estimate \eqref{eq:AbsoluteValueEstimate} implies that
  $2^{j'-2} \leq |\xi| < 4 = 2^{2}$ and hence $j' \leq 3$
  if $Q_0 \cap Q_{j',m',\ell'} \neq \emptyset$.
  %But as $j' \leq 3$, we see
  Furthermore
  \[
    0
    \leq \ell
    \leq \ell_{j'}^{\max}
    = \Big\lceil N \cdot 2^{(1-\beta)j'} \Big\rceil
    \leq 8N
    \qquad \text{and} \qquad
    0
    \leq m'
    \leq m_{j'}^{\max}
    = \Big\lceil 2^{(1-\alpha)j' - 1} \Big\rceil
    \leq 4 \, ,
  \]
  as $j' \leq 3$.
  Hence there can be at most $3 \cdot 5 \cdot 9N = 135N$ different triples
  $(j',m',\ell') \in I_0^{(\alpha,\beta)}$ such that
  $Q_0 \cap Q_{j',m',\ell'} \neq \emptyset$.

  \medskip{}

  Finally, we can prove the admissibility of $\CalQ^{(\alpha,\beta)}$.
  Combining \ref{enu:IntersectionScaleRelation},
  \ref{enu:IntersectionShiftRelation}, and \ref{enu:IntersectionAngleCount}
  we conclude that, for any given $i \in I_0^{(\alpha,\beta)}$, there are at most
  $7 \cdot 5 \cdot 65 \cdot N + 1$ different values of
  $i' \in I^{(\alpha,\beta)}$ such that $Q_i \cap Q_{i'} \neq \emptyset$.
  Part \ref{enu:IntersectionLowFrequencyCount} shows that this also holds for $i = 0$.
  %Hence,
  %\[
  %  |i^\ast|
  %  = |\{
  %       i' \in \{0\} \cup I_0^{(\alpha,\beta)}
  %       \,:\,
  %       Q_{i} \cap Q_{i'} \neq \emptyset
  %     \}|
  %  \leq 1 + 46431 N
  %\]
  %for all $i \in I^{(\alpha,\beta)}$.
  %Finally, $0\leq m \leq m_j^{\max} = \lceil 2^{(1-\alpha)j - 1} \rceil \leq 4$,
  %and $0\leq\ell\leq \ell_j^{\max} =\lceil N \cdot 2^{(1-\beta)j}\rceil\leq 8N$,
  %so that there are at most $15 \cdot (1+8N) \leq 135 \cdot N$ possible values
  %of $(j,m,\ell) \in I_0$ for which $Q_{j,m,\ell} \cap Q_0 \neq \emptyset$ is
  %possible. Hence, $|0^\ast| \leq 1 + 135 \cdot N$.
  %%%
  %\medskip{}
  %The restrictions on $(j',m',\ell')$ that we derived above show
  %for any $(j,m,\ell) \in I_0$ that
  %\[
  %  \left\{
  %    (j',m',\ell') \in I_0
  %    \with
  %    Q_{j,m,\ell} \cap Q_{j',m',\ell'} \neq \emptyset
  %  \right\}
  %  \subset \bigcup_{j' = j-5}^{j+5}
  %            \left(
  %              \{ j' \} \times M_{j,m,j'} \times L_{j,\ell,j'}
  %            \right) \, ,
  %\]
  %where the set on the right-hand side has at most
  %$11 \cdot 7 \cdot 603N = 46431 \cdot N$ elements.
  %Therefore, $|(j,m,\ell)^\ast| \leq 1 + 46431 N$ for all $(j,m,\ell) \in I_0$.
  %\medskip{}
\end{proof}

\section{Proving almost-structuredness of the wave packet covering}
\label{sec:Structuredness}

%We now choose $\alpha$ and $\beta$ such that
%$0 \leq \beta \leq \alpha < 1$, rather than $0 \leq \beta \leq \alpha \leq 1$,
%and prove that, with this choice, the admissible covering
%$\CalQ$ as introduced in Lemma \ref{lem:Admissibility} is almost structured.
\noindent We now prove that the wave packet covering $\CalQ^{(\alpha,\beta)}$ is almost structured.

%In this section, we show that the covering $\CalQ = \CalQ^{(\alpha,\beta)}$
%from Lemma \ref{lem:Admissibility} is almost structured.

%It is then not hard to see that $P_i' , Q_i' \subset \R^2$ are open and bounded
%with $\overline{P_i '} \subset Q_i '$ and with $Q_i = T_i Q_i ' + b_i$ for
%all $i \in I$. Furthermore, the sets $\{Q_i ' \with i \in I \}$ and
%$\{ P_i ' \with i \in I \}$ are finite, and we saw in
%Lemmas \ref{lem:CoveringCovers} and \ref{lem:Admissibility} that the families
%$(T_i P_i ' + b_i)_{i \in I}$ and $\CalQ = (T_i Q_i' + b_i)_{i \in I}$
%are admissible coverings of $\R^2$. Thus, all that remains to show is that
%there is a constant $C > 0$ with
%\begin{equation}
%  \| T_i^{-1} T_j \| \leq C \qquad
%  \forall \, i,j \in I \text{ with } Q_i \cap Q_j \neq \emptyset \, .
%  \label{eq:StructurednessCondition}
%\end{equation}
%

\begin{lem}\label{lem:CoveringAlmostStructured}
  Let $0 \leq \beta \leq \alpha \leq 1$
  and let us define, with notations as in Definition \ref{defn:CoveringSets},
  %and with notation as in
  %To this end, with notation as in Definition \ref{defn:CoveringSets},
  %set $Q_i ' := Q$ and $P_i ' := (-\frac{\eps}{2}, 1 + \frac{\eps}{2})
  %\times (-1-\frac{\eps}{2}, 1 + \frac{\eps}{2})$, as well as
  \begin{equation}
    %Q_i^\mynatural := Q,
    Q_1^{(0)} := Q,
    \quad
    T_i := T_{j,m,\ell} := R_{j,\ell} \, A_{j}
    \quad \text{and} \quad
    b_i := b_{j,m,\ell} := R_{j,\ell} \, c_{j,m}
    \quad \text{for} \quad
    i = (j,m,\ell) \in I_0^{(\alpha,\beta)}
    \label{eq:CoveringParametrization}
  \end{equation}
  and $Q_2^{(0)} := B_4 (0)$, $T_0 := \identity$ and $b_0 := 0$.
  Finally, set $k_i := 1$ for $i \in I_0^{(\alpha,\beta)}$ and $k_0 := 2$
  and $Q_i ' := Q_{k_i}^{(0)}$ for $i \in I^{(\alpha,\beta)}$.
  %$P_0 := P_0 ' := B_3 (0)$, as well as $T_0 := \identity$ and $b_0 := 0$.

  Then the admissible covering
  $\CalQ^{(\alpha,\beta)} = (Q_i)_{i \in I} = (T_i \, Q_i ' + b_i)_{i \in I}$
  %from Lemma \ref{lem:Admissibility}
  with associated family
  $(T_i \mybullet + \, b_i)_{i \in I}$ is almost structured.
\end{lem}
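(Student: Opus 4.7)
Admissibility of $\CalQ^{(\alpha,\beta)}$ has already been verified in Lemma~\ref{lem:Admissibility}, so I only need to check the two remaining conditions of Definition~\ref{def:AlmostStructuredCovering}: (2) the uniform bound $\|T_i^{-1}T_\ell\|\leq C$ whenever $Q_i\cap Q_\ell\neq\emptyset$, and (3) the existence of the open sets $Q_k^{(0)}$ and $P_k$ with $\overline{P_k}\subset Q_k^{(0)}$ and $\R^2=\bigcup_{i\in I}(T_iP_{k_i}+b_i)$, together with the identities $Q_i=T_iQ_{k_i}^{(0)}+b_i$. The identities $Q_i=T_iQ_{k_i}^{(0)}+b_i$ are immediate from the choices in~\eqref{eq:CoveringParametrization} and Definition~\ref{defn:CoveringSets}.

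For condition (3), the sets $Q_1^{(0)}=Q$ and $Q_2^{(0)}=B_4(0)$ are already open, non-empty and bounded. Since the set $P=[0,1]\times[-1,1]$ from Definition~\ref{defn:CoveringSets} is merely closed, I would choose slightly enlarged open substitutes, e.g.
\[
  P_1:=(-\tfrac{\eps}{2},1+\tfrac{\eps}{2})\times(-1-\tfrac{\eps}{2},1+\tfrac{\eps}{2})
  \qquad\text{and}\qquad
  P_2:=B_{7/2}(0)\,,
\]
both of which are open and satisfy $\overline{P_1}\subset Q$ and $\overline{P_2}\subset B_4(0)$. Since $P_1\supset P$ and $P_2\supset P_0$, Lemma~\ref{lem:CoveringCovers} immediately yields $\R^2=\bigcup_{i\in I}(T_iP_{k_i}+b_i)$.

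The main work lies in verifying condition (2). I first dispose of the special cases involving the index $0$: if $Q_0\cap Q_i\neq\emptyset$ with $i=(j,m,\ell)$, then part~\ref{enu:IntersectionLowFrequencyCount} of Lemma~\ref{lem:Admissibility} (combined with its proof) forces $j\leq 3$, so both $\|T_i\|=\|A_j\|\leq 2^{3\alpha}\leq 8$ and $\|T_i^{-1}\|=\|A_j^{-1}\|\leq 1$ are uniformly controlled. For the generic case $i=(j,m,\ell),\,i'=(j',m',\ell')\in I_0^{(\alpha,\beta)}$ with $Q_i\cap Q_{i'}\neq\emptyset$, I would write
\[
  T_i^{-1}T_{i'}=A_j^{-1}R\,A_{j'}\,,
  \qquad R:=R_{j,\ell}^{-1}R_{j',\ell'}=\begin{pmatrix}\cos\theta & -\sin\theta\\ \sin\theta & \cos\theta\end{pmatrix}\,,
\]
where $\theta=\Theta_{j',\ell'}-\Theta_{j,\ell}$. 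Lemma~\ref{lem:Admissibility}\ref{enu:IntersectionScaleRelation} gives $|j-j'|\leq 3$, while Lemma~\ref{lem:Admissibility}\ref{enu:IntersectionAngleControl} yields some $k\in\{-2,\dots,2\}$ with $|\theta-2\pi k|\leq 4(1+\eps)(2^{(\beta-1)j}+2^{(\beta-1)j'})\lesssim 2^{(\beta-1)j}$. Since $R$ is $2\pi$-periodic, this means $|\sin\theta|$ and $|1-\cos\theta|$ are bounded by $C\cdot 2^{(\beta-1)j}$. A direct entry-wise computation then gives the four entries of $A_j^{-1}RA_{j'}$:
\[
  2^{\alpha(j'-j)}\cos\theta,\quad -2^{-\alpha j+\beta j'}\sin\theta,\quad 2^{-\beta j+\alpha j'}\sin\theta,\quad 2^{\beta(j'-j)}\cos\theta\,,
\]
and bounding these via the scale and angle estimates gives the desired uniform bound.

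The only nontrivial obstacle is the off-diagonal entry $2^{-\beta j+\alpha j'}\sin\theta$: the prefactor $2^{-\beta j+\alpha j'}=2^{(\alpha-\beta)j+\alpha(j'-j)}$ grows without bound when $\alpha>\beta$, so the naive bound $|\sin\theta|\leq 1$ is useless. This is precisely where the angle control from Lemma~\ref{lem:Admissibility}\ref{enu:IntersectionAngleControl} becomes essential: multiplying by $|\sin\theta|\lesssim 2^{(\beta-1)j}$ collapses the dangerous factor to $2^{\alpha j'-j}\leq 2^{j'-j}\leq 2^3$. Once this cancellation is recognised, the remaining three entries are straightforward since $|j-j'|\leq 3$ and $\beta\leq\alpha\leq 1$, and the uniform boundedness $\|T_i^{-1}T_{i'}\|\leq C$ follows.
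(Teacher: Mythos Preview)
Your proposal is correct and follows essentially the same approach as the paper: verify the covering/compact-inclusion condition via Lemma~\ref{lem:CoveringCovers} with the slightly enlarged open sets $P_1,P_2$, and establish the norm bound $\|T_i^{-1}T_{i'}\|\leq C$ by explicitly computing the four entries of $A_j^{-1}R_{j,\ell}^{-1}R_{j',\ell'}A_{j'}$, using $|j-j'|\leq 3$ for three of them and the angle control from Lemma~\ref{lem:Admissibility}\ref{enu:IntersectionAngleControl} to tame the dangerous entry $2^{\alpha j'-\beta j}\sin\theta$. The only cosmetic differences are your choice $P_2=B_{7/2}(0)$ versus the paper's $P_2=B_3(0)$ (either works), and that the paper keeps both terms $2^{(\beta-1)j}+2^{(\beta-1)j'}$ in the sine bound to obtain the explicit constant $C=104$.
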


\begin{proof}
  First of all, note that the family $\CalQ^{(\alpha,\beta)} = (Q_i)_{i \in I}$ indeed satisfies
  $Q_0 = T_0 B_4 (0) + b_0 = T_0 Q_0 ' + b_0$ and
  $Q_{j,m,\ell} = T_{j,m,\ell} Q + b_{j,m,\ell} = T_{j,m,\ell} Q_{j,m,\ell} ' + b_{j,m,\ell}$
  and that $Q_1^{(0)}, Q_2^{(0)} \subset \R^2$ are nonempty, open, and bounded.
  %of the covering are obtained from a finite number of, indeed of two,
  %different open and bounded sets $Q_i^{\natural}$, namely $Q$ and $B_4 (0)$,
  %by applying affine-linear transformations.

  Moreover, $Q_1^{(0)} \supset \overline{P_1}$ and $Q_2^{(0)} \supset \overline{P_2}$
  for the non-empty, open, bounded sets
  %$Q_i^{\natural} = Q \supset P =: P_i^{\natural}$
  %for any $i \in I_0^{(\alpha,\beta)}$, where
  \[
    P_1 := (-\eps/2, 1 + \eps/2) \times (-1-\eps/2, 1+\eps/2)
    \qquad \text{and} \qquad
    P_2 := B_3 (0).
  \]
  %and $Q_0^{\natural} = B_4 (0) \supset B_3 (0) =: P_0^{\natural}$
  %are open and bounded sets.
  From Lemma \ref{lem:CoveringCovers} we infer that the family
  $(T_i P_{k_i} + b_i)_{i \in I}$ covers the entire frequency plane $\R^2$,
  and Lemma~\ref{lem:Admissibility} shows that $\CalQ^{(\alpha,\beta)}$ is admissible.
  Therefore, to prove that the covering $\CalQ^{(\alpha,\beta)}$ is almost structured,
  it is enough to show
  %we would prove the almost-structuredness of the covering $\CalQ^{(\alpha,\beta)}$ if we show
  that there exists a constant $0 < C < \infty$ such that
  \begin{equation}
    \|T_i^{-1} T_{i'}\| \leq C
    \qquad \forall \, i, i' \in I^{(\alpha,\beta)}
                      \text{ for which } Q_i \cap Q_{i'} \neq \emptyset \, .
    \label{eq:StructurednessCondition}
  \end{equation}

  To do so we first consider the case where neither $i$ nor $i'$ are zero, i.e.,
  $i = (j,m,\ell)$ and $i' = (j',m',\ell')$ belong to $I_0^{(\alpha,\beta)}$.
  %$Q_i \cap Q_{i'} \neq \emptyset$ and where
  %From \eqref{eq:GammaEstimate} we infer that $\| A_{j,m}^{-1} A_j \| \leq 4$
  %and $\|A_{j,m} A_j^{-1}\| = \| A_j^{-1} A_{j,m} \| \leq 3/2$ and hence
  %\begin{equation}
  %  \| T_{j,m,\ell}^{-1} T_{j',m',\ell'} \|
  %  = \|
  %      A_{j,m}^{-1} A_j \cdot
  %      A_j^{-1} R_{j,\ell}^{-1} R_{j',\ell'} A_{j'} \cdot
  %      A_{j'}^{-1} A_{j',m'}
  %    \|
  %  \leq 6 \cdot \| A_j^{-1} R_{j,\ell}^{-1} R_{j',\ell'} A_{j'} \| \, .
  %  %\quad \forall \, (j,m,\ell), (j',m',\ell') \in I_0 \, .
  %  \label{eq:TransitionMatrixSimplification}
  %\end{equation}
  %where
  %\begin{equation}
  %  A_j := \left(
  %         \begin{matrix}
  %           2^{\alpha j} & 0 \\
  %           0 & 2^{\beta j}
  %         \end{matrix}
  %         \right)
  %  \quad \text{ for any } j \in \N \, .
  %  \label{eq:SimplifiedScalingMatrix}
  %\end{equation}
  %Moreover,
  Note that $T_i^{-1} T_{i'} = A_j^{-1} R_{j,\ell}^{-1} R_{j',\ell'} A_{j'}$,
  so that a direct computation shows that
  \begin{equation}
    %A_j^{-1} R_{j,\ell}^{-1} R_{j',\ell'} A_{j'}
    T_{j,m,\ell}^{-1} T_{j',m',\ell'}
    = \left(
        \begin{matrix}
            2^{\alpha (j' - j)}
            \cdot \cos (\Theta_{j',\ell'} - \Theta_{j,\ell})
          & - 2^{\beta j' - \alpha j}
            \cdot \sin (\Theta_{j',\ell'} - \Theta_{j,\ell}) \\
            2^{\alpha j' - \beta j} \cdot \sin (\Theta_{j',\ell'} - \Theta_{j,\ell})
          & 2^{\beta(j' - j)} \cdot \cos (\Theta_{j',\ell'} - \Theta_{j,\ell})
        \end{matrix}
      \right)
    =: \left( \begin{matrix} a & b \\ c & d \end{matrix}\right) \, .
    \label{eq:SimpleTransitionMatrixExplicit}
  \end{equation}
  %For brevity, we will write $A_j^{-1} R_{j,\ell}^{-1} R_{j',\ell'} A_{j'}
  %=: \left( \begin{smallmatrix} a & b \\ c & d\end{smallmatrix} \right)$.

  From \eqref{eq:IntersectionScaleRelation} we infer that $|j - j'| \leq 3$,
  since $Q_i \cap Q_{i'} \neq \emptyset$.
  Recalling that $0 \leq \beta \leq \alpha \leq 1$, we thus see that
  \[
    |a| \leq 2^{\alpha (j'-j)} \leq 2^{\alpha |j'-j|} \leq 2^3,
    \qquad
    |b| \leq 2^{\beta j' - \alpha j}
        \leq 2^{\alpha (j' - j)}
        \leq 2^{\alpha |j' - j|}
        \leq 2^3
    \quad \text{and} \quad
    |d| \leq 2^{\beta (j'-j)} \leq 2^3.
  \]
  Furthermore, from \eqref{eq:IntersectionAngleControl} we conclude that
  \(
    |\Theta_{j,\ell} - \Theta_{j',\ell'} - 2\pi k|
    \leq 4(1+\eps) \cdot \big(2^{(\beta-1)j} + 2^{(\beta-1)j'}\big)
  \)
  for some $k \in \{-2,\dots,2\}$.
  Therefore, since the sine is $2\pi$-periodic and $|\sin \phi| \leq |\phi|$, we conclude that
  \begin{align*}
    |c| = 2^{\alpha j' - \beta j} \cdot |\sin (\Theta_{j',\ell'} - \Theta_{j,\ell} + 2\pi k)|
    & \leq 2^{\alpha j' - \beta j}
           \cdot 4(1+\eps)
           \cdot (2^{(\beta-1)j} + 2^{(\beta-1)j'}) \\
    & \leq 5 \cdot (2^{\alpha j' - j} + 2^{(\alpha-1)j' + \beta (j'-j)}) \\
    %({\scriptstyle{\text{since } |j-j'| \leq 5 \text{ and } \alpha \leq 1}})
    ({\scriptstyle{\text{since } \alpha j' - j \leq j' - j \leq 3 \text{ and } (\alpha - 1) j' \leq 0}})
    & \leq 5 \cdot (2^{3} + 2^{3\beta})
      \leq 80 \, .
  \end{align*}
  %where we used the periodicity of the sine function and the estimate
  %$|\sin \phi| \leq |\phi|$ for all $\phi \in \R$.
  Thus, we have shown that
  \(
    \|T_{j,m,\ell}^{-1} T_{j',m',\ell'} \|
    \leq 2^3 + 2^3 + 2^3 + 80
    = 104
    %\leq 6 \cdot (2^5 + 2^5 + 2^5 + 448)
  \).

  %To simplify the proof of this claim, we introduce the matrices
  %To see why these are useful, note as a consequence of
  %equation \eqref{eq:GammaEstimate} that
  %Thus, for $(j,m,\ell), (j',m',\ell') \in I_0$, we have
  %Furthermore, a direct computation shows
  %
  %With these preliminary considerations, we can finally prove the following:

  %As seen above, we need only prove equation \eqref{eq:StructurednessCondition}.
  %To this end, let $(j,m,\ell),(j',m',\ell') \in I_0$ with
  %$Q_{j,m,\ell} \cap Q_{j',m',\ell'} \neq \emptyset$.
  %Recall from Lemma \ref{lem:Admissibility} that this implies
  %$|j-j'| \leq 5$.
  %Then, equation \eqref{eq:SimpleTransitionMatrixExplicit} shows
  %In order to estimate $|c|$, we have to work a little harder:
  %Recall from Lemma \ref{lem:Admissibility} that there is some
  %$k \in \Z$ with $|\Theta_{j,\ell} - \Theta_{j',\ell'} - 2\pi k|
  %\leq 6(1+\eps) \cdot (2^{(\beta-1)j} + 2^{(\beta-1)j'})$.
  %Combining our estimates for $a,b,c,d$ with
  %equation \eqref{eq:TransitionMatrixSimplification} shows overall

  \medskip{}

  %To prove equation \eqref{eq:StructurednessCondition} (with $C=3264$), it
  %remains to consider the case $Q_i \cap Q_{i'} \neq \emptyset$
  %where $i = 0$ or $i' = 0$.
  We now consider the case where $i = 0$ or $i' = 0$. If
  $i = i' = 0$, then $\|T_i^{-1} T_{i'}\| = 1 \leq 104$.
  Furthermore, if $\xi \in Q_{0} \cap Q_{j,m,\ell} \neq \emptyset$, then Lemma
  \ref{lem:CoveringGeometry} shows that $2^{j-2} \leq |\xi| < 4$, and hence
  $j \leq 3$. Therefore, since $\|R_{j,\ell}\| = \| R_{j,\ell}^{-1}\| = 1$
  and since $\| A_{j}^{-1} \| \leq 1$ and $\|A_j\| = 2^{\alpha j} \leq 2^3$,
  we finally deduce that $\| T_{j,m,\ell}^{-1} T_0 \| = \| A_{j}^{-1} \| \leq 1 \leq 104$
  and $\| T_0^{-1} T_{j,m,\ell} \| \leq \|A_j\| \leq 2^3 \leq 104$.
  %\[
  %  \| T_{j,m,\ell}^{-1} T_0 \|
  %  = \| A_{j}^{-1} \| \leq 1 \leq 89
  %  %\leq \| A_{j,m}^{-1} A_j \| \cdot \| A_j^{-1} \|
  %  %\leq 4
  %  \quad \text{and} \quad
  %  \| T_0^{-1} T_{j,m,\ell} \|
  %  \leq \|A_j\|
  %  \leq 2^3 \leq 89
  %  %\| T_0^{-1} T_{j,m,\ell} \|
  %  %\leq \| A_{j,m} A_j^{-1} \| \cdot \| A_j \|
  %  %\leq \frac{3}{2} \cdot 2^3 \leq 3264 \, .
  %\]
  This completes the proof of Equation~\eqref{eq:StructurednessCondition} with $C = 104$.
\end{proof}

\section{Defining the wave packet smoothness spaces and investigating their properties}
\label{sec:WavePacketSpaces}

\noindent Having proved that $\CalQ^{(\alpha,\beta)}$ is an almost structured and
admissible covering of $\R^2$, we shall now define the
\emph{wave packet smoothness spaces} $\PacketSpace_s^{p,q}(\alpha,\beta)$
as decomposition spaces associated with $\CalQ^{(\alpha,\beta)}$
and investigate their basic properties.
In particular, we shall demonstrate that the spaces
$\PacketSpace_s^{p,q}(\alpha,\beta)$ are embedded in the space of
tempered distributions and, under certain restrictions on its parameters,
in classical function spaces such as Besov and Sobolev spaces.
We shall also investigate the conditions under which
one wave packet smoothness space $\PacketSpace_{s_1}^{p_1,q_1}(\alpha,\beta)$
is embedded in another wave packet smoothness space
$\PacketSpace_{s_2}^{p_2,q_2}(\alpha',\beta')$.
Furthermore, we show that any two wave packet spaces $\PacketSpace_{s_1}^{p_1, q_1} (\alpha,\beta)$
and $\PacketSpace_{s_2}^{p_2, q_2} (\alpha',\beta')$ are distinct,
unless their parameters satisfy $(p_1, q_1, s_1, \alpha, \beta) = (p_2, q_2, s_2, \alpha', \beta')$
or $(p_1, q_1, s_1) = (2, 2, s) = (p_2, q_2, s_2)$ for some $s \in \R$.
Finally, we show that if $\alpha = \beta$, then
$\PacketSpace_s^{p,q}(\alpha,\alpha)$ coincides with the $\alpha$-modulation
space $M^{p,q}_{\alpha,s}(\R^2)$.

\subsection{Defining the wave packet smoothness spaces}
\label{sub:WavePacketSpacesDefinition}

%In Lemma \ref{lem:CoveringAlmostStructured}, we verified that the
\noindent The $(\alpha,\beta)$ \textbf{wave packet covering}
$\CalQ^{(\alpha,\beta)} = (Q_i)_{i \in I}$
with $I = I^{(\alpha,\beta)} = \{0\} \cup I_0^{(\alpha,\beta)}$ is an almost
structured covering of $\R^2$ as we saw in
Lemma~\ref{lem:CoveringAlmostStructured}.
In Section~\ref{sub:DecompositionDefinition}, we explained that this guarantees
that the associated decomposition spaces
$\DecompSp (\CalQ^{(\alpha,\beta)}, L^p, \ell_w^q)$ are well-defined
quasi-Banach spaces, as long as the weight $w = (w_i)_{i \in I}$ is
$\CalQ^{(\alpha,\beta)}$-moderate.
For the weights we are interested in, this is verified in the following lemma:

\begin{lem}\label{lem:WavePacketWeight}

\noindent For $0 \leq \beta \leq \alpha \leq 1$ and $s \in \R$, define
  \[
    w_i^{s}
    := \begin{cases}
         2^{j s} \, ,
         & \text{if } i = (j,m,\ell) \in I_0^{(\alpha, \beta)} \, , \\
         1 \, ,
         & \text{if } i = 0 \, .
       \end{cases}
  \]
  Then $w^{s} = (w_i^{s})_{i \in I}$ is $\CalQ^{(\alpha,\beta)}$-moderate.
\end{lem}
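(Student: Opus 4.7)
The plan is to verify the defining inequality of Definition~\ref{def:ModerateWeight}, namely that there is a constant $C > 0$ with $w_i^{s} \leq C \cdot w_\ell^{s}$ whenever $Q_i \cap Q_\ell \neq \emptyset$. The key input is the scale-separation estimate from Lemma~\ref{lem:Admissibility}: if two covering sets meet, then the associated frequency scales are comparable. Since the weight depends only on the scale index $j$, this immediately forces boundedness of the quotient $w_i^{s}/w_\ell^{s}$.

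I would proceed by distinguishing three cases based on whether the index $0$ is involved. First, suppose $i = (j,m,\ell), \, i' = (j',m',\ell') \in I_0^{(\alpha,\beta)}$ with $Q_i \cap Q_{i'} \neq \emptyset$. Part~\ref{enu:IntersectionScaleRelation} of Lemma~\ref{lem:Admissibility} gives $|j - j'| \leq 3$, hence
\[
  \frac{w_i^{s}}{w_{i'}^{s}} = 2^{(j - j') s} \leq 2^{3 |s|}.
\]
Second, if $i = 0$ and $i' = (j', m', \ell') \in I_0^{(\alpha,\beta)}$ with $Q_0 \cap Q_{i'} \neq \emptyset$, the argument in part~\ref{enu:IntersectionLowFrequencyCount} of Lemma~\ref{lem:Admissibility} shows $j' \leq 3$ (and $j' \geq 1$), so
\[
  \frac{w_0^{s}}{w_{i'}^{s}} = 2^{-j' s} \leq 2^{3 |s|}.
\]
The symmetric case $i \in I_0^{(\alpha,\beta)}, \, i' = 0$ is analogous. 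Finally, $i = i' = 0$ is trivial.

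Combining the three cases yields the moderateness estimate with constant $C = 2^{3|s|}$, and hence $w^{s}$ is $\CalQ^{(\alpha,\beta)}$-moderate. There is no real obstacle here: all the work has already been done in establishing the admissibility properties of $\CalQ^{(\alpha,\beta)}$ in Lemma~\ref{lem:Admissibility}, and what remains is the one-line observation that the weight depends solely on the scale parameter $j$, which is controlled up to additive constant~$3$ along intersecting sets.
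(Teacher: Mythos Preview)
Your proof is correct and follows essentially the same approach as the paper: both split into the cases $i,i' \in I_0$, one index zero, and both zero, invoke the scale bound $|j-j'|\le 3$ from Lemma~\ref{lem:Admissibility} (and $j'\le 3$ in the low-pass case), and arrive at the constant $C = 2^{3|s|}$.
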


\begin{proof}
  Let $i, i' \in I$ with $\emptyset \neq Q_i \cap Q_{i'} \ni \xi$.
  Our goal is to show that $w_i^s / w_{i'}^s \leq 2^{3 |s|}$.

  First, let us consider the case where $i = (j, m, \ell) \in I_0$ and $i' = (j', m', \ell') \in I_0$.
  Then Equation~\eqref{eq:IntersectionScaleRelation} shows that $|j - j'| \leq 3$, whence
  %$2^{j - 2} \leq |\xi| \leq 2^{j + 3}$
  %and $2^{j' - 2} \leq |\xi| \leq 2^{j + 3}$ by Lemma \ref{lem:CoveringGeometry}.
  %Thus $|j - j'| \leq 5$, whence
  $w_i^s / w_{i'}^s = 2^{s (j - j')} \leq 2^{|s| \cdot |j-j'|} \leq 2^{3 |s|}$.

  Second, we consider the case $i = (j, m, \ell) \in I_0$, but $i' = 0$.
  By virtue of Equation~\eqref{eq:AbsoluteValueEstimate}, this entails $2^{j - 2} < |\xi|$.
  Since $Q_0 = B_4 (0)$, this implies that $2^{j-2} \leq |\xi| < 2^{2}$ and hence $j \leq 3$.
  Therefore, $w_i^s / w_{i'}^s = 2^{js} \leq 2^{|j| \cdot |s|} \leq 2^{3 |s|}$.

  Third, if $i = 0$ and $i' = (j', m', \ell') \in I_0$, then we see as in the
  preceding case that $j' \leq 3$, whence
  $w_{i}^s / w_{i'}^{s} = 2^{-s j'} \leq 2^{|s| \cdot |j'|} \leq 2^{3 |s|}$.

  Finally, if $i = i' = 0$, then $w_i^s / w_{i'}^s = 1 \leq 2^{3 |s|}$ as well.
\end{proof}

With the preceding lemma, we know that the spaces introduced below are
well-defined quasi-Banach spaces.

\begin{defn}\label{def:WavePacketSmoothnessSpaces}

\noindent Let $0 \leq \beta \leq \alpha \leq 1$.
  For $s \in \R$ and $p,q \in (0,\infty]$, the
  $(\alpha,\beta)$ \textbf{wave packet smoothness space}
  associated with the parameters $p,q,s$ is the decomposition space
  \[
    \PacketSpace_{s}^{p, q} (\alpha, \beta)
    := \DecompSp (\CalQ^{(\alpha,\beta)}, L^p, \ell_{w^s}^q) \, .
  \]
\end{defn}

\begin{rem*}
  Recall from Lemma \ref{lem:CoveringGeometry} that $1 + |\xi| \asymp 2^{j}$
  for $\xi \in Q_{j,m,\ell}$.
  Therefore, the weight $w_i^s$ satisfies
  \begin{equation}
    w_i^s \asymp (1 + |\xi|)^s
    \qquad \text{for } \quad \xi \in Q_i
    \quad \text{ and } \quad i \in I^{(\alpha,\beta)} \, .
    \label{eq:WeightConsistencyCondition}
  \end{equation}
  Therefore, the weight $w^s$ here is similar to that in Besov- and modulation spaces.
\end{rem*}

%\subsection{Showing that wave packet smoothness spaces embed
%into \texorpdfstring{$\Schwartz'$}{𝓢'}}
%\label{sub:WavePacketTemperedDistribution}

\subsection{Investigating the conditions for inclusions between different wave packet smoothness spaces}
\label{sub:WavePacketEmbeddings}

%We want to use the theory of embeddings for decomposition spaces
%(see Section \ref{sub:DecompositionEmbedding}) to establish conditions under
%which the inclusion
\noindent In order to use the theory of embeddings for decomposition spaces
to establish conditions under which the inclusion
\[
  \PacketSpace_{s_1}^{p_1,q_1} (\alpha,\beta)
  \subset \PacketSpace_{s_2}^{p_2,q_2} (\alpha',\beta')
\]
holds, we first have to determine for which values of $\alpha,\beta$
and $\alpha',\beta'$ the covering
%holds. For this, we first need to determine for which choices of $\alpha,\beta$
%and $\alpha',\beta'$ the covering
$\CalQ^{(\alpha,\beta)}$ is almost subordinate to
the covering $\CalQ^{(\alpha',\beta')}$.
This will be done in the following lemma.
In proving this lemma, we shall often use arguments similar to those
in the proof of Lemma~\ref{lem:Admissibility}.

In what follows, we shall write $T_i^{(\alpha,\beta)}$ rather than $T_i$
and $Q_i^{(\alpha,\beta)}$ rather than $Q_i$.
This will be done to avoid any confusion when we consider the two coverings
$\CalQ^{(\alpha,\beta)}$ and $\CalQ^{(\alpha',\beta')}$ at the same time.
We also remind the reader of the notations $m_j^{\max,\alpha}$ and $\ell_j^{\max,\beta}$,
$\Theta_{j,\ell}^{(\beta)}$ and $\phi_j^{(\beta)}$ introduced in Definition~\ref{defn:CoveringSets}.
%Many of the arguments in the proof are similar to the proof of
%Lemma \ref{lem:Admissibility}; but for the sake of completeness we still
%provide all arguments.

%\todo[inline]{The assumptions regarding $\alpha, \beta$ in the following lemma
%were wrong (we assumed $\alpha \leq \beta$ instead of $\beta \leq \alpha$).
%Check if this makes any problems in the proof!}

\begin{prop}\label{prop:WavePacketCoveringSubordinateness}
  Let $0 \leq \beta \leq \alpha \leq 1$ and $0 \leq \beta' \leq \alpha' \leq 1$
  and let the coverings $\CalQ^{(\alpha,\beta)}$ and $\CalQ^{(\alpha',\beta')}$
  be as introduced in Definition~\ref{defn:CoveringSets}.
  %and Lemma \ref{lem:CoveringCovers}.
  Then
  \begin{equation}
    \forall \, (j,m,\ell) \in I_0^{(\alpha,\beta)} \quad
      \forall \, (j',m',\ell') \in I_0^{(\alpha',\beta')} \,\, : \,\,
        \text{if }
        Q_{j,m,\ell}^{(\alpha,\beta)} \cap Q_{j',m',\ell'}^{(\alpha',\beta')} \neq \emptyset
        \text{ then }
        |j - j'| \leq 4.
        %\quad \text{unless} \quad
    \label{eq:RelativeModeratenessHelper}
  \end{equation}
  Moreover, $\CalQ^{(\alpha,\beta)}$ is almost subordinate to
  $\CalQ^{(\alpha',\beta')}$ if and only if $\alpha \leq \alpha'$ and $\beta \leq \beta'$.
\end{prop}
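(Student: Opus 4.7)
I would prove the proposition in three stages: the scale bound \eqref{eq:RelativeModeratenessHelper}, then the sufficiency of $\alpha \leq \alpha'$ and $\beta \leq \beta'$ for almost subordinateness, and finally its necessity.

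First, \eqref{eq:RelativeModeratenessHelper} follows directly from Lemma~\ref{lem:CoveringGeometry}: any $\xi \in Q_{j,m,\ell}^{(\alpha,\beta)} \cap Q_{j',m',\ell'}^{(\alpha',\beta')}$ must simultaneously satisfy the two-sided estimate \eqref{eq:AbsoluteValueEstimate} with $(j,m)$ and with $(j',m')$, giving $2^{j-2} < |\xi| < 2^{j'+3}$ and $2^{j'-2} < |\xi| < 2^{j+3}$, whence $|j-j'| < 5$, i.e.\ $|j-j'| \leq 4$.

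Second, for the sufficient direction, I would invoke Lemma~\ref{lem:WeakAndAlmostSubordinateness}: since each set in either covering is an affine image of an open rectangle or the open ball $B_4(0)$, it is open and path-connected, so it suffices to establish weak subordinateness, i.e.\ a uniform bound on $|\{i' \in I^{(\alpha',\beta')} : Q_{i'}^{(\alpha',\beta')} \cap Q_i^{(\alpha,\beta)} \neq \emptyset\}|$. Fix $i = (j,m,\ell) \in I_0^{(\alpha,\beta)}$. By \eqref{eq:RelativeModeratenessHelper}, only $j'$ with $|j - j'| \leq 4$ contribute. For each such $j'$, I would mimic Lemma~\ref{lem:Admissibility}: the radial bound \eqref{eq:AbsoluteValueEstimate} together with the assumption $\alpha \leq \alpha'$ (which, combined with $|j-j'| \leq 4$, yields $2^{\alpha j} \lesssim 2^{\alpha' j'}$) forces $m'$ to take only $O(1)$ values, exactly as in Case~1 of the proof of Lemma~\ref{lem:Admissibility}\ref{enu:IntersectionShiftRelation}. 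The angular estimate \eqref{eq:IntersectionAngleControl}, whose proof does not use that the two sets belong to the same covering, combined with $\beta \leq \beta'$ (giving $2^{(\beta-1)j} \lesssim 2^{(\beta'-1)j'}$), restricts $\ell'$ to $O(1)$ values as in Lemma~\ref{lem:Admissibility}\ref{enu:IntersectionAngleCount}. The exceptional cases $i = 0$ and $i' = 0$ are handled as in Lemma~\ref{lem:Admissibility}\ref{enu:IntersectionLowFrequencyCount}.

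Third, for necessity, I argue by contrapositive. Almost subordinateness implies weak subordinateness by Lemma~\ref{lem:WeakAndAlmostSubordinateness}; I exhibit test sets violating this whenever $\alpha > \alpha'$ or $\beta > \beta'$. If $\alpha > \alpha'$, then for each large $j$ the set $Q_{j,0,0}^{(\alpha,\beta)}$ contains, by \eqref{eq:UnrotatedQExplicit}, the segment $\{(2^{j-1} + t, 0) : 0 \leq t \leq \tfrac{1}{2} 2^{\alpha j}\}$; each point $(2^{j-1} + m' 2^{\alpha' j}, 0)$ with $0 \leq m' \leq \tfrac{1}{2} 2^{(\alpha-\alpha')j}$ lies simultaneously in $Q_{j,m',0}^{(\alpha',\beta')}$, producing $\asymp 2^{(\alpha-\alpha')j} \to \infty$ distinct intersecting $(\alpha',\beta')$-indices and contradicting weak subordinateness. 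The case $\beta > \beta'$ is analogous, but using angular positions at fixed radius: taking $j' = j$ and varying $\ell'$, the angular aperture of $Q_{j,0,0}^{(\alpha,\beta)}$ (of size $\asymp 2^{(\beta-1)j}$ by Lemma~\ref{lem:CoveringGeometry}) contains the rays at angles $\Theta_{j,\ell'}^{(\beta')}$ for $\asymp 2^{(\beta-\beta')j}$ distinct values of $\ell'$, each yielding an intersection with $Q_{j,0,\ell'}^{(\alpha',\beta')}$. The main technical obstacle I anticipate is the bookkeeping: verifying in the necessity part that the test indices actually lie in $I_0^{(\alpha',\beta')}$ (i.e., $m' \leq m_j^{\max,\alpha'}$ and $\ell' \leq \ell_j^{\max,\beta'}$), which is routine for $j$ large, and ensuring in the sufficiency part that the implicit $O(1)$ constants are genuinely uniform in $(j,m,\ell)$ by tracking the $\eps$-dependent constants from Lemma~\ref{lem:CoveringGeometry}.
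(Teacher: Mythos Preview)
Your proposal is correct and follows the paper's approach in all three parts: the scale bound via Lemma~\ref{lem:CoveringGeometry}, the reduction of sufficiency to weak subordinateness via Lemma~\ref{lem:WeakAndAlmostSubordinateness} with a count over $j',m',\ell'$ mimicking Lemma~\ref{lem:Admissibility}, and the contrapositive for necessity using the test sets $Q_{j,0,0}^{(\alpha,\beta)}$.

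There is one point in the necessity direction where you are too quick. In the case $\beta>\beta'$ you assert that each ray at angle $\theta=\Theta_{j,\ell'}^{(\beta')}$ lying in the angular aperture of $Q_{j,0,0}^{(\alpha,\beta)}$ ``yields an intersection with $Q_{j,0,\ell'}^{(\alpha',\beta')}$''. This is not automatic: the ray meets $Q_{j,0,\ell'}^{(\alpha',\beta')}$ on the radial segment $r\in[2^{j-1},\,2^{j-1}+2^{\alpha' j}]$, whereas it meets $Q_{j,0,0}^{(\alpha,\beta)}$ on roughly $r\in[2^{j-1}/\cos\theta,\,(2^{j-1}+2^{\alpha j})/\cos\theta]$, and these overlap only under the additional restriction $|\theta|\lesssim 2^{(\alpha'-1)j/2}$. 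The paper enforces precisely this extra cutoff (setting $\theta_j:=\min\{\tfrac12\,2^{(\beta-1)j},\,2^{(\alpha'-1)j/2}\}$) and then exhibits an explicit common point $z_{j,\theta}\,e^{i\theta}$. Since the angular argument is only needed when $\alpha\leq\alpha'$ (the radial argument already disposes of $\alpha>\alpha'$), and since then $\beta\leq\alpha\leq\alpha'$ gives $2\beta\leq 1+\alpha'$ and hence $2^{(\beta-1)j}\leq 2^{(\alpha'-1)j/2}$, your angular bound is in fact the binding one and your count $\asymp 2^{(\beta-\beta')j}$ survives. But this radial-overlap verification is exactly the step you must supply; without it the phrase ``each yielding an intersection'' is an assertion, not an argument.
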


\begin{proof}
  First of all, if
  $\xi\in Q_{j,m,\ell}^{(\alpha,\beta)}\cap Q_{j',m',\ell'}^{(\alpha',\beta')} \neq \emptyset$,
  then \eqref{eq:AbsoluteValueEstimate} implies that both
  $2^{j-2} < |\xi| < 2^{j'+3}$ and $2^{j'-2} < |\xi| < 2^{j+3}$.
  Combining these estimates results immediately in
  \eqref{eq:RelativeModeratenessHelper}.

  \medskip{}

  \textbf{Part 1:}
  In this part, we assume that $\alpha \leq \alpha'$ and $\beta \leq \beta'$
  and prove that $\CalQ^{(\alpha,\beta)}$ is almost subordinate to $\CalQ^{(\alpha',\beta')}$.
  To do so, let us define
  \[
    J_i
    := \big\{
             i' \in I^{(\alpha',\beta')}
             \,:\,
             Q_{i'}^{(\alpha',\beta')} \cap Q_i^{(\alpha,\beta)} \neq \emptyset
       \big\}
    \quad \text{for} \quad
    i \in I^{(\alpha,\beta)} \, .
  \]
  Since the coverings $\CalQ^{(\alpha,\beta)}$ and $\CalQ^{(\alpha',\beta')}$
  consist of open \emph{path-connected}, indeed convex, sets,
  Lemma~\ref{lem:WeakAndAlmostSubordinateness} shows that
  %Corollary 2.13 in \cite{DecompositionEmbeddings} shows that
  $\CalQ^{(\alpha,\beta)}$ is almost subordinate to
  $\CalQ^{(\alpha',\beta')}$ if and only if there is $K > 0$ such that
  $|J_i| \leq K$ for all $i \in I^{(\alpha,\beta)}$.
  To verify this, it will be enough to prove the following claims:
  %We will verify this by proving the following facts:
  \begin{enumerate}[label=\alph*)]
    \item For any given $i = (j,m,\ell) \in I_0^{(\alpha,\beta)}$ and
          $j' \in \N$, there are at most five different values of $m' \in \N_0$
          such that there is some $\ell' \in \N_0$ with
          $(j',m',\ell') \in I_0^{(\alpha',\beta')} \cap J_i$;

    \item For any given $i = (j,m,\ell) \in I_0^{(\alpha,\beta)}$ and
          $j' \in \N$, $m' \in \N_0$, there are at most $125 N$ different values
          of $\ell' \in \N_0$ with
          $(j',m',\ell') \in I_0^{(\alpha',\beta')} \cap J_i$; and

    \item $J_0 \cap I_0^{(\alpha',\beta')}$ contains at most $135N$ elements.
  \end{enumerate}
  Indeed, as $I^{(\alpha',\beta')} = \{0\} \cup I_0^{(\alpha',\beta')}$,
  the statements a),b) and c) together with Equation~\eqref{eq:RelativeModeratenessHelper}
  imply that
  %it is not hard to see that these three properties imply as desired that
  \[
    |J_i| \leq \max \{ 1 + 135N \,,\, 1 + 9 \cdot 5 \cdot 125 N \}
    = 1 + 5625 N
    \quad \text{for all} \quad
    i \in I^{(\alpha,\beta)} \, .
  \]

  \emph{Proof of a)} We suppose that
  $Q_{j,m,\ell}^{(\alpha,\beta)} \cap Q_{j',m',\ell'}^{(\alpha',\beta')}
  \neq \emptyset$ and derive restrictions on the possible values of $m'$.
  To do so, we distinguish three possible cases:
  \smallskip{}

  \emph{Case 1:} $j=j'$. Let $m_{\min}'$ and $m_{\max}'$ be respectively the
  minimal and the maximal values of $m'$ such that
  $Q_{j,m,\ell}^{(\alpha,\beta)} \cap Q_{j',m',\ell'}^{(\alpha',\beta')}
  \neq \emptyset$.
  Therefore, there exist $\xi \in
  Q_{j,m,\ell}^{(\alpha,\beta)} \cap Q_{j',m_{\min}',\ell'}^{(\alpha',\beta')}$
  and $\eta \in
  Q_{j,m,\ell}^{(\alpha,\beta)} \cap Q_{j',m_{\max}',\ell'}^{(\alpha',\beta')}$.
  Since $j = j'$, Equation~\eqref{eq:AbsoluteValueEstimate} inplies that
  \[
    2^{j-1} + 2^{\alpha' j} (m_{\max}' - \eps)
    \leq |\eta|
    \leq 2^{j-1} + 2^{\alpha j} (m + 2 + 2 \eps)
  \]
  and
  \[
    2^{j-1} + 2^{\alpha j} (m - \eps)
    \leq |\xi|
    \leq 2^{j-1} + 2^{\alpha' j} (m_{\min}' + 2 + 2 \eps) \, .
  \]
  Combining these estimates results in
  \begin{align*}
    2^{\alpha' j} \cdot (m_{\max}' - m_{\min}')
    & = [2^{j-1} + 2^{\alpha' j} \cdot (m_{\max}' - \eps)]
        - [2^{j-1} + 2^{\alpha' j} \cdot (m_{\min}' + 2 + 2 \eps)]
        + (2 + 3 \eps) \cdot 2^{\alpha' j} \\
    & \leq [2^{j-1} + 2^{\alpha j} \cdot (m + 2 + 2\eps)]
           - [2^{j-1} + 2^{\alpha j} \cdot (m - \eps)]
           + (2 + 3 \eps) \cdot 2^{\alpha' j} \\
    & \leq (2 + 3 \eps) \cdot (2^{\alpha j} + 2^{\alpha' j}) \, .
  \end{align*}
  Since $\alpha \leq \alpha'$ and $\eps < \frac{1}{32}$, this finally implies that
  \(
    m_{\max}' - m_{\min}'
    \leq (2 + 3 \eps) \cdot (2^{(\alpha - \alpha') j} + 1)
    \leq 2 \cdot (2 + 3 \eps) < 5
  \).
  Therefore, $m'$ can take at most five different values if $j = j'$.
  \smallskip{}

  \emph{Case 2:} $j' < j$ and hence $j' \leq j - 1$.
  Since $Q_{j,m,\ell}^{(\alpha,\beta)} \cap Q_{j',m',\ell'}^{(\alpha',\beta')}
  \neq \emptyset$, there is some $\xi \in Q_{j,m,\ell}^{(\alpha,\beta)} \cap
  Q_{j',m',\ell'}^{(\alpha',\beta')}$.
  Therefore, from Equation~\eqref{eq:AbsoluteValueEstimate} we infer that
  \(
    2^{j-1} - \eps \cdot 2^{\alpha j}
   \leq |\xi|
   \leq 2^{j' - 1} + 2^{\alpha' j'} (m' + 2 + 2\eps)
  \)
  and thus
  %Since $2^{j'} \leq 2^{j-1}$ and $\alpha \leq \alpha'$, this implies that
  \begin{align*}
    2^{j' - 1}
      \leq 2^{j - 1} - 2^{j' - 1}
    & \leq 2^{\alpha j} \eps + 2^{\alpha' j'} (m' + 2 + 2\eps) \\
    ({\scriptstyle{\text{since } \eps < 2^{-5} \text{ and } j \leq j' + 4
                   \,\, (\text{see } \eqref{eq:RelativeModeratenessHelper})
                   \text{ and } \alpha \leq \alpha'}})
    & \leq 2^{\alpha' j'} (m' + 3 + 2\eps)
      <    2^{\alpha ' j'} (m' + 4) \, ,
  \end{align*}
  since $2^{j'} \leq 2^{j-1}$.
  From this we infer that
  \(
    2^{(1-\alpha')j' - 1} - 4
    < m'
    \leq m_{j'}^{\max,\alpha'}
    \leq 2^{(1-\alpha') j'} + 1
  \).
  Thus, $m'$ can take at most five different values if $j' < j$.
  \smallskip{}

  \emph{Case 3:} $j' > j$ and thus $j \leq j' - 1$.
  Here there exists again
  $\xi\in Q_{j,m,\ell}^{(\alpha,\beta)}\cap Q_{j',m',\ell'}^{(\alpha',\beta')}$
  and from \eqref{eq:AbsoluteValueEstimate} we infer that
  \[
    2^{j' - 1} + 2^{\alpha' j'} (m' - \eps)
    \leq |\xi|
    \leq 2^j + 2^{\alpha j} (3 + 2 \eps)
    \leq 2^{j' - 1} + 2^{\alpha (j' - 1)} (3 + 2 \eps)
    \leq 2^{j' - 1} + 2^{\alpha j'} (3 + 2 \eps) \, ,
  \]
  and hence $0 \leq m' \leq \eps + 2^{(\alpha - \alpha') j'} (3 + 2 \eps) \leq 3 + 3 \eps < 4$,
  since $\alpha \leq \alpha'$.
  Thus, $m'$ can take at most four different values if $j' > j$.

  \smallskip{}

  Having considered all three possible cases, we conclude that $m'$
  can take at most five different values, as claimed in a).

  \medskip{}

  \emph{Proof of b)}
  Here again there exists
  $\xi \in Q_{j,m,\ell}^{(\alpha,\beta)} \cap Q_{j',\ell',m'}^{(\alpha',\beta')}$
  and from \eqref{eq:AngleControl} we infer that there are $\varphi, \varphi' \in \R$
  such that $|\xi| \cdot e^{i \varphi} = \xi = |\xi| \cdot e^{i \varphi'}$,
  $|\varphi - \Theta_{j,\ell}^{(\beta)}| \leq 4(1+\eps) \cdot 2^{(\beta - 1) j} \leq 4(1+\eps)$
  and furthermore
  $|\varphi' - \Theta_{j',\ell'}^{(\beta')}| \leq 4(1+\eps) \cdot 2^{(\beta' - 1) j'} \leq 4(1+\eps)$.
  Using essentially the same arguments as in the proof of
  Lemma~\ref{lem:Admissibility}, we conclude that there is some
  $k \in \{-2,\dots,2\}$ such that $\varphi - \varphi' = 2\pi k$.
  %Since \eqref{eq:AbsoluteValueEstimate} shows $|\xi| > 0$, we see
  %$e^{i \varphi} = e^{i \varphi'}$, so that there is $k \in \Z$ with
  %$\varphi - \varphi' = 2\pi k$.
  %In fact, we have $k \in \{-4,\dots,4\}$. To see this, recall from
  %\eqref{eq:ThetaJEstimate} that
  %$0 \leq \Theta_{j,\ell}^{(\beta)}, \Theta_{j',\ell'}^{(\beta')}
  %\leq \frac{20}{9} \pi$, and hence
  %$|\varphi|,|\varphi'| \leq \frac{20}{9}\pi + 6(1+\eps) \leq \frac{38}{9}\pi$,
  %which shows
  %$|k| \leq \frac{|\varphi - \varphi'|}{2\pi} \leq \frac{38}{9} < 5$.

  Finally, defining
  \[
    \lambda_{j,\ell,k,j'}
    := \frac{N}{2\pi} \cdot 2^{(1-\beta')j'}
                      \cdot \big( \Theta_{j,\ell}^{(\beta)} - 2\pi k \big)
    = \frac{1}{2} \cdot \left(\phi_{j'}^{(\beta')}\right)^{-1}
                  \cdot \big( \Theta_{j,\ell}^{(\beta)} - 2\pi k \big),
  \]
  %and using essentially the same arguments as in the proof of part d)
  %of Lemma \ref{lem:Admissibility},
  we conclude that
  %by definition of
  %$\Theta_{j,\ell}^{(\beta)} = 2\ell \cdot \phi_j^{(\beta)}$ with
  %$\phi_j^{(\beta)} = \frac{\pi}{N} \cdot 2^{(\beta-1)j}$, and by the analogous
  %definition of $\Phi_{j',\ell'}^{(\beta')}$, we conclude that
  %satisfies
  \begin{align*}
    |\ell' - \lambda_{j,\ell,k,j'}|
    & = \frac{1}{2} \cdot \left(\phi_{j'}^{(\beta')}\right)^{-1}
        \cdot \left|
               2 \ell' \phi_{j'}^{(\beta')}
               - (\Theta_{j,\ell}^{(\beta)} - 2\pi k)
              \right|
      = \frac{1}{2} \cdot \left(\phi_{j'}^{(\beta')}\right)^{-1}
        \left|
         \Theta_{j',\ell'}^{(\beta')}
         - \Theta_{j,\ell}^{(\beta)}
         + \varphi
         - \varphi'
        \right| \\
    & \leq \left(\phi_{j'}^{(\beta')}\right)^{-1} \cdot 2(1+\eps)
           \cdot (2^{(\beta-1)j} + 2^{(\beta' - 1) j'})
      =    \frac{N}{\pi} \cdot 2(1+\eps) \cdot (2^{(\beta-1)j - (\beta' - 1)j'} + 1) \\
    %({\scriptstyle{\text{since } \beta \leq \beta'
    %               \text{ and } |j-j'| \leq 5 \text{ by }
    %               \eqref{eq:RelativeModeratenessHelper}}})
    ({\scriptstyle{\text{since } \beta \leq \beta '}})
    & \leq \frac{N}{\pi} \cdot 2(1+\eps) \cdot (2^{(\beta'-1)(j-j')} + 1)
      \leq \frac{34 N}{\pi} \, (1+\eps) \leq 12 N \, ,
  \end{align*}
  since $|j-j'| \leq 5$, according to Equation~\eqref{eq:RelativeModeratenessHelper}.

  Because of $|\ell' - \lambda_{j,\ell,k,j'}| \leq 12 N$ and
  %, we see for each
  %Because of this, we see for each
  $k \in \{-2,\dots,2\}$, % that $\ell'$ can take at most
  %choice of $k \in \{-4,\dots,4\}$ that $\ell'$ can take at most
  the index $\ell'$ can take at most
  %$1 + 2 \cdot 33N \leq 67N$ different values.
  %Overall, there are thus
  %at most $9 \cdot 67N = 603N$ different possibilities for $\ell'$.
  $5 \cdot 25 N = 125 N$ different values, for given $j,\ell$ and $j'$.

  \medskip{}

  \emph{Proof of c)}
  The proof of this part is identical to that of part e) of
  Lemma \ref{lem:Admissibility}, since the set $Q_0^{(\alpha,\beta)} = B_4 (0)$
  is independent of the choice of $\alpha$ and $\beta$.

  \medskip{}

  \textbf{Part 2:}
  In this part, we prove that
  %In the final part of the proof, we show that
  $\CalQ^{(\alpha,\beta)}$ is \emph{not} almost subordinate to
  $\CalQ^{(\alpha',\beta')}$ if $\alpha > \alpha'$ or $\beta > \beta'$.
  To do so, it will be enough to prove the following two properties:
  %we will actually prove that $\CalQ^{(\alpha,\beta)}$ is not
  %weakly subordinate to $\CalQ^{(\alpha',\beta')}$, by proving the following
  %properties:
  \begin{enumerate}[label=\alph*)]
    \setcounter{enumi}{3}
    \item If $\alpha' < \alpha$, then
          \[
            \lim_{j\to\infty} |J_{(j,0,0)}|
            \geq \lim_{j \to \infty}
                   \left|
                    \big\{
                          m' \in \N_0
                          \,:\,
                          (j,m',0) \in I_0^{(\alpha',\beta')}
                          \text{ and }
                          Q_{j,m',0}^{(\alpha',\beta')}
                          \cap Q_{j,0,0}^{(\alpha,\beta)}
                          \neq \emptyset
                    \big\}
                   \right|
            = \infty .
          \]

    \item If $\alpha \leq \alpha '$ but $\beta' < \beta$, then
          \[
            \lim_{j \to \infty} |J_{(j,0,0)}|
            \geq \lim_{j \to \infty}
                   \left|
                    \{
                      \ell' \in \N_0
                      \,:\,
                      (j,0,\ell') \in I_0^{(\alpha',\beta')}
                      \text{ and }
                      Q_{j,0,\ell'}^{(\alpha',\beta')}
                      \cap
                      Q_{j,0,0}^{(\alpha,\beta)} \neq \emptyset
                    \}
                   \right|
            = \infty \, .
          \]
  \end{enumerate}
  Indeed, d) and e) show that $\CalQ^{(\alpha,\beta)}$ is not \emph{weakly}
  subordinate to $\CalQ^{(\alpha',\beta')}$.
  Thanks to Lemma~\ref{lem:WeakAndAlmostSubordinateness}, this implies that
  %Since we saw in Lemma \ref{lem:Admissibility} that $\CalQ^{(\alpha',\beta')}$
  %is admissible, part a) of Lemma 2.11 in \cite{DecompositionEmbeddings} shows that
  $\CalQ^{(\alpha,\beta)}$ is not \emph{almost} subordinate to $\CalQ^{(\alpha',\beta')}$.

  \medskip{}
  \emph{Proof of d)}
  From the definition of $Q_{j,m,\ell}^{(\alpha,\beta)}$, we infer that
  \[
    Q_{j,0,0}^{(\alpha,\beta)}
    \supset [2^{j-1} \,,\, 2^{j-1} + 2^{\alpha j}] \times \{0\}
    \quad \text{and} \quad
    Q_{j,m',0}^{(\alpha',\beta')}
    \supset [2^{j-1} + m' \cdot 2^{\alpha' j} \,,\,
             2^{j-1} + (m' + 1) \cdot 2^{\alpha' j}]
            \times \{0\} \, .
    %\supset [2^{j-1} \,,\, 2^{j-1} + 2^{\alpha j}]
    %        \times [-2^{\beta j - 2}, 2^{\beta j - 2}]
  \]
  %and
  %\[
  %  Q_{j,m',0}^{(\alpha',\beta')}
  %  \supset [2^{j-1} + m' \cdot 2^{\alpha' j} \,,\,
  %           2^{j-1} + (m' + 1) \cdot 2^{\alpha' j}]
  %          \times \{0\} \, .
  %\]
  The latter implies that
  $\xi_{j,m'} := (2^{j-1} + m' \cdot 2^{\alpha' j}, \, 0)^t \in Q_{j,m',0}^{(\alpha',\beta')}$.

  Let us now choose $m' \in \N_0$ with $m' \leq 2^{j(\alpha-\alpha') - 1}$.
  Then, on the one hand, $(j,m',0) \in I_0^{(\alpha',\beta')}$ since
  $m' \leq 2^{j(1-\alpha') - 1} \leq m_j^{\max,\alpha'}$.
  %On the other hand, we see
  %\[
  %  Q_{j,m',0}^{(\alpha',\beta')}
  %  \supset [2^{j-1} + m' \cdot 2^{\alpha' j} \,,\,
  %           2^{j-1} + (m' + 1) \cdot 2^{\alpha' j}]
  %          \times \{0\} \, ,
  %\]
  On the other hand, $\xi_{j,m'} \in Q_{j,0,0}^{(\alpha,\beta)}$
  since $m' \cdot 2^{\alpha' j} \leq 2^{\alpha j - 1} \leq 2^{\alpha j}$.
  %by our choice of $m$.

  Put together, this implies, as $\alpha > \alpha'$, that
  \begin{align*}
    |J_{(j,0,0)}|
    & \geq |
            \{
              m' \in \N_0
              \,:\,
              (j,m',0) \in I_0^{(\alpha',\beta')}
              \text{ and }
              Q_{j,m',0}^{(\alpha',\beta')} \cap Q_{j,0,0}^{(\alpha,\beta)}
              \neq \emptyset
            \}
           | \\
     & \geq |\{ m' \in \N_0 \,:\, m' \leq 2^{j (\alpha - \alpha') - 1}\}|
       = 1 + \lfloor 2^{j(\alpha - \alpha') - 1} \rfloor
       \xrightarrow[j\to\infty]{} \infty \, .
  \end{align*}

  \medskip{}

  \emph{Proof of e)}
  Here we shall write
  %$\gamma_{j,m}^{(\alpha,\beta)}$ rather than $\gamma_{j,m}$
  %for the quantity defined in \eqref{eq:DilationMatrixAndTranslation},
  %in order to clearly indicate the values of $\alpha,\beta$ that determine it.
  %Similarly, we shall write
  $R_{j,\ell}^{(\beta)}$ instead of $R_{j,\ell}$ to clearly indicate the value of $\beta$
  that determines this matrix.
  %In the following, we will write $\gamma_{j,m}^{(\alpha,\beta)}$ instead of
  %$\gamma_{j,m}$ to clearly indicate the choice of $\alpha,\beta$ that are used
  %to define this quantity. Likewise, we will write $R_{j,\ell}^{(\beta)}$ for
  %the rotation matrix $R_{j,\ell}$ defined in \eqref{eq:RotationMatrix}.

  %Recall from \eqref{eq:GammaEstimate} that
  %$\gamma_{j,m}^{(\alpha,\beta)} \geq 2^{\beta j - 2}$.
  %Given the definition of the sets $Q_{j,m,\ell}^{(\alpha,\beta)}$, this implies
  From the definition of $Q_{j,m,\ell}^{(\alpha,\beta)}$ we infer that
  \begin{equation}
    Q_{j,0,0}^{(\alpha',\beta')}
    \supset [2^{j-1}, 2^{j-1} + 2^{\alpha' j}]
            \times \{0\}
    \quad \text{and} \quad
    Q_{j,0,0}^{(\alpha,\beta)}
    \supset [2^{j-1}, 2^{j-1} + 2^{\alpha j}]
            \times [-2^{\beta j}, 2^{\beta j}] \, .
    \label{eq:SubordinatenessCounterexampleBasicSet}
  \end{equation}
  For $j \in \N$ define
  $\theta_j := \min \big\{ \tfrac{1}{2} \, 2^{(\beta - 1) j} , 2^{\frac{\alpha' - 1}{2} j} \big\}$.
  Below, we shall prove the following technical auxiliary claim:
  \begin{equation}
    \forall \, j \in \N \,\,
      \forall \, \theta \in [-\theta_j, \theta_j] \,\,
        \exists \, z_{j,\theta} \in [2^{j-1}, 2^{j-1} + 2^{\alpha ' j}] \,\,\, : \,\,\,
          \left(
            \begin{smallmatrix}
              z_{j,\theta} \, \cos \theta \\
              z_{j,\theta} \, \sin \theta
            \end{smallmatrix}
          \right)
          \in [2^{j-1}, 2^{j-1} + 2^{\alpha j}] \times [-2^{\beta j}, 2^{\beta j}] .
    \label{eq:NonSubordinatenessDifficultCaseSpecialStatement}
  \end{equation}
  Accepting this for the moment, we can combine
  Equations~\eqref{eq:NonSubordinatenessDifficultCaseSpecialStatement}
  and \eqref{eq:SubordinatenessCounterexampleBasicSet} to conclude
  that if $j \in \N$ and $\ell ' \in \N_0$ with $\ell' \leq \ell_j^{\max, \beta'}$
  are such that $\theta(j,\ell') := \Theta_{j,\ell'}^{(\beta')}$
  satisfies $|\theta(j,\ell')| \leq \theta_j$, then
  \[
    %\xi_{j,\ell'}
    R_{j,\ell'}^{(\beta')}
    \left(
      \begin{smallmatrix}
        z_{j, \theta(j,\ell')} \\
        0
      \end{smallmatrix}
    \right)
    = \left(
        \begin{smallmatrix}
          z_{j, \theta(j,\ell')} \cdot \cos ( \theta(j,\ell') ) \\
          z_{j, \theta(j,\ell')} \cdot \sin ( \theta(j,\ell') )
        \end{smallmatrix}
      \right)
    \in Q_{j,0,0}^{(\alpha,\beta)} \cap R_{j,\ell'}^{(\beta')} \, Q_{j,0,0}^{(\alpha',\beta')}
    =   Q_{j,0,0}^{(\alpha,\beta)} \cap Q_{j,0,\ell'}^{(\alpha',\beta')}
  \]
  and hence
  \begin{align*}
    |J_{(j,0,0)}|
    & \geq \Big|
             \big\{
               \ell ' \in \N_0
               \,\, : \,\,
               \ell ' \leq \ell_j^{\max, \beta'}
               \text{ and }
               \big| \Theta_{j,\ell'}^{(\beta')} \big| \leq \theta_j
             \big\}
           \Big| \\
    ({\scriptstyle{\text{Def.~of } \Theta_{j,\ell'}^{(\beta')},
                                   \ell_j^{\max,\beta'},
                                   \text{ and } \theta_j}})
    & \geq \Big|
             \Big\{
               \ell ' \in \N_0
               \,\, : \,\,
               \ell ' \leq N \cdot 2^{(1 - \beta') j}
               \text{ and }
               \ell' \leq \tfrac{N}{2 \pi}
                          \cdot \min
                                \big\{
                                  \tfrac{1}{2} 2^{(\beta - \beta') j},
                                  2^{\frac{1 - \beta' + \alpha ' - \beta'}{2} \cdot j}
                                \big\}
             \Big\}
           \Big| \\
    & \xrightarrow[j\to\infty]{} \infty .
  \end{align*}
  Here we noted in the very last step that $\beta' < \beta \leq 1$ and that $\beta' \leq \alpha'$,
  so that $2^{(1-\beta') j}$, $2^{(\beta - \beta') j}$
  and $2^{\frac{1 - \beta' + \alpha ' - \beta'}{2} \cdot j}$ all tend to $\infty$ as $j \to \infty$.
  Thus, we shall prove Claim e), if we prove~\eqref{eq:NonSubordinatenessDifficultCaseSpecialStatement}.

  \medskip{}

  To prove that \eqref{eq:NonSubordinatenessDifficultCaseSpecialStatement} is indeed satisfied,
  let $j \in \N$ and $\theta \in [-\theta_j, \theta_j]$.
  We first show that we can choose $z = z_{j,\theta} \in [2^{j-1}, 2^{j-1} + 2^{\alpha ' j}]$
  such that $z \cdot \cos \theta \in [2^{j-1}, 2^{j-1} + 2^{\alpha j}]$.
  Note that $|\theta| \leq \theta_j \leq 1 < \tfrac{\pi}{2}$ and hence $\cos \theta > 0$.
  Thus, our goal is to show that we can choose
  \begin{equation}
    z_{j,\theta} \in [2^{j-1} , 2^{j-1} + 2^{\alpha' j}]
                     \cap \Big[
                            \frac{2^{j-1}}{\cos \theta},
                            \frac{2^{j-1} + 2^{\alpha j}}{\cos \theta}
                          \Big] .
    \label{eq:NonSubordinatenessZChoice}
  \end{equation}
  This is possible if and only if the first condition
  in the following chain of equivalences is satisfied:
  \begin{align*}
    [2^{j-1} , 2^{j-1} + 2^{\alpha' j}]
    \cap \Big[
           \frac{2^{j-1}}{\cos \theta},
           \frac{2^{j-1} + 2^{\alpha j}}{\cos \theta}
         \Big]
    \neq \emptyset
    & \quad \Longleftrightarrow \quad
    2^{j-1} \leq \frac{2^{j-1} + 2^{\alpha j}}{\cos \theta}
    \text{ and }
    \frac{2^{j-1}}{\cos \theta} \leq 2^{j-1} + 2^{\alpha' j} \\
    & \quad \Longleftrightarrow \quad
    \frac{2^{j-1}}{2^{j-1} + 2^{\alpha' j}}
    \leq \cos \theta
    \leq \frac{2^{j-1} + 2^{\alpha j}}{2^{j-1}} \\
    ({\scriptstyle{\text{since } \cos \theta \leq 1 \leq \frac{2^{j-1} + 2^{\alpha j}}{2^{j-1}}}})
    & \quad \Longleftrightarrow
    \cos \theta \geq 1 - \frac{2^{\alpha' j}}{2^{j-1} + 2^{\alpha' j}} .
  \end{align*}
  To prove that the latter condition is satisfied, we recall
  from Equation~\eqref{eq:CosineQuadraticLowerBound} that $\cos \theta \geq 1 - \tfrac{\theta^2}{2}$
  for all $\theta \in \R$, and hence
  \[
    \cos \theta
    \geq 1 - \frac{\theta^2}{2}
    \geq 1 - \frac{\theta_j^2}{2}
    \geq 1 - \frac{2^{(\alpha' - 1) j}}{2}
    =    1 - \frac{2^{\alpha ' j}}{2 \cdot 2^j}
    \geq 1 - \frac{2^{\alpha ' j}}{2^{j-1} + 2^{\alpha ' j}},
  \]
  as desired.
  Here we noted in the last step that $2^{j-1} + 2^{\alpha' j} \leq 2^j + 2^j$ since $\alpha' \leq 1$.
  Overall, we have shown that one can indeed choose $z_{j,\theta}$
  as in Equation~\eqref{eq:NonSubordinatenessZChoice}.

  Thus, to prove Equation~\eqref{eq:NonSubordinatenessDifficultCaseSpecialStatement},
  it suffices to verify that $|z_{j,\theta} \cdot \sin \theta| \leq 2^{\beta j}$.
  But this is a consequence of the estimate $|\sin \phi| \leq |\phi|$
  combined with $0 \leq z_{j,\theta} \leq 2^{j-1} + 2^{\alpha ' j} \leq 2 \cdot 2^j$
  and ${|\theta| \leq \theta_j \leq \tfrac{1}{2} 2^{(\beta - 1) j}}$;
  indeed, these estimates imply that
  \(
    |z_{j,\theta} \cdot \sin \theta|
    \leq 2 \cdot 2^j \cdot \frac{1}{2} 2^{(\beta - 1) j}
    =    2^{\beta j}
  \).
\end{proof}

%We now derive from \eqref{eq:RelativeModeratenessHelper} the conditions
In the next corollary, we verify the conditions
concerning relative moderation of coverings and weights that we shall need to apply
Theorems~\ref{thm:EmbeddingPFinerThanQ} and \ref{thm:EmbeddingQFinerThanP}.
%We now use \eqref{eq:RelativeModeratenessHelper} to derive the relative
%moderateness conditions required for applying
%Theorems~\ref{thm:EmbeddingPFinerThanQ} and \ref{thm:EmbeddingQFinerThanP}.
%To avoid confusion, we first introduce a modified notation:
%For the matrices $T_i$ introduced in Lemma~\ref{lem:CoveringAlmostStructured},

\begin{cor}\label{cor:RelativeModerateness}
  Let $0 \leq \beta \leq \alpha \leq 1$ and $0 \leq \beta' \leq \alpha' \leq 1$.
  %and assume $\alpha \leq \alpha'$ and $\beta \leq \beta'$.
  Then, for any fixed $s \in \R$, the weight $w^s$ --- considered as a weight
  for $\CalQ^{(\alpha,\beta)}$ --- is relatively
  $\CalQ^{(\alpha',\beta')}$-moderate; more specifically,
  \[
    w_i^s \asymp w_{i'}^s
    \qquad
    \text{if }
    Q_i^{(\alpha,\beta)} \cap Q_{i'}^{(\alpha', \beta')} \neq \emptyset \, .
  \]
  Furthermore, the covering $\CalQ^{(\alpha,\beta)}$ is relatively
  $\CalQ^{(\alpha',\beta')}$-moderate, and
  \[
    |\det T_i^{(\alpha,\beta)}| \asymp w_{i'}^{\alpha + \beta}
    \qquad \text{if }
    Q_i^{(\alpha,\beta)} \cap Q_{i'}^{(\alpha', \beta')} \neq \emptyset \, .
  \]
\end{cor}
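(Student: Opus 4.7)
The plan is to reduce both claims to the two pointwise equivalences asserted in the ``more specifically'' clauses: whenever $Q_i^{(\alpha,\beta)} \cap Q_{i'}^{(\alpha',\beta')} \neq \emptyset$, I would like to show $w_i^s \asymp w_{i'}^s$ and $|\det T_i^{(\alpha,\beta)}| \asymp w_{i'}^{\alpha+\beta}$, with constants depending only on $s, \alpha, \beta, \alpha', \beta'$. Granted these, relative $\CalQ^{(\alpha',\beta')}$-moderateness in the sense of Definition~\ref{def:RelativeModerateness} follows immediately: if $i, \ell \in I^{(\alpha,\beta)}$ are such that both $Q_i^{(\alpha,\beta)}$ and $Q_\ell^{(\alpha,\beta)}$ meet the same $Q_{i'}^{(\alpha',\beta')}$, then $w_i^s \asymp w_{i'}^s \asymp w_\ell^s$ and analogously for the determinants.

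To establish the pointwise estimates in the main case $i = (j,m,\ell) \in I_0^{(\alpha,\beta)}$ and $i' = (j',m',\ell') \in I_0^{(\alpha',\beta')}$, I would invoke Equation~\eqref{eq:RelativeModeratenessHelper} of Proposition~\ref{prop:WavePacketCoveringSubordinateness}, which yields $|j - j'| \leq 4$. Combined with the explicit formulas $w_{(j,m,\ell)}^s = 2^{js}$ from Lemma~\ref{lem:WavePacketWeight} and $|\det T_{(j,m,\ell)}^{(\alpha,\beta)}| = |\det A_j| = 2^{(\alpha+\beta)j}$ from \eqref{eq:DilationMatrixAndTranslation} and \eqref{eq:CoveringParametrization}, this gives
\[
  \frac{w_i^s}{w_{i'}^s} = 2^{s(j-j')} \leq 2^{4|s|}
  \quad \text{and} \quad
  \frac{|\det T_i^{(\alpha,\beta)}|}{w_{i'}^{\alpha+\beta}}
  = 2^{(\alpha+\beta)(j - j')}
  \leq 2^{4(\alpha+\beta)} \leq 2^8,
\]
together with the corresponding lower bounds obtained by interchanging $i$ and $i'$.

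For the low-frequency cases where $i = 0$ or $i' = 0$, I would argue via Lemma~\ref{lem:CoveringGeometry}: since $Q_0 = B_4(0)$, any $\xi$ lying in $Q_0 \cap Q_{j',m',\ell'}^{(\alpha',\beta')}$ satisfies $2^{j'-2} < |\xi| < 4$ and hence $j' \leq 3$, and symmetrically for the opposite configuration. Consequently, $w_0^s = 1$ compares with $w_{i'}^s = 2^{j's}$ up to the constant $2^{3|s|}$, and $|\det T_0^{(\alpha,\beta)}| = 1$ compares with $w_{i'}^{\alpha+\beta} = 2^{(\alpha+\beta)j'}$ up to $2^{3(\alpha+\beta)} \leq 2^6$; the mirror case $i' = 0$ is handled by the same argument applied to the $(\alpha,\beta)$ covering, giving $j \leq 3$.

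The only real bookkeeping burden is keeping the three cases consistent, in particular verifying the determinant/weight comparison at the coarse scale in a uniform way; but because both $|\det T_0^{(\alpha,\beta)}|$ and $w_0^{\alpha+\beta}$ equal $1$, and the remaining scale $j$ (or $j'$) is bounded by an absolute constant, no essential difficulty arises beyond what already appeared in the proof of Lemma~\ref{lem:WavePacketWeight}.
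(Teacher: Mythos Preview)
Your proposal is correct and follows essentially the same approach as the paper: both use Equation~\eqref{eq:RelativeModeratenessHelper} to bound $|j-j'|\leq 4$ in the main case, handle the low-frequency cases $i=0$ or $i'=0$ via the bound $2^{j-2}<|\xi|<4$ from Lemma~\ref{lem:CoveringGeometry}, and reduce the determinant claim to the identity $|\det T_i^{(\alpha,\beta)}|=w_i^{\alpha+\beta}$. The only cosmetic difference is that the paper records $j'\leq 4$ in the low-frequency case while you obtain the slightly sharper $j'\leq 3$; both suffice.
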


\begin{proof}
  If $i = (j,m,\ell) \in I_0^{(\alpha,\beta)}$ and
  $i' = (j',m',\ell') \in I_0^{(\alpha',\beta')}$
  satisfy $Q_i^{(\alpha,\beta)} \cap Q_{i'}^{(\alpha',\beta')} \neq \emptyset$,
  then \eqref{eq:RelativeModeratenessHelper} implies that $|j - j'| \leq 4$.
  Therefore,
  \[
    \frac{w_i^s}{w_{i'}^s}
    = 2^{(j - j') \cdot s}
    \leq 2^{|j-j'| \cdot |s|}
    \leq 2^{4 |s|}
    \qquad \text{and} \qquad
    \frac{w_i^s}{w_{i'}^s}
    = 2^{(j - j') \cdot s}
    \geq 2^{- |j-j'| \cdot |s|}
    \geq 2^{-4 |s|} \, .
  \]
  Moreover, if $\emptyset \neq Q_0^{(\alpha,\beta)} \cap Q_{i'}^{(\alpha',\beta')} \ni \xi$
  for $i' = (j',m',\ell') \in I_0^{(\alpha',\beta')}$, then
  \eqref{eq:AbsoluteValueEstimate} implies
  ${2^2 > |\xi| \geq 2^{j' - 2}}$, and hence $j' \leq 4$.
  Therefore,
  $w_0^s / w_{i'}^s = 2^{-s \cdot j'} \leq 2^{|s| \cdot j'} \leq 2^{4 |s|}$
  and
  $w_0^s / w_{i'}^s = 2^{-s \cdot j'} \geq 2^{-|s| \cdot j'} \geq 2^{-4 |s|}$.

  Similarly, if
  $Q_i^{(\alpha,\beta)} \cap Q_{0}^{(\alpha',\beta')} \neq \emptyset$ for
  $i = (j,m,\ell) \in I_0^{(\alpha,\beta)}$, we see precisely as in the
  preceding paragraph that $j \leq 4$ and hence
  $2^{-4 |s|} \leq w_i^s / w_0^s \leq 2^{4 |s|}$.
  Finally, $2^{-4 |s|} \leq 1 = w_0^s / w_0^s = 1 \leq 2^{4 |s|}$.

  These estimates show that $\vphantom{\sum_j}w_i^s \asymp w_{i'}^s$ if
  $Q_i^{(\alpha,\beta)} \cap Q_{i'}^{(\alpha',\beta')} \neq \emptyset$,
  proving that $w^s$ --- considered as a weight for
  $\CalQ^{(\alpha,\beta)}$ --- is relatively $\CalQ^{(\alpha',\beta')}$-moderate.

  \medskip{}

  To prove that $\CalQ^{(\alpha,\beta)}$ is relatively
  $\CalQ^{(\alpha',\beta')}$-moderate, we note that
  \[
    \det T_i
    = \det (R_{j,\ell} \, A_{j})
    = \det A_{j}
    = 2^{\alpha j} \cdot 2^{\beta j}
    = w_i^{\alpha + \beta}
    \qquad \text{for} \quad i = (j,m,\ell) \in I_0^{(\alpha,\beta)} \, .
  \]
  Similarly, $\det T_i = 1 = w_i^{\alpha+\beta}$ for $i = 0$.
  Thus, we conclude that
  \(
    |\det T_i^{(\alpha,\beta)}|
    = w_i^{\alpha+\beta}
    \asymp w_{i'}^{\alpha + \beta}
    %\qquad \text{if} \quad
  \)
  if $Q_i^{(\alpha,\beta)} \cap Q_{i'}^{(\alpha', \beta')} \neq \emptyset$.
\end{proof}

We can now state and prove the main theorem of this subsection.

\begin{thm}\label{thm:WavePacketSpacesEmbeddings}
  Let $0 \leq \beta \leq \alpha \leq 1$ and $0 \leq \beta' \leq \alpha' \leq 1$
  be such that $\alpha \leq \alpha '$ and $\beta \leq \beta '$.
  Let $p_1, p_2, q_1, q_2 \in (0,\infty]$ and $s_1, s_2 \in \R$.

  Then
  \[
    \PacketSpace_{s_1}^{p_1, q_1} (\alpha, \beta)
    \hookrightarrow \PacketSpace_{s_2}^{p_2, q_2} (\alpha',\beta')
  \]
  if and only if $p_1 \leq p_2$ and
  \[
    \begin{cases}
      s_1 > s_2
            + (p_1^{-1} - p_2^{-1})(\alpha + \beta)
            + \mu (\alpha' - \alpha + \beta' - \beta)
            + (2 - \alpha' - \beta') (q_2^{-1} - q_1^{-1}) \, ,
      & \text{if } q_1 > q_2 \, , \\
      s_1 \geq s_2
               + (p_1^{-1} - p_2^{-1})(\alpha + \beta)
               + \mu (\alpha' - \alpha + \beta' - \beta) \, ,
      & \text{if } q_1 \leq q_2 ,
    \end{cases}
  \]
  where $\mu = (p_2^{\ast \ast} - q_1^{-1})_+$ and $p_2^{\ast \ast} = (\min \{ p_2, p_2' \})^{-1}$
  and where the conjugate exponent $p_2' \in [1,\infty]$ of $p_2 \in (0,\infty]$ is defined as in
  Appendix~\ref{sec:Notation}.
  %where $s = \big(\frac{1}{p_2^{\triangledown}} - \frac{1}{q_1}\big)_+$
  %is as defined in Theorem~\ref{thm:EmbeddingQFinerThanP}.

  \medskip{}

  Conversely,
  \[
    \PacketSpace_{s_1}^{p_1, q_1} (\alpha', \beta')
    \hookrightarrow \PacketSpace_{s_2}^{p_2, q_2} (\alpha,\beta)
  \]
  if and only if $p_1 \leq p_2$ and
  \[
    \begin{cases}
      s_1 > s_2
            + (p_1^{-1} - p_2^{-1})(\alpha + \beta)
            + \nu (\alpha' - \alpha + \beta' - \beta)
            + (2 - \alpha' - \beta') (q_2^{-1} - q_1^{-1}) \, ,
      & \text{if } q_1 > q_2 \, , \\
      s_1 \geq s_2
               + (p_1^{-1} - p_2^{-1})(\alpha + \beta)
               + \nu (\alpha' - \alpha + \beta' - \beta) \, ,
      & \text{if } q_1 \leq q_2 ,
    \end{cases}
  \]
  where $\nu = (q_2^{-1} - p_1^\ast)_+$ and $p_1^\ast = \min \{ p_1^{-1}, 1 - p_1^{-1} \}$.
  %where $s = \big(\frac{1}{q_2} - \frac{1}{p_1^{\pm \triangle}}\big)_+$
  %is as defined in Theorem~\ref{thm:EmbeddingPFinerThanQ}.
\end{thm}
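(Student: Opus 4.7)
The plan is to reduce both characterisations to Theorems~\ref{thm:EmbeddingQFinerThanP} and~\ref{thm:EmbeddingPFinerThanQ} and then perform a straightforward geometric series computation. The hypotheses of both theorems follow immediately from the preceding results: since $\alpha \leq \alpha'$ and $\beta \leq \beta'$, Proposition~\ref{prop:WavePacketCoveringSubordinateness} shows that $\CalQ^{(\alpha,\beta)}$ is almost subordinate to $\CalQ^{(\alpha',\beta')}$, and Corollary~\ref{cor:RelativeModerateness} shows that the weights $w^{s_1}, w^{s_2}$ and the covering $\CalQ^{(\alpha,\beta)}$ are relatively $\CalQ^{(\alpha',\beta')}$-moderate. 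The weights themselves are $\CalQ$-moderate on their respective coverings by Lemma~\ref{lem:WavePacketWeight}.

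For the first embedding we apply Theorem~\ref{thm:EmbeddingQFinerThanP} with source covering $\CalQ = \CalQ^{(\alpha,\beta)}$ and target $\CalP = \CalQ^{(\alpha',\beta')}$. Given $j = (j_1, m', \ell') \in I^{(\alpha',\beta')}$, pick any $i_j = (j_0, m, \ell) \in I^{(\alpha,\beta)}$ with $Q_{i_j}^{(\alpha,\beta)} \cap Q_j^{(\alpha',\beta')} \neq \emptyset$; by \eqref{eq:RelativeModeratenessHelper} we have $|j_0 - j_1| \leq 4$. Since the weight $w_i^s$ depends only on the scale index and since $|\det T_i^{(\alpha,\beta)}| = 2^{(\alpha+\beta) j_0}$ and $|\det S_j^{(\alpha',\beta')}| = 2^{(\alpha'+\beta') j_1}$, a direct calculation yields
\[
  \frac{w_j^{s_2}}{w_{i_j}^{s_1}} \cdot |\det T_{i_j}|^{p_1^{-1} - p_2^{-1} - \mu}
  \cdot |\det S_j|^{\mu}
  \asymp 2^{j_1 \cdot A},
  \quad
  A := s_2 - s_1 + (\alpha+\beta)(p_1^{-1} - p_2^{-1})
        + \mu (\alpha'-\alpha+\beta'-\beta).
\]
At each scale $j_1 \geq 1$ the number of admissible $(m',\ell')$ is comparable to $m_{j_1}^{\max,\alpha'} \cdot \ell_{j_1}^{\max,\beta'} \asymp 2^{(2-\alpha'-\beta') j_1}$, so the $\ell^{q_2 (q_1/q_2)'}$ norm appearing in Theorem~\ref{thm:EmbeddingQFinerThanP} reduces to the series (or supremum) $\sum_{j_1 \geq 1} 2^{(2-\alpha'-\beta') j_1} \cdot 2^{r j_1 A}$, where $r := q_2 (q_1/q_2)'$. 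If $q_1 \leq q_2$ then $r = \infty$ and boundedness is equivalent to $A \leq 0$, which is precisely the non-strict inequality in the statement. If $q_1 > q_2$ then $r$ is finite with $r^{-1} = q_2^{-1} - q_1^{-1}$, and the geometric series converges iff $rA + (2-\alpha'-\beta') < 0$, which rearranges to the strict inequality. The finitely many low-frequency indices (involving $i = 0$ or $j = 0$) contribute only boundedly and are absorbed.

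The second embedding is handled identically, but using Theorem~\ref{thm:EmbeddingPFinerThanQ} with the \emph{same} subordinateness, now interpreted as $\CalP = \CalQ^{(\alpha,\beta)}$ being almost subordinate to $\CalQ = \CalQ^{(\alpha',\beta')}$. The summation in that theorem runs over $i \in I^{(\alpha',\beta')}$, so again the number of indices at scale $j_0$ is $\asymp 2^{(2-\alpha'-\beta') j_0}$. The exponent of $2^{j_0}$ in the generic summand becomes $s_2 - s_1 + (\alpha+\beta)(p_1^{-1} - p_2^{-1}) + \nu(\alpha'-\alpha+\beta'-\beta)$ since the roles of the two determinants are swapped (now $|\det T_i| = 2^{(\alpha'+\beta') j_0}$ carries the exponent $\nu$ and $|\det S_{j_i}| = 2^{(\alpha+\beta) j_1} \asymp 2^{(\alpha+\beta) j_0}$ carries the exponent $p_1^{-1} - p_2^{-1} - \nu$). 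The same dichotomy on $q_1$ versus $q_2$ then produces the two claimed inequalities with $\nu$ in place of $\mu$.

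The main obstacle is purely bookkeeping: verifying that every factor reduces cleanly to a power of $2^{j_1}$ (resp.\ $2^{j_0}$), and that the index counting at each dyadic scale contributes the factor $2^{(2-\alpha'-\beta') j}$ in both directions. The first point is straightforward given that $|j_0 - j_1| \leq 4$ allows the interchange of $j_0$ and $j_1$ in all exponents up to a uniform multiplicative constant, and the second follows from the formulas $m_j^{\max,\alpha'} \asymp 2^{(1-\alpha')j}$ and $\ell_j^{\max,\beta'} \asymp 2^{(1-\beta')j}$ in~\eqref{eq:MaxValues}. Once these are in place, the geometric-series criterion yields the two-case dichotomy exactly as stated.
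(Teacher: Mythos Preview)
Your proposal is correct and follows essentially the same route as the paper: invoke Theorems~\ref{thm:EmbeddingQFinerThanP} and~\ref{thm:EmbeddingPFinerThanQ} with the subordinateness and moderateness supplied by Proposition~\ref{prop:WavePacketCoveringSubordinateness} and Corollary~\ref{cor:RelativeModerateness}, simplify the resulting weight ratio to a pure power of $2^{j'}$, count $\asymp 2^{(2-\alpha'-\beta')j'}$ indices per scale, and read off the geometric-series dichotomy. The paper packages the simplification via Corollary~\ref{cor:RelativeModerateness} (writing everything as $w_{i'}^{\gamma}$) rather than via the explicit bound $|j_0-j_1|\leq 4$, but this is the same computation.
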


\begin{rem*}
  1) Note that this theorem cannot be applied if one of the conditions
     $\alpha \leq \alpha'$ or $\beta \leq \beta'$ does not hold.
     Nevertheless, \emph{sufficient} conditions for embeddings can still be derived,
     for instance by considering the chain of embeddings
     %The preceding theorem assumes right from the beginning that
     %$\alpha \leq \alpha'$ and $\beta \leq \beta'$.
     %Thus, if $(\alpha,\beta)$ and $(\alpha',\beta')$ are ``incomparable'',
     %in the sense that neither $\alpha \leq \alpha'$ and $\beta \leq \beta'$
     %nor $\alpha' \leq \alpha$ and $\beta' \leq \beta$ are satisfied,
     %the theorem is not applicable.
     %
     %Nevertheless, one can still obtain sufficient conditions for embeddings,
     %for instance by considering a chain of embeddings of the form
     \[
       \PacketSpace_{s_1}^{p_1, q_1} (\alpha,\beta)
       \hookrightarrow \PacketSpace_{s}^{p,q} (\max\{\alpha,\alpha'\}, \max\{\beta,\beta'\})
       \hookrightarrow \PacketSpace_{s_2}^{p_2, q_2} (\alpha', \beta') \, ,
     \]
     for suitable parameters $p,q,s$ under certain conditions
     on $p_1,p_2, q_1,q_2$ and $s_1,s_2$.
     Alternatively, one can use embedding criteria provided in
     \cite{DecompositionEmbeddings} which are applicable to coverings
     that are not almost subordinate to each other.
     %apply for coverings which are not almost subordinate to each other.
     This, however, is outside the scope of the present paper.

  \medskip{}

  2) In Subsection~\ref{sub:WavePacketAlphaModulation}, we shall see that
  the wave packet smoothness spaces $\PacketSpace_s^{p,q}(\alpha,\alpha)$
  are identical to the $\alpha$\textbf{-modulation spaces}
  $M_{p,q}^{s,\alpha}(\R^2)$ introduced in Gröbner's PhD
  thesis \cite{GroebnerAlphaModulationSpaces} and studied further in
  \cite{BorupNielsenAlphaModulationSpaces,
  FornasierFramesForAlphaModulation,SpeckbacherAlphaModulationTransform,
  HanWangAlphaModulationEmbeddings,KatoAlphaModulationSobolev,
  GuoAlphaModulationEmbeddingCharacterization,DecompositionEmbeddings}.
  Therefore, Theorem~\ref{thm:WavePacketSpacesEmbeddings} can be seen as a
  generalisation of the characterisation of the embeddings between
  $\alpha$-modulation spaces, which were first studied in
  \cite{GroebnerAlphaModulationSpaces,HanWangAlphaModulationEmbeddings}
  and fully understood in
  \cite{VoigtlaenderPhDThesis,
        GuoAlphaModulationEmbeddingCharacterization,
        DecompositionEmbeddings}.
\end{rem*}

\begin{proof}
  To characterise the embedding
  \(
    \PacketSpace_{s_1}^{p_1, q_1} (\alpha, \beta)
    \hookrightarrow \PacketSpace_{s_2}^{p_2, q_2} (\alpha',\beta')
  \),
  we shall apply Theorem~\ref{thm:EmbeddingQFinerThanP} to the coverings
  $\CalQ = \CalQ^{(\alpha,\beta)}$ and $\CalP = \CalQ^{(\alpha',\beta')}$
  and the respective weights $w = w^{s_1}$ and $v = v^{s_2}$.
  All assumptions of that theorem are indeed satisfied, as can be seen from
  Lemmas~\ref{lem:CoveringAlmostStructured} and \ref{lem:WavePacketWeight},
  Proposition~\ref{prop:WavePacketCoveringSubordinateness} and
  Corollary~\ref{cor:RelativeModerateness}.
  Furthermore, note that the constant $\mu$ defined in the present theorem
  is identical to the one introduced in Theorem~\ref{thm:EmbeddingQFinerThanP}.
  Finally, let us select, for each $i' \in I^{(\alpha',\beta')}$, such an index
  $i_{i'} \in I^{(\alpha,\beta)}$ that
  $Q_{i_{i'}}^{(\alpha,\beta)} \cap Q_i^{(\alpha,\beta)} \neq \emptyset$.
  Then, Theorem~\ref{thm:EmbeddingQFinerThanP} implies that the embedding
  $\PacketSpace_{s_1}^{p_1, q_1} (\alpha, \beta)
   \hookrightarrow \PacketSpace_{s_2}^{p_2, q_2} (\alpha',\beta')$
  holds if and only if $p_1 \leq p_2$ and
  \begin{equation}
    \begin{split}
      \infty
      %\,\,\,\,\,\,\,\,\, > \,\,\,\,\,\,\,\,\,
      & > \left\|
            \left(
              \frac{w_{i'}^{s_2}}{w_{i_{i'}}^{s_1}}
              \cdot \big| \det T_{i_{i'}}^{(\alpha,\beta)} \big|^{p_1^{-1} - p_2^{-1} - \mu}
              \cdot \big| \det T_{i'}^{(\alpha',\beta')} \big|^\mu
            \right)_{i' \in  I^{(\alpha',\beta')}}
          \right\|_{\ell^{q_2 \cdot (q_1 / q_2)'}} \\
      %\overset{\text{Lemma } \ref{cor:RelativeModerateness}}{\asymp} &
      ({\scriptstyle{\text{Corollary }\ref{cor:RelativeModerateness}}})
      & \asymp
        \left\|
          \left(
            w_{i'}^{s_2 - s_1
                    + (\alpha + \beta) (p_1^{-1} - p_2^{-1} - \mu)
                    + \mu (\alpha' + \beta')}
          \right)_{i' \in  I^{(\alpha',\beta')}}
        \right\|_{\ell^{q_2 \cdot (q_1 / q_2)'}} \, .
    \end{split}
    \label{eq:WavePacketEmbeddingFineIntoCoarseCondition}
  \end{equation}
  First, we note that the single term with index $0 \in I^{(\alpha',\beta')}$ alone
  has no influence on whether the norm in
  \eqref{eq:WavePacketEmbeddingFineIntoCoarseCondition} is finite or not.
  Therefore, it is enough to consider only the terms
  $i' \in I_0^{(\alpha',\beta')}$.
  %Now, first note that the single index $0 \in I^{(\alpha',\beta')}$ does not
  %influence whether the norm from above is finite or not, so that
  %it is enough to consider $i' \in I_0^{(\alpha',\beta')}$.

  Next, since the set
  \[
    \Omega_{j'} :=
    \{
      (m',\ell') \in \N_0 \times \N_0
      \,:\,
      (j', m', \ell') \in I_0^{(\alpha', \beta')}
    \}
  \]
  satisfies $|\Omega_{j'}| \asymp 2^{(1 - \alpha' + 1 - \beta') j'}$
  %for each $j' \in \N$, the set
  %$$ has $\asymp 2^{(1 - \alpha' + 1 - \beta') j'}$ elements,
  and since the weight $w_{i'}^\gamma = 2^{\gamma \, j'}$ is independent of
  $m',\ell'$ for ${i' = (j',m',\ell')}$, we conclude that
  \begin{equation}
    \left\|
      \left(
        w_{i'}^\gamma
      \right)_{i' \in I_0^{(\alpha',\beta')}}
    \right\|_{\ell^q}
    \asymp \Big\|
             \big(
               2^{j' \cdot (\gamma + (2 - \alpha' - \beta')/q)}
             \big)_{j' \in \N}
           \Big\|_{\ell^q}
    \qquad \forall \, \gamma \in \R \text{ and } q \in (0,\infty] \, .
    \label{eq:MainSequenceSpaceNormCharacterization1}
  \end{equation}
  The right-hand side of \eqref{eq:MainSequenceSpaceNormCharacterization1}
  is finite if and only if
  %which is finite if and only if
  \begin{equation}
    \begin{cases}
      \gamma + (2 - \alpha' - \beta') / q < 0,    & \text{if } q < \infty \, , \\
      \gamma + (2 - \alpha' - \beta') / q \leq 0, & \text{if } q = \infty \, .
    \end{cases}
    \quad \Longleftrightarrow \quad
    \begin{cases}
      \gamma + (2 - \alpha' - \beta') / q < 0,    & \text{if } q < \infty \, , \\
      \gamma                              \leq 0, & \text{if } q = \infty \, .
    \end{cases}
    \label{eq:MainSequenceSpaceNormCharacterization2}
  \end{equation}
  Therefore, by recalling the identity \eqref{eq:EmbeddingSequenceSpaceExponent},
  we infer that \eqref{eq:WavePacketEmbeddingFineIntoCoarseCondition}
  is satisfied if and only if%in our case: 1/q = (q_2^{-1} - q_1^{-1})_+
  \[
    \begin{cases}
      s_2 - s_1
      + (\alpha + \beta)(p_1^{-1} - p_2^{-1} - \mu)
      + \mu (\alpha' + \beta')
      + (2 - \alpha' - \beta') \cdot (q_2^{-1} - q_1^{-1})
      < 0 \, ,
      & \text{if } q_1 >    q_2 \, , \\
      s_2 - s_1
      + (\alpha + \beta)(p_1^{-1} - p_2^{-1} - \mu)
      + \mu (\alpha' + \beta') \leq 0 \, ,
      & \text{if } q_1 \leq q_2 \, ,
    \end{cases}
  \]
  which is equivalent to the conditions stated in the theorem.

  \medskip{}

  To characterise the converse embedding
  $\PacketSpace_{s_1}^{p_1, q_1} (\alpha', \beta')
   \hookrightarrow \PacketSpace_{s_2}^{p_2, q_2} (\alpha,\beta)$,
  we apply Theorem~\ref{thm:EmbeddingPFinerThanQ} to the coverings
  $\CalQ = \CalQ^{(\alpha',\beta')}$ and $\CalP = \CalQ^{(\alpha,\beta)}$
  and the respective weights $w = w^{s_1}$ and $v = v^{s_2}$.
  As before, we see that all assumptions of that theorem are indeed satisfied.
  Furthermore, we note that the constant $\nu$ defined in the present is identical
  to the one introduced in Theorem~\ref{thm:EmbeddingPFinerThanQ}.
  %Therefore, with $s$ as in Theorem~\ref{thm:EmbeddingPFinerThanQ},
  Therefore, we see as above that the desired embedding holds if and only if $p_1 \leq p_2$ and
  \[
    \begin{split}
      \infty
      %\,\,\,\,\,\,\,\,\, > \,\,\,\,\,\,\,\,\,
      & > \left\|
            \left(
              \frac{w_{i_{i'}}^{s_2}}{w_{i'}^{s_1}}
              \cdot \big| \det T_{i'}^{(\alpha',\beta')} \big|^{\nu}
              \cdot \big| \det T_{i_{i'}}^{(\alpha,\beta)} \big|^{p_1^{-1} - p_2^{-1} - \nu}
            \right)_{i' \in  I^{(\alpha',\beta')}}
          \right\|_{\ell^{q_2 \cdot (q_1 / q_2)'}} \\
      %\overset{\text{Lemma } \ref{cor:RelativeModerateness}}{\asymp} &
      ({\scriptstyle{\text{Corollary } \ref{cor:RelativeModerateness}}})
      & \asymp \left\|
                 \left(
                   w_{i'}^{s_2 - s_1
                           + (\alpha + \beta) (p_1^{-1} - p_2^{-1} - \nu)
                           + \nu (\alpha' + \beta')}
                 \right)_{i' \in  I^{(\alpha',\beta')}}
               \right\|_{\ell^{q_2 \cdot (q_1 / q_2)'}} \, .
    \end{split}
  \]
  Precisely as before, we thus see that the embedding holds
  if and only if the conditions stated in the theorem are satisfied.
\end{proof}

\subsection{Characterising the coincidence of two wave packet smoothness spaces}
\label{sub:WavePacketCoincidence}

\noindent In this short subsection, we show that two wave packet spaces
$\PacketSpace_{s_1}^{p_1, q_1}(\alpha,\beta)$
and $\PacketSpace_{s_2}^{p_2, q_2}(\alpha',\beta')$ can coincide only
if all their parameters are identical.
This is almost true as stated; a small exception occurs for the case $p_1 = q_1 = p_2 = q_2 = 2$
in which the wave packet smoothness spaces are simply $L^2$-Sobolev spaces,
independently of the parameters $\alpha,\beta$.

\begin{thm}\label{thm:WavePacketCoincidence}
  Let $0 \leq \beta \leq \alpha \leq 1$, $0 \leq \beta' \leq \alpha' \leq 1$, $s_1, s_2 \in \R$ and $p_1, p_2, q_1, q_2 \in (0,\infty]$.
  If $\PacketSpace_{s_1}^{p_1, q_1} (\alpha,\beta) = \PacketSpace_{s_2}^{p_2, q_2}(\alpha', \beta')$,
  then $(p_1, q_1, s_1) = (p_2, q_2, s_2)$.
  If furthermore $(p_1, q_1) \neq (2,2)$, then $(\alpha, \beta) = (\alpha', \beta')$.

  Finally, for arbitrary $s \in \R$,
  \(
    \PacketSpace_{s}^{2,2}(\alpha,\beta) = H^s (\R^2)
  \)
  with equivalent norms, where the $L^2$-Sobolev space $H^s(\R^2)$ is given by
  \({
    H^s (\R^2)
    = \big\{
        f \in \Schwartz'(\R^2)
        \colon
        (1 + |\xi|^2)^{s/2} \cdot \widehat{f} \in L^2 (\R^2)
      \big\}
  }\)
  (see for instance Section~9.3 in \cite{FollandRA}).
\end{thm}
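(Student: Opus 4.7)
The plan is to derive parts (1) and (2) by invoking Theorem~\ref{thm:RigidityTheorem}, and to prove part (3) directly via Plancherel.

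For part (1), suppose $\PacketSpace_{s_1}^{p_1, q_1}(\alpha,\beta) = \PacketSpace_{s_2}^{p_2, q_2}(\alpha',\beta')$. By Theorem~\ref{thm:RigidityTheorem} applied to $\CalQ = \CalQ^{(\alpha,\beta)}$, $\CalP = \CalQ^{(\alpha',\beta')}$, $w = w^{s_1}$, and $v = w^{s_2}$, we immediately obtain $(p_1, q_1) = (p_2, q_2)$ together with the weight comparability $w_i^{s_1} \asymp w_{i'}^{s_2}$ whenever $Q_i^{(\alpha,\beta)} \cap Q_{i'}^{(\alpha',\beta')} \neq \emptyset$. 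To deduce $s_1 = s_2$, I take, for each $j \in \N$, the index $i = (j,0,0) \in I_0^{(\alpha,\beta)}$ and select any $i' = (j',m',\ell') \in I^{(\alpha',\beta')}$ with $Q_i^{(\alpha,\beta)} \cap Q_{i'}^{(\alpha',\beta')} \neq \emptyset$; this is possible by Lemma~\ref{lem:CoveringCovers}. Equation~\eqref{eq:RelativeModeratenessHelper} of Proposition~\ref{prop:WavePacketCoveringSubordinateness} gives $|j-j'| \leq 4$, so that $2^{j s_1} \asymp 2^{j' s_2} = 2^{j s_2} \cdot 2^{(j'-j) s_2}$ with $|(j'-j)s_2| \leq 4|s_2|$ independent of $j$. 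Hence $2^{j(s_1 - s_2)}$ is uniformly bounded above and below over $j \in \N$, which forces $s_1 = s_2$.

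For part (2), assume additionally $(p_1,q_1) \neq (2,2)$. Then Theorem~\ref{thm:RigidityTheorem} furthermore asserts that $\CalQ^{(\alpha,\beta)}$ and $\CalQ^{(\alpha',\beta')}$ are weakly equivalent. Since each element of both coverings is open and convex, hence path-connected, Lemma~\ref{lem:WeakAndAlmostSubordinateness} upgrades weak equivalence to equivalence: $\CalQ^{(\alpha,\beta)}$ is almost subordinate to $\CalQ^{(\alpha',\beta')}$ and vice versa. Proposition~\ref{prop:WavePacketCoveringSubordinateness} then gives simultaneously $\alpha \leq \alpha'$, $\beta \leq \beta'$, $\alpha' \leq \alpha$, $\beta' \leq \beta$, so that $(\alpha,\beta) = (\alpha',\beta')$.

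For part (3), I use Plancherel's theorem and the fact that the partition of unity $(\varphi_i)_{i \in I}$ associated with $\CalQ^{(\alpha,\beta)}$ has finite overlap. For any $g \in Z'$ for which the quantity is finite,
\[
\|g\|_{\PacketSpace_s^{2,2}(\alpha,\beta)}^2
= \sum_{i \in I} w_i^{2s} \, \|\Fourier^{-1}(\varphi_i \cdot \widehat{g})\|_{L^2}^2
= \sum_{i \in I} w_i^{2s} \, \|\varphi_i \cdot \widehat{g}\|_{L^2}^2
= \int_{\R^2} \Big( \sum_{i \in I} w_i^{2s} \, |\varphi_i(\xi)|^2 \Big) \, |\widehat{g}(\xi)|^2 \, d\xi.
\]
The weight consistency \eqref{eq:WeightConsistencyCondition} gives $w_i^s \asymp (1+|\xi|)^s$ whenever $\xi \in Q_i$, while admissibility (Lemma~\ref{lem:Admissibility}) ensures that the number of indices $i$ with $\varphi_i(\xi) \neq 0$ is uniformly bounded, say by $M$. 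Since $0 \leq \varphi_i \leq 1$ with $\sum_i \varphi_i \equiv 1$, Cauchy--Schwarz yields $M^{-1} \leq \sum_i \varphi_i(\xi)^2 \leq 1$. Combining these facts,
\[
\sum_{i \in I} w_i^{2s} \, |\varphi_i(\xi)|^2 \asymp (1+|\xi|)^{2s} \qquad \forall \, \xi \in \R^2,
\]
with constants depending only on $s$, $M$ and the implied constants in~\eqref{eq:WeightConsistencyCondition}. Hence $\|g\|_{\PacketSpace_s^{2,2}(\alpha,\beta)} \asymp \|(1+|\cdot|)^s \, \widehat{g}\|_{L^2} = \|g\|_{H^s}$; the remaining point to verify is that elements of $H^s(\R^2) \subset \Schwartz'(\R^2)$ can be identified with elements of the reservoir $Z'$ via the embedding $\Schwartz'(\R^2) \hookrightarrow Z'$ noted after Definition~\ref{def:ReservoirDefinition}, which yields equality of the two spaces. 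The only minor technical obstacle is this last identification together with the routine verification that the $H^s$-norm and the decomposition-space norm are equivalent on a dense subset (for instance $\Fourier(C_c^\infty(\R^2))$); this is handled by the Plancherel computation above once the reservoirs are matched.
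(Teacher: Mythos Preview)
Your argument for parts (1) and (2) is essentially the paper's; the only difference is that for $s_1 = s_2$ the paper notes directly that $(2^{j-1},0)^t \in Q_{j,0,0}^{(\alpha,\beta)} \cap Q_{j,0,0}^{(\alpha',\beta')}$, which gives $C^{-1} 2^{s_1 j} \leq 2^{s_2 j} \leq C \, 2^{s_1 j}$ for all $j$ without passing through \eqref{eq:RelativeModeratenessHelper}. Your detour via an arbitrary intersecting $i'$ and the bound $|j-j'|\leq 4$ is fine (for $j \geq 5$ the intersecting $i'$ cannot be $0$, so your notation $i' = (j',m',\ell')$ is eventually legitimate).

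For part (3) the approaches genuinely differ. The paper simply invokes \cite[Lemma~6.10]{DecompositionEmbeddings}, which packages exactly your Plancherel computation together with the reservoir identification; the paper then only spells out why $(1+|\xi|^2)^{s/2}\widehat{f} \in L^2$ forces $f \in \Schwartz'$ rather than merely $f \in Z'$. Your direct computation is more self-contained, but note one unjustified step: Definition~\ref{def:RegularPartitionOfUnity} does \emph{not} require $\varphi_i \geq 0$, so the bound $M^{-1} \leq \sum_i \varphi_i(\xi)^2$ via Cauchy--Schwarz is not available as written. This is easily repaired---choose a non-negative regular partition of unity (standard constructions produce one) and appeal to the norm-independence of the choice of $\Phi$---but it should be said. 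With that fix, and the reservoir matching you already flag, your argument is complete and has the advantage of not leaving the paper.
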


\begin{proof}
  Let us assume that
  $\PacketSpace_{s_1}^{p_1, q_1} (\alpha,\beta) = \PacketSpace_{s_2}^{p_2, q_2}(\alpha', \beta')$.
  Since $\PacketSpace_s^{p,q}(\alpha,\beta) = \DecompSp(\CalQ^{(\alpha,\beta)}, L^p, \ell_{w^s}^q)$,
  Theorem~\ref{thm:RigidityTheorem} implies that $(p_1, q_1) = (p_2, q_2)$ and that there is
  $C > 0$ such that $C^{-1} \cdot w_i^{s_1} \leq w_{i'}^{s_2} \leq C \cdot w_i^{s_1}$ for all
  $i \in I^{(\alpha,\beta)}$ and $i' \in I^{(\alpha',\beta')}$ for which
  $Q_i^{(\alpha,\beta)} \cap Q_{i'}^{(\alpha',\beta')} \neq \emptyset$.
  Because of $(2^{j-1}, 0)^t \in Q_{j,0,0}^{(\alpha,\beta)} \cap Q_{j,0,0}^{(\alpha',\beta')}$
  for arbitrary $j \in \N$, this implies $C^{-1} \cdot 2^{s_1 j} \leq 2^{s_2 j} \leq C \cdot 2^{s_1 j}$
  for all $j \in \N$, which implies that $s_1 = s_2$.

  Furthermore, in case of $(p_1, q_1) \neq (2,2)$, Theorem~\ref{thm:RigidityTheorem} shows that
  $\CalQ^{(\alpha,\beta)}$ and $\CalQ^{(\alpha',\beta')}$ are weakly equivalent.
  Since the coverings $\CalQ^{(\alpha,\beta)}$ and $\CalQ^{(\alpha',\beta')}$ consist of
  open, path-connected sets, Lemma~\ref{lem:WeakAndAlmostSubordinateness} shows that
  $\CalQ^{(\alpha,\beta)}$ and $\CalQ^{(\alpha',\beta')}$ are in fact equivalent coverings.
  Therefore, Proposition~\ref{prop:WavePacketCoveringSubordinateness} shows that
  $(\alpha,\beta) = (\alpha',\beta')$.

  \medskip{}

  Finally, since $w_i^s \asymp (1 + |\xi|)^s \asymp (1 + |\xi|^2)^{s/2}$ for all
  $\xi \in Q_i^{(\alpha,\beta)}$ and $i \in I^{(\alpha,\beta)}$
  (see Equation~\eqref{eq:WeightConsistencyCondition}),
  Lemma~6.10 in \cite{DecompositionEmbeddings} implies that
  \[
    \PacketSpace_{s}^{2,2}(\alpha,\beta)
    = \big\{ f \in Z' \colon (1 + |\xi|^2)^{s/2} \cdot \widehat{f} \in L^2(\R^2) \big\}
    = \big\{ f \in \Schwartz'(\R^2) \colon (1 + |\xi|^2)^{s/2} \cdot \widehat{f} \in L^2(\R^2) \big\}
    = H^s (\R^2) ,
  \]
  where the penultimate equality is justified by the smoothness and growth properties of the weight
  ${\xi \mapsto (1 + |\xi|^2)^{s/2}}$, which imply that if $g = \widehat{f} \in \CalD'(\R^2)$
  satisfies $(1 + |\xi|^2)^{s/2} \cdot g \in L^2(\R^2) \subset \Schwartz'(\R^2)$,
  then $g \in \Schwartz'(\R^2)$ and hence $f \in \Schwartz'(\R^2)$.
\end{proof}

\subsection{Establishing embeddings of wave packet smoothness spaces in classical spaces}
\label{sub:WavePacketClassicalEmbeddings}

%In the preceding subsection, we investigated the existence of embeddings
%between the wave packet smoothness spaces for different values of the
%parameters $(\alpha,\beta)$.
\noindent In this subsection, we study the conditions on the parameters
%for which choice of the parameters
$\alpha,\beta$ and $p,q,s$ under which the wave packet smoothness space
$\PacketSpace_{s}^{p,q}(\alpha,\beta)$ embeds in the Sobolev space
$W^{k,r}(\R^2)$ or the inhomogeneous Besov space $B^s_{p,q}(\R^2)$.
For the Besov spaces, we also study the converse question, that is, whether
the Besov spaces embed in the wave packet smoothness spaces.
As an application, we show that the Besov spaces arise as special cases of the
wave packet smoothness spaces for the case $\alpha = \beta = 1$.

We start by analysing the existence of embeddings between
wave packet smoothness and Besov spaces.
%The notations $p^{\pm\triangle}$ and $p^{\triangledown}$ used in the following
%theorem were defined in Theorems~\ref{thm:EmbeddingPFinerThanQ} and
%\ref{thm:EmbeddingQFinerThanP}, respectively.

\begin{thm}\label{thm:WavePacketBesov}
  Let $0 \leq \beta \leq \alpha \leq 1$,
  $p_1, p_2, q_1, q_2 \in (0,\infty]$ and $s_1, s_2 \in \R$.
  Let $B_{p,q}^s (\R^2)$ be the inhomogeneous Besov spaces as
  introduced for instance in Definition~2.2.1 in \cite{GrafakosModern} or in
  Definition~2 of Section 2.3.1 in \cite{TriebelTheoryOfFunctionSpaces}.
  Let us define $p_1^\ast := \min \{ p_1^{-1}, 1 - p_1^{-1} \}$ and
  $p_2^{\ast \ast} := ( \min \{ p_2, p_2 ' \})^{-1}$.

  Then,
  \[
    \PacketSpace_{s_1}^{p_1, q_1} (\alpha,\beta)
    \hookrightarrow B_{p_2,q_2}^{s_2} (\R^2)
  \]
  if and only if $p_1 \leq p_2$ and
  \begin{equation}
    \begin{cases}
      s_1 \geq s_2 + (\alpha+\beta)(p_1^{-1} - p_2^{-1} - \mu) + 2 \mu
      \, ,
      & \text{if } q_1 \leq q_2 \, , \\
      s_1 > s_2 + (\alpha+\beta)(p_1^{-1} - p_2^{-1} - \mu) + 2 \mu \, ,
      & \text{if } q_1 >    q_2
    \end{cases}
    \qquad \text{where} \qquad
    \mu := \left( p_2^{\ast \ast} - q_1^{-1} \right)_+
    %\tau_1 := \left(\frac{1}{p_2^{\triangledown}} - \frac{1}{q_1}\right)_+
    \,\,\, .
    \label{eq:WavePacketIntoBesovCharacterization}
  \end{equation}

  Conversely,
  \[
    B_{p_1,q_1}^{s_1} (\R^2)
    \hookrightarrow \PacketSpace_{s_2}^{p_2, q_2} (\alpha,\beta)
  \]
  if and only if $p_1 \leq p_2$ and
  \begin{equation}
    \begin{cases}
      s_1 \geq s_2 + (\alpha+\beta)(p_1^{-1} - p_2^{-1} - \nu) + 2 \nu
      \, ,
      & \text{if } q_1 \leq q_2 \, , \\
      s_1 > s_2 + (\alpha+\beta)(p_1^{-1} - p_2^{-1} - \nu) + 2 \nu \, ,
      & \text{if } q_1 >    q_2
    \end{cases}
    \qquad \!\! \text{where} \qquad \!\!
    \nu := \left( q_2^{-1} - p_1^\ast \right)_+ \,\, .
    %\tau_2 := \left( \frac{1}{q_2} - \frac{1}{p_1^{\pm\triangle}}\right)_+ \,\, .
    \label{eq:BesovIntoWavePacketCharacterization}
  \end{equation}
\end{thm}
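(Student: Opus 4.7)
My plan is to identify the Besov space $B_{p,q}^{s}(\R^2)$ with a decomposition space $\DecompSp(\mathcal{P}, L^p, \ell_{v^s}^{q})$ over a standard dyadic covering $\mathcal{P} = (P_j)_{j \in \N_0}$ of $\R^2$ --- taking $P_0$ as a ball around the origin and $P_j = \{\xi : 2^{j-1} < |\xi| < 2^{j+1}\}$ for $j \in \N$ --- with associated family $S_j := 2^j \cdot \identity$, $c_j := 0$ (and $S_0 := \identity$, $c_0 := 0$), and with weight $v_j^s := 2^{sj}$ for $j \geq 1$, $v_0^s := 1$. This identification is classical, can be extracted from \cite{BorupNielsenDecomposition,TriebelTheoryOfFunctionSpaces}, and turns both assertions of the theorem into embeddings between decomposition spaces, opening the way for the general embedding theorems of Subsection~\ref{sub:DecompositionEmbedding}.

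Once the identification is in place, I verify the hypotheses of Theorems~\ref{thm:EmbeddingPFinerThanQ} and \ref{thm:EmbeddingQFinerThanP} for the pair $(\CalQ^{(\alpha,\beta)}, \mathcal{P})$. The covering $\mathcal{P}$ is plainly almost structured with prototype sets $\{1/2 < |\xi| < 2\}$ and $B_2(0)$, and Lemma~\ref{lem:CoveringGeometry} shows that every $Q_{j,m,\ell}^{(\alpha,\beta)}$ sits inside the cluster $P_{j-1} \cup \cdots \cup P_{j+3}$ (with the analogous statement for $Q_0$). Since each $Q_{j,m,\ell}^{(\alpha,\beta)}$ is convex --- hence path-connected --- and each $P_j$ is open, Lemma~\ref{lem:WeakAndAlmostSubordinateness} promotes this to almost subordinateness of $\CalQ^{(\alpha,\beta)}$ to $\mathcal{P}$. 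Arguing as in Corollary~\ref{cor:RelativeModerateness}, the bound $|j - j'| \leq C$ whenever $Q_{j,m,\ell}^{(\alpha,\beta)} \cap P_{j'} \neq \emptyset$ supplies all the required relative moderateness of the weights $w^{s_1}, w^{s_2}$ and of $\CalQ^{(\alpha,\beta)}$ relative to $\mathcal{P}$.

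For the embedding $\PacketSpace_{s_1}^{p_1,q_1}(\alpha,\beta) \hookrightarrow B_{p_2,q_2}^{s_2}(\R^2)$ I then apply Theorem~\ref{thm:EmbeddingQFinerThanP}. Choosing for each $j \in \N$ some $i_j = (j',m',\ell') \in I^{(\alpha,\beta)}$ with $j' \asymp j$ and $Q_{i_j}^{(\alpha,\beta)} \cap P_j \neq \emptyset$, and plugging in $|\det T_{i_j}^{(\alpha,\beta)}| \asymp 2^{(\alpha+\beta)j}$, $|\det S_j| = 2^{2j}$, $w_{i_j}^{s_1} \asymp 2^{s_1 j}$, $v_j^{s_2} = 2^{s_2 j}$, the criterion reduces to the summability of $(2^{aj})_{j \in \N}$ in $\ell^{q_2 \cdot (q_1/q_2)'}$, where
\[
  a = s_2 - s_1 + (\alpha+\beta)\bigl(p_1^{-1} - p_2^{-1} - \mu\bigr) + 2\mu.
\]
Since a geometric sequence $(2^{aj})_j$ belongs to $\ell^r$ precisely when $a < 0$ for $r < \infty$ and when $a \leq 0$ for $r = \infty$, and since formula \eqref{eq:EmbeddingSequenceSpaceExponent} yields $q_2(q_1/q_2)' = \infty$ exactly when $q_1 \leq q_2$, the dichotomy \eqref{eq:WavePacketIntoBesovCharacterization} drops out. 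The converse embedding is handled symmetrically via Theorem~\ref{thm:EmbeddingPFinerThanQ}, which substitutes $\nu = (q_2^{-1} - p_1^{\ast})_+$ for $\mu$ and yields \eqref{eq:BesovIntoWavePacketCharacterization}.

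The only serious obstacle is justifying the identification $B_{p,q}^s(\R^2) = \DecompSp(\mathcal{P}, L^p, \ell_{v^s}^{q})$ across the full quasi-Banach range $p,q \in (0,\infty]$, $s \in \R$, while ensuring that passing from the customary tempered distribution definition of Besov spaces to the reservoir $Z'$ of Definition~\ref{def:ReservoirDefinition} does not alter the space. This step is standard but must be stated explicitly; everything beyond it is the geometric-series bookkeeping outlined above.
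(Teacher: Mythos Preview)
Your approach is essentially identical to the paper's: both identify the Besov space with a decomposition space over a dyadic covering (the paper cites Lemma~9.15 in \cite{DecompositionEmbeddings} for this and uses the specific covering $B_n = B_{2^{n+2}}(0) \setminus \overline{B_{2^{n-2}}}(0)$, $S_n = 2^n \cdot \identity$), verify almost subordinateness of $\CalQ^{(\alpha,\beta)}$ to the dyadic covering via Lemma~\ref{lem:CoveringGeometry} and Lemma~\ref{lem:WeakAndAlmostSubordinateness}, establish the relative moderateness estimates, and then apply Theorems~\ref{thm:EmbeddingQFinerThanP} and \ref{thm:EmbeddingPFinerThanQ} to reduce the two embedding questions to the same geometric-series dichotomy you describe. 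Your acknowledgement of the $Z'$ versus $\Schwartz'$ identification issue is likewise exactly the point addressed in the remark following the theorem statement.
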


\begin{rem*}
  %The theorem statement is true, but glosses over a technical detail.
  %The point is that the Besov spaces $B_{p,q}^s(\R^2)$ are defined as subspaces
  %of $\Schwartz'(\R^2)$, while the wave packet smoothness spaces
  %$\PacketSpace_{s}^{p,q} (\alpha,\beta)$ are subspaces of $Z'$.
  Let us somewhat clarify this statement.
  The Besov space $B_{p,q}^s(\R^2)$ is defined as a subspace
  of $\Schwartz'(\R^2)$, while the wave packet smoothness space
  $\PacketSpace_{s}^{p,q} (\alpha,\beta)$ is a subspace of $Z'$
  (see Definition~\ref{def:ReservoirDefinition}).

  Therefore, validity of the embedding $B_{p_1,q_1}^{s_1}(\R^2)
  \hookrightarrow \PacketSpace_{s_2}^{p_2,q_2}(\alpha,\beta)$ means, strictly
  speaking, that the map
  $B_{p_1,q_1}^{s_1}(\R^2) \to \PacketSpace_{s_2}^{p_2,q_2}(\alpha,\beta),
   f \mapsto f|_{Z}$ is well-defined and bounded.
  Likewise, validity of the embedding
  $\PacketSpace_{s_1}^{p_1, q_1}(\alpha,\beta)
   \hookrightarrow B_{p_2, q_2}^{s_2}(\R^2)$ means that each
  $f \in \PacketSpace_{s_1}^{p_1, q_1}(\alpha,\beta) \subset Z'$ can be extended
  to a uniquely determined tempered distribution $f_{\Schwartz'}$ and that
  the map $\PacketSpace_{s_1}^{p_1,q_1}(\alpha,\beta)
  \to B_{p_2,q_2}^{s_2} (\R^2),f \mapsto f_{\Schwartz'}$ is well-defined
  and bounded.
\end{rem*}

\begin{proof}
  It was shown in Lemma~9.15 in \cite{DecompositionEmbeddings} that the map
  \begin{equation}
    B_{p,q}^s (\R^2) \to \DecompSp(\CalB, L^p, \ell_{v^{(s)}}^q),
    f \mapsto f|_{Z}
    \label{eq:BesovAsDecomposition}
  \end{equation}
  is an isomorphism of quasi-Banach spaces.
  Here, the \textbf{inhomogeneous Besov covering} $\CalB = (B_n)_{n \in \N_0}$
  is given by $B_0 = B_4 (0)$ and
  $B_n = B_{2^{n+2}}(0) \setminus \overline{B}_{2^{n-2}} (0)$
  for $n \in \N$ and the weight $v^{(s)}$ is given by
  $v^{(s)}_n = 2^{s n}$ for $n \in \N_0$.
  It was shown in Lemma~9.10 in \cite{DecompositionEmbeddings} for
  \[
    S_n := 2^n \, \identity,
    \quad
    e_n := 0,
    \quad
    B_1^{(0)} := B_4 (0) \setminus \overline{B_{1/4}} (0),
    \quad
    B_2^{(0)} := B_4 (0),
    \quad \text{and} \quad
    k_n := \begin{cases}
             1 , & \text{if } n \in \N, \\
             2 , & \text{if } n = 0
           \end{cases}
    %\qquad \text{and} \qquad
    %B_n^{\natural} := \begin{cases}
    %                    B_4 (0) \setminus \overline{B}_{1/4}(0) \, ,
    %                    & \text{if } n \in \N \, , \\
    %                    B_4 (0) \, , & \text{if } n = 0
    %                  \end{cases}
  \]
  that $\CalB = (B_n)_{n \in \N_0} = (S_n B_{k_n}^{(0)} + e_n)_{n \in \N_0}$
  is an almost structured covering of $\R^2$ with
  associated family $(S_n \mybullet + e_n)_{n \in \N_0}$.

  Given the isomorphism \eqref{eq:BesovAsDecomposition} and the remark
  we made after the theorem, we need to characterise the existence
  of the embeddings
  \(
    \DecompSp(\CalQ^{(\alpha,\beta)}, L^{p_1}, \ell_{w^{s_1}}^{q_1})
    =               \PacketSpace_{s_1}^{p_1,q_1}(\alpha,\beta)
    \smash{\overset{!}{\hookrightarrow}}
    \DecompSp(\CalB, L^{p_2}, \ell_{v^{(s_2)}}^{q_2})
  \)
  and
  \(
    \DecompSp(\CalB, L^{p_1}, \ell_{v^{(s_1)}}^{q_1})
    \smash{\overset{!}{\hookrightarrow}}
    \PacketSpace_{s_2}^{p_2,q_2}(\alpha,\beta)
    = \DecompSp(\CalQ^{(\alpha,\beta)}, L^{p_2}, \ell_{w^{s_2}}^{q_2})
  \).
  To do so, we shall rely on Theorems~\ref{thm:EmbeddingQFinerThanP}
  and \ref{thm:EmbeddingPFinerThanQ}, respectively.
  The main prerequisite for applying these theorems is that
  $\CalQ^{(\alpha,\beta)} = (Q_i)_{i \in I^{(\alpha,\beta)}}$ be
  almost subordinate to $\CalB$ and that $\CalQ^{(\alpha,\beta)}$
  and $w^{s_1}$ be relatively $\CalB$-moderate.

  Since $\CalQ^{(\alpha,\beta)}$ consists only of open and path-connected sets,
  and since $\CalB$ consists only of open sets,
  Lemma~\ref{lem:WeakAndAlmostSubordinateness} implies that
  $\CalQ^{(\alpha,\beta)}$ is almost subordinate to $\CalB$,
  if it is weakly subordinate; that is, we need to show that
  $\sup_{i \in I^{(\alpha,\beta)}} |J_i| < \infty$ where
  $J_i := \{n \in \N_0 \,:\, B_n \cap Q_i \neq \emptyset\}$ for
  $i \in I^{(\alpha,\beta)}$.
  To see that this is the case, let $i = (j,m,\ell) \in I_0^{(\alpha,\beta)}$ be arbitrary.
  For any $n \in \N$ with $\emptyset \neq B_n \cap Q_i \ni \xi$,
  \eqref{eq:AbsoluteValueEstimate} implies that
  \[
    2^{n-2} < |\xi| < 2^{j + 3}
    \quad \text{and} \quad
    2^{j - 2} < |\xi| < 2^{n + 2} \, ,
  \]
  and hence $j - 3 \leq n \leq j + 4$.
  Thus, $J_i \subset \{0\} \cup \{j-3,\dots,j+4\}$, which implies that
  $|J_i| \leq 9$ for all $i \in I_0^{(\alpha,\beta)}$.
  Finally, if $\emptyset \neq B_n \cap Q_0 \ni \xi$ for some $n \in \N$, then
  $2^{n-2} < |\xi| < 4 = 2^{2}$ and hence $n \leq 3$.
  %for $i = 0$ and $k \in \N$ with $\emptyset \neq B_k \cap Q_i \ni \xi$.
  Therefore, $J_0 \subset \{0,\dots,3\}$ and thus $|J_i| \leq 9$ for all
  $i \in I^{(\alpha,\beta)}$.
  We have thus shown that $\CalQ^{(\alpha,\beta)}$ is almost subordinate to
  $\CalB$.

  To verify that, for arbitrary $\sigma \in \R$, the weight $w^\sigma$ is
  relatively $\CalB$-moderate, we recall from
  \eqref{eq:WeightConsistencyCondition} that $w_i^1 \asymp 1 + |\xi|$
  for arbitrary $\xi \in Q_i$ and $i \in I^{(\alpha,\beta)}$.
  Since $1 + |\xi| \asymp 2^n$ for $\xi \in B_n$ and any $n \in \N_0$,
  this implies that
  \begin{equation}
    w_i^\sigma
    \asymp_\sigma (1 + |\xi|)^\sigma
    \asymp_\sigma 2^{\sigma n}
    \quad \text{ for any } \sigma \in \R
          \text{ and all } i \in I^{(\alpha,\beta)}
          \text{ and } n \in \N_0
          \text{ with } Q_i \cap B_n \neq \emptyset \, .
    \label{eq:BesovEmbeddingWeightModerateness}
  \end{equation}
  In particular, $w_i^\sigma \asymp 2^{\sigma n} \asymp w_{i'}^\sigma$, if
  $Q_i \cap B_n \neq \emptyset \neq Q_{i'} \cap B_n$.
  Hence, $w^\sigma$ is relatively $\CalB$-moderate.

  From this, we conclude that the wave packet covering
  $\CalQ^{(\alpha,\beta)}$ is relatively $\CalP$-moderate.
  Indeed,%for $i = (j,m,\ell) \in I_0^{(\alpha,\beta)}$,
  %we see, as a consequence of \eqref{eq:GammaEstimate}, that
  \begin{equation}
    |\det T_i|
    %= 2^{\alpha j} \cdot \gamma_{j,m}
    = 2^{(\alpha+\beta) j}
    = w_i^{\alpha+\beta}
    \qquad \forall \, i = (j,m,\ell) \in I_0^{(\alpha,\beta)} \, .
    \label{eq:BesovEmbeddingDeterminantModerateness}
  \end{equation}
  Likewise, $|\det T_0| = 1 = w_0^{\alpha+\beta}$, so that
  \eqref{eq:BesovEmbeddingDeterminantModerateness} is also true for $i = 0$.
  In particular, we see that
  \[
    |\det T_i|
    = w_i^{\alpha+\beta}
    \asymp w_{i'}^{\alpha+\beta}
    = |\det T_{i'}|
    \qquad \text{for} \qquad
    i,i' \in I^{(\alpha,\beta)} \text{ such that }
    Q_i \cap B_n \neq \emptyset \neq Q_{i'} \cap B_n \, ,
  \]
  or, in other words, $\CalQ^{(\alpha,\beta)}$ is relatively $\CalB$-moderate.

  \medskip{}

  Now, let us choose, for each $n \in \N_0$, such an index $i_n \in I^{(\alpha,\beta)}$
  that $Q_{i_n} \cap B_n \neq \emptyset$.
  %and let $\tau := \Big(\frac{1}{p_2^{\triangledown}} - \frac{1}{q_1}\Big)_+$.
  Then, for $\mu$ as defined in the present theorem,
  Theorem~\ref{thm:EmbeddingQFinerThanP} shows that
  \(
    \DecompSp(\CalQ^{(\alpha,\beta)}, L^{p_1}, \ell_{w^{s_1}}^{q_1})
    \hookrightarrow \DecompSp(\CalB, L^{p_2}, \ell_{v^{(s_2)}}^{q_2})
  \)
  holds if and only if $p_1 \leq p_2$ and
  \begin{align*}
    \infty
    & \overset{!}{>}
      \left\|
        \left(
          \frac{v_n^{(s_2)}}{w_{i_n}^{s_1}}
          \cdot |\det T_{i_n}|^{p_1^{-1} - p_2^{-1} - \tau_1}
          \cdot |\det S_n|^{\tau_1}
        \right)_{n \in \N_0}
      \right\|_{\ell^{q_2 \cdot (q_1 / q_2)'}}
      \\
    ({\scriptstyle{\eqref{eq:BesovEmbeddingWeightModerateness}
                   \text{ and }
                   \eqref{eq:BesovEmbeddingDeterminantModerateness}}})
    & \asymp
      \left\|
        \left(
          2^{(s_2 - s_1) n
             + n (\alpha+\beta) (p_1^{-1} - p_2^{-1} - \tau_1)
             + 2n \tau_1}
        \right)_{n \in \N_0}
      \right\|_{\ell^{q_2 \cdot (q_1 / q_2)'}} \, .
  \end{align*}
  On the other hand, Equation~\eqref{eq:EmbeddingSequenceSpaceExponent} shows that
  $q_2 \cdot (q_1 / q_2)' = \infty$ if and only if $q_1 \leq q_2$.
  Therefore, the norm in the last expression is finite
  %Because of this, it is not hard to see that the norm above is finite
  if and only if Condition~\eqref{eq:WavePacketIntoBesovCharacterization}
  is satisfied.

  Similarly, Theorem~\ref{thm:EmbeddingPFinerThanQ} shows that
  \(
    \DecompSp(\CalB, L^{p_1}, \ell_{v^{(s_1)}}^{q_1})
    \hookrightarrow \DecompSp(\CalQ^{(\alpha,\beta)}, L^{p_2}, \ell_{w^{s_2}}^{q_2})
  \)
  holds if and only if $p_1 \leq p_2$ and
  \begin{align*}
    \infty
    & \overset{!}{>}
      \left\|
        \left(
          \frac{w_{i_n}^{s_2}}{v_n^{(s_1)}}
          \cdot |\det S_{n}|^{\tau_2}
          \cdot |\det T_{i_n}|^{p_1^{-1} - p_2^{-1} - \tau_2}
        \right)_{n \in \N_0}
      \right\|_{\ell^{q_2 \cdot (q_1 / q_2)'}}
      \\
    ({\scriptstyle{\eqref{eq:BesovEmbeddingWeightModerateness}
                   \text{ and }
                   \eqref{eq:BesovEmbeddingDeterminantModerateness}}})
    & \asymp
      \left\|
        \left(
          2^{n (s_2 - s_1)
             + 2n \tau_2
             + n (\alpha+\beta)(p_1^{-1} - p_2^{-1} - \tau_2)}
        \right)_{n \in \N_0}
      \right\|_{\ell^{q_2 \cdot (q_1 / q_2)'}} \, .
  \end{align*}
  As before, we see that this norm is finite if and only if
  Condition~\eqref{eq:BesovIntoWavePacketCharacterization} holds.
\end{proof}

As a direct application of the preceding theorem to the case $\alpha = \beta = 1$,
we conclude that the inhomogeneous Besov spaces are special examples of the wave packet smoothness spaces.

\begin{cor}\label{cor:BesovAsWavePacketSmoothness}
  \[
    \PacketSpace_s^{p,q}(1,1) = B^s_{p,q} (\R^2)
  \]
  for all $p,q \in (0,\infty]$ and $s \in \R$.
\end{cor}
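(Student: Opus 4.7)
The plan is to derive this corollary as a direct application of Theorem~\ref{thm:WavePacketBesov} with the specialisation $\alpha = \beta = 1$ and by choosing identical parameters on both sides of each embedding.

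First, I would establish the forward embedding $\PacketSpace_s^{p,q}(1,1) \hookrightarrow B^s_{p,q}(\R^2)$ by invoking \eqref{eq:WavePacketIntoBesovCharacterization} in Theorem~\ref{thm:WavePacketBesov} with $p_1 = p_2 = p$, $q_1 = q_2 = q$, $s_1 = s_2 = s$, and $\alpha = \beta = 1$. The condition $p_1 \leq p_2$ is trivially satisfied. Since $q_1 = q_2$, we are in the case $q_1 \leq q_2$ (with equality), so the required inequality reduces to
\[
  s \geq s + (\alpha+\beta)(p^{-1} - p^{-1} - \mu) + 2\mu = s + 2(-\mu) + 2\mu = s,
\]
which holds (trivially with equality), regardless of the precise value of $\mu = (p^{\ast\ast} - q^{-1})_+$. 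This yields the forward embedding.

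Next, for the converse embedding $B^s_{p,q}(\R^2) \hookrightarrow \PacketSpace_s^{p,q}(1,1)$, I would apply \eqref{eq:BesovIntoWavePacketCharacterization} in Theorem~\ref{thm:WavePacketBesov} with the same choice of parameters. Again $p_1 = p_2$ gives $p_1 \leq p_2$, and in the case $q_1 \leq q_2$ the condition becomes
\[
  s \geq s + 2(p^{-1} - p^{-1} - \nu) + 2\nu = s,
\]
which holds with equality irrespective of $\nu = (q^{-1} - p^\ast)_+$. This yields the reverse embedding.

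Combining the two embeddings gives $\PacketSpace_s^{p,q}(1,1) = B^s_{p,q}(\R^2)$ as sets, with equivalent quasi-norms (keeping in mind the identification explained in the remark after Theorem~\ref{thm:WavePacketBesov}, whereby elements of the Besov space restrict to $Z$ and elements of the wave packet smoothness space extend uniquely to tempered distributions). There is no real obstacle here: the corollary is essentially a consistency check confirming that the nice cancellation $(\alpha+\beta)(p_1^{-1} - p_2^{-1}) = 0$ and $-(\alpha+\beta)\mu + 2\mu = 0$ (respectively with $\nu$) takes place precisely when $\alpha + \beta = 2$, that is, exactly in the regime where the wave packet covering degenerates into a covering of dyadic type that is equivalent to the Besov covering $\CalB$.
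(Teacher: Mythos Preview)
Your proposal is correct and follows exactly the approach the paper indicates: the paper states just before the corollary that it is ``a direct application of the preceding theorem to the case $\alpha = \beta = 1$,'' without spelling out the verification, and your argument supplies precisely those details.
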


From Section~2.3.3 in \cite{TriebelTheoryOfFunctionSpaces}, we know that
$\Schwartz(\R^2) \hookrightarrow B_{p,q}^{\sigma} (\R^2) \hookrightarrow \Schwartz' (\R^2)$.
Combining this with the previous theorem, we conclude that,
for arbitrary $p,q \in (0,\infty]$ and $s \in \R$,
%In combination with the preceding theorem, we thus see for arbitrary
\[
  \Schwartz (\R^2)
  \hookrightarrow B_{p,q}^\sigma (\R^2)
  \hookrightarrow \PacketSpace_s^{p,q} (\alpha,\beta)
  \hookrightarrow B_{p,q}^\varrho (\R^2)
  \hookrightarrow \Schwartz' (\R^2) \,
\]
if $\sigma$ is sufficiently large and $\varrho$ sufficiently small (negative).
We have thus established the following corollary.

\begin{cor}\label{cor:WavePacketSmoothnessAndSchwartzSpaces}
  \[
    \Schwartz (\R^2)
    \hookrightarrow \PacketSpace_s^{p,q}(\alpha,\beta)
    \hookrightarrow \Schwartz'(\R^2)
  \]
  for arbitrary $0 \leq \beta \leq \alpha \leq 1$, $p,q \in (0,\infty]$, and $s \in \R$.
\end{cor}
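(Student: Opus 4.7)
The plan is to obtain the corollary essentially for free by composing the classical embeddings of Besov spaces between the Schwartz space and its dual with the two-sided embeddings between Besov and wave packet smoothness spaces established in Theorem~\ref{thm:WavePacketBesov}. Since everything is already set up, the task reduces to verifying that the quantitative conditions in \eqref{eq:WavePacketIntoBesovCharacterization} and \eqref{eq:BesovIntoWavePacketCharacterization} can be satisfied by a sufficiently large smoothness parameter on one side and a sufficiently negative one on the other.

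First, I would fix $p,q \in (0,\infty]$, $s \in \R$ and $0 \leq \beta \leq \alpha \leq 1$ and apply the second half of Theorem~\ref{thm:WavePacketBesov} with $p_1 = p_2 = p$ and $q_1 = q_2 = q$. In this symmetric setting the conditions $p_1 \leq p_2$ and $q_1 \leq q_2$ both hold, so \eqref{eq:BesovIntoWavePacketCharacterization} collapses to the single requirement $s_1 \geq s + 2\nu$ where $\nu = (q^{-1} - p^\ast)_+$ depends only on $p,q$. Choosing any $\sigma \geq s + 2\nu$ therefore yields $B_{p,q}^{\sigma}(\R^2) \hookrightarrow \PacketSpace_s^{p,q}(\alpha,\beta)$. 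Symmetrically, I would apply the first half of Theorem~\ref{thm:WavePacketBesov} with the same $p$ and $q$; condition \eqref{eq:WavePacketIntoBesovCharacterization} then reduces to $s \geq \varrho + 2\mu$ with $\mu = (p^{\ast\ast} - q^{-1})_+$, which is satisfied by any sufficiently negative $\varrho$, giving $\PacketSpace_s^{p,q}(\alpha,\beta) \hookrightarrow B_{p,q}^{\varrho}(\R^2)$.

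Next, I would invoke the classical fact recalled in Section~2.3.3 of \cite{TriebelTheoryOfFunctionSpaces}, namely $\Schwartz(\R^2) \hookrightarrow B_{p,q}^{t}(\R^2) \hookrightarrow \Schwartz'(\R^2)$ for every exponent $t \in \R$ (and every $p,q \in (0,\infty]$). Composing the four embeddings in the chain
\[
  \Schwartz(\R^2)
  \hookrightarrow B_{p,q}^{\sigma}(\R^2)
  \hookrightarrow \PacketSpace_s^{p,q}(\alpha,\beta)
  \hookrightarrow B_{p,q}^{\varrho}(\R^2)
  \hookrightarrow \Schwartz'(\R^2)
\]
delivers both required embeddings simultaneously. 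The interpretation of these compositions is the one spelled out in the remark following Theorem~\ref{thm:WavePacketBesov}: elements of $\PacketSpace_s^{p,q}(\alpha,\beta)$ are a priori distributions in the reservoir $Z'$, but the embedding into $B_{p,q}^{\varrho}(\R^2)$ asserts that each admits a unique tempered extension depending continuously on $f$, while restriction from $\Schwartz(\R^2)$ to $Z$ provides the embedding $\Schwartz(\R^2) \hookrightarrow \PacketSpace_s^{p,q}(\alpha,\beta)$.

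There is essentially no hard step here; the only mild point is bookkeeping: one must check that the classical chain $\Schwartz \hookrightarrow B_{p,q}^t \hookrightarrow \Schwartz'$ remains valid in the quasi-Banach range $p,q \in (0,1)$, but this is covered by Triebel's theory at the reference cited. Hence the corollary follows without any further computation.
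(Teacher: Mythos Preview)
Your proposal is correct and follows exactly the same approach as the paper: the paper establishes the corollary immediately from Theorem~\ref{thm:WavePacketBesov} by sandwiching $\PacketSpace_s^{p,q}(\alpha,\beta)$ between $B_{p,q}^\sigma(\R^2)$ and $B_{p,q}^\varrho(\R^2)$ for sufficiently large $\sigma$ and sufficiently negative $\varrho$, and then invoking $\Schwartz(\R^2) \hookrightarrow B_{p,q}^t(\R^2) \hookrightarrow \Schwartz'(\R^2)$ from Triebel. If anything, you spell out the verification of the quantitative conditions in \eqref{eq:WavePacketIntoBesovCharacterization} and \eqref{eq:BesovIntoWavePacketCharacterization} more explicitly than the paper does.
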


We now turn to studying conditions under which the wave packet smoothness
space $\PacketSpace_s^{p,q}(\alpha,\beta)$ are embedded in the Sobolev space $W^{k,r}(\R^2)$.
%$L^r (\R^2) = W^{0,r}(\R^2)$ in particular.
%Finally, we consider the embeddings of wave packet smoothness spaces
%into the Sobolev spaces $W^{k,r}(\R^2)$, which includes embeddings into
%the Lebesgue spaces $L^r (\R^2) = W^{0,r}(\R^2)$.
The following theorem will also justify the name ``smoothness spaces,'' since it
will show that, if the smoothness parameter $s$ is chosen so that
${s > k + 2 \, (1 + p^{-1})}$, then the wave packet smoothness space
%$s > k + 2 \max \{1, p^{-1} \}$, then the wave packet smoothness space
$\PacketSpace_s^{p,q}(\alpha,\beta)$ consists of $C^k$ functions.

\begin{thm}\label{thm:WavePacketIntoSobolev}
  Let $0 \leq \beta \leq \alpha \leq 1$,
  $p,q \in (0,\infty]$, $k \in \N_0$, $s \in \R$ and $r \in [1,\infty]$,
  and let us define $r^{\triangledown} := \min \{r, r'\}$. If
  \begin{equation}
    p \leq r
    \quad \text{and} \quad
    \begin{cases}
      s \geq k + (\alpha+\beta)(p^{-1} - r^{-1}) \, ,
      & \text{if } q \leq r^{\triangledown} \, , \\
      s >    k + (\alpha+\beta)(p^{-1} - r^{-1})
             + (2 - \alpha - \beta)
               \left(\frac{1}{r^{\triangledown}} - \frac{1}{q}\right) \, ,
      & \text{if } q > r^{\triangledown} \, ,
    \end{cases}
    \label{eq:SobolevEmbeddingSufficient}
  \end{equation}
  then $\PacketSpace_s^{p,q}(\alpha,\beta) \hookrightarrow W^{k,r}(\R^2)$;
  that is, there is an injective bounded linear map
  \[
    \iota : \PacketSpace_s^{p,q}(\alpha,\beta) \to W^{k,r}(\R^2)
    \quad \text{such that} \quad
    \iota f = f
    \quad \forall \, f \in \Schwartz(\R^2)
                  \text{ with } \widehat{f} \in C_c^\infty (\R^2) \, .
  \]
  Moreover, if \eqref{eq:SobolevEmbeddingSufficient} is satisfied for
  $r = \infty$, then $\iota f \in C_b^k (\R^2)$ for all
  $f \in \PacketSpace_s^{p,q}(\alpha,\beta)$ where
  \[
    C_b^k (\R^2)
    = \{
        f \in C^k (\R^2; \CC)
        \,:\,
        \partial^\alpha f \in L^\infty (\R^2)
        \quad \forall \, \alpha \in \N_0^2 \text{ with } |\alpha| \leq k
      \} \, .
  \]

  Conversely, assume that there is such a $C > 0$ that
  $\|f\|_{W^{k,r}} \leq C \cdot \|f\|_{\PacketSpace_s^{p,q}(\alpha,\beta)}$
  for all such $f \in \Schwartz (\R^2)$ that
  ${\widehat{f} \in C_c^\infty(\R^2)}$.
  Then
  \begin{equation}
    p \leq r
    \quad \text{and} \quad
    \begin{cases}
      s \geq k + (\alpha+\beta)(p^{-1} - r^{-1}) \, ,
      & \text{if } q \leq r \, , \\
      s >    k + (\alpha+\beta)(p^{-1} - r^{-1})
               + (2-\alpha-\beta) (r^{-1} - q^{-1}) \, ,
      & \text{if } q > r \, .
    \end{cases}
    \label{eq:SobolevEmbeddingNecessary}
  \end{equation}
  Furthermore, if $r = \infty$, then \eqref{eq:SobolevEmbeddingSufficient} is
  satisfied and if $r \in (2,\infty)$, then
  \begin{equation}
    \begin{cases}
      s \geq k + (\alpha+\beta)(p^{-1} - 2^{-1})_+ \,\, ,
      & \text{if } q \leq 2 \, , \\
      s > k + (\alpha+\beta)(p^{-1} - 2^{-1})_+
            + (2 - \alpha - \beta)(2^{-1} - q^{-1}) \, ,
      & \text{if } q >    2 \, .
    \end{cases}
    \label{eq:SobolevSpecialCondition}
  \end{equation}
\end{thm}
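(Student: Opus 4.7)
Paragraph 1 (Sufficient direction and the $C_b^k$ claim):
My approach to the sufficient condition \eqref{eq:SobolevEmbeddingSufficient} is to factor the embedding through a Besov space. For $r \in [1, \infty)$ I would invoke the classical inclusion $B_{r, \min\{r, 2\}}^{k}(\R^2) \hookrightarrow W^{k, r}(\R^2)$, and for $r = \infty$ the Bernstein-type inclusion $B_{\infty, 1}^{k}(\R^2) \hookrightarrow C_b^k(\R^2) \hookrightarrow W^{k, \infty}(\R^2)$; the latter also furnishes the $C_b^k$ conclusion. It then suffices to embed $\PacketSpace_s^{p, q}(\alpha, \beta)$ into the corresponding Besov space, and this is delivered by Theorem \ref{thm:WavePacketBesov} with $(p_2, s_2, q_2) = (r, k, \min\{r, 2\})$ for $r < \infty$ and $(p_2, s_2, q_2) = (\infty, k, 1)$ for $r = \infty$. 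The auxiliary quantity $p_2^{\ast \ast} = 1/\min\{r, r'\} = 1/r^\triangledown$ appearing in Theorem \ref{thm:WavePacketBesov} is exactly the threshold in \eqref{eq:SobolevEmbeddingSufficient}, so $\mu = (1/r^\triangledown - 1/q)_+$ and the case splits $q \lessgtr r^\triangledown$ match up automatically, while the strict inequality for $q > r^\triangledown$ emerges from the $q_1 > q_2$ branch of Theorem \ref{thm:WavePacketBesov}.

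Paragraph 2 (Necessary conditions via explicit test functions):
For the converse \eqref{eq:SobolevEmbeddingNecessary}, I would construct explicit atoms adapted to the wave packet covering. Let $(\varphi_i)_{i \in I}$ be a regular partition of unity subordinate to $\CalQ^{(\alpha, \beta)}$ (which exists by Theorem \ref{thm:RegularPartitionOfUnityExistence}) and set $g_i := \Fourier^{-1}\varphi_i$. A change of variables via $T_i$ together with the uniform bounds on $\varphi_i^\natural$ yields $\|g_i\|_{L^p} \asymp |\det T_i|^{1 - 1/p} = 2^{(\alpha+\beta) j (1 - 1/p)}$ for $i = (j, m, \ell) \in I_0^{(\alpha, \beta)}$, while the bounded overlap of $\CalQ^{(\alpha, \beta)}$ established in Lemma \ref{lem:Admissibility} gives $\|g_i\|_{\PacketSpace_s^{p, q}(\alpha, \beta)} \asymp 2^{js}\|g_i\|_{L^p}$. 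Since $\widehat{g_i}$ is concentrated near $b_i$ with $|b_i| \asymp 2^j$, a Bernstein-type estimate yields $\|g_i\|_{W^{k, r}} \asymp 2^{jk}\|g_i\|_{L^r} \asymp 2^{jk + (\alpha+\beta) j(1 - 1/r)}$; sending $j \to \infty$ along $i = (j, 0, 0)$ forces $s \geq k + (\alpha+\beta)(p^{-1} - r^{-1})$. The condition $p \leq r$ is extracted by testing with $f = \sum_{n = 1}^{N} g_i(\,\cdot\, - x_n)$ for widely separated translates: the frequency support stays in $Q_i$ (so the $\PacketSpace$-norm scales as $N^{1/p}$), whereas $\|f\|_{L^r}/\|f\|_{L^p} \asymp N^{1/r - 1/p}$ forces $p \leq r$ as $N \to \infty$. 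The $q$-dependent part of \eqref{eq:SobolevEmbeddingNecessary} is obtained by summing atoms over the $\asymp 2^{(2 - \alpha - \beta) j}$ admissible pairs $(m, \ell)$ at a fixed scale $j$ and then summing across scales with coefficients placed on the boundary of $\ell^q$-summability, which also upgrades the inequality to a strict one when $q > r$.

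Paragraph 3 (Refined necessary statements for $r = \infty$ and $r \in (2, \infty)$; main obstacle):
The sharper claims --- that \eqref{eq:SobolevEmbeddingSufficient} is itself necessary when $r = \infty$ and that \eqref{eq:SobolevSpecialCondition} is necessary when $r \in (2, \infty)$ --- require finer test families. For $r = \infty$ I would evaluate the superposition $f_j := \sum_{(m, \ell)} g_{j, m, \ell}$ at the origin: since $g_{j, m, \ell}(0) = \int \varphi_{j, m, \ell} \asymp |\det T_{j, m, \ell}| = 2^{(\alpha + \beta) j}$ and there are $\asymp 2^{(2 - \alpha - \beta) j}$ summands, $|f_j(0)| \asymp 2^{2 j}$, while $\|f_j\|_{\PacketSpace_s^{p, q}(\alpha, \beta)} \asymp 2^{(2 - \alpha - \beta) j / q} \cdot 2^{j[s + (\alpha + \beta)(1 - 1/p)]}$; comparing these and applying derivatives before evaluation for $k \geq 1$ recovers exactly the threshold of \eqref{eq:SobolevEmbeddingSufficient} at $r = \infty$, and a multi-scale limiting argument with coefficients on the $\ell^q$-boundary upgrades $\geq$ to $>$ for $q > 1$. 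For $r \in (2, \infty)$, condition \eqref{eq:SobolevSpecialCondition} follows from a Plancherel-based almost-orthogonality analysis of the same family: localising $f_j$ against a compactly supported cutoff and using $L^r \hookrightarrow L^2_{\mathrm{loc}}$ for $r \geq 2$ reduces the embedding into $W^{k, r}$ to an $L^2$-estimate, and Plancherel on the frequency-separated atoms yields $\|f_j\|_{W^{k, 2}}^2 \asymp 2^{2jk} \sum_{(m, \ell)} \|g_{j, m, \ell}\|_{L^2}^2$, from which the $r = 2$-analog of \eqref{eq:SobolevEmbeddingNecessary} emerges as \eqref{eq:SobolevSpecialCondition}. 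The main obstacle I anticipate is the careful book-keeping of strict versus non-strict inequalities at the critical thresholds: matching the test-function scaling on the necessary side with the strict-inequality hypothesis on the sufficient side (which flows from the $q_1 > q_2$ branch of Theorem \ref{thm:WavePacketBesov}) requires iterating the test over infinitely many scales with a coefficient sequence placed precisely at the boundary of $\ell^q$-summability.
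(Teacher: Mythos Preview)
Your approach differs substantially from the paper's. The paper does not route through Besov spaces at all; both directions are obtained by quoting abstract results from \cite{DecompositionIntoSobolev}: Corollary~3.5 there reduces the sufficient condition directly to a weighted sequence-space embedding $\ell^q_{w^s}(I) \hookrightarrow \ell^{r^{\triangledown}}_{u}(I)$ with $u_i \asymp w_i^{\,k + (\alpha+\beta)(p^{-1}-r^{-1})}$, and Theorems~4.4 and 4.7 there (applied with the four parameter triples $(\sigma,\tau,\varrho) = (p,r,r)$, $(p,r,1)$, $(p,2,2)$, $(p,p,2)$) deliver all the necessary conditions, including \eqref{eq:SobolevSpecialCondition}. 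The only covering-specific input is the elementary computation $|b_i|^k + \|T_i\|^k \asymp w_i^k$ and $|\det T_i| = w_i^{\alpha+\beta}$.

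Your sufficient direction via Theorem~\ref{thm:WavePacketBesov} together with the classical inclusions $B^k_{r,\min\{r,2\}} \hookrightarrow W^{k,r}$ and $B^k_{\infty,1} \hookrightarrow C^k_b$ is correct and pleasantly self-contained. One inaccuracy: for $r \in (2,\infty)$ your $q_2 = 2$ is \emph{not} equal to $r^{\triangledown} = r'$, so the case split coming out of Theorem~\ref{thm:WavePacketBesov} is $q \lessgtr 2$, not $q \lessgtr r^{\triangledown}$; you need to check the three sub-ranges $q \leq r'$, $r' < q \leq 2$, $q > 2$ separately, and one finds that \eqref{eq:SobolevEmbeddingSufficient} does imply your Besov condition in each (indeed your route even gives a non-strict inequality in the middle range, slightly improving on \eqref{eq:SobolevEmbeddingSufficient}). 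Your necessary direction, by contrast, is essentially a by-hand rederivation of what \cite{DecompositionIntoSobolev} already packages. The single-atom and translated-sum tests in Paragraph~2 are standard and work, but Paragraph~3 is thin: for $r = \infty$ the claim $g_{j,m,\ell}(0) = \int \varphi_{j,m,\ell} \asymp |\det T_{j,m,\ell}|$ needs the partition of unity to be nonnegative (not assumed), and for $r \in (2,\infty)$ the passage from $\|\cdot\|_{L^r}$ to $\|\cdot\|_{L^2}$ via local H\"older followed by Plancherel requires a uniform spatial-concentration estimate for $f_j$ that you have not supplied. These gaps are fillable, but at that point you are reproving the cited theorems; the paper's route trades transparency for brevity by invoking them directly.
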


\begin{rem*}
  Note that this theorem gives a \emph{complete characterisation} of
  the existence of the embedding for $r \in [1,2] \cup \{\infty\}$.
  Indeed, for $r = \infty$, this results from the theorem statement;
  moreover, $r^{\triangledown} = r$ for $r \in [1,2]$, so that
  \eqref{eq:SobolevEmbeddingSufficient} and \eqref{eq:SobolevEmbeddingNecessary}
  are identical.

  For $r \in (2,\infty)$, on the other hand, there is a gap
  between the necessary and the sufficient conditions.
\end{rem*}

\begin{proof}
  Let us use the notations of Lemma~\ref{lem:CoveringAlmostStructured} and additionally define
  %With notation as in Lemma~\ref{lem:CoveringAlmostStructured}, let us define
  \[
    v_i := |\det T_i|^{p^{-1} - r^{-1}}
    \qquad \text{and} \qquad
    u_i := |\det T_i|^{p^{-1} - r^{-1}} \cdot \big(|b_i|^k + \|T_i\|^{k} \big)
    \quad \text{for } i \in I := I^{(\alpha,\beta)} \, .
  \]
  Next, remember that
  $\PacketSpace_s^{p,q}(\alpha,\beta) = \DecompSp(\CalQ^{(\alpha,\beta)}, L^p, \ell_{w^s}^q)$
  where $\CalQ^{(\alpha,\beta)} = (Q_i)_{i \in I}$
  with $Q_i = T_i \, Q_i ' + b_i$ is an almost
  %with $Q_i = T_i \, Q_i^{\mynatural} + b_i$ is an almost
  structured covering, and thus --- according to Theorem~2.8 in
  \cite{DecompositionIntoSobolev} --- a regular covering of $\R^2$.
  %(and thus a regular covering, by \cite[Theorem 2.8]{DecompositionIntoSobolev})
  We can thus apply \cite[Corollary 3.5]{DecompositionIntoSobolev} to conclude that
  the embedding
  $\PacketSpace_s^{p,q} (\alpha,\beta) \hookrightarrow W^{k,r}(\R^2)$
  --- which is to be understood as in the statement of the theorem --- holds
  as long as
  \begin{equation}
    p \leq r
    \quad \text{and} \quad
    \ell_{w^s}^q (I^{(\alpha,\beta)})
    \hookrightarrow \ell_{v}^{r^{\triangledown}} (I^{(\alpha,\beta)}) ,
    \quad \text{as well as} \quad
    \ell_{w^s}^q (I^{(\alpha,\beta)})
    \hookrightarrow \ell_{u}^{r^{\triangledown}} (I^{(\alpha,\beta)}) \, .
    \label{eq:SobolevEmbeddingSufficientPreSimplification}
  \end{equation}

  To verify \eqref{eq:SobolevEmbeddingSufficientPreSimplification},
  we simplify the weights $v = (v_i)_{i \in I}$ and $u = (u_i)_{i \in I}$.
  First, for $i = (j,m,\ell) \in I_0^{(\alpha,\beta)}$, we infer from the
  definition of $b_i$ in Lemma~\ref{lem:CoveringAlmostStructured} and that of
  $c_{j,m}$ in Definition~\ref{defn:CoveringSets}, that
  \[
    |b_i|
    = |c_{j,m}|
    = \left|
        \left(
          \begin{smallmatrix}
            2^{j-1} + m \cdot 2^{\alpha j} \\
            0
          \end{smallmatrix}
        \right)
      \right|
    \asymp 2^j \, ,
    \quad \text{since} \quad
    0 \leq m \leq m_j^{\max} = \lceil 2^{(1-\alpha)j - 1} \rceil
      \lesssim 2^{(1 - \alpha) j} \, .
  \]
  Second, since $T_i = R_{j,\ell} \, A_{j}$ and $A_{j} = \mathrm{diag} (2^{\alpha j}, 2^{\beta j})$
  and since $\beta \leq \alpha$,
  %and recalling from \eqref{eq:GammaEstimate} that $\gamma_{j,m} \asymp 2^{\beta j}$,
  we conclude that
  \[
    \|T_i\|
    = \|A_{j}\|
    = \max \{2^{\alpha j}, 2^{\beta j} \}
    =      2^{\alpha j} \, .
  \]
  Combining these two results leads to
  $|b_i|^k + \|T_i\|^k \asymp 2^{j k} + 2^{\alpha j k} \asymp 2^{j k} = w_i^k$
  for $i = (j,m,\ell) \in I_0^{(\alpha,\beta)}$ where the weight
  $w^k$ is as introduced in Lemma~\ref{lem:WavePacketWeight}.
  On the other hand, if $i = 0$, then $b_i = 0$ and $T_i = \identity$ and thus
  $|b_i|^k + \|T_i\|^k = 1 = w_i^k$ as well.
  Therefore,
  \begin{equation}
    |b_i|^k + \|T_i\|^k \asymp w_i^k \qquad \forall \, i \in I^{(\alpha,\beta)}
    \, .
    \label{eq:SpecialSobolevWeightEstimate}
  \end{equation}

  Furthermore, we note that
  %using the same arguments as those used to estimate $\|T_i\|$,
  $|\det T_i| = w_i^{\alpha + \beta}$.
  Therefore,
  \begin{equation}
    v_i \asymp w_i^{(\alpha+\beta)(p^{-1} - r^{-1})}
    \quad \text{and} \quad
    u_i \asymp w_i^{k + (\alpha + \beta)(p^{-1} - r^{-1})} \, .
    \label{eq:SobolevWeightsSimplified}
  \end{equation}

  Finally, since $w_i \geq 1$ for all $i \in I$ and since $k \geq 0$,
  we conclude that \eqref{eq:SobolevEmbeddingSufficientPreSimplification}
  holds if and only if
  \[
    p \leq r
    \quad \text{and} \quad
    \ell_{w^s}^q (I^{(\alpha,\beta)})
    \hookrightarrow
    \ell_{w^{k + (\alpha + \beta)(p^{-1} - r^{-1})}}^{r^{\triangledown}}
    (I^{(\alpha,\beta)}) \, .
  \]
  This, according to Lemma~5.1 in \cite{DecompositionIntoSobolev}
  and the remark that follows it, holds if and only if
  %and the associated remark
  %show that this holds if and only if
  \begin{equation}
    p \leq r
    \quad \text{and} \quad
    %w^{k - s + (\alpha+\beta)(p^{-1} - r^{-1})}
        \frac{w^{k + (\alpha+\beta)(p^{-1} - r^{-1})}}{w^s}
    \in \ell^{r^{\triangledown} \cdot (q / r^{\triangledown})'}
        (I^{(\alpha,\beta)})
    \quad \text{where} \quad
    \frac{1}{r^{\triangledown} \cdot (q / r^{\triangledown})'}
    = \left(\frac{1}{r^{\triangledown}} - \frac{1}{q}\right)_+ \,\, .
    \label{eq:SobolevEmbeddingSufficientAlmostDone}
  \end{equation}
  In particular, this shows that
  $r^{\triangledown} \cdot (q / r^{\triangledown}) = \infty$ if and only if
  $q \leq r^{\triangledown}$.
  Now, using the same arguments as in the proof of
  Theorem~\ref{thm:WavePacketSpacesEmbeddings} --- see especially
  \eqref{eq:MainSequenceSpaceNormCharacterization1}
  and \eqref{eq:MainSequenceSpaceNormCharacterization2} --- we conclude that
  \eqref{eq:SobolevEmbeddingSufficientAlmostDone} holds
  if and only if \eqref{eq:SobolevEmbeddingSufficient} does.

  \medskip{}

  %It remains to prove that \eqref{eq:SobolevEmbeddingNecessary}
  %(respectively the two other conditions stated at the end of the theorem)
  %are indeed fulfilled provided that
  %$\|f\|_{W^{k,r}} \lesssim \|f\|_{\PacketSpace_s^{p,q}(\alpha,\beta)}$
  %for all $f \in \Schwartz(\R^2)$ with $\widehat{f} \in C_c^\infty (\R^2)$.
  It remains to prove the converse statement.
  %so that by assumption
  %$\|f\|_{W^{k,r}} \lesssim \|f\|_{\PacketSpace_s^{p,q}(\alpha,\beta)}$
  %for all $f \in \Schwartz(\R^2)$ with $\widehat{f} \in C_c^\infty (\R^2)$.
  To do so, we shall apply Theorem~4.7 in \cite{DecompositionIntoSobolev},
  which is fully applicable only if there is such an
  %Some parts of this theorem, however, are only applicable if there is some
  $M > 0$ that $\|T_i^{-1}\| \leq M$ for all $i \in I$.
  This can be easily verified in our case, since $T_0^{-1} = \identity$
  and since
  %\eqref{eq:GammaEstimate} implies that
  \(
     \|T_i^{-1}\|
    = \|A_{j}^{-1}\|
    = \max \{ 2^{-\alpha j}, 2^{-\beta j} \}
    \leq 1
  \)
  for all $i = (j,m,\ell) \in I_0^{(\alpha,\beta)}$.

  %To make the application of \cite[Theorem~4.7]{DecompositionIntoSobolev}
  %more convenient, let us define
  Now, let us define
  \[
    u_i^{(\sigma,\tau)}
    := |\det T_i|^{\sigma^{-1} - \tau^{-1}}
       \cdot \big( |b_i|^k + \|T_i\|^k \big)
    \quad \text{for} \quad
    i \in I
    \quad \text{and} \quad
    \sigma, \tau \in (0,\infty] \,
  \]
  and note that
  $u_i^{(\sigma,\tau)} \asymp w_i^{k + (\alpha+\beta)(\sigma^{-1} - \tau^{-1})}$
  as a consequence of $|\det T_i| = w_i^{\alpha+\beta}$ and of
  \eqref{eq:SpecialSobolevWeightEstimate}.
  %\smallskip{}
  %Now, if
  %$\|f\|_{W^{k,r}} \lesssim \|f\|_{\PacketSpace_s^{p,q}(\alpha,\beta)}$
  %for all $f \in \Schwartz(\R^2)$ with $\widehat{f} \in C_c^\infty (\R^2)$,
  %then a combination of
  Since by assumption
  $\|f\|_{W^{k,r}} \lesssim \|f\|_{\PacketSpace_s^{p,q}(\alpha,\beta)}$
  for all $f \in \Schwartz(\R^2)$ with $\widehat{f} \in C_c^\infty (\R^2)$,
  we can combine Theorems~4.4 and 4.7 and Lemma~5.1 in \cite{DecompositionIntoSobolev}
  to conclude that
  %\cite[Theorem~4.7 and Lemma~5.1]{DecompositionIntoSobolev} shows that
  $p \leq r$, and that
  \begin{equation}
    w^{k - s + (\alpha+\beta)(\sigma^{-1} - \tau^{-1})}
    \asymp \frac{u^{(\sigma,\tau)}}{w^s}
    \in \ell^{\varrho \cdot (q / \varrho)'} (I^{(\alpha,\beta)})
    %\ell_{w^s}^q (I^{(\alpha,\beta)})
    %\hookrightarrow \ell_{u^{(\sigma,\tau)}}^\varrho (I^{(\alpha,\beta)})
    \label{eq:SobolevNecessaryAlmostDone}
  \end{equation}
  holds for the following choices of $\sigma,\tau,\varrho$:
  \begin{enumerate}[label=(\arabic*)]
    \item %In any case, \eqref{eq:SobolevNecessaryAlmostDone} holds for
          $(\sigma,\tau,\varrho) = (p,r,r)$;
          %From this, it follows using the same arguments as in the proof
          %of Theorem~\ref{thm:WavePacketSpacesEmbeddings} that
          %\eqref{eq:SobolevEmbeddingNecessary} is satisfied.

    \item %If $r = \infty$, then \eqref{eq:SobolevNecessaryAlmostDone} holds for
          $(\sigma,\tau,\varrho) = (p,r,1) = (p,r,r^{\triangledown})$
          if $r = \infty$;
          %From this, we see with the same arguments as above
          %that \eqref{eq:SobolevEmbeddingSufficient} is satisfied.

    \item %If $r \in (2,\infty)$, then \eqref{eq:SobolevNecessaryAlmostDone}
          %holds for $(\sigma,\tau,\varrho) = (p,2,2)$.
          $(\sigma,\tau,\varrho) = (p,2,2)$ if $r \in (2,\infty)$; and
          %From this, we see with the same arguments as above that
          %\[
          %  \begin{cases}
          %    s \geq k + (\alpha+\beta)(p^{-1} - 2^{-1}) \, ,
          %    & \text{if } q \leq 2 \, , \\
          %    s > k + (\alpha+\beta)(p^{-1} - 2^{-1})
          %          + (2 - \alpha - \beta)(2^{-1} - q^{-1}) \, ,
          %    & \text{if } q >    2 \, .
          %  \end{cases}
          %\]

    \item %If $r \in (2,\infty)$, then \eqref{eq:SobolevNecessaryAlmostDone}
          %holds for $(\sigma,\tau,\varrho) = (p,p,2)$.
          $(\sigma,\tau,\varrho) = (p,p,2)$ if $r \in (2,\infty)$.
          %From this, we see with the same arguments as above that
          %\[
          %  \begin{cases}
          %    s \geq k \, ,
          %    & \text{if } q \leq 2 \, , \\
          %    s > k + (2 - \alpha - \beta)(2^{-1} - q^{-1}) \, ,
          %    & \text{if } q >    2 \, .
          %  \end{cases}
          %\]
  \end{enumerate}
  Using the same arguments as in the proof of
  Theorem~\ref{thm:WavePacketSpacesEmbeddings}, we conclude that
  (1) implies \eqref{eq:SobolevEmbeddingNecessary}, while
  (2) implies \eqref{eq:SobolevEmbeddingSufficient}.
  Similarly, (3) and (4) imply, respectively, that
  \[
    \begin{cases}
      s \geq k + (\alpha+\beta)(p^{-1} - 2^{-1}) \, ,
      & \text{if } q \leq 2 \, , \\
      s > k + (\alpha+\beta)(p^{-1} - 2^{-1})
            + (2 - \alpha - \beta)(2^{-1} - q^{-1}) \, ,
      & \text{if } q >    2
    \end{cases}
  \]
  and
  \[
    \begin{cases}
      s \geq k \, ,
      & \text{if } q \leq 2 \, , \\
      s > k + (2 - \alpha - \beta)(2^{-1} - q^{-1}) \, ,
      & \text{if } q >    2 \, .
    \end{cases}
  \]
  Combining these shows that \eqref{eq:SobolevSpecialCondition} is satisfied.
  %By combining the latter two statements, we see that
  %\eqref{eq:SobolevSpecialCondition} is satisfied.
\end{proof}

\subsection{Identifying \texorpdfstring{$\alpha$-modulation}{α-modulation}
spaces as wave-packet smoothness spaces}
\label{sub:WavePacketAlphaModulation}

\noindent In this subsection, we show that, for arbitrary $\alpha \in [0,1)$,
the $\alpha$-modulation spaces $M_{p,q}^{s,\alpha}(\R^2)$
\cite{GroebnerAlphaModulationSpaces,BorupNielsenAlphaModulationSpaces}
are identical --- up to canonical identifications --- to the wave packet
smoothness spaces $\PacketSpace_s^{p,q}(\alpha,\alpha)$.
In particular, we show that the wave packet smoothness spaces
$\PacketSpace_s^{p,q}(0,0)$ are identical to the modulation spaces
$M_{p,q}^s (\R^2)$, which play a crucial role in time-frequency analysis
\cite{GroechenigTimeFrequencyAnalysis,ModulationSpacesTimeFrequency}.
Precisely, we prove the following theorem.

\begin{thm}\label{thm:AlphaModulationAsWavePacket}
  Let $\alpha \in [0,1)$, $p,q \in (0,\infty]$ and $s \in \R$.
  Then, for the $\alpha$-modulation space $M_{p,q}^{s,\alpha}(\R^2)$
  as defined in Definition~2.4 in \cite{BorupNielsenAlphaModulationSpaces}
  and the space $Z$ as introduced in Definition~\ref{def:ReservoirDefinition},
  the map
  \[
    M_{p,q}^{s,\alpha}(\R^2) \to \PacketSpace_s^{p,q} (\alpha, \alpha),
    f \mapsto f |_{Z}
  \]
  is an isomorphism of quasi-Banach spaces. In other words,
  $M_{p,q}^{s,\alpha}(\R^2) = \PacketSpace_s^{p,q} (\alpha, \alpha)$,
  up to canonical identifications.
\end{thm}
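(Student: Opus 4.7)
The plan is to identify both sides as decomposition spaces associated with equivalent coverings and equivalent weights, and then invoke the rigidity of the decomposition-space construction to conclude that they coincide, even up to the choice of reservoir. Concretely, the $\alpha$-modulation space $M_{p,q}^{s,\alpha}(\R^2)$ can be realized, via Borup--Nielsen (see \cite{BorupNielsenAlphaModulationSpaces}), as a decomposition space $\DecompSp(\CalP^{(\alpha)}, L^p, \ell_{v^s}^q)$, where $\CalP^{(\alpha)} = (P_k)_{k \in K}$ is a so-called $\alpha$-covering: the sets $P_k$ have diameter $\asymp (1+|\xi|)^\alpha$ uniformly for $\xi \in P_k$ and the associated affine maps are isotropic dilations of scale $(1+|\xi_k|)^\alpha$. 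The weight is $v_k^s \asymp (1+|\xi|)^s$ for $\xi \in P_k$.

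First, I would compare the geometry of the two coverings. For $\beta = \alpha$, the rectangles $Q_{j,m,\ell}^{(\alpha,\alpha)}$ are (up to rotation) of \emph{both} radial and angular bandwidth $\asymp 2^{\alpha j}$, hence roughly squares of side $\asymp 2^{\alpha j} \asymp (1+|\xi|)^\alpha$ for $\xi \in Q_{j,m,\ell}$ (by Lemma~\ref{lem:CoveringGeometry}). Consequently $\CalQ^{(\alpha,\alpha)}$ is itself an $\alpha$-covering in the Borup--Nielsen sense. Using Lemma~\ref{lem:WeakAndAlmostSubordinateness}, it suffices to establish \emph{weak} equivalence of $\CalQ^{(\alpha,\alpha)}$ and $\CalP^{(\alpha)}$; this is a straightforward volume-counting argument based on Lemma~\ref{lem:CoveringGeometry}: any $P_k$ intersects only boundedly many $Q_{j,m,\ell}$ because both families tile each dyadic ring $\{|\xi|\asymp 2^j\}$ by sets of comparable diameter $\asymp 2^{\alpha j}$, and vice versa. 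This yields that $\CalQ^{(\alpha,\alpha)}$ and $\CalP^{(\alpha)}$ are equivalent coverings.

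Second, I would check weight compatibility. By \eqref{eq:WeightConsistencyCondition} one has $w_i^s \asymp (1+|\xi|)^s$ for $\xi \in Q_i^{(\alpha,\alpha)}$, and by construction $v_k^s \asymp (1+|\xi|)^s$ for $\xi \in P_k$. Hence $w_i^s \asymp v_k^s$ whenever $Q_i^{(\alpha,\alpha)} \cap P_k \neq \emptyset$. With coverings equivalent and weights mutually moderate, Lemma~6.10 in \cite{DecompositionEmbeddings} (or equivalently an application of Theorems~\ref{thm:EmbeddingPFinerThanQ} and \ref{thm:EmbeddingQFinerThanP} in the trivial case $p_1=p_2$, $q_1=q_2$, $s_1=s_2$) shows that the identity map is a two-sided isomorphism between $\DecompSp(\CalQ^{(\alpha,\alpha)}, L^p, \ell_{w^s}^q) = \PacketSpace_s^{p,q}(\alpha,\alpha)$ and $\DecompSp(\CalP^{(\alpha)}, L^p, \ell_{v^s}^q)$.

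Finally, the remaining point is to reconcile the two reservoirs, since $M_{p,q}^{s,\alpha}(\R^2)$ is traditionally defined inside $\Schwartz'(\R^2)$ while $\PacketSpace_s^{p,q}(\alpha,\alpha) \subset Z'$. For this, I would argue exactly as in the proof of Lemma~9.15 in \cite{DecompositionEmbeddings} (used in our proof of Theorem~\ref{thm:WavePacketBesov}): since every frequency-localized component $\Fourier^{-1}(\varphi_i \widehat{f})$ is independent of the choice of reservoir, the restriction map $f \mapsto f|_Z$ is injective with dense range, the quasi-norms agree, and completeness of both sides transfers the isomorphism. Combining this with the decomposition-space identification from the previous step yields the claimed isomorphism $M_{p,q}^{s,\alpha}(\R^2) \to \PacketSpace_s^{p,q}(\alpha,\alpha)$, $f \mapsto f|_Z$. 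The main technical obstacle is the covering-equivalence step, especially obtaining \emph{uniform} bounds on the number of mutual intersections between $Q_{j,m,\ell}^{(\alpha,\alpha)}$ and $P_k$ over both the angular index $\ell$ and the translational index $m$; the restriction $\alpha < 1$ is essential here, since it guarantees that the rings still contain many tiles so that the $\alpha$-covering structure is meaningful and the counting arguments of Lemma~\ref{lem:Admissibility} apply uniformly.
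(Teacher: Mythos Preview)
Your proposal is correct and follows essentially the same route as the paper: identify both spaces as decomposition spaces, verify that $\CalQ^{(\alpha,\alpha)}$ is an $\alpha$-covering in the Borup--Nielsen sense, establish (weak) equivalence of the two coverings together with weight compatibility, and conclude via the coincidence theory for decomposition spaces. The paper streamlines two of your steps---it invokes Lemma~B.2 of \cite{BorupNielsenAlphaModulationSpaces} (any two $\alpha$-coverings of $\R^d$ are weakly equivalent) in place of your direct volume-counting, and it cites Corollary~9.16 of \cite{DecompositionEmbeddings} to absorb the reservoir identification $f\mapsto f|_Z$ into a single reference---and for the coincidence step it uses Lemma~6.11 (not 6.10) of \cite{DecompositionEmbeddings}.
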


\begin{rem*}
  Usually $\alpha$-modulation spaces for $\alpha = 1$
  are understood as inhomogeneous Besov spaces.
  With this interpretation, Corollary~\ref{cor:BesovAsWavePacketSmoothness} shows
  that the preceding theorem also remains valid for $\alpha = 1$.
\end{rem*}

\begin{proof}
  It was shown in Corollary~9.16 in \cite{DecompositionEmbeddings} that the map
  \begin{equation}
    M_{p,q}^{s,\alpha}(\R^2) \to
    \DecompSp \big( \CalP^{(\alpha)}, L^p, \ell^q_{v^{(s/(1-\alpha))}} \big),
    f \mapsto f|_{Z}
    \label{eq:AlphaModulationAsDecomposition}
  \end{equation}
  is an isomorphism of quasi-Banach spaces.
  Here, the covering $\CalP^{(\alpha)}$ is given by
  \[
    \CalP^{(\alpha)} = \big( P_k \big)_{k \in \Z^2 \setminus \{0\}}
    \quad \text{where} \quad
    P_k := B_{r |k|^{\alpha_0}} \, \big( |k|^{\alpha_0} \, k \big)
    \quad \text{and} \quad
    \alpha_0 := \frac{\alpha}{1 - \alpha} \, ,
  \]
  and where $r > 0$ is chosen large enough so that $\CalP^{(\alpha)}$ is a structured
  admissible covering of $\R^2$; this is possible due to Lemma~9.3 in
  \cite{DecompositionEmbeddings}.
  Furthermore, for arbitrary $\theta \in \R$, the weight
  $v^{(\theta)} = (v_k^{(\theta)})_{k \in \Z^2 \setminus \{0\}}$
  is given by $v_k^{(\theta)} = (1 + |k|^2)^{\theta/2}$.

  Given \eqref{eq:AlphaModulationAsDecomposition}, it is enough to prove that
  \(
    \DecompSp \big( \CalP^{(\alpha)}, L^p, \ell^q_{v^{(\gamma/(1-\alpha))}} \big)
    = \PacketSpace_s^{p,q}(\alpha,\alpha)
  \).
  To do so, Lemma~6.11 in \cite{DecompositionEmbeddings} and the identity
  $\PacketSpace_s^{p,q}(\alpha,\alpha) = \DecompSp(\CalQ^{(\alpha,\alpha)}, L^p, \ell_{w^s}^q)$
  show that it is enough to prove that
  %To this end, it suffices by \cite[Lemma~6.11]{DecompositionEmbeddings}
  %and because of $\PacketSpace_s^{p,q}(\alpha,\alpha)
  %= \DecompSp(\CalQ^{(\alpha,\alpha)}, L^p, \ell_{w^s}^q)$
  %to show that
  the covering $\CalP^{(\alpha)}$ is equivalent to the
  $(\alpha,\alpha)$-wave packet covering
  $\CalQ^{(\alpha,\alpha)} = (Q_i)_{i \in I^{(\alpha,\alpha)}}$ and that
  %to show additionally that
  \begin{equation}
    v_k^{(s/(1-\alpha))} \asymp w_i^s
    \quad \text{for all} \quad i \in I^{(\alpha,\alpha)}
          \text{ and } k \in \Z^2 \setminus \{0\}
          \text{ with } P_k \cap Q_i \neq \emptyset \, .
    \label{eq:AlphaModulationWeightEquivalence}
  \end{equation}
  We start by proving the latter.
  From \eqref{eq:AbsoluteValueEstimate}, we conclude that
  $1 + |\xi| \asymp |\xi| \asymp 2^j = w_i$ for all $\xi \in Q_i$ and
  $i = (j,m,\ell) \in I_0^{(\alpha,\alpha)}$.
  On the other hand, $Q_0 = B_4 (0)$
  and thus $1 + |\xi| \asymp 1 = w_0$ for all $\xi \in Q_0$ as well.
  Furthermore, Lemma~9.2 in \cite{DecompositionEmbeddings} shows that
  $1 + |\xi| \asymp (1 + |\xi|^2)^{1/2} \asymp v_k^{(1/(1-\alpha))}$
  for all $k \in \Z^2 \setminus \{0\}$ and $\xi \in P_k$.
  Therefore, if there is some $\xi \in P_k \cap Q_i \neq \emptyset$, then
  \[
    v_k^{(s / (1-\alpha))}
    = \big( v_k^{(1/(1-\alpha))} \big)^s
    \asymp (1 + |\xi|)^s
    \asymp w_i^s \, .
  \]
  %as desired.

  \medskip{}

  It remains to prove that the coverings $\CalQ^{(\alpha,\alpha)}$ and
  $\CalP^{(\alpha)}$ are equivalent.
  Since both coverings consist of open, path-connected sets,
  Lemma~\ref{lem:WeakAndAlmostSubordinateness} shows that it
  suffices to prove that the two coverings are \emph{weakly} equivalent.
  To prove this, we shall use Lemma~B.2 in
  \cite{BorupNielsenAlphaModulationSpaces}, which implies that any
  two $\alpha$\textbf{-coverings} of $\R^d$ are weakly equivalent.
  Therefore, it suffices to show that both $\CalQ^{(\alpha,\alpha)}$ and
  $\CalP^{(\alpha)}$ are $\alpha$-coverings of $\R^2$.

  To present the notion of $\alpha$-coverings as introduced in
  Definition~2.1 in \cite{BorupNielsenAlphaModulationSpaces}, we need
  yet another notation: For a bounded open set $\Omega \subset \R^d$,
  we shall write
  \[
    R_\Omega
    := \inf \{
              R > 0
              \,:\,
              \exists \, \xi \in \R^d : \Omega \subset \xi + B_R (0)
            \}
    \quad \text{and} \quad
    r_{\Omega}
    := \sup \{
              r > 0
              \,:\,
              \exists \, \xi \in \R^d : \xi + B_r (0) \subset \Omega
            \} \, .
  \]
  With this, a family $(\Omega_\ell)_{\ell \in L}$ is called an
  $\alpha$-covering if it satisfies the following:
  \begin{enumerate}[label=(\arabic*)]
    \item $(\Omega_\ell)_{\ell \in L}$ is an admissible covering of $\R^d$
          consisting of open bounded sets;

    \item there is such a constant $K \geq 1$ that
          $R_{\Omega_\ell} / r_{\Omega_\ell} \leq K$ for all $\ell \in L$; and

    \item $\lambda(\Omega_\ell) \asymp (1 + |\xi|)^{d \alpha}$
          for all $\ell \in L$ and $\xi \in \Omega_\ell$ where the implied
          constant is independent of $\ell$ and $\xi$.
          Here, $\lambda$ denotes the Lebesgue measure.
  \end{enumerate}
  The first of these three conditions is satisfied for $\CalQ^{(\alpha,\alpha)}$
  and $\CalP^{(\alpha)}$, as shown by Lemmas~\ref{lem:CoveringCovers} and \ref{lem:Admissibility},
  and by Theorem~2.6 in \cite{BorupNielsenAlphaModulationSpaces}, respectively.

  Now, since the covering
  $\CalP^{(\alpha)} = (P_k)_{k \in \Z^2 \setminus \{0\}}$ consists
  of open balls, $R_{P_k} = r_{P_k}$ for all $k \in \Z^2 \setminus \{0\}$.
  Hence, $\CalP^{(\alpha)}$ also satisfies the second condition from above.
  Finally, since we are dealing with coverings of $\R^2$ and hence $d = 2$,
  \begin{align*}
    \lambda (P_k)
    & = \lambda \big(B_{r |k|^{\alpha_0}} (|k|^{\alpha_0} \, k) \big)
      \asymp (r |k|^{\alpha_0})^2 \\
    ({\scriptstyle{|k| \asymp (1 + |k|^2)^{1/2}
                   \text{ since } k \in \Z^2 \setminus \{0\}}})
    & \asymp \big[ (1 + |k|^2)^{1/2} \, \big]^{2 \alpha_0}
      =      \big[ v_k^{(1 / (1 - \alpha))} \, \big]^{2 \alpha}
      \asymp (1 + |\xi|)^{2 \alpha}
      =      (1 + |\xi|)^{\alpha d}
  \end{align*}
  for all $k \in \Z^2 \setminus \{0\}$ and $\xi \in P_k$.
  Here, we noted that $1 + |\xi| \asymp v_k^{(1 / (1-\alpha))}$ for
  $\xi \in P_k$, as seen above.

  \medskip{}

  It remains to show that $\CalQ^{(\alpha,\alpha)}$ satisfies conditions
  (2) and (3).
  To do so, we first note that if $\Omega = T Q + b$ with $T \in \GL(\R^d)$ and
  $b \in \R^d$ and with an open bounded set $Q$ such that
  $Q \subset B_\varrho (\xi)$, then
  $\Omega \subset b + T \xi + T B_\varrho (0)
          \subset b + T \xi + B_{\varrho \|T\|} (0)$
  and hence $R_\Omega \leq \varrho \|T\|$.
  Conversely, if $\Omega = T Q + b$ with $Q \supset B_\varrho (\xi)$, then
  \[
    b + T \xi + B_{\varrho / \|T^{-1}\|} (0)
    = b + T \xi + T T^{-1} B_{\varrho / \|T^{-1}\|} (0)
    \subset b + T B_\varrho (\xi)
    \subset T Q + b
    = \Omega \, ,
  \]
  and hence $r_\Omega \geq \varrho / \|T^{-1}\|$.

  Moreover, we note that $Q_i = T_i Q + b_i$ for
  $i = (j,m,\ell) \in I_0^{(\alpha,\alpha)}$ with $T_i,b_i$ as in
  Lemma~\ref{lem:CoveringAlmostStructured} and with
  $Q = (-\eps, 1 + \eps) \times (-1-\eps, 1 + \eps)$ for a certain
  $\eps \in (0,1)$.
  From the definition of $Q$, we conclude that $Q \subset B_4 (0)$ and
  $Q \supset B_{1/2}(\xi_0)$ for $\xi_0 = (1/2, \, 0)^t \in \R^2$.
  Finally, recalling that $\beta = \alpha$, we see that
  \[
    \|T_i\|
    = \|A_{j}\|
    = \max \{ 2^{\alpha j}, 2^{\beta j} \}
    = 2^{\alpha j}
    \quad \text{and} \quad
    \|T_i^{-1}\|
    = \|A_{j}^{-1}\|
    = \max \{ 2^{-\alpha j}, 2^{-\beta j} \}
    = 2^{-\alpha j} \, .
  \]
  All these considerations imply that
  \[
    \frac{1}{2} \cdot 2^{\alpha j}
    = \frac{1}{2} \cdot \|T_i^{-1}\|^{-1}
    \leq r_{Q_i}
    \leq R_{Q_i}
    \leq 4 \cdot \|T_i\|
    \leq 4 \cdot 2^{\alpha j} \, ,
  \]
  and hence $R_{Q_i} / r_{Q_i} \leq 8$ for all
  $i = (j,m,\ell) \in I_0^{(\alpha,\alpha)}$.
  Furthermore, since $Q_0 = B_4 (0)$, we also see that
  $R_{Q_0} / r_{Q_0} = 1 \leq 8$.
  Put together, this shows that the $(\alpha,\alpha)$-wave packet covering
  $\CalQ^{(\alpha,\alpha)}$ satisfies Condition~(2).

  \smallskip{}

  To verify Condition~(3), we recall that $1 + |\xi| \asymp 2^j$ for
  all $\xi \in Q_i$ and $i = (j,m,\ell) \in I_0^{(\alpha,\alpha)}$.
  Noting that $Q_i = T_i Q + b_i$ and $\alpha = \beta$,
  this shows that, for $i = (j,m,\ell) \in I_0^{(\alpha,\alpha)}$,
  \[
    \lambda (Q_i)
    \asymp | \det T_i|
    =      2^{\alpha j} \cdot 2^{\beta j}
    =      2^{2 \alpha j}
    \asymp (1 + |\xi|)^{2 \alpha}
    =      (1 + |\xi|)^{d \alpha}
    \qquad \forall \, \xi \in Q_i .
  \]
  Finally, $1 + |\xi| \asymp 1$ and hence $\lambda (Q_0) \asymp 1 \asymp (1 + |\xi|)^{d \alpha}$
  for all $\xi \in Q_0 = B_4 (0)$.
  Therefore, $\CalQ^{(\alpha,\alpha)}$ satisfies Condition~(3)
  and thus is an $\alpha$-covering of $\R^2$.
\end{proof}

\section{Universality of the wave packet coverings}
\label{sec:Universality}

\noindent Even though we showed in Section~\ref{sub:WavePacketAlphaModulation} that the
$\alpha$-modulation spaces arise as special cases of the wave packet smoothness spaces,
it might still be objected that the construction of the coverings
$\CalQ^{(\alpha,\beta)}$ involves a lot of arbitrariness, so that these
coverings and the decomposition spaces associated with them are rather esoteric.
%one might still object that the coverings $\CalQ^{(\alpha,\beta)}$ introduced
%in Definition~\ref{defn:CoveringSets} are quite esoteric objects whose
%construction involves a lot of arbitrary choices, so that the
%associated decomposition spaces are of little interest.

In the present section, we show that this is not the case.
Generalising the concept of $\alpha$-coverings
\cite{GroebnerAlphaModulationSpaces,BorupNielsenAlphaModulationSpaces},
we introduce the \emph{natural} class of $(\alpha,\beta)$ coverings of $\R^2$.
We then prove that any two $(\alpha,\beta)$ coverings determine the same class
of decomposition spaces.
Finally, we show that the wave packet coverings $\CalQ^{(\alpha,\beta)}$
are indeed $(\alpha,\beta)$ coverings.
%This will---we hope---make it clear that the wave packet coverings
In summary, this shows that the wave packet coverings $\CalQ^{(\alpha,\beta)}$
and the wave packet smoothness spaces
$\PacketSpace_s^{p,q}(\alpha,\beta)$ are natural objects universal among,
respectively, all coverings and function spaces with a similar frequency concentration.

We begin by introducing the class of $(\alpha,\beta)$ coverings,
drawing on the intuition that an element $Q_i$ of an
$(\alpha,\beta)$-covering $\CalQ = (Q_i)_{i \in I}$ should essentially
be a set or a union of two sets symmetric with respect to the origin of the frequency plane of length
$\approx (1 + |\xi|)^\alpha$ in the radial direction
and of width $\approx (1 + |\xi|)^{\beta}$ in the angular direction
where $\xi \in Q_i$ is chosen arbitrarily.

In the rest of this section, we shall identify vectors
$(x,y) \in \R^2$ with corresponding complex numbers $x + i y$.

\begin{defn}\label{def:AlphaBetaCoverings}
  Let $\alpha, \beta \in [0,1]$.
  A family $\CalQ = (Q_i)_{i \in I}$ of open bounded subsets of $\R^2$
  is called an $(\alpha,\beta)$-\textbf{covering} of $\R^2$, if it
  satisfies the following conditions:
  \begin{enumerate}[label=(\arabic*)]
    \item $\CalQ$ is an admissible covering of $\R^2$;

    \item $\lambda(Q_i) \asymp (1 + |\xi|)^{\alpha + \beta}$ for all $i \in I$
          and $\xi \in Q_i$ where $\lambda$ denotes the Lebesgue measure;

    \item for each $i \in I$, there is an interval $L_i \subset [0,\infty)$
          such that
          \begin{enumerate}[label=(\alph*)]
            \item $\lambda (L_i) \leq (1 + |\xi|)^{\alpha}$
                  for all $\xi \in Q_i$ and

            \item $|\xi| \in L_i$ for all $\xi \in Q_i$; and
          \end{enumerate}

    \item for each $i \in I$, there is an angle $\phi_i \in \R$ such that
          \begin{equation}
            \forall \, \xi \in Q_i \quad
              \exists \, \phi \in \R \, : \quad
                \min_{\omega \in \{0,\pi\}}
                  |\phi - (\phi_i + \omega)|
                  %|\phi - \eps \, \phi_i|
                \lesssim (1 + |\xi|)^{\beta - 1}
                \quad \text{and} \quad
                \xi = |\xi| \cdot e^{i \phi} \, .
            \label{eq:AlphaBetaCoveringAngleControl}
          \end{equation}
  \end{enumerate}
  Here, all the constants must be independent of the choice of
  $i \in I$ and $\xi \in Q_i$.
\end{defn}

Our first goal is to prove that any two $(\alpha,\beta)$ coverings are
weakly equivalent.
To do so, the following lemma will be helpful.

\begin{lem}\label{lem:AlphaBetaCoveringNormEstimate}
  Let $\alpha,\beta \in [0,1]$ and let $\CalQ = (Q_i)_{i \in I}$ be an
  $(\alpha,\beta)$ covering of $\R^2$.
  Then $1 + |\xi| \asymp 1 + |\eta|$ for arbitrary $i \in I$ and
  $\xi, \eta \in Q_i$.
\end{lem}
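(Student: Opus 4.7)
The plan is to use condition (3) of the definition directly, since it puts both $|\xi|$ and $|\eta|$ into a common interval of small length. Specifically, I would pick $\xi, \eta \in Q_i$ and invoke the interval $L_i \subset [0, \infty)$ guaranteed by condition (3). By (3b) we have $|\xi|, |\eta| \in L_i$, so that
\[
  \big| \, |\xi| - |\eta| \, \big| \leq \lambda(L_i).
\]
By (3a), there is a constant $C > 0$, independent of $i$ and of the chosen point in $Q_i$, such that $\lambda(L_i) \leq C \cdot (1 + |\zeta|)^{\alpha}$ for \emph{every} $\zeta \in Q_i$.

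Applying this with $\zeta = \xi$, and using $\alpha \leq 1$ to bound $(1 + |\xi|)^{\alpha} \leq 1 + |\xi|$, I obtain
\[
  |\eta| \leq |\xi| + C \cdot (1 + |\xi|),
\]
and therefore $1 + |\eta| \leq (2 + C)(1 + |\xi|)$. By symmetry --- applying the same argument with the roles of $\xi$ and $\eta$ exchanged, and using $\lambda(L_i) \leq C \cdot (1 + |\eta|)^{\alpha}$ --- I also obtain $1 + |\xi| \leq (2 + C)(1 + |\eta|)$. Combining these two estimates gives the asserted equivalence $1 + |\xi| \asymp 1 + |\eta|$, with an implicit constant depending only on the constant from condition (3a), and hence uniform in $i \in I$ and in $\xi, \eta \in Q_i$.

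There is essentially no obstacle here: conditions (1), (2) and (4) play no role, and the argument uses only the radial-length control (3a)--(3b) together with the restriction $\alpha \leq 1$ built into Definition~\ref{def:AlphaBetaCoverings}. The main subtlety, which is worth mentioning explicitly in the write-up, is that one must apply the radial-length bound $\lambda(L_i) \lesssim (1 + |\zeta|)^{\alpha}$ once with $\zeta = \xi$ and once with $\zeta = \eta$; applying it only at one of the two points would give a one-sided bound, but to conclude that $1 + |\xi|$ and $1 + |\eta|$ are comparable one needs both directions.
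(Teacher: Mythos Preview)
Your argument is correct. It differs from the paper's proof in a pleasant way: the paper splits into two cases, handling $\alpha = 0$ exactly as you do (via condition~(3)), but for $\alpha > 0$ it instead invokes condition~(2), writing $1 + |\xi| \asymp [\lambda(Q_i)]^{1/(\alpha+\beta)} \asymp 1 + |\eta|$. Your approach avoids the case split entirely by exploiting the inequality $(1+|\xi|)^{\alpha} \leq 1+|\xi|$, which is valid for all $\alpha \in [0,1]$; this lets condition~(3) alone carry the whole argument. The upshot is that your proof is more self-contained (it does not need the measure condition~(2)) and uniform in $\alpha$, while the paper's version for $\alpha > 0$ makes the link to $\lambda(Q_i)$ explicit, which may be conceptually suggestive elsewhere but is not needed here. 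A minor cosmetic point: your constant $2+C$ can be sharpened to $1+C$, since $1 + |\eta| \leq 1 + |\xi| + C(1+|\xi|) = (1+C)(1+|\xi|)$.
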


\begin{proof}
  Let us first consider the case where $\alpha = 0$.
  Then, the intervals $L_i$ introduced in Condition~(3) of
  Definition~\ref{def:AlphaBetaCoverings} satisfy
  $\lambda (L_i) \leq C \cdot (1 + |\xi|)^{\alpha} = C$ for all $i \in I$.
  Since, on the other hand, the Lebesgue measure of an interval is simply
  its length and since Definition~\ref{def:AlphaBetaCoverings} implies that
  $|\xi|, |\eta| \in L_i$ if $\xi, \eta \in Q_i$, we conclude that
  $\big|\, |\xi| - |\eta| \,\big| \leq C$.
  Therefore,
  \[
    1 + |\xi|
    \leq 1 + |\eta| + \big| |\xi| - |\eta| \big|
    %\leq 1 + |\eta| + |\xi - \eta|
    \leq 1 + C + |\eta|
    \leq (1 + C) \cdot (1 + |\eta|) \, .
  \]
  By symmetry, we also infer that $1 + |\eta| \leq (1 + C) \cdot (1 + |\xi|)$,
  thereby proving the claim for the case $\alpha = 0$.

  \medskip{}

  If $\alpha > 0$ then $\alpha + \beta > 0$, and so
  $1 + |\xi| \asymp [\lambda(Q_i)]^{1 / (\alpha+\beta)} \asymp 1 + |\eta|$
  for arbitrary $\xi, \eta \in Q_i$, according to Condition~(2) in
  Definition~\ref{def:AlphaBetaCoverings}.
\end{proof}

We can now prove that any two $(\alpha,\beta)$ coverings are weakly equivalent.

\begin{thm}\label{thm:AlphaBetaCoveringsAreEquivalent}
  Let $\alpha, \beta \in [0,1]$ and let
  $\CalQ = (Q_i)_{i \in I}$ and $\CalP = (P_j)_{j \in J}$ be two
  $(\alpha,\beta)$ coverings of $\R^2$.

  Then $\CalQ$ and $\CalP$ are weakly equivalent.
  Furthermore, if $\beta \leq \alpha$, then
  \[
    \lambda \big( \, \overline{Q_i} - \overline{\strut P_j} \, \big)
    \lesssim (1 + |\xi|)^{\alpha + \beta}
    \asymp   \min \{ \lambda(Q_i), \, \lambda(P_j) \}
    \quad \text{for all} \quad i \in I \text{ and } j \in J
          \text{ such that } \emptyset \neq Q_i \cap P_j \ni \xi \,
  \]
  where we write $A - B = \{a - b \,:\, a \in A, \, b \in B\}$ for
  $A, B \subset \R^2$.
\end{thm}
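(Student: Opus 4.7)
The plan is to establish weak equivalence first, then deduce the Minkowski--difference estimate under the hypothesis $\beta \leq \alpha$. For the weak equivalence I would fix $j \in J$ and a reference point $\xi_0 \in P_j$, writing $R := 1 + |\xi_0|$. For each $i \in I_j := \{i : Q_i \cap P_j \neq \emptyset\}$, pick $\xi_i \in Q_i \cap P_j$ and apply Lemma~\ref{lem:AlphaBetaCoveringNormEstimate} to $\CalP$ (to get $1+|\xi_i|\asymp R$) and then to $\CalQ$ (to propagate $1 + |\eta| \asymp R$ to every $\eta \in Q_i$). Conditions~(3) and~(4) of Definition~\ref{def:AlphaBetaCoverings} then confine the radii of all such $\eta$ to an interval of length $\lesssim R^\alpha$, and their angles, modulo $\pi$ (because of the $\omega \in \{0,\pi\}$ ambiguity), to an interval of length $\lesssim R^{\beta-1}$ about the reference angle $\phi_j$ of $P_j$. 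Hence $\bigcup_{i \in I_j} Q_i$ sits inside a union of two antipodal polar rectangles, each of Cartesian Lebesgue measure $\asymp R \cdot R^\alpha \cdot R^{\beta-1} = R^{\alpha+\beta}$ by the polar Jacobian.

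Combining this geometric localisation with the admissibility of $\CalQ$ (which yields a uniform overlap bound $N$ on the $Q_i$'s) and Condition~(2) of Definition~\ref{def:AlphaBetaCoverings} (which gives $\lambda(Q_i) \asymp R^{\alpha+\beta}$ for every $i \in I_j$ thanks to the norm equivalence established above), one obtains
\[
|I_j| \cdot R^{\alpha+\beta}
\asymp \sum_{i \in I_j} \lambda(Q_i)
\leq N \cdot \lambda\Big( \bigcup_{i \in I_j} Q_i \Big)
\lesssim R^{\alpha+\beta},
\]
so $\sup_j |I_j| < \infty$. Reversing the roles of $\CalQ$ and $\CalP$ gives $\sup_i |J_i| < \infty$, establishing weak equivalence.

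For the Minkowski--difference estimate under $\beta \leq \alpha$, the same geometry places both $\overline{Q_i}$ and $\overline{P_j}$ inside a union of two antipodal polar rectangles at distance $\asymp R$. A polar rectangle of radial width $\lesssim R^\alpha$ and angular half-width $\lesssim R^{\beta-1}$ is contained, up to a bounded enlargement, in a Cartesian rectangle of dimensions $\lesssim R^\alpha \times R^\beta$ aligned with the radial--angular frame at $\xi_0$; the curvature correction is of order $R \cdot R^{2(\beta-1)} = R^{2\beta-1}$, which is $\lesssim R^\alpha$ precisely when $2\beta - 1 \leq \alpha$, a condition implied by $\beta \leq \alpha \leq 1$. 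Writing $\overline{Q_i} \subset K_Q \cup (-K_Q)$ and $\overline{P_j} \subset K_P \cup (-K_P)$ with $K_Q, K_P$ such Cartesian rectangles, one distributes
\[
\overline{Q_i} - \overline{P_j}
\,\subset\, (K_Q - K_P) \,\cup\, (K_Q + K_P) \,\cup\, \bigl(-(K_Q - K_P)\bigr) \,\cup\, \bigl(-(K_Q + K_P)\bigr),
\]
and each Minkowski sum/difference on the right is a translated axis-aligned rectangle of dimensions $\lesssim R^\alpha \times R^\beta$, hence of Lebesgue measure $\lesssim R^{\alpha+\beta}$. The $\asymp \min\{\lambda(Q_i), \lambda(P_j)\}$ part is then immediate from Condition~(2) together with Lemma~\ref{lem:AlphaBetaCoveringNormEstimate}.

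The step I expect to be the main obstacle is the careful bookkeeping of the antipodal ($\omega \in \{0,\pi\}$) ambiguity in Condition~(4) --- one must verify uniformly that the triangle-inequality argument relating the reference angles $\phi_i^{\CalQ}$ and $\phi_j^{\CalP}$ does not silently cross between the two cones --- together with making the polar-to-Cartesian-box approximation quantitative enough to identify $\beta \leq \alpha$ as the precise hypothesis under which the curvature correction is absorbed into the radial width.
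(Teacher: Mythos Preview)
Your proposal is correct and follows essentially the same route as the paper: localise $\bigcup_{i \in I_j} Q_i$ in a union of two antipodal polar sectors via Conditions~(3)--(4) and Lemma~\ref{lem:AlphaBetaCoveringNormEstimate}, bound its measure by polar integration, deduce $|I_j| \lesssim 1$ from admissibility and Condition~(2), and for the Minkowski-difference part fit each sector into a Cartesian rectangle of dimensions $\asymp R^\alpha \times R^\beta$ before taking differences. The paper handles the antipodal bookkeeping you flag via the pseudo-metric $d(\phi,\psi) = \min_{\omega \in \pi\Z} |\phi - (\psi+\omega)|$, and uses the cruder bound $1-\cos\phi \leq |\phi|$ (yielding a curvature term $R^\beta$) rather than your sharper $R^{2\beta-1}$, but both are absorbed by $R^\alpha$ under the hypothesis $\beta \leq \alpha$.
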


\begin{proof}
  Let us fix, for each $j \in J$, some $\zeta_j \in P_j$.
  By symmetry, it suffices to prove that $\CalP$ is weakly subordinate to $\CalQ$.
  Therefore, defining
  \[
    I_j
    := \{
         i \in I
         \,:\,
         Q_i \cap P_j \neq \emptyset
       \}
    \qquad \text{for } j \in J,
  \]
  we have to find such an $N \in \N$ that $|I_j| \leq N$ for all $j \in J$.

  \medskip{}

  \noindent
  \textbf{Step 1:}
  Our first goal is to show that there are $C_1, C_2 > 0$ such that, for each
  $j \in J$, there is an interval $\Lambda_j \subset [0,\infty)$ of length
  $\lambda (\Lambda_j) \leq C_1 \cdot (1 + |\zeta_j|)^{\alpha}$ and such that
  \begin{equation}
    Q_i
    \subset \Big\{
              r \cdot e^{i \phi}
              \,:\,
              r \in \Lambda_j
              \, \text{ and }
              \min_{\omega \in \{0, \pi\}}
                |\phi - (\phi_j + \omega)| \leq C_2 \cdot (1 + r)^{\beta - 1}
            \Big\}
    =: \Omega_j
    \qquad \forall \, i \in I_j \,
    \label{eq:AlphaBetaCoveringEquivalenceMainInclusion}
  \end{equation}
  where $\phi_j \in \R$ is the angle associated with $P_j$ according
  to Condition~(4) in Definition~\ref{def:AlphaBetaCoverings}.

  To prove \eqref{eq:AlphaBetaCoveringEquivalenceMainInclusion}, we recall
  from Lemma~\ref{lem:AlphaBetaCoveringNormEstimate} that there is such a $C_3 \geq 1$ that $1 + |\xi| \leq C_3 \cdot (1 + |\eta|)$ for all $\xi, \eta \in Q_i$
  and for all $\xi, \eta \in P_j$.
  Furthermore, according to Condition~(3) in Definition~\ref{def:AlphaBetaCoverings},
  there is such a $C_4 > 0$ that
  \[
    \lambda (L_i) \leq C_4 \cdot (1 + |\xi|)^\alpha
    \quad \forall \, \xi \in Q_i
    \qquad \text{and} \quad
    \lambda(L_j) \leq C_4 \cdot (1 + |\xi|)^\alpha
    \quad \forall \, \xi \in P_j \, .
  \]

  Now, let $j \in J$ and $i \in I_j$ be arbitrary and let us fix some
  $\xi_i \in Q_i \cap P_j$.
  Then, $|\xi|, |\xi_i| \in L_i$ for arbitrary $\xi \in Q_i$.
  Since the Lebesgue measure $\lambda(L_i)$ is the length of the interval $L_i$
  and since $\xi_i , \zeta_j \in P_j$, this implies that
  \[
    \big|\, |\xi| - |\xi_i| \,\big|
    \leq \lambda(L_i)
    \leq C_4 \cdot (1 + |\xi_i|)^{\alpha}
    \leq C_3^\alpha C_4 \cdot (1 + |\zeta_j|)^{\alpha} \, .
  \]
  Likewise, $|\xi_i|, |\zeta_j| \in L_j$ and hence
  $\big|\, |\xi_i| - |\zeta_j| \,\big| \leq \lambda(L_j)
   \leq C_4 \cdot (1 + |\zeta_j|)^\alpha$.
  Combining these estimates results in
  $\big|\, |\xi| - |\zeta_j| \,\big|
   \leq \big|\, |\xi| - |\xi_i| \,\big| + \big|\, |\xi_i| - |\zeta_j| \,\big|
   \leq ( C_3^\alpha C_4 + C_4 ) \cdot (1 + |\zeta_j|)^{\alpha}$.
  Therefore, defining $C_1 := 2 C_4 \cdot (1 + C_3^\alpha)$, we see that
  \begin{equation}
    \begin{split}
      & \Lambda_j
        := [0,\infty)
           \cap \left[
                  |\zeta_j| - \frac{C_1}{2} \cdot (1 + |\zeta_j|)^\alpha , \,
                  |\zeta_j| + \frac{C_1}{2} \cdot (1 + |\zeta_j|)^\alpha
                \right] \\[0.2cm]
      \text{satisfies} \quad
      & \lambda(\Lambda_j) \leq C_1 \cdot (1 + |\zeta_j|)^{\alpha}
        \quad \text{and} \quad
        |\xi| \in \Lambda_j \quad \forall \, \xi \in Q_i \text{ and } i \in I_j
        \, .
    \end{split}
    \label{eq:AlphaBetaCoveringEquivalenceNormInterval}
  \end{equation}

  \medskip{}

  Having estimated the Euclidean norm of $\xi \in Q_i$ for $i \in I_j$,
  we now estimate the angle of $\xi$.
  To do so, let us choose a $C_5 > 0$ larger than the
  constant in \eqref{eq:AlphaBetaCoveringAngleControl} for both coverings
  $\CalQ$ and $\CalP$.
  Furthermore, for $\phi, \psi \in \R$, let us define
  \[
    d (\phi, \psi)
    := \min_{\omega \in \pi \Z}
         |\phi - (\psi + \omega)| \in [0,\infty) \, .
  \]
  It is not hard to see that the minimum is indeed attained and that
  \[
    d(\phi,\psi) = d(\psi,\phi)
    \quad \text{and} \quad
    d(\phi, \vartheta) \leq d(\phi, \psi) + d(\psi, \vartheta)
    \qquad \forall \, \phi,\psi,\vartheta \in \R \, .
  \]
  %One the one hand, for any $\phi, \psi, \vartheta \in \R$ and
  %$\omega \in \pi \Z$, if we choose $\widetilde{\omega} \in \pi \Z$ such that
  %$|\phi - (\psi + \widetilde{\omega})| = d(\phi, \psi)$, then we can
  %choose $\omega \in \pi \Z$ such that
  %\[
  %  |\phi - (\vartheta + \omega)|
  %  \leq |\phi - (\psi + \widetilde{\omega})|
  %       + |(\psi + \widetilde{\omega}) - (\vartheta + \omega)|
  %  = d (\phi, \psi) + d(\psi, \vartheta) \, .
  %\]
  %Hence, $d(\phi,\vartheta) \leq d (\phi, \psi) + d(\psi, \vartheta)$.
  %With similar, but easier, arguments one sees $d(\phi,\psi) = d(\psi,\phi)$.

  Now, let $j \in J$ and $i \in I_j$ be arbitrary.
  Then $Q_i \cap P_j$ is a non-empty open set,
  so that we can find a \emph{non-zero} $\xi_i \in Q_i \cap P_j$.
  According to \eqref{eq:AlphaBetaCoveringAngleControl},
  we can find $\phi,\psi \in \R$ and $\omega_1, \omega_2 \in \{0, \pi\}$
  such that
  \[
    |\xi_i| \cdot e^{i \psi} = \xi_i = |\xi_i| \cdot e^{i \phi}
    %\quad \text{for} \quad
    %\phi, \psi \in \R
    %\text{ such that }
    \qquad \text{and} \qquad
    \max
    \big\{
      |\phi - (\phi_i + \omega_1)| , \,
      |\psi - (\phi_j + \omega_2)|
    \big\}
    \leq C_5 \cdot (1 + |\xi_i|)^{\beta - 1} \, .
  \]
  On the one hand, since $|\xi_i| \neq 0$, this entails
  $e^{i \psi} = e^{i \phi}$ and hence $\phi = \psi + 2 \pi k$
  for some $k \in \Z$; therefore, $d (\phi,\psi) = 0$.
  On the other hand, the preceding estimate implies that
  $d(\phi, \phi_i) \leq C_5 \cdot (1 + |\xi_i|)^{\beta - 1}$
  and $d (\psi, \phi_j) \leq C_5 \cdot (1 + |\xi_i|)^{\beta - 1}$.

  Finally, from \eqref{eq:AlphaBetaCoveringAngleControl} we conclude that,
  for arbitrary $\xi \in Q_i$, there are $\vartheta \in \R$
  and $\omega \in \{0, \pi\}$ such that
  \[
    \xi = |\xi| \cdot e^{i \vartheta}
    \qquad \text{and} \qquad
    |\vartheta - (\phi_i + \omega)|
    \leq C_5 \cdot (1 + |\xi|)^{\beta - 1}
    \leq C_3^{1 - \beta} C_5 \cdot (1 + |\xi_i|)^{\beta - 1} \, .
  \]
  In particular, $d (\vartheta, \phi_i)
  \leq C_3^{1 - \beta} C_5 \cdot (1 + |\xi_i|)^{\beta - 1}$.
  By combining our observations, we see that
  \begin{equation}
    \begin{split}
      d(\vartheta, \phi_j)
      & \leq d(\vartheta, \phi_i)
             + d(\phi_i, \phi)
             + d(\phi, \psi)
             + d(\psi, \phi_j)
        \leq C_5 \cdot \big( C_3^{1 - \beta} + 1 + 0 + 1 \big)
             \cdot (1 + |\xi_i|)^{\beta - 1} \\
      & \leq C_3^{1 - \beta} C_5 \cdot \big( 2 + C_3^{1 - \beta} \big)
             \cdot (1 + |\xi|)^{\beta - 1}
        =:   C_2 \cdot (1 + |\xi|)^{\beta - 1} \, .
    \end{split}
    \label{eq:AlphaBetaCoveringProofAngleControl}
  \end{equation}
  Here, in the penultimate step, we noted that $\xi, \xi_i \in Q_i$ and hence
  $1 + |\xi| \leq C_3 \cdot (1 + |\xi_i|)$.
  The estimate~\eqref{eq:AlphaBetaCoveringProofAngleControl}
  shows that there is some $k \in \Z$ such that
  $|(\vartheta - k \pi) - \phi_j| \leq C_2 \cdot (1 + |\xi|)^{\beta - 1}$.
  Writing $k = 2 \ell + m$ with $m \in \{0,1\}$ and $\ell \in \Z$, we thus see
  that $|(\vartheta - 2 \ell \pi) - (\phi_j + m \pi)|
        \leq C_2 \cdot (1 + |\xi|)^{\beta - 1}$.

  All in all, defining $r := |\xi|$ and $\varphi := \vartheta - 2 \ell \pi$,
  we have shown that
  $\xi = r \cdot e^{i \, \vartheta} = r \cdot e^{i \, \varphi}$ where
  $\min_{\omega \in \{0, \pi\}}
    |\varphi - (\phi_j + \omega)| \leq C_2 \cdot (1 + r)^{\beta - 1}$
  and $r \in \Lambda_j$, according to
  \eqref{eq:AlphaBetaCoveringEquivalenceNormInterval}.
  Therefore, \eqref{eq:AlphaBetaCoveringEquivalenceMainInclusion} holds.

  \medskip{}

  \noindent
  \textbf{Step 2:} We now estimate the measure of the set $\Omega_j$ on the right-hand side of
  Equation~\eqref{eq:AlphaBetaCoveringEquivalenceMainInclusion}.
  %which we denote by $\Omega_j$.
  To do so, let
  \[
    W_j
    := \{
         r \cdot e^{i \phi}
         \,:\,
         r \in \Lambda_j
         \text{ and }
         |\phi| \leq C_2 \cdot (1 + r)^{\beta - 1}
       \} \, .
  \]
  Then, since $-1 = e^{-i \pi}$, we see that
  \[
    - \{
        r \cdot e^{i \phi}
        \,:\,
        r \in \Lambda_j
        \text{ and }
        |\phi - \pi| \leq C_2 \cdot (1+r)^{\beta - 1}
      \}
    = \{
        r \cdot e^{i (\phi - \pi)}
        \,:\,
        r \in \Lambda_j
        \text{ and }
        |\phi - \pi| \leq C_2 \cdot (1+r)^{\beta - 1}
      \}
    = W_j \, .
  \]
  Therefore,
  \begin{equation}
    \begin{split}
      e^{- i \phi_j} \, \Omega_j
      & = \big\{
            r \cdot e^{i (\phi - \phi_j)}
            \,:\,
            r \in \Lambda_j
            \text{ and }
            \min_{\omega \in \{0, \pi\}}
              |\phi - (\phi_j + \omega)| \leq C_2 \cdot (1+r)^{\beta - 1}
          \big\} \\
      & = \bigcup_{\omega \in \{0,\pi\}}
            \big\{
              r \cdot e^{i \varphi}
              \,:\,
              r \in \Lambda_j
              \text{ and }
              |\varphi - \omega| \leq C_2 \cdot (1+r)^{\beta - 1}
            \big\}
        = W_j \cup (- W_j)
    \end{split}
    \label{eq:AlphaBetaCoveringOmegaJAsWJUnion}
  \end{equation}
  %Since the Lebesgue measure is rotation invariant, we conclude
  and hence
  \begin{equation}
    \lambda(\Omega_j) = \lambda(e^{-i \phi_j} \Omega_j)
    = \lambda\big(W_j \cup (-W_j) \big)
    \leq 2 \lambda (W_j) .
    \label{eq:UniversalityOmegaJMeasureEstimate}
  \end{equation}

  Now, let us define
  \begin{equation}
    a_j
    := \max \Big\{
                  0,
                  |\zeta_j| - \frac{C_1}{2} (1 + |\zeta_j|)^\alpha
            \Big\}
    \quad \text{and} \quad
    b_j := |\zeta_j| + \frac{C_1}{2} \cdot (1 + |\zeta_j|)^{\alpha} \, ,
    \label{eq:AlphaBetaCoveringLambdaJIntervalBoundaries}
  \end{equation}
  so that $\Lambda_j = [a_j, b_j]$
  according to \eqref{eq:AlphaBetaCoveringEquivalenceNormInterval}.
  Introducing polar coordinates allows us to write
  \[
    \lambda (W_j)
    = \int_{(a_j, b_j)}
        r \cdot
        \int_{-\pi}^{\pi}
          \Indicator_{W_j} (r \cdot e^{i \phi})
        \, d \phi
      \, dr \ ,
      %\quad \text{where }
      %      \Indicator_{W_j}
      %      \text{ denotes the indicator function of the set } W_j
      %\, .
  \]
  where $\Indicator_{W_j}$ denotes the indicator function of the set $W_j$.
  On the other hand, if $\Indicator_{W_j} (r \cdot e^{i \phi}) = 1$ for some
  $r \in (a_j, b_j) \subset (0,\infty)$ and $\phi \in [-\pi,\pi]$, then
  $r \cdot e^{i \phi} = s \cdot e^{i \psi}$ for some $s \in \Lambda_j$ and
  $\psi \in \R$ satisfies $|\psi| \leq C_2 \cdot (1 + s)^{\beta - 1}$.
  %$|\psi| \leq C_2 \cdot (1 + |\zeta_j|)^{\beta - 1}$.
  Since $r \cdot e^{i \phi} = s \cdot e^{i \psi}$ where $r > 0$,
  we conclude that $s = r$ and $\phi - \psi \in 2 \pi \Z$.
  We claim that this implies $|\phi| \leq C_2 \cdot (1 + r)^{\beta - 1}$.
  As $C_2 \cdot (1 + r)^{\beta - 1} \geq \pi$ this is trivial. Therefore
  we shall assume that $C_2 \cdot (1 + r)^{\beta - 1} < \pi$.
  This implies that $|\psi| < \pi$ and hence $|\phi - \psi| \leq |\phi| + |\psi| < 2 \pi$,
  since $\phi \in [-\pi, \pi]$.
  Since $\phi - \psi \in 2 \pi \Z$, this entails $\phi = \psi$ and thus
  $|\phi| = |\psi| \leq C_2 \cdot (1 + r)^{\beta - 1}$ also in this case.
  %We claim that this implies
  %$|\phi| \leq \phi_{0}^{(j)} := C_2 \cdot (1 + |\zeta_j|)^{\beta - 1}$.
  %If $\phi_{0}^{(j)} \geq \pi$, this is trivial.
  %Hence, we can assume $\phi_{0}^{(j)} < \pi$,
  %so that $|\psi| \leq \phi_{0}^{(j)} < \pi$ and
  %hence $|\phi - \psi| \leq |\phi| + |\psi| < 2 \pi$.
  %Since $\phi - \psi \in 2 \pi \Z$, this entails $\phi = \psi$ and thus
  %$|\phi| = |\psi| \leq \phi_{0}^{(j)}$.
  In combination with the estimate
  $1 + b_j = 1 + |\zeta_j| + \tfrac{C_1}{2} (1 + |\zeta_j|)^{\alpha} \leq (1 + C_1) (1 + |\zeta_j|)$,
  these considerations show that
  \begin{equation}
    \begin{split}
      \lambda (W_j)
      %& \leq \int_{(a_j, b_j)} 2 r \phi_{0}^{(j)} \, dr
      & \leq \int_{(a_j, b_j)}
               r \cdot 2 C_2 \cdot (1 + r)^{\beta - 1}
               %2 r \phi_{0}^{(j)}
             \, dr
        \leq 2 C_2
             \int_{(a_j, b_j)}
               (1 + r)^{\beta}
             \, d r \\
      & \leq 2 C_2 \cdot (1 + b_j)^{\beta} \cdot (b_j - a_j)
        \leq 2 C_2 \cdot (1 + C_1)^\beta \cdot (1 + |\zeta_j|)^\beta
                   \cdot C_1 \cdot (1 + |\zeta_j|)^\alpha \\
      & \leq 2 C_1 C_2 (1 + C_1) \cdot (1 + |\zeta_j|)^{\alpha + \beta}.
      %  \leq 2 \phi_{0}^{(j)} \cdot b_j \cdot (b_j - a_j) \\
      %& \leq 2 \phi_{0}^{(j)}
      %       \cdot \Big(1 + \frac{C_1}{2}\Big)
      %       \cdot (1 + |\zeta_j|)
      %       \cdot C_1 \cdot (1 + |\zeta_j|)^{\alpha}
      %  \leq 2 C_1 C_2 \cdot (1 + |\zeta_j|)^{\alpha + \beta} \, .
    \end{split}
    \label{eq:AlphaBetaWjMeasureEstimate}
  \end{equation}

  \medskip{}

  \noindent
  \textbf{Step 3:} We now show that there is such an $N > 0$ that
  $|I_j| \leq N$ for all $j \in J$, or, in other words,
  $\CalP$ is weakly subordinate to $\CalQ$. Since $\CalQ$ is an admissible covering, there is such an $N_0 \in \N$ that
  $\sum_{i \in I} \Indicator_{Q_i} \leq N_0$.
  Combined with \eqref{eq:AlphaBetaCoveringEquivalenceMainInclusion}, this
  implies $\sum_{i \in I_j} \Indicator_{Q_i} \leq N_0 \cdot \Indicator_{\Omega_j}$.
  Now, from Condition~(2) in Definition~\ref{def:AlphaBetaCoverings}
  we infer that there is such an $C_6 > 0$ that
  \[
    \lambda(Q_i)
    \geq C_6 \cdot (1 + |\xi_i|)^{\alpha + \beta}
    \geq C_6 C_3^{-(\alpha+\beta)} \cdot (1 + |\zeta_j|)^{\alpha+\beta}
    \qquad \forall \, i \in I_j
  \]
  where $\xi_i \in Q_i \cap P_j$ can be chosen arbitrarily.

  Therefore, if $\Gamma \subset I_j$ is an arbitrary finite subset, then
  \begin{align*}
    C_6 C_3^{-(\alpha+\beta)}
    & \cdot (1 + |\zeta_j|)^{\alpha+\beta}
      \cdot |\Gamma|
      \leq \sum_{i \in \Gamma} \lambda(Q_i)
      = \int_{\R^2} \sum_{i \in \Gamma} \Indicator_{Q_i}(\xi) \, d \xi
      \leq \int_{\R^2} N_0 \cdot \Indicator_{\Omega_j} (\xi) \, d \xi \\
    %& \leq \int_{\R^2} N_0 \cdot \Indicator_{W_j} (\xi) \, d \xi
    ({\scriptstyle{\text{Eqs. } \eqref{eq:UniversalityOmegaJMeasureEstimate}
                   \text{ and } \eqref{eq:AlphaBetaWjMeasureEstimate}}})
    %& \overset{\eqref{eq:AlphaBetaWjMeasureEstimate}}{\leq}
    & \leq 4 C_1 C_2 (1 + C_1) N_0 \cdot (1 + |\zeta_j|)^{\alpha + \beta} \,
  \end{align*}
  and hence $|\Gamma| \leq 4 C_1 C_2 (1 + C_1) C_3^{\alpha+\beta} N_0 / C_6 =: N$.
  Since $\Gamma \subset I_j$ was an arbitrary finite subset and the right-hand side
  of the last estimate is independent of $j \in J$,
  this completes the proof of the statement of this step.

  \medskip{}

  \noindent
  \textbf{Step 4:} We assume that $\beta \leq \alpha$ and estimate
  $\lambda \big(\, \overline{Q_i} - \overline{\strut P_j} \, \big)$.
  To do so, we first estimate the set $W_j$ introduced in Step~2.
  Let us recall the definition
  \eqref{eq:AlphaBetaCoveringLambdaJIntervalBoundaries} of $a_j$ and $b_j$,
  where we saw that $\Lambda_j = [a_j, b_j]$.
  Any $\xi \in W_j$ is given by $\xi = r \cdot e^{i \phi}$ for
  $r \in [a_j, b_j]$ and $|\phi| \leq C_2 \cdot (1 + r)^{\beta - 1}$.

  \smallskip{}

  Now, we note that $1 + r \leq 1 + b_j \leq (1 + C_1) \cdot (1 + |\zeta_j|)$ and so
  \[
    r \cdot (1 + r)^{\beta - 1} \leq (1 + r)^\beta
   \leq (1 + C_1)^\beta \cdot (1 + |\zeta_j|)^\beta
   \leq (1 + C_1) \cdot (1 + |\zeta_j|)^\alpha,
  \]
  where our assumption $\beta \leq \alpha$ was used in the last step.
  On the other hand, $\cos \phi \geq 1 - |\phi|$,
  since the cosine is $1$-Lipschitz with $\cos (0) = 1$.
  Therefore, defining $C_7 := (1 + C_1) (1 + C_2)$ and recalling the definition of $a_j$, we see that
  \[
    \xi_1
    = r \cdot \cos \phi
    \geq r \cdot (1 - |\phi|)
    \geq r - C_2 \cdot r \cdot (1 + r)^{\beta - 1}
    \geq a_j - (1 + C_1) C_2 \cdot (1 + |\zeta_j|)^\alpha
    \geq |\zeta_j| - C_7 \, (1 + |\zeta_j|)^{\alpha} \, .
  \]
  Conversely,
  \[
    \xi_1
    \leq r
    \leq b_j
    \leq |\zeta_j| + C_7 \cdot (1 + |\zeta_j|)^{\alpha} \, .
  \]
  Finally, noting that
  $|\sin \phi| \leq |\phi| \leq C_2 \cdot (1 + r)^{\beta - 1}$
  and $1 + r \leq 1 + b_j \leq (1 + C_1) \, (1 + |\zeta_j|)$,
  %$|\sin \phi| \leq |\phi| \leq \phi_0^{(j)} = C_2 \cdot (1 + |\zeta_j|)^{\beta - 1}$,
  we conclude that
  \[
    |\xi_2|
    \leq r \cdot C_2 \cdot (1 + r)^{\beta - 1}
    \leq C_2 \cdot (1 + r)^{\beta}
    \leq (1 + C_1)^\beta C_2 \cdot (1 + |\zeta_j|)^\beta .
  \]
  %Defining $C_7 := 2 + 2 C_1$ and $C_8 := C_2 (1 + C_1)$
  Defining $C_8 := (1 + C_1)^\beta \, C_2$ and $\gamma_j := (|\zeta_j|, 0)^t \in \R^2$,
  we thus see that
  \begin{equation}
    \overline{\strut W_j} \subset \gamma_j + R_j
    \quad \text{where} \quad
    %R_j = [- (2 + 2C_1) (1 + |\zeta_j|)^\alpha , \,
    %       (2 + 2 C_1) (1 + |\zeta_j|)^\alpha]
    %      \times [- C_2 (1 + C_1) (1 + |\zeta_j|)^\beta , \,
    %              C_2 (1 + C_1) (1 + |\zeta_j|)^\beta] \, .
    R_j = \Big( C_7 \cdot (1 + |\zeta_j|)^\alpha \cdot [-1,1] \Big)
          \times \Big( C_8 \cdot (1 + |\zeta_j|)^\beta \cdot [-1,1] \Big) \, .
    \label{eq:AlphaBetaCoveringWjMainInclusion}
  \end{equation}
  Using the symmetry of the rectangle $R_j$, we see that
  $\eps_1 \overline{W_j} + \eps_2 \overline{W_j}
   \subset \eps_1 \gamma_j + \eps_2 \gamma_j + (R_j + R_j)$
  for any $\eps_1, \eps_2 \in \{\pm 1\}$,
  from where we deduce that
  $\lambda(R_j + R_j) \leq C_9 \cdot (1 + |\zeta_j|)^{\alpha+\beta}$.
  Combining this with Equation~\eqref{eq:AlphaBetaCoveringOmegaJAsWJUnion} results in
  \begin{align*}
    \lambda \big(\, \overline{\Omega_j} - \overline{\Omega_j} \,\big)
    & = \lambda \big(\,
                  e^{-i\phi_j} \, \overline{\Omega_j}
                  - e^{-i\phi_j} \, \overline{\Omega_j}
                \,\big)
      \leq \lambda \big( [W_j \cup (-W_j)] - [W_j \cup (-W_j)] \big) \\[0.3cm]
    & \leq \sum_{\eps_1, \eps_2 \in \{\pm 1\}}
             \lambda
             \big(\,
               \eps_1 \, \overline{\strut W_j} + \eps_2 \, \overline{\strut W_j}
             \,\big)
      \leq 4 C_9 \cdot (1 + |\zeta_j|)^{\alpha + \beta} \, .
  \end{align*}
  Finally, since $\CalQ$ covers the whole $\R^2$,
  we see that $P_j \subset \bigcup_{i \in I_j} Q_i$, and hence
  $P_j \subset \Omega_j$, according to
  \eqref{eq:AlphaBetaCoveringEquivalenceMainInclusion}.
  Therefore,
  \(
    \lambda \big(\, \overline{\strut Q_i} - \overline{\strut P_j} \,\big)
    \leq \lambda \big( \, \overline{\strut \Omega_j} - \overline{\strut \Omega_j} \, \big)
    \leq 4 C_9 \cdot (1 + |\zeta_j|)^{\alpha + \beta}
  \)
  for all $j \in J$ and $i \in I_j$.
  Since $\lambda(Q_i) \asymp (1 + |\xi|)^{\alpha+\beta} \asymp \lambda(P_j)$
  for all $\xi \in Q_i \cap P_j$,
  according to Condition~(2) in Definition~\ref{def:AlphaBetaCoverings}
  and since $1 + |\xi| \asymp 1 + |\zeta_j|$ for such $\xi$, the proof
  is complete.
\end{proof}

We now show that the wave packet covering $\CalQ^{(\alpha,\beta)}$
fits the framework of $(\alpha,\beta)$ coverings.

\begin{lem}\label{lem:WavePacketCoveringIsAlphaBetaCovering}
  For any $0 \leq \beta \leq \alpha \leq 1$,
  the wave packet covering $\CalQ^{(\alpha,\beta)}$ is an
  $(\alpha,\beta)$ covering of $\R^2$.
\end{lem}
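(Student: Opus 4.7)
The plan is to verify the four defining conditions of Definition~\ref{def:AlphaBetaCoverings} one by one, treating separately the distinguished index $i = 0$ (for which $Q_0 = B_4(0)$) and the indices $i = (j,m,\ell) \in I_0^{(\alpha,\beta)}$ (for which the full geometric information from Section~\ref{sec:CoveringDefinition} is available). All four conditions turn out to follow essentially directly from the work already carried out in Lemmas~\ref{lem:CoveringCovers}, \ref{lem:Admissibility}, and \ref{lem:CoveringGeometry}, so the proof is almost a bookkeeping exercise.

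Condition (1) is immediate: admissibility is precisely Lemma~\ref{lem:Admissibility}, the fact that $\CalQ^{(\alpha,\beta)}$ covers $\R^2$ is Lemma~\ref{lem:CoveringCovers}, and the sets $Q_i$ are open and bounded by construction. For condition (2), I use that $|\det T_i| = 2^{(\alpha+\beta)j}$ for $i = (j,m,\ell) \in I_0^{(\alpha,\beta)}$ (as computed in the proof of Corollary~\ref{cor:RelativeModerateness}), so $\lambda(Q_i) = |\det T_i| \cdot \lambda(Q) \asymp 2^{(\alpha+\beta)j}$. Combining this with the equivalence $1 + |\xi| \asymp 2^j$ on $Q_i$ provided by Equation~\eqref{eq:AbsoluteValueEstimate} yields $\lambda(Q_i) \asymp (1+|\xi|)^{\alpha+\beta}$; for $i = 0$, both sides are bounded above and below by positive constants since $Q_0 = B_4(0)$.

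For condition (3), set $L_i := \bigl[\, 2^{j-1} + 2^{\alpha j}(m - \eps),\; 2^{j-1} + 2^{\alpha j}(m + 2 + 2\eps) \,\bigr]$ for $i = (j,m,\ell) \in I_0^{(\alpha,\beta)}$; the estimate~\eqref{eq:AbsoluteValueEstimate} in Lemma~\ref{lem:CoveringGeometry} guarantees $|\xi| \in L_i$ for every $\xi \in Q_i$, while $\lambda(L_i) = (2+3\eps) \cdot 2^{\alpha j} \lesssim (1+|\xi|)^\alpha$. For $i = 0$, take $L_0 := [0, 4]$; then $|\xi| \in L_0$ for $\xi \in Q_0$ and $\lambda(L_0) = 4 \lesssim 1 \leq (1+|\xi|)^\alpha$. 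For condition (4), set $\phi_i := \Theta_{j,\ell}$ for $i = (j,m,\ell) \in I_0^{(\alpha,\beta)}$ and $\phi_0 := 0$. Equation~\eqref{eq:AngleControl} supplies, for any $\xi \in Q_i$, an angle $\varphi$ with $\xi = |\xi| \cdot e^{i\varphi}$ and $|\varphi - \Theta_{j,\ell}| \leq 4(1+\eps) \cdot 2^{(\beta-1)j} \lesssim (1+|\xi|)^{\beta-1}$, using once more $1 + |\xi| \asymp 2^j$; the $\omega = 0$ term in the minimum of \eqref{eq:AlphaBetaCoveringAngleControl} already suffices.

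The only mildly delicate step is handling $i = 0$ in condition (4), since $Q_0$ is a ball about the origin and all directions occur there. This is nevertheless harmless: any $\phi$ realising $\xi = |\xi| \cdot e^{i\phi}$ satisfies $\min_{\omega \in \{0,\pi\}} |\phi - \omega| \leq \pi/2$, while on $Q_0$ we have $(1+|\xi|)^{\beta-1} \geq 5^{\beta-1} > 0$, so the bound holds with a sufficiently large implied constant. I do not expect any genuine obstacle in this proof; the real geometric work was already done in proving Lemma~\ref{lem:CoveringGeometry}, and this verification mainly consists of translating those scale-based estimates into the scale-free language of Definition~\ref{def:AlphaBetaCoverings}.
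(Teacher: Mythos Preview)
Your proposal is correct and follows essentially the same route as the paper: verify condition~(1) via Lemmas~\ref{lem:CoveringCovers} and~\ref{lem:Admissibility}, use $|\det T_i| = 2^{(\alpha+\beta)j}$ together with Equation~\eqref{eq:AbsoluteValueEstimate} for condition~(2), take $L_i$ to be the interval appearing in Equation~\eqref{eq:AbsoluteValueEstimate} for condition~(3), and set $\phi_i = \Theta_{j,\ell}$ and invoke Equation~\eqref{eq:AngleControl} for condition~(4), handling $i=0$ separately by trivial bounds. One tiny imprecision: in condition~(4) for $i=0$ you should say that one can \emph{choose} $\phi$ so that $\min_{\omega\in\{0,\pi\}}|\phi-\omega|\leq\pi/2$ (not that every representative $\phi$ satisfies this), but since the definition only asks for existence this is harmless.
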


\begin{rem*}
  This shows in particular that almost structured $(\alpha,\beta)$ coverings exist
  for $0 \leq \beta \leq \alpha \leq 1$.
  %We are convinced that the (almost structured) $\alpha$-shearlet covering introduced
  %in \cite{AlphaShearletSparsity} are $(1,\alpha)$-coverings, for any $\alpha \in [0,1]$.
  %A formal verification of this, however, is outside the scope of this paper.

  By considering coverings that consist of sectors of rings, one can show that
  $(\alpha,\beta)$ coverings also exist for $\beta > \alpha$.
  It seems to be an open questions, however,
  whether such coverings can be chosen to be almost structured.
  To the best of our knowledge, no coverings
  that would satisfy this condition have been reported.
\end{rem*}

\begin{proof}
  As shown by Lemmas~\ref{lem:CoveringCovers} and \ref{lem:Admissibility},
  $\CalQ^{(\alpha,\beta)} = (Q_i)_{i \in I^{(\alpha,\beta)}}$
  is an admissible covering of $\R^2$.
  Thus, it remains to verify conditions (2)-(4) of
  Definition~\ref{def:AlphaBetaCoverings}.
  First, we shall do so only for $i = (j,m,\ell) \in I_0^{(\alpha,\beta)}$;
  the case $i = 0$ will be considered afterwards.

  Recall from Equation~\eqref{eq:AbsoluteValueEstimate} that
  $|\xi| \asymp 1 + |\xi| \asymp 2^j$ for all $\xi \in Q_i = T_i \, Q + b_i$.
  Therefore, we see that Condition~(2) in Definition~\ref{def:AlphaBetaCoverings} is satisfied;
  indeed,
  \[
    \lambda (Q_i)
    \asymp |\det T_i|
    =      |\det A_{j}|
    =      2^{\alpha j} \, 2^{\beta j}
    \asymp (1 + |\xi|)^{\alpha + \beta}
    \quad \forall \, i = (j,m,\ell) \in I_0^{(\alpha,\beta)}
                     \text{ and } \xi \in Q_i \, .
  \]

  Now, let us define
  \[
    L_i
    := [0,\infty) \cap \big[
                            2^{j-1} + 2^{\alpha j}(m - \eps) , \,
                            2^{j-1} + 2^{\alpha j} (m + 2 + 2 \eps)
                       \big]
    \quad \text{for} \quad
    i = (j,m,\ell) \in I_0^{(\alpha,\beta)} \, ,
  \]
  note that $\lambda(L_i) \leq (2+3 \eps) \cdot 2^{\alpha j} \lesssim (1 + |\xi|)^\alpha$
  for all $\xi \in Q_i$ and recall from \eqref{eq:AbsoluteValueEstimate}
  that $|\xi| \in L_i$ for all $\xi \in Q_i$.
  Hence, Condition~(3) in Definition~\ref{def:AlphaBetaCoverings} is satisfied.

  Finally, let $\phi_i := \Theta_{j,\ell}$ with $\Theta_{j,\ell}$ as in \eqref{eq:RotationMatrix}.
  With this, Equation~\eqref{eq:AngleControl} shows that --- for
  an arbitrary $\xi \in Q_i$ --- there is $\phi \in \R$ such that
  $\xi = |\xi| \cdot e^{i \phi}$ and
  \[
    \min_{\omega \in \{0, \pi \}}
      |\phi - (\phi_j + \omega)|
    \leq |\phi - \Theta_{j,\ell}|
    \leq 5 \cdot 2^{(\beta - 1) j}
    \asymp (1 + |\xi|)^{\beta - 1} \, ,
  \]
  as required in \eqref{eq:AlphaBetaCoveringAngleControl}.

  Since $Q_0 = B_4 (0)$, we have $|\xi| \in L_0 := [0,4]$
  and furthermore $1 + |\xi| \asymp 1$, $\lambda(Q_0) \asymp 1 \asymp (1 + |\xi|)^{\alpha+\beta}$,
  and $\lambda(L_0) \lesssim 1 \lesssim (1 + |\xi|)^\alpha$ for all $\xi \in Q_0$.
  Thus, it is not hard to see that Conditions (2)-(4) of
  Definition~\ref{def:AlphaBetaCoverings} remain valid for $i = 0$ as well,
  after adjusting the implicit constants.
\end{proof}

As our final result in this section, we shall now show that any two
$(\alpha,\beta)$ coverings induce the same family of decomposition spaces.
To make this result as general as possible, we first introduce slightly broader
classes of coverings than the almost-structured coverings
that we introduced in Section~\ref{sub:DecompositionDefinition}.

\begin{defn}\label{def:SemiStructuredCovering}
  (Definition~2.5 in \cite{DecompositionEmbeddings};
  inspired by \cite{BorupNielsenDecomposition})

\noindent A family $\CalQ = (Q_i)_{i \in I}$ is called a \textbf{semi-structured
  covering} of $\R^d$, if there is an \textbf{associated family}
  $(T_i \bullet + b_i)_{i \in I}$ of invertible affine-linear maps
  such that the following properties hold:
  \begin{enumerate}
    \item $\CalQ$ covers $\R^d$, that is, $\R^d = \bigcup_{i \in I} Q_i$;

    \item $\CalQ$ is \textbf{admissible}, that is, the sets $i^\ast$
          introduced in \eqref{eq:IndexClusterDefinition} have uniformly
          bounded cardinality;

    \item there is a family $(Q_i ')_{i \in I}$
          of non-empty open sets $Q_i ' \subset \R^d$ such that
          $Q_i = T_i \, Q_i ' + b_i$ for all $i \in I$,
          and such that the $Q_i '$ are uniformly bounded, that is,
          $\sup_{i \in I} \sup_{\xi \in Q_i '} |\xi| < \infty$; and
          %\begin{itemize}
          %  \item $Q_i = T_i \, Q_i^{\natural} + b_i$ for all $i \in I$;

          %  \item the $Q_i^{\natural}$ are uniformly bounded, that is
          %        $\sup_{i \in I} \sup_{\xi \in Q_i^{\natural}} |\xi| < \infty$;
          %\end{itemize}

    \item there is such a $C > 0$ that $\|T_i^{-1}T_\ell\| \leq C$ for all such
          $i, \ell \in I$ that $Q_i \cap Q_\ell \neq \emptyset$.
  \end{enumerate}
  %A semi-structured covering always satisfies
  %$\lambda(Q_i) \lesssim |\det T_i|$.
  %If, moreover, $\lambda(Q_i) \asymp |\det T_i|$,
  %then, we shall say that $\CalQ$ is \textbf{weakly tight}.
\end{defn}

We also need to impose less restrictive conditions on the partitions of unity
than those imposed on regular partitions of unity.

\begin{defn}\label{def:BAPUs}
  (Definitions~3.5 and 3.6 in \cite{DecompositionEmbeddings};
  inspired by Definition~2.2 in \cite{DecompositionSpaces1}
  and by Definition~2 in \cite{BorupNielsenDecomposition})

\noindent Let $\CalQ = (Q_i)_{i \in I}$ be an admissible covering of $\R^d$.
  A family of functions $\Phi = (\varphi_i)_{i \in I}$ is called an
  $L^p$-\textbf{bounded admissible partition of unity} ($L^p$-BAPU) subordinate
  to $\CalQ$ for all $1 \leq p \leq \infty$, if
  \begin{enumerate}[label=(\arabic*)]
    \item $\varphi_i \in C_c^\infty (\R^d)$ with
          $\varphi_i (\xi) = 0$ for all $\xi \in \R^d \setminus Q_i$ and
          any $i \in I$;

    \item $\sum_{i \in I} \varphi_i \equiv 1$ on $\R^d$; and

    \item $\sup_{i \in I}
               \| \, \Fourier^{-1} \varphi_1 \, \|_{L^1}
            < \infty$.
  \end{enumerate}
  If there is such an $L^p$-BAPU, the covering $\CalQ$ is called an
  $L^p$-\textbf{decomposition covering} of $\R^d$, for all
  $1 \leq p \leq \infty$.

  \medskip{}

  Now, let $p \in (0,1)$ and let us assume that $\CalQ$ is semi-structured
  with associated family $(T_i \mybullet + b_i)_{i \in I}$.
  A family $\Phi = (\varphi_i)_{i \in I}$ is called an $L^p$-\textbf{BAPU}
  subordinate to $\CalQ$, if it is an $L^q$-BAPU for all $1 \leq q \leq \infty$,
  and
  \(
    \sup_{i \in I}
      |\det T_i|^{p^{-1} - 1} \cdot \|\Fourier^{-1} \varphi_i\|_{L^p}
    < \infty
  \).
  If there is such an $L^p$-BAPU, we shall say that $\CalQ$ is an
  $L^p$-\textbf{decomposition covering} of $\R^d$.
\end{defn}

Replacing the regular partition of unity by an $L^p$-BAPU, one can define
decomposition spaces associated with $L^p$-decomposition coverings,
proceeding exactly as in Definition~\ref{def:DecompositionSpace}.
%, by replacing the regular partition of
%unity by an $L^p$-BAPU, one can define decomposition spaces associated to
%$L^p$-decomposition coverings. These are again well-defined quasi-Banach spaces;
%we refer to Section~3 of \cite{DecompositionEmbeddings} for the details.

Now, we state the main result of this section, whose proof is slightly deferred.

\begin{thm}\label{thm:AlphaBetaCoveringUniversality}
  Let $p,q \in (0,\infty]$, $\alpha, \beta \in [0,1]$, $\CalQ = (Q_i)_{i \in I}$ and $\CalP = (P_j)_{j \in J}$
  be two $(\alpha,\beta)$ coverings of $\R^2$,
  and $w = (w_i)_{i \in I}$ and $v = (v_j)_{j \in J}$ be $\CalQ$-moderate
  and $\CalP$-moderate, respectively.
  Let us assume that
  \[
    w_i \asymp v_j
    \quad \text{for any} \quad
    i \in I \text{ and } j \in J \text{ satisfying } Q_i \cap P_j \neq \emptyset
    \, .
  \]
  Then the following holds:
  \begin{enumerate}[label=(\arabic*)]
    \item If $p \in [1,\infty]$ and if $\CalQ,\CalP$ are $L^p$-decomposition
          coverings, then
          \[
            \DecompSp (\CalQ, L^p, \ell_w^q)
            = \DecompSp (\CalP, L^p, \ell_v^q)
            \quad \text{with equivalent quasi-norms.}
          \]

    \item If $p \in (0,1)$, if $\beta \leq \alpha$, and if $\CalQ,\CalP$
          are semi-structured $L^p$-decomposition coverings, then
          \[
            \DecompSp (\CalQ, L^p, \ell_w^q)
            = \DecompSp (\CalP, L^p, \ell_v^q)
            \quad \text{with equivalent quasi-norms.}
          \]
  \end{enumerate}
\end{thm}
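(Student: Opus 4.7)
The plan is to establish the single-sided bound $\|g\|_{\DecompSp(\CalP,L^p,\ell_v^q)} \lesssim \|g\|_{\DecompSp(\CalQ,L^p,\ell_w^q)}$; the reverse inequality follows by interchanging the roles of $\CalQ$ and $\CalP$. First, I pick $L^p$-BAPUs $(\varphi_i)_{i\in I}$ and $(\psi_j)_{j\in J}$ subordinate to $\CalQ$ and $\CalP$ respectively. By Theorem~\ref{thm:AlphaBetaCoveringsAreEquivalent}, the coverings $\CalQ$ and $\CalP$ are weakly equivalent, so the index sets $I_j := \{i \in I : Q_i \cap P_j \neq \emptyset\}$ and $J_i := \{j \in J : Q_i \cap P_j \neq \emptyset\}$ have uniformly bounded cardinality. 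Exploiting $\sum_i \varphi_i \equiv 1$, I decompose
\[
  \Fourier^{-1}(\psi_j \widehat{g})
  = \sum_{i \in I_j} \Fourier^{-1}\bigl( \psi_j \varphi_i \widehat{g} \bigr)
  = \sum_{i \in I_j} \Fourier^{-1}\psi_j * \Fourier^{-1}(\varphi_i \widehat{g}).
\]
Assuming the pointwise-in-$j$ estimate $\|\Fourier^{-1}(\psi_j \widehat{g})\|_{L^p} \lesssim \sum_{i \in I_j} \|\Fourier^{-1}(\varphi_i \widehat{g})\|_{L^p}$, the uniform bound $|I_j|, |J_i| \leq N$, the weight compatibility $v_j \asymp w_i$ for $i \in I_j$, and a standard exchange-of-summation argument (combined with the subadditivity of $\|\cdot\|_{\ell^q}^{\min(q,1)}$) will yield the desired $\ell^q$ bound.

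Hence the task reduces to the single-summand estimate
\(
  \|\Fourier^{-1}\psi_j * \Fourier^{-1}(\varphi_i \widehat{g})\|_{L^p}
  \lesssim \|\Fourier^{-1}(\varphi_i \widehat{g})\|_{L^p}
\),
uniformly in $j \in J$ and $i \in I_j$. In case (1), with $p \in [1,\infty]$, this is immediate from Young's inequality $\|h * f\|_{L^p} \leq \|h\|_{L^1} \|f\|_{L^p}$ together with the $L^p$-BAPU property of $\Phi = (\psi_j)_{j \in J}$, which in this range forces $\sup_j \|\Fourier^{-1}\psi_j\|_{L^1} < \infty$.

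For case (2), with $p \in (0,1)$, Young's inequality is no longer at our disposal. Instead I appeal to a Bernstein-type convolution estimate valid for band-limited functions: for $h_1, h_2 \in L^p$ whose Fourier transforms are supported in compact sets $K_1, K_2 \subset \R^2$,
\[
  \|h_1 * h_2\|_{L^p}
  \lesssim [\lambda(K_1 - K_2)]^{1/p - 1} \, \|h_1\|_{L^p} \, \|h_2\|_{L^p} \, ,
\]
an inequality obtained through Plancherel--Polya combined with a reconstruction-from-samples argument in the framework of \cite{DecompositionEmbeddings}. Applying this to $h_1 = \Fourier^{-1}\psi_j$ (Fourier supported in $\overline{P_j}$) and $h_2 = \Fourier^{-1}(\varphi_i \widehat{g})$ (Fourier supported in $\overline{Q_i}$), I combine the semi-structured $L^p$-BAPU bound $\|\Fourier^{-1}\psi_j\|_{L^p} \lesssim |\det S_j|^{1 - 1/p}$ with the refined measure estimate
\[
  \lambda\bigl(\overline{Q_i} - \overline{P_j}\bigr)
  \lesssim (1+|\xi|)^{\alpha+\beta}
  \asymp |\det S_j|
  \qquad \bigl(\xi \in Q_i \cap P_j\bigr),
\]
supplied by Theorem~\ref{thm:AlphaBetaCoveringsAreEquivalent}. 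The factors $|\det S_j|^{1/p-1}$ and $|\det S_j|^{1-1/p}$ then cancel and leave precisely the desired uniform estimate.

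The main obstacle is case (2). For $p \geq 1$ the argument is essentially the classical BAPU exchange via Young. For $p < 1$ one must replace Young by the Bernstein-type convolution inequality, and -- crucially -- it is the volume bound $\lambda(\overline{Q_i} - \overline{P_j}) \lesssim \min\{\lambda(Q_i), \lambda(P_j)\}$ of Theorem~\ref{thm:AlphaBetaCoveringsAreEquivalent} that makes the determinant factors conspire to cancel. This measure bound is available precisely under the hypothesis $\beta \leq \alpha$, which explains the extra restriction in case (2).
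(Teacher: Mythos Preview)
Your proposal is correct and follows essentially the same route as the paper: both arguments rest on the weak equivalence of $\CalQ$ and $\CalP$ from Theorem~\ref{thm:AlphaBetaCoveringsAreEquivalent} for part~(1), and on the measure bound $\lambda(\overline{Q_i}-\overline{P_j})\lesssim\min\{\lambda(Q_i),\lambda(P_j)\}\lesssim\min\{|\det T_i|,|\det S_j|\}$ from the same theorem for part~(2). The only difference is packaging---the paper invokes Lemma~6.11 of \cite{DecompositionEmbeddings} (and the remark following it) as a black box, whereas you unpack its content via the Young/Bernstein-convolution estimate; one small slip is that you write $(1+|\xi|)^{\alpha+\beta}\asymp|\det S_j|$, but only $\lesssim$ is guaranteed by the semi-structured hypothesis, and fortunately that is the direction you need.
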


The wave packet covering $\CalQ^{(\alpha,\beta)} = (Q_i)_{i \in I^{(\alpha,\beta)}}$
is an $(\alpha,\beta)$ covering and $\CalQ^{(\alpha,\beta)}$ is almost-structured,
so that it is a semi-structured $L^p$-decomposition covering for all $p \in (0,\infty]$
and therefore satisfies the assumptions of the preceding theorem. Moreover, Equation~\eqref{eq:WeightConsistencyCondition} and
Lemma~\ref{lem:AlphaBetaCoveringNormEstimate} imply that
$w_i^s \asymp (1 + |\xi|)^s \asymp (1 + |\xi_j|)^s$ for
$\xi \in Q_i \cap P_j$ and any fixed $\xi_j \in P_j$, if $\CalP = (P_j)_{j \in J}$ is any $(\alpha,\beta)$-covering. From this we immediately infer the following corollary.

\begin{cor}\label{cor:WavePacketSmoothnessUniversality}
  Let $p,q \in (0,\infty]$, $0 \leq \beta \leq \alpha \leq 1$
  and $\CalP = (P_j)_{j \in J}$ be an arbitrary $(\alpha,\beta)$ covering.
  Moreover let $s \in \R$ and the weight $v = (v_j)_{j \in J}$ be given by
  \[
    v_j = (1 + |\xi_j|)^s
    \quad \text{with} \quad
    \xi_j \in P_j \text{ arbitrary} \, .
  \]
  Then the following holds:
  \begin{enumerate}[label=(\arabic*)]
    \item If $p \in [1,\infty]$ and if $\CalP$ is an $L^p$-decomposition covering, then
          \[
            \DecompSp (\CalP, L^p, \ell_v^q)
            = \PacketSpace_s^{p,q}(\alpha,\beta)
            \quad \text{with equivalent quasi-norms.}
          \]

    \item If $p \in (0,1)$, and if $\CalP$ is a semi-structured
          $L^p$-decomposition covering, then
          \[
            \DecompSp (\CalP, L^p, \ell_v^q)
            = \PacketSpace_s^{p,q}(\alpha,\beta)
            \quad \text{with equivalent quasi-norms.}
          \]
  \end{enumerate}
\end{cor}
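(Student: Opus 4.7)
The plan is to deduce Corollary~\ref{cor:WavePacketSmoothnessUniversality} as a direct specialisation of Theorem~\ref{thm:AlphaBetaCoveringUniversality}, where one of the two $(\alpha,\beta)$ coverings is chosen to be the wave packet covering $\CalQ^{(\alpha,\beta)} = (Q_i)_{i \in I^{(\alpha,\beta)}}$ equipped with its canonical weight $w^s = (w_i^s)_{i \in I^{(\alpha,\beta)}}$ from Lemma~\ref{lem:WavePacketWeight}. Once the hypotheses of the theorem are checked, the conclusion
$\DecompSp(\CalP, L^p, \ell_v^q) = \DecompSp(\CalQ^{(\alpha,\beta)}, L^p, \ell_{w^s}^q) = \PacketSpace_s^{p,q}(\alpha,\beta)$
follows immediately from the definition of $\PacketSpace_s^{p,q}(\alpha,\beta)$ in Definition~\ref{def:WavePacketSmoothnessSpaces}.

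The first thing to verify is that both coverings $\CalQ^{(\alpha,\beta)}$ and $\CalP$ are $(\alpha,\beta)$ coverings in the sense of Definition~\ref{def:AlphaBetaCoverings}: for $\CalP$ this is an assumption of the corollary, and for $\CalQ^{(\alpha,\beta)}$ this is exactly Lemma~\ref{lem:WavePacketCoveringIsAlphaBetaCovering}. Next, Lemma~\ref{lem:WavePacketWeight} tells us that $w^s$ is $\CalQ^{(\alpha,\beta)}$-moderate, while the $\CalP$-moderateness of $v$ follows from Lemma~\ref{lem:AlphaBetaCoveringNormEstimate}: if $P_j \cap P_{j'} \neq \emptyset$ and $\CalP$ is admissible, one obtains $1 + |\xi_j| \asymp 1 + |\xi_{j'}|$ through any intersection point, so $v_j \asymp v_{j'}$. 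The remaining compatibility condition $w_i^s \asymp v_j$ whenever $Q_i \cap P_j \neq \emptyset$ is the combination of Equation~\eqref{eq:WeightConsistencyCondition}, which gives $w_i^s \asymp (1+|\xi|)^s$ for $\xi \in Q_i$, with Lemma~\ref{lem:AlphaBetaCoveringNormEstimate} applied to $\CalP$, which gives $(1+|\xi|)^s \asymp (1+|\xi_j|)^s = v_j$ once $\xi \in P_j$.

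It remains to verify the appropriate decomposition-covering property. By Lemma~\ref{lem:CoveringAlmostStructured}, $\CalQ^{(\alpha,\beta)}$ is almost structured, so Theorem~\ref{thm:RegularPartitionOfUnityExistence} produces a regular partition of unity subordinate to it; standard arguments (see for instance the results quoted in Section~\ref{sub:DecompositionDefinition} and the references therein) show that such a regular partition of unity is automatically an $L^p$-BAPU for every $p \in (0,\infty]$, so $\CalQ^{(\alpha,\beta)}$ is a semi-structured $L^p$-decomposition covering for all $p$. For case (1), $p \in [1,\infty]$, the assumption of the corollary says $\CalP$ is an $L^p$-decomposition covering, so part (1) of Theorem~\ref{thm:AlphaBetaCoveringUniversality} applies. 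For case (2), $p \in (0,1)$, the assumption is that $\CalP$ is a semi-structured $L^p$-decomposition covering, and the constraint $0 \leq \beta \leq \alpha \leq 1$ built into the setup supplies the remaining hypothesis $\beta \leq \alpha$ needed for part (2) of the theorem.

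The substantive work in this corollary is already contained in Theorem~\ref{thm:AlphaBetaCoveringUniversality}; no new obstacle appears here, and the whole argument reduces to checking a short list of hypotheses. The only point that deserves care is keeping the two different quasi-Banach regimes ($p \geq 1$ versus $p < 1$) aligned with the two separate conclusions of Theorem~\ref{thm:AlphaBetaCoveringUniversality}, and in particular invoking the $\beta \leq \alpha$ restriction only where necessary.
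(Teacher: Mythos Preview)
Your proof is correct and follows essentially the same approach as the paper: both deduce the corollary directly from Theorem~\ref{thm:AlphaBetaCoveringUniversality} by taking $\CalQ = \CalQ^{(\alpha,\beta)}$ with weight $w^s$, invoking Lemma~\ref{lem:WavePacketCoveringIsAlphaBetaCovering} and the almost-structured property of $\CalQ^{(\alpha,\beta)}$ for the covering hypotheses, and combining Equation~\eqref{eq:WeightConsistencyCondition} with Lemma~\ref{lem:AlphaBetaCoveringNormEstimate} for the weight compatibility $w_i^s \asymp v_j$. Your write-up is in fact slightly more explicit than the paper's, spelling out the $\CalP$-moderateness of $v$ and the separation of the two regimes for $p$.
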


\begin{proof}[Proof of Theorem~\ref{thm:AlphaBetaCoveringUniversality}]
  The first statement is an immediate consequence of
  Lemma~6.11 in \cite{DecompositionEmbeddings}, since
  Theorem~\ref{thm:AlphaBetaCoveringsAreEquivalent} shows that $\CalQ$
  and $\CalP$ are weakly equivalent.

  \medskip{}

  For the second part, where $p \in (0,1)$, the definition of an
  $L^p$-decomposition covering shows that $\CalQ$ and $\CalP$ are
  semi-structured coverings, say, with associated families
  $(T_i \mybullet + b_i)_{i \in I}$ and $(S_j \mybullet + c_j)_{j \in J}$,
  respectively.
  From the uniform boundedness of the sets $Q_i^{\natural}$ and
  $P_j^{\natural}$ that satisfy $Q_i = T_i \, Q_i^{\natural} + b_i$ and
  $P_j = S_j \, P_j^{\natural} + c_j$, respectively,
  we conclude that $\lambda(Q_i) \lesssim |\det T_i|$ and
  $\lambda(P_j) \lesssim |\det S_j|$.

  Since $\CalQ$ and $\CalP$ are weakly equivalent and since
  $w_i \asymp v_j$ for any $i \in I$ and $j \in J$ such that
  $Q_i \cap P_j \neq \emptyset$, the remark that follows
  Lemma~6.11 in \cite{DecompositionEmbeddings} shows that
  $\DecompSp (\CalQ, L^p, \ell_w^q) = \DecompSp (\CalP, L^p, \ell_v^q)$
  provided that
  $\lambda\big(\, \overline{Q_i} - \overline{\strut P_j} \,\big)
   \lesssim \min \big\{ |\det T_i|, |\det S_j| \big\}$ for any $i \in I$ and
  $j \in J$ such that $Q_i \cap P_j \neq \emptyset$.
  However, the last part of Theorem~\ref{thm:AlphaBetaCoveringsAreEquivalent}
  shows indeed that
  \[
    \lambda\big(\, \overline{\strut Q_i} - \overline{\strut P_j} \,\big)
    \lesssim \min \big\{ \lambda(Q_i) \,, \lambda(P_j) \big\}
    \lesssim \min \{ |\det T_i|, |\det S_j| \}
    %\quad \text{for any} \quad
    %i \in I \text{ and } j \in J \text{ such that } Q_i \cap P_j \neq \emptyset
  \]
  for any $i \in I$ and $j \in J$ satisfying $Q_i \cap P_j \neq \emptyset$.
\end{proof}

\section{Dilation invariance of the wave packet smoothness spaces}
\label{sec:DilationInvariance}

\noindent In this section, we shall use the universality of the wave packet coverings
to show that the wave packet smoothness spaces are invariant under
dilation with arbitrary invertible matrices $B \in \GL(\R^2)$.

To do so, we shall first show that it suffices to establish an embedding
between certain decomposition spaces to derive the dilation invariance.
To show this, let us fix $B \in \GL(\R^2)$ and define the dilation $f \circ B \in Z'$ of an
element $f$ of the reservoir $Z'$ as usual%
\footnote{This definition is natural, since
$\langle f \circ B, g \rangle = |\det B|^{-1} \, \langle f, g \circ B^{-1} \rangle$
if $f : \R^2 \to \CC$ is of moderate growth and $g \in \Schwartz(\R^2)$.}
as
\begin{equation}
  f \circ B : Z \to \CC,
              \varphi \mapsto |\det B|^{-1} \cdot \langle f, \varphi \circ B^{-1} \rangle_{Z'}.
  \label{eq:DilationDefinitionOnReservoir}
\end{equation}
We now show that this indeed defines a well-defined element $f \circ B \in Z'$
and, at the same time, compute the Fourier transform $\Fourier [f \circ B]$.
Namely, $f \circ B \in Z'$ if and only if
$\Fourier[f \circ B] = (f \circ B) \circ \Fourier \in \CalD'(\R^2)$.
This is indeed the case, since for $\psi \in C_c^\infty (\R^2)$
\[
  \langle \Fourier [f \circ B], \psi \rangle_{\CalD'}
  = \langle f \circ B, \widehat{\psi} \rangle_{Z'}
  = |\det B|^{-1} \, \langle f, \widehat{\psi} \circ B^{-1} \rangle_{Z'}
  = |\det B|^{-1} \, \langle \widehat{f}, \Fourier^{-1} [\widehat{\psi} \circ B^{-1}] \rangle_{\CalD'}
  = \langle \widehat{f}, \psi \circ B^t \rangle_{\CalD'},
  %:= |\det B|^{-1} \cdot \langle \widehat{f} \circ B^{-t}, \psi \rangle_{\CalD'} ,
\]
from where it is easy to see that the map
$\psi \mapsto \langle \widehat{f}, \psi \circ B^t \rangle_{\CalD'}$ is a well-defined distribution
on $\R^2$.

Now, let us fix $0 \leq \beta \leq \alpha \leq 1$ and a regular partition of unity
$(\varphi_i)_{i \in I}$ subordinate to the $(\alpha,\beta)$ wave packet covering
$\CalQ^{(\alpha,\beta)} = (Q_i)_{i \in I}$.
By using the computation of $\Fourier[f \circ B]$  and
recalling how the Fourier transform is computed for compactly supported distributions
(see Theorem~7.23 in \cite{RudinFA}), we conclude that
\begin{align*}
  \Fourier^{-1} (\varphi_i \cdot \Fourier[f \circ B]) (x)
  & = \langle
        \Fourier[f \circ B],
        e^{2 \pi i \langle x, \bullet \rangle} \cdot \varphi_i
      \rangle_{\CalD'}
    = \langle
        \widehat{f},
        e^{2 \pi i \langle x, B^t \bullet \rangle} \cdot (\varphi_i \circ B^t)
      \rangle_{\CalD'} \\
  & = \langle
        \widehat{f},
        e^{2 \pi i \langle B x, \bullet \rangle} \cdot (\varphi_i \circ B^t)
      \rangle_{\CalD'}
    = \Fourier^{-1} \big[ (\varphi_i \circ B^t) \cdot \widehat{f} \,\, \big] \, (B x)
\end{align*}
for all $i \in I$ and $x \in \R^2$.
Therefore, for all $i \in I$,
\begin{equation}
  \|\Fourier^{-1} (\varphi_i \cdot \Fourier[f \circ B])\|_{L^p}
  = \|
      \Fourier^{-1} \big[ (\varphi_i \circ B^t) \cdot \widehat{f} \big] \circ B
    \|_{L^p}
  = |\det B|^{-1/p}
    \cdot \|
            \Fourier^{-1} \big[ (\varphi_i \circ B^t) \cdot \widehat{f} \,\, \big]
          \|_{L^p} .
  \label{eq:DilationInvarianceMainIdentity}
\end{equation}

It is straightforward to verify that the family
$B^{-t} \CalQ^{(\alpha,\beta)} := (B^{-t} Q_i)_{i \in I}$ is an almost structured admissible covering of
$\R^2$ with associated family $(B^{-t} T_i \bullet + B^{-t} b_i)_{i \in I}$
where $T_i$ and $b_i$ are as defined in Lemma~\ref{lem:CoveringAlmostStructured}.
Likewise, it follows directly from the definitions that $(\varphi_i \circ B^t)_{i \in I}$
is a regular partition of unity subordinate to $B^{-t} \CalQ^{(\alpha,\beta)}$.
Furthermore, $B^{-t} Q_i \cap B^{-t} Q_j \neq \emptyset$ if and only if $Q_i \cap Q_j \neq \emptyset$,
so that the weight $w^s$ introduced in Lemma~\ref{lem:WavePacketWeight}
is $B^{-t} \CalQ^{(\alpha,\beta)}$-moderate.
Thus, the decomposition spaces $\DecompSp(B^{-t} \CalQ^{(\alpha,\beta)},L^p,\ell_{w^s}^q)$
are well-defined and Equation~\eqref{eq:DilationInvarianceMainIdentity} implies that, for $f \in Z'$,
\[
  \|f \circ B\|_{\PacketSpace_s^{p,q}(\alpha,\beta)} \!
  \asymp \Big\|
           \big(
             \|\Fourier^{-1} (\varphi_i \cdot \widehat{f \circ B})\|_{L^p}
           \big)_{i \in I}
         \Big\|_{\ell_{w^s}^q} \!\!\!
  \asymp \Big\|
           \big(
             \|\Fourier^{-1} ( [\varphi_i \circ B^t] \cdot \widehat{f} \, )\|_{L^p}
           \big)_{i \in I}
         \Big\|_{\ell_{w^s}^q} \!\!\!
  \asymp \! \|f\|_{\DecompSp(B^{-t} \CalQ^{(\alpha,\beta)}, L^p, \ell_{w^s}^q)} .
\]
If we knew that
$\DecompSp(B^{-t} \CalQ^{(\alpha,\beta)}, L^p, \ell_{w^s}^q) = \PacketSpace_s^{p,q} (\alpha,\beta)$
with equivalent quasi-norms, then the preceding equation would show that every
$f \in \PacketSpace_s^{p,q} (\alpha,\beta) = \DecompSp(B^{-t} \CalQ^{(\alpha,\beta)}, L^p, \ell_{w^s}^q)$
satisfies ${f \circ B \in \PacketSpace_s^{p,q}(\alpha,\beta)}$ and
\(
  \|f \circ B\|_{\PacketSpace_s^{p,q}(\alpha,\beta)}
  \asymp \|f\|_{\DecompSp(B^{-t} \CalQ^{(\alpha,\beta)}, L^p, \ell_{w^s}^q)}
  \asymp \|f\|_{\PacketSpace_s^{p,q} (\alpha,\beta)}.
\)
Thus, if we establish the identity
${\DecompSp(B^{-t} \CalQ^{(\alpha,\beta)}, L^p, \ell_{w^s}^q) = \PacketSpace_s^{p,q} (\alpha,\beta)}$,
we shall prove the following theorem concerning the dilation invariance of the
wave packet spaces $\PacketSpace_s^{p,q}(\alpha,\beta)$:

\begin{thm}\label{thm:DilationInvariance}
  Let $0 \leq \beta \leq \alpha \leq 1$, $p,q \in (0,\infty]$, $s \in \R$, and $B \in \GL(\R^2)$.
  With $f \circ B$ as defined in Equation~\eqref{eq:DilationDefinitionOnReservoir}, the linear map
  \[
    \PacketSpace_s^{p,q}(\alpha,\beta) \to \PacketSpace_s^{p,q}(\alpha,\beta), f \mapsto f \circ B
  \]
  is well-defined and bounded.
\end{thm}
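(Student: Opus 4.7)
The strategy is already outlined in the discussion preceding the theorem, which reduces the task to establishing the identity $\DecompSp(B^{-t} \CalQ^{(\alpha,\beta)}, L^p, \ell_{w^s}^q) = \PacketSpace_s^{p,q}(\alpha,\beta)$ with equivalent quasi-norms. My plan is to derive this identity from Corollary~\ref{cor:WavePacketSmoothnessUniversality}: I will verify that the dilated covering $\CalP := B^{-t} \CalQ^{(\alpha,\beta)}$ is an $(\alpha,\beta)$-covering in the sense of Definition~\ref{def:AlphaBetaCoverings} and that, on this covering, the weight $w^s$ agrees, up to equivalence, with $(1+|\eta_i|)^s$ for any $\eta_i \in B^{-t} Q_i$. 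Since the excerpt already notes that $\CalP$ is almost-structured with associated family $(B^{-t} T_i \mybullet + B^{-t} b_i)_{i \in I}$, it is automatically a semi-structured $L^p$-decomposition covering for every $p \in (0,\infty]$, so both parts of the corollary apply.

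The weight equivalence and the first two conditions of Definition~\ref{def:AlphaBetaCoverings} are routine. Since $B^{-t}$ is a linear bijection, $|B^{-t}\xi| \asymp |\xi|$ with constants depending only on $B$, and admissibility of $\CalP$ is inherited from admissibility of $\CalQ^{(\alpha,\beta)}$ because $B^{-t} Q_i \cap B^{-t} Q_j \neq \emptyset$ if and only if $Q_i \cap Q_j \neq \emptyset$. The measure condition follows from $\lambda(B^{-t} Q_i) = |\det B|^{-1} \lambda(Q_i) \asymp (1+|\xi|)^{\alpha+\beta} \asymp (1+|B^{-t}\xi|)^{\alpha+\beta}$, and Equation~\eqref{eq:WeightConsistencyCondition} yields $w_i^s \asymp (1+|\xi|)^s \asymp (1+|B^{-t}\xi|)^s$. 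For the radial-interval condition, I would observe that --- because $\beta \leq \alpha$ --- the diameter of each $Q_{j,m,\ell}$ is bounded by a constant times $2^{\alpha j}$, hence $\mathrm{diam}(B^{-t} Q_{j,m,\ell}) \leq \|B^{-t}\| \cdot \mathrm{diam}(Q_{j,m,\ell}) \lesssim 2^{\alpha j} \asymp (1+|\eta|)^\alpha$ for any $\eta$ in this set. Because $B^{-t} Q_i$ is path-connected, the image $\{|\eta| \,:\, \eta \in B^{-t} Q_i\}$ is an interval whose length is at most this diameter, and this yields the required $L_i$; the case $i = 0$ is handled trivially since $B^{-t} Q_0$ is a bounded set containing a neighbourhood of the origin.

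The main obstacle, which I would treat most carefully, is the angular condition. The key observation is that the map $\tilde\Phi : \R \to \R,\, \psi \mapsto \arg(B^{-t} e^{i \psi})$ admits a smooth continuous lift whose derivative is bounded above and below by constants depending only on $\|B^{-t}\|$ and $\|B^t\|$ (it is the angular diffeomorphism of $S^1$ induced by the linear isomorphism $B^{-t}$), and which satisfies $\tilde\Phi(\psi + \pi) = \tilde\Phi(\psi) + \pi$. Setting $\phi_i' := \tilde\Phi(\Theta_{j,\ell})$ for $i = (j,m,\ell) \in I_0^{(\alpha,\beta)}$, Lemma~\ref{lem:CoveringGeometry} supplies, for any $\xi \in Q_i$, some $\psi \in \R$ with $\xi = |\xi| e^{i \psi}$ and $|\psi - \Theta_{j,\ell}| \lesssim 2^{(\beta-1) j}$. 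Combining the identity $\arg(B^{-t}\xi) = \arg(B^{-t} e^{i \psi}) = \tilde\Phi(\psi) \pmod{2\pi}$ with the Lipschitz estimate on $\tilde\Phi$, it follows that $\arg(B^{-t}\xi)$ differs from $\phi_i'$ by at most a constant times $2^{(\beta-1) j} \asymp (1 + |B^{-t}\xi|)^{\beta - 1}$; the $\omega = \pi$ alternative in Equation~\eqref{eq:AlphaBetaCoveringAngleControl} is absorbed by the relation $\tilde\Phi(\Theta_{j,\ell} + \pi) = \phi_i' + \pi$. Once $\CalP$ is confirmed to be an $(\alpha,\beta)$-covering, Corollary~\ref{cor:WavePacketSmoothnessUniversality} delivers the desired identity of decomposition spaces, which together with the norm computation from the excerpt completes the proof.
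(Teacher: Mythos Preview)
Your proposal is correct and follows the same overall strategy as the paper: reduce to showing $\DecompSp(B^{-t}\CalQ^{(\alpha,\beta)},L^p,\ell_{w^s}^q)=\PacketSpace_s^{p,q}(\alpha,\beta)$ via Corollary~\ref{cor:WavePacketSmoothnessUniversality}, which in turn amounts to verifying that $B^{-t}\CalQ^{(\alpha,\beta)}$ is an $(\alpha,\beta)$-covering and that the weight matches. Your treatment of conditions~(1)--(3) and of the weight is essentially identical to the paper's (Lemma~\ref{lem:DilatedCoveringIsAlphaBetaCovering}); the paper constructs the radial interval $L_i$ explicitly from $|B^{-t}b_i|$ and $\|T_i\|$, while you use a diameter--connectedness argument, but these are interchangeable.

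The only substantive difference is in the angular condition~(4). The paper isolates this as a separate result (Lemma~\ref{lem:LinearMapEffectOnAngle}) and proves the Lipschitz control on angles through a determinant/area identity, obtaining $|\sin(\psi_B-\varphi_B)|\le \|B^{-1}\|^2|\det B|\,|\sin(\psi-\varphi)|$ and then invoking $|\sin x|\asymp \min_{\ell\in\Z}|x-\pi\ell|$; this yields an explicit constant $\tfrac{\pi}{2}\|B^{-1}\|^2|\det B|$. You instead argue via the smooth lift $\tilde\Phi$ of the circle diffeomorphism $e^{i\psi}\mapsto B^{-t}e^{i\psi}/|B^{-t}e^{i\psi}|$, noting that $\tilde\Phi'(\psi)=\det(B^{-t})/|B^{-t}e^{i\psi}|^2$ is bounded above and below. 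The two arguments are really the same computation viewed differently; your formulation is a bit more conceptual, the paper's gives the constant directly and handles the $\{0,\pi\}$-ambiguity through the sine rather than through $\tilde\Phi(\psi+\pi)=\tilde\Phi(\psi)+\pi$.
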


%\todo[inline]{First write something about the connection between dilation invariance and
%embeddings between decomposition spaces.}

Corollary~\ref{cor:WavePacketSmoothnessUniversality} shows that in order to prove
${\DecompSp(B^{-t} \CalQ^{(\alpha,\beta)}, L^p, \ell_{w^s}^q) = \PacketSpace_s^{p,q} (\alpha,\beta)}$,
it suffices to show that $B^{-t} \CalQ^{(\alpha,\beta)}$ is an $(\alpha,\beta)$-covering
and that $w_i^s \asymp (1 + |\xi_i|)^s$ where $\xi_i \in B^{-t} Q_i$ is chosen arbitrarily.
The latter point is straightforward:
For arbitrary $\eta_i \in Q_i$ we have $\xi_i = B^{-t} \eta_i$,
whence Equation~\eqref{eq:WeightConsistencyCondition} shows that $(1 + |\eta_i|)^s \asymp w_i^s$;
but since $B$ is invertible, $|\xi_i| \asymp |B^{t} \xi_i| = |\eta_i|$,
which finally shows that $(1 + |\xi_i|)^s \asymp (1 + |\eta_i|)^s \asymp w_i^s$
for all $i \in I$ and $\xi_i \in B^{-t} Q_i$.

To prove that $B^{-t} \CalQ^{(\alpha,\beta)}$ is indeed an $(\alpha,\beta)$-covering,
we start with a geometric lemma which shows that if two vectors $x,y$ are
pointing essentially in the same direction (the angle between the vectors is small),
then the angle between the vectors $B x, B y$ is also small where $B \in \GL(\R^2)$ is fixed.
As in Section~\ref{sec:Universality}, we identify the complex number $x + i y$ with
the vector $(x,y)^t \in \R^2$ throughout this section.

\begin{lem}\label{lem:LinearMapEffectOnAngle}
  Let $B \in \GL(\R^2)$ and $\varphi_0 \in \R$ be arbitrary.
  Then there is an angle $\psi_0 \in \R$ with the following property:

  For arbitrary $r \geq 0$ and $\varphi \in \R$ there are $r' \geq 0$ and $\varphi ' \in \R$
  such that $B(r \, e^{i \varphi}) = r' \, e^{i \varphi'}$ and
  \[
    \min_{\omega \in \{0,\pi\}}
      |\varphi ' - (\psi_0 + \omega)|
    \leq \frac{\pi}{2} \cdot \|B^{-1}\|^2 \cdot |\det B|
         \cdot \min_{\theta \in \{0,\pi\}}
                 |\varphi - (\varphi_0 + \theta)| .
  \]
\end{lem}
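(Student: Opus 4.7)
The idea is to lift the angle of $B e^{i\varphi}$ to a smooth real-valued function of $\varphi$ and to show that this lift is Lipschitz with constant $|\det B| \cdot \|B^{-1}\|^2$. Since $B$ is invertible, the map $\varphi \mapsto B e^{i\varphi}$ takes values in $\CC \setminus \{0\}$, so by the standard lifting theory for the covering map $t \mapsto e^{it}$ there exists a smooth function $\tilde\psi : \R \to \R$ with $B e^{i\varphi} = |B e^{i\varphi}| \cdot e^{i \tilde\psi(\varphi)}$, and we set $\psi_0 := \tilde\psi(\varphi_0)$.

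A direct computation --- writing $u(\varphi) := (\cos\varphi, \sin\varphi)^t$ and using $\det(Bu, Bu') = \det B \cdot \det(u, u') = \det B$ --- yields the angular-velocity formula
\[
    \tilde\psi'(\varphi) = \frac{\det B}{|B e^{i\varphi}|^2}.
\]
Combined with the elementary bound $|B e^{i\varphi}| \geq 1/\|B^{-1}\|$, this gives the uniform Lipschitz estimate $|\tilde\psi'(\varphi)| \leq |\det B| \cdot \|B^{-1}\|^2 =: C$ for all $\varphi \in \R$. Furthermore, since $e^{i(\varphi_0 + \pi)} = -e^{i \varphi_0}$, we have $\tilde\psi(\varphi_0 + \pi) \equiv \psi_0 + \pi \pmod{2\pi}$, so for each $\theta \in \{0, \pi\}$ there is an integer $m_\theta \in \Z$ with $\tilde\psi(\varphi_0 + \theta) = \psi_0 + \theta + 2\pi m_\theta$.

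To conclude, we choose the representative of $\varphi'$ carefully. For $r = 0$ the claim is trivial with $r' = 0$ and $\varphi' = \psi_0$. For $r > 0$, pick $\theta^* \in \{0, \pi\}$ attaining $\min_{\theta \in \{0, \pi\}} |\varphi - (\varphi_0 + \theta)|$, set $\delta := \varphi - (\varphi_0 + \theta^*)$, $r' := |B r e^{i\varphi}|$, and $\varphi' := \tilde\psi(\varphi) - 2\pi m_{\theta^*}$; these choices ensure $B r e^{i\varphi} = r' e^{i\varphi'}$. The mean value theorem then gives
\[
    |\varphi' - (\psi_0 + \theta^*)|
    = |\tilde\psi(\varphi) - \tilde\psi(\varphi_0 + \theta^*)|
    \leq C \cdot |\delta|,
\]
so $\min_{\omega \in \{0, \pi\}} |\varphi' - (\psi_0 + \omega)| \leq C \cdot \min_{\theta \in \{0,\pi\}} |\varphi - (\varphi_0 + \theta)| \leq \tfrac{\pi}{2} \cdot C \cdot \min_{\theta \in \{0,\pi\}} |\varphi - (\varphi_0 + \theta)|$, where the final inequality uses $1 \leq \tfrac{\pi}{2}$.

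The main technical subtlety is the bookkeeping of the integer shift $m_{\theta^*}$. The minimum on the left-hand side ranges only over $\omega \in \{0, \pi\}$ rather than over all of $\pi \Z$, so we must choose the representative of $\varphi'$ modulo $2\pi$ so that $\psi_0 + \theta^*$ is indeed the nearest realized target; this is exactly the role of $m_{\theta^*}$. The remaining ingredients --- the derivative formula for $\tilde\psi$ and the operator-norm bound $|B e^{i \varphi}| \geq 1/\|B^{-1}\|$ --- are routine, so this careful choice of representative is really the only non-trivial point.
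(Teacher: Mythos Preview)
Your proof is correct and takes a genuinely different route from the paper's. The paper works pointwise with the ``sine law'' for areas: for vectors $r e^{i\varphi}$, $s e^{i\psi}$ it uses $\det(r e^{i\varphi} \mid s e^{i\psi}) = rs \sin(\psi - \varphi)$ together with $\det(Bx \mid By) = (\det B)\det(x \mid y)$ and the length bound $|Bx| \geq |x|/\|B^{-1}\|$ to obtain $|\sin(\psi_B - \varphi_B)| \leq \|B^{-1}\|^2 |\det B| \cdot |\sin(\psi - \varphi)|$; it then converts sines back to angular distances via the two-sided estimate $\tfrac{2}{\pi}\,\mathrm{dist}(x,\pi\Z) \leq |\sin x| \leq \mathrm{dist}(x,\pi\Z)$, and this last conversion is exactly where the factor $\tfrac{\pi}{2}$ enters. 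Your approach instead differentiates the smooth lift $\tilde\psi$ and applies the mean value theorem, which is cleaner, avoids the sine--distance conversion entirely, and therefore yields the sharper Lipschitz constant $\|B^{-1}\|^2 |\det B|$ that you then relax by $1 \leq \tfrac{\pi}{2}$ to match the stated bound. The paper's argument, on the other hand, needs no smoothness or lifting machinery and handles the $2\pi$-bookkeeping in a single step by minimising over $\pi\Z$ and then reducing modulo $2\pi$ at the end.
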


\begin{rem*}
  Roughly speaking, the lemma shows that if $x = \pm \, r \, e^{i \varphi}$
  with $\varphi \approx \varphi_0$, then also $B x = \pm \, s \, e^{i \varphi '}$
  with $\varphi' \approx \psi_0$.
  Here, the angle $\psi_0$ is the one satisfying $B ( e^{i \varphi_0}) = r_B \, e^{i \psi_0}$
  for some $r_B > 0$.
\end{rem*}

\begin{proof}
  \textbf{Step 1:} In this step, we prove that
  \begin{equation}
    \frac{2}{\pi} \cdot \min_{\ell \in \Z}
                          |x - \pi \ell|
    \leq |\sin x|
    \leq \min_{\ell \in \Z}
           |x - \pi \ell|
    \qquad \forall x \in \R .
    \label{eq:SineEquivalentToIntegerDistance}
  \end{equation}
  To prove the upper bound, simply note that $x \mapsto |\sin x|$ is a $1$-Lipschitz
  and $\pi$-periodic function and that $\sin(0) = 0$, whence
  $|\sin x| = |\sin(x - \pi \ell) - \sin(0)| \leq |x - \pi \ell|$ for all $\ell \in \Z$.

  To prove the lower bound, note that the functions $x \mapsto |\sin x|$
  and $x \mapsto \min_{\ell \in \Z} |x - \pi \ell|$ are both $\pi$-periodic,
  so that it is enough to prove the claim for $x \in [-\tfrac{\pi}{2},\tfrac{\pi}{2}]$.
  On this interval, $|\sin x| = \sin (|x|)$ and $\min_{\ell \in \Z} |x - \pi \ell| = |x|$,
  so that it is enough to show that $\tfrac{2}{\pi} x \leq \sin x$ for $x \in [0, \tfrac{\pi}{2}]$.
  To see this, note that the sine is concave on $[0, \tfrac{\pi}{2}]$, since
  $\sin'' = - \sin \leq 0$ on this interval.
  Now, if $x \in [0, \tfrac{\pi}{2}]$, then $t := \tfrac{2}{\pi} x \in [0,1]$ and
  $x = (1-t) \cdot 0 + t \cdot \tfrac{\pi}{2}$, so that
  \(
    \sin(x)
    \geq (1-t) \sin(0) + t \sin(\pi/2)
    = t
    = \frac{2}{\pi} \cdot x
    %= \frac{2}{\pi}
    %  \cdot \min_{\ell \in \Z}
    %          |x - \pi \ell|,
  \),
  completing the proof of Equation~\eqref{eq:SineEquivalentToIntegerDistance}.
  %where we used in the last step that $x \in [0, \pi/2]$.
  %Next, if $x \in [\tfrac{\pi}{2}, \pi]$, then $t := \tfrac{2}{\pi} (x - \tfrac{\pi}{2}) \in [0,1]$,
  %and a direct calculation shows that $(1-t) \tfrac{\pi}{2} + t \pi = x$.
  %Since the sine is concave, this implies as before that
  %\[
  %  \sin x
  %  \geq (1-t) \sin(\pi/2) + t \sin(\pi)
  %  =    1 - t
  %  = \frac{2}{\pi} (\pi - x)
  %  = \frac{2}{\pi} \min_{\ell \in \Z} |x - \pi \ell|,
  %\]

  \medskip{}

  \textbf{Step 2:} In this step, we show that if $r, s, r_B, s_B > 0$
  and $\varphi, \psi, \varphi_B, \psi_B \in \R$ are arbitrary with
  $B (r e^{i \varphi}) = r_B \, e^{i \varphi_B}$ and $B(s e^{i \psi}) = s_B \, e^{i \psi_B}$, then
  \begin{equation}
    |\sin (\psi_B - \varphi_B)|
    \leq \|B^{-1}\|^2 \cdot |\det B| \cdot |\sin (\psi - \varphi)| .
    \label{eq:SineOfAngleDifferenceLinearTransform}
  \end{equation}

  To see this, let $\rho, \sigma > 0$ and $\theta, \omega \in \R$ be arbitrary and define
  %set $x := \rho e^{i \theta}$ and $y := \sigma e^{i \omega}$, and define
  \[
    \Delta_{\rho,\sigma}^{\theta,\omega}
    := A_{\rho,\sigma}^{\theta,\omega} (\Delta)
    \quad \text{where} \quad
    A_{\rho,\sigma}^{\theta,\omega}
    := \big( \rho \, e^{i \theta} \,\big|\, \sigma \, e^{i \omega} \big)
     = \Big(
         \begin{matrix}
           \rho \cos \theta & \sigma \cos \omega \\
           \rho \sin \theta & \sigma \sin \omega
         \end{matrix}
       \Big) \in \R^{2 \times 2}
  \]
  and $\Delta := \{ (\mu, \nu) \in [0,1]^2 \colon \mu + \nu \leq 1 \}$.
  Since the Lebesgue measure of $\Delta$ is given by $\lambda(\Delta) = \tfrac{1}{2}$, we see that
  \begin{equation}
    \lambda(\Delta_{\rho,\sigma}^{\theta,\omega})
    = \frac{1}{2} \cdot |\det A_{\rho,\sigma}^{\theta,\omega}|
    = \frac{\rho \sigma}{2} \cdot |\cos \theta \sin \omega - \sin \theta \cos \omega|
    = \frac{\rho \sigma}{2} \cdot |\sin (\omega - \theta)|.
    \label{eq:SineLaw}
  \end{equation}

  Now, note that
  \[
    A_{r_B, s_B}^{\varphi_B, \psi_B}
    = \big( r_B \, e^{i \varphi_B} \big| s_B \, e^{i \psi_B} \big)
    = \big( B (r \, e^{i \varphi}) \big| B (s \, e^{i \psi}) \big)
    = B A_{r, s}^{\varphi,\psi}
  \]
  and hence
  \(
    \lambda(\Delta_{r_B, s_B}^{\varphi_B, \psi_B})
    = \lambda(B \Delta_{r,s}^{\varphi,\psi})
    = |\det B| \cdot \lambda(\Delta_{r,s}^{\varphi,\psi})
  \).
  Therefore, by applying Equation~\eqref{eq:SineLaw} twice, we conclude that
  \begin{equation}
    \frac{r_B \cdot s_B}{2} \cdot |\sin (\psi_B - \varphi_B)|
    = \lambda(\Delta_{r_B, s_B}^{\varphi_B,\psi_B})
    = |\det B| \cdot \lambda(\Delta_{r,s}^{\varphi,\psi})
    = |\det B| \cdot \frac{r \cdot s}{2} \cdot |\sin(\psi - \varphi)|.
    \label{eq:SineOfAngleDifferenceAlmostDone}
  \end{equation}
  Now, note that $r = |B^{-1} x_B| \leq \|B^{-1}\| \cdot |x_B| = \|B^{-1}\| \cdot r_B$ since of $x_B := B(r \, e^{i \varphi}) = r_B \, e^{i \varphi_B}$
  and similarly $s \leq \|B^{-1}\| \cdot s_B$.
  Introducing these estimates into Equation~\eqref{eq:SineOfAngleDifferenceAlmostDone},
  resluts in \eqref{eq:SineOfAngleDifferenceLinearTransform}.

  \medskip{}

  \textbf{Step 3:} In this step, we complete the proof.
  With $\varphi_0$ as in the statement of the lemma, let $x_0 := e^{i \varphi_0}$
  and write $B x_0 = r_0 \, e^{i \psi_0}$ for suitable $r_0 > 0$ and $\psi_0 \in \R$.

  Let $r \geq 0$ and $\varphi \in \R$ be arbitrary.
  If $r = 0$, we can simply choose $\varphi' = \psi_0$,
  so that the claimed estimate trivially holds.
  Thus, we shall assume that $r > 0$, define $x := r \, e^{i \varphi}$
  and write $B x = r_B \, e^{i \varphi_B}$ for suitable $r_B > 0$ and $\varphi_B \in \R$.
  Let us choose $k \in \Z$ such that
  $|\varphi_B - (\psi_0 + \pi k)| = \min_{\ell \in \Z} |\varphi_B - (\psi_0 + \pi \ell)|$.
  Finally, we write $k = 2n + \kappa$ with $n \in \Z$ and $\kappa \in \{0,1\}$ and define
  $\varphi' := \varphi_B - 2 \pi n$.
  Then we have $B (r e^{i \varphi}) = r_B \, e^{i \varphi_B} = r_B \, e^{i \varphi'}$ and
  \begin{align*}
    \min_{\omega \in \{0,\pi\}}
      |\varphi' - (\psi_0 + \omega)|
    & \leq |(\varphi_B - 2 \pi n) - (\psi_0 + \pi \kappa)|
      =    |\varphi_B - (\psi_0 + \pi k)|
      =    \min_{\ell \in \Z}
             |\varphi_B - (\psi_0 + \pi \ell)| \\
    ({\scriptstyle{\text{Step 1 and then Step 2}}})
    & \leq \frac{\pi}{2} \, |\sin(\varphi_B - \psi_0)|
    %({\scriptstyle{\text{Step 2}}})
      \leq \frac{\pi}{2} \cdot \|B^{-1}\|^2 \cdot |\det B| \cdot |\sin(\varphi - \varphi_0)| \\
    ({\scriptstyle{\text{Step 1}}})
    & \leq \frac{\pi}{2} \cdot \|B^{-1}\|^2 \cdot |\det B|
           \cdot \min_{\theta \in \{0,\pi\}}
                   |\varphi - (\varphi_0 + \theta)|.
    \qedhere
  \end{align*}
\end{proof}

Using Lemma~\ref{lem:LinearMapEffectOnAngle}, we can now show that $B^{-t} \CalQ^{(\alpha,\beta)}$
is an $(\alpha,\beta)$-covering.
As explained above, this will also prove Theorem~\ref{thm:DilationInvariance}.

\begin{lem}\label{lem:DilatedCoveringIsAlphaBetaCovering}
  Let $0 \leq \beta \leq \alpha \leq 1$ and $B \in \GL(\R^2)$.
  Then $B^{-t} \CalQ^{(\alpha,\beta)}$ is an $(\alpha,\beta)$-covering of $\R^2$.
\end{lem}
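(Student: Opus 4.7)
The plan is to verify the four conditions of Definition~\ref{def:AlphaBetaCoverings} for $B^{-t} \CalQ^{(\alpha,\beta)} = (B^{-t} Q_i)_{i \in I}$, leveraging that $\CalQ^{(\alpha,\beta)}$ is itself an $(\alpha,\beta)$-covering by Lemma~\ref{lem:WavePacketCoveringIsAlphaBetaCovering}. Admissibility, the covering property, and the openness and boundedness of the members transfer immediately, since $B^{-t}$ is a linear homeomorphism of $\R^2$: overlaps are preserved because $B^{-t} Q_i \cap B^{-t} Q_\ell = B^{-t}(Q_i \cap Q_\ell)$, the images are open and bounded, and their union is still $\R^2$. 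Throughout, the key comparability $1 + |\xi| \asymp_B 1 + |B^t \xi|$ for $\xi \in B^{-t} Q_i$ (with constants depending only on $\|B^t\|$ and $\|B^{-t}\|$) lets us transport norm-size estimates between the two coverings.

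Condition~(2) then follows from $\lambda(B^{-t} Q_i) = |\det B|^{-1} \lambda(Q_i) \asymp_B (1+|B^t\xi|)^{\alpha+\beta} \asymp_B (1+|\xi|)^{\alpha+\beta}$ for $\xi \in B^{-t} Q_i$. For condition~(3), I would fix an arbitrary $\eta_i \in Q_i$ and set $x_i := B^{-t}\eta_i \in B^{-t} Q_i$; the explicit description of $Q_i$ as a rotated translated rectangle of side lengths $\lesssim 2^{\alpha j}$ and $\lesssim 2^{\beta j} \leq 2^{\alpha j}$ (here $\beta \leq \alpha$ enters) gives $\mathrm{diam}(Q_i) \lesssim (1+|\eta_i|)^\alpha$, so that for every $\xi \in B^{-t} Q_i$,
\[
  \big|\, |\xi| - |x_i| \,\big|
  \leq |\xi - x_i|
  = |B^{-t}(B^t \xi - \eta_i)|
  \leq \|B^{-t}\| \cdot \mathrm{diam}(Q_i)
  \lesssim_B (1+|\xi|)^\alpha.
\]
Taking $L_i := [0,\infty) \cap [\,|x_i| - C(1+|x_i|)^\alpha,\, |x_i| + C(1+|x_i|)^\alpha\,]$ for an appropriate $C$ yields an interval of length $\lesssim_B (1+|\xi|)^\alpha$ containing $|\xi|$ for every such $\xi$.

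The main obstacle is condition~(4), the angle control, which is precisely where Lemma~\ref{lem:LinearMapEffectOnAngle} is designed to intervene. Applying that lemma to $B^{-t}$ (for which $(B^{-t})^{-1} = B^t$ and $|\det B^{-t}| = |\det B|^{-1}$) with reference angle $\varphi_0 := \phi_i^{\CalQ}$ provided by the $(\alpha,\beta)$-covering structure of $\CalQ^{(\alpha,\beta)}$ produces an angle $\psi_i \in \R$ with the following property: for every $\xi \in B^{-t} Q_i$, writing $\eta := B^t \xi = |\eta|\, e^{i\varphi} \in Q_i$, condition~(4) for $\CalQ^{(\alpha,\beta)}$ supplies $\min_{\theta \in \{0,\pi\}} |\varphi - (\phi_i^{\CalQ} + \theta)| \lesssim (1+|\eta|)^{\beta-1} \asymp_B (1+|\xi|)^{\beta-1}$, and the lemma then yields $\xi = B^{-t}\eta = r' e^{i\varphi'}$ with $r' = |\xi|$ and
\[
  \min_{\omega \in \{0,\pi\}} |\varphi' - (\psi_i + \omega)|
  \leq \tfrac{\pi}{2} \|B^t\|^2 |\det B|^{-1}
       \cdot \min_{\theta \in \{0,\pi\}} |\varphi - (\phi_i^{\CalQ} + \theta)|
  \lesssim_B (1+|\xi|)^{\beta-1}.
\]
Choosing $\phi_i := \psi_i$ gives the required estimate~\eqref{eq:AlphaBetaCoveringAngleControl}.

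Finally, the index $i = 0$ is handled separately: $B^{-t} Q_0 = B^{-t} B_4 (0)$ is a fixed bounded open set on which $1 + |\xi| \asymp_B 1$, so all three quantitative conditions reduce to trivially uniform constants, and the angle can be chosen arbitrarily. Combining all cases completes the verification that $B^{-t} \CalQ^{(\alpha,\beta)}$ is an $(\alpha,\beta)$-covering, which, as explained above, implies Theorem~\ref{thm:DilationInvariance} via Corollary~\ref{cor:WavePacketSmoothnessUniversality}.
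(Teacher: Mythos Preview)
Your proof is correct and follows essentially the same route as the paper: admissibility and condition~(2) transfer under the linear homeomorphism $B^{-t}$, condition~(4) is handled by applying Lemma~\ref{lem:LinearMapEffectOnAngle} to $B^{-t}$ with the angles $\phi_i$ supplied by Lemma~\ref{lem:WavePacketCoveringIsAlphaBetaCovering}, and $i=0$ is treated trivially. The only cosmetic difference is in condition~(3), where the paper centres its interval at the explicit point $|B^{-t} b_i|$ and uses $\|T_i\| = 2^{\alpha j}$ directly, while you centre at $|x_i| = |B^{-t}\eta_i|$ for an arbitrary $\eta_i \in Q_i$ and bound via $\mathrm{diam}(Q_i)$; both arguments are equivalent.
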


\begin{proof}
  It is straightforward to verify that $B^{-t} \CalQ^{(\alpha,\beta)} = (B^{-t} Q_i)_{i \in I}$
  is an almost structured admissible covering of $\R^2$
  with associated family $(B^{-t} T_i \bullet + B^{-t} b_i)_{i \in I}$.
  We now verify the remaining three properties in Definition~\ref{def:AlphaBetaCoverings}.

  \begin{enumerate}
    \item[(2)] Since $\CalQ^{(\alpha,\beta)}$ is an $(\alpha,\beta)$-covering
               (see Lemma~\ref{lem:WavePacketCoveringIsAlphaBetaCovering}),
               $\lambda(Q_i) \asymp (1 + |\eta|)^{\alpha + \beta}$
               for all $i \in I$ and $\eta \in Q_i$.
               Since $B$ is invertible and since $\eta = B^t \xi \in Q_i$ for $\xi \in B^{-t} Q_i$,
               we conclude that, for all $i \in I$ and $\xi \in B^{-t} Q_i$,
               \(
                 \lambda(B^{-t} Q_i)
                 = |\det B|^{-1} \lambda(Q_i)
                 \asymp (1 + |B^t \xi|)^{\alpha + \beta}
                 \asymp (1 + |\xi|)^{\alpha + \beta}
               \).

    \item[(3)] %In this step, we use the notations $T_i$ and $b_i$
               %introduced in Lemma~\ref{lem:CoveringAlmostStructured}.
               Let us define
               \(
                 L_i
                 := [0,\infty) \cap [\gamma_i, \lambda_i]
                 := [0,\infty) \cap \big[
                                      |B^{-t} b_i| - 4 \|B^{-1}\| \cdot 2^{\alpha j},
                                      |B^{-t} b_i| + 4 \|B^{-1}\| \cdot 2^{\alpha j}
                                    \big]
               \)
               for $i = (j,m,\ell) \in I_0^{(\alpha,\beta)}$.
               It is not hard to see that $|\eta| \leq 4$ for all
               $\eta \in Q = (-\eps,1+\eps) \times (-1-\eps, 1+\eps)$
               and $\|T_i\| = \|A_j\| = 2^{\alpha j}$.
               Therefore, we see for $\xi \in B^{-t} Q_i = B^{-t} (T_i Q + b_i)$ that
               \[
                 %     |B^{-t} b_i| - 4 \|B^{-t}\| \cdot 2^{\alpha j}
                      \gamma_i
                 =    |B^{-t} b_i| - 4 \, \|B^{-t}\| \cdot \|T_i\|
                 \leq |\xi|
                 \leq |B^{-t} b_i| + 4 \, \|B^{-t}\| \cdot \|T_i\|
                 =    \lambda_i
                 %=    |B^{-t} b_i| + 4 \|B^{-1}\| \cdot 2^{\alpha j},
               \]
               and hence $|\xi| \in L_i$ for all $\xi \in B^{-t} Q_i$ and $i \in I_0^{(\alpha,\beta)}$.
               Finally, Equation~\eqref{eq:AbsoluteValueEstimate} indicates that, for $\xi \in B^{-t} Q_i$,
               $2^{j-2} \leq |B^t \xi| \leq \|B\| \cdot |\xi|$ and hence
               \(
                 \lambda(L_i)
                 \leq 8 \|B^{-1}\| \cdot 2^{\alpha j}
                 \lesssim (\|B\|^{-1} \cdot 2^{j-2})^\alpha
                 \leq (1 + |\xi|)^\alpha
               \)
               for all $\xi \in Q_i$ and $i \in I_0^{(\alpha,\beta)}$.

               For the remaining case $i = 0$, define $L_0 := [0, 4 \, \|B^{-1}\|]$
               and note that, if $\xi \in B^{-t} Q_0 = B^{-t} B_4 (0)$ is arbitrary,
               then $|\xi| \in L_0$ and $\lambda(L_0) \lesssim 1 \leq (1 + |\xi|)^\alpha$.

    \item[(4)] Lemma~\ref{lem:WavePacketCoveringIsAlphaBetaCovering} shows that $\CalQ^{(\alpha,\beta)}$
               is an $(\alpha,\beta)$-covering.
               Thus, there is such a constant $C > 0$ and, for each $i \in I$, such an angle $\phi_i \in \R$
               that, for each $\eta \in Q_i$, there is another angle $\phi \in \R$
               that satisfies $\eta = |\eta| \cdot e^{i \phi}$ and
               \(
                 \min_{\omega \in \{0,\pi\}} |\phi - (\phi_i + \omega)|
                 \leq C \cdot (1 + |\eta|)^{\beta - 1}
               \).
               Therefore, Lemma~\ref{lem:LinearMapEffectOnAngle} (applied to $B^{-t}$ instead of $B$)
               yields a constant $C' = C'(B) > 0$ and for each $i \in I$ an angle $\psi_i \in \R$
               such that for each $\xi = B^{-t} \eta \in B^{-t} Q_i$ there is another angle $\psi \in \R$
               satisfying $\xi = B^{-t} \eta = |\xi| \cdot e^{i \psi}$ and
               \[
                 \min_{\omega \in \{0,\pi\}}
                   |\psi - (\psi_i + \omega)|
                 \leq C' \cdot \min_{\theta \in \{0,\pi\}}
                                 |\phi - (\phi_i + \theta)|
                 \leq C' C \cdot (1 + |\eta|)^{\beta - 1}
                 \leq C' C \cdot (1 + \|B^{-1}\|)^{1 - \beta} \cdot (1 + |\xi|)^{\beta - 1},
               \]
               since $1 + |\xi| \leq (1 + \|B^{-1}\|) \cdot (1 + |\eta|)$.
               \qedhere
  \end{enumerate}
\end{proof}

\section{Constructing Banach frame decompositions of the wave packet smoothness spaces}
\label{sec:SeriesConvergence}

\noindent In this section, we introduce so-called \emph{wave packet systems}
(as informally introduced in \cite{DemanetWaveAtoms}).
Furthermore, we prove that if the generators of such a system are nice enough,
then the system constitutes an atomic decomposition and
a Banach frame for certain wave packet smoothness spaces.
To do so, we shall concentrate on the case where $\alpha < 1$;
results for the case $\alpha = 1$ can be found in \cite{AlphaShearletSparsity}.
%Before we show this, we formally introduce the concept of a wave packet system.

\begin{defn}\label{def:WavePacketSystem}
  Let $0 \leq \beta \leq \alpha < 1$ $\delta > 0$
  and $\varphi, \gamma \in L^1 (\R^2)$.
  Moreover let $I_0 = I_0^{(\alpha,\beta)}$ and $I = I^{(\alpha,\beta)}$
  be as defined in Equation~\eqref{eq:IndexSet}.
  %and Lemma~\ref{lem:Admissibility}, respectively.
  Finally, let $T_i := R_{j,\ell} \, A_{j}$ and $b_i = R_{j,\ell} \, c_{j,m}$
  for $i = (j,m,\ell) \in I_0$ and $T_0 := \identity$ and $b_0 := 0$.
  Here, $A_{j}, R_{j,\ell}$, and $c_{j,m}$ are as introduced in Definition~\ref{defn:CoveringSets}.
  \smallskip{}

  The family $(L_{\delta \cdot T_i^{-t} \cdot k} \, \gamma^{[i]})_{i \in I, k \in \Z^2}$ with
  \[
    \gamma^{[i]}
    := \begin{cases}
         |\det A_{j}|^{1/2}
         \cdot M_{R_{j,\ell}
                  \left(
                    \begin{smallmatrix}
                      2^{j-1} + 2^{\alpha j} \, m \\ 0
                    \end{smallmatrix}
                  \right)}
                  \Big[
                   \gamma \circ A_{j}^{t} \circ R_{j,\ell}^t
                  \Big] \, ,
         & \text{if } i = (j,m,\ell) \in I_0 \, , \\
         \varphi \, , & \text{if } i = 0
       \end{cases}
  \]
  is called the $(\alpha,\beta)$-\emph{wave packet system with generators}
  $\varphi, \gamma$ \emph{and sampling density} $\delta > 0$.
\end{defn}

\begin{rem*}
  If $\gamma \in C_c^\infty (\R^2)$, then
  $\supp \gamma^{[i]} = R_{j,\ell}^{-t} A_{j}^{-t} \supp \gamma$ is
  essentially a rectangle that can be obtained by rotating the axis-aligned
  rectangle of the dimensions $2^{-\alpha j} \times 2^{-\beta j}$
  with its centre at the origin through the angle
  $\Theta_{j,\ell} \asymp \frac{2\pi}{N} \cdot 2^{(\beta - 1) j} \cdot \ell$.

  Furthermore, %in the frequency domain, the function
  $\gamma \circ A_{j}^{t} \circ R_{j,\ell}^t$
  oscillates roughly at frequencies $\xi$ such that
  $|\xi| \lesssim \|A_{j}\| = 2^{\alpha j} \ll 2^j$.
  %Therefore, the behaviour of $\gamma^{[i]}$ will be largely determined by
  %oscillations
  Since $|2^{j-1} + 2^{\alpha j} \, m| \geq 2^{j-1} \asymp 2^j$,
  the behaviour of $\gamma^{[i]}$ will be largely determined by the oscillations
  at frequency $\approx 2^j$ in direction
  $R_{j,\ell} \left(\begin{smallmatrix}1 \\ 0\end{smallmatrix}\right)$
  that are caused by applying the modulation
  $M_{R_{j,\ell}
           \left(
             \begin{smallmatrix}
               2^{j-1} + 2^{\alpha j} \, m \\ 0
             \end{smallmatrix}
           \right)}$
  to the function $\gamma \circ A_{j}^{t} \circ R_{j,\ell}^t$.

  Therefore, our $(\alpha,\beta)$-wave packet systems are similar
  to those introduced in \cite[Section 1.1]{DemanetWaveAtoms}, with one notable
  exception. Namely, the frequency support of the elements of our wave packet
  systems are not symmetric with respect to the origin,
  while those in \cite{DemanetWaveAtoms} are.
  %definition of $(\alpha,\beta)$ wave packet systems given in
  %\cite[Section 1.1]{DemanetWaveAtoms}, with one small exception:
  %In \cite{DemanetWaveAtoms}, it is assumed that the (essential)
  %frequency support of the wave packets is \emph{symmetric}.
  %This is not true for the systems introduced above.
\end{rem*}

To formulate our discretisation results for the wave packet
smoothness spaces, we need the following definition.

\begin{defn}\label{def:WavePacketCoefficientSpace}
  Let $0 \leq \beta \leq \alpha < 1$, $p,q \in (0,\infty]$ and
  $s \in \R$.
  The set of sequences of complex numbers
  \[
    \mathcal{C}_{s}^{p,q}(\alpha,\beta)
    := \left\{
         c = (c_k^{(i)})_{i \in I^{(\alpha,\beta)}, k \in \Z^2}
         \in \CC^{I^{(\alpha,\beta)} \times \Z^2}
         \,:\,
         \|c\|_{\mathcal{C}_s^{p,q}} < \infty
       \right\} \,
  \]
  where
  \[
    \|(c_k^{(i)})_{i \in I^{(\alpha,\beta)}, k \in \Z^2}\|_{\mathcal{C}_s^{p,q}}
    := \left\|
         \left(
           w_i^{s + (\alpha + \beta) \cdot (\frac{1}{2} - \frac{1}{p})}
           \cdot \big\| (c_k^{(i)})_{k \in \Z^2} \big\|_{\ell^p}
         \right)_{i \in I^{(\alpha,\beta)}}
       \right\|_{\ell^q}
    \in [0,\infty]
  \]
  is called the \emph{coefficient space} associated with the
  wave packet smoothness space $\PacketSpace_s^{p,q}(\alpha,\beta)$.
\end{defn}

%Now we can state our theorem concerning the existence of atomic decompositions
%of wave packet smoothness spaces by wave packet systems.

The main goal of the present section is to prove the following two theorems that provide condition on
the functions $\varphi,\gamma$ that --- if satisfied --- guarantee that the
$(\alpha,\beta)$ wave packet system generated by $\varphi,\gamma$ forms an atomic decomposition
or a Banach frame for the wave packet smoothness spaces $\PacketSpace_s^{p,q} (\alpha,\beta)$.

\begin{thm}\label{thm:WavePacketAtomicDecomposition}
  Let $0 \leq \beta \leq \alpha < 1$, $s_0 \geq 0$ and $\omega, p_0, q_0 \in (0,1]$.
  Moreover let $\varphi,\gamma \in L^1 (\R^2)$ be such that:
  \begin{enumerate}
    \item $\widehat{\varphi}, \widehat{\gamma} \in C^\infty (\R^2)$ and all
          partial derivatives of $\widehat{\varphi}$ and $\widehat{\gamma}$ are
          of polynomial growth at most;

    \item $\widehat{\varphi}(\xi) \neq 0$ for all $\xi \in \overline{B}_4 (0)$
          and $\widehat{\gamma} (\xi) \neq 0$ for all
          $\xi \in [-\eps, 1+\eps] \times [-1-\eps, 1+\eps]$; and

    \item $\sup_{x \in \R^2} (1 + |x|)^{1 + 2 \cdot p_0^{-1}} |\varphi (x)| < \infty$ and
          $\sup_{x \in \R^2} (1 + |x|)^{1 + 2 \cdot p_0^{-1}} |\gamma (x)| < \infty$.

  \end{enumerate}
  Finally, let us assume that there is such a constant $C > 0$ that
  \begin{equation}
    \begin{split}
      & \big|
          (\partial^\theta \, \widehat{\varphi}) (\xi)
        \big|
        \leq C \cdot (1 + |\xi|)^{-(4 + \kappa_0)}
               \cdot (1 + |\xi_1|)^{- \kappa_1}
               \cdot (1 + |\xi_2|)^{-\kappa_2} \\
      \text{and} \quad
      & \big|
          (\partial^\theta \, \widehat{\gamma}) (\xi)
        \big|
        \leq C \cdot (1 + |\xi|)^{-(4 + \kappa_0)}
               \cdot (1 + |\xi_1|)^{- \kappa_1}
               \cdot (1 + |\xi_2|)^{-\kappa_2}
    \end{split}
    \label{eq:WavePacketAtomicDecompositionCondition}
  \end{equation}
  for all $\xi \in \R^2$ and all such $\theta \in \N_0^2$ that $|\theta| \leq N_0$ where
  \[
    N_0 := \lceil p_0^{-1} (2 + \omega) \rceil \, ,
    \qquad
    \kappa_1 := \frac{2}{\min \{p_0, q_0\}} \, ,
    \qquad
    \kappa_2 := 3 + \frac{2}{(1 - \beta)\min \{p_0, q_0\}} + \frac{5}{p_0} \, ,
  \]
  and
  \[
    \kappa_0
    := (1 - \alpha)^{-1}
       \cdot \left(
               3
               + s_0
               + \frac{3 + \alpha}{\min\{p_0,q_0\}}
               + \frac{6\alpha + 9 \beta}{p_0}
               + \frac{2\beta}{(1-\beta) \min \{p_0,q_0\}}
             \right)
    \, .
  \]
  Then there is such a
  $\delta_0 = \delta_0(\alpha,\beta,\omega,p_0,q_0,s_0,\varphi,\gamma,C) > 0$,
  that, for each $\delta \in (0, \delta_0]$, $p \in [p_0,\infty]$,
  $q \in [q_0, \infty]$ and $s \in [-s_0,s_0]$,
  the $(\alpha,\beta)$-wave packet system with
  generators $\varphi,\gamma$ and sampling density $\delta$ is an atomic
  decomposition for $\PacketSpace_s^{p,q} (\alpha,\beta)$ with coefficient space
  $\mathcal{C}_s^{p,q}(\alpha,\beta)$.
\end{thm}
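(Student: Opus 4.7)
The strategy is to apply Theorem~\ref{thm:StructuredBFDAtomicDecomposition} to the almost structured covering $\CalQ^{(\alpha,\beta)}$ with the parametrisation supplied by Lemma~\ref{lem:CoveringAlmostStructured}, choosing $n = 2$ with prototypes $\gamma_1^{(0)} := \gamma$ and $\gamma_2^{(0)} := \varphi$ and assignment $k_i := 1$ for $i \in I_0^{(\alpha,\beta)}$, $k_0 := 2$, and setting the theorem's parameter $\eps := \omega$. Since $p \geq p_0$, $q \geq q_0$ and $p_0,q_0 \in (0,1]$, we have $\tau := \min\{1,p,q\} \geq \min\{p_0,q_0\}$, $\Lambda = 1 + 2/\min\{1,p\} \leq 1 + 2/p_0$, and $N := \lfloor (2+\omega)/\min\{1,p\} \rfloor \leq N_0$. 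Consequently, hypotheses~\ref{enu:AtomicDecompositionFourierTransformDecent}, \ref{enu:AtomicDecompositionNonVanishing} and~\ref{enu:AtomicDecompositionSpaceSideDecay} of Theorem~\ref{thm:StructuredBFDAtomicDecomposition} follow directly from conditions~(1)--(3) of the present theorem.

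Condition~\ref{enu:AtomicDecompositionRhoDefinition} is handled by setting
\[
  \varrho_1(\xi) := C \cdot (1 + |\xi|)^{-(1 + \kappa_0 - \omega)}
                          \cdot (1 + |\xi_1|)^{-\kappa_1}
                          \cdot (1 + |\xi_2|)^{-\kappa_2},
  \qquad \varrho_2 := \varrho_1 \, ,
\]
so that the product $\varrho_k(\xi) \cdot (1 + |\xi|)^{-(3+\omega)}$ dominates $|\partial^\theta \widehat{\gamma_k^{(0)}}(\xi)|$ for all $|\theta| \leq N_0 \geq N$ by the assumption~\eqref{eq:WavePacketAtomicDecompositionCondition}; the integrability $\varrho_k \in L^1(\R^2)$ follows from $\kappa_1,\kappa_2 > 1$ and the obvious positivity $\kappa_0 > \omega$.

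Everything thus reduces to verifying the summability condition~\eqref{eq:AtomicDecompositionConstantDefinition}, i.e.~the finiteness of $K_1 = \sup_i \sum_j N_{i,j}$ and $K_2 = \sup_j \sum_i N_{i,j}$, \emph{uniformly} in $p \in [p_0,\infty]$, $q \in [q_0,\infty]$ and $s \in [-s_0,s_0]$. I would parametrise a generic non-trivial pair as $i = (j_1,m_1,\ell_1)$, $j = (j_2,m_2,\ell_2)$ (the finitely many pairs involving the low-frequency index $0$ are controlled directly via admissibility) and use the explicit computation of $T_j^{-1} T_i$ from~\eqref{eq:SimpleTransitionMatrixExplicit} to bound
\[
  \|T_j^{-1} T_i\|
  \lesssim 2^{\alpha|j_1-j_2|} + 2^{\alpha j_1 - \beta j_2} \cdot |\sin \Delta\Theta| ,
\]
with $\Delta\Theta := \Theta_{j_1,\ell_1} - \Theta_{j_2,\ell_2}$, together with $|\det T_j|/|\det T_i| = 2^{(\alpha+\beta)(j_2 - j_1)}$ and $w_i^s/w_j^s = 2^{s(j_1 - j_2)}$. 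After the affine change of variables $\xi = T_j\eta + b_j$, the integral factor in $N_{i,j}$ becomes
\[
  |\det T_i|^{-1} \int_{Q_i} \varrho_1(T_j^{-1}(\xi - b_j))\, d\xi
  = \frac{|\det T_j|}{|\det T_i|}
    \int_{(T_j^{-1} T_i) Q + T_j^{-1}(b_i - b_j)} \varrho_1(\eta)\, d\eta ,
\]
and the anisotropic decay of $\varrho_1$ in the two coordinate directions is precisely what controls both the transverse shear coefficient $2^{\alpha j_1 - \beta j_2} |\sin \Delta\Theta|$ and the translation $T_j^{-1}(b_i - b_j)$; the latter is bounded using the radial and angular inequalities of Lemma~\ref{lem:CoveringGeometry}.

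The hard part will be combining these ingredients into genuine convergence of the double sums. One has to sum $N_{i,j}$ over $j_2 - j_1$, over $\ell_2$ (with roughly $2^{(1-\beta)j_2}$ admissible values on each dyadic ring) and over $m_2$ (with roughly $2^{(1-\alpha)j_2}$ admissible values), while accounting for the worst-case exponents $\tau \geq \min\{p_0,q_0\}$, $\vartheta \leq (p_0^{-1} - 1)_+$ and, when $p < 1$, $\sigma \leq \tau \bigl( 2/p_0 + \lceil (2+\omega)/p_0 \rceil \bigr)$. The specific values of $\kappa_0,\kappa_1,\kappa_2$ prescribed in the statement are calibrated so that the losses $(1 + \|T_j^{-1} T_i\|)^\sigma$, the weight ratios $(w_i/w_j)^{s\tau}$ and $(|\det T_j|/|\det T_i|)^{\vartheta\tau}$, and the counting factors arising from $m$ and $\ell$, are all beaten by the anisotropic decay produced by $\varrho_1$, leaving a uniformly bounded sum. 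Once $K_1, K_2 < \infty$ is established, Theorem~\ref{thm:StructuredBFDAtomicDecomposition} yields a single $\delta_0 > 0$ depending only on $\alpha,\beta,\omega,p_0,q_0,s_0,\varphi,\gamma,C$ such that the wave packet system of Definition~\ref{def:WavePacketSystem} is an atomic decomposition of $\DecompSp(\CalQ^{(\alpha,\beta)}, L^p, \ell_{w^s}^q) = \PacketSpace_s^{p,q}(\alpha,\beta)$ with associated coefficient space $C_{w^s}^{p,q}$, which coincides with $\mathcal{C}_s^{p,q}(\alpha,\beta)$ by virtue of the identity $|\det T_i| = w_i^{\alpha+\beta}$.
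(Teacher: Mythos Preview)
Your overall strategy is exactly the paper's: apply Theorem~\ref{thm:StructuredBFDAtomicDecomposition} with the same choice of prototypes, the same assignment $k_i$, and $\eps=\omega$, reduce everything to the finiteness of $K_1,K_2$, and observe that the coefficient space $C_{w^s}^{p,q}$ equals $\mathcal C_s^{p,q}(\alpha,\beta)$ via $|\det T_i|=w_i^{\alpha+\beta}$. Your choice of $\varrho_k$ differs cosmetically from the paper's (they take $\varrho_k(\xi)=C(1+|\xi|)^{-\kappa_0}(1+|\xi_1|)^{-\kappa_1}(1+|\xi_2|)^{-\kappa_2}$ and absorb the extra $(1+|\xi|)^{-(3+\omega)}$ into the $-(4+\kappa_0)$ bound using $\omega\le 1$), but this is harmless.

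What is genuinely missing is the execution of ``the hard part''. The paper does not leave this as a sketch: it isolates a standalone estimate (Theorem~\ref{thm:MainTechnicalResult}) on the quantities $M_{i,i'}^{(1)}$ of Equations~\eqref{eq:MainTerm}--\eqref{eq:MainTermLowPass}, shows $N_{i,i'}\le C^\tau 2^{6\tau} M_{i',i}^{(1)}$ (with the substitution $s\rightsquigarrow \tau[\vartheta(\alpha+\beta)-s]$), and then devotes Subsections~\ref{sub:GeneralEstimates}--\ref{sub:LowPassContribution} to proving Theorem~\ref{thm:MainTechnicalResult}. The machinery there goes well beyond what you outline: after the change of variables one must pin down the domain $\Omega_{i,i'}$ inside a product of intervals whose endpoints depend affinely on $m'$ and on $\ell'$ (Lemmas~\ref{lem:InclusionLemma2}, \ref{lem:IntervalEstimateMSummation}--\ref{lem:IntervalEstimateLSummation}, \ref{lem:IntervalEstimateMPrimeSummation}--\ref{lem:IntervalEstimateLPrimeSummation}), and then feed these into two bespoke summation lemmas (Lemmas~\ref{lem:AnneLemma} and~\ref{lem:MainLemma}). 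A subtle point you do not anticipate is that the $m'$-sum requires a separate argument when the relevant trigonometric coefficient $t_{j',\ell'}$ is near zero (the set $J^{(j')}_{\mathrm{special}}$), handled via Lemma~\ref{lem:CosineVanishingSpecialEstimate}. The precise values of $\kappa_0,\kappa_1,\kappa_2$ are then checked against the explicit inequalities \eqref{eq:KappaConditions1}--\eqref{eq:KappaConditions2}; this verification is carried out at the end of Subsection~\ref{sub:WavePacketDecompositionProof}.

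One concrete error: the pairs involving the low-frequency index $0$ are \emph{not} finitely many---for $K_1$ one must bound $\sum_{i'\in I} M_{0,i'}^{(1)}$ and for $K_2$ one must bound $\sum_{i\in I} M_{i,0}^{(1)}$, both infinite sums over $I_0$. Admissibility alone does not control them; the paper treats them by a direct decay estimate in Subsection~\ref{sub:LowPassContribution}.
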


\begin{rem*}
  %\begin{enumerate}
  %  \item For a precise definition of the term ``atomic decomposition'',
  %        the reader should consult Section \ref{sub:BFD}, and in particular
  %        Theorem \ref{thm:StructuredBFDAtomicDecomposition}.

    %\item The first two conditions in the theorem are always satisfied for
  Note that Conditions (1) and (3) in the theorem are satisfied as long as
  $\varphi, \gamma \in C_c(\R^2)$.
  Furthermore, Condition~\eqref{eq:WavePacketAtomicDecompositionCondition} is satisfied if
  %Furthermore, the final condition is satisfied if
  $\varphi, \gamma \in C_c^k (\R^2)$ where
  $k \geq 4 + \kappa_0 + \kappa_1 + \kappa_2$.
  This last observation is due to the fact that
  $\partial^\alpha \widehat{f} (\xi)
   = [\Fourier ((-2 \pi i x)^\alpha \cdot f)](\xi)$
  where $(-2 \pi i x)^\alpha \cdot f \in C_c^k (\R^2)$ if
  $f \in C_c^k (\R^2)$.
  %\end{enumerate}
\end{rem*}

%Next, we state our result concerning Banach frames of wave packets.

\begin{thm}\label{thm:WavePacketBanachFrames}
  Let $0 \leq \beta \leq \alpha < 1$, $s_0 \geq 0$ and $\omega, p_0, q_0 \in (0,1]$.
  Moreover let $\varphi,\gamma \in L^1 (\R^2)$ be such that Properties (1)--(2) from
  Theorem~\ref{thm:WavePacketAtomicDecomposition} are satisfied and that
  \begin{enumerate}
    \item[(3')] $\varphi, \gamma \in C^1 (\R^2)$ and
                $\partial^\mu \varphi, \partial^\mu \gamma \in L^1 (\R^2) \cap L^\infty (\R^2)$
                for all $\mu \in \N_0^2$ with $|\mu| \leq 1$.
  \end{enumerate}
  %\begin{enumerate}
  %  \item \label{enu:WavePacketBanachFrameFourierSmoothness}
  %        $\widehat{\varphi}, \widehat{\gamma} \in C^\infty (\R^2)$, and all
  %        partial derivatives of $\widehat{\varphi}$ and $\widehat{\gamma}$ are
  %        of polynomial growth at most;
  %
  %  \item $\varphi, \gamma \in C^1 (\R^2)$ and
  %        $\partial^\alpha \varphi, \partial^\alpha \gamma
  %         \in L^1 (\R^2) \cap L^\infty (\R^2)$
  %        for all $\alpha \in \N_0^2$ with $|\alpha| \leq 1$;
  %
  %  \item $\widehat{\varphi}(\xi) \neq 0$ for all $\xi \in \overline{B}_4 (0)$
  %        and $\widehat{\gamma} (\xi) \neq 0$ for all
  %        $\xi \in [-\eps, 1+\eps] \times [-1-\eps, 1+\eps]$; and
  %\end{enumerate}
  Finally, let us assume that there is such a constant $C > 0$ that
  \begin{equation}
    \begin{split}
      & \big|
          (\partial^\nu \, \widehat{\partial^\mu \varphi}) (\xi)
        \big|
        \leq C \cdot (1 + |\xi|)^{- \kappa_0 '}
               \cdot (1 + |\xi_1|)^{- \kappa_1}
               \cdot (1 + |\xi_2|)^{-\kappa_2} \\
      \text{and} \quad
      & \big|
          (\partial^\nu \, \widehat{\partial^\mu \gamma}) (\xi)
        \big|
        \leq C \cdot (1 + |\xi|)^{- \kappa_0 '}
               \cdot (1 + |\xi_1|)^{- \kappa_1}
               \cdot (1 + |\xi_2|)^{-\kappa_2}
    \end{split}
    \label{eq:WavePacketBanachFrameCondition}
  \end{equation}
  for all $\xi \in \R^2$ and all such $\mu, \nu \in \N_0^2$ that
  $|\nu| \leq N_0$ and $|\mu| \leq 1$ where $N_0, \kappa_1, \kappa_2$ are as in
  Theorem~\ref{thm:WavePacketAtomicDecomposition} and where
  %\[
  %  N_0 := \lceil p_0^{-1} (2 + \omega) \rceil \, ,
  %  \quad
  %  \kappa_1 := \frac{2}{\min \{p_0, q_0\}} \, ,
  %  \quad
  %  \kappa_2 := 3 + \frac{5}{p_0} + \frac{2}{\min\{p_0,q_0\}} \, ,
  %\]
  %and
  \[
    \kappa_0'
    := (1 - \alpha)^{-1}
       \cdot \left(
               3
               + s_0
               + \frac{1 + \alpha}{\min\{p_0,q_0\}}
               + \frac{5 \alpha + 10 \beta}{p_0}
               + \frac{2}{(1-\beta)\min\{p_0,q_0\}}
             \right)
    \, .
  \]
  Then there exists
  $\delta_0 = \delta_0(\alpha,\beta,\omega,p_0,q_0,s_0,\varphi,\gamma,C) > 0$
  such that, for any $\delta \in (0, \delta_0]$, $p \in [p_0,\infty]$,
  ${q \in [q_0, \infty]}$ and $s \in [-s_0,s_0]$,
  the $(\alpha,\beta)$-wave packet system with generators $\varphi,\gamma$
  and sampling density $\delta > 0$ is a Banach frame for
  $\PacketSpace_s^{p,q} (\alpha,\beta)$ with coefficient space
  $\mathcal{C}_s^{p,q}(\alpha,\beta)$.

  More specifically, there is such a bounded \emph{analysis map}
  $A^{(\delta)} : \PacketSpace_s^{p,q}(\alpha,\beta) \to \mathcal{C}_s^{p,q}$
  that
  \begin{equation}
    A^{(\delta)} f
    = \left(
        \big\langle
          f \mid L_{\delta \cdot T_i^{-t} \cdot k} \, \gamma^{[i]}
        \big\rangle_{L^2}
      \right)_{i \in I, k \in \Z^2}
    \qquad \forall \, f \in L^2 (\R^2) \cap \PacketSpace_s^{p,q}(\alpha,\beta)
    \, ,
    \label{eq:WavePacketAnalysisOperatorExplicit}
  \end{equation}
  and $A^{(\delta)}$ has a bounded linear left inverse whose action is
  independent of the choice of $p \in [p_0,\infty]$, $q \in [q_0, \infty]$ and $s \in [-s_0, s_0]$.
\end{thm}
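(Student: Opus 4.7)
The plan is to apply Theorem~\ref{thm:StructuredBFDBanachFrame} to the almost structured covering $\CalQ^{(\alpha,\beta)}$, the family $(T_i \mybullet + b_i)_{i \in I}$ from Lemma~\ref{lem:CoveringAlmostStructured}, the $\CalQ^{(\alpha,\beta)}$-moderate weight $w^s$ from Lemma~\ref{lem:WavePacketWeight}, and the prototype functions $\gamma_1^{(0)} := \gamma$ (attached to high-frequency indices $i = (j,m,\ell) \in I_0^{(\alpha,\beta)}$) and $\gamma_2^{(0)} := \varphi$ (attached to the index $i = 0$). Unwinding Definition~\eqref{eq:GeneratorDefinition} of $\gamma^{[i]}$ with the explicit $T_i = R_{j,\ell} A_j$ and $b_i = R_{j,\ell} c_{j,m}$ shows that the GSI system $\widetilde{\Gamma}^{(\delta)}$ produced by Theorem~\ref{thm:StructuredBFDBanachFrame} coincides, up to the reflection $g \mapsto g(-\mybullet)$ built into the definition of $\widetilde{\Gamma}^{(\delta)}$, with the $(\alpha,\beta)$ wave packet system of Definition~\ref{def:WavePacketSystem}. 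The identification of the coefficient space $C_{w^s}^{p,q}$ from Definition~\ref{def:StructuredBFDSequenceSpace} with $\mathcal{C}_s^{p,q}(\alpha,\beta)$ from Definition~\ref{def:WavePacketCoefficientSpace} follows at once from $|\det T_i|^{1/2 - 1/p} = w_i^{(\alpha+\beta)(1/2 - 1/p)}$, which is a direct consequence of $|\det T_i| = 2^{(\alpha+\beta) j} = w_i^{\alpha+\beta}$. Conditions~\ref{enu:BanachFrameFourierTransformDecent}, \ref{enu:BanachFrameNonVanishing} and \ref{enu:BanachFrameSpaceSideDecay} of Theorem~\ref{thm:StructuredBFDBanachFrame} then match exactly with our Conditions~(1), (2) and~(3').

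The hard part will be to establish the uniform summability bounds $C_1 := \sup_{i \in I} \sum_{j \in I} M_{j,i} < \infty$ and $C_2 := \sup_{j \in I} \sum_{i \in I} M_{j,i} < \infty$, with constants depending only on the allowed parameters $\alpha,\beta,\omega,p_0,q_0,s_0,\varphi,\gamma$ and $C$. I would first reduce to the extremal parameter choice, since $\tau = \min\{1,p,q\}$ and the effective weight exponent $\theta$ are maximised at $(p,q) = (p_0, q_0)$, while the weight ratio $(w_j/w_i)^\tau$ is dominated by a constant depending only on $s_0$ via Lemma~\ref{lem:WavePacketWeight}. The contribution from terms involving the low-frequency index $0 \in I$ consists of only finitely many summands by the admissibility part of Lemma~\ref{lem:Admissibility}, so it suffices to estimate the sum over $i = (j_1, m_1, \ell_1)$, $j = (j_2, m_2, \ell_2) \in I_0^{(\alpha,\beta)}$.

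For these indices, the explicit formula~\eqref{eq:SimpleTransitionMatrixExplicit} for $T_j^{-1} T_i$, combined with Lemma~\ref{lem:CoveringGeometry} and the angle/scale controls \eqref{eq:AbsoluteValueEstimate}--\eqref{eq:AngleControl}, polynomially bounds $\|T_j^{-1} T_i\|$ in terms of the scale difference $|j_1 - j_2|$ and the angular discrepancy between $\Theta_{j_1,\ell_1}$ and $\Theta_{j_2,\ell_2}$. Performing the change of variable $\eta = T_j^{-1}(\xi - b_j)$ in the integral defining $M_{j,i}$ turns the decay hypothesis~\eqref{eq:WavePacketBanachFrameCondition} into a product of three decay factors in $\eta$: the factor $(1+|\eta_1|)^{-\kappa_1}$ provides decay in the radial shift index $m_1$; the factor $(1+|\eta_2|)^{-\kappa_2}$ provides decay in the angular index $\ell_1$; and the overall $(1+|\eta|)^{-\kappa_0'}$ factor provides decay in the scale difference $|j_1 - j_2|$. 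Since the number of indices at scale $j_1$ is $\asymp 2^{(2-\alpha-\beta) j_1}$ by virtue of $m_j^{\max,\alpha} \asymp 2^{(1-\alpha)j}$ and $\ell_j^{\max,\beta} \asymp 2^{(1-\beta)j}$ from~\eqref{eq:MaxValues}, and an additional weight factor $w_i^{\tau s_0}$ enters through the weight-ratio term, the explicit values $N_0$, $\kappa_0'$, $\kappa_1$, $\kappa_2$ have been tuned precisely so that the resulting triple series converges with each exponent bounded away from zero uniformly in the remaining indices. This computation, while lengthy, runs closely parallel to the corresponding $\alpha$-shearlet analysis of \cite{AlphaShearletSparsity}.

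Once $C_1, C_2 < \infty$ are in hand, Theorem~\ref{thm:StructuredBFDBanachFrame} directly delivers the sampling density $\delta_0 = \delta_0(p_0,q_0,w^s)$, the boundedness of the analysis map on $\PacketSpace_s^{p,q}(\alpha,\beta)$, and a bounded linear left-inverse reconstruction map $R^{(\delta)}_{p,q,w^s}$. Statement~(4) of that theorem, applied to all pairs of admissible parameter triples, yields a single reconstruction map whose action is independent of $p \in [p_0,\infty]$, $q \in [q_0,\infty]$ and $s \in [-s_0, s_0]$. Finally, Equation~\eqref{eq:WavePacketAnalysisOperatorExplicit} is an immediate consequence of~\eqref{eq:AnalysisOperatorConsistency} combined with the identification of $\widetilde{\Gamma}^{(\delta)}$ with our wave packet system made at the outset.
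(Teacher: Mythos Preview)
Your overall strategy is right, but there are two concrete gaps.

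First, with your choice $\gamma_1^{(0)} = \gamma$, $\gamma_2^{(0)} = \varphi$, the system $\widetilde{\Gamma}^{(\delta)}$ that Theorem~\ref{thm:StructuredBFDBanachFrame} declares to be a Banach frame is the \emph{reflected} wave packet system $(L_{\delta T_i^{-t} k}\,\widetilde{\gamma^{[i]}})$, not the wave packet system itself; and the analysis coefficients it produces are the bilinear pairings $\langle f, L_{\delta T_i^{-t} k}\,\widetilde{\gamma^{[i]}}\rangle = \langle f \mid L_{\delta T_i^{-t} k}\,\overline{\widetilde{\gamma^{[i]}}}\rangle_{L^2}$, which do not coincide with $\langle f \mid L_{\delta T_i^{-t} k}\,\gamma^{[i]}\rangle_{L^2}$ unless every $\gamma^{[i]}$ happens to satisfy $\overline{\widetilde{\gamma^{[i]}}} = \gamma^{[i]}$. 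The paper fixes this by taking $\gamma_1^{(0)} := \overline{\widetilde{\gamma}}$ and $\gamma_2^{(0)} := \overline{\widetilde{\varphi}}$; then, using $\Fourier\overline{\widetilde{f}} = \overline{\widehat{f}}$ and the identity $\overline{\widetilde{M_\xi g}} = M_\xi\,\overline{\widetilde{g}}$, one checks both that the hypotheses transfer to these modified prototypes and that $\overline{\widetilde{\gamma^{\{i\}}}}$ equals the wave packet generator $\gamma^{[i]}$ on the nose, which is exactly what yields~\eqref{eq:WavePacketAnalysisOperatorExplicit}.

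Second, your claim that the terms involving the index $0$ form a finite sum is false: admissibility controls $|i^\ast|$, but $M_{j,i}$ is not supported on intersecting pairs. The sums $\sum_{i \in I_0} M_{i,0}$ and $\sum_{i' \in I_0} M_{0,i'}$ are genuinely infinite and require the decay in $\kappa_0'$ to converge; the paper handles them in a separate subsection. More generally, your sketch of the double-index estimate is too informal: the paper packages the entire $\sup\sum$ computation into a dedicated technical result (Theorem~\ref{thm:MainTechnicalResult}), then reduces the present proof to verifying that $\kappa_0',\kappa_1,\kappa_2$ satisfy the hypotheses of that result with $s$ replaced by $s\tau$ and $\sigma$ replaced by the exponent $\theta$ from Theorem~\ref{thm:StructuredBFDBanachFrame}. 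That verification is short; the heavy lifting is in Theorem~\ref{thm:MainTechnicalResult}, which you should invoke rather than redo.
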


\begin{rem*}
  Conditions (1) and (3') are satisfied as long as $\varphi,\gamma \in C_c^{1}(\R^2)$.
  Furthermore, Condition~\eqref{eq:WavePacketBanachFrameCondition} is satisfied
  if $\varphi, \gamma \in C_c^k (\R^2)$ where $k \geq 1 + \kappa_0 ' + \kappa_1 + \kappa_2$.
\end{rem*}

%In this section, we will construct (possibly) compactly supported Banach frames
%and atomic decompositions for the wave packet smoothness spaces.
%In view of Theorems \ref{thm:StructuredBFDBanachFrame} and
%\ref{thm:StructuredBFDAtomicDecomposition}, the main question we need to answer
%is under which conditions the constants $K_1, K_2$ and $C_1,C_2$ that are
%introduced in these theorems are finite.
%The constant $K_1,K_2$ and $C_1,C_2$ are similar, but still differ slightly from
%each other.
%Thus, in order to achieve a greater uniformity, we will consider
%a slightly modified question which can easily be specialized to prove
%finiteness of both of these different constants.
%
%\todo[inline]{Up to now, in this section we only prove the finiteness of the
%series in Equation \eqref{eq:TargetEstimate}. We still need to apply this
%to prove the existence of nice Banach frames and atomic decompositions for
%the wave packet smoothness spaces. I am not sure if it is better to do this
%in the present section (as a further subsection; probably as the very first one),
%or to do it in an earlier section, just taking the finiteness of the series
%as given/known.}

To prove Theorems~\ref{thm:WavePacketAtomicDecomposition}
and \ref{thm:WavePacketBanachFrames}, we shall use
Theorems~\ref{thm:StructuredBFDAtomicDecomposition} and
\ref{thm:StructuredBFDBanachFrame}, respectively.
To do so, we need to show that the constants
%However, for these theorems to be applicable, we have to show that the
$K_1, K_2$ and $C_1, C_2$, as they were introduced in those theorems,
are finite.
Given that these constants differ only marginally from each other, we shall
slightly reformulate this problem and by doing so prove that
$K_1, K_2, C_1, C_2$ are all finite at once.
Specifically, let us define
%The constant $K_1,K_2$ and $C_1,C_2$ are similar, but still differ slightly from
%each other.
%Thus, in order to achieve a greater uniformity, we will consider
%a slightly modified question which can easily be specialized to prove
%finiteness of both of these different constants.
%Specifically, let $\kappa_0, \kappa_1, \kappa_2 \geq 0$, and define
\begin{equation}
  \psi :
  \R^2 \to (0,\infty),
  \xi \mapsto (1 + |\xi|)^{-\kappa_0}
              \cdot (1 + |\xi_1|)^{-\kappa_1}
              \cdot (1 + |\xi_2|)^{- \kappa_2} \, ,
  \label{eq:PsiDefinition}
\end{equation}
where $\kappa_0, \kappa_1, \kappa_2 \geq 0$ are fixed, but arbitrary.
In the remainder of this section, we shall establish conditions on
$\kappa_0, \kappa_1, \kappa_2$ and $B$ so that
\begin{equation}
  %\sup_{i' \in I_0} \,\,
  %  \sum_{i \in I_0}
  \sup_{i' \in I} \,\,
    \sum_{i \in I}
      M_{i,i'}^{(1)}
  \leq B < \infty
  \qquad \text{and} \qquad
  %\sup_{i \in I_0} \,\,
  %  \sum_{i' \in I_0}
  \sup_{i \in I} \,\,
    \sum_{i' \in I}
      M_{i,i'}^{(1)}
  \leq B < \infty \, ,
  \label{eq:TargetEstimate}
\end{equation}
where, for $i = (j,m,\ell) \in I_0$ and $i' = (j',m',\ell') \in I_0$, the
quantity $M_{i,i'}^{(1)}$ is given by
\begin{equation}
  M_{i,i'}^{(1)}
  := 2^{(j-j')s}
     \cdot \big( 1 + \|T_i^{-1} T_{i'}\| \big)^\sigma
     \cdot \Big(
             \fint_{Q_{i'}}
               \psi \big( T_i^{-1} (\xi - b_i) \big)
             \, d\xi
           \Big)^\tau
     \, ,
  \label{eq:MainTerm}
\end{equation}
with $s \in \R$ and $\sigma, \tau \in (0,\infty)$ fixed, but arbitrary
and with $T_i,b_i$ %and $Q_i^{\natural}$
as in Lemma \ref{lem:CoveringAlmostStructured}.
Here, we used the notation
$\fint_M f(x) \, dx := \frac{1}{\lambda(M)} \int_M f(x) \, dx$ where $\lambda$
denotes the Lebesgue measure.

Similarly, we define
\begin{equation}
  \begin{split}
    & M_{0, i'}^{(1)}
      := 2^{-j' s}
         \cdot \big( 1 + \|T_0^{-1} T_{i'}\| \big)^{\sigma}
         \cdot \Big(
                 \fint_{Q_{i'}}
                   \psi \big( T_{0}^{-1} (\xi - b_0) \big)
                 \, d \xi
               \Big)^{\tau} \, , \\
    & M_{i, 0}^{(1)}
      := 2^{j s}
         \cdot \big( 1 + \|T_i^{-1} T_{0}\| \big)^{\sigma}
         \cdot \Big(
                 \fint_{Q_{0}}
                   \psi \big( T_{i}^{-1} (\xi - b_i) \big)
                 \, d \xi
               \Big)^{\tau} \, , \\
    \text{and} \quad
    & M_{0, 0}^{(1)}
      := \big( 1 + \| T_0^{-1} T_{0} \| \big)^{\sigma}
         \cdot \Big(
                 \fint_{Q_{0}}
                   \psi \big( T_{0}^{-1} (\xi - b_0) \big)
                 \, d \xi
               \Big)^{\tau} \,
  \end{split}
  \label{eq:MainTermLowPass}
\end{equation}
where, again $i = (j,m,\ell) \in I_0$ and $i' = (j',m',\ell') \in I_0$.
Precisely, we shall prove the following theorem, from which we shall then deduce
Theorems~\ref{thm:WavePacketAtomicDecomposition} and \ref{thm:WavePacketBanachFrames}.

\begin{thm}\label{thm:MainTechnicalResult}
  If $\sigma \geq 0$, $\tau > 0$,
  \begin{equation}
    \kappa_1 \geq \max \left\{2 , \tfrac{2}{\tau} \right\}
    \quad \text{and} \quad
    \kappa_2 \geq \max \left\{
                         1 + \tfrac{\sigma + 2}{\tau}
                         %\frac{2}{\tau}
                         \,,\,
                         2 + \kappa_2^{(0)}
                       \right\}
    \quad \text{with} \quad
    %\kappa_2^{(0)} := \frac{2 + \sigma (\alpha - \beta)}{\tau (1-\beta)} \, ,
    \kappa_2^{(0)} := \tfrac{2 - \alpha - \beta + \sigma (\alpha - \beta)}
                           {(1-\beta) \tau}
    %\kappa_2^{(0)} := \frac{2}{\tau}
    \label{eq:KappaConditions1}
  \end{equation}
  and if
  \begin{equation}
    \kappa_0
    \geq \max \left\{
                \tfrac{3 + |s| + \tau + \alpha + (\alpha + \beta) \sigma}
                     {(1-\alpha)\tau}
                \, , \,
                \tfrac{2 + |s|
                      %+ \alpha \tau
                      %+ \beta (3 + \tau)
                      + \tau \beta \, \kappa_2^{(0)}
                      + \max \{\tau, \sigma \} (\alpha + \beta)
                      }
                     {(1-\alpha)\tau}
              \right\}
    \label{eq:KappaConditions2}
  \end{equation}
  and
  %For the choice of $B$, we refer to equation \eqref{eq:BChoice} below.
  \begin{equation}
    B := N
         \cdot 2^{37
                  + 8 \sigma
                  + \tau (10 + 5 \kappa_0 + 6 \kappa_2^{(0)} + \kappa_2)} ,
         %\cdot 2^{40
         %         + (5 + 3 \tau) \kappa_0
         %         + 14\sigma
         %         + 15\tau
         %         + \tau \kappa_2
         %         + 7 \tau \kappa_2^{(0)}} \, ,
    \label{eq:BChoice}
  \end{equation}
  then \eqref{eq:TargetEstimate} holds.
\end{thm}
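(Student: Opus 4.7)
The plan is to estimate each factor of $M_{i,i'}^{(1)}$ using the explicit geometry of the wave packet covering developed in Sections~\ref{sec:CoveringDefinition}--\ref{sec:Structuredness}, and then to organise the summation as a product of nested geometric series indexed by the scale difference, the radial-index difference, and the angular-index difference. The contributions of the low-frequency terms $M_{0,i'}^{(1)}, M_{i,0}^{(1)}, M_{0,0}^{(1)}$ are separated at the outset: part~\ref{enu:IntersectionLowFrequencyCount} of Lemma~\ref{lem:Admissibility} and the rapid radial decay of $\psi$ ensure that their total contribution is bounded by a constant that can be absorbed into $B$.

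For $i = (j,m,\ell)$ and $i' = (j',m',\ell')$ both in $I_0$, the central task is to estimate
\[
  I_{i,i'}
  := \fint_{Q_{i'}}
       \psi\bigl(T_i^{-1}(\xi - b_i)\bigr) \, d\xi
  \quad\text{where}\quad
  \eta := T_i^{-1}(\xi - b_i)
       = A_j^{-1} R_{j,\ell}^{-1}\bigl(\xi - R_{j,\ell}\, c_{j,m}\bigr).
\]
I plan to read off the three decay factors of $\psi$ from the shape of $T_i^{-1}(Q_{i'}-b_i)$, using Lemmas~\ref{lem:CoveringGeometry} and \ref{lem:CoveringAlmostStructured}. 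The radial factor $(1+|\eta|)^{-\kappa_0}$ contributes at least $2^{-\kappa_0(1-\alpha)\max\{j,j'\}}$ whenever $|j-j'|$ is large, since $|\xi|\asymp 2^{j'}$ and $|b_i|\asymp 2^j$ force $|\xi - b_i|\asymp 2^{\max\{j,j'\}}$, and $\|A_j\|=2^{\alpha j}$ gives $|\eta|\geq 2^{-\alpha j}|\xi-b_i|$. The first-coordinate factor $(1+|\eta_1|)^{-\kappa_1}$ is controlled by the normalised radial offset between the two cells, essentially $m - m'\cdot 2^{\alpha(j'-j)}$. The second-coordinate factor $(1+|\eta_2|)^{-\kappa_2}$ is controlled jointly by $2^{\beta(j'-j)}$ (the rescaled angular diameter of $Q_{i'}$) and by the rotation misalignment $2^{\alpha j'-\beta j}\cdot|\sin(\Theta_{j',\ell'}-\Theta_{j,\ell})|$. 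The operator-norm factor $(1+\|T_i^{-1}T_{i'}\|)^\sigma$ is bounded directly from the block form \eqref{eq:SimpleTransitionMatrixExplicit}, which produces an analogous product of polynomial factors in the same quantities.

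Combining these estimates and absorbing the scale weight $2^{(j-j')s}$ into the radial decay yields a bound of the schematic form
\[
  M_{i,i'}^{(1)}
  \,\lesssim\, 2^{-a_0 |j-j'|}\cdot(1+|\Delta m|)^{-a_1}\cdot (1+|\Delta\ell|)^{-a_2},
\]
where $\Delta m$ and $\Delta\ell$ denote the effective radial and angular index offsets described above, and the exponents $a_0,a_1,a_2$ are strictly positive precisely under hypotheses \eqref{eq:KappaConditions1}--\eqref{eq:KappaConditions2}. The lower bounds on $\kappa_1$ and $\kappa_2$ in \eqref{eq:KappaConditions1} are tuned to beat the cardinalities $m_j^{\max}\asymp 2^{(1-\alpha)j}$ and $\ell_j^{\max}\asymp 2^{(1-\beta)j}$ of the radial and angular index sets at scale $j$ (which enter after a change of variable when summing), while the lower bound on $\kappa_0$ in \eqref{eq:KappaConditions2} ensures that the scale weight and the operator-norm factor are dominated by the radial decay. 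Summation is then performed iteratively: first over $\ell$ against $\ell_j^{\max}$, then over $m$ against $m_j^{\max}$, and finally over $j$; the opposite direction of \eqref{eq:TargetEstimate} is handled symmetrically by exchanging the roles of $i$ and $i'$. Tracking constants through each geometric series yields the explicit bound $B$ in \eqref{eq:BChoice}.

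The principal obstacle is the honest geometric bookkeeping behind the estimate of $I_{i,i'}$: because $\eta_2$ depends nontrivially on both the scale difference and the angular difference $\Theta_{j',\ell'}-\Theta_{j,\ell}$ through a sine, one must treat the regimes $j\geq j'$ and $j<j'$ separately and, within each, track how the prefactors $2^{\alpha j'-\beta j}$ and $2^{\beta(j'-j)}$ interact; moreover, the exponentially growing angular cardinality $\ell_j^{\max}\asymp 2^{(1-\beta)j}$ forces the lower bound $\kappa_2\geq 2+\kappa_2^{(0)}$ in \eqref{eq:KappaConditions1}, and similarly dictates the exact thresholds in \eqref{eq:KappaConditions2}. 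These case distinctions are elementary but lengthy, which is why the proof is deferred to Appendix~\ref{sec:MainLemmaProof}.
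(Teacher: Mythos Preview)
Your high-level strategy matches the paper's: separate the low-pass terms, reduce via the change of variables $\eta = T_i^{-1}(\xi - b_i)$, exploit the product structure of $\psi$, and sum successively over the radial, angular, and scale indices. Two points, however, are genuine gaps rather than details.

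First, the clean pointwise bound
\(
  M_{i,i'}^{(1)} \lesssim 2^{-a_0|j-j'|}(1+|\Delta m|)^{-a_1}(1+|\Delta\ell|)^{-a_2}
\)
is not what the paper obtains, and it is not clear that it can be obtained. The domain $\Omega_{i,i'}$ is contained in a rectangle $I_1^{(i,i')}\times I_2^{(i,i')}$ (Lemma~\ref{lem:InclusionLemma2}), but the \emph{length} of $I_2^{(i,i')}$ grows linearly with the angular offset (through $x_{i_\ast'}^{\pm}\sin\theta_{i_0,i_0'}$), so neither a supremum bound nor a crude integral bound on $(1+|\eta_2|)^{-\kappa_2}$ gives the decay in $\Delta\ell$ that you claim. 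The paper instead keeps the integral and applies a tailored summation lemma (Lemma~\ref{lem:MainLemma} and Corollary~\ref{cor:MainLemmaNegativVersion}) that handles sums of integrals over intervals whose endpoints move at \emph{different} linear rates $\beta_1\neq\beta_2$; this, together with the companion Lemma~\ref{lem:AnneLemma} for the $m$-sum, is the actual engine of the proof. The growth of $(1+\|T_i^{-1}T_{i'}\|)^\sigma$ in the angular variable (via the $2^{\alpha j'-\beta j}|\sin\vartheta_{i_0,i_0'}|$ term in \eqref{eq:TransitionMatrixNormEstimate}) must be absorbed into the same summation lemma, which is why $\kappa_2\geq 1+(\sigma+2)/\tau$ appears.

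Second, your assertion that ``the opposite direction of \eqref{eq:TargetEstimate} is handled symmetrically by exchanging the roles of $i$ and $i'$'' is incorrect, and this is where the paper's argument becomes most delicate. When summing over $m$ with $i'$ fixed, the first coordinate of $\eta$ shifts by exactly $1$ per unit of $m$ (Lemma~\ref{lem:IntervalEstimateMSummation}). But when summing over $m'$ with $i$ fixed, the shift is $t_{j',\ell'}\cdot 2^{\alpha(j'-j)}$ per unit of $m'$, where $t_{j',\ell'}\in\{\cos\theta_{i_0,i_0'},\sin\theta_{i_0,i_0'}\}$ can be arbitrarily close to $0$ (when $\vartheta_{i_0,i_0'}$ is near a multiple of $\pi/2$). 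In that regime Lemma~\ref{lem:AnneLemma} gives no useful control on the $m'$-sum. The paper isolates these angles as the set $J^{(j')}_{\mathrm{special}}$ and disposes of them by a completely different mechanism (Lemma~\ref{lem:CosineVanishingSpecialEstimate}): for such $\ell'$ one has $1+|\eta_2|\gtrsim 2^{j'-\beta j}$ uniformly, so the $\kappa_2$-decay alone controls the full $m'$-sum, and this is precisely where the second branch $\kappa_2\geq 2+\kappa_2^{(0)}$ of \eqref{eq:KappaConditions1} and the second branch of \eqref{eq:KappaConditions2} are used. Without this split your $m'$-summation would diverge.
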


%In fact, we will see that \eqref{eq:TargetEstimate} holds as soon as

\paragraph{Structure of the section}

%This---rather long---section is structured as follows:
To prove Theorem~\ref{thm:MainTechnicalResult}, we first estimate the
different terms occurring in \eqref{eq:TargetEstimate}; this will be done in
Subsection~\ref{sub:GeneralEstimates}.
In Subsections~\ref{sub:SummingOverI} and \ref{sub:SummingOverIPrime},
we estimate respectively the former and the latter series in
\eqref{eq:TargetEstimate} for $i,i' \in I_0$.
%The second supremum is estimated in Subsection \ref{sub:SummingOverIPrime},
%but again only for $i,i' \in I_0$.
In Subsection~\ref{sub:LowPassContribution}, we estimate these series
for $i' = 0$ or $i = 0$, respectively.
%Lastly, the terms where $i = 0$ or $i' = 0$ are considered in
%Subsection \ref{sub:LowPassContribution}.
%But first of all, in Subsection \ref{sub:GeneralEstimates}, we begin with
%several general estimates concerning the term $M_{i,i'}^{(1)}$
%for $i,i' \in I_0$.
%These estimates will be crucial for both Subsections \ref{sub:SummingOverI}
%and \ref{sub:SummingOverIPrime}.
Finally, in Subsection~\ref{sub:WavePacketDecompositionProof}, we prove
Theorems~\ref{thm:WavePacketAtomicDecomposition}
and \ref{thm:WavePacketBanachFrames}
by using Theorem~\ref{thm:MainTechnicalResult}.

\medskip{}

For $i \in I_0$ or $i' \in I_0$, we shall use the convention
$i = (j,m,\ell)$ and $i' = (j',m',\ell')$ throughout this section,
without mentioning it explicitly.

\subsection[Estimating various terms occurring in (\ref*{eq:MainTerm})]
           {Estimating various terms occurring in $M_{i,i'}^{(1)}$ for $i,i' \in I_0$}
\label{sub:GeneralEstimates}

Let $i = (j,m,\ell) \in I_0$ and $i' = (j',m',\ell') \in I_0$ and let us define
\begin{equation}
  i_0 := (j,\ell) \, ,
  \quad
  i_0 ' := (j',\ell') \, ,
  \quad
  i_\ast := (j,m) \, ,
  \quad
  i_\ast ' := (j',m') \, ,
  \quad \text{and} \quad
  \vartheta_{i_0,i_0 '}^{(0)}
  := \Theta_{j',\ell'} - \Theta_{j,\ell} \, .
  \label{eq:AngleDifference}
\end{equation}
%To further simplify notation, we let
Since $0 \leq \Theta_{j,\ell} < 3 \pi$ according to \eqref{eq:ThetaJEstimate},
$\vartheta_{i_0,i_0'}^{(0)} \in (-3\pi, 3\pi)$.
Thus there exists ${k = k_{i_0,i_0 '} \in \{-1,0,1,2\}}$ such that
\begin{equation}
  \vartheta_{i_0, i_0 '}
  := \vartheta_{i_0, i_0 '}^{(0)} + 2 \pi k
  \in [0, 2 \pi) \, .
  \label{eq:NormalizedAngleDifference}
\end{equation}

With the change of variables $\eta = T_i^{-1}(\xi - b_i)$, we obtain
\begin{equation}
  M_{i,i'}^{(1)}
  = M_{i,i'}^{(2)}
  := 2^{(j-j')s}
     \cdot \big( 1 + \|T_i^{-1} T_{i'} \| \big)^\sigma
     \cdot \Big( \fint_{\Omega_{i,i'}} \psi(\eta) \, d\eta \Big)^\tau \,
  \quad \text{where} \quad
  \Omega_{i,i'} := T_i^{-1} (Q_{i'} - b_i) \, .
  \label{eq:MainDomain}
\end{equation}
To estimate the integral in \eqref{eq:MainDomain},
we first estimate the Euclidean norm $|\xi|$ of $\xi \in \Omega_{i,i'}$.

\begin{lem}\label{lem:SeriesEuclideanNormEstimate}
  Let $i,i' \in I_0$ and $\xi \in \Omega_{i,i'}$, then:
  \begin{enumerate}[label=\alph*)]
    \item \label{enu:SeriesNormEstimate}
          $1 + |\xi| \geq 2^{-5} \cdot 2^{(1-\alpha) |j-j'|}$; and

    \item \label{enu:SeriesNormEstimateLargeScaleDifference} if $|j-j'| \geq 5$,
          then $|\xi| \geq 2^{-5} \cdot 2^{\max\{j,j'\} - \alpha j}
                      \geq 2^{-5} \cdot 2^{(1-\alpha) \max \{j,j'\}}$.
  \end{enumerate}
\end{lem}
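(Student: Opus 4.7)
The plan is to unravel the definition $\Omega_{i,i'} = T_i^{-1}(Q_{i'} - b_i)$ using $T_i = R_{j,\ell} A_j$ with $A_j = \mathrm{diag}(2^{\alpha j}, 2^{\beta j})$ and $b_i = R_{j,\ell} c_{j,m}$, so that any $\xi \in \Omega_{i,i'}$ has the form $\xi = A_j^{-1}(R_{j,\ell}^{-1}\eta - c_{j,m})$ for some $\eta \in Q_{i'}$. Since $\beta \leq \alpha$, the operator norm of $A_j$ is $\|A_j\| = 2^{\alpha j}$, which yields $|A_j^{-1} v| \geq 2^{-\alpha j}\,|v|$ for every $v \in \R^2$. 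Combining this with the fact that $R_{j,\ell}$ is an isometry and the reverse triangle inequality gives the master bound
\[
  |\xi| \geq 2^{-\alpha j}\cdot\big|\,|\eta| - |c_{j,m}|\,\big|.
\]

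To use this, I would feed in the norm estimates available from earlier results: Lemma~\ref{lem:CoveringGeometry} yields $2^{j'-2} < |\eta| < 2^{j'+3}$ for $\eta \in Q_{i'}$, while $c_{j,m} = (2^{j-1} + m\,2^{\alpha j},\,0)^t$ together with $0 \leq m \leq m_j^{\max} \leq 2^{(1-\alpha)j-1}+1$ gives $2^{j-1} \leq |c_{j,m}| \leq 2^j + 2^{\alpha j} \leq 2^{j+1}$. I would then split into cases according to the sign and size of $j' - j$, using the cutoff $|j-j'| \geq 5$ from part (b). If $j' \geq j + 5$, then $|c_{j,m}| \leq 2^{j+1} \leq 2^{j'-4}$ is dominated by $|\eta| > 2^{j'-2}$, so $\big||\eta|-|c_{j,m}|\big| \geq 2^{j'-2} - 2^{j'-4} = 3\cdot 2^{j'-4} \geq 2^{j'-3}$, hence $|\xi| \geq 2^{-3}\cdot 2^{j'-\alpha j}$. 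Symmetrically, if $j \geq j' + 5$, then $|\eta| < 2^{j'+3} \leq 2^{j-2}$ is dominated by $|c_{j,m}| \geq 2^{j-1}$, yielding $\big||\eta|-|c_{j,m}|\big| \geq 2^{j-2}$ and thus $|\xi| \geq 2^{-2}\cdot 2^{(1-\alpha)j}$. In both subcases of $|j-j'| \geq 5$, the exponent of $2$ can be rewritten as $\max\{j,j'\} - \alpha j$, and the stronger lower bound $|\xi| \geq 2^{-3}\cdot 2^{\max\{j,j'\}-\alpha j}$ implies the claim of part (b), since moreover $\max\{j,j'\} - \alpha j \geq (1-\alpha)\max\{j,j'\}$ regardless of which of $j,j'$ is larger.

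Part (a) then follows with essentially the same computation. For $|j-j'| \geq 5$ the estimate from part (b) gives $1 + |\xi| \geq |\xi| \geq 2^{-3}\cdot 2^{(1-\alpha)|j-j'|}$ after noting that $\max\{j,j'\} - \alpha j \geq (1-\alpha)|j-j'|$ (considering the two orderings of $j,j'$ separately). For the remaining range $|j-j'| \leq 4$, the target bound becomes trivial: $2^{-5}\cdot 2^{(1-\alpha)|j-j'|} \leq 2^{-5}\cdot 2^4 = 1/2 < 1 \leq 1 + |\xi|$, using $1-\alpha \leq 1$.

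The main obstacle is purely bookkeeping---keeping the two case splits (sign of $j'-j$, and whether $|j-j'|$ is large or small) aligned and checking that the numerical constants $2^{-3}$ and $2^{-2}$ dominate the claimed $2^{-5}$ in every scenario. Conceptually nothing beyond the triangle inequality and the elementary observation $\|A_j^{-1}\|^{-1} = 2^{-\alpha j}$ (which in turn relies on $\beta \leq \alpha$) is required.
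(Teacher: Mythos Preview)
Your proof is correct and follows essentially the same route as the paper's: both use $\|T_i\|=2^{\alpha j}$ together with the reverse triangle inequality to compare $|\eta|\asymp 2^{j'}$ against $|b_i|=|c_{j,m}|\in[2^{j-1},2^{j+1}]$, splitting into the two cases $j'\geq j+5$ and $j\geq j'+5$ for part~(b), and dispatching $|j-j'|$ small trivially for part~(a). The only cosmetic difference is that you package the estimate as a single ``master bound'' $|\xi|\geq 2^{-\alpha j}\bigl||\eta|-|c_{j,m}|\bigr|$ before the case split, whereas the paper carries out the triangle-inequality step inside each case separately.
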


\begin{proof}
  Since $\xi \in \Omega_{i,i'}$, we see that $\eta := T_i \xi + b_i \in Q_{i'}$.
  Thus, Equation~\eqref{eq:AbsoluteValueEstimate} implies
  $2^{j' - 2} < |\eta| < 2^{j' + 3}$.
  Furthermore, for $c_{j,m}$ as defined in \eqref{eq:DilationMatrixAndTranslation},
  \[
         2^{j-1}
    \leq |c_{j,m}|
    =    |b_i|
    =    |c_{j,m}|
    \leq 2^{j-1} + (1 + 2^{(1-\alpha)j - 1}) \cdot 2^{\alpha j}
    = 2^{j-1} + 2^{\alpha j} + 2^{j-1}
    \leq 2^{j+1} \, .
  \]
  Finally, we see that
  \(
    \|T_i\|
    = \|A_{j}\|
    = \max \{2^{\alpha j}, 2^{\beta j} \}
    = 2^{\alpha j}
    %\leq \max \big\{2^{\alpha j}, \frac{3}{2} 2 ^{\beta j} \big\}
    %\leq \frac{3}{2} \cdot 2^{\alpha j} \, .
  \)
  since $\beta \leq \alpha$ and $A_j = \mathrm{diag}(2^{\alpha j}, 2^{\beta j})$.

  After this preparation, we first prove
  Part~\ref{enu:SeriesNormEstimateLargeScaleDifference},
  so that we are working under the assumption $|j-j'| \geq 5$.
  Thus, there are two cases:
  \smallskip{}

  \noindent
  \emph{Case 1:} $j' \geq j + 5$.
  Then $2^{j' - 2} < |\eta| = |T_i \xi + b_i| \leq 2^{\alpha j} |\xi| + 2^{j+1}$
  and hence
  \[
    \qquad \qquad
    |\xi|
    \geq 2^{-\alpha j}
         \cdot \big( 2^{j' - 2} - 2^{j+1} \big)
    \geq 2^{-\alpha j}
         \cdot \big( 2^{j' - 2} - 2^{j'-3} \big)
    =    2^{-3} \cdot 2^{j' - \alpha j}
    %\geq 2^{-5} \cdot 2^{j' - \alpha j}
    \geq 2^{-5} \cdot 2^{\max\{j,j'\} - \alpha j} \, .
  \]

  \noindent
  \emph{Case 2:} $j \geq j' + 5$. Then
  \[
    2^{\alpha j} \cdot |\xi|
    \geq |T_i \xi|
    =    |\eta - b_i|
    \geq |b_i| - |\eta|
    \geq 2^{j-1} - 2^{j'+3}
    \geq 2^{j-1} - 2^{j-2}
    =    2^{j-2} \, ,
  \]
  and therefore
  $|\xi| \geq 2^{-2} \cdot 2^{j - \alpha j}
         \geq 2^{-5} \cdot 2^{\max\{j,j'\} - \alpha j}$.

  Combining these two cases proves the first estimate in
  Part~\ref{enu:SeriesNormEstimateLargeScaleDifference}.
  To prove the second, we note that
  $\max\{j,j'\} - \alpha j \geq \max\{j,j'\} - \alpha \max\{j,j'\}
   = (1-\alpha) \max \{j,j'\}$.

  \medskip{}

  Finally, to prove Part~\ref{enu:SeriesNormEstimate}, we note that
  $2^{-5} \, 2^{(1-\alpha) |j-j'|} \leq 1 \leq 1 + |\xi|$ if $|j-j'| \leq 5$.
  If otherwise $|j-j'| \geq 5$, then
  Part~\ref{enu:SeriesNormEstimateLargeScaleDifference} implies that
  \[
    1 + |\xi| \geq |\xi| \geq 2^{-5} \cdot 2^{(1-\alpha) \max\{j,j'\}}
             \geq 2^{-5} \cdot 2^{(1-\alpha)
                  \cdot (\max\{j,j'\} - \min\{j,j'\})}
             =    2^{-5} \cdot 2^{(1-\alpha) |j-j'|} \, .
     \qedhere
  \]
\end{proof}

%The main idea for proving \eqref{eq:TargetEstimate} is to show that the
%domain of integration $\Omega_{i,i'}$ in \eqref{eq:MainDomain} can be included
%in a cartesian product of certain intervals, and then to apply the following
%two lemmata:
To prove \eqref{eq:TargetEstimate}, we shall rely on the following two lemmata.
%To prove this, we shall need two lemmata.

\begin{lem}\label{lem:AnneLemma}
  (see Lemma~C.1 and ensuing remark in \cite{AlphaShearletSparsity})

\noindent Let $N \in [0,\infty)$, $\tau,\beta_0,L \in (0,\infty)$ and $M \in \R$
  and let $f : \R \to \CC$ be measurable and such that
  \[
    |f(x)| \leq C \cdot (1+|x|)^{-(N+2)/\tau}
    \quad \forall \, x \in \R \, .
  \]
  Then,
  \[
    \sum_{k \in \Z}
      \big( 1 + |\beta_0 k + M| \big)^N
      \left(
        \int_{\beta_0 k + M - L}^{\beta_0 k + M + L}
          |f(x)|
        \, dx
      \right)^\tau
    \leq 2^{3+\tau+N}
         \cdot 10^{N+3}
         \cdot C^\tau
         \cdot L^\tau
         \cdot (1 + L^N)
         \cdot \left(1 + \frac{L+1}{\beta_0}\right) .
  \]
  %\[
  %  \sum_{k \in \Z}
  %    |\beta k + M|^N
  %    \left(\int_{\beta k + M - L}^{\beta k + M + L} |f(x)| \, dx\right)^\tau
  %  \leq 2^{1+\tau}
  %       \cdot 10^{N+3}
  %       \cdot C^\tau
  %       \cdot L^\tau
  %       \cdot (1 + L^N)
  %       \cdot \left(1 + \frac{L+1}{\beta}\right) \, .
  %\]
\end{lem}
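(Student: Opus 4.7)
The plan is to split the sum over $k \in \Z$ according to the size of the translated lattice point $\beta_0 k + M$ relative to the half-length $L$ of the integration window. For indices where $|\beta_0 k + M|$ is comparable to or smaller than $L$, the decay of $f$ on the interval $[\beta_0 k + M - L, \beta_0 k + M + L]$ cannot be exploited fruitfully, so one uses the crude pointwise bound $|f(x)| \leq C$ (which follows from $(1+|x|)^{-(N+2)/\tau} \leq 1$); for indices where $|\beta_0 k + M|$ is much larger than $L$, the decay of $f$ can be used to transfer the weight $(1+|x|)^{-(N+2)/\tau}$ inside the integral into the weight $(1+|\beta_0 k + M|)^{-(N+2)/\tau}$ at the cost of a constant factor.

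Concretely, I would set the threshold at $|\beta_0 k + M| \leq 2L + 1$ versus $|\beta_0 k + M| > 2L + 1$, and write $a_k := \beta_0 k + M$. For the \emph{near} part, estimate $(1+|a_k|)^N \leq (2L+2)^N \leq 2^N (1+L)^N$ and $\bigl(\int_{a_k - L}^{a_k + L} |f|\bigr)^\tau \leq (2 L C)^\tau$, and count the indices: the relevant $k$'s lie in an interval of length $(4L+2)/\beta_0$, which gives at most $1 + (4L+2)/\beta_0 \leq 3 \bigl(1 + (L+1)/\beta_0\bigr)$ terms. For the \emph{far} part, observe that $|\beta_0 k + M| \geq 2L - 1$ implies $1 + |a_k| - L \geq \tfrac{1}{2}(1+|a_k|)$, so that $x \in [a_k - L, a_k + L]$ gives $(1+|x|)^{-(N+2)/\tau} \leq 2^{(N+2)/\tau}(1+|a_k|)^{-(N+2)/\tau}$. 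Plugging this into the defining estimate for $|f|$, integrating and raising to the power $\tau$ yields
\[
  \left(\int_{a_k - L}^{a_k + L} |f(x)| \, dx\right)^{\!\tau}
  \leq (2 L C)^\tau \cdot 2^{N+2} \cdot (1 + |a_k|)^{-(N+2)},
\]
so that the summand becomes bounded by $(2LC)^\tau \cdot 2^{N+2} \cdot (1+|a_k|)^{-2}$.

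It then remains to sum $(1 + |a_k|)^{-2}$ over all $k \in \Z$. Here I would use the standard comparison with a Riemann sum: since $t \mapsto (1+|t|)^{-2}$ is unimodal with total integral $\int_\R (1+|t|)^{-2} \, dt = 2$, one has
\[
  \sum_{k \in \Z} (1 + |\beta_0 k + M|)^{-2}
  \leq 1 + \frac{1}{\beta_0} \int_\R (1 + |t|)^{-2} \, dt
  =    1 + \frac{2}{\beta_0}
  \leq 2 \Bigl(1 + \frac{1}{\beta_0}\Bigr)
  \leq 2 \Bigl(1 + \frac{L+1}{\beta_0}\Bigr),
\]
the ``$+1$'' accounting for the single maximal term of the unimodal profile. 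Combining the near-part bound $3 \cdot 2^N (1+L)^N (2LC)^\tau \cdot \bigl(1 + (L+1)/\beta_0\bigr)$ with the far-part bound $2 \cdot 2^{N+2} (2LC)^\tau \cdot \bigl(1 + (L+1)/\beta_0\bigr)$ and using $(1+L^N) \geq \max\{1, L^N\}$ to absorb the $(1+L)^N \leq 2^N (1+L^N)$ factor, one obtains a total of the claimed form, where the constant $2^{3+\tau+N} \cdot 10^{N+3}$ provides comfortable room to absorb the numerical prefactors.

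The only genuinely delicate point is the bookkeeping of constants, in particular verifying that the chosen threshold $|a_k| > 2L + 1$ really does yield the factor $2^{(N+2)/\tau}$ on the decay estimate (this is why the shift by $1$ is included, so that $1 + |a_k| - L \geq (1 + |a_k|)/2$ holds unconditionally on the far set), and treating correctly the case $\beta_0 \geq 1$ versus $\beta_0 < 1$ in the Riemann-sum comparison. None of this is conceptually difficult; the argument is a routine weighted-summation estimate once the splitting and the transfer-of-decay step are in place.
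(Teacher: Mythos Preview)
The paper does not actually give its own proof of this lemma; it merely cites Lemma~C.1 of \cite{AlphaShearletSparsity}. Your argument is correct and self-contained, and in fact parallels closely the method the paper does spell out in Appendix~\ref{sec:MainLemmaProof} for the companion Lemma~\ref{lem:MainLemma}: there too the sum is split according to whether $\beta_1(k+M)$ is below or above the threshold $2L$, the near part is handled by a crude bound and a count of indices, and the far part by transferring the decay of $f$ onto the lattice point and then invoking the auxiliary estimate $\sum_{k}(1+|\beta(k+x)|)^{-2}\lesssim 1+\beta^{-1}$ (Lemma~\ref{lem:SumEstimate}). Your unimodality/Riemann-sum argument for this last sum is a cleaner variant of the three-case computation the paper gives for Lemma~\ref{lem:SumEstimate}.

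One tiny arithmetic slip: the count of near indices satisfies $1+(4L+2)/\beta_0 \leq 4\bigl(1+(L+1)/\beta_0\bigr)$, but not in general $\leq 3\bigl(1+(L+1)/\beta_0\bigr)$ (take $L$ large and $\beta_0$ small). This has no effect on the conclusion, since the stated constant $2^{3+\tau+N}\cdot 10^{N+3}$ absorbs the factor $4$ with room to spare.
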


\begin{lem}\label{lem:MainLemma}
  Let $N, \gamma \in [0,\infty)$, $L,\tau \in (0,\infty)$, $M \in \R$ and $0 < \beta_1 \leq \beta_2$.
  %Let $0 < \beta_{1} \leq \beta_{2}$; let $N, \gamma \geq 0$;
  %let $L, \tau > 0$; and let $M \in \R$.
  Furthermore, let $f: \R \to \CC$ be measurable and assume that there is
  \begin{equation}
    q \geq 1 + \frac{N+2}{\tau}
    \quad
    \text{such that}
    \qquad
    \left|f(x)\right|
    \leq C_{0} \cdot \left(1+|x|\right)^{-q}
    \qquad \text{for all } x \in \R \, ,
    %\quad \text{and some} \quad
    \label{eq:MainLemmaFDecayAssumption}
  \end{equation}
  and let $C := 2^{4 + N + \tau + \tau q}$.
  Then
  %Then
  \begin{align*}
    %& \hphantom{\leq {}}
      \sum_{\substack{k \in \Z \text{ with}\\
                      k + M \geq 0}} \!
        \left[
          \left|\gamma \!\cdot\! (k+M)\right|^{N}
          \!\cdot\!
          \left(\!
            \int_{\beta_{1} \cdot (k+M)-L}^{\beta_{2} \cdot (k+M) + L}
              \!\! \left|f(x)\right|
            \, d x
          \right)^{\!\!\tau}\,
        \right] % \\
      \!\leq\! %2^{4+N+\tau+p q}
               C \!\cdot\! \left(\frac{\beta_{2}}{\beta_{1}}\right)^{\!\!\tau} \!
               \left(\frac{\gamma}{\beta_{1}}\right)^{\!\!N}
               \!\!\cdot \, C_{0}^{\tau} \cdot \left(1\!+\!L^{\tau + N}\right)
               \cdot \left(\!1\!+\!\frac{L \!+\! 1}{\beta_{1}}\right)\!.
    \qedhere
  \end{align*}
\end{lem}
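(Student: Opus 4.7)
My plan is to decompose the integration interval $[\beta_1(k+M)-L,\beta_2(k+M)+L]$ — throughout which $y := k+M \ge 0$ — into the symmetric piece $[\beta_1 y - L, \beta_1 y + L]$ and the residual piece $[\beta_1 y + L, \beta_2 y + L]$, obtaining the pointwise bound
\[
\int_{\beta_1 y - L}^{\beta_2 y + L}|f(x)|\,dx \le F_1(y) + F_2(y),
\]
where $F_1, F_2$ are the integrals over those two subintervals. Using the elementary inequality $(a+b)^\tau \le 2^\tau(a^\tau+b^\tau)$ valid for $\tau>0$, I would then bound the full sum by $2^\tau(S_1+S_2)$, where $S_i := \sum_{k+M\ge0}|\gamma(k+M)|^N F_i(k+M)^\tau$, and treat $S_1$ via Lemma~\ref{lem:AnneLemma} and $S_2$ by the polynomial decay of $f$ directly.

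For $S_1$, I would exploit that $\gamma y = (\gamma/\beta_1)(\beta_1 y) \le (\gamma/\beta_1)(1+|\beta_1(k+M)|)$ for $y \ge 0$, so $|\gamma(k+M)|^N \le (\gamma/\beta_1)^N (1+|\beta_1(k+M)|)^N$. After enlarging the restricted sum to one over all $k \in \Z$, I would apply Lemma~\ref{lem:AnneLemma} with the parameters $\beta_0 \leftarrow \beta_1$ and the shift replaced by $\beta_1 M$, so that $\beta_0 k + (\beta_1 M) = \beta_1(k+M)$ coincides with the centres of our intervals. The hypothesis $q \ge 1 + (N+2)/\tau$ is precisely the decay condition needed there, yielding
\[
S_1 \le (\gamma/\beta_1)^N \cdot C' \cdot C_0^\tau \cdot L^\tau (1+L^N)\bigl(1+(L+1)/\beta_1\bigr)
\]
with an explicit absolute constant $C'$.

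For $S_2$, I would observe that for $y \ge 0$ the range $[\beta_1 y + L, \beta_2 y + L]$ lies in $[0,\infty)$, whence $F_2(y) \le C_0(\beta_2-\beta_1)y(1+\beta_1 y)^{-q}$. Pulling out the factor $(\gamma/\beta_1)^N(\beta_2/\beta_1)^\tau$ by means of the elementary estimates $y \le (1+\beta_1 y)/\beta_1$ and $\beta_2-\beta_1\le\beta_2$ would reduce the remaining task to bounding $\sum_{k+M\ge 0}(1+\beta_1(k+M))^{N+\tau-q\tau}$. The condition $q\tau \ge \tau + N + 2$ ensures that the exponent is at most $-2$, and comparing this sum to $\int_0^\infty (1+\beta_1 t)^{-2}\,dt = 1/\beta_1$ bounds it by $1+1/\beta_1 \le 1+(L+1)/\beta_1$.

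Adding the two estimates and tracking constants would yield the claim. The main obstacle is not conceptual but purely bookkeeping: one must verify that the combined constant --- which comes from the explicit factor $2^{3+\tau+N}\cdot 10^{N+3}$ produced by Lemma~\ref{lem:AnneLemma}, together with the $2^\tau$ from the $(a+b)^\tau$ split and the rough bound $L^\tau(1+L^N) \le 2(1+L^{N+\tau})$ --- really is dominated by the compact form $C = 2^{4+N+\tau+\tau q}$. This is precisely where the lower bound on $q\tau$ is used again, to absorb the polynomial powers of $N$ and $L$ and the extra factor $10^{N+3}$ into the exponential prefactor.
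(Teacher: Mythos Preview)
Your decomposition is genuinely different from the paper's and is conceptually sound: splitting the integration interval into a symmetric piece handled by Lemma~\ref{lem:AnneLemma} and a residual tail handled by direct decay would indeed produce a bound of the correct \emph{shape}. The gap is exactly where you flagged it, but it does not close. Lemma~\ref{lem:AnneLemma} produces the prefactor $2^{3+\tau+N}\cdot 10^{N+3}$, and after the $2^\tau$ from the $(a+b)^\tau$ split and the factor $2$ from $L^\tau(1+L^N)\le 2(1+L^{\tau+N})$ you need
\[
  2^{4+2\tau+N}\cdot 10^{N+3} \;\le\; 2^{4+N+\tau+\tau q},
  \qquad\text{i.e.}\qquad
  \tau q \;\ge\; \tau + (N+3)\log_2 10 .
\]
But the hypothesis only gives $\tau q \ge \tau + N + 2$, and since $\log_2 10 > 3$ this fails for every $N\ge 0$. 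The factor $10^{N+3}$ is exponential in $N$ with base $10$, while $2^{\tau q}$ is (via the hypothesis) only guaranteed to beat $2^{N}$; there is no way to absorb one into the other using the stated lower bound on $q$.

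The paper avoids this by taking a different route: instead of splitting the integration interval, it splits the \emph{index set} into the regime $\beta_1(k+M)\ge 2L$ (where every $x$ in the interval satisfies $x\ge \tfrac12\beta_1(k+M)$, so the integrand is essentially constant and one can bound the integral by length times pointwise value) and the regime $0\le \beta_1(k+M)\le 2L$ (finitely many terms, trivially bounded). The large-$k$ sum is then controlled by the elementary Lemma~\ref{lem:SumEstimate}, which contributes only a factor $2^4$ rather than $10^{N+3}$. This is what makes the tight constant $C=2^{4+N+\tau+\tau q}$ attainable. Your interval-splitting idea would give a correct proof of the lemma with a weaker constant of the form $C' = c^{\,N+\tau}$ for some absolute $c>2$, which suffices for the qualitative statement of Theorem~\ref{thm:MainTechnicalResult} but not for the explicit bound~\eqref{eq:BChoice}.
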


\begin{proof}
  See Appendix~\ref{sec:MainLemmaProof}.
\end{proof}

%Below, we will also need the following variant of Lemma \ref{lem:MainLemma}:
We shall also need a slightly reformulated form of this lemma.

\begin{cor}\label{cor:MainLemmaNegativVersion}
  %Let $0 < \beta_2 \leq \beta_1$; let $N, \gamma \geq 0$; let $L, \tau > 0$;
  %let $M \in \R$,
  Let $N, \gamma \in [0,\infty)$, $L,\tau \in (0,\infty)$, $M \in \R$, and $0 < \beta_2 \leq \beta_1$.
  Furthermore, let $f : \R \to \CC$ be measurable and such that
  \eqref{eq:MainLemmaFDecayAssumption} is satisfied.

  Then, with the same constant $C$ as in Lemma~\ref{lem:MainLemma},
  \[
    \sum_{\substack{k \in \Z \text{ with} \\ k + M \leq 0}}
    \left[
      |\gamma \!\cdot\! (k+M)|^N
      \!\cdot\! \left(
                  \int_{\beta_1 \cdot (k+M) - L}^{\beta_2 \cdot (k+M) + L}
                    |f(x)|
                  \, dx
                \right)^{\!\!\tau} \,
    \right]
    \leq C
         \cdot \left(\frac{\beta_1}{\beta_2}\right)^\tau \!
               \left(\frac{\gamma}{\beta_2}\right)^N \!\!\!
         \cdot \, C_0^\tau
         \cdot (1 + L^{\tau + N}) \,
         \cdot \left(\! 1 \!+\! \frac{L \!+\! 1}{\beta_2}\right) .
  \]
\end{cor}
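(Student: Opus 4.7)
}

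The plan is to reduce the corollary to Lemma~\ref{lem:MainLemma} by a reflection argument, since the two statements are symmetric to each other under the substitutions $k \mapsto -k$ and $x \mapsto -x$. Concretely, I would first introduce the new variables $\widetilde{k} := -k$ and $\widetilde{M} := -M$, so that the condition $k+M \leq 0$ becomes $\widetilde{k} + \widetilde{M} = -(k+M) \geq 0$, and the weight transforms trivially via $|\gamma\cdot(k+M)|^N = |\gamma\cdot(\widetilde{k}+\widetilde{M})|^N$.

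Next I would reparameterise the integral by setting $x = -y$. Since $\beta_1(k+M) - L = -\beta_1(\widetilde{k}+\widetilde{M}) - L$ and $\beta_2(k+M)+L = -\beta_2(\widetilde{k}+\widetilde{M})+L$, the integral transforms into
\[
  \int_{\beta_1(k+M)-L}^{\beta_2(k+M)+L} |f(x)| \, dx
  = \int_{\beta_2(\widetilde{k}+\widetilde{M}) - L}^{\beta_1(\widetilde{k}+\widetilde{M}) + L} |\widetilde{f}(y)| \, dy \, ,
\]
where $\widetilde{f}(y) := f(-y)$. The key point is that the roles of $\beta_1$ and $\beta_2$ are interchanged, so after the reflection the inner factor $\beta_2$ takes the place of the ``lower slope'' and $\beta_1$ takes the place of the ``upper slope.'' The hypothesis $\beta_2 \leq \beta_1$ then corresponds exactly to the hypothesis $\widetilde{\beta}_1 \leq \widetilde{\beta}_2$ required in Lemma~\ref{lem:MainLemma} with $\widetilde{\beta}_1 := \beta_2$ and $\widetilde{\beta}_2 := \beta_1$.

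Then I would verify that $\widetilde{f}$ inherits the decay hypothesis from $f$: since $|\widetilde{f}(y)| = |f(-y)| \leq C_0 \, (1+|-y|)^{-q} = C_0 \, (1+|y|)^{-q}$, the estimate \eqref{eq:MainLemmaFDecayAssumption} holds for $\widetilde{f}$ with the same constants $C_0$ and $q$. Applying Lemma~\ref{lem:MainLemma} to the reflected sum with parameters $\widetilde{f}, \widetilde{M}, \widetilde{\beta}_1 = \beta_2, \widetilde{\beta}_2 = \beta_1$ yields the bound
\[
  C \cdot \Big(\frac{\widetilde{\beta}_2}{\widetilde{\beta}_1}\Big)^{\!\tau} \Big(\frac{\gamma}{\widetilde{\beta}_1}\Big)^{\!N} \cdot C_0^\tau \cdot (1 + L^{\tau+N}) \cdot \Big(1 + \frac{L+1}{\widetilde{\beta}_1}\Big) \, ,
\]
which after substituting back $\widetilde{\beta}_1 = \beta_2$ and $\widetilde{\beta}_2 = \beta_1$ is precisely the right-hand side of the corollary.

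Since the argument is a clean change of variables, there is no real obstacle; the only care needed is bookkeeping to ensure that the integration limits invert correctly and that the hypothesis $\beta_2 \leq \beta_1$ matches $\widetilde{\beta}_1 \leq \widetilde{\beta}_2$ when Lemma~\ref{lem:MainLemma} is invoked.
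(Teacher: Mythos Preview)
Your proposal is correct and matches the paper's own proof essentially line for line: the paper also sets $\widetilde{M} := -M$, $\widetilde{\beta}_1 := \beta_2$, $\widetilde{\beta}_2 := \beta_1$, $\widetilde{f}(x) := f(-x)$, substitutes $y = -x$ in the integral and $\ell = -k$ in the sum, and then invokes Lemma~\ref{lem:MainLemma} with the tilded data. There is nothing to add.
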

\begin{proof}
  See Appendix~\ref{sec:MainLemmaProof}.
\end{proof}

%To not disrupt the flow of proving \eqref{eq:TargetEstimate}, we defer the
%proofs of Lemma \ref{lem:MainLemma} and
%Corollary \ref{cor:MainLemmaNegativVersion} to \ref{sec:MainLemmaProof}.

To use the preceding results for proving \eqref{eq:TargetEstimate}, we have to
verify that the domain of integration $\Omega_{i,i'}$ in
\eqref{eq:MainDomain} is contained in a Cartesian product of intervals
that comply with the requirements of the lemmata.
%In view of these results, we continue by showing that $\Omega_{i,i'}$
%is indeed contained in a suitable cartesian product of intervals.
%In fact, setting
To this end, let us define
\begin{equation}
  R_{i_0,i_0 '}
  %:= R_{j,\ell}^{-1} R_{j',\ell'}
  :=  \left(
        \begin{matrix}
          %\cos \vartheta_{i,i'}^{(0)} & - \sin \vartheta_{i,i'}^{(0)} \\
          %\sin \vartheta_{i,i'}^{(0)} & \cos \vartheta_{i,i'}^{(0)}
              \cos \vartheta_{i_0, i_0 '}^{(0)}
          & - \sin \vartheta_{i_0, i_0 '}^{(0)} \\[0.2cm]
              \sin \vartheta_{i_0, i_0 '}^{(0)}
          &   \cos \vartheta_{i_0, i_0 '}^{(0)}
        \end{matrix}
      \right)
  =  \left(
       \begin{matrix}
         \cos \vartheta_{i_0, i_0 '} & - \sin \vartheta_{i_0, i_0 '} \\
         \sin \vartheta_{i_0, i_0 '} & \cos \vartheta_{i_0, i_0 '}
       \end{matrix}
     \right) \, ,
  \label{eq:TransitionRotation}
\end{equation}
and recall that $T_i = R_{j,\ell} A_{j}$, to conclude that
\begin{equation}
  \begin{split}
    \Omega_{i,i'}
    = T_i^{-1} (Q_{i'} - b_i)
    & = A_{j}^{-1}
        R_{j,\ell}^{-1}
        \big(
          R_{j',\ell'} (A_{j'} Q
          + c_{j',m'})
          - R_{j,\ell} \, c_{j,m}
        \big) \\
    & = A_{j}^{-1} \big( R_{i_0,i_0'} Q_{j',m',0} - c_{j,m} \big) \, .
  \end{split}
  \label{eq:SeriesMainDomainEstimate1}
\end{equation}
Next, we investigate the set $Q_{j',m',0}$ and, in doing so, introduce
a convenient notation:

\begin{lem}\label{lem:InclusionLemma1}
  For $i' = (j',m',\ell') \in I_0$, let us define
  \[
    x_{i_\ast '}^{-} := 2^{j' - 1} + (m' - \eps) \cdot 2^{\alpha j'},
    \qquad
    %\quad \text{and} \quad
    x_{i_\ast '}^{+} := 2^{j' - 1} + (m' + 1 + \eps) \cdot 2^{\alpha j'} \, ,
    \quad \text{and} \quad
    y_{j'} := 2^{\beta j' + 1} \, .
  \]
  %Furthermore, %set $y_{i'}^{-} := -2^{\beta j' + 1}$ and
  %set $y_{j'} := 2^{\beta j' + 1}$.
  Then
  %$\frac{1}{4} \cdot 2^{j'} \leq x_{i_\ast '}^{-}
  %                          \leq x_{i_\ast '}^{+} \leq 4 \cdot 2^{j'}$
  \[
    Q_{j',m',0}
    \subset [x_{i_\ast '}^{-} \,,\, x_{i_\ast '}^{+}]
            \times [-y_{j'} \,,\, y_{j'}]
    \qquad \text{and} \qquad
    \frac{1}{4} \cdot 2^{j'}
    \leq x_{i_\ast '}^{-}
    \leq x_{i_\ast '}^{+}
    \leq 4 \cdot 2^{j'} \, .
  \]
\end{lem}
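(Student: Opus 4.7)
The plan is to derive this lemma directly from the explicit description of the (unrotated) rectangle $Q_{j',m',0}$ that was already computed earlier in the paper, combined with the definition of $m_{j'}^{\max}$. In other words, the statement is essentially a bookkeeping step that repackages bounds already present in Equation~\eqref{eq:UnrotatedQExplicit} and Equation~\eqref{eq:MaxValues} into a form tailored for the sums treated by Lemma~\ref{lem:AnneLemma}, Lemma~\ref{lem:MainLemma} and Corollary~\ref{cor:MainLemmaNegativVersion}.

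For the rectangle inclusion, I would simply recall from Equation~\eqref{eq:UnrotatedQExplicit} that
\[
 Q_{j',m',0} = \bigl(2^{j'-1} + 2^{\alpha j'}(m'-\eps),\; 2^{j'-1} + 2^{\alpha j'}(m'+1+\eps)\bigr) \times \bigl(-(1+\eps) 2^{\beta j'},\; (1+\eps) 2^{\beta j'}\bigr),
\]
so that with the definitions of $x_{i_\ast'}^\pm$ the first coordinate interval is $(x_{i_\ast'}^-, x_{i_\ast'}^+)$, while for the second coordinate it suffices to note that $1 + \eps < 2$ (since $\eps < 1/32$), which gives $(1+\eps) 2^{\beta j'} \leq 2 \cdot 2^{\beta j'} = y_{j'}$. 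This yields the desired inclusion.

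For the numerical bounds on $x_{i_\ast'}^\pm$, I would argue as follows. Since $m' \geq 0$ and $\eps < 1/4$, the lower bound
\[
 x_{i_\ast'}^- \geq 2^{j'-1} - \eps \cdot 2^{\alpha j'} \geq 2^{j'-1} - \eps \cdot 2^{j'} = \bigl(\tfrac{1}{2} - \eps\bigr) 2^{j'} \geq \tfrac{1}{4} \cdot 2^{j'}
\]
is immediate (here I use $\alpha \leq 1$ to absorb $2^{\alpha j'} \leq 2^{j'}$). For the upper bound, the key input is Equation~\eqref{eq:MaxValues}, which gives $m' \leq m_{j'}^{\max} = \lceil 2^{(1-\alpha)j'-1}\rceil \leq 2^{(1-\alpha)j'-1} + 1$. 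Substituting this into the definition of $x_{i_\ast'}^+$ and using $\alpha \leq 1$ once again to bound $2^{\alpha j'} \leq 2^{j'}$ yields
\[
 x_{i_\ast'}^+ \leq 2^{j'-1} + \bigl(2^{(1-\alpha)j' - 1} + 2 + \eps\bigr) \cdot 2^{\alpha j'} = 2^{j'} + (2 + \eps) \cdot 2^{\alpha j'} \leq (3 + \eps) \cdot 2^{j'} \leq 4 \cdot 2^{j'}.
\]
The inequality $x_{i_\ast'}^- \leq x_{i_\ast'}^+$ is visible from the definitions. There is no real obstacle here; the only thing to watch is consistent use of the standing hypotheses $0 \leq \beta \leq \alpha \leq 1$ and $\eps \in (0, 1/32)$ so that the constants $1/4$ and $4$ work uniformly in $(j',m',\ell') \in I_0$.
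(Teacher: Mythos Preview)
Your proposal is correct and follows essentially the same argument as the paper: both derive the rectangle inclusion from the explicit form of $Q_{j',m',0}$ (you cite Equation~\eqref{eq:UnrotatedQExplicit}, the paper recomputes it from $Q$ and $A_{j'}$), and the bounds on $x_{i_\ast'}^{\pm}$ are obtained identically via $m' \geq 0$, $\eps < 1/4$, $\alpha \leq 1$, and $m' \leq m_{j'}^{\max} \leq 2^{(1-\alpha)j'-1} + 1$.
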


\begin{proof}
  Since
  \(
    Q =       (-\eps, 1+\eps) \times (-1-\eps, 1+\eps)
      \subset (-\eps,1+\eps) \times (-2 \,,\, 2)\vphantom{\sum_j}
  \)
  and $A_{j'} = \mathrm{diag}(2^{\alpha j'}, 2^{\beta j'})$,
  %and since \eqref{eq:GammaEstimate} implies $0 < \gamma_{j',m'} \leq \frac{3}{2} \cdot 2^{\beta j'}$,
  we conclude that
  \begin{align*}
    Q_{j',m',0}
    = A_{j'} Q + c_{j',m'}
    & = [
         2^{\alpha j'} \cdot (-\eps, 1+\eps)
         \times 2^{\beta j'} \cdot (- 2, 2)
        ]
        + \left(
            \begin{smallmatrix}
              \raisebox{0.2cm}{$\scriptstyle 2^{j' - 1} \,+\, m' \cdot 2^{\alpha j'}$} \\
              0
            \end{smallmatrix}
          \right) \\
    & \subset [2^{j' - 1} + 2^{\alpha j'} (m' - \eps),
               2^{j'-1} + 2^{\alpha j'} (m' + 1 + \eps)]
              \times [-2^{\beta j' + 1}, 2^{\beta j' + 1}] \, ,
    %&& \qedhere
  \end{align*}
  which completes the proof of the first claim of the lemma.

  To prove the second claim, let us recall that
  $m' \leq m_{j'}^{\max} \leq 1 + 2^{(1-\alpha) j' - 1}$, whence
  %\begin{equation}
  \[
    x_{i_\ast '}^{+}
    \leq 2^{j' - 1} + (2^{(1-\alpha) j' - 1} + 2 + \eps) \cdot 2^{\alpha j'}
    = 2^{j'} + (2 + \eps) \cdot 2^{\alpha j'}
    \leq 4 \cdot 2^{j'} \, .
  \]
    %\label{eq:XiPlusUpperBound}
  %\end{equation}
  Clearly, $x_{i_\ast '}^{-} \leq x_{i_\ast '}^{+}$.
  Finally, since $m' \geq 0$ and $\eps \leq 1/32 \leq 1/4$, we see that
  %\begin{equation}
  \[
    x_{i_\ast '}^{-}
    \geq 2^{j' - 1} - \eps \cdot 2^{\alpha j'}
    \geq 2^{j' - 1} - \eps \cdot 2^{j'}
    =    2^{j'} \cdot \big( \tfrac{1}{2} - \eps \big)
    \geq \tfrac{1}{4} \cdot 2^{j'} \, .
    \qedhere
  \]
  %  \label{eq:XiMinusLowerBound}
  %\end{equation}
\end{proof}

%As the next step, we also incorporate the effect of the rotation matrix
%$R_{i_0,i_0 '}$.
%We now show that the set $\Omega_{i,i'}$ can indeed be included in a Cartesian
%product of certain intervals.
We now investigate the set $\Omega_{i,i'}$ defined in \eqref{eq:MainDomain}.

%\todo[inline]{Since the proofs are quite long and tedious, it might be a good
%idea to defer the proofs of Lemmas \ref{lem:InclusionLemma2},
%\ref{lem:IntervalEstimateLSummation},
%and \ref{lem:IntervalEstimateLPrimeSummation}
%to an appendix.}

\begin{lem}\label{lem:InclusionLemma2}
  Recall that $\vartheta_{i_0, i_0 '} \in [0,2\pi)$ and define
  \begin{equation}
    \theta_{i_0 , i_0 '}
    :=  \vartheta_{i_0, i_0 '} - \iota \cdot \tfrac{\pi}{2}
    \in \big[0,\tfrac{\pi}{2}\big)
    \quad \text{if} \quad
    \vartheta_{i_0, i_0 '}
    \in \iota \cdot \tfrac{\pi}{2} + \big[0,\tfrac{\pi}{2}\big)
    \quad \text{for some} \quad \iota \in \{0,1,2,3\}
    \, .
    \label{eq:RenormalizedAngleDifference}
  \end{equation}
  %\[
  %  \theta_{i,i'}
  %  := \begin{cases}
  %       \vartheta_{i,i'} \,\, ,
  %       & \text{ if } \vartheta_{i,i'} \in [0,\frac{\pi}{2}) \, , \\
  %       \vartheta_{i,i'} - \frac{\pi}{2} \,\, ,
  %       & \text{ if } \vartheta_{i,i'} \in [\frac{\pi}{2}, \pi) \, , \\
  %       \vartheta_{i,i'} - \pi \,\, ,
  %       & \text{ if } \vartheta_{i,i'} \in [\pi,\frac{3}{2} \pi) \, , \\
  %       \vartheta_{i,i'} - \frac{3}{2} \pi \,\, ,
  %       & \text{ if } \vartheta_{i,i'} \in [\frac{3}{2} \pi , 2\pi) \, .
  %     \end{cases}
  %\]
  Let us also define
  \begin{equation}
    u_{i_0, i'}^{\pm}
    := \begin{cases}
         \phantom{-} x_{i_\ast '}^{\pm} \cdot \cos \theta_{i_0, i_0 '}
         \pm y_{j'} \cdot \sin \theta_{i_0, i_0 '} \,\, ,
         & \text{ if } \vartheta_{i_0, i_0 '} \in [0,\frac{\pi}{2}) \, , \\
         - x_{i_\ast '}^{\mp} \cdot \sin \theta_{i_0, i_0 '}
         \pm y_{j'} \cdot \cos \theta_{i_0, i_0 '} \,\, ,
         & \text{ if } \vartheta_{i_0, i_0 '} \in [\frac{\pi}{2}, \pi) \, , \\
         - x_{i_\ast '}^{\mp} \cdot \cos \theta_{i_0, i_0 '}
         \pm y_{j'} \cdot \sin \theta_{i_0, i_0 '} \,\, ,
         & \text{ if } \vartheta_{i_0, i_0 '} \in [\pi,\frac{3}{2} \pi) \, , \\
         \phantom{-} x_{i_\ast '}^{\pm} \cdot \sin \theta_{i_0, i_0'}
         \pm y_{j'} \cdot \cos \theta_{i_0, i_0 '} \,\, ,
         & \text{ if } \vartheta_{i_0, i_0 '} \in [\frac{3}{2} \pi , 2\pi) \, ,
       \end{cases}
    \label{eq:UBoundsDefinition}
  \end{equation}
  and
  \begin{equation}
    v_{i_0, i'}^{\pm}
    := \begin{cases}
         \phantom{-}
             x_{i_\ast '}^{\pm} \cdot \sin \theta_{i_0, i_0 '}
         \pm y_{j'} \cdot \cos \theta_{i_0, i_0 '} \,\, ,
         & \text{ if } \vartheta_{i_0, i_0 '} \in [0,\frac{\pi}{2}) \, , \\
         \phantom{-}
             x_{i_\ast '}^{\pm} \cdot \cos \theta_{i_0, i_0 '}
         \pm y_{j'} \cdot \sin \theta_{i_0, i_0 '} \, ,
         & \text{ if } \vartheta_{i_0, i_0 '} \in [\frac{\pi}{2}, \pi) \, , \\
         -   x_{i_\ast '}^{\mp} \cdot \sin \theta_{i_0, i_0 '}
         \pm y_{j'} \cdot \cos \theta_{i_0, i_0 '} \, ,
         & \text{ if } \vartheta_{i_0, i_0 '} \in [\pi,\frac{3}{2} \pi) \, , \\
         %\phantom{-} x_{i'}^{\pm} \cdot \sin \theta_{i,i'}
         %\pm y_{j'} \cdot \cos \theta_{i,i'} \,\, ,
         -   x_{i_\ast '}^{\mp} \cdot \cos \theta_{i_0, i_0 '}
         \pm y_{j'} \cdot \sin \theta_{i_0, i_0 '} \, ,
         & \text{ if } \vartheta_{i_0, i_0 '} \in [\frac{3}{2} \pi , 2\pi) \, .
       \end{cases}
    \label{eq:VBoundsDefinition}
  \end{equation}
  Then
  \begin{equation}
    R_{i_0, i_0'} \, Q_{j',m',0}
    \subset [u_{i_0, i'}^{-} \,,\, u_{i_0, i'}^{+}]
            \times [v_{i_0, i'}^{-}, v_{i_0, i'}^{+}]
    \label{eq:SpeciallyRotatedBaseSet}
  \end{equation}
  and
  \begin{equation}
    \Omega_{i,i'}
    \subset I_1^{(i,i')} \times I_2^{(i,i')} \, ,
    %=:      I_1 \times I_2 \,
    \label{eq:MainDomainCartesianInclusion}
  \end{equation}
  where
  \begin{equation}
    \begin{split}
      & I_1^{(i,i')}
        := I_1
        := \big[
             2^{-\alpha j} \cdot (
                                  u_{i_0, i'}^{-}
                                  - 2^{j-1}
                                  - m \cdot 2^{\alpha j}
                                 )
             \,,\,
             2^{-\alpha j} \cdot (
                                  u_{i_0, i'}^{+}
                                  - 2^{j-1}
                                  - m \cdot 2^{\alpha j}
                                 )
           \big] \\
      \quad \text{and} \quad
      & I_2^{(i,i')}
        := I_2
        := [
            2^{-\beta j} \cdot v_{i_0, i'}^{-}
            \,\,,\,\,
            2^{-\beta j} \cdot v_{i_0, i'}^{+}
           ] \, .
    \end{split}
    \label{eq:MainDomainIntervalDefinition}
  \end{equation}
\end{lem}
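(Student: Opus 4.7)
The plan is to start from the chain of equalities already derived in~\eqref{eq:SeriesMainDomainEstimate1},
\[
  \Omega_{i,i'} = A_j^{-1}\bigl(R_{i_0,i_0'} Q_{j',m',0} - c_{j,m}\bigr),
\]
and to reduce everything to controlling the image of the axis-aligned enclosing rectangle for $Q_{j',m',0}$ under the rotation $R_{i_0,i_0'}$. Specifically, Lemma~\ref{lem:InclusionLemma1} already provides the tight inclusion
\(
  Q_{j',m',0} \subset [x_{i_\ast'}^-, x_{i_\ast'}^+] \times [-y_{j'}, y_{j'}],
\)
so monotonicity of $R_{i_0,i_0'}(\,\cdot\,)$ as a linear map reduces the first inclusion~\eqref{eq:SpeciallyRotatedBaseSet} to computing, in each coordinate, the minimum and maximum of the affine function $(x,y) \mapsto x\cos\vartheta_{i_0,i_0'} \mp y\sin\vartheta_{i_0,i_0'}$ or $(x,y) \mapsto x\sin\vartheta_{i_0,i_0'} \pm y\cos\vartheta_{i_0,i_0'}$ over this rectangle.

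Next I would use the normalisation $\vartheta_{i_0,i_0'} \in [0,2\pi)$ from~\eqref{eq:NormalizedAngleDifference} together with the fact that rotations are $2\pi$-periodic to replace $\vartheta_{i_0,i_0'}^{(0)}$ by $\vartheta_{i_0,i_0'}$ in the rotation matrix~\eqref{eq:TransitionRotation}. Then I would split $\vartheta_{i_0,i_0'} = \iota \cdot \tfrac{\pi}{2} + \theta_{i_0,i_0'}$ as in~\eqref{eq:RenormalizedAngleDifference} and apply the elementary addition formulas
\(
  \cos\bigl(\iota\tfrac{\pi}{2}+\theta\bigr), \sin\bigl(\iota\tfrac{\pi}{2}+\theta\bigr) \in \{\pm\cos\theta, \pm\sin\theta\},
\)
with $\theta = \theta_{i_0,i_0'} \in [0,\pi/2)$, so that $\cos\theta,\sin\theta \geq 0$. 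In each of the four cases $\iota \in \{0,1,2,3\}$, the coefficients of $x$ and $y$ in the two coordinate functions have definite signs, and since $x \in [x_{i_\ast'}^-, x_{i_\ast'}^+] \subset [0,\infty)$ (Lemma~\ref{lem:InclusionLemma1}) and $y \in [-y_{j'}, y_{j'}]$, the extrema are attained at one of the four corners of the enclosing rectangle. A direct bookkeeping yields precisely the expressions for $u_{i_0,i'}^{\pm}$ and $v_{i_0,i'}^{\pm}$ in~\eqref{eq:UBoundsDefinition} and~\eqref{eq:VBoundsDefinition}.

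Finally, the second inclusion~\eqref{eq:MainDomainCartesianInclusion} follows by applying to \eqref{eq:SpeciallyRotatedBaseSet} the affine map $\xi \mapsto A_j^{-1}(\xi - c_{j,m})$. Since $A_j = \mathrm{diag}(2^{\alpha j}, 2^{\beta j})$ is diagonal with positive entries and $c_{j,m} = (2^{j-1} + m \cdot 2^{\alpha j},\, 0)^t$ only shifts the first coordinate, this map preserves Cartesian products of intervals and yields precisely the intervals $I_1^{(i,i')}$ and $I_2^{(i,i')}$ of~\eqref{eq:MainDomainIntervalDefinition}.

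The only real source of difficulty is the case analysis: the four configurations of signs obtained from the quadrant of $\vartheta_{i_0,i_0'}$ each generate different pairings of corners realising the optimum, and it is easy to mix them up. Once the addition formulas have been tabulated and the sign of each coefficient determined, however, each case reduces to an elementary one-step optimisation of a linear functional on a rectangle, so the argument is bookkeeping-heavy rather than conceptually hard.
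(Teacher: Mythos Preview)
Your proposal is correct and follows essentially the same approach as the paper's proof: both use Lemma~\ref{lem:InclusionLemma1} to enclose $Q_{j',m',0}$ in the axis-aligned rectangle, express $\cos\vartheta_{i_0,i_0'}$ and $\sin\vartheta_{i_0,i_0'}$ in terms of $\cos\theta_{i_0,i_0'},\sin\theta_{i_0,i_0'}\ge 0$ via the four-quadrant case split, read off the coordinate extrema of $R_{i_0,i_0'}\xi$ over the rectangle, and then apply $A_j^{-1}(\,\cdot\,-c_{j,m})$ together with~\eqref{eq:SeriesMainDomainEstimate1} to obtain~\eqref{eq:MainDomainCartesianInclusion}. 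The paper carries out the four cases explicitly where you summarise them as linear optimisation over a rectangle, but the content is identical.
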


\begin{rem*}
  Note that the angle $\theta_{i_0, i_0 '}$ was introduced to ensure that
  $\cos \theta_{i_0, i_0 '} \geq 0$ and $\sin \theta_{i_0, i_0 '} \geq 0$,
  which will prove convenient.
\end{rem*}

\begin{proof}[Proof of Lemma~\ref{lem:InclusionLemma2}]
  See Appendix~\ref{sec:IntervalInclusionProofs}.
\end{proof}

%Since the proof of Lemma \ref{lem:InclusionLemma2} is slightly tedious, but
%not very englightening, we deferred its proof to
%\ref{sec:IntervalInclusionProofs}.

\medskip{}

Having estimated the domain of integration $\Omega_{i,i'}$ in
\eqref{eq:MainDomain}, we still need to estimate the quantities
$(1 + \|T_{i}^{-1} T_{i'}\|)^\sigma$ and $\psi(\eta)$,
for $\eta \in \Omega_{i,i'}$, in such a way that Lemmas~\ref{lem:AnneLemma}
or \ref{lem:MainLemma} can be readily applied.

First, from the definition of $\psi$ and from
Lemma~\ref{lem:SeriesEuclideanNormEstimate}, we infer that
\begin{equation}
  \psi(\eta)
  \leq 2^{5 \kappa_0}
       \cdot 2^{-(1-\alpha) \kappa_0 |j-j'|}
       \cdot (1 + |\eta_1|)^{-\kappa_1}
       \cdot (1 + |\eta_2|)^{-\kappa_2}
  \qquad \forall \, \eta \in \Omega_{i,i'} \, .
  \label{eq:PsiEstimateOnMainDomain}
\end{equation}

Second, by recalling Equation~\eqref{eq:SimpleTransitionMatrixExplicit} from the proof of
Lemma~\ref{lem:CoveringAlmostStructured} and by recalling the definitions
of $\vartheta_{i_0, i_0'}^{(0)}$ and $\vartheta_{i_0, i_0'}$
(see Equations~\eqref{eq:AngleDifference} and \eqref{eq:NormalizedAngleDifference}),
we conclude that
\[
  %\|T_{i}^{-1} T_{i'}\|
  %\leq 6 \cdot \|E_{i,i'}\|
  %\quad \text{where} \quad
  T_i^{-1} T_{i'}
  = E_{i,i'}
  := \left(
       \begin{matrix}
             2^{\alpha (j' - j)}     \cdot \cos \vartheta_{i_0, i_0 '}
         & - 2^{\beta j' - \alpha j} \cdot \sin \vartheta_{i_0, i_0 '} \\[0.1cm]
             2^{\alpha j' - \beta j} \cdot \sin \vartheta_{i_0, i_0 '}
         &   2^{\beta (j' - j)}      \cdot \cos \vartheta_{i_0, i_0 '}
       \end{matrix}
     \right)
  =: \left(
       \begin{matrix}
         E_{i,i'}^{(1)} & E_{i,i'}^{(2)} \\[0.1cm]
         E_{i,i'}^{(3)} & E_{i,i'}^{(4)}
       \end{matrix}
     \right) \, .
\]
To estimate the matrix elements of $E_{i,i'}$, we recall that $\beta \leq \alpha$, whence
\begin{align*}
  |E_{i,i'}^{(1)}| \leq 2^{\alpha (j' - j)}
                   \leq 2^{\alpha (j' - j)_{+}} \, ,
  & \qquad
    |E_{i,i'}^{(2)}| \leq 2^{\beta j' - \alpha j}
                     \leq 2^{\alpha (j' - j)}
                     \leq 2^{\alpha (j' - j)_{+}} \, , \\
  \text{and}
  & \qquad
  |E_{i,i'}^{(4)}| \leq 2^{\beta (j' - j)}
                   \leq 2^{\beta (j' - j)_{+}}
                   \leq 2^{\alpha (j' - j)_{+}} \, .
\end{align*}
Therefore,
\[
  1 + \|T_{i}^{-1} T_{i'}\|
  =    1 + \|E_{i,i'}\|
  \leq 4 \cdot 2^{\alpha (j' - j)_{+}}
       + 2^{\alpha j' - \beta j} |\sin \vartheta_{i_0, i_0'}|
  \leq 4 \cdot 2^{\alpha (j' - j)_{+}}
         \cdot \big(
                 1 + 2^{\alpha j' - \beta j} \, |\sin \vartheta_{i_0, i_0'}|
               \big) \,
\]
and hence
\begin{equation}
  \big( 1 + \|T_{i}^{-1} T_{i'}\| \big)^{\sigma}
  \leq 4^{\sigma} \cdot 2^{\alpha \sigma (j' - j)_{+}}
                  \cdot \big(
                          1
                          + 2^{\alpha j' - \beta j}
                            \, |\sin \vartheta_{i_0, i_0'}|
                        \big)^{\sigma} \, .
  \label{eq:TransitionMatrixNormEstimate}
\end{equation}
Finally, since we need to convert the mean integral in \eqref{eq:MainDomain}
to an ordinary integral, we need to estabish a lower bound the measure of $\Omega_{i,i'}$.
Given \eqref{eq:SeriesMainDomainEstimate1} and recalling that
$A_{j} = \mathrm{diag} (2^{\alpha j}, 2^{\beta j})$, we conclude that
\[
  \lambda(\Omega_{i,i'})
  =    |\det A_{j}^{-1}| \cdot \lambda(Q_{j',m',0})
  =    |\det A_{j}|^{-1} \cdot |\det A_{j'}| \cdot \lambda(Q)
  \geq 2^{(\alpha + \beta) (j' - j)},
  %& \geq 2 \cdot \big( 2^{\alpha j}
  %         \cdot \frac{3}{2} \cdot 2^{\beta j} \big)^{-1}
  %         \cdot \big( 2^{\alpha j'} \cdot 2^{\beta j' - 2} \big)
  %  =    \frac{1}{3} \cdot 2^{(\alpha+\beta)(j' - j)} \,
\]
and hence
\begin{equation}
  [\lambda(\Omega_{i,i'})]^{-1} \leq 2^{(\alpha+\beta) (j-j')} \, .
  \label{eq:MainDomainMeasureLowerBound}
\end{equation}

Combining the estimates \eqref{eq:PsiEstimateOnMainDomain}-\eqref{eq:MainDomainMeasureLowerBound}
and recalling that $\Omega_{i,i'} \subset I_1^{(i,i')} \times I_2^{(i,i')}$, we conclude that
\begin{equation}
  M_{i,i'}^{(2)}
  \leq 4^{\sigma} \cdot 2^{5 \tau \kappa_0} \cdot 2^{\omega_{j,j'}} \cdot M_{i,i'}^{(3)} \,
  \label{eq:MainTermMainEstimate}
\end{equation}
where
\begin{equation}
  \omega_{j,j'} := (s + \tau (\alpha + \beta)) \cdot (j-j')
                   + \alpha \, \sigma \cdot (j' - j)_{+}
                   - (1-\alpha) \, \kappa_0 \, \tau \cdot |j - j'|
  \label{eq:OmegaJDefinition}
\end{equation}
and
\begin{equation}
  M_{i,i'}^{(3)}
  := (1 + 2^{\alpha j' - \beta j} \, |\sin \vartheta_{i_0, i_0 '}|)^{\sigma}
     \cdot \bigg(
             \int_{I^{(i,i')}_{1}}
               (1 + |\eta_1|)^{-\kappa_1} \,
             \, d\eta_1
             \cdot
             \int_{I^{(i,i')}_{2}}
               (1 + |\eta_2|)^{-\kappa_2}
             \, d \eta_2
           \bigg)^\tau \, .
  \label{eq:MainTermLemmaVersion}
\end{equation}

The estimate \eqref{eq:MainTermMainEstimate} and the inclusion
%Given \eqref{eq:MainTermLemmaVersion} and the inclusion
$\Omega_{i,i'} \subset I_1^{(i,i')} \times I_2^{(i,i')}$ from
Lemma~\ref{lem:InclusionLemma2} are the main ingredients for applying
%we are almost ready to apply
Lemmas~\ref{lem:AnneLemma} and \ref{lem:MainLemma}.
This will be done in the next two subsections.
%To do so, we distinguish the two cases indicated by \eqref{eq:TargetEstimate};
%that is, we first consider the case where we sum over $i \in I_0$ and
%then the one where we sum over $i' \in I_0$.

\subsection{Estimating the sum over \texorpdfstring{$i \in I_0$}{i ∈ I₀}}
\label{sub:SummingOverI}

We fix $i' = (j',m',\ell') \in I_0$ for this whole subsection.
To be able to apply Lemmas~\ref{lem:AnneLemma} and \ref{lem:MainLemma},
we investigate the intervals $I_1^{(i,i')}$ and $I_2^{(i,i')}$ a little further.
%in the next two lemmata.
%In order to apply Lemmas \ref{lem:AnneLemma} and \ref{lem:MainLemma},
%we still need to estimate the intervals $I_1^{(i,i')}$ and $I_2^{(i,i')}$
%further, so that they are precisely of the form required in these lemmas.
%With Lemma \ref{lem:InclusionLemma2}, we have shown that the domain of
%integration $\Omega_{i,i'}$ in \eqref{eq:MainDomain} is included in a
%certain cartesian product of intervals.
%In order for Lemmas \ref{lem:AnneLemma} and \ref{lem:MainLemma} to be readily
%applicable, however, we need to further estimate the intervals
%$I_1^{(i,i')}$ and $I_2^{(i,i')}$.
%This is done in the next two lemmata.

%Let us fix $i' = (j',m',\ell') \in I_0$ and $j \in \N$.
%As indicated by \eqref{eq:RenormalizedAngleDifference}, define
%\[
%  J_{\iota}
%  := \left\{
%       (m,\ell) \in \N_0^2
%       \,:\,
%       (j,m,\ell) \in I_0
%       \text{ and }
%       \vartheta_{i,i'}
%       \in \iota \cdot \frac{\pi}{2} + \Big[ 0, \frac{\pi}{2} \Big)
%     \right\}
%  \quad \text{for} \quad
%  \iota \in \{0,1,2,3\} \, .
%\]
%We will now estimate individually the sums
%$\sum_{(j,\ell \in J_{\iota})} M_{i,i'}^{(3)}$ for each value of $\iota$.
%\todo[inline]{We should probably also incorporate the value of $k$,
%with $k$ as in \eqref{eq:NormalizedAngleDifference}, into the index set
%$J_{\iota}$.}

\begin{lem}\label{lem:IntervalEstimateMSummation}
  %Let $i' = (j',m',\ell') \in I_0$ and $j \in \N$, $\ell \in \N_0$ with
  Let $j \in \N$ and $\ell \in \N_0$ such that $\ell \leq \ell_j^{\max}$.
  Then there is a number $S_{j,\ell} \in \R$ such that
  \[
    I_1^{(i,i')}
    \subset \big[
              - m + S_{j,\ell} - 2^{2 + \alpha (j' - j)}
              \,,\,
              - m + S_{j,\ell} + 2^{2 + \alpha (j' - j)} \,
            \big]
    \quad \text{for all } m \in \N_0
    \text{ for which } i = (j, m, \ell) \in I_0 .
  \]
\end{lem}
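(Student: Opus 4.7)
The plan is to read off the statement directly from the explicit formula for $I_{1}^{(i,i')}$ in \eqref{eq:MainDomainIntervalDefinition}. The key observation is that the $m$-dependence enters only through the additive shift $-m$ on both endpoints, so writing $I_{1}^{(i,i')}$ as an interval $[-m + c_{j,\ell} - h_{j,\ell},\, -m + c_{j,\ell} + h_{j,\ell}]$ centered at $-m + c_{j,\ell}$ and identifying $S_{j,\ell} := c_{j,\ell}$ reduces everything to a uniform upper bound on the half-width $h_{j,\ell}$.

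Concretely, I would set
\[
  S_{j,\ell}
  := 2^{-\alpha j} \cdot \tfrac{1}{2}\bigl(u_{i_0, i'}^{+} + u_{i_0, i'}^{-}\bigr)
     - 2^{(1-\alpha)j - 1},
  \qquad
  h_{j,\ell}
  := 2^{-\alpha j - 1} \cdot \bigl(u_{i_0, i'}^{+} - u_{i_0, i'}^{-}\bigr),
\]
noting that both quantities are independent of $m$ because $u_{i_0, i'}^{\pm}$ depends only on $i_0 = (j,\ell)$ and on the globally fixed index $i'$. A direct computation from \eqref{eq:MainDomainIntervalDefinition} then gives $I_{1}^{(i,i')} = [-m + S_{j,\ell} - h_{j,\ell},\, -m + S_{j,\ell} + h_{j,\ell}]$, so it only remains to show $h_{j,\ell} \leq 2^{2 + \alpha(j' - j)}$, i.e.\ $u_{i_0,i'}^{+} - u_{i_0,i'}^{-} \leq 8 \cdot 2^{\alpha j'}$.

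For this bound I would go through the four cases of $\vartheta_{i_0,i_0'}$ in \eqref{eq:UBoundsDefinition}. In each case the difference $u_{i_0, i'}^{+} - u_{i_0, i'}^{-}$ simplifies to either $(x_{i_\ast'}^{+} - x_{i_\ast'}^{-}) \cos\theta_{i_0,i_0'} + 2 y_{j'} \sin\theta_{i_0,i_0'}$ or $(x_{i_\ast'}^{+} - x_{i_\ast'}^{-}) \sin\theta_{i_0,i_0'} + 2 y_{j'} \cos\theta_{i_0,i_0'}$, with $\theta_{i_0,i_0'} \in [0, \pi/2)$ so both $\sin\theta_{i_0,i_0'}, \cos\theta_{i_0,i_0'} \in [0,1]$. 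Substituting $x_{i_\ast'}^{+} - x_{i_\ast'}^{-} = (1 + 2\eps) \cdot 2^{\alpha j'}$ and $y_{j'} = 2^{\beta j' + 1}$ from Lemma~\ref{lem:InclusionLemma1}, and using $\beta \leq \alpha$ together with $\eps < 1/32$, yields
\[
  u_{i_0,i'}^{+} - u_{i_0,i'}^{-}
  \leq (1 + 2\eps) \cdot 2^{\alpha j'} + 2^{\beta j' + 2}
  \leq (5 + 2\eps) \cdot 2^{\alpha j'}
  \leq 8 \cdot 2^{\alpha j'},
\]
so that $h_{j,\ell} \leq 2^{2 + \alpha(j'-j)}$, as required. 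The argument contains no real obstacle: everything follows by careful bookkeeping of the four cases in the definition of $u_{i_0,i'}^{\pm}$ and a trivial use of $\beta \leq \alpha$.
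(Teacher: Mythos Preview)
Your proof is correct and essentially the same as the paper's: both exploit that the $m$-dependence in \eqref{eq:MainDomainIntervalDefinition} is a pure shift by $-m$, and both bound the remaining interval using $x_{i_\ast'}^{+} - x_{i_\ast'}^{-} \lesssim 2^{\alpha j'}$ and $y_{j'} \lesssim 2^{\alpha j'}$ (via $\beta \leq \alpha$). The only cosmetic difference is that you take $S_{j,\ell}$ to be the midpoint of the interval and bound the half-width, whereas the paper picks a reference point $S_{j,\ell}^{(0)}$ (obtained from $u_{i_0,i'}^{\pm}$ by replacing $x_{i_\ast'}^{\pm}$ with $x := 2^{j'-1} + m' \cdot 2^{\alpha j'}$ and dropping the $y_{j'}$ term) and bounds $|u_{i_0,i'}^{\pm} - S_{j,\ell}^{(0)}|$.
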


\begin{proof}%[Proof of Lemma \ref{lem:IntervalEstimateLSummation}]
  %Set $x_{i_\ast '} := 2^{j' - 1} + m' \cdot 2^{\alpha j'}$.
  Let us define $x := 2^{j' - 1} + m' \cdot 2^{\alpha j'}$.
  From the definition of $x_{i_\ast '}^{\pm}$ in
  Lemma~\ref{lem:InclusionLemma1}, we infer that
  \[
    |x_{i_\ast '}^{\pm} - x|
    \leq (1 + \eps) \cdot 2^{\alpha j'}
    \leq 2^{1 + \alpha j'} \, .
  \]

  %Next, recall from Equation \eqref{eq:RenormalizedAngleDifference} that
  %$\theta_{i,i'}$ only depends on the value of $\vartheta_{i,i'}$, which in
  %turn (see Equation \eqref{eq:NormalizedAngleDifference}) only depends
  %on $\vartheta_{i,i'}^{(0)} = \Theta_{j',\ell'} - \Theta_{j,\ell}$, and is
  %thus not dependent on $m$.
  %Therefore, by slight abuse of notation, let us write
  %$\theta_{i',j,\ell} := \theta_{i,i'}$ and
  %$\vartheta_{i',j,\ell} := \vartheta_{i,i'}$.
  Let $m \in \N_0$ be arbitrary with $i = (j,m,\ell) \in I_0$,
  and let us define
  \[
    %S_{i',j,\ell}^{(0)}
    S_{j,\ell}^{(0)}
    := \begin{cases}
         %\phantom{-} x_{i_\ast '} \cdot \cos \theta_{i_0, i_0'},
         \phantom{-} x \cdot \cos \theta_{i_0, i_0'},
         & \text{if } \vartheta_{i_0, i_0 '} \in [0, \frac{\pi}{2} ) \, , \\
         %- x_{i_\ast '} \cdot \sin \theta_{i_0, i_0 '},
         - x \cdot \sin \theta_{i_0, i_0 '},
         & \text{if } \vartheta_{i_0, i_0 '} \in [\frac{\pi}{2}, \pi ) \, , \\
         %- x_{i_\ast '} \cdot \cos \theta_{i_0, i_0'},
         - x \cdot \cos \theta_{i_0, i_0'},
         & \text{if } \vartheta_{i_0, i_0 '} \in [\pi, \frac{3}{2} \pi ) \, , \\
         %\phantom{-} x_{i_\ast '} \cdot \sin \theta_{i_0, i_0'},
         \phantom{-} x \cdot \sin \theta_{i_0, i_0'},
         & \text{if } \vartheta_{i_0, i_0'} \in [\frac{3}{2} \pi, \pi )
       \end{cases}
    \qquad \text{and} \qquad
    S_{j,\ell} := 2^{-\alpha j} \cdot (S_{j,\ell}^{(0)} - 2^{j-1}) \, ,
  \]
  and compare the definition of $S_{j,\ell}^{(0)}$
  with the definition of $u_{i_0, i'}^{\pm}$ in \eqref{eq:UBoundsDefinition}.
  Recalling that $\beta \leq \alpha$ and $y_{j'} = 2^{\beta j' + 1}$,
  and that $|\sin \theta_{i_0, i_0 '}|, |\cos \theta_{i_0, i_0 '}| \leq 1$,
  we conclude that
  \[
    %| u_{i_0, i'}^{\pm} - S_{i',j,\ell} |
    | u_{i_0, i'}^{\pm} - S_{j,\ell}^{(0)} |
    %\leq |y_{j'}| + |x_{i_\ast '} - x_{i_\ast '}^{\pm}|
    \leq |y_{j'}| + |x - x_{i_\ast '}^{\pm}|
    \leq 2^{1 + \beta j'} + 2^{1 + \alpha j'}
    \leq 2^{2 + \alpha j'}
  \]
  %Hence, if we set
  %$S_{i',j,\ell} := 2^{-\alpha j} \cdot (S_{i',j,\ell}^{(0)} + 2^{j-1})$, then
  and
  \[
    \big|
      2^{-\alpha j} \cdot (u_{i_0,i'}^{\pm} - 2^{j-1} - m \cdot 2^{\alpha j})
      %- (S_{i',j,\ell} + m)
      - (S_{j,\ell} - m)
    \big|
    %= \big| 2^{-\alpha j} \cdot (u_{i,i'}^{\pm} - S_{i',j,\ell}^{(0)}) \big|
    = \big| 2^{-\alpha j} \cdot (u_{i_0,i'}^{\pm} - S_{j,\ell}^{(0)}) \big|
    \leq 2^{2 + \alpha (j' - j)} \, .
  \]
  Combining this and the definition of $I_1^{(i,i')}$ in
  \eqref{eq:MainDomainIntervalDefinition} results in
  \[
    I_1^{(i,i')} \subset \big[
                           %m + S_{i',j,\ell} - 2^{2 + \alpha (j' - j)}
                           - m + S_{j,\ell} - 2^{2 + \alpha (j' - j)}
                           \,,\,
                           %m + S_{i',j,\ell} + 2^{2 + \alpha (j' - j)} \,
                           - m + S_{j,\ell} + 2^{2 + \alpha (j' - j)} \,
                         \big]
    \, . \qedhere
  \]
\end{proof}

\begin{lem}\label{lem:IntervalEstimateLSummation}
  Let $\iota \in \{0,1,2,3\}$, $k \in \{-1,0,1,2\}$, and $j \in \N$.
  %$i' = (j',m',\ell') \in I_0$ and $j \in \N$, and define
  Define
  \[
    %J^{(i',j)}_{\iota,k}
    J_{j}^{\iota,k}
    := \left\{
         \ell \in \N_0
         \,:\,
         \ell \leq \ell_{j}^{\max}
         \text{ and }
         \vartheta_{i_0, i_0 '}
         \in \iota \cdot \tfrac{\pi}{2} + \big[0, \tfrac{\pi}{2} \big)
         \text{ and }
         k_{i_0, i_0 '} = k
       \right\} \, ,
  \]
  with $k_{i_0, i_0 '}$ is as defined in \eqref{eq:NormalizedAngleDifference}.
  Furthermore, let
  \[
    \beta_1 := N^{-1} \cdot 2^{j' - j} \, ,
    %\quad \text{and} \quad
    \qquad
    \beta_2 := \frac{8 \pi}{N} \cdot 2^{j' - j} \, ,
    \quad \text{and} \quad
    %\quad \text{as well as} \quad
    %L_{j,j'} := ??? \, .
    L_j := 2 \cdot 2^{\beta (j' - j)} \, .
  \]
  %Then there is some $S_{i',k} \in \R$ and some $\nu_{\iota} \in \{\pm 1\}$
  Then there are $S_{k,\iota,j} \in \R$ and $\nu_{\iota} \in \{\pm 1\}$
  such that:
  \begin{enumerate}
    \item for any $m \in \N_0$ and $\ell \in J_j^{\iota,k}$ satisfying
          $i = (j,m,\ell) \in I_0$, we have
          \[
            \begin{cases}
              I_2^{(i,i')}
              \subset
              \big[
                %\beta_1 (S_{i',k} + \nu_{\iota} \, \ell) - L_{j,j'}
                \beta_1 \cdot (S_{k,\iota,j} + \nu_{\iota} \, \ell) - L_{j}
                \,,\,
                %\beta_2 (S_{i',k} + \nu_{\iota} \, \ell) + L_{j,j'}
                \beta_2 \cdot (S_{k,\iota,j} + \nu_{\iota} \, \ell) + L_{j}
              \big] \, ,
              & \text{if } \iota \in \{0,1\} \, , \\[0.2cm]
              I_2^{(i,i')}
              \subset
              \big[
                %\beta_1 (S_{i',k} + \nu_{\iota} \, \ell) - L_{j,j'}
                \beta_2 \cdot (S_{k,\iota,j} + \nu_{\iota} \, \ell) - L_{j}
                \,,\,
                %\beta_2 (S_{i',k} + \nu_{\iota} \, \ell) + L_{j,j'}
                \beta_1 \cdot (S_{k,\iota,j} + \nu_{\iota} \, \ell) + L_{j}
              \big] \, ,
              & \text{if } \iota \in \{2,3\} \, ;
            \end{cases}
            %I_2^{(i,i')}
            %\subset
            %\big[
            %  %\beta_1 (S_{i',k} + \nu_{\iota} \, \ell) - L_{j,j'}
            %  \beta_1 (S_{k,\iota,j} + \nu_{\iota} \, \ell) - L_{j}
            %  \,,\,
            %  %\beta_2 (S_{i',k} + \nu_{\iota} \, \ell) + L_{j,j'}
            %  \beta_2 (S_{k,\iota,j} + \nu_{\iota} \, \ell) + L_{j}
            %\big]
          \]

    \item for any $\ell \in J_{j}^{\iota, k}$, we have
          \[
            \begin{cases}
              S_{k,\iota,j} + \nu_{\iota} \, \ell \geq 0 \, ,
              & \text{if } \iota \in \{0,1\} \, , \\[0.2cm]
              S_{k,\iota,j} + \nu_{\iota} \, \ell \leq 0 \, ,
              & \text{if } \iota \in \{2,3\} \, ;
            \end{cases}
          \]

    \item $2^{\alpha j' - \beta j} \cdot |\sin \vartheta_{i_0,i_0 '}|
          \leq 2\pi \cdot \beta_1 \cdot |S_{k,\iota,j} + \nu_\iota \, \ell|$
          for all $\ell \in J_{j}^{\iota,k}$.
  \end{enumerate}
  %such that
  %\[
  %  I_2^{(i,i')}
  %  \subset \big[
  %           %\beta_1 (S_{i',k} + \nu_{\iota} \, \ell) - L_{j,j'}
  %           \beta_1 (S_{k} + \nu_{\iota} \, \ell) - L_{j}
  %           \,,\,
  %           %\beta_2 (S_{i',k} + \nu_{\iota} \, \ell) + L_{j,j'}
  %           \beta_2 (S_{k} + \nu_{\iota} \, \ell) + L_{j}
  %          \big]
  %  \quad \forall \, m \in \N_0
  %                   %\text{ and } \ell \in J^{(i',j)}_{\iota,k}
  %                   \text{ and } \ell \in J^{j}_{\iota,k}
  %                   \text{ with } i = (j,m,\ell) \in I_0 \, .
  %\]
  %Furthermore, we have $S_{i',k} + \nu_{\iota} \, \ell \geq 0$ for all
\end{lem}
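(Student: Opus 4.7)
The plan is to exploit the fact that the quadrant-normalized angle $\theta_{i_0, i_0'} = \vartheta_{i_0, i_0'} - \iota \pi / 2 \in [0, \pi/2)$ is an affine function of $\ell$ with slope $-2 \phi_j$, so that both $\theta_{i_0, i_0'}$ and its complement $\pi/2 - \theta_{i_0, i_0'}$ can be written as $2 \phi_j \cdot |S_{k, \iota, j} + \nu_\iota \ell|$ for a suitable $S_{k, \iota, j} \in \R$ and $\nu_\iota \in \{\pm 1\}$ chosen to match slopes. Writing $T := S_{k, \iota, j} + \nu_\iota \ell$ throughout, I will show that $T$ is the correct ``natural variable'' for describing the position of $I_2^{(i, i')}$.

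For each of the four quadrants, I would take the formula for $v_{i_0, i'}^{\pm}$ from Lemma \ref{lem:InclusionLemma2} and split it into a dominant term involving $x_{i_\ast'}$ and a remainder of the form $\pm y_{j'}$ times a bounded trigonometric factor. The remainder has modulus at most $y_{j'} = 2^{\beta j' + 1}$, so after scaling by $2^{-\beta j}$ in the definition of $I_2^{(i,i')}$ it produces exactly the additive $\pm L_j$ term appearing in the claim. The dominant part involves $\sin \theta_{i_0, i_0'}$ for $\iota \in \{0, 2\}$ and $\cos \theta_{i_0, i_0'}$ for $\iota \in \{1, 3\}$; in the latter case I would rewrite $\cos \theta = \sin(\pi/2 - \theta)$ so that in all four cases the two-sided inequality $\tfrac{2}{\pi} \phi \leq \sin \phi \leq \phi$ on $[0, \pi/2]$ (a direct consequence of \eqref{eq:COS}) applies directly, with $\phi$ being $\theta_{i_0, i_0'}$ or $\pi/2 - \theta_{i_0, i_0'}$ respectively. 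Combining this with the bounds $2^{j'-2} \leq x_{i_\ast'}^- \leq x_{i_\ast'}^+ \leq 4 \cdot 2^{j'}$ from Lemma \ref{lem:InclusionLemma1} sandwiches the modulus of the dominant term between $\tfrac{1}{2 \pi} \cdot 2^{j' - 2} \cdot 2 \phi_j \cdot |T|$ and $4 \cdot 2^{j'} \cdot 2 \phi_j \cdot |T|$. After scaling by $2^{-\beta j}$ and substituting $\phi_j = \tfrac{\pi}{N} \cdot 2^{(\beta - 1) j}$, these two expressions collapse to precisely $\beta_1 |T|$ and $\beta_2 |T|$.

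The sign statement in (2) is then immediate from how I choose $\nu_\iota$: for $\iota \in \{0, 1\}$ I pick the sign so that $T$ is a non-negative multiple of $\theta_{i_0, i_0'}$ or $\pi/2 - \theta_{i_0, i_0'}$ respectively (both being non-negative on $J_j^{\iota, k}$), which produces the inclusion $[\beta_1 T - L_j, \beta_2 T + L_j]$; for $\iota \in \{2, 3\}$ the formulas for $v_{i_0, i'}^{\pm}$ begin with $-x_{i_\ast'}^{\mp}$ rather than $+x_{i_\ast'}^{\pm}$, which I absorb by flipping the overall sign convention so that $T$ is instead non-positive and the bracket becomes $[\beta_2 T - L_j, \beta_1 T + L_j]$ (well-defined because $\beta_1 \leq \beta_2$ and $T \leq 0$ together imply $\beta_2 T \leq \beta_1 T$). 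Claim (3) then follows uniformly from the upper bound $\sin \phi \leq \phi$: the quantity $|\sin \vartheta_{i_0, i_0'}|$ equals $\sin \theta_{i_0, i_0'}$ for $\iota \in \{0, 2\}$ and $\cos \theta_{i_0, i_0'}$ for $\iota \in \{1, 3\}$, and in either case is bounded by $2 \phi_j |T|$; multiplying by $2^{\alpha j' - \beta j}$, bounding $2^{\alpha j'}$ by $2^{j'}$ via $\alpha \leq 1$, and substituting $\phi_j$ yields the claimed $2 \pi \beta_1 |T|$.

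The only real difficulty is bookkeeping --- tracking signs and subscript swaps across the four quadrants so that the single unified expression $S_{k, \iota, j} + \nu_\iota \ell$ captures both the positive-$T$ cases ($\iota \in \{0, 1\}$) and the negative-$T$ cases ($\iota \in \{2, 3\}$), and so that the two different bracket forms in (1) emerge naturally from the two sign patterns in the formula for $v_{i_0, i'}^{\pm}$. Once the correct natural variable is chosen in each quadrant (sine-type for $\iota \in \{0, 2\}$, cosine-type for $\iota \in \{1, 3\}$), all three claims reduce to elementary manipulations based on Equation \eqref{eq:COS} and the size bound from Lemma \ref{lem:InclusionLemma1}.
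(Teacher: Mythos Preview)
Your approach is essentially the same as the paper's: both express $\theta_{i_0,i_0'}$ (and, for the cosine cases, its complement $\tfrac{\pi}{2}-\theta_{i_0,i_0'}$) as $2\phi_j$ times an affine function of $\ell$, then combine the linear sine/cosine bounds with the size estimates $\tfrac14\cdot 2^{j'}\le x_{i_\ast'}^{\pm}\le 4\cdot 2^{j'}$ and $y_{j'}=2^{\beta j'+1}$ to produce the $\beta_1,\beta_2,L_j$ envelope, treating the four quadrants separately. The only difference is cosmetic --- the paper applies the cosine bound \eqref{eq:CosineLinearBound} directly rather than rewriting $\cos\theta=\sin(\tfrac{\pi}{2}-\theta)$ --- and note that your intermediate lower coefficient should read $\tfrac{2}{\pi}\cdot 2^{j'-2}$ rather than $\tfrac{1}{2\pi}\cdot 2^{j'-2}$ for it to collapse to $\beta_1$ as you claim.
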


\begin{proof}
  %Since the proof of Lemma \ref{lem:IntervalEstimateLSummation} is quite
  %tedious and technical, we defer it to
  See Appendix~\ref{sec:IntervalInclusionProofs}.
\end{proof}

Given Lemmas~\ref{lem:IntervalEstimateMSummation} and
\ref{lem:IntervalEstimateLSummation}, we can finally show that the first
supremum in \eqref{eq:TargetEstimate} is finite,
provided that \eqref{eq:KappaConditions1} and \eqref{eq:KappaConditions2} are satisfied.

To show this, let us define $\langle x \rangle := 1 + |x|$ for $x \in \R$
and fix $j \in \N$ for the moment.
%For brevity, let us write $\langle x \rangle := 1 + |x|$ for $x \in \R$.
Since
\[
  \sum_{\ell = 0}^{\ell_j^{\max}} \,
    \sum_{m = 0}^{m_j^{\max}}
      M_{i,i'}^{(3)}
  \leq \sum_{\iota = 0}^3
         \sum_{k = -1}^{2}
           \sum_{\ell \in J_{j}^{\iota,k}} \,
             \sum_{m = 0}^{m_j^{\max}}
               M_{i,i'}^{(3)} \, ,
\]
it suffices to estimate the inner double sum
for fixed $\iota \in \{0,\dots,3\}$ and $k \in \{-1,\dots,2\}$.
Let $\beta_1, \beta_2, S_{k,\iota,j}, \nu_\iota, L_j$ as in Lemma~\ref{lem:IntervalEstimateLSummation}.
If $\iota \in \{0,1\}$, let us define $\widetilde{\beta_1} := \beta_1$
and $\widetilde{\beta_2} := \beta_2$;
otherwise, if $\iota \in \{2,3\}$, let us define $\widetilde{\beta_1} := \beta_2$
and $\widetilde{\beta_2} := \beta_1$  instead.
Now, by definition of $M_{i,i'}^{(3)}$ and by
Lemmas~\ref{lem:IntervalEstimateMSummation}
and \ref{lem:IntervalEstimateLSummation}, we see that, for $\ell \in J_{j}^{\iota,k}$
and $0 \leq m \leq m_j^{\max}$ with $S_{j,\ell}$ as in
Lemma~\ref{lem:IntervalEstimateMSummation},
\begin{align*}
  M_{i,i'}^{(3)}
  \leq \big(
         1 + 2 \pi \beta_1 \, |S_{k,\iota,j} + \nu_\iota \, \ell|
       \big)^\sigma
       \! \cdot \!
       \left(
         \int_{\widetilde{\beta_1} \cdot (S_{k,\iota,j} + \nu_\iota \, \ell) - L_j}
             ^{\widetilde{\beta_2} \cdot (S_{k,\iota,j} + \nu_\iota \, \ell) + L_j}
           %(1 + |\eta_2|)^{- \kappa_2}
           \langle \eta_2 \rangle^{-\kappa_2}
         \, d \eta_2
       \right)^{ \! \tau}
       \!\! \cdot \!
       \left(
         \int_{S_{j,\ell} - m - 2^{2 + \alpha (j' - j)}}
             ^{S_{j,\ell} - m + 2^{2 + \alpha (j' - j)}}
         %\int_{m + S_{j,\ell} - 2^{2 + \alpha (j' - j)}}
         %    ^{m + S_{j,\ell} + 2^{2 + \alpha (j' - j)}}
            %(1 + |\eta_1|)^{- \kappa_1}
            \langle \eta_1 \rangle^{-\kappa_1}
         \, d \eta_1
       \right)^{\! \tau} \! .
\end{align*}

On the other hand, since $\kappa_1 \geq \frac{2}{\tau}$,
Lemma \ref{lem:AnneLemma}
(applied with $N = 0$, $\beta_0 = 1$, $L = 2^{2 + \alpha (j' - j)}$ and $M = - S_{j,\ell}$)
yields the estimate
\begin{align*}
  \sum_{m \in \Z}
    \left(
      \int_{S_{j,\ell} - m - 2^{2 + \alpha (j' - j)}}
          ^{S_{j,\ell} - m + 2^{2 + \alpha (j' - j)}}
         (1 + |\eta_1|)^{- \kappa_1}
      \, d \eta_1
    \right)^{\tau}
  & =    \sum_{m \in \Z}
           \left(
             \int_{m - S_{j,\ell} - 2^{2 + \alpha (j' - j)}}
                 ^{m - S_{j,\ell} + 2^{2 + \alpha (j' - j)}}
                (1 + |\xi_1|)^{- \kappa_1}
             \, d \xi_1
           \right)^{\tau} \\
  & \leq 2^{3 + \tau}
         \cdot 10^{3}
         \cdot 2^{2\tau + \tau \alpha (j' - j)}
         \cdot 2
         \cdot (2 + 2^{2 + \alpha (j' - j)}) \\
  & \leq 2^{17 + 3\tau}
         \cdot 2^{\tau \alpha (j' - j)}
         \cdot 2^{\alpha (j' - j)_{+}}
    =: \Gamma_j^{(1)} \, .
\end{align*}

Next, we note that
\[
  (1 + 2 \pi \beta_1 \cdot |S_{k,\iota,j} + \nu_\iota \, \ell|)^{\sigma}
  \leq 2^{\sigma}
       \cdot \sum_{E \in \{0,\sigma\}}
               |2 \pi \beta_1 \cdot (S_{k,\iota,j} + \nu_\iota \, \ell)|^{E} \, .
\]
Let us fix $E \in \{0, \sigma\}$ for the moment.

Set $\varepsilon := 1$ if $\iota \in \{0,1\}$ and $\varepsilon := -1$ otherwise.
Then Lemma \ref{lem:IntervalEstimateLSummation} shows that
$\varepsilon \cdot (S_{k,\iota,j} + \nu_{\iota} \, \ell) \geq 0$ for
$\ell \in J_{j}^{\iota,k}$.
Furthermore, the change of variable $\lambda = \nu_\iota \, \ell$ combined with
an application of Lemma \ref{lem:MainLemma} or
Corollary \ref{cor:MainLemmaNegativVersion} (depending on whether
$\varepsilon = 1$ or $\varepsilon = -1$) shows that,
for $\Gamma_j^{(2)} := 2^{4 + \sigma + \tau + \tau \kappa_2}$,
\begin{align*}
  & \sum_{\substack{\ell \in \Z, \\
                    \varepsilon \cdot (S_{k,\iota,j} + \nu_\iota \, \ell) \geq 0}}
      |2 \pi \beta_1 (S_{k,\iota,j} + \nu_\iota \, \ell)|^E
      \left(
        \int_{\widetilde{\beta_1} \cdot (S_{k,\iota,j} + \nu_\iota \, \ell) - L_j}
            ^{\widetilde{\beta_2} \cdot (S_{k,\iota,j} + \nu_\iota \, \ell) + L_j}
          %(1 + |\eta_2|)^{- \kappa_2}
          \langle \eta_2 \rangle^{-\kappa_2}
        \, d \eta_2
      \right)^{ \! \tau} \\
  & = \sum_{\substack{\lambda \in \Z, \\
                      \varepsilon \cdot (S_{k,\iota,j} + \lambda) \geq 0}}
      |2 \pi \beta_1 (S_{k,\iota,j} + \lambda)|^E
      \left(
        \int_{\widetilde{\beta_1} \cdot (S_{k,\iota,j} + \lambda) - L_j}
            ^{\widetilde{\beta_2} \cdot (S_{k,\iota,j} + \lambda) + L_j}
          %(1 + |\eta_2|)^{- \kappa_2}
          \langle \eta_2 \rangle^{-\kappa_2}
        \, d \eta_2
      \right)^{ \! \tau} \\
  & \leq \Gamma_j^{(2)}
         \cdot (8 \pi)^\tau
         \cdot (2\pi)^\sigma
         \cdot ( 1 + \max\{1, L_j\}^{\tau + \sigma} )
         \cdot (1 + 2N \cdot 2^{(1-\beta)(j-j')} + N \cdot 2^{j-j'}) \\
  & \leq \Gamma_j^{(2)}
         \cdot 2^{5 \tau + 3 \sigma}
         \cdot 2 \cdot (2 \cdot 2^{\beta (j' - j)_{+}})^{\tau + \sigma}
         \cdot 4N \cdot 2^{(j-j')_{+}} \\
  & \leq N \cdot \Gamma_j^{(2)}
         \cdot 2^{3 + 6 \tau + 4 \sigma}
         \cdot 2^{(j-j')_{+} + \beta (\tau + \sigma) (j' - j)_{+}}
    =:   \Gamma_j^{(3)} \, .
\end{align*}
Here, we noted in the penultimate estimate that
$L_j = 2 \cdot 2^{\beta (j' - j)} \leq 2 \cdot 2^{\beta (j' - j)_{+}}$.
Furthermore, we noted that $\kappa_2 \geq 1 + \frac{\sigma + 2}{\tau}$
(see Equation~\eqref{eq:KappaConditions1}), so that Lemma~\ref{lem:MainLemma}
and Corollary~\ref{cor:MainLemmaNegativVersion} are indeed applicable.

Summarising all these estimates, we finally conclude that
\[
  \sum_{\ell = 0}^{\ell_{j}^{\max}} \,
    \sum_{m = 0}^{m_{j}^{\max}} \,
      M_{i,i'}^{(3)}
  \leq 4 \cdot 4 \cdot 2^{\sigma} \cdot 2
       \cdot \Gamma_j^{(1)} \cdot \Gamma_j^{(3)}
  \leq \Gamma_{j}^{(4)} \, ,
\]
where a straightforward but tedious calculation shows that one can choose
\[
  \Gamma_{j}^{(4)}
  := 2^{30 + 6 \sigma + (10 + \kappa_2) \tau}
     \cdot N
     \cdot 2^{(\alpha + \beta (\tau + \sigma)) (j' - j)_{+}
              + (j - j')_{+}
              + \tau \alpha (j' - j)} \, .
\]

Finally, by recalling Equations~\eqref{eq:MainDomain} and \eqref{eq:MainTermMainEstimate}, we see that
\[
  \sum_{i = (j,m,\ell) \in I_0}
    M_{i,i'}^{(1)}
  \leq 4^{\sigma} \cdot 2^{5 \tau \kappa_0} \cdot
       \sum_{j = 1}^{\infty} 2^{\omega_{j,j'}} \cdot \Gamma_{j}^{(4)}
  \leq \Gamma^{(5)} \cdot \sum_{j=1}^{\infty} 2^{\widetilde{\omega}_{j,j'}}
\]
with
\[
  \widetilde{\omega}_{j,j'}
  = \tau \alpha (j' - j)
    + (j - j')_{+}
    + (\alpha + \beta (\tau + \sigma)) (j' - j)_{+}
    + \omega_{j,j'}
  \quad \! \text{and} \quad \!
  \Gamma^{(5)} := 2^{30 + 5 \tau \kappa_0 + 8 \sigma + (10 + \kappa_2) \tau} N \, .
\]
A direct calculation shows that
\[
  \widetilde{\omega}_{j,j'}
  = \begin{cases}
      |j' - j|
      \cdot (1-\alpha) \tau
      \cdot \big(
              \frac{\alpha + \sigma(\alpha + \beta) - s}{(1-\alpha)\tau}
              - \kappa_0
            \big) \, ,
      & \text{if } j \leq j' \\[0.2cm]
      |j' - j| \cdot (1 - \alpha) \tau
      \cdot \big(
              \frac{1 + s + \tau \beta}{(1-\alpha) \tau}
              - \kappa_0
            \big) \, ,
      & \text{otherwise} \, .
    \end{cases}
\]
Given our choice of $\kappa_0$ (see Equation \eqref{eq:KappaConditions2}),
we therefore conclude that $\widetilde{\omega}_{j,j'} \leq -|j' - j|$ and
\begin{equation}
  \sum_{i = (j,m,\ell) \in I_0} M_{i,i'}^{(1)}
  \leq \Gamma^{(5)} \cdot \sum_{j \in \Z} 2^{-|j' - j|}
  \leq 3 \cdot \Gamma^{(5)} < \infty
  \qquad \forall \, i' \in I_0 .
  \label{eq:ISummationFinalEstimate}
\end{equation}
Since $\Gamma^{(5)}$ is independent of the choice of
$i' = (j',m',\ell') \in I_0$, we have thus shown that the first supremum
in \eqref{eq:TargetEstimate} is finite, as long as both $i$ and $i'$ are restricted
to $I_0$ instead of to $I = \{0\} \cup I_0$.
\hfill $\square$
%\todo[inline]{Apply Lemmas \ref{lem:AnneLemma} and \ref{lem:MainLemma}, as well
%as Corollary \ref{cor:MainLemmaNegativVersion} to prove convergence of the
%series under suitable assumptions regarding the parameters
%$\kappa_0,\kappa_1,\kappa_2$. This should be almost straightforward.}

\subsection{Estimating the sum over \texorpdfstring{$i' \in I_0$}{i’ ∈ I₀}}
\label{sub:SummingOverIPrime}

For this whole subsection, we fix $i = (j,m,\ell) \in I_0$.

In the preceding subsection, we used the inclusion
\({
  I_1^{(i,i')}
  \subset [S_{j,\ell} \!-\! m \!-\! 2^{2 + \alpha(j' - j)}, S_{j,\ell} \!-\! m \!+\! 2^{2 + \alpha(j' - j)}]
}\)
to then apply Lemma~\ref{lem:AnneLemma}.
Observe that the parameter $m$ appears on the right-hand side
of this inclusion \emph{without a factor in front}.
In contrast, we shall prove in Lemma~\ref{lem:IntervalEstimateMPrimeSummation}
that the inclusion
\({
  I_1^{(i,i')}
  \subset [s_{j',\ell'} \cdot \beta_{j',\ell'} \cdot m' + S_{j',\ell'} - L_{j'}^\ast \,\, , \,\,
           s_{j',\ell'} \cdot \beta_{j',\ell'} \cdot m' + S_{j',\ell'} + L_{j'}^\ast]
}\)
holds where $s_{j',\ell'} \in \{\pm 1\}$ and
the factor $\beta_{j',\ell'}$ depends on $\cos \theta_{i_0, i_0 '}$
%then---as we will see in Lemma \ref{lem:IntervalEstimateMPrimeSummation}
%below---the factor $\beta_{j',\ell'}$ will depend on $\cos \theta_{i_0, i_0 '}$
or $\sin \theta_{i_0, i_0 '}$, so that possibly $\beta_{j',\ell'} \approx 0$.
If this happens, the bound provided by Lemma~\ref{lem:AnneLemma} will be ineffective.
Therefore, we first deal with this special case using another method.
Precisely, let us define
\[
  t_{j', \ell'}
  := \begin{cases}
       \cos \theta_{i_0, i_0 '}
       & \text{if } \vartheta_{i_0, i_0 '} \in [0, \frac{\pi}{2} ) \, , \\
       \sin \theta_{i_0, i_0 '}
       & \text{if } \vartheta_{i_0, i_0 '} \in [\frac{\pi}{2}, \pi) \, , \\
       \cos \theta_{i_0, i_0 '}
       & \text{if } \vartheta_{i_0, i_0 '} \in [\pi, \frac{3}{2} \pi ) \, , \\
       \sin \theta_{i_0, i_0 '}
       & \text{if } \vartheta_{i_0, i_0 '} \in [\frac{3}{2} \pi , 2\pi ) \, .
     \end{cases}
\]
Note that this is well-defined, i.e. the right-hand side indeed only depends on $j',\ell'$),
since ${i = (j,m,\ell)}$ is fixed and $i_0 ' = (j',\ell')$.
Furthermore, we note that $t_{j',\ell'}$ is, up to a sign,
the coefficient of $x_{i_\ast '}^{\pm}$ in the definition of $u_{i_0, i'}^{\pm}$;
see Equation \eqref{eq:UBoundsDefinition}.
Finally, for fixed $j' \in \N$, let us define
\begin{align*}
  J^{(j')}_{\mathrm{special}}
  & := \left\{
         l' \in \N_0
         \,:\,
         %(j',m',l') \in I_0 \text{ and } t_{j',l'} \leq \frac{1}{10}
         l' \leq l_{j'}^{\max} \text{ and } t_{j',l'} \leq \frac{1}{10}
       \right\} \\
  \quad \text{and} \quad
  J^{(j')}_{\mathrm{normal}}
  & := \left\{
         l' \in \N_0
         \,:\,
         l' \leq l_{j'}^{\max} \text{ and } t_{j',l'} > \frac{1}{10}
       \right\} \, .
\end{align*}

The following lemma provides the crucial ingredient for estimating the
contribution of the terms with problematic indices
$l' \in J^{(j')}_{\mathrm{special}}$.

\begin{lem}\label{lem:CosineVanishingSpecialEstimate}
  For $j' \in \N$, $\ell' \in J^{(j')}_{\mathrm{special}}$ and $m' \in \N_0$
  with $i' = (j',m',\ell') \in I_0$, any $\eta_2 \in I_{2}^{(i,i')}$
  (see Equation~\eqref{eq:MainDomainIntervalDefinition}) satisfies
  \[
    1 + |\eta_2| \geq \frac{2^{j' - \beta j}}{40} \, .
  \]
\end{lem}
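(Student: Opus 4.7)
The plan is to exploit the fact that $\theta_{i_0, i_0'} \in [0, \pi/2)$ (from \eqref{eq:RenormalizedAngleDifference}), so that both $\sin \theta_{i_0, i_0'}$ and $\cos \theta_{i_0, i_0'}$ are non-negative with $\sin^2 + \cos^2 = 1$. Hence, whenever $t_{j', \ell'} \leq 1/10$, the \emph{complementary} trig function is forced to be close to $1$, which in turn forces $|v|$ to be bounded below by a constant multiple of $2^{j'}$ for every $v \in [v_{i_0, i'}^{-}, v_{i_0, i'}^{+}]$. Rescaling by $2^{-\beta j}$ then yields the claim, since every $\eta_2 \in I_2^{(i, i')}$ is of the form $\eta_2 = 2^{-\beta j} v$ for some such $v$ by \eqref{eq:MainDomainIntervalDefinition}.

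More precisely, I would first verify, by a case-by-case inspection of \eqref{eq:VBoundsDefinition} against the definition of $t_{j', \ell'}$, that in each of the four cases $\iota \in \{0, 1, 2, 3\}$ the endpoints take the common form
\[
    v_{i_0, i'}^{\pm}
    = \varepsilon_\iota \cdot x_{i_\ast'}^{\sigma_\iota(\pm)} \cdot s_{j', \ell'}
      \pm y_{j'} \cdot t_{j', \ell'},
\]
where $\varepsilon_\iota \in \{\pm 1\}$, $\sigma_\iota(\pm) \in \{\pm, \mp\}$, and $s_{j', \ell'}$ denotes the trig function in $\{\sin \theta_{i_0, i_0'}, \cos \theta_{i_0, i_0'}\}$ other than $t_{j', \ell'}$. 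That is, $t_{j', \ell'}$ was defined precisely so as to always be the coefficient of $y_{j'}$, while the coefficient of $x$ is (up to sign) the complementary quantity $s_{j', \ell'}$. Under the hypothesis $\ell' \in J_{\mathrm{special}}^{(j')}$, the identity $s_{j', \ell'}^2 + t_{j', \ell'}^2 = 1$ together with $t_{j', \ell'} \leq 1/10$ forces $s_{j', \ell'} \geq \sqrt{1 - 1/100} > 9/10$.

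Next, using Lemma~\ref{lem:InclusionLemma1} ($x_{i_\ast'}^{\pm} \geq 2^{j'}/4$) and $y_{j'} = 2 \cdot 2^{\beta j'} \leq 2^{j'+1}$ (since $\beta \leq 1$), a direct estimate gives
\[
    |v_{i_0, i'}^{\pm}|
    \geq s_{j', \ell'} \cdot x_{i_\ast'}^{\sigma_\iota(\pm)} - t_{j', \ell'} \cdot y_{j'}
    \geq \frac{9}{10} \cdot \frac{2^{j'}}{4} - \frac{1}{10} \cdot 2^{j' + 1}
    = \frac{2^{j'}}{40}.
\]
The crucial additional observation is that both endpoints have the \emph{same} sign: one checks directly that in the cases $\iota \in \{0, 1\}$ both are strictly positive and in the cases $\iota \in \{2, 3\}$ both are strictly negative. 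Consequently $0 \notin [v_{i_0, i'}^{-}, v_{i_0, i'}^{+}]$ and the minimum of $|v|$ over the interval is attained at the endpoint whose $y$-term is subtracted; this minimum is $\geq 2^{j'}/40$. Therefore, for $\eta_2 = 2^{-\beta j} v$ with $v \in [v_{i_0, i'}^{-}, v_{i_0, i'}^{+}]$,
\[
    1 + |\eta_2| \geq |\eta_2| = 2^{-\beta j} |v| \geq \frac{2^{j' - \beta j}}{40}.
\]

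The only genuine difficulty is the book-keeping across the four cases of \eqref{eq:VBoundsDefinition}: in each case one must identify $s_{j', \ell'}$, choose $\sigma_\iota(\pm)$ correctly so that the $x$-contribution dominates the $y$-contribution with the \emph{right} sign, and verify that both endpoints have a common sign. All four cases reduce to the same arithmetic displayed above, so the verification is mechanical, and no sharper estimate than $s_{j', \ell'} \geq 9/10$ is needed.
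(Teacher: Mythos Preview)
Your proposal is correct and follows essentially the same route as the paper. The paper's proof is organised slightly more economically---it merges your four cases into two (according to whether $\vartheta_{i_0,i_0'}\in[0,\pi)$ or $[\pi,2\pi)$), bounds only the single relevant endpoint ($v^-$ in the first case, $-v^+$ in the second) rather than both, and uses the linear estimate $\widetilde t\geq 1-t$ from \eqref{eq:CosineSineBound} in place of your $s=\sqrt{1-t^2}>9/10$---but the arithmetic and the underlying idea are identical.
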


\begin{proof}
  let us define
  \[
    \widetilde{t}_{j',\ell'}
    := \begin{cases}
         \sin \theta_{i_0, i_0 '}
         & \text{if } \vartheta_{i_0, i_0 '} \in [0, \frac{\pi}{2} ) \, , \\
         \cos \theta_{i_0, i_0 '}
         & \text{if } \vartheta_{i_0, i_0 '} \in [\frac{\pi}{2}, \pi) \, , \\
         \sin \theta_{i_0, i_0 '}
         & \text{if } \vartheta_{i_0, i_0 '} \in [\pi, \frac{3}{2} \pi ) \, , \\
         \cos \theta_{i_0, i_0 '}
         & \text{if } \vartheta_{i_0, i_0 '} \in [\frac{3}{2} \pi , 2\pi ) \, ,
       \end{cases}
  \]
  and note that
  \begin{equation}
    \widetilde{t}_{j', \ell'} \geq 1 - t_{j', \ell'} \,
    \label{eq:SpecialEstimateTTTilde}
  \end{equation} as a consequence of Equation~\eqref{eq:CosineSineBound} and since
  $t_{j', \ell'}, \widetilde{t}_{j', \ell'} \geq 0$.

  According to Equation~\eqref{eq:MainDomainIntervalDefinition}, % each
  %$\eta \in \Omega_{i,i'}$ satisfies
  %$\eta_2 \in I_2^{(i,i')}
  $I_2^{(i,i')} = 2^{-\beta j} \cdot [v_{i_0, i'}^{-}, v_{i_0, i'}^{+}]$.
  To derive from this the desired estimate, we distinguish two cases,
  depending on $\vartheta_{i_0, i_0'}$.
  \medskip{}

  \emph{Case 1:} $\vartheta_{i_0, i_0 '} \in [0, \pi)$.
  In this case, $v_{i_0, i'}^{-}
  = x_{i_\ast '}^{-} \cdot \widetilde{t}_{j',\ell'} - y_{j'} \cdot t_{j',\ell'}$
  where $y_{j'} = 2^{\beta j' + 1} \leq 2^{j' + 1}$ and
  $x_{i_\ast '}^{-} \geq 2^{j'} / 4$; see Lemma \ref{lem:InclusionLemma1}.
  This and $t_{j', \ell'} \leq 1/10$ for
  $\ell' \in J^{(j')}_{\mathrm{special}}$ allows us to conculude that
  \[
    v_{i_0, i'}^{-}
    \geq 2^{j' - 2} \cdot \widetilde{t}_{j', \ell'}
         - 2^{j' + 1} \cdot t_{j', \ell'}
    \,\, \overset{\text{Eq. } \eqref{eq:SpecialEstimateTTTilde}}{\geq} \,\,
         2^{j' - 2} \cdot \big( 1 - t_{j', \ell'} - 2^{3} \cdot t_{j', \ell'} \big)
    \geq 2^{j' - 2} \cdot \frac{1}{10}
    >    0 \, .
  \]
  Therefore, %and since $\gamma_{j,m} \leq 2^{\beta j + 1}$ by \eqref{eq:GammaEstimate},
  we see that
  \(
    1 + |\eta_2|
    \geq \eta_2
    \geq 2^{-\beta j} \cdot v_{i_0, i'}^{-}
    \geq 2^{j' - \beta j} / 40
  \),
  as desired.

  \medskip{}

  \emph{Case 2:} $\vartheta_{i_0, i_0 '} \in [\pi, 2\pi)$.
  In this case, $- v_{i_0, i'}^{+}
  = x_{i_\ast '}^{-} \cdot \widetilde{t}_{j', \ell'} - y_{j'} \cdot t_{j', \ell'}$.
  Precisely as in the previous case, we note that
  $- v_{i_0, i'}^{+} \geq 2^{j' - 2} / 10 > 0$.
  This and $\eta_2 \leq 2^{-\beta j} \cdot v_{i_0, i'}^{+}$, leads to
  \[
    1 + |\eta_2|
    \geq - \eta_2
    \geq 2^{-\beta j} \cdot (- v_{i_0, i'}^{+})
    %\geq 2^{-\beta j} \cdot 2^{j'} / 40
    \geq 2^{j' - \beta j} / 40 \, .
    \qedhere
  \]
  %just as in the previous case.
\end{proof}

Since $\kappa_2 \geq 2 + \kappa_2^{(0)}$ (see Equation \eqref{eq:KappaConditions1}),
Lemma \ref{lem:CosineVanishingSpecialEstimate} shows that, for
$\ell' \in J^{(j')}_{\mathrm{special}}$ and $m' \in \N_0$ with $m' \leq m_{j'}^{\max}$,
\[
  \int_{I_2^{(i,i')}} (1+|\eta_2|)^{-\kappa_2} \, d \eta_2
  \leq \left(\frac{2^{j' - \beta j}}{40}\right)^{-\kappa_2^{(0)}}
       \cdot \int_{\R} (1 + |\eta_2|)^{-2} \, d \eta_2
  \leq 2 \cdot \left(\frac{2^{j' - \beta j}}{40}\right)^{-\kappa_2^{(0)}} \, .
\]
Likewise, since $\kappa_1 \geq 2$,
\(
  \int_{I_{1}^{(i,i')}} (1 + |\eta_1|)^{-\kappa_1} \, d \eta_1
  \leq \int_{\R} (1 + |\eta_1|)^{-2} \, d \eta_1
  \leq 2
\).
Furthermore,
\[
  1 + 2^{\alpha j' - \beta j} |\sin \vartheta_{i_0, i_0 '}|
  \leq 1 + 2^{(\alpha - \beta) j'} \cdot 2^{\beta (j' - j)}
  \leq 2 \cdot 2^{\beta (j' - j)_{+}} \cdot 2^{(\alpha - \beta) j'} \, .
\]
Finally,
\[
  2^{j' - \beta j}
  = 2^{(1 - \beta) j'} \cdot 2^{\beta (j' - j)}
  \geq 2^{(1 - \beta) j'} \cdot 2^{- \beta (j - j')_{+}} \, .
\]
Combined with the definition of $M_{i,i'}^{(3)}$
(see Equation~\eqref{eq:MainTermLemmaVersion}),
these estimates imply that, for any index ${i' = (j',m',\ell') \in I_0}$
with $\ell' \in J^{(j')}_{\mathrm{special}}$,
\[
  M_{i,i'}^{(3)}
  \leq 2^{\sigma + 2 \tau} \cdot 40^{\tau \kappa_2^{(0)}}
       \cdot 2^{\beta \sigma (j' - j)_{+}}
       \cdot 2^{\beta \tau \kappa_2^{(0)} (j - j')_{+}}
       \cdot 2^{\sigma (\alpha - \beta) j'}
       %\cdot 2^{-\tau \kappa_2^{(0)} (j' - \beta j)}
       \cdot 2^{- \tau \kappa_2^{(0)} (1 - \beta) j'} \, .
\]

Since
$1 + \ell_{j'}^{\max} \leq 3N \cdot 2^{(1 - \beta) j'}$
and $1 + m_{j'}^{\max} \leq 3 \cdot 2^{(1 - \alpha) j'}$ as well,
combining this with Equations~\eqref{eq:MainDomain} and \eqref{eq:MainTermMainEstimate} results in
\[
  \sum_{\ell' \in J_{\mathrm{special}}^{(j')}}
    \sum_{m' = 0}^{m_{j'}^{\max}} \!\!
      M_{i,i'}^{(1)}
  \leq \! N \cdot 2^{4 + 3 \sigma + 2 \tau + 5 \tau \kappa_0 + 6 \tau \kappa_2^{(0)}}
       \!\cdot\, 2^{\omega_{j,j'}}
       \cdot 2^{\beta \sigma (j' - j)_{+}}
       \cdot 2^{\beta \tau \kappa_2^{(0)} (j - j')_{+}}
       \cdot 2^{j' \cdot [
                          (1-\beta)(1 - \tau \kappa_2^{(0)})
                          + \sigma(\alpha - \beta)
                          + 1 - \alpha
                         ]}
\]
for each fixed $j' \in \N$.
But by definition of $\kappa_2^{(0)}$ and by our choice of $\kappa_0$
(see Equations \eqref{eq:KappaConditions1} and \eqref{eq:KappaConditions2}),
we see that
%$\omega_{j,j'} + \beta (1 + \tau \kappa_2^{(0)}) (j' - j)_{+} \leq - |j' - j|$
$\omega_{j,j'}
 + \beta \sigma (j' - j)_{+}
 + \beta \tau \kappa_2^{(0)} (j - j')_{+}
 \leq -|j-j'|$
and $(1-\beta)(1-\tau \kappa_2^{(0)}) + \sigma(\alpha - \beta) + 1 - \alpha \leq 0$.
%and $2 - \tau \kappa_2^{(0)} \leq 0$.
Therefore,
\begin{equation}
  \sum_{j' = 1}^{\infty} \,
  \sum_{\ell' \in J_{\mathrm{special}}^{(j')}} \,
    \sum_{m' = 0}^{m_{j'}^{\max}} \,
      M_{i,i'}^{(1)}
  \leq N \cdot 2^{4 + 3 \sigma + 2 \tau + 5 \tau \kappa_0 + 6 \tau \kappa_2^{(0)}}
       \cdot \sum_{j' = 1}^{\infty} 2^{-|j' - j|}
  \leq N \cdot 2^{6 + 3 \sigma + 2 \tau + 5 \tau \kappa_0 + 6 \tau \kappa_2^{(0)}}
  \, .
  \label{eq:SpecialCaseOverallEstimate}
\end{equation}

Having taken care of the special case as
$\ell' \in J^{(j')}_{\mathrm{special}}$, the first step for estimating the
remaining series is to further
%As in the previous subsection, the first order of business is to further
estimate the intervals $I_1^{(i,i')}$ and $I_2^{(i,i')}$,
which we shall do in the following two lemmata.

\begin{lem}\label{lem:IntervalEstimateMPrimeSummation}
  Let $j' \in \N$ and $\ell' \in J^{(j')}_{\mathrm{normal}}$ be arbitrary
  and define
  \[
    \beta_{j',\ell'} := t_{j',\ell'} \cdot 2^{\alpha (j' - j)} \,
    \qquad \text{and} \qquad
    L_{j'}^\ast := 4 \cdot 2^{\alpha (j' - j)} \, .
  \]
  Then there are such $s_{j', \ell'} \in \{\pm 1\}$ and $S_{j',\ell'} \in \R$
  that
  \[
    I_1^{(i,i')}
    \subset \big[
              s_{j',\ell'} \cdot \beta_{j',\ell'} \cdot m'
              + S_{j', \ell'}
              - L_{j'}^\ast
              \,,\,
              s_{j',\ell'} \cdot \beta_{j',\ell'} \cdot m'
              + S_{j', \ell'}
              + L_{j'}^\ast
            \big]
    %\quad \text{with} \quad
    %\qquad \text{for all } m' \in \N_0
    %       \text{ with } i' = (j',m',\ell') \in I_0 \, .
  \]
  for all $m' \in \N_0$ with $i' = (j',m',\ell') \in I_0$.
\end{lem}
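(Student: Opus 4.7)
The plan is to compute, in each of the four cases distinguishing the value of $\vartheta_{i_0, i_0'}$, the midpoint and half-width of $I_1^{(i,i')}$ as explicit affine functions of $m'$, and then read off $s_{j',\ell'}$, $S_{j',\ell'}$, and a bound for the half-width. Recalling the definition of $x_{i_\ast'}^\pm$ from Lemma~\ref{lem:InclusionLemma1}, I would start by writing
\[
  x_{i_\ast'}^{\pm}
  = x^\ast + a^\pm \cdot 2^{\alpha j'},
  \quad \text{with } x^\ast = 2^{j'-1} + m' \cdot 2^{\alpha j'},
  \quad a^- = -\eps, \quad a^+ = 1 + \eps,
\]
so that the half-sum and half-difference are
\(
(x_{i_\ast'}^{+} + x_{i_\ast'}^{-})/2 = 2^{j'-1} + (m' + \tfrac{1}{2}) \cdot 2^{\alpha j'}
\)
and
\(
(x_{i_\ast'}^{+} - x_{i_\ast'}^{-})/2 = \tfrac{1 + 2\eps}{2} \cdot 2^{\alpha j'}.
\)

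Next, I would plug these into the four branches of the definition \eqref{eq:UBoundsDefinition} of $u_{i_0,i'}^\pm$. In each branch the half-sum $(u_{i_0,i'}^+ + u_{i_0,i'}^-)/2$ becomes $\varepsilon \cdot t_{j',\ell'} \cdot [2^{j'-1} + (m' + \tfrac{1}{2}) \cdot 2^{\alpha j'}]$ with $\varepsilon = +1$ when $\vartheta_{i_0,i_0'} \in [0,\tfrac{\pi}{2}) \cup [\tfrac{3\pi}{2}, 2\pi)$ and $\varepsilon = -1$ otherwise, because in cases $1$ and $4$ the $x$-coefficient is $(x_{i_\ast'}^\pm, x_{i_\ast'}^\pm)$ with common sign, while in cases $2$ and $3$ it is $(-x_{i_\ast'}^\mp, -x_{i_\ast'}^\mp)$. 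Rescaling by $2^{-\alpha j}$ and subtracting $2^{j-1} + m \cdot 2^{\alpha j}$ (which is independent of $m'$) then shows that the midpoint of $I_1^{(i,i')}$ equals $s_{j',\ell'} \cdot \beta_{j',\ell'} \cdot m' + S_{j',\ell'}$ with $s_{j',\ell'} := \varepsilon$, $\beta_{j',\ell'} = t_{j',\ell'} \cdot 2^{\alpha(j'-j)}$ as required, and some $S_{j',\ell'} \in \R$ depending on the (fixed) index $i$ but not on $m'$.

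For the half-width, the half-difference $(u_{i_0,i'}^{+} - u_{i_0,i'}^{-})/2$ is in every case equal to $\tfrac{1+2\eps}{2} \cdot 2^{\alpha j'} \cdot t_{j',\ell'} + y_{j'} \cdot \widetilde{t}_{j',\ell'}$, where $\widetilde{t}_{j',\ell'}$ is the companion sine or cosine (bounded by $1$). After dividing by $2^{\alpha j}$, the first term is bounded by $2^{\alpha(j'-j)}$ since $t_{j',\ell'} \leq 1$ and $\eps < \tfrac{1}{32}$, while the second term equals $2^{\beta j' - \alpha j + 1} \cdot \widetilde{t}_{j',\ell'} \leq 2 \cdot 2^{(\beta - \alpha) j'} \cdot 2^{\alpha(j'-j)} \leq 2 \cdot 2^{\alpha(j'-j)}$, using $\beta \leq \alpha$. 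Summing, the half-width is at most $3 \cdot 2^{\alpha(j'-j)} \leq L_{j'}^\ast$, proving the desired inclusion.

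The argument is essentially a case-by-case bookkeeping exercise, so there is no real obstacle; the only mild annoyance is that the sign $s_{j',\ell'}$ flips between the two pairs of cases, which must be tracked carefully to verify that the factorisation of the midpoint has exactly the advertised form. The quantity $t_{j',\ell'}$ was defined precisely so that, in each branch, the coefficient of $m'$ in the midpoint is $\pm t_{j',\ell'} \cdot 2^{\alpha(j'-j)}$, which makes the final identification of $\beta_{j',\ell'}$ automatic.
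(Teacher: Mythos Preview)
Your proposal is correct and follows essentially the same approach as the paper: identify the sign $s_{j',\ell'}$ (which the paper also defines as $+1$ for $\vartheta_{i_0,i_0'} \in [0,\tfrac{\pi}{2}) \cup [\tfrac{3\pi}{2},2\pi)$ and $-1$ otherwise), locate the center $s_{j',\ell'} \cdot t_{j',\ell'} \cdot (2^{j'-1} + m' \cdot 2^{\alpha j'})$, and bound the distance of $u_{i_0,i'}^{\pm}$ to that center by $4 \cdot 2^{\alpha j'}$. The paper does this last step slightly more crudely---it bounds $|x_{i_\ast'}^{\pm} - (2^{j'-1} + m' \, 2^{\alpha j'})| \leq 2 \cdot 2^{\alpha j'}$ and $|y_{j'}| \leq 2 \cdot 2^{\alpha j'}$ directly rather than splitting into midpoint and half-width---while your explicit half-sum/half-difference computation yields the sharper constant $3$ instead of $4$; but the argument and the choices of $s_{j',\ell'}$, $S_{j',\ell'}$ are the same.
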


\begin{proof}
  Directly from the definition of $x_{i_\ast '}^{\pm}$, we see that
  $|x_{i_\ast '}^{\pm} - (2^{j' - 1} + m' \cdot 2^{\alpha j'})|
   \leq 2 \cdot 2^{\alpha j'}$.
  Therefore, defining
  \[
    s_{j', \ell'}
    := \begin{cases}
         -1 &
         \text{if } \vartheta_{i_0, i_0 '} \in [\frac{\pi}{2}, \frac{3}{2} \pi), \\
          1 &
         \text{otherwise}
       \end{cases}
  \]
  and recalling the definitions of $u_{i_0, i'}^{\pm}$ and $t_{j', \ell'}$,
  we conclude that
  \[
    |
     u_{i_0, i'}^{\pm}
     - s_{j',\ell'}
       \cdot t_{j', \ell'}
       \cdot (2^{j' - 1} + m' \cdot 2^{\alpha j'})
    |
    \leq 2 \cdot 2^{\alpha j'} + |y_{j'}|
    \leq 4 \cdot 2^{\alpha j'} \, ,
  \]
  where we noted in the last step that $\beta \leq \alpha$ and that $y_{j'} = 2^{\beta j' + 1}$;
  see Lemma~\ref{lem:InclusionLemma1}.

  By the definition of $I_1^{(i,i')}$, we thus see that, for
  \[
    S_{j', \ell'}
    := - 2^{(1-\alpha) j - 1}
       - m
       + s_{j',\ell'} \cdot t_{j',\ell'} \cdot 2^{j' - \alpha j - 1} \, ,
  \]
  \begin{align*}
    I_{1}^{(i,i')}
    & = 2^{-\alpha j} \cdot [u_{i_0,i'}^{-} - 2^{j-1} - m \cdot 2^{\alpha j} \,\, , \,\,
                             u_{i_0,i'}^{+} - 2^{j-1} - m \cdot 2^{\alpha j}] \\
    & \subset 2^{-\alpha j}
              \cdot \Big(
                      - 2^{j-1}
                      - m \cdot 2^{\alpha j}
                      + s_{j',\ell'}
                        \cdot t_{j',\ell'}
                        \cdot (2^{j' - 1} + m' \cdot 2^{\alpha j'})
                      + [-4 \cdot 2^{\alpha j'}, 4 \cdot 2^{\alpha j'}]
                    \Big) \\
    & = [s_{j',\ell'} \cdot \beta_{j',\ell'} \cdot m' + S_{j',\ell'} - L_{j'}^\ast
         \,,\,
         s_{j',\ell'} \cdot \beta_{j',\ell'} \cdot m' + S_{j',\ell'} + L_{j'}^\ast]
    \, .
    \qedhere
  \end{align*}
\end{proof}

\begin{lem}\label{lem:IntervalEstimateLPrimeSummation}
  Let us fix $\iota \in \{0,1,2,3\}$, $k \in \{-1,0,1,2\}$ and $j' \in \N$ and
  define
  \[
    J_{j'}^{\iota,k}
    := \left\{
         \ell' \in \N_0
         \,:\,
         \ell ' \leq \ell_{j'}^{\max}
         \text{ and }
         \vartheta_{i_0, i_0 '} \in \iota \cdot \tfrac{\pi}{2} + \big[0, \tfrac{\pi}{2} \big)
         \text{ and }
         k_{i_0, i_0 '} = k
       \right\} \, ,
  \]
  with $k_{i_0, i_0 '}$ as in Equation~\eqref{eq:NormalizedAngleDifference}.

  Furthermore, let us define
  \[
    \beta_1 := N^{-1} \cdot 2^{\beta (j' - j)} \,
    \quad \text{ } \quad
    \beta_2 := \frac{8 \pi}{N} \cdot 2^{\beta (j' - j)} \,
    \quad \text{and} \quad
    L_{j'} := 2 \cdot 2^{\beta (j' - j)} \, .
  \]
  Then there are such $S_{k,\iota,j'} \in \R$ and $\nu_{\iota} \in \{\pm 1\}$
  that:
  \begin{enumerate}
    \item For any $m' \in \N_0$ and $\ell' \in J_{j'}^{\iota,k}$ with
          $i' = (j', m', \ell') \in I_0$,
          \[
            \begin{cases}
              I_2^{(i,i')}
              \subset
              \big[
                \beta_1 \cdot (S_{k,\iota,j'} + \nu_{\iota} \, \ell') - L_{j'}
                \,,\,
                \beta_2 \cdot (S_{k,\iota,j'} + \nu_{\iota} \, \ell') + L_{j'}
              \big] \, ,
              & \text{if } \iota \in \{0,1\} \, , \\[0.2cm]
              I_2^{(i,i')}
              \subset
              \big[
                \beta_2 \cdot (S_{k,\iota,j'} + \nu_{\iota} \, \ell') - L_{j'}
                \,,\,
                \beta_1 \cdot (S_{k,\iota,j'} + \nu_{\iota} \, \ell') + L_{j'}
              \big] \, ,
              & \text{if } \iota \in \{2,3\} \, .
            \end{cases}
          \]

    \item For any $\ell' \in J_{j'}^{\iota, k}$,
          \[
            \begin{cases}
              S_{k,\iota,j'} + \nu_{\iota} \, \ell' \geq 0 \, ,
              & \text{if } \iota \in \{0,1\} \, , \\[0.2cm]
              S_{k,\iota,j'} + \nu_{\iota} \, \ell' \leq 0 \, ,
              & \text{if } \iota \in \{2,3\} \, .
            \end{cases}
          \]

    \item Finally
          \(
            2^{\alpha j' - \beta j} \cdot |\sin \vartheta_{i_0, i_0 '}|
           \leq 2\pi \cdot \beta_1 \cdot |S_{k,\iota,j'} + \nu_\iota \, \ell'|
          \)
          for all $\ell' \in J_{j'}^{\iota,k}$.
  \end{enumerate}
\end{lem}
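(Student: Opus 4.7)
The proof mirrors that of Lemma~\ref{lem:IntervalEstimateLSummation}, with the roles of $(j,\ell)$ (fixed) and $(j',\ell')$ (varying) interchanged. The central observation is that, by the definition of $\Theta_{j,\ell}$,
\[
  \vartheta_{i_0,i_0'}^{(0)}
  = 2\ell'\phi_{j'} - 2\ell\phi_j,
  \qquad
  \vartheta_{i_0,i_0'} = \vartheta_{i_0,i_0'}^{(0)} + 2\pi k
\]
is an affine function of $\ell'$ with step $2\phi_{j'} = (2\pi/N)\cdot 2^{(\beta-1)j'}$. Let $\vartheta_0$ denote $\vartheta_{i_0,i_0'}$ reduced modulo $\pi$ into $[-\pi/2, \pi/2]$. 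The plan is to choose $\nu_\iota$ and $S_{k,\iota,j'}$ so that $2\phi_{j'}(S_{k,\iota,j'} + \nu_\iota\ell') = \vartheta_0$ \emph{exactly}; the three claims will then follow essentially mechanically.

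First I would make the explicit choices: $\nu_0 = 1$ with $S_{k,0,j'} = (2\pi k - 2\ell\phi_j)/(2\phi_{j'})$; $\nu_1 = -1$ with $S_{k,1,j'} = (\pi + 2\ell\phi_j - 2\pi k)/(2\phi_{j'})$; $\nu_2 = -1$ with $S_{k,2,j'} = (\pi + 2\ell\phi_j - 2\pi k)/(2\phi_{j'})$; $\nu_3 = 1$ with $S_{k,3,j'} = (2\pi(k-1) - 2\ell\phi_j)/(2\phi_{j'})$. A direct check confirms $2\phi_{j'}(S_{k,\iota,j'} + \nu_\iota\ell') = \vartheta - k_0\iota \pi = \vartheta_0$ in all four cases (with $k_0\iota \in \{0,\pi,\pi,2\pi\}$), and claim~(2) follows at once from the ranges of $\vartheta_{i_0,i_0'}$ on each $J^{\iota,k}_{j'}$. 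Claim~(3) is then also immediate: since $|\sin\vartheta_{i_0,i_0'}| = |\sin\vartheta_0| \leq |\vartheta_0| = 2\phi_{j'}|S_{k,\iota,j'} + \nu_\iota\ell'|$, multiplication by $2^{\alpha j'-\beta j}$ and the substitution $2\phi_{j'} = (2\pi/N)\cdot 2^{(\beta-1)j'}$ give
\[
  2^{\alpha j'-\beta j}|\sin\vartheta_{i_0,i_0'}|
  \leq \tfrac{2\pi}{N}\cdot 2^{(\alpha-1)j' + \beta(j'-j)}|S_{k,\iota,j'}+\nu_\iota\ell'|
  \leq 2\pi\beta_1|S_{k,\iota,j'}+\nu_\iota\ell'|,
\]
where the final inequality uses $\alpha \leq 1$.

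Claim~(1) is the technical heart of the proof. Starting from the formulas \eqref{eq:VBoundsDefinition} for $v_{i_0,i'}^\pm$, I would estimate the $y_{j'}$-contribution trivially: since $y_{j'} = 2^{\beta j'+1}$ and $|\cos\theta_{i_0,i_0'}|, |\sin\theta_{i_0,i_0'}| \leq 1$, we have $2^{-\beta j} \cdot y_{j'} \cdot(\cdot) \leq 2^{\beta(j'-j)+1} = L_{j'}$ in absolute value, giving the $\pm L_{j'}$ slack. The dominant contribution is $\pm x_{i_\ast'}^\pm \cdot f(\theta_{i_0,i_0'})$ with $f = \sin$ for $\iota \in \{0,2\}$ and $f = \cos$ for $\iota \in \{1,3\}$; in the latter case I would substitute $\theta' = \pi/2 - \theta_{i_0,i_0'}$ so $\cos\theta_{i_0,i_0'} = \sin\theta'$ and $2\phi_{j'}|S_{k,\iota,j'}+\nu_\iota\ell'| = \theta'$ (from the computation above). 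Using $x_{i_\ast'}^\pm \in [2^{j'-2}, 2^{j'+2}]$ (Lemma~\ref{lem:InclusionLemma1}) together with the sandwich $\tfrac{2}{\pi}|\varphi| \leq |\sin\varphi| \leq |\varphi|$ from \eqref{eq:COS} yields
\[
  2^{-\beta j} \cdot x_{i_\ast'}^{\pm} \cdot \sin(\cdot)
  \in \left[\beta_1 |S_{k,\iota,j'}+\nu_\iota\ell'|,\; \beta_2|S_{k,\iota,j'}+\nu_\iota\ell'|\right],
\]
because $(2\pi)^{-1}\cdot 2\phi_{j'}\cdot 2^{j'-\beta j} = \beta_1$ and $4\cdot 2\phi_{j'}\cdot 2^{j'-\beta j} = \beta_2$. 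The main obstacle is then sign-bookkeeping: for $\iota \in \{0,1\}$ one has $S_{k,\iota,j'}+\nu_\iota\ell' \geq 0$, so the dominant term is nonnegative and the interval inclusion takes the form $[\beta_1(\cdot) - L_{j'},\, \beta_2(\cdot)+L_{j'}]$; for $\iota \in \{2,3\}$ the dominant term carries an overall minus sign (cf.\ \eqref{eq:VBoundsDefinition}) and $S_{k,\iota,j'}+\nu_\iota\ell' \leq 0$, so the inclusion flips to $[\beta_2(\cdot) - L_{j'},\, \beta_1(\cdot)+L_{j'}]$. Verifying this in each of the four $\iota$-cases is routine but tedious, and constitutes the only real work; the bounds on $x_{i_\ast'}^{\mp}$ (rather than $x_{i_\ast'}^{\pm}$) appearing in \eqref{eq:VBoundsDefinition} for $\iota \in \{2,3\}$ must be matched against the correct extreme of $\sin\theta$ or $\sin\theta'$ to avoid sign errors.
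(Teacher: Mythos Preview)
Your proposal is correct and follows essentially the same route as the paper: the paper also writes $\theta_{i_0,i_0'}$ (respectively $1-\tfrac{2}{\pi}\theta_{i_0,i_0'}$) as $2\phi_{j'}$ times an affine expression in $\ell'$, defines $S_{k,\iota,j'}$ and $\nu_\iota$ exactly as you do in each of the four $\iota$-cases, and then combines the bounds $\tfrac14\cdot 2^{j'}\le x_{i_\ast'}^{\pm}\le 4\cdot 2^{j'}$, $y_{j'}=2^{\beta j'+1}$, and the linear sandwich \eqref{eq:SineLinearBound}--\eqref{eq:CosineLinearBound} to obtain claims (1)--(3). One minor slip: your identity $2\phi_{j'}(S_{k,\iota,j'}+\nu_\iota\ell')=\vartheta_0$ is off by a sign for $\iota\in\{1,2\}$ (a direct computation gives $\pi-\vartheta_{i_0,i_0'}=-\vartheta_0$ there), but since only $|S_{k,\iota,j'}+\nu_\iota\ell'|$ and the sign of this quantity enter the argument, nothing downstream is affected.
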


\begin{proof}
  See Appendix~\ref{sec:IntervalInclusionProofs}.
\end{proof}

\noindent Given Lemmas~\ref{lem:IntervalEstimateMPrimeSummation} and
\ref{lem:IntervalEstimateLPrimeSummation}, we can finally show that the second
supremum in \eqref{eq:TargetEstimate} is finite
provided that Conditions~\eqref{eq:KappaConditions1} and \eqref{eq:KappaConditions2} are satisfied.

For the moment, we fix $j' \in \N$ and  define $\langle x \rangle := 1 + |x|$ for $x \in \R$.
Recall that we already estimated the series as
$\ell' \in J^{(j')}_{\mathrm{special}}$; see Equation~\eqref{eq:SpecialCaseOverallEstimate}.
Therefore, it suffices to consider
\[
  \sum_{\ell' \in J^{(j')}_{\mathrm{normal}}} \,
    \sum_{m' = 0}^{m_{j'}^{\max}} \,
      M_{i,i'}^{(3)}
  = \sum_{\iota = 0}^{3} \,
      \sum_{k = -1}^{2} \,\,
        \sum_{\ell' \in J^{(j')}_{\mathrm{normal}} \cap J_{j'}^{\iota,k}} \,\,
          \sum_{m' = 0}^{m_{j'}^{\max}} \,
            M_{i,i'}^{(3)} \, .
\]
Therefore, let us fix $\iota \in \{0,\dots,3\}$ and $k \in \{-1,\dots,2\}$ for the moment.
Let $\beta_1, \beta_2, S_{k,\iota,j'}, \nu_\iota, L_{j'}$
as in Lemma~\ref{lem:IntervalEstimateLPrimeSummation}.
If $\iota \in \{0,1\}$, let us define $\widetilde{\beta_1} := \beta_1$ and
$\widetilde{\beta_2} := \beta_2$;
otherwise, if $\iota \in \{2,3\}$, let us define $\widetilde{\beta_1} := \beta_2$
and $\widetilde{\beta_2} := \beta_1$  instead.
By definition of $M_{i,i'}^{(3)}$ and by
Lemmas~\ref{lem:IntervalEstimateMPrimeSummation} and
\ref{lem:IntervalEstimateLPrimeSummation}, we see that, for
$\ell' \in J^{(j')}_{\mathrm{normal}} \cap J_{j'}^{\iota,k}$ and
$0 \leq m' \leq m_{j'}^{\max}$ with $\beta_{j',\ell'}, L_{j'}^\ast$ and $s_{j',\ell'}, S_{j',\ell'}$
as in Lemma \ref{lem:IntervalEstimateMPrimeSummation},
\begin{align*}
  M_{i,i'}^{(3)}
  & \leq \big(
           1 + 2\pi \beta_1 \cdot |S_{k,\iota,j'} + \nu_\iota \, \ell'|
         \big)^{\sigma}
         \cdot
         \left(
           \int_{\widetilde{\beta_1} \cdot (S_{k,\iota,j'} + \nu_\iota \, \ell') - L_{j'}}
               ^{\widetilde{\beta_2} \cdot (S_{k,\iota,j'} + \nu_\iota \, \ell') + L_{j'}}
            \langle \eta_2 \rangle^{-\kappa_2}
           \, d \eta_2
         \right)^{\! \tau} \\
       & \quad \cdot
         \left(
           \int_{s_{j',\ell'} \cdot \beta_{j',\ell'} \cdot m' + S_{j',\ell'} - L_{j'}^\ast}
               ^{s_{j',\ell'} \cdot \beta_{j',\ell'} \cdot m' + S_{j',\ell'} + L_{j'}^\ast}
             \langle \eta_1 \rangle^{-\kappa_1}
           \, d \eta_1
         \right)^{\! \tau}
         \! .
\end{align*}
But with $S_{j',\ell'}^{\ast} := s_{j',\ell'} S_{j',\ell'}$,
the change of variables $\xi = s_{j',\ell'} \eta_1$ results in
\[
  \int_{s_{j',\ell'} \cdot \beta_{j',\ell'} \cdot m' + S_{j',\ell'} - L_{j'}^\ast}
      ^{s_{j',\ell'} \cdot \beta_{j',\ell'} \cdot m' + S_{j',\ell'} + L_{j'}^\ast}
    \langle \eta_1 \rangle^{-\kappa_1}
  \, d \eta_1
  = \int_{\beta_{j',\ell'} \cdot m' + S_{j',\ell'}^\ast - L_{j'}^\ast}
        ^{\beta_{j',\ell'} \cdot m' + S_{j',\ell'}^\ast + L_{j'}^\ast}
      \langle \xi \rangle^{-\kappa_1}
    \, d \xi \, .
\]
Since $\kappa_1 \geq \frac{2}{\tau}$, Lemma~\ref{lem:AnneLemma} --- applied with
$N = 0$, $\beta_0 = \beta_{j',\ell'}$,
and $L = L_{j'}^{\ast} = 4 \cdot 2^{\alpha (j' - j)}$) --- gives the estimate
\begin{align*}
  \sum_{m' \in \Z}
    \left(
      \int_{\beta_{j',\ell'} \cdot m' + S_{j',\ell'}^\ast - L_{j'}^\ast}
          ^{\beta_{j',\ell'} \cdot m' + S_{j',\ell'}^\ast + L_{j'}^\ast}
        \langle \xi \rangle^{-\kappa_1}
      \, d \xi
    \right)^{\tau}
  & \leq 2^{3 + \tau} \cdot 10^3
         \cdot (L_{j'}^\ast)^\tau \cdot 2
         \cdot \big(1 + \beta_{j',\ell'}^{-1} \cdot (1 + L_{j'}^{\ast}) \big) \\
  & \leq 2^{20 + 3\tau}
         \cdot 2^{\alpha \tau (j' - j)}
         \cdot 2^{\alpha (j - j')_{+}}
    := \Gamma_{j'}^{(1)} \, .
\end{align*}
Here, we noted that $\beta_{j',\ell'} = t_{j',\ell'} \cdot 2^{\alpha(j' - j)}$
where $t_{j',\ell'} \geq 1/10$ since $\ell' \in J^{(j')}_{\mathrm{normal}}$.

Next, we note that
\[
  (1 + 2 \pi \cdot \beta_1 \cdot |S_{k,\iota,j'} + \nu_\iota \, \ell' |)^{\sigma}
  \leq 2^{\sigma}
       \cdot \sum_{E \in \{0,\sigma\}}
               |2 \pi \beta_1 (S_{k,\iota,j'} + \nu_\iota \, \ell')|^{E} \, .
\]
Let us fix $E \in \{0, \sigma\}$ for the moment and define $\varepsilon := 1$ if $\iota \in \{0,1\}$
and $\varepsilon := -1$ otherwise.
Then Lemma~\ref{lem:IntervalEstimateLPrimeSummation} shows that
$\varepsilon \cdot (S_{k,\iota,j'} + \nu_{\iota} \, \ell ') \geq 0$ for
$\ell' \in J_{j'}^{\iota,k}$.
Furthermore, the change of variable $\lambda = \nu_\iota \, \ell '$ combined
with an application of Lemma~\ref{lem:MainLemma} or
Corollary~\ref{cor:MainLemmaNegativVersion}
--- depending on whether $\varepsilon = 1$ or $\varepsilon = -1$ ---
with $N = E \leq \sigma$, $\gamma = 2 \pi \beta_1$, and $L = L_{j'}$ shows that,
for $\Gamma_{j'}^{(2)} := 2^{4 + \sigma + \tau + \tau \kappa_2}$,
\begin{align*}
  & \sum_{\substack{\ell' \in \Z, \\
                    \varepsilon (S_{k,\iota,j'} + \nu_\iota \, \ell') \geq 0}}
      |2 \pi \beta_1 (S_{k,\iota,j'} + \nu_\iota \, \ell')|^E
      \left(
        \int_{\widetilde{\beta_1} \cdot (S_{k,\iota,j'} + \nu_\iota \, \ell') - L_{j'}}
            ^{\widetilde{\beta_2} \cdot (S_{k,\iota,j'} + \nu_\iota \, \ell') + L_{j'}}
          %(1 + |\eta_2|)^{- \kappa_2}
          \langle \eta_2 \rangle^{-\kappa_2}
        \, d \eta_2
      \right)^{ \! \tau} \\
  & = \sum_{\substack{\lambda \in \Z, \\
                      \varepsilon (S_{k,\iota,j'} + \lambda) \geq 0}}
      |2 \pi \beta_1 (S_{k,\iota,j'} + \lambda)|^E
      \left(
        \int_{\widetilde{\beta_1} \cdot (S_{k,\iota,j'} + \lambda) - L_j}
            ^{\widetilde{\beta_2} \cdot (S_{k,\iota,j'} + \lambda) + L_j}
          %(1 + |\eta_2|)^{- \kappa_2}
          \langle \eta_2 \rangle^{-\kappa_2}
        \, d \eta_2
      \right)^{ \! \tau} \\
  & \leq \Gamma_{j'}^{(2)}
         \cdot (8 \pi)^\tau
         \cdot (2\pi)^\sigma
         \cdot \big( 1 + (\max\{1, L_{j'}\})^{\tau + \sigma} \big)
         \cdot (1 + 2N + N \cdot 2^{\beta(j-j')}) \\
  & \leq \Gamma_{j'}^{(2)}
         \cdot 2^{5 \tau + 3 \sigma}
         \cdot 2 \cdot (2 \cdot 2^{\beta (j' - j)_{+}})^{\tau + \sigma}
         \cdot 5N \cdot 2^{\beta (j-j')_{+}} \\
  & \leq N \cdot \Gamma_{j'}^{(2)}
         \cdot 2^{4 + 6 \tau + 4 \sigma}
         \cdot 2^{\beta [(j-j')_{+} + (\tau + \sigma) (j' - j)_{+}]}
    =:   \Gamma_{j'}^{(3)} \, .
\end{align*}
Here, we noted that $\kappa_2 \geq 1 + \frac{\sigma + 2}{\tau}$,
so that Lemma~\ref{lem:MainLemma}
and Corollary~\ref{cor:MainLemmaNegativVersion} are indeed applicable.

Summarising all these estimates, we finally see that
\[
  \sum_{\ell' \in J_{\mathrm{normal}}^{(j')}} \,
    \sum_{m' = 0}^{m_{j'}^{\max}} \,
      M_{i,i'}^{(3)}
  \leq 4 \cdot 4 \cdot 2^{\sigma} \cdot 2
       \cdot \Gamma_{j'}^{(1)} \cdot \Gamma_{j'}^{(3)}
  \leq \Gamma_{j'}^{(4)} \, ,
\]
from where a straightforward but tedious calculation shows that one can choose
\[
  \Gamma_{j'}^{(4)}
  := 2^{33 + 6 \sigma + (10 + \kappa_2) \tau} \cdot N
     \cdot 2^{\alpha [(j - j')_{+} + \tau (j' - j)]
              + \beta [(j - j')_{+} + (\tau + \sigma) (j' - j)_{+}]} \, .
\]

Finally, by recalling Equations~\eqref{eq:MainDomain} and
\eqref{eq:MainTermMainEstimate}, we see that
\[
  %\sum_{i' = (j,m,\ell) \in I_0}
  \sum_{j' = 1}^{\infty} \,
    \sum_{\ell ' \in J_{\mathrm{normal}}^{(j')}} \,
      \sum_{m' = 0}^{m_{j'}^{\max}} \,
        M_{i,i'}^{(1)}
  \leq 4^{\sigma} \cdot 2^{5 \tau \kappa_0} \cdot
       \sum_{j' = 1}^{\infty} 2^{\omega_{j,j'}} \cdot \Gamma_{j'}^{(4)}
  \leq \Gamma^{(6)} \cdot \sum_{j'=1}^{\infty} 2^{\widetilde{\omega}_{j,j'}}
\]
with
\[
  \widetilde{\omega}_{j,j'}
  = \alpha [(j - j')_{+} + \tau (j' - j)]
    + \beta [(j - j')_{+} + (\tau + \sigma) (j' - j)_{+}]
    + \omega_{j,j'}
  \quad \! \text{and} \quad \!
  \Gamma^{(6)} := 2^{33 + 8 \sigma + (10 + \kappa_0 + \kappa_2) \tau} N \, .
\]
A direct calculation shows that
\[
  \widetilde{\omega}_{j,j'}
  = \begin{cases}
      |j' - j|
      \cdot (1-\alpha) \tau
      \cdot \big(
              \frac{\sigma (\alpha + \beta) - s}{(1-\alpha)\tau}
              - \kappa_0
            \big) \, ,
      & \text{if } j \leq j' \\[0.2cm]
      |j' - j| \cdot (1 - \alpha) \tau
      \cdot \big(
              \frac{s + \tau \beta + \alpha + \beta}{(1-\alpha) \tau}
              - \kappa_0
            \big) \, ,
      & \text{otherwise} \, .
    \end{cases}
\]
Given our choice of $\kappa_0$ (see Equation~\eqref{eq:KappaConditions2})
and recalling that $\alpha,\beta \leq 1$,
we therefore see that ${\widetilde{\omega}_{j,j'} \leq -|j' - j|}$, and thus
\[
  \sum_{j' = 1}^{\infty} \,
    \sum_{\ell ' \in J_{\mathrm{normal}}^{(j')}} \,
      \sum_{m' = 0}^{m_{j'}^{\max}} \,
        M_{i,i'}^{(1)}
  \leq \Gamma^{(6)} \cdot \sum_{j' \in \Z} 2^{-|j' - j|}
  \leq 3 \cdot \Gamma^{(6)} < \infty \, .
\]

Combining this with Equation~\eqref{eq:SpecialCaseOverallEstimate} finally results in
\begin{equation}
  \sum_{i' \in I_0}
    M_{i,i'}^{(1)}
  \leq 3 \, \Gamma^{(6)}
       + N \cdot 2^{6 + 3 \sigma + 2 \tau + 5 \tau \kappa_0 + 6 \tau \kappa_2^{(0)}}
  := \Gamma^{(7)}
  \qquad \forall \, i \in I_0 \, .
  \label{eq:IPrimeSummationFinalEstimate}
\end{equation}

\subsection{Estimating the contribution of the low-pass part}
\label{sub:LowPassContribution}

\noindent In this subsection, we estimate the series $\sum_{i \in I} M_{i,0}^{(1)}$
and $\sum_{i' \in I} M_{0, i'}^{(1)}$ where $M_{i,0}^{(1)}$ and
$M_{0,i'}^{(1)}$ are as defined in Equation~\eqref{eq:MainTermLowPass}.

\medskip{}

We first estimate $\sum_{i' \in I_0} M_{0,i'}^{(1)}$.
To this end, we recall that, for $i' = (j',m',\ell') \in I_0$,
$T_{i'} = R_{j',\ell'} A_{j'}$.
Since the rotation matrix $R_{j',\ell'}$ does not change the norm and since
$A_{j'} = \mathrm{diag} (2^{\alpha j'}, 2^{\beta j'})$
we conclude that $\|T_{0}^{-1} T_{i'}\| = \|A_{j'}\| = 2^{\alpha j'}$, since $\beta \leq \alpha$.
Furthermore, Lemma~\ref{lem:CoveringGeometry} shows that
$1 + |\xi| \geq |\xi| \geq 2^{j' - 2}$ for all $\xi \in Q_{i'}$ and hence
\[
  \psi(T_0^{-1} (\xi - b_0))
  = \psi(\xi)
  \leq (2^{j' - 2})^{- \kappa_0}
  \leq 2^{2\kappa_0} \cdot 2^{- \kappa_0 j'}
  \leq 2^{2 \kappa_0} \cdot 2^{-(1-\alpha) \kappa_0 j'}
  \qquad \forall \, \xi \in Q_{i'} \, .
\]
Overall, these considerations imply
\[
  M_{0, i'}^{(1)}
  \leq 2^{\sigma + 2 \kappa_0 \tau}
       \cdot 2^{j' (\alpha \sigma - s - (1-\alpha) \tau \kappa_0)} \, .
\]

Finally, we note that
$1 + m_{j'}^{\max} \leq 3 \cdot 2^{(1 - \alpha) j'} \leq 3 \cdot 2^{j'}$ and
 $1 + \ell_{j'}^{\max} \leq 3N \cdot 2^{(1-\beta) j'} \leq 3N \cdot 2^{j'}$ and thus
\[
  \sum_{i' \in I_0}
    M_{0,i'}^{(1)}
  \leq 9N \cdot 2^{\sigma + 2 \kappa_0 \tau} \cdot
       \sum_{j' = 1}^{\infty}
         2^{j' (2 + \alpha \sigma - s - (1-\alpha) \tau \kappa_0)}
  \leq N \cdot 2^{4 + \sigma + 2 \kappa_0 \tau} \, .
  %\label{eq:LowPassIPrimeSum}
\]
Here, in the last step we noted that
$2 + \alpha \sigma - s - (1 - \alpha) \tau \kappa_0 \leq -1$
thanks to our assumptions regarding $\kappa_0$; see Equation~\eqref{eq:KappaConditions2}.

\medskip{}

Next, we estimate $\sum_{i \in I_0} M_{i,0}^{(1)}$.
As above, we see that, for $i = (j,m,\ell) \in I_0$,
\[
  \|T_{i}^{-1} T_0\|
  = \|T_i^{-1}\|
  = \|A_{j}^{-1}\|
  = \max \{2^{-\alpha j}, 2^{- \beta j} \}
  \leq 1 \, .
\]
In particular, this implies that, for $\xi \in Q_0 = B_4 (0)$,
$|T_i^{-1} \xi| \leq \|T_{i}^{-1}\| \cdot |\xi| \leq 4$.
Furthermore,
\[
  T_{i}^{-1} b_i
  = A_{j}^{-1} R_{j,\ell}^{-1} R_{j,\ell} c_{j,m}
  = A_{j}^{-1} c_{j,m}
  = \mathrm{diag} (2^{-\alpha j}, 2^{-\beta j})
      \cdot (2^{j-1} + m \cdot 2^{\alpha j} \,,\, 0)^t
  = (2^{(1 - \alpha) j - 1} + m \,,\, 0)^t \, .
\]
Combining this results in
\[
       2^{(1 - \alpha) j - 1}
  \leq 1 + |T_{i}^{-1} b_i|
  \leq 1 + |T_{i}^{-1} (b_i - \xi)| + |T_{i}^{-1} \xi|
  \leq 5 \cdot (1 + |T_{i}^{-1} (\xi - b_i)|) \,
\]
and thus
\[
  \psi \big( T_{i}^{-1} (\xi - b_i) \big)
  \leq (1 + |T_i^{-1} (\xi - b_i)|)^{-\kappa_0}
  \leq 2^{4 \kappa_0} \cdot 2^{-(1 - \alpha) \kappa_0 j} \, .
\]

Overall, we have thus shown that
\[
  M_{i, 0}^{(1)}
  \leq 2^{\sigma + 4 \tau \kappa_0}
       \cdot 2^{j (s - \tau \kappa_0 (1 - \alpha))} \, .
\]
Noting once again that
$1 + m_{j}^{\max} \leq 3 \cdot 2^{(1 - \alpha) j} \leq 3 \cdot 2^{j}$
and $1 + \ell_{j}^{\max} \leq 3N \cdot 2^{(1 - \beta) j} \leq 3N \cdot 2^{j}$,
we finally see that
\[
  \sum_{i \in I_0}
    M_{i,0}^{(1)}
  \leq 9N \cdot 2^{\sigma + 4 \tau \kappa_0} \cdot
       \sum_{j=1}^{\infty}
         2^{j (2 + s - \tau \kappa_0 (1 - \alpha))}
  \leq N \cdot 2^{4 + \sigma + 4 \tau \kappa_0} \,
\]
since our assumptions regarding $\kappa_0$ imply that
$2 + s - \tau \kappa_0 (1 - \alpha) \leq -1$; see Equation~\eqref{eq:KappaConditions2}.

\medskip{}

Finally, since $\psi \leq 1$, we see $M_{0,0}^{(1)} \leq 2^{\sigma}$.
Combining this with the preceding estimates from this subsection, we conclude that
\begin{equation}
  \sum_{i \in I}
    M_{i,0}^{(1)}
  \leq N \cdot 2^{5 + \sigma + 4 \tau \kappa_0}
  \quad \text{and} \quad
  \sum_{i' \in I}
    M_{0,i'}^{(1)}
  \leq  N \cdot 2^{5 + \sigma + 2 \tau \kappa_0} \, .
  \label{eq:EndEstimateLowPass}
\end{equation}

\paragraph{Concluding the proof of Equation~\eqref{eq:TargetEstimate}}

Combining the estimates \eqref{eq:ISummationFinalEstimate},
\eqref{eq:IPrimeSummationFinalEstimate} and \eqref{eq:EndEstimateLowPass},
we finally see that Equation~\eqref{eq:TargetEstimate} is satisfied for
$B$ as in \eqref{eq:BChoice}.

\subsection{Proving Theorems \ref{thm:WavePacketAtomicDecomposition}
            and \ref{thm:WavePacketBanachFrames}}
\label{sub:WavePacketDecompositionProof}

\begin{proof}[Proof of Theorem \ref{thm:WavePacketAtomicDecomposition}]
  We shall derive the claims by applying
  Theorem~\ref{thm:StructuredBFDAtomicDecomposition}
  (with $\omega$ instead of $\varepsilon$).
  To this end, let us choose $Q_1^{(0)} := Q = (-\eps, 1+\eps) \times (-1-\eps,1+\eps)$
  and $Q_2^{(0)} := B_4 (0)$.
  Furthermore, let $k_i := 1$ for $i \in I_0^{(\alpha,\beta)}$ and $k_0 := 2$.
  With $T_i, b_i$ as in Lemma~\ref{lem:CoveringAlmostStructured},
  $Q_i = T_i Q_{k_i}^{(0)} + b_i$ for all $i \in I^{(\alpha,\beta)}$,
  as required at the beginning of Section~\ref{sub:BFD}.

  Furthermore, in the notation of Theorem~\ref{thm:StructuredBFDAtomicDecomposition},
 let us define $\gamma_1^{(0)} := \gamma$ and $\gamma_2^{(0)} := \varphi$.
  Still in the notation of Theorem~\ref{thm:StructuredBFDAtomicDecomposition},
  let us define
  \[
    \varrho_k :
    \R^2 \to (0,\infty),
    \xi \mapsto C
                \cdot (1 + |\xi|)^{-\kappa_0}
                \cdot (1 + |\xi_1|)^{-\kappa_1}
                \cdot (1 + |\xi_2|)^{-\kappa_2}
  \]
  for $k \in \{1,2\}$, noting that $\varrho_k = C \cdot \psi$
  with $\psi$ as in Equation~\eqref{eq:PsiDefinition}.
  Finally, we choose $\tau, \vartheta$, and $\sigma$ as in
  Theorem~\ref{thm:StructuredBFDAtomicDecomposition} and note
  that $N_0$ as defined in Theorem~\ref{thm:WavePacketAtomicDecomposition}
  satisfies $N_0 \geq N$ for $N$ as in Theorem~\ref{thm:StructuredBFDAtomicDecomposition}.

  It is then not hard to see that the wave packet system
  $(L_{\delta T_i^{-t} k} \, \gamma^{[i]})_{i \in I, k \in \Z^2}$
  introduced in Definition~\ref{def:WavePacketSystem} coincides with
  the system $\Gamma^{(\delta)}$ from Equation~\eqref{eq:GSISystemDefinition}.
  Furthermore, the assumptions of Theorem~\ref{thm:WavePacketAtomicDecomposition}
  imply that the first three assumptions of Theorem~\ref{thm:StructuredBFDAtomicDecomposition}
  are satisfied.
  In addition, since we are working in dimension $d = 2$, so that
  $d + 1 + \omega \leq 4$ and given our choice of $\varrho_1, \varrho_2$,
  Equation~\eqref{eq:WavePacketAtomicDecompositionCondition} shows that the
  fourth assumption of Theorem~\ref{thm:StructuredBFDAtomicDecomposition}
  is fulfilled.

  Therefore, it remains to verify the last condition in
  Theorem~\ref{thm:StructuredBFDAtomicDecomposition},
  namely that the constants $K_1, K_2$ introduced in
  Equation~\eqref{eq:AtomicDecompositionConstantDefinition} are finite.
  To this end, we first show that the entries $N_{i,j}$ of the infinite matrix
  $(N_{i,j})_{i,j \in I}$ can be estimated in terms of the numbers
  $M_{i,j}^{(1)}$ from Equations \eqref{eq:MainTerm} and \eqref{eq:MainTermLowPass}.
  To see this, first recall that
  %first recall for $i = (j,m,\ell) \in I_0^{(\alpha,\beta)}$ that
  %$|\det T_i| = |\det \mathrm{diag} (2^{\alpha j}, \gamma_{j,m})|$,
  %where we know from \eqref{eq:GammaEstimate} that
  %$2^{\beta j - 2} \leq \gamma_{j,m} \leq 2^{\beta j + 1}$.
  %Therefore,
  \begin{equation}
    |\det T_{j,m,\ell}|
    = |\det \mathrm{diag(2^{\alpha j}, 2^{\beta j})}|
    = 2^{(\alpha + \beta) j}
    %\leq 2^{(\alpha + \beta) j + 1}
    \qquad \forall \, (j,m,\ell) \in I_0^{(\alpha,\beta)} \,
    \label{eq:DeterminantEstimate}
  \end{equation}
  and hence
  \[
    \frac{|\det T_{i'}|}{|\det T_{i}|} = 2^{(\alpha + \beta) (j' - j)}
    \qquad \forall \, i = (j,m,\ell) , i' = (j',m',\ell') \in I_0^{(\alpha,\beta)}
    \, .
  \]
  It should be noted that Equation~\eqref{eq:DeterminantEstimate} also implies that
  the coefficient space introduced in Definition~\ref{def:WavePacketCoefficientSpace}
  coincides with the one in Definition~\ref{def:StructuredBFDSequenceSpace}, with
  identical quasi-norms.

  Furthermore, since
  $Q = (-\eps, 1+\eps) \times (-1-\eps, 1+\eps) \subset (-2,2)^2$, we see that
  \[
    \lambda(Q_i)
    = \lambda (T_i Q + b_i)
    = |\det T_i| \cdot \lambda(Q)
    \leq 2^6 \cdot |\det T_i|
    \qquad \forall \, i \in I_0^{(\alpha,\beta)}.
  \]
  Therefore,
  \(
    |\det T_i|^{-1} \int_{Q_i} f \, d\xi
    = \frac{\lambda(Q_i)}{|\det T_i|} \fint_{Q_i} f \, d\xi
    \leq 2^6 \cdot \fint_{Q_i} f \, d\xi
  \)
  for every non-negative measurable function $f : \R^2 \to [0,\infty)$ and each
  $i \in I_0^{(\alpha,\beta)}$.

  By comparing the definition of $N_{i,i'}$ (where $w = w^s$)
  in Theorem~\ref{thm:StructuredBFDAtomicDecomposition}
  with that of $M_{i',i}^{(1)}$, and by using the observations from
  the two preceding paragraphs, it is not hard to see that
  \[
    N_{i,i'}
    \leq C^\tau \cdot 2^{6 \tau} \cdot M_{i',i}^{(1)}
    \quad \forall \, i,i' \in I_0^{(\alpha,\beta)} \,
    \quad \text{where } M_{i',i}^{(1)} \text{ is defined using }
          \tau [\vartheta(\alpha+\beta) - s] \text{ instead of } s \, .
  \]
  Since $\lambda(B_4 (0)) = \pi \cdot 4^2 \leq 2^{6}$,
  it is not hard to see that this estimate remains valid for all
  $i,i' \in I^{(\alpha,\beta)}$.
  Let us define
  $\widetilde{s} := \tau [\vartheta \cdot (\alpha + \beta) - s]$.

  Therefore, if we prove that
  $\kappa_0, \kappa_1, \kappa_2$ in Theorem~\ref{thm:WavePacketAtomicDecomposition}
  satisfy the conditions of Theorem~\ref{thm:MainTechnicalResult}, we shall also prove that,
  for any $\ell \in \{1,2\}$, the constant $K_\ell$ defined in
  Equation~\eqref{eq:AtomicDecompositionConstantDefinition} satisfies
  \begin{align*}
    K_\ell^{1/\tau}
    \leq 2^6 C
         \cdot \left(
                 \sup_{i \in I^{(\alpha,\beta)}}
                   \sum_{i' \in I^{(\alpha,\beta)}}
                     M_{i',i}^{(1)}
               \right)^{1/\tau}
    & \leq 2^6 C
           \cdot \left(
                   N \cdot 2^{37
                              + 8 \sigma
                              + \tau (10
                                      + 5 \kappa_0
                                      + 6 \kappa_2^{(0)}
                                      + \kappa_2)}
                              %+ (5 + 3 \tau) \kappa_0
                              %+ 14\sigma
                              %+ 15\tau
                              %+ \tau \kappa_2
                              %+ 7 \tau \kappa_2^{(0)}}
                 \right)^{1/\tau} \\
    & \leq C \cdot 2^{24
                      + 41 / \tau_0
                      + 5 \kappa_0
                      + 6 \sigma_0
                      + 12 / ( (1-\beta) \tau_0)
                      + \kappa_2
                      + 40 / p_0} \, ,
                      %+ 43/\tau_0
                      %+ (3 + 5/\tau_0) / \kappa_0
                      %+ 21 \sigma_0
                      %+ \kappa_2
                      %+ 14 / ( (1 - \beta) \tau_0) } \,
  \end{align*}
  where we noted that
  \begin{equation}
    \tau = \min \{1,p,q\} \geq \tau_0 := \min\{p_0,q_0\}
    \qquad \text{and} \qquad
    \vartheta = (p^{-1} - 1)_{+} \leq p_0^{-1} \leq \tau_0^{-1} \, ,
    \label{eq:TauNaughtDefinition}
  \end{equation}
  so that $1/\tau \leq 1/\tau_0$.
  Furthermore, we noted that
  \begin{equation}
    \frac{\sigma}{\tau}
    \leq \frac{2}{p_0} + \lceil p_0^{-1} (2 + \omega) \rceil
    =: \sigma_0
    \leq 1 + \frac{5}{p_0} \, ,
    \label{eq:SigmaTauQuotientEstimate}
  \end{equation}
  which follows directly from the definition of $\sigma$ in
  Theorem \ref{thm:StructuredBFDAtomicDecomposition} by recalling that we use
  $\omega$ instead of $\varepsilon$ and that $d=2$, $p \geq p_0$ and $\omega \leq 1$.
  Finally, we also invoked the estimate
  \begin{equation}
    \kappa_2^{(0)}
    \leq \frac{2}{(1-\beta) \tau_0} + \sigma_0
    \, ,
    \label{eq:KappaTwoNaughtEstimate}
  \end{equation}
  which can be obtained directly from \eqref{eq:KappaConditions1},
  given \eqref{eq:SigmaTauQuotientEstimate} and recalling that
  $\alpha - \beta \leq 1 - \beta$.
  Since the right-hand side of the estimate above only depends on
  $\alpha, \beta, p_0, q_0, s_0, C$,
  Theorem \ref{thm:StructuredBFDAtomicDecomposition} finally yields the claim.

  Overall, it remains to verify that the choice of
  $\kappa_0, \kappa_1, \kappa_2$ in Theorem \ref{thm:WavePacketAtomicDecomposition}
  satisfy the assumptions of Theorem \ref{thm:MainTechnicalResult} where
  $\widetilde{s}$ is used instead of $s$.
  To see this, we first of all note that indeed
  ${\kappa_1 = \frac{2}{\tau_0} \geq \max \{ 2, \frac{2}{\tau} \}}$.
  Second, from Equations~\eqref{eq:KappaTwoNaughtEstimate}
  and \eqref{eq:SigmaTauQuotientEstimate} we infer that
  \[
    2 + \kappa_2^{(0)}
    \leq 2 + \frac{2}{(1-\beta) \tau_0} + \sigma_0
    \leq 3 + \frac{5}{p_0} + \frac{2}{(1-\beta) \tau_0}
    =    \kappa_2 \,
  \]
  and furthermore
  \(
    1 + \frac{\sigma + 2}{\tau}
    \leq 1 + \frac{2}{\tau_0} + \sigma_0
    \leq 2 + \frac{2}{(1-\beta) \tau_0} + \frac{5}{p_0} \leq \kappa_2
  \),
  as required in Theorem \ref{thm:MainTechnicalResult}.

  Regarding $\kappa_0$, we note
  $|\frac{\widetilde{s}}{\tau}| = |\vartheta(\alpha+\beta) - s|
  \leq s_0 + p_0^{-1} (\alpha + \beta)$, which implies
  \begin{align*}
    \frac{3 + |\widetilde{s}| + \tau + \alpha + (\alpha + \beta) \sigma}
         {(1-\alpha)\tau}
    & \leq (1-\alpha)^{-1}
           \left(
             \frac{3}{\tau_0}
             + s_0
             + \frac{\alpha + \beta}{p_0}
             + 1
             + \frac{\alpha}{\tau_0}
             + (\alpha + \beta) \sigma_0
           \right) \\
    & \leq (1-\alpha)^{-1}
           \left(
             3 +
             \frac{3 + \alpha}{\tau_0}
             + s_0
             + \frac{6 \alpha + 6 \beta}{p_0}
           \right)
    \leq \kappa_0 \, .
  \end{align*}
  Finally, we see that
  \(
    \frac{\max\{\tau,\sigma\}}{\tau}
    \leq \max\{1, \sigma_0\} \leq 1 + \frac{5}{p_0}
  \)
  and hence
  \begin{align*}
    & \frac{2
            + |\widetilde{s}|
            + \tau \beta \, \kappa_2^{(0)}
            + \max \{\tau, \sigma \} (\alpha + \beta)
            }
           {(1-\alpha)\tau} \\
    & \leq (1 - \alpha)^{-1}
           \left(
             \frac{2}{\tau_0}
             + s_0
             + \frac{\alpha + \beta}{p_0}
             + \beta \cdot \left(
                             \frac{2}{(1-\beta)\tau_0} + 1 + \frac{5}{p_0}
                           \right)
             + (\alpha + \beta) \cdot \left(1 + \frac{5}{p_0}\right)
           \right) \\
    & \leq (1-\alpha)^{-1} \left(
                             3
                             + \frac{2}{\tau_0}
                             + s_0
                             + \frac{6 \alpha + 11\beta}{p_0}
                             + \frac{2\beta}{(1-\beta)\tau_0}
                           \right)
      \leq \kappa_0 \, ,
  \end{align*}
  as required in \eqref{eq:KappaConditions2}.
\end{proof}

\begin{proof}[Proof of Theorem~\ref{thm:WavePacketBanachFrames}]
The proof is very similar to the one of Theorem~\ref{thm:WavePacketAtomicDecomposition}
and therefore only sketched here.

Instead of Theorem~\ref{thm:StructuredBFDAtomicDecomposition}, we use
Theorem~\ref{thm:StructuredBFDBanachFrame}, but with
$\gamma_1^{(0)} := \overline{\widetilde{\gamma}}$ and
$\gamma_2^{(0)} := \overline{\widetilde{\varphi}}$
instead of $\gamma_1^{(0)} = \gamma$ and $\gamma_2^{(0)} = \varphi$ in the
preceding proof; here we recall the notation $\widetilde{f}(x) = f(-x)$.
To justify this choice of $\gamma_1^{(0)}, \gamma_2^{(0)}$, we recall the elementary identity
$\Fourier \overline{\widetilde{f} \,\,} = \overline{\widehat{f} \,\,}$.
With this, it is not hard to see that $\overline{\widetilde{\gamma}\,}$
and $\overline{\widetilde{\varphi}\,}$ also satisfy the assumptions
(1)-(2) of Theorem~\ref{thm:WavePacketAtomicDecomposition} and (3')
%\ref{enu:WavePacketBanachFrameFourierSmoothness}%
%--\ref{enu:WavePacketBanachFrameFourierDecay}
of Theorem~\ref{thm:WavePacketBanachFrames},
and thus assumptions (1)-(3) of Theorem~\ref{thm:StructuredBFDBanachFrame}.

Given our assumptions, it is not hard to verify
--- as in the proof of Theorem~\ref{thm:WavePacketAtomicDecomposition} ---
that the matrix elements $M_{j,i}$ introduced in Theorem~\ref{thm:StructuredBFDBanachFrame} satisfy
\[
  M_{i',i} \leq C^{\tau} \cdot 2^{6 \tau} \cdot M_{i',i}^{(1)}
  \quad \text{where } M_{i',i}^{(1)} \text{ is defined using }
        s \tau \text{ instead of } s
        \text{ and } \theta \text{ instead of } \sigma
\]
where $\theta$ is as defined in Theorem~\ref{thm:StructuredBFDBanachFrame}.
The remainder of the proof is then almost identical to that of
Theorem~\ref{thm:WavePacketAtomicDecomposition} with one exception
Namely we still need to verify Equation~\eqref{eq:WavePacketAnalysisOperatorExplicit}.

For this, to avoid confusion in notations of  the family $\gamma^{[i]}$ defined
in Equation~\eqref{eq:GeneratorDefinition} with those introduced in
Definition~\ref{def:WavePacketSystem}, let us write
$\gamma^{\{i\}} := |\det T_i|^{1/2} \cdot M_{b_i} [\gamma_i (T_i^t \bullet)]$
for the family defined in Equation~\eqref{eq:GeneratorDefinition}.
Now, we recall that
$\gamma_i = \gamma_{k_i}^{(0)} = \gamma_1^{(0)} = \overline{\widetilde{\gamma}}$
for $i \in I_0^{(\alpha,\beta)}$ and
$\gamma_0 = \gamma_{k_0}^{(0)} = \overline{\widetilde{\varphi}}$.
Finally, we note that
\[
  \overline{\widetilde{M_\xi g}} = M_\xi \overline{\widetilde{g}}
  \quad \text{for any measurable } g : \R^2 \to \CC
        \text{ and any } \xi \in \R^2 \, ,
\]
which shows that
\[
  \big\langle
    f , \, L_{\delta T_i^{-t} k} \, \widetilde{\gamma^{\{i\}}}
  \big\rangle
  = \big\langle
      f \mid L_{\delta T_{i}^{-t} k} \, \overline{\widetilde{\gamma^{\{i\}}}}
    \big\rangle_{L^2}
  = \Big\langle
      f
      \,\Big|\,
      |\det T_i|^{1/2}
      \cdot L_{\delta T_i^{-t} k} \,
            \big[
              M_{b_i} ( \overline{\widetilde{\gamma_i \circ T_i^t}} \, )
            \big]
    \Big\rangle_{L^2}
  = \big\langle f \mid L_{\delta T_i^{-t} k} \, \gamma^{[i]} \big\rangle_{L^2}
\]
for all $f \in L^2 (\R^2)$, $i \in I^{(\alpha,\beta)}$ and $k \in \Z^2$.

Therefore, Equation \eqref{eq:AnalysisOperatorConsistency} finally shows that, for
$f \in L^2 (\R^2) \cap \PacketSpace_{s}^{p,q} (\alpha,\beta)$, the analysis
map $A^{(\delta)} := A_{\widetilde{\Gamma}^{(\delta)}}$ constructed in
Theorem~\ref{thm:StructuredBFDBanachFrame} satisfies
\[
  A_{\widetilde{\Gamma}^{(\delta)}} f
  = \left(
      \big\langle
        f , L_{\delta \cdot T_i^{-t} k} \, \widetilde{\gamma^{\{i\}}}
      \big\rangle
    \right)_{i \in I, k \in \Z^2}
  = \left(
      \big\langle
        f
        \,\mid\,
        L_{\delta \cdot T_i^{-t} k} \, \gamma^{[i]}
      \big\rangle_{L^2}
    \right)_{i \in I, k \in \Z^2} \, ,
\]
as desired.
\end{proof}

\appendix

\section{Proof of Lemma \ref{lem:MainLemma} and Corollary \ref{cor:MainLemmaNegativVersion}}
\label{sec:MainLemmaProof}
%To prove this, we will use the following auxiliary lemma.

\noindent For proving Lemma~\ref{lem:MainLemma}, we shall use the following auxiliary result.

\begin{lem}\label{lem:SumEstimate}
  For any $\beta > 0$ and $x \in \R$,
  \[
    \sum_{k\in\Z}
      \big(1 + \left|\beta (k+x)\right|\big)^{-2}
      \leq 2 + \frac{10}{\beta}
      \leq 2^{4} \cdot \left(1+\beta^{-1}\right) .
    \qedhere
  \]
\end{lem}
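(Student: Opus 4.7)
The plan is to reduce the two-sided sum to a one-sided tail that can be bounded by an integral. Fix $\beta > 0$ and $x \in \R$, and choose the unique $n_0 \in \Z$ such that $n_0 + x \in [0,1)$ (that is, $n_0 = \lceil -x \rceil$). I will split the index set into $\{k \geq n_0\}$ and $\{k < n_0\}$ and reparametrise each half by a non-negative integer $j$ measuring the distance to $n_0$. For $k \geq n_0$, writing $j = k - n_0 \geq 0$, the identity $k + x = j + (n_0 + x)$ gives $|\beta(k+x)| \geq \beta j$. For $k \leq n_0 - 1$, writing $j = n_0 - 1 - k \geq 0$, we have $k + x = (n_0 + x) - 1 - j \in [-1-j,-j)$, so $|\beta(k+x)| \geq \beta j$ again.

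Consequently each of the two half-sums is bounded by $\sum_{j=0}^{\infty}(1+\beta j)^{-2}$, and it remains to estimate this tail. Since $t \mapsto (1 + \beta t)^{-2}$ is decreasing, the integral comparison gives
\[
  \sum_{j=0}^{\infty}(1+\beta j)^{-2}
  \leq 1 + \int_{0}^{\infty}(1+\beta t)^{-2}\, dt
  = 1 + \frac{1}{\beta}.
\]
Adding the two halves yields
\[
  \sum_{k\in\Z}\bigl(1+|\beta(k+x)|\bigr)^{-2}
  \leq 2\left(1+\frac{1}{\beta}\right)
  = 2 + \frac{2}{\beta}
  \leq 2 + \frac{10}{\beta},
\]
which already beats the first claimed bound. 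The second inequality $2 + 10/\beta \leq 2^{4}(1+\beta^{-1})$ is trivial, since $2 \leq 16$ and $10 \leq 16$.

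There is no real obstacle in this proof; the only subtle point is to make sure that the two halves are parametrised so that the distance $|k+x|$ is bounded below by $\beta j$ on both sides of $n_0$, which is guaranteed by the choice $n_0 + x \in [0,1)$. The constants $2+10/\beta$ and $2^4(1+\beta^{-1})$ in the statement are deliberately loose; the argument produces the sharper bound $2+2/\beta$ for free, so the stated inequalities follow at once.
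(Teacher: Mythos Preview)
Your proof is correct and in fact cleaner than the paper's. The paper first uses the $1$-periodicity of the sum in $x$ to reduce to $x\in[0,1]$, and then splits the index set into three ranges depending on the size of $|k|$ relative to $1/\beta$: for $k\geq 1/\beta$ and for $k\leq -1/\beta-1$ it drops the ``$1+$'' and compares $\sum k^{-2}$ to an integral (each tail contributing $4/\beta$), while for the middle range $-1/\beta-1\leq k\leq 1/\beta$ it simply bounds each of the at most $2+2/\beta$ terms by $1$. Summing the three pieces gives exactly $2+10/\beta$. Your approach instead anchors the split at the integer $n_0$ nearest $-x$ and observes that on each side the shifted index $j$ satisfies $|k+x|\geq j$, so a single integral comparison for $\sum_{j\geq 0}(1+\beta j)^{-2}\leq 1+1/\beta$ handles both halves uniformly. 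This yields the sharper bound $2+2/\beta$ with less case analysis; the paper's three-case split is what produces the looser constant $10$. (Minor slip in your closing paragraph: you wrote ``$|k+x|$ is bounded below by $\beta j$'' where you meant $|k+x|\geq j$, but the body of the argument has it right.)
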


\begin{proof}
  First of all, we note that the function
  \[
    g: \R \to     [0,\infty],
       x  \mapsto \sum_{k \in \Z} \big(1+|\beta(k+x)|\big)^{-2}
  \]
  is periodic with period one.
  Therefore, to prove that $g(x) \leq 2 + \frac{10}{\beta}$, it is enough to
  consider only the case when $x \in [0,1]$.
  We now distinguish three cases determined by the value of $k \in \Z$.
  %Our goal is to show $g(x) \leq 2 + \frac{10}{\beta}$ for all $x \in \R$.
  %Note that $g$ is $1$-periodic, so that it suffices to
  %consider the case $x\in\left[0,1\right]$.
  %To show $g\left(x\right)\leq2+\frac{10}{\beta}$,
  %we distinguish three cases regarding $k\in\Z$.

  \medskip{}

  \emph{Case 1:} $k \geq \frac{1}{\beta}$.
  This implies that $\beta (k+x) \geq \beta k \geq1$, whence
  \[
    \sum_{k \geq \frac{1}{\beta}}
      \left(1 + \left|\beta (k+x)\right|\right)^{-2}
    \leq \sum_{k\geq\frac{1}{\beta}}
           \left(\beta k\right)^{-2}
    =    \beta^{-2}
         \cdot \sum_{k\geq\frac{1}{\beta}}
                 k^{-2}.
  \]
  Furthermore, if $y>0$ and $n \in \Z_{\geq y}$,
  then $n \geq y > 0$, whence $n \geq 1$ and $n+1 \leq 2n$.
  Therefore, if $z \in [n,n+1]$, then
  $z^{-2} \geq (n+1)^{-2} \geq (2n)^{-2} = n^{-2}/4$ and thus
  \begin{equation}
         \sum_{n\in\Z_{\geq y}}
           n^{-2}
    =    \sum_{n\geq y}
           \int_{n}^{n+1}n^{-2} \, d z
    \leq 4 \sum_{n\geq y}
             \int_{n}^{n+1}
               z^{-2}
             \, d z
    \leq 4 \cdot \int_{y}^{\infty}
                    z^{-2}
                 \, d z
    =    4 \cdot \frac{z^{-1}}{-1}\bigg|_{z=y}^{\infty}
    =    \frac{4}{y} % \, .
    \label{eq:IntegralComparisonTest}
  \end{equation}
  and
  \(
    \sum_{k\geq1/\beta} \left( 1+|\beta (k+x)| \right)^{-2}
    \leq \beta^{-2} \cdot \sum_{k \geq 1/\beta} k^{-2}
    \leq \beta^{-2} \cdot \frac{4}{1/\beta}
    = \frac{4}{\beta}
  \).

  \medskip{}

  \emph{Case 2:} $k\leq-\frac{1}{\beta}-1$, and hence
  $-(k+1) \geq \frac{1}{\beta}$.
  For any $x \in [0,1]$, this implies that
  \[
    \beta(k+x)
    \leq \beta(k+1)
    \leq \beta \cdot \left(-\tfrac{1}{\beta}\right)
    = -1 < 0
    \quad \text{ and hence } \quad
         \left|\beta (k+x)\right|
         =-\beta (k+x)
    \geq -\beta (k+1) > 0.
  \]
  Therefore, we can again apply Equation~\eqref{eq:IntegralComparisonTest} to obtain
  \begin{align*}
    \sum_{k \in \Z_{\leq -\frac{1}{\beta}-1}}
      \left( 1 + \left|\beta (k+x)\right|\right)^{-2}
    & \leq \sum_{k \in \Z_{\leq -\frac{1}{\beta} - 1}}
             \left(-\beta (k+1)\right)^{-2}\\
    \left({\scriptstyle \text{with }\ell=-\left(k+1\right)}\right)
    & =    \sum_{\ell \in \Z_{\geq 1/\beta}}
             \left(\beta\ell\right)^{-2}
      \leq \beta^{-2} \cdot \frac{4}{1/\beta}
      =    \frac{4}{\beta}.
  \end{align*}

  \medskip{}

  \emph{Case 3:} $-\frac{1}{\beta} - 1 \leq k \leq \frac{1}{\beta}$.
  This implies that
  $k \in \Z \cap \big[-\frac{1}{\beta}-1,\frac{1}{\beta}\big]$
  and hence $k$ can take at most $2 + \frac{2}{\beta}$ different values.
  %and hence $k$ there are at most $2 + \frac{2}{\beta}$ values of $k$ for
  %which the present case occurs.
  Therefore,
  \[
    \sum_{\substack{k \in \Z\\
                    -\frac{1}{\beta} - 1 \leq k \leq \frac{1}{\beta}}}
      \left(1+|\beta(k+x)|\right)^{-2}
    \leq2 \left( 1+\frac{1}{\beta} \right).
  \]

  Combining the three cases results in
  \(
    g(x) \leq \tfrac{4}{\beta}
             + \tfrac{4}{\beta}
             + 2 \big( 1 + \tfrac{1}{\beta} \big)
        =    2 + \frac{10}{\beta}
  \)
  for all $x \in [0,1]$ and, indeed, for all $x \in \R$,
  since $g$ is periodic with period $1$.
\end{proof}

We can now turn to the proof of Lemma~\ref{lem:MainLemma}.

\begin{proof}[Proof of Lemma~\ref{lem:MainLemma}]
  Let
  \[
    I_{k} := \left[\beta_{1} \cdot (k+M) - L \,,\, \beta_{2} \cdot (k+M) + L\right]
  \]
  for $k \in \Z$ such that $k + M \geq 0$.
  The Lebesgue measure $\lambda (I_{k})$ of $I_{k}$ is given by
  \begin{equation}
    \lambda(I_{k})
    = 2L + \frac{\beta_{2} - \beta_{1}}{\beta_{1}} \cdot \beta_{1} \cdot (k+M).
    \label{eq:MeasureOfInterval}
  \end{equation}
  We now distinguish two cases determined by the value of $k \in \Z$.
  \smallskip{}

  \emph{Case 1:} $k$ is such that $\beta_{1} \cdot (k+M) \geq 2 L > 0$.
  Given \eqref{eq:MeasureOfInterval}, this implies that
  %This implies that $2L \leq \beta_{1}(k+M)$ and,
  %given \eqref{eq:MeasureOfInterval}, that
  \[
    \lambda(I_{k})
    \leq \beta_{1} \cdot (k+M)
         \cdot \left[ 1 + \frac{\beta_{2} - \beta_{1}}{\beta_{1}} \right]
    =    \frac{\beta_{2}}{\beta_{1}}
         \cdot \beta_{1} \cdot (k+M)
    \leq \frac{\beta_{2}}{\beta_{1}}
         \cdot \left( 1 + |\beta_{1} \cdot (k+M)| \right).
  \]
  Furthermore, we note that each $x \in I_{k}$ satisfies
  \[
    x
    \geq \beta_{1} \cdot (k+M)-L
    \geq \frac{1}{2} \cdot \beta_{1} \cdot (k+M)
    =    \frac{1}{2} \cdot |\beta_{1} \cdot (k+M)|.
  \]
  Therefore,
  \[
    |f(x)|
    \leq C_{0} \cdot \left( 1+|x| \right)^{-q}
    \leq C_{0} \cdot \left(
                       1 + \frac{1}{2} \, |\beta_{1} \cdot (k+M)|
                     \right)^{-q}
    \leq C_{0} \cdot 2^{q} \cdot \left(1 + \left|\beta_{1} \cdot (k+M)\right|\right)^{-q},
  \]
  and
  \[
    \left|\gamma \cdot \left(k+M\right)\right|^{N}
    =    \left(\frac{\gamma}{\beta_{1}}\right)^{N}
         \cdot \left|\beta_{1} \cdot (k+M) \right|^{N}
    \leq \left(\frac{\gamma}{\beta_{1}}\right)^{N}
         \cdot \left( 1 + \left|\beta_{1} \cdot \left(k+M\right)\right|\right)^{N}.
  \]

  Put together, this results in
  \begin{align*}
           \left|\gamma \cdot (k+M)\right|^{N}
           \cdot\left(\int_{I_{k}} | f(x) | \, d x\right)^{\!\tau}
    & \leq \left(\frac{\gamma}{\beta_{1}}\right)^{N} \,
           \left(\frac{\beta_{2}}{\beta_{1}}\right)^{\tau}
           \cdot C_{0}^{\tau}
           \cdot 2^{\tau q}
           \cdot \left(1 + \left| \beta_{1} \cdot (k+M) \right|\right)^{N+\tau(1-q)}\\
    & \leq \left(\frac{\gamma}{\beta_{1}}\right)^{N} \,
           \left(\frac{\beta_{2}}{\beta_{1}}\right)^{\tau}
           \cdot C_{0}^{\tau}
           \cdot 2^{\tau q}
           \cdot \left(1+\left| \beta_{1} \cdot (k+M) \right|\right)^{-2},
  \end{align*}
  where, in the last step, we recalled that $q \geq 1 + \tau^{-1} (N+2)$,
  whence $N + \tau(1-q) \leq -2$.
  %by our choice of $q$.

  Finally, applying Lemma~\ref{lem:SumEstimate}, we conclude that
  \begin{align*}
     \sum_{\substack{k \in \Z \text{ with}\\
                     \beta_{1} \cdot (k+M) \geq 2L}}
       \left[
         \left|\gamma \cdot \left(k+M\right)\right|^{N}
         \! \cdot \left(
                    \int_{I_{k}} \left| f(x) \right| \, d x
                  \right)^{\tau}
       \right] %\\
   & \leq \left(\frac{\gamma}{\beta_{1}}\right)^{N}
          \left(\frac{\beta_{2}}{\beta_{1}}\right)^{\tau}
          \cdot C_{0}^{\tau}
          \cdot 2^{\tau q}
          \cdot \sum_{k \in \Z}
                  \left(
                    1 + \left|\beta_{1} \cdot (k+M)\right|
                  \right)^{-2}\\
   & \leq \left(\frac{\gamma}{\beta_{1}}\right)^{N} \,
          \left(\frac{\beta_{2}}{\beta_{1}}\right)^{\tau}
          \cdot C_{0}^{\tau}
          \cdot 2^{\tau q+4}
          \cdot \left(1 + \frac{1}{\beta_{1}}\right) \, .
  \end{align*}

  \medskip{}

  \emph{Case 2:} $k$ is such that $0 \leq \beta_{1} \cdot (k+M) \leq 2 L$,
  or equivalently $0 \leq k+M \leq 2 \cdot \frac{L}{\beta_{1}}$.
  Therefore, $k$ can take at most $1+2\cdot\frac{L}{\beta_{1}}$ different values.
  %There are at most $1+2\cdot\frac{L}{\beta_{1}}$ different values of $k$
  %for which this is satisfied.

  Furthermore, from Equation~\eqref{eq:MeasureOfInterval} we infer that
  \[
    \lambda (I_{k})
    =    2L + \frac{\beta_{2}-\beta_{1}}{\beta_{1}} \cdot \beta_{1} \cdot (k+M)
    \leq 2L + \frac{\beta_{2} - \beta_{1}}{\beta_{1}} \cdot 2L
    = 2 \cdot \frac{\beta_2}{\beta_1} \cdot L.
    %\\
    %& \leq 2L \cdot \left(1+\frac{\beta_{2}-\beta_{1}}{\beta_{1}}\right)
    %  =    2\cdot\frac{\beta_{2}}{\beta_{1}}\cdot L \, .
  \]
  Moreover, $|f(x)| \leq C_{0}$ and
  \[
    \left| \gamma \cdot (k+M) \right|^{N}
    \leq \left( \frac{\gamma}{\beta_{1}} \cdot \beta_{1} \cdot (k+M) \right)^{N}
    \leq \left( \frac{\gamma}{\beta_{1}} \right)^{N} \cdot (2 L)^{N}.
  \]

  Put together, this results in
  \[
     \sum_{\substack{k \in \Z \text{ with} \\ 0 \leq \beta_{1} \cdot (k+M) \leq 2L}} \!\!
       \left[
         \left|\gamma \cdot (k+M)\right|^{N}
         \cdot \left(
                 \int_{I_{k}}
                   \left| f(x) \right|
                 \, d x
               \right)^{\!\tau}
       \,\right] %\\
     \leq \left(1 + 2 \frac{L}{\beta_{1}}\right)
          \cdot \left(\frac{\gamma}{\beta_{1}}\right)^{N}
          \cdot \left(2 L\right)^{N} \, C_{0}^{\tau}
          \cdot \left(2\cdot\frac{\beta_{2}}{\beta_{1}}\cdot L\right)^{\tau}.
  \]

  Finally, combining the estimates obtained in Cases 1 and 2 results in the claimed estimate
  \begin{align*}
   & \sum_{\substack{k \in \Z \\ \text{with } k + M \geq 0}}
       \left[
         \left|\gamma \cdot (k+M)\right|^{N}
         \cdot \left(
                 \int_{I_{k}}
                   \left|f\left(x\right)\right|
                 \, d x
               \right)^{\!\tau}
       \,\right]\\
   & \leq \left(\frac{\gamma}{\beta_{1}}\right)^{N} \,
            \left(\frac{\beta_{2}}{\beta_{1}}\right)^{\tau}
            \cdot C_{0}^{\tau}
            \cdot 2^{\tau q+4}
            \cdot \left(1+\frac{1}{\beta_{1}}\right)
          + \left(1 + 2 \frac{L}{\beta_{1}}\right)
            \cdot \left(\frac{\gamma}{\beta_{1}}\right)^{N}
            \cdot \left(2 L\right)^{N}
            \cdot C_{0}^{\tau}
            \cdot \left(2\cdot\frac{\beta_{2}}{\beta_{1}}\cdot L\right)^{\tau}\\
   & \leq \left(\frac{\beta_{2}}{\beta_{1}}\right)^{\tau} \,
            \left(\frac{\gamma}{\beta_{1}}\right)^{N}
            \cdot C_{0}^{\tau}\cdot\left[2^{\tau q+4}
            \cdot \left(1+\frac{1}{\beta_{1}}\right)
          + \left(1+2 \frac{L}{\beta_{1}}\right)
            \cdot \left(2 L\right)^{N+\tau}\right] \\
   & \leq 2^{4+N+\tau+\tau q}
          \cdot \left(\frac{\beta_{2}}{\beta_{1}}\right)^{\tau} \,
          \left(\frac{\gamma}{\beta_{1}}\right)^{N}
          \cdot C_{0}^{\tau}
                \cdot \left[
                        \left(1+\frac{1}{\beta_{1}}\right)
                        + \left(1+\frac{L}{\beta_{1}}\right) \cdot L^{N+\tau}
                      \right] \\
   & \leq 2^{4+N+\tau+\tau q}
          \cdot \left(\frac{\beta_{2}}{\beta_{1}}\right)^{\tau} \,
          \left(\frac{\gamma}{\beta_{1}}\right)^{N}
          \cdot C_{0}^{\tau}
          \cdot \left(1+L^{\tau + N}\right)
                \left(1+\frac{L+1}{\beta_{1}}\right).
  \qedhere
  \end{align*}
  %as claimed.
\end{proof}

Finally, we also prove Corollary \ref{cor:MainLemmaNegativVersion}.

\begin{proof}[Proof of Corollary \ref{cor:MainLemmaNegativVersion}]
  Let $\widetilde{\beta}_2 := \beta_1$, $\widetilde{\beta}_1 := \beta_2$, $\widetilde{M} := -M$ and
  $\widetilde{f} : \R \to \CC, x \mapsto f(-x)$.
  Together with $f$, $\widetilde{f}$ also satisfies \eqref{eq:MainLemmaFDecayAssumption}.
  Furthermore, $0 < \widetilde{\beta}_1 \leq \widetilde{\beta}_2$.
  Thus, after the substitution $y = -x$ and the change of summation index
  $l = -k$, we can apply Lemma \ref{lem:MainLemma} (with
  $\widetilde{\beta}_1, \widetilde{\beta}_2$ instead of $\beta_1, \beta_2$
  and with $\widetilde{M}$ instead of $M$) to obtain
  \begin{align*}
    & \sum_{\substack{k \in \Z \text{ with} \\ k + M \leq 0}}
      \left[
        |\gamma \cdot (k+M)|^N
        \cdot \left(
                \int_{\beta_1 \cdot (k+M) - L}^{\beta_2 \cdot (k+M) + L}
                  |f(x)|
                \, dx
              \right)^{\!\!\tau} \,
      \right] \\
    & = \sum_{\substack{k \in \Z \text{ with} \\
              - [(-k) + \widetilde{M}] \leq 0}}
        \left[
          |\gamma \cdot \big( (-k) + \widetilde{M}\big)|^N
          \cdot \left(
                  \int_{\beta_2 \cdot ( (-k) + \widetilde{M} \, ) - L}
                      ^{\beta_1 \cdot ( (-k) + \widetilde{M} \, ) + L}
                    |\widetilde{f}(y)|
                  \, dy
                \right)^{\!\!\tau} \,
        \right] \\
    & = \sum_{\substack{\ell \in \Z \text{ with} \\
              \ell + \widetilde{M} \geq 0}}
        \left[
          |\gamma \cdot ( \ell + \widetilde{M})|^N
          \cdot \left(
                  \int_{\widetilde{\beta_1} \cdot ( \ell + \widetilde{M}) - L}
                      ^{\widetilde{\beta_2} \cdot ( \ell + \widetilde{M}) + L}
                    |\widetilde{f}(y)|
                  \, dy
                \right)^{\!\!\tau} \,
        \right] \\
    & \leq C \cdot \left(
                     \widetilde{\beta}_2 \, \big/ \, \widetilde{\beta}_1
                   \right)^\tau
             \cdot \left( \gamma \, \big/ \, \widetilde{\beta}_1 \right)^N
             \cdot C_0^\tau
             \cdot (1 + L^{\tau + N})
             \cdot \left(1 + \frac{L+1}{\widetilde{\beta}_1}\right) \, ,
  \end{align*}
  which easily yields the claim.
\end{proof}

%\section{Linear bounds for Sine and Cosine}
\section{Estimates for Sine and Cosine}
\label{sec:TrigonometricLinearBounds}

\noindent In this appendix, we first state and prove linear bounds of the sine and cosine
on the interval $[0, \frac{\pi}{2}]$.
Second, we prove an elementary relation between the absolute values of sine and cosine.
Finally, we prove a quadratic lower bound for the cosine.
Even though these bounds are probably well-known,
we prefer to provide a proof, since they play an important role
in the proofs of Proposition~\ref{prop:WavePacketCoveringSubordinateness}
and of Lemmas~\ref{lem:IntervalEstimateLSummation} and
\ref{lem:IntervalEstimateLPrimeSummation}
and thus in our proof of the existence of  Banach frames and atomic
decompositions for the wave packet smoothness spaces.

\medskip{}

First, we show that
\begin{equation}
  \frac{2}{\pi} \cdot \phi \leq \sin \phi \leq \phi
  \qquad \forall \, \phi \in \big[0, \tfrac{\pi}{2}\big] \, .
  \label{eq:SineLinearBound}
\end{equation}
Indeed, the upper bound follows from the stronger estimate $|\sin \phi| \leq |\phi|$
for all $\phi \in \R$, which results from
$|\tfrac{d}{d\phi} \sin \phi| = |\cos \phi| \leq 1$ combined with $\sin 0 = 0$.
%$\big|\frac{d}{d\phi} \sin \phi\big| = |\cos \phi| \leq 1$, we see
%$\sin \phi \leq |\sin \phi| = |\sin \phi - \sin 0| \leq |\phi|$ for all
%$\phi \in \R$, and thus $\sin \phi \leq |\phi| = \phi$ for
%$\phi \in [0,\frac{\pi}{2}]$.

To estimate the lower bound, we note that
$\frac{d^2}{d \phi^2} \sin \phi = - \sin \phi \leq 0$ on $[0, \frac{\pi}{2}]$,
so that the sine is concave on this interval.
This together with $\lambda := \frac{2}{\pi} \cdot \phi \in [0,1]$ for
$\phi \in [0, \frac{\pi}{2}]$, implies, as claimed, that
\[
       \sin \phi
  =    \sin \Big( (1-\lambda) \cdot 0 + \lambda \cdot \frac{\pi}{2} \Big)
  \geq (1-\lambda) \cdot \sin 0 + \lambda \cdot \sin \frac{\pi}{2}
  =    \lambda
  =    \frac{2}{\pi} \cdot \phi \, .
\]

\medskip{}

Next, we show that
\begin{equation}
  1 - \frac{2}{\pi} \cdot \phi
  \leq \cos \phi
  \leq \frac{\pi}{2} \cdot \Big( 1 - \frac{2}{\pi} \cdot \phi \Big)
  \qquad \forall \, \phi \in \big[ 0, \tfrac{\pi}{2} \big] \, .
  \label{eq:CosineLinearBound}
\end{equation}
To see this, we recall that $\cos \phi = \cos(-\phi) = \sin(\tfrac{\pi}{2} - \phi)$
and then apply Equation~\eqref{eq:SineLinearBound},
noting that $\tfrac{\pi}{2} - \phi \in [0, \tfrac{\pi}{2}]$ since $\phi \in [0, \tfrac{\pi}{2}]$.
%To prove the lower bound, observe
%$\frac{d^2}{d \phi^2} \cos \phi = - \cos \phi \leq 0$ on $[0, \frac{\pi}{2}]$,
%so that the cosine is concave on this interval.
%With $\lambda = \frac{2}{\pi} \cdot \phi \in [0,1]$ as above, this implies
%\[
%  \cos \phi
%  = \cos \Big( (1-\lambda) \cdot 0 + \lambda \cdot \frac{\pi}{2} \Big)
%  \geq (1-\lambda) \cdot \cos 0 + \lambda \cdot \cos \frac{\pi}{2}
%  =    1 - \lambda = 1 - \frac{2}{\pi} \cdot \phi \, .
%\]
%
%Finally, the prove the upper bound, recall that
%$\cos \phi = \sin \big( \frac{\pi}{2} + \phi \big)$ for all $\phi \in \R$.
%Therefore, and since $\frac{\pi}{2} - \phi \in [0, \frac{\pi}{2}]$ for
%$\phi \in [0, \frac{\pi}{2}]$, \eqref{eq:SineLinearBound} shows
%\[
%  \cos \phi
%  = \cos (-\phi)
%  = \sin \Big( \frac{\pi}{2} - \phi \Big)
%  \leq \frac{\pi}{2} - \phi
%  = \frac{\pi}{2} \cdot \Big( 1 - \frac{2}{\pi} \cdot \phi \Big) \, ,
%\]
%as desired.

\medskip{}
Next, we show that
\begin{equation}
  |\sin \phi| \geq 1 - |\cos \phi|
  \quad \text{and} \quad
  |\cos \phi| \geq 1 - |\sin \phi|
  \qquad \forall \, \phi \in \R \, .
  \label{eq:CosineSineBound}
\end{equation}
To see this, we recall that $\sin^2 \phi + \cos^2 \phi = 1$.
Thus, it suffices to prove that
\[
  |x| \geq 1 - |y| \qquad \forall \, x,y \in \R \text{ with } x^2 + y^2 = 1 \, .
\]
%Note that if $x^2 + y^2 = 1$, then $|x|, |y| \leq 1$.
We note that $|x|, |y| \leq 1$ because $x^2 + y^2 = 1$.
Thus, both sides of the desired inequality are non-negative, so that the
inequality is equivalent to
\[
  |x|^2 \overset{!}{\geq} (1 - |y|)^2
  \Longleftrightarrow
  x^2 \overset{!}{\geq} 1 - 2 |y| + y^2
  \Longleftrightarrow
  0 \overset{!}{\geq} 1 - x^2 - 2 |y| + y^2
    =                 2 (y^2 - |y|)
    =                 2 |y| \cdot (|y| - 1) \, .
\]
This last estimate is satisfied since $|y| \leq 1$.

\medskip{}

Finally, we establish the quadratic lower bound
\begin{equation}
  \cos \theta \geq 1 - \frac{\theta^2}{2}
  \qquad \forall \, \theta \in \R.
  \label{eq:CosineQuadraticLowerBound}
\end{equation}
To prove this, we first of all note that both sides of the inequality are even functions, so that it is enough
to consider the case $\theta \geq 0$.
Next, we also note that
\(
  \frac{d}{d \theta} (\frac{\theta^2}{2} + \cos \theta - 1)
  = \theta - \sin \theta
  \geq \theta - |\sin \theta|
  \geq \theta - |\theta|
  = 0
\)
for all $\theta \geq 0$.
Hence, we see that, as claimed,
$\frac{\theta^2}{2} + \cos \theta - 1 \geq \frac{0^2}{2} + \cos 0 - 1 = 0$ for all $\theta \geq 0$.

\section{Proof of Lemmas \ref{lem:InclusionLemma2},
         \ref{lem:IntervalEstimateLSummation},
         and \ref{lem:IntervalEstimateLPrimeSummation}}
\label{sec:IntervalInclusionProofs}

\begin{proof}[Proof of Lemma~\ref{lem:InclusionLemma2}]
  First of all, elementary properties of the sine and cosine imply that
  \[
    \cos \vartheta_{i_0 ,i_0 '} =
    \begin{cases}
      \phantom{-} \cos \theta_{i_0, i_0 '} \, , %\geq 0 \, ,
      & \text{ if } \vartheta_{i_0, i_0 '} \in [0,\frac{\pi}{2}) \, , \\
      - \sin \theta_{i_0, i_0 '} \, , % \leq 0 \, ,
      & \text{ if } \vartheta_{i_0, i_0 '} \in [\frac{\pi}{2}, \pi) \, , \\
      - \cos \theta_{i_0, i_0 '} \, , % \leq 0 \, ,
      & \text{ if } \vartheta_{i_0, i_0 '} \in [\pi, \frac{3}{2} \pi) \, , \\
      \phantom{-} \sin \theta_{i_0, i_0 '} \, , % \geq 0 \, ,
      & \text{ if } \vartheta_{i_0, i_0 '} \in [\frac{3}{2} \pi, 2 \pi)
    \end{cases}
    \! \quad \text{and} \quad \!
    \sin \vartheta_{i_0, i_0 '} =
    \begin{cases}
      \phantom{-} \sin \theta_{i_0, i_0 '} \, , % \geq 0 \, ,
      & \text{ if } \vartheta_{i_0, i_0 '} \in [0,\frac{\pi}{2}) \, , \\
      \phantom{-} \cos \theta_{i_0, i_0 '} \, , % \geq 0 \, ,
      & \text{ if } \vartheta_{i_0, i_0 '} \in [\frac{\pi}{2}, \pi) \, , \\
      - \sin \theta_{i_0, i_0 '} \, , % \leq 0 \, ,
      & \text{ if } \vartheta_{i_0, i_0 '} \in [\pi, \frac{3}{2} \pi) \, , \\
      - \cos \theta_{i_0, i_0 '} \, , % \leq 0 \, ,
      & \text{ if } \vartheta_{i_0, i_0 '} \in [\frac{3}{2} \pi, 2 \pi) \, .
    \end{cases}
  \]
  Next, Lemma \ref{lem:InclusionLemma1} shows that, for $\xi \in Q_{j',m',0}$,
  $x_{i_\ast '}^{-} \leq \xi_1 \leq x_{i_\ast '}^{+}$ and $|\xi_2| \leq y_{j'}$.
  Furthermore, by definition of $R_{i_0, i_0 '}$,
  %and since $\vartheta_{i,i'} - \vartheta_{i,i'}^{(0)} \in 2\pi \Z$,
  \[
    (R_{i_0, i_0 '} \, \xi)_{1}
    = \xi_1 \cdot \cos \vartheta_{i_0, i_0 '}
      - \xi_2 \cdot \sin \vartheta_{i_0, i_0 '}
    \quad \text{and} \quad
    (R_{i_0, i_0 '} \, \xi)_{2}
    = \xi_1 \cdot \sin \vartheta_{i_0, i_0 '}
      + \xi_2 \cdot \cos \vartheta_{i_0, i_0 '} \, .
  \]
  Finally, since $\theta_{i_0, i_0 '} \in [0, \pi/2)$,
  $\sin \theta_{i_0, i_0 '} \geq 0$ and $\cos \theta_{i_0, i_0 '} \geq 0$. By combining these observations,
  we see that \eqref{eq:SpeciallyRotatedBaseSet} is true.
  Indeed, we distinguish four cases:

  \noindent
  \emph{Case 1:} $\vartheta_{i_0, i_0 '} \in [0,\frac{\pi}{2})$.
  In this case, $\vartheta_{i_0, i_0 '} = \theta_{i_0, i_0 '}$ and hence
  \[
    u_{i_0, i'}^{-}
    = x_{i_\ast '}^{-} \cdot \cos \theta_{i_0, i_0 '}
      - y_{j'} \cdot \sin \theta_{i_0, i_0 '}
    \leq (R_{i_0, i_0 '} \, \xi)_1
    \leq x_{i_\ast '}^{+} \cdot \cos \theta_{i_0, i_0 '}
         + y_{j'} \cdot \sin \theta_{i_0, i_0 '}
    =    u_{i_0, i'}^{+}
  \]
  and
  \[
    v_{i_0, i'}^{-}
    = x_{i_\ast '}^{-} \cdot \sin \theta_{i_0, i_0 '}
      - y_{j'} \cdot \cos \theta_{i_0, i_0 '}
    \leq (R_{i_0, i_0 '} \, \xi)_2
    \leq x_{i_\ast '}^{+} \cdot \sin \theta_{i_0, i_0 '}
         + y_{j'} \cdot \cos \theta_{i_0, i_0 '}
    =    v_{i_0, i'}^{+} \, .
  \]

  \medskip{}

  \noindent
  \emph{Case 2:} $\vartheta_{i_0, i_0 '} \in [\frac{\pi}{2}, \pi)$.
  In this case,
  $(R_{i_0, i_0 '} \, \xi)_1 = - \xi_1 \cdot \sin \theta_{i_0, i_0 '}
                               - \xi_2 \cdot \cos \theta_{i_0, i_0 '}$
  and thus
  \[
         u_{i_0, i'}^{-}
    =    - x_{i_\ast '}^{+} \cdot \sin \theta_{i_0, i_0 '}
         - y_{j'} \cdot \cos \theta_{i_0, i_0 '}
    \leq (R_{i_0, i_0 '} \, \xi)_1
    \leq - x_{i_\ast '}^{-} \cdot \sin \theta_{i_0, i_0 '}
         + y_{j'} \cdot \cos \theta_{i_0, i_0 '}
    =    u_{i_0, i'}^{+} \, .
  \]
  Likewise,
  $(R_{i_0, i_0 '} \, \xi)_2 = \xi_1 \cdot \cos \theta_{i_0, i_0 '}
                               - \xi_2 \cdot \sin \theta_{i_0, i_0 '}$
  and thus
  \[
         v_{i_0, i'}^{-}
    =    x_{i_\ast '}^{-} \cdot \cos \theta_{i_0, i_0 '}
         - y_{j'} \cdot \sin \theta_{i_0, i_0 '}
    \leq (R_{i_0, i_0 '} \, \xi)_2
    \leq x_{i_\ast '}^{+} \cdot \cos \theta_{i_0, i_0 '}
         + y_{j'} \cdot \sin \theta_{i_0, i_0 '}
    =    v_{i_0, i'}^{+} \, .
  \]

  \medskip{}

  \noindent
  \emph{Case 3:} $\vartheta_{i_0, i_0 '} \in [\pi, \frac{3}{2} \pi)$.
  In this case,
  $(R_{i_0, i_0 '} \, \xi)_{1} = - \xi_1 \cdot \cos \theta_{i_0, i_0 '}
                                 + \xi_2 \cdot \sin \theta_{i_0, i_0 '}$
  and thus
  \[
         u_{i_0, i'}^{-}
    =    - x_{i_\ast '}^{+} \cdot \cos \theta_{i_0, i_0 '}
         - y_{j'} \cdot \sin \theta_{i_0, i_0 '}
    \leq (R_{i_0, i_0 '} \, \xi)_1
    \leq - x_{i_\ast '}^{-} \cdot \cos \theta_{i_0, i_0 '}
         + y_{j'} \cdot \sin \theta_{i_0, i_0 '}
    =    u_{i_0 ,i'}^{+} \, .
  \]
  Likewise, $(R_{i_0, i_0 '} \, \xi)_2 = - \xi_1 \cdot \sin \theta_{i_0, i_0 '}
                                         - \xi_2 \cdot \cos \theta_{i_0, i_0 '}$
  and thus
  \[
         v_{i_0, i'}^{-}
    =    - x_{i_\ast '}^{+} \cdot \sin \theta_{i_0, i_0 '}
         - y_{j'} \cdot \cos \theta_{i_0, i_0 '}
    \leq (R_{i_0, i_0 '} \, \xi)_2
    \leq - x_{i_\ast '}^{-} \cdot \sin \theta_{i_0, i_0 '}
         + y_{j'} \cdot \cos \theta_{i_0, i_0 '}
    =    v_{i_0, i'}^{+} \, .
  \]

  \medskip{}

  \noindent
  \emph{Case 4:} $\vartheta_{i_0, i_0 '} \in [\frac{3}{2} \pi, 2\pi)$.
  In this case, $(R_{i_0, i_0 '} \, \xi)_{1}
  = \xi_1 \cdot \sin \theta_{i_0, i_0 '} + \xi_2 \cdot \cos \theta_{i_0, i_0 '}$,
  and thus
  \[
         u_{i_0, i'}^{-}
    =    x_{i_\ast '}^{-} \cdot \sin \theta_{i_0, i_0 '}
         - y_{j'} \cdot \cos \theta_{i_0, i_0 '}
    \leq (R_{i_0, i_0 '} \, \xi)_{1}
    \leq x_{i_\ast '}^{+} \cdot \sin \theta_{i_0, i_0 '}
         + y_{j'} \cdot \cos \theta_{i_0, i_0 '}
    =    u_{i_0, i'}^{+} \, .
  \]
  Likewise, $(R_{i_0, i_0 '} \, \xi)_{2}
  = - \xi_1 \cdot \cos \theta_{i_0, i_0 '} + \xi_2 \cdot \sin \theta_{i_0, i_0 '}$
  and thus
  \[
         v_{i_0, i'}^{-}
    =    - x_{i_\ast '}^{+} \cdot \cos \theta_{i_0, i_0 '}
         - y_{j'} \cdot \sin \theta_{i_0, i_0 '}
    \leq (R_{i_0, i_0 '} \, \xi)_{2}
    \leq - x_{i_\ast '}^{-} \cdot \cos \theta_{i_0, i_0 '}
          + y_{j'} \cdot \sin \theta_{i_0, i_0 '}
    =    v_{i_0, i'}^{+} \, .
  \]

  Finally, \eqref{eq:MainDomainCartesianInclusion} results from combination of
  \eqref{eq:SpeciallyRotatedBaseSet} and \eqref{eq:SeriesMainDomainEstimate1},
  since $A_{j} = \mathrm{diag} (2^{\alpha j}, 2^{\beta j})$.
\end{proof}

\begin{proof}[Proof of Lemma~\ref{lem:IntervalEstimateLSummation}]
  %We begin with estimating the numbers $x_{i_\ast '}^{\pm}$ introduced in
  %Lemma \ref{lem:InclusionLemma1}.
  First we recall from Lemma~\ref{lem:InclusionLemma1} that
  \begin{equation}
    \frac{1}{4} \cdot 2^{j'}
    \leq x_{i_\ast '}^{-}
    \leq x_{i_\ast '}^{+}
    \leq 4 \cdot 2^{j'}
    \quad \text{and} \quad
    y_{j'} = 2^{\beta j' + 1} \, .
    \label{eq:XiBound}
  \end{equation}
  Next, we recall Equations \eqref{eq:AngleDifference},
  \eqref{eq:NormalizedAngleDifference}, \eqref{eq:RenormalizedAngleDifference}
  and \eqref{eq:RotationMatrix} to see that, for $\ell \in J_{j}^{\iota,k}$,
  \begin{equation}
    \begin{split}
          \theta_{i_0, i_0'}
        = \vartheta_{i_0, i_0'} - \iota \cdot \frac{\pi}{2}
      & = \vartheta_{i_0, i_0'}^{(0)} + 2\pi k - \iota \cdot \frac{\pi}{2}
        = 2\pi k
          + \Theta_{j',\ell'}
          - \iota \cdot \frac{\pi}{2}
          - \Theta_{j,\ell} \\
      & = 2\pi k
          + \Theta_{j',\ell'}
          - \iota \cdot \frac{\pi}{2}
          - \frac{2\pi}{N} \cdot 2^{(\beta - 1) j} \, \ell
        = \frac{2\pi}{N}
          \cdot 2^{(\beta - 1) j}
          \cdot \big(S_{k,\iota,j}^{(0)} - \ell \big) \, ,
      %\quad \text{where} \quad
      %= \frac{2\pi}{N} \cdot 2^{(\beta- 1) j}
      %  \cdot \big( S_{k,\iota,j} + \nu_\iota \, \ell \big) \,
    \end{split}
    \label{eq:AngleDifferenceInTermsOfL}
  \end{equation}
  and hence
  \begin{equation}
    1 - \frac{2}{\pi} \cdot \theta_{i_0, i_0 '}
    = \frac{4}{N} \cdot 2^{(\beta - 1) j}
                  \cdot \big( S_{k,\iota,j}^{(1)} + \ell \big) \,
    \label{eq:OneMinusAngleDifferenceInTermsOfL}
  \end{equation}
  where $S_{k,\iota,j}^{(0)} := \frac{N}{2\pi}
                                \cdot 2^{(1-\beta) j}
                                \cdot (
                                       2\pi k + \Theta_{j',\ell'}
                                       - \iota \cdot \frac{\pi}{2}
                                      )$
  and $S_{k,\iota,j}^{(1)}
  := \frac{N}{4} \cdot 2^{(1-\beta) j} - S_{k,\iota,j}^{(0)}$.
  In particular, since $0 \leq \theta_{i_0, i_0'} < \frac{\pi}{2}$,
  we see that $S_{k,\iota,j}^{(0)} - \ell \geq 0$ and
  $S_{k,\iota,j}^{(1)} + \ell \geq 0$ for all $\ell \in J_{j}^{\iota,k}$.

  As a further preparation, we recall from Appendix~\ref{sec:TrigonometricLinearBounds} the estimates
  \begin{equation}
    \frac{2}{\pi} \cdot \phi \leq \sin \phi \leq \phi
    \quad \text{and} \quad
    1 - \frac{2}{\pi} \cdot \phi
    \leq \cos \phi
    \leq \frac{\pi}{2} \cdot \Big( 1 - \frac{2}{\pi} \cdot \phi \Big)
    \qquad \forall \, \phi \in \Big[ 0, \frac{\pi}{2} \Big] \, .
    \label{eq:TrigonometricLinearBounds}
  \end{equation}
  %For a proof of these, we refer to Appendix~\ref{sec:TrigonometricLinearBounds}.

  Finally, to actually prove the claim, we distinguish the four possible values of $\iota$.
  \medskip{}

  \emph{Case 1:} $\iota = 0$.
  Let $S_{k,\iota,j} := S_{k,\iota,j}^{(0)}$ and $\nu_\iota := -1$.
  %\[
  %  S_{k,\iota,j} := S_{k,\iota,j}^{(0)}
  %  %:= \frac{N}{2\pi} \cdot 2^{(1-\beta)j} \cdot (2\pi k + \Theta_{j',\ell'})
  %  \quad \text{and} \quad
  %  \nu_\iota := -1 \, .
  %\]
  With this definition, Equation~\eqref{eq:AngleDifferenceInTermsOfL} shows that
  \(
    \theta_{i_0, i_0 '}
    = \frac{2\pi}{N} \cdot 2^{(\beta - 1) j} \cdot (S_{k,\iota,j} + \nu_\iota \, \ell)
  \)
  and $S_{k,\iota,j} + \nu_{\iota} \, \ell \geq 0$ for all $\ell \in J_{j}^{\iota,k}$.
  %and note that the right-hand side indeed only depends on $k,\iota,j$,
  %when neglecting the value of $i' = (j',m',\ell')$ which is fixed for the
  %whole subsection.
  %
  %Next, recall Equations \eqref{eq:AngleDifference},
  %\eqref{eq:NormalizedAngleDifference}, and \eqref{eq:RotationMatrix} to see
  %\begin{equation}
  %  \vartheta_{i_0, i_0'}
  %  = 2\pi k + \Theta_{j',\ell'} - \Theta_{j,\ell}
  %  = 2\pi k + \Theta_{j',\ell'} - \frac{2\pi}{N} \cdot 2^{(\beta - 1) j} \, \ell
  %  = \frac{2\pi}{N} \cdot 2^{(\beta- 1) j}
  %    \cdot \big( S_{k,\iota,j} + \nu_\iota \, \ell \big) \, .
  %  \label{eq:AngleDifferenceInTermsOfL}
  %\end{equation}
  %In particular, since $0 \leq \vartheta_{i_0, i_0 '} < \frac{\pi}{2}$ for
  %$\ell \in J_{j}^{\iota,k}$, this implies
  %$S_{k,\iota,j} + \nu_\iota \, \ell \geq 0$ for those $\ell$.

  %Next, recall for $\ell \in J_{j}^{\iota,k}$ that
  %$\theta_{i_0, i_0 '} = \vartheta_{i_0, i_0 '}$.
  Next, recalling the definition of $v_{i_0, i'}^{\pm}$ (see Equation~\eqref{eq:VBoundsDefinition})
  and combining Equations~\eqref{eq:XiBound} , \eqref{eq:TrigonometricLinearBounds}
  and the identity
  %\eqref{eq:XiPlusUpperBound}, \eqref{eq:XiMinusLowerBound},
  \(
    \theta_{i_0, i_0 '}
    = \frac{2\pi}{N} \cdot 2^{(\beta - 1) j}
                     \cdot (S_{k,\iota,j} + \nu_\iota \, \ell)
  \),
  we see that, for any $\ell \in J_j^{\iota,k}$,
  \[
    v_{i_0, i'}^{+}
    \leq 2^{\beta j' + 1} + 4 \cdot 2^{j'} \cdot \sin \theta_{i_0, i_0 '}
    \leq 2^{\beta j' + 1} + 4 \cdot 2^{j'} \cdot \theta_{i_0, i_0'}
    =  2^{\beta j' + 1}
       + \frac{8 \pi}{N}
         \cdot 2^{j' + (\beta - 1) j}
         \cdot \big( S_{k,\iota,j} + \nu_\iota \, \ell \big)
  \]
  and
  \[
    v_{i_0, i'}^{-}
    \geq - 2^{\beta j' + 1}
         + \frac{1}{4} \cdot 2^{j'} \cdot \sin \theta_{i_0, i_0'}
    \geq - 2^{\beta j' + 1}
         + \frac{1}{2\pi} \cdot 2^{j'} \cdot \theta_{i_0, i_0'}
    =    - 2^{\beta j' + 1}
         + N^{-1}
           \cdot 2^{j' + (\beta - 1) j}
           \cdot \big( S_{k,\iota,j} + \nu_\iota \, \ell \big) \, .
  \]
  Since
  \(
    I_2^{(i,i')} = [2^{-\beta j} \cdot v_{i_0, i'}^{-} \,,\,
                    2^{-\beta j} \cdot v_{i_0, i'}^{+}]
  \),
  %where $2^{\beta j - 2} \leq \gamma_{j,m} \leq 2^{\beta j + 1}$ by \eqref{eq:GammaEstimate},
  this proves the desired estimate for $I_2^{(i,i')}$.
  Here we noted again that $S_{k,\iota,j} + \nu_\iota \, \ell \geq 0$.
  %which proves the desired estimate for $I_2^{(i,i')}$.
  %and hence
  %$\theta_{i_0, i_0 '} = \vartheta_{i_0, i_0 '}$ with notation as in
  %Lemma \ref{lem:InclusionLemma2}.

  Finally, \eqref{eq:AngleDifferenceInTermsOfL}, $\alpha \leq 1$
  and $\vartheta_{i_0, i_0'} = \theta_{i_0, i_0 '}$ implies that, for
  $\ell \in J_{j}^{\iota,k}$,
  \[
    2^{\alpha j' - \beta j} \cdot |\sin \vartheta_{i_0, i_0'}|
    \leq 2^{\alpha j' - \beta j} \cdot \theta_{i_0, i_0 '}
    =    \frac{2\pi}{N}
         \cdot 2^{(\alpha - 1) j'}
         \cdot 2^{j' - j}
         \cdot \big( S_{k,\iota,j} + \nu_\iota \, \ell \big)
    \leq 2\pi \cdot \beta_1 \cdot \big( S_{k,\iota,j} + \nu_\iota \, \ell \big)
    \, .
  \]

  \medskip{}

  \emph{Case 2:} $\iota = 1$.
  Let $S_{k,\iota,j} := S_{k,\iota,j}^{(1)}$ and $\nu_\iota := 1$.
  On the one hand, as seen after
  Equation~\eqref{eq:OneMinusAngleDifferenceInTermsOfL}, this ensures that
  $S_{k,\iota,j} + \nu_{\iota} \, \ell \geq 0$ for all
  $\ell \in J_{j}^{\iota,k}$.
  %shows
  %\[
  %  S_{k,\iota,j} := \frac{N}{4} \cdot 2^{(1-\beta) j} - S_{k,\iota,j}^{(0)}
  %  \quad \text{and} \quad
  %  \nu_\iota := 1 \, .
  %\]
  %Recall that $\theta_{i_0, i_0'} < \frac{\pi}{2}$.
  %In combination with \eqref{eq:AngleDifferenceInTermsOfL}, this shows
  %for any $\ell \in J_{j}^{\iota,k}$ that
  %\begin{align*}
  %  0 \leq 1 - \frac{2}{\pi} \cdot \theta_{i_0, i_0 '}
  %    =    1 - \frac{2}{\pi}
  %             \cdot \frac{2\pi}{N}
  %             \cdot 2^{(\beta - 1) j} \cdot (S_{k,\iota,j}^{(0)} - \ell)
  %  & =    \frac{4}{N} \cdot 2^{(\beta - 1) j}
  %                     \cdot \Big(
  %                             \frac{N}{4} \cdot 2^{(1-\beta) j}
  %                             - S_{k,\iota,j}^{(0)}
  %                             + \nu_\iota \, \ell
  %                           \Big) \\
  %  & =    \frac{4}{N} \cdot 2^{(\beta - 1) j}
  %                     \cdot \big( S_{k,\iota,j} + \nu_\iota \, \ell \big) \, .
  %\end{align*}
  %On the one hand, this proves

  On the other hand, by combining Equations \eqref{eq:XiBound},
  %\eqref{eq:XiPlusUpperBound}, \eqref{eq:XiMinusLowerBound},
  \eqref{eq:OneMinusAngleDifferenceInTermsOfL},
  and \eqref{eq:TrigonometricLinearBounds},
  %the preceding identity with
  %the estimates
  %\eqref{eq:XiPlusUpperBound}--\eqref{eq:TrigonometricLinearBounds},
  and by recalling the definition of $v_{i_0, i'}^{\pm}$, we see for any
  $\ell \in J_{j}^{\iota,k}$ that
  \[
    v_{i_0, i'}^{+}
    \leq 2^{\beta j' + 1} + 4 \cdot 2^{j'} \cdot \cos \theta_{i_0, i_0 '}
    \leq 2^{\beta j' + 1}
           + 2\pi \cdot 2^{j'}
             \cdot \Big( 1 - \frac{2}{\pi} \theta_{i_0, i_0 '} \Big)
    =    2^{\beta j' + 1}
           + \frac{8\pi}{N} \cdot 2^{j' + (\beta - 1) j}
             \cdot \big( S_{k,\iota,j} + \nu_\iota \, \ell \big)
  \]
  and
  \[
    v_{i_0, i '}^{-}
    \geq - 2^{\beta j' + 1} + \frac{2^{j'}}{4} \cdot \cos \theta_{i_0, i_0 '}
    \geq - 2^{\beta j' + 1}
         + \frac{2^{j'}}{4}
           \cdot \Big(
                   1 - \frac{2}{\pi} \theta_{i_0, i_0 '}
                 \Big)
    =    - 2^{\beta j' + 1}
         + N^{-1} \cdot 2^{j' + (\beta - 1) j}
                  \cdot \big( S_{k,\iota,j} + \nu_\iota \, \ell \big) \, .
  \]
  Just as in Case 1, this yields the desired estimate for $I_2^{(i,i')}$.
  %\begin{align*}
  %  v_{i_0, i'}^{+}
  %  & \leq 2^{\beta j' + 1} + 4 \cdot 2^{j'} \cdot \cos \theta_{i_0, i_0 '}
  %    \leq 2^{\beta j' + 1}
  %         + 2\pi \cdot 2^{j'}
  %           \cdot \Big( 1 - \frac{2}{\pi} \theta_{i_0, i_0 '} \Big) \\
  %  & =    2^{\beta j' + 1}
  %         + 2\pi \cdot 2^{j'}
  %           \cdot \Big(
  %                   1 - \frac{4}{N}
  %                       \cdot 2^{(\beta - 1) j}
  %                       \cdot \big( S_{k,\iota,j}^{(0)} - \ell \big)
  %                 \Big)
  %    =    2^{\beta j' + 1}
  %         + \frac{8\pi}{N} \cdot 2^{j' + (\beta - 1) j}
  %           \cdot \big( S_{k,\iota,j} + \nu_\iota \, \ell \big) \, .
  %\end{align*}

  Finally, since $\vartheta_{i_0, i_0 '} = \theta_{i_0, i_0 '} + \pi / 2$
  for $\ell \in J_j^{\iota,k}$,
  we see that $\sin \vartheta_{i_0, i_0 '} = \cos \theta_{i_0, i_0 '}$.
  This combined with \eqref{eq:TrigonometricLinearBounds}
  and \eqref{eq:OneMinusAngleDifferenceInTermsOfL} results in
  \[
         2^{\alpha j' - \beta j} \cdot |\sin \vartheta_{i_0, i_0 '}|
    %=    2^{\alpha j' - \beta j} \cdot |\cos \theta_{i_0, i_0 '}|
    \leq \frac{\pi}{2}
         \cdot 2^{\alpha j' - \beta j}
         \cdot \Big( 1 - \frac{2}{\pi} \cdot \theta_{i_0, i_0 '} \Big)
    =    \frac{2\pi}{N} \cdot 2^{(\alpha - 1) j'} \cdot 2^{j' - j}
                        \cdot (S_{k,\iota,j} + \nu_\iota \, \ell) \, .
  \]
  Just as in Case 1, this proves that
  \(
    2^{\alpha j' - \beta j} \cdot |\sin \vartheta_{i_0, i_0 '}|
    \leq 2\pi \cdot \beta_1 \cdot |S_{k,\iota,j} + \nu_\iota \, \ell|
  \)
  for $\ell \in J_{j}^{\iota,k}$.
  %that the last property from the statement
  %of the lemma is satisfied.

  \medskip{}

  \emph{Case 3:} $\iota = 2$.
  Let $S_{k,\iota,j} := - S_{k,\iota,j}^{(0)}$
  and $\nu_\iota := 1$.
  On the one hand, as ca be seen from
  Equation \eqref{eq:OneMinusAngleDifferenceInTermsOfL}, this ensures that
  $S_{k,\iota,j} + \nu_\iota \, \ell \leq 0$ for all $\ell \in J_j^{\iota,k}$.
  %Given \eqref{eq:AngleDifferenceInTermsOfL}, we see
  %\[
  %  0 \leq \theta_{i_0, i_0 '}
  %    =    \frac{2\pi}{N}
  %         \cdot 2^{(\beta - 1) j}
  %         \cdot \big( S_{k,\iota,j}^{(0)} - \ell \big)
  %    =    - \frac{2\pi}{N}
  %         \cdot 2^{(\beta - 1) j}
  %         \cdot \big( S_{k,\iota,j} + \nu_\iota \, \ell \big) \, ,
  %\]
  %and thus $S_{k,\iota,j} + \nu_\iota \, \ell \leq 0$, as required.

  %Furthermore, combining the identity above with \eqref{eq:XiPlusUpperBound}%
  %--\eqref{eq:TrigonometricLinearBounds}, and recalling the definition of
  On the other hand, recalling the definition of $v_{i_0, i'}^{\pm}$
  and combining %\eqref{eq:XiPlusUpperBound}, \eqref{eq:XiMinusLowerBound},
  Equations~\eqref{eq:XiBound} \eqref{eq:TrigonometricLinearBounds},
  and \eqref{eq:AngleDifferenceInTermsOfL},
  we see that, for any $\ell \in J_j^{\iota,k}$,
  \[
    v_{i_0, i'}^{+}
    \leq 2^{\beta j' + 1} - \frac{2^{j'}}{4} \cdot \sin \theta_{i_0, i_0 '}
    \leq 2^{\beta j' + 1} - \frac{2^{j'}}{2 \pi} \cdot \theta_{i_0, i_0 '}
    =    2^{\beta j' + 1}
         + N^{-1}
           \cdot 2^{j' + (\beta - 1) j}
           \cdot \big(S_{k,\iota,j} + \nu_\iota \, \ell\big)
  \]
  and
  \[
    v_{i_0, i'}^{-}
    \geq - 2^{\beta j' + 1} - 4 \cdot 2^{j'} \cdot \sin \theta_{i_0, i_0'}
    \geq - 2^{\beta j' + 1} - 4 \cdot 2^{j'} \cdot \theta_{i_0, i_0'}
    =    - 2^{\beta j' + 1}
         + \frac{8\pi}{N}
           \cdot 2^{j' + (\beta - 1) j}
           \cdot \big( S_{k,\iota,j} + \nu_\iota \, \ell \big) \, .
  \]
  Given the previous two estimates and bearing in mind that
  $S_{k,\iota,j} + \nu_\iota \, \ell \leq 0$,
  %and $2^{\beta j - 2} \leq \gamma_{j,m} \leq 2^{\beta j + 1}$,
  we get the inclusion
  $I_2^{(i,i')} \subset \big[\beta_2 (S_{k,\iota,j} + \nu_\iota \, \ell) - L_j,
                             \beta_1 (S_{k,\iota,j} + \nu_\iota \, \ell) + L_j\big]$.

  Finally, we have
  $\vartheta_{i_0, i_0'} = \theta_{i_0, i_0 '} + \pi$ for $\ell \in J_j^{\iota,k}$ and thus
  $\sin \vartheta_{i_0, i_0 '} = - \sin \theta_{i_0, i_0 '}$, whence
  \[
      2^{\alpha j' - \beta j} \cdot |\sin \vartheta_{i_0, i_0 '}|
    \leq 2^{\alpha j' - \beta j} \cdot |\theta_{i_0, i_0 '}|
    =    2^{(\alpha - 1) j'} \cdot 2^{j' - j} \cdot \frac{2\pi}{N}
                             \cdot |S_{k,\iota,j} + \nu_\iota \, \ell| \, .
  \]
  As in the preceding cases, this shows that
  \(
    2^{\alpha j' - \beta j} \cdot |\sin \vartheta_{i_0, i_0 '}|
    \leq 2\pi \cdot \beta_1 \cdot |S_{k,\iota,j} + \nu_\iota \, \ell|
  \)
  for $\ell \in J_j^{\iota,k}$.
  %the last property from the
  %statement of the lemma is true in the current case.

  \medskip{}

  \emph{Case 4:} $\iota = 3$. Let
  $S_{k,\iota,j} := - S_{k,\iota,j}^{(1)}$ and $\nu_\iota := -1$.
  On the one hand, this, as can be seen from Equation \eqref{eq:OneMinusAngleDifferenceInTermsOfL}, ensures that $S_{k,\iota,j} + \nu_\iota \, \ell \leq 0$
  for all $\ell \in J_j^{\iota,k}$.
  %\[
  %  S_{k,\iota,j} := S_{k,\iota,j}^{(0)} - \frac{N}{4} \cdot 2^{(1 - \beta) j}
  %  \quad \text{and} \quad
  %  \nu_\iota := -1 \, .
  %\]
  %Recall that $\theta_{i_0, i_0'} < \frac{\pi}{2}$.
  %In combination with \eqref{eq:AngleDifferenceInTermsOfL}, this shows
  %for any $\ell \in J_{j}^{\iota,k}$ that
  %\[
  %  0 < 1 - \frac{2}{\pi} \cdot \theta_{i_0, i_0 '}
  %    = 1 - \frac{2}{\pi}
  %          \cdot \frac{2\pi}{N}
  %          \cdot 2^{(\beta - 1) j}
  %          \cdot \big( S_{k,\iota,j}^{(0)} - \ell \big)
  %    = - \frac{4}{N}
  %        \cdot 2^{(\beta - 1) j}
  %        \cdot \big( S_{k,\iota,j} + \nu_\iota \, \ell \big) \, .
  %\]
  %On the one hand, this proves $S_{k,\iota,j} + \nu_\iota \, \ell \leq 0$
  %for all $\ell \in J_{j}^{\iota,k}$.

  On the other hand, by recalling the definition of $v_{i_0, i'}^{\pm}$,
  and by combining %\eqref{eq:XiPlusUpperBound}, \eqref{eq:XiMinusLowerBound},
  Equations~\eqref{eq:XiBound}, \eqref{eq:AngleDifferenceInTermsOfL},
  and \eqref{eq:TrigonometricLinearBounds},
  we see for any $\ell \in J_j^{\iota,k}$ that
  %combining the preceding identity with the estimates
  %\eqref{eq:XiPlusUpperBound}--\eqref{eq:TrigonometricLinearBounds}, and
  %by recalling the definition of $v_{i_0, i'}^{\pm}$, we see for any
  %$\ell \in J_j^{\iota,k}$ that
  \[
    v_{i_0, i'}^{+}
    \leq 2^{\beta j' + 1} - \frac{2^{j'}}{4} \cdot \cos \theta_{i_0, i_0 '}
    \leq 2^{\beta j' + 1} - \frac{2^{j'}}{4}
                            \cdot \Big(
                                    1 - \frac{2}{\pi} \cdot \theta_{i_0, i_0 '}
                                  \Big)
    =    2^{\beta j' + 1}
         + N^{-1} \cdot 2^{j' + (\beta - 1) j}
           \cdot \big(S_{k,\iota,j} + \nu_\iota \, \ell \big)
  \]
  and
  \[
    v_{i_0, i'}^{-}
    \geq - 2^{\beta j' + 1} - 4 \cdot 2^{j'} \cdot \cos \theta_{i_0, i_0 '}
    \geq - 2^{\beta j' + 1}
         - 2\pi \cdot 2^{j'}
           \cdot \Big( 1 - \frac{2}{\pi} \cdot \theta_{i_0, i_0 '} \Big)
    =    - 2^{\beta j' + 1}
         + \frac{8\pi}{N} \cdot 2^{j' + (\beta - 1) j}
                          \cdot \big( S_{k,\iota,j} + \nu_\iota \, \ell \big)
    \, .
  \]
  As in Case 3, this yields the desired estimate for $I_2^{(i,i')}$.

  Finally, since $\vartheta_{i_0, i_0 '} = \theta_{i_0, i_0 '} + \frac{3}{2} \pi$
  for $\ell \in J_j^{\iota,k}$, we see that
  $\sin \vartheta_{i_0, i_0 '} = - \cos \theta_{i_0, i_0 '} \leq 0$.
  Therefore, Equations \eqref{eq:TrigonometricLinearBounds}
  and \eqref{eq:OneMinusAngleDifferenceInTermsOfL} imply that
  \[
    2^{\alpha j' - \beta j} \cdot |\sin \vartheta_{i_0, i_0 '}|
    = 2^{\alpha j' - \beta j} \cdot \cos \theta_{i_0, i_0 '}
    \leq 2^{\alpha j' - \beta j}
         \cdot \frac{\pi}{2}
         \cdot \Big( 1 - \frac{2}{\pi} \cdot \theta_{i_0, i_0 '} \Big)
    \leq - \frac{2\pi}{N}
           \cdot 2^{(\alpha - 1) j'} \cdot 2^{j' - j}
           \cdot \big( S_{k,\iota,j} + \nu_\iota \, \ell \big) \, .
  \]
  As in the previous cases, since $S_{k,\iota,j} + \nu_\iota \, \ell \leq 0$, this proves that
  \(
    2^{\alpha j' - \beta j} \cdot |\sin \vartheta_{i_0, i_0 '}|
    \leq 2\pi \cdot \beta_1 \cdot |S_{k,\iota,j} + \nu_\iota \, \ell|
  \)
  for $\ell \in J_j^{\iota,k}$.
  %that the last property from the statement of the lemma is satisfied.
\end{proof}

\begin{proof}[Proof of Lemma~\ref{lem:IntervalEstimateLPrimeSummation}]
  %The proof is very similar to that of
  %Lemma \ref{lem:IntervalEstimateLSummation}.
  %The only difference (up to for example writing $L_{j'}$ instead of $L_j$,
  %since now $j$ is fixed) is that Equation \eqref{eq:AngleDifferenceInTermsOfL}
  %needs to be replaced by
  This proof is quite similar to that of
  Lemma~\ref{lem:IntervalEstimateLSummation}. Therefore we shall only outline it briefly.
  %First, recall from \eqref{eq:XiPlusUpperBound} and \eqref{eq:XiMinusLowerBound}
  %that
  %\[
  %  2^{j' - 2}
  %  \leq x_{i_\ast '}^{-}
  %  \leq x_{i_\ast '}^{+}
  %  \leq 4 \cdot 2^{j'} \, ,
  %\]
  %and
  %First, recall from \eqref{eq:TrigonometricLinearBounds} that
  %\[
  %  \frac{2}{\pi} \cdot \phi \leq \sin \phi \leq \phi
  %  \quad \text{and} \quad
  %  1 - \frac{2}{\pi} \cdot \phi
  %  \leq \cos \phi
  %  \leq \frac{\pi}{2} \cdot \Big( 1 - \frac{2}{\pi} \cdot \phi \Big)
  %  \qquad \forall \, \phi \in \Big[ 0, \frac{\pi}{2} \Big] \, .
  %\]
  First, combining Equations~\eqref{eq:NormalizedAngleDifference},
  \eqref{eq:RenormalizedAngleDifference} and \eqref{eq:RotationMatrix} results in
  \begin{equation}
    \theta_{i_0,i_0'}
    = 2\pi k + \Theta_{j',\ell'} - \iota \cdot \frac{\pi}{2} - \Theta_{j,\ell}
    = 2\pi k
      - \Theta_{j,\ell}
      - \iota \cdot \frac{\pi}{2}
      + \frac{2\pi}{N} \cdot 2^{(\beta - 1) j'} \, \ell'
    = \frac{2\pi}{N} \cdot 2^{(\beta - 1) j'}
                     \cdot \big( S_{k, \iota, j'}^{(0)} + \ell' \big) \,
    \label{eq:AngleDifferenceInTermsOfLPrime}
  \end{equation}
  and hence
  \begin{equation}
    1 - \frac{2}{\pi} \cdot \theta_{i_0, i_0 '}
    = \frac{4}{N}
      \cdot 2^{(\beta - 1) j'}
      \cdot \big( S_{k,\iota,j'}^{(1)} - \ell' \big)
    \label{eq:OneMinusAngleDifferenceInTermsOfLPrime}
  \end{equation}
  where
  $S_{k, \iota, j'}^{(0)}
   := \frac{N}{2\pi} \cdot 2^{(1 - \beta) j'}
                     \cdot (
                            2\pi k
                            - \Theta_{j,\ell}
                            - \iota \cdot \frac{\pi}{2}
                           )$
  and $S_{k,\iota,j'}^{(1)}
  := \frac{N}{4} \cdot 2^{(1 - \beta) j'} - S_{k,\iota,j'}^{(0)}$.

  Since $0 \leq \theta_{i_0, i_0 '} < \frac{\pi}{2}$,
  Equations \eqref{eq:AngleDifferenceInTermsOfLPrime} and
  \eqref{eq:OneMinusAngleDifferenceInTermsOfLPrime} show that
  $S_{k,\iota,j'}^{(0)} + \ell' \geq 0$ and
  $S_{k,\iota,j'}^{(1)} - \ell' \geq 0$ for all $\ell \in J_{j'}^{\iota,k}$.

  To prove the lemma, we now distinguish the four possible values of $\iota$.

  \medskip{}

  \emph{Case 1:} $\iota = 0$.
  Let $S_{k,\iota,j'} := S_{k,\iota, j'}^{(0)}$ and
  $\nu_\iota := 1$.
  On the one hand, this, as can be seen from \eqref{eq:OneMinusAngleDifferenceInTermsOfLPrime},
 ensures that $S_{k,\iota, j'} + \nu_\iota \, \ell' \geq 0$ for all
  $\ell \in J_{j'}^{\iota,k}$.
  %given \eqref{eq:AngleDifferenceInTermsOfLPrime}
  %and $\theta_{i_0, i_0 '}$, we see

  On the other hand, precisely as in Case 1 in the proof of
  Lemma \ref{lem:IntervalEstimateLSummation}, we see that,
  for $\ell' \in J_{j'}^{\iota,k}$, $v_{i_0, i'}^{+}
  \leq 2^{\beta j' + 1} + 4 \cdot 2^{j'} \cdot \theta_{i_0, i_0 '}$
  and $v_{i_0, i'}^{-}
  \geq - 2^{\beta j' + 1} + \frac{2^{j'}}{2\pi} \cdot \theta_{i_0, i_0 '}$.
  Given \eqref{eq:AngleDifferenceInTermsOfLPrime}, our choice of
  $S_{k,\iota,j'}$ and $\nu_\iota$, this implies that
  \[
    v_{i_0, i'}^{+}
    \leq 2^{\beta j' + 1}
         + \frac{8\pi}{N}
           \cdot 2^{\beta j'}
           \cdot \big( S_{k, \iota, j'} + \nu_\iota \, \ell' \big)
    \quad \text{and} \quad
    v_{i_0, i'}^{-}
    \geq - 2^{\beta j' + 1}
         + N^{-1}
           \cdot 2^{\beta j'}
           \cdot \big( S_{k, \iota, j'} + \nu_\iota \, \ell' \big) \, .
  \]
  Combining these estimates results in the stated inclusion for $I_2^{(i,i')}$.
  %with the estimate $2^{\beta j - 2} \leq \gamma_{j,m} \leq 2^{\beta j + 1}$,

  Finally, since $\theta_{i_0, i_0'} = \vartheta_{i_0, i_0 '}$ for
  $\ell' \in J_{j'}^{\iota,k}$ and $\alpha \leq 1$, we see that
  \[
    2^{\alpha j' - \beta j} \cdot |\sin \vartheta_{i_0, i_0'}|
    \leq 2^{\alpha j' - \beta j} \cdot \theta_{i_0, i_0 '}
    =    \frac{2\pi}{N}
         2^{\beta (j' - j)} \cdot 2^{(\alpha - 1) j'}
         %2^{(\beta - 1) j'} \cdot 2^{\alpha j' - \beta j}
         \cdot \big( S_{k,\iota,j'} + \nu_\iota \, \ell' \big)
    \leq 2\pi \cdot \beta_1 \cdot \big( S_{k,\iota,j'} + \nu_\iota \, \ell' \big)
    \, .
  \]

  \medskip{}

  \emph{Case 2:} $\iota = 1$.
  Let $S_{k,\iota,j'} := S_{k,\iota,j'}^{(1)}$ and
  $\nu_\iota := -1$.
  On the one hand, this, as can be seen from \eqref{eq:OneMinusAngleDifferenceInTermsOfLPrime},
  ensures that $S_{k,\iota,j'} + \nu_\iota \, \ell' \geq 0$ for all
  $\ell \in J_{j'}^{\iota,k}$.
  %On the one hand, given \eqref{eq:AngleDifferenceInTermsOfLPrime}
  %and $\theta_{i_0, i_0 '}$, we see
  %$S_{k,\iota, j'} + \nu_\iota \, \ell' \geq 0$ for $\ell \in J_{j'}^{\iota,k}$.

  On the other hand, precisely as in Case 2 of the proof of Lemma \ref{lem:IntervalEstimateLSummation}, we see that, for
  $\ell' \in J_{j'}^{\iota,k}$, $v_{i_0, i'}^{+}
  \leq 2^{\beta j' + 1} + 2\pi \cdot 2^{j'}
                          \cdot \big(
                                  1 - \frac{2}{\pi} \theta_{i_0, i_0 '}
                                \big)$,
  and $v_{i_0, i'}^{-} \geq - 2^{\beta j' + 1}
                              + \frac{2^{j'}}{4}
                                \cdot \big(
                                        1 - \frac{2}{\pi} \theta_{i_0, i_0 '}
                                      \big)$.
  Given \eqref{eq:OneMinusAngleDifferenceInTermsOfLPrime} and our choice of
  $S_{k,\iota,j'}$ and $\nu_\iota$, this implies that
  \[
    v_{i_0, i '}^{+}
    \leq 2^{\beta j' + 1}
         + \frac{8 \pi}{N}
           \cdot 2^{\beta j'}
           \cdot \big( S_{k,\iota,j'} + \nu_\iota \, \ell' \big)
    \quad \text{and} \quad
    v_{i_0, i'}^{-}
    \geq - 2^{\beta j' + 1}
         + N^{-1} \cdot 2^{\beta j'}
                  \cdot \big( S_{k,\iota,j'} + \nu_\iota \, \ell' \big) \, .
  \]
  This yields the stated inclusion for $I_{2}^{(i,i')}$.

  Finally, we see that, exactly as in Case 2 of the proof of
  Lemma \ref{lem:IntervalEstimateLSummation},
  \[
    2^{\alpha j' - \beta j} \cdot |\sin \vartheta_{i_0, i_0 '}|
    \leq \frac{\pi}{2}
         \cdot 2^{\alpha j' - \beta j}
         \cdot \Big( 1 - \frac{2}{\pi} \cdot \theta_{i_0, i_0 '} \Big) \, ,
  \]
  which, given \eqref{eq:OneMinusAngleDifferenceInTermsOfLPrime} and $\alpha \leq 1$, implies that
  \(
    2^{\alpha j' - \beta j} \cdot |\sin \vartheta_{i_0, i_0 '}|
    \leq 2\pi \cdot \beta_1 \cdot \big(S_{k,\iota,j'} + \nu_\iota \, \ell' \big)
  \)
  as in the previous case.

  \medskip{}

  \emph{Case 3:} $\iota = 2$.
  Let $S_{k,\iota,j'} := - S_{k,\iota,j'}^{(0)}$ and
  $\nu_\iota := -1$.
  On the one hand, this, as can be seen from \eqref{eq:OneMinusAngleDifferenceInTermsOfLPrime},
  ensures that $S_{k,\iota,j'} + \nu_\iota \, \ell' \leq 0$
  for all $\ell' \in J_{j'}^{\iota,k}$.

  On the other hand, precisely as in Case 3 of the proof of
  Lemma \ref{lem:IntervalEstimateLSummation}, we see that, for
  $\ell' \in J_{j'}^{\iota,k}$, $v_{i_0, i'}^{+}
   \leq 2^{\beta j' + 1} - \frac{2^{j'}}{2 \pi} \cdot \theta_{i_0, i_0 '}$
  and $v_{i_0, i'}^{-}
  \geq - 2^{\beta j' + 1} - 4 \cdot 2^{j'} \cdot \theta_{i_0, i_0'}$.
  Given \eqref{eq:AngleDifferenceInTermsOfLPrime} and our choice of
  $S_{k,\iota,j'}$ and $\nu_\iota$, this implies that
  \[
    v_{i_0, i'}^{+}
    \leq 2^{\beta j' + 1} + N^{-1} \cdot 2^{\beta j'}
                                   \cdot \big(
                                           S_{k,\iota,j'} + \nu_\iota \, \ell'
                                         \big)
    \quad \text{and} \quad
    v_{i_0, i'}^{-}
    \geq - 2^{\beta j' + 1}
         + \frac{8\pi}{N}
           \cdot 2^{\beta j'}
           \cdot \big( S_{k,\iota,j'} + \nu_\iota \, \ell' \big) \, .
  \]
  These estimates together imply the stated inclusion for $I_2^{(i,i')}$.
  %and bearing in mind that $S_{k,\iota,j'} + \nu_\iota \, \ell' \leq 0$ and
  %$2^{\beta j - 2} \leq \gamma_{j,m} \leq 2^{\beta j + 1}$,

  Finally, as in Case 3 of the proof of
  Lemma \ref{lem:IntervalEstimateLSummation}, we see that
  $2^{\alpha j' - \beta j} \cdot |\sin \vartheta_{i_0, i_0 '}|
   \leq 2^{\alpha j' - \beta j} \cdot |\theta_{i_0, i_0 '}|$.
  Given \eqref{eq:AngleDifferenceInTermsOfLPrime} and our choice of
  $S_{k,\iota,j'}$ and $\nu_\iota$, this implies that
  \(
    2^{\alpha j' - \beta j} \cdot |\sin \vartheta_{i_0, i_0 '}|
    \leq 2\pi \cdot \beta_1 \cdot |S_{k,\iota,j'} + \nu_\iota \, \ell'|
  \),
  since $\alpha \leq 1$.

  \medskip{}

  \emph{Case 4:} $\iota = 3$.
  Let $S_{k,\iota,j'} := - S_{k,\iota,j'}^{(1)}$ and $\nu_\iota := 1$.
  On the one hand, this, as can be seen from \eqref{eq:OneMinusAngleDifferenceInTermsOfLPrime},
  ensures that $S_{k,\iota,j'} + \nu_\iota \, \ell' \leq 0$
  for all $\ell' \in J_{j'}^{\iota,k}$.

  On the other hand, precisely as in Case 4 of the proof of
  Lemma \ref{lem:IntervalEstimateLSummation}, we see that, for
  $\ell' \in J_{j'}^{\iota,k}$,
  $v_{i_0, i'}^{+} \leq 2^{\beta j' + 1}
                        - \frac{2^{j'}}{4}
                          \cdot \big(
                                  1 - \frac{2}{\pi} \cdot \theta_{i_0, i_0 '}
                                \big)$
  and $v_{i_0, i'}^{-} \geq - 2^{\beta j' + 1}
                            - 2\pi \cdot 2^{j'}
                              \cdot \big(
                                      1 - \frac{2}{\pi} \cdot \theta_{i_0, i_0 '}
                                    \big)$.
  Given \eqref{eq:OneMinusAngleDifferenceInTermsOfLPrime} and our choice of
  $S_{k,\iota,j'}$ and $\nu_\iota$, this implies that
  \[
    v_{i_0, i'}^{+}
    \leq 2^{\beta j' + 1}
         + N^{-1} \cdot 2^{\beta j'} \cdot (S_{k,\iota,j'} + \nu_\iota \, \ell')
    \quad \text{and} \quad
    v_{i_0, i'}^{-}
    \geq - 2^{\beta j' + 1}
         + \frac{8\pi}{N}
           \cdot 2^{\beta j'}
           \cdot \big( S_{k,\iota,j'} + \nu_\iota \, \ell' \big) \, .
  \]
  These estimates together imply the stated inclusion for $I_2^{(i,i')}$.
  %and bearing in mind that $S_{k,\iota,j'} + \nu_\iota \, \ell' \leq 0$ and
  %$2^{\beta j - 2} \leq \gamma_{j,m} \leq 2^{\beta j + 1}$,

  Finally, as in Case 4 of the proof of
  Lemma \ref{lem:IntervalEstimateLSummation}, we see that
  \[
    2^{\alpha j' - \beta j} \cdot |\sin \vartheta_{i_0, i_0 '}|
    \leq 2^{\alpha j' - \beta j}
         \cdot \frac{\pi}{2}
         \cdot \big( 1 - \frac{2}{\pi} \cdot \theta_{i_0, i_0 '} \big) \, .
  \]
  Given \eqref{eq:OneMinusAngleDifferenceInTermsOfLPrime} and our choice of
  $S_{k,\iota,j'}$ and $\nu_\iota$, this implies
  \(
    2^{\alpha j' - \beta j} \cdot |\sin \vartheta_{i_0, i_0 '}|
    \leq 2\pi \cdot \beta_1 \cdot |S_{k,\iota,j'} + \nu_\iota \, \ell'|
  \)
  since $\alpha \leq 1$.
\end{proof}

\section{Notation}\label{sec:Notation}

%We will repeatedly use the basic theory of distributions.
\noindent We recall a few elements of the theory of distributions and introduce the
notations that we use throughout this work:

For an open non-empty set $U \subset \R^d$, we define
$C_c^\infty (U) := \left\{ g \in C^\infty (\R^d; \CC) \with
\supp g \subset U \text{ compact} \right\}$.
There is a canonical topology on $C_c^\infty (U)$ that makes this space
into a topological vector space; see Sections~6.3-6.6 in \cite{RudinFA}
for the definition of this topology.
%Since this topology is surprisingly difficult to describe precisely, and since
%we will not really be interested in this topology itself, but rather in the
%\emph{dual vector space} determined by this topology, we do not describe it
%here, and instead refer the interested reader to
%\cite[Sections 6.3--6.6]{RudinFA}.
The topological dual space $\CalD ' (U) := [C_c^\infty(U)]'$ of $C_c^\infty(U)$
is called the space of \textbf{distributions} on $U$.
The bilinear pairing between $\CalD'(U)$ and $C_c^\infty (U)$ is denoted by
%We will use the notation
$\langle \phi, g \rangle_{\CalD'} := \langle \phi, g \rangle := \phi(g)$.
We use the characterisation of
%Probably the most convenient description of
$\CalD'(U)$ given in Theorem~6.8 in \cite{RudinFA}:
a linear functional $\phi : C_c^\infty (U) \to \CC$ belongs to $\CalD' (U)$
if and only if, for every compact set $K \subset U$, there are numbers
$C = C(K,\phi) > 0$ and $N = N(K,\phi) \in \N$ such that
$|\phi (g)| \leq C \cdot \max_{|\alpha|\leq N} \|\partial^\alpha g\|_{L^\infty}$
for all $g \in C_c^\infty (U)$ with $\supp g \subset K$.
Here, we used \textbf{multi-index} notation:
Any $\alpha \in \N_0^d$ is called a multi-index.
We write $\partial^\alpha f =
\frac{\partial^{\alpha_d}}{\partial x_d^{\alpha_d}}
\cdots \frac{\partial^{\alpha_1}}{\partial x_1^{\alpha_1}} f$
if $f : U \subset \R^d \to \CC$ is sufficiently smooth for the derivative to be
defined.
Similarly, we write $x^\alpha = x_1^{\alpha_1} \cdots x_d^{\alpha_d}$
for $x \in \R^d$.
Finally, we write $|\alpha| = \alpha_1 + \dots + \alpha_d$.
Even though we use the same notation $|\xi|$ for the Euclidean
norm of a vector $\xi \in \R^d$,
%Strictly speaking, this is at odds with the notation $|\xi|$
%for the euclidean norm of $\xi \in \R^d$,
this, we believe, should not lead to any confusion.

Any locally integrable function $f \in L^1_{\mathrm{loc}} (U)$ induces
a distribution $T_f \in \CalD'(U)$ that is given by
$\langle T_f, g \rangle := \int_{U} f(x) g(x) \, dx$.
A distribution $\phi \in \CalD'(U)$ is called \textbf{regular} if $\phi = T_f$
for some $f \in L_{\mathrm{loc}}^1 (U)$.
%We will often slightly simplify the notation,
For the sake of simplicity, we shall often write $f$ instead of $T_f$, i.e.,
$\langle f, g\rangle := \langle T_f, g \rangle$.

Various operations in the space of distributions $\CalD'(U)$ can be defined
using its duality with the space $C_c^\infty(U)$, i.e., for any
%As usual, many operations on distributions are defined by duality.
%In particular, if
$h \in C^\infty (U)$, $\phi \in \CalD'(U)$ and $\alpha \in \N_0^d$,
the distributions $\partial^\alpha \phi \in \CalD'(U)$
and $h \cdot \phi \in \CalD'(U)$ are defined by
$\langle \partial^\alpha \phi, f \rangle
= (-1)^{|\alpha|} \cdot \langle \phi, \partial^\alpha f \rangle$, and
$\langle h\cdot\phi, f \rangle = \langle \phi, h \cdot f \rangle$, respectively.
%for $f \in C_c^\infty (U)$.

\medskip{}

With the topology induced by the family of norms $(\| \bullet \|)_{N \in \N_0}$,
the \textbf{Schwartz function space} $\Schwartz(\R^d)$, defined as
\[
  \Schwartz (\R^d)
  := \left\{
        g \in C^\infty (\R^d; \CC)
        \with
        \forall \, N \in \N_0
        : \| g \|_N := \max_{\alpha \in \N_0^d, |\alpha| \leq N_0} \,
                         \sup_{x \in \R^d} \,
                           (1+|x|)^N \cdot |\partial^\alpha g (x)|
        < \infty
     \right\} \, ,
  %\quad \text{where} \quad
  %\|g \|_N := \max_{\substack{\alpha \in \N_0^d \\ |\alpha| \leq N}}
\]
becomes a topological vector space and its topological dual space
$\Schwartz '(\R^d)$, called the space of \textbf{tempered distributions},
becomes a topological vector space when equipped with the weak-$\ast$-topology.
%With the topology induced by the family of norms $(\|\bullet\|_N)_{N \in \N_0}$,
%$\Schwartz(\R^d)$ becomes a topological vector space
%(in fact, a ), whose topological dual space
%is called the space of \textbf{tempered distributions}.
We write
$\langle \phi, g \rangle_{\Schwartz'} := \langle \phi, g \rangle := \phi (g)$
for the dual pairing between $\Schwartz'(\R^d)$ and $\Schwartz (\R^d)$.
The reader is referred to Sections~8.1 and 9.2 in \cite{FollandRA}
for more details on these spaces.

In contrast to the \emph{bilinear} dual pairings for distributions and
tempered distributions, we write
$\langle f \mid g \rangle_{L^2} := \int f \cdot \overline{g} \, dx$,
which is sesquilinear.

%The most important fact that we will use repeatedly is that the
The \textbf{Fourier transform} of $f \in L^1(\R^d)$ is defined by
\[
  \Fourier f (\xi)
  := \widehat{f} (\xi)
  := \int_{\R^d} f(x) \cdot e^{-2 \pi i \langle x, \xi \rangle} \, dx
  \quad \text{for} \quad \xi \in \R^d .
\]
The map $\Fourier : L^1 (\R^d) \to L^\infty(\R^d)$
is restricted to a homeomorphism $\Fourier : \Schwartz(\R^d) \to \Schwartz(\R^d)$
%which is invertible and whose inverse is given by
whose inverse $\Fourier^{-1} : \Schwartz(\R^d) \to \Schwartz(\R^d)$ is
given by $\Fourier^{-1} g (x) = \Fourier g (-x)$.
Using the duality between $\Schwartz'(\R^d)$ and $\Schwartz(\R^d)$,
the Fourier transform
$\Fourier : \Schwartz'(\R^d) \to \Schwartz'(\R^d)$
on the space of tempered distributions is defined by
%extends to a homeomorphism
$\langle \Fourier \phi, g \rangle := \langle \phi, \Fourier g\rangle$.
It is a homeomorphism with its inverse given by
$\langle \Fourier^{-1} \phi,g \rangle = \langle \phi, \Fourier^{-1} g \rangle$.

As for ``ordinary'' distributions,
certain functions also induce tempered distributions.
Precisely, if $f : \R^d \to \CC$ is measurable and if
$\int_{\R^d} (1+|x|)^{-N} \, |f(x)| \, dx < \infty$ for some $N \in \N$, then
we say that $f$ is \textbf{of moderate growth}.
In this case, one can verify that $T_f \in \Schwartz'(\R^d)$
where $\langle T_f, g \rangle := \int_{\R^d} f(x) \, g(x) \, dx$.
If $\phi \in \Schwartz '(\R^d)$ satisfies $\phi = T_f$ for some $f$ of moderate
growth, then we say that $\phi$ is given by integration against
the function $f$.

The \textbf{transpose} of a matrix $A \in \R^{k \times n}$ is denoted by $A^t \in \R^{n \times k}$.
If $A \in \R^{d \times d}$ is
invertible, we write $A^{-t} := (A^{-1})^t = (A^t)^{-1}$.

The \textbf{composition of two functions $f$ and $g$} is denoted by
$f \circ g (x) = f(g(x))$.

%We write $\underline{n} := \{1,\dots,n\}$ for $n \in \N_0$, so that in
%particular $\underline{0} = \emptyset$.

The \textbf{translation operator} $L_y$ and the
\textbf{modulation operator} $M_\xi$ are defined, respectively,
by $(L_y \, g)(x) = g(x-y)$ and
$(M_\xi \, g)(x) = e^{2\pi i \langle x, \xi \rangle} \cdot g(x)$.
Here and elsewhere in the paper,
${\langle \xi, x \rangle = \sum_{j = 1}^d \xi_j x_j}$ denotes the
\textbf{standard scalar product} on $\R^d$.

If $f : X \to Y$ is a function and $A \subset X$, we denote by
$f|_A$ the \textbf{restriction} of $f$ to $A$.

We denote by $B_r (x) = \{ y \in \R^d \colon |y - x| < \eps \}$
the open \textbf{Euclidean ball} of radius $r > 0$ with its centre in $x \in \R^d$.

For a function $g : \R^d \to \CC$, $\widetilde{g}$ stands for
$\widetilde{g} : \R^d \to \CC, x \mapsto g(-x)$.

We define the \textbf{conjugate exponent} of $p \in (0,\infty]$ as follows:
For $p \in (1,\infty)$, we define $p' := \frac{p}{p-1}$,
if $p' := 1$ if $p = \infty$ and $p' := \infty$ if $p \in (0,1]$.
This definition implies that $\frac{1}{p} + \frac{1}{p'} = 1$
for $p \in [1,\infty]$, but not for $p \in (0,1)$.

%\todo[inline]{Maybe write 1--2 paragraphs about quasi-Banach spaces.
%Banach spaces and normed vector spaces are completely standard, but quasi-normed
%spaces are slightly exotic. Therefore, it might be a good idea to recall
%their definition and some of their special properties that are different
%from the case of normed vector spaces.}

\section*{Acknowledgements}
\noindent Dimitri Bytchenkoff thanks the Centre National de la Recherche Scientifique of France
and the Deutscher Akademischer Austauschdienst for their financial support. Felix Voigtlaender acknowledges support from the European Commission through DEDALE
(contract no.~665044) within the H2020 Framework Program.
We both thank Professor Gitta Kutyniok of the Technische Universität Berlin and Professor Götz Pfander of
the Katholische Universität Eichstätt-Ingolstadt
for their support of this work. We are also grateful to Jordy van Velthoven for suggesting relevant references. Finally we thank the referee for his careful reading of our manuscript and making a number of suggestions that, we believe, helped us to improve our work.

\bibliographystyle{amsplain}

\phantomsection

\addcontentsline{toc}{chapter}{References}
\section*{\refname}
{\footnotesize
\bibliography{mybib}}

\end{document}